\theoremstyle{plain}
\newtheorem{introthm}{Theorem}
\newtheorem{introcorollary}{Corollary}
\newtheorem{proposition}[subsubsection]{Proposition}
\newtheorem{lemma}[subsubsection]{Lemma}
\newtheorem{corollary}[subsubsection]{Corollary}
\newtheorem{thm}[subsubsection]{Theorem}
\newtheorem*{thm*}{Theorem}
\theoremstyle{definition}
\newtheorem{definition}[subsubsection]{Definition}
\newtheorem{notation}[subsubsection]{Notation}
\newtheorem{example}[subsubsection]{Example}
\theoremstyle{remark}
\newtheorem{remark}[subsubsection]{Remark}
\numberwithin{equation}{subsection}
\numberwithin{subsubsection}{subsection}
\newcommand{\eq}[2]{\begin{equation}\label{#1}#2 \end{equation}}
\newcommand{\ml}[2]{\begin{multline}\label{#1}#2 \end{multline}}
\newcommand{\mlnl}[1]{\begin{multline*}#1 \end{multline*}}
\newcommand{\arir}{\ar@{^{(}->}}
\newcommand{\aril}{\ar@{_{(}->}}
\newcommand{\are}{\ar@{>>}}
\newcommand{\xr}[1] {\xrightarrow{#1}}
\newcommand{\xl}[1] {\xleftarrow{#1}}
\newcommand{\lra}{\longrightarrow}
\newcommand{\inj}{\hookrightarrow}
\newcommand{\mc}[1]{\mathcal{#1}}
\newcommand{\codim}{{\rm codim}}
\newcommand{\Hom}{{\rm Hom}}
\newcommand{\sHom}{{\rm \mathcal{H}om}}
\newcommand{\sExt}{{\rm \mathcal{E}xt}}
\newcommand{\Ker}{{\rm Ker}}
\newcommand{\Spec}{{\rm Spec \,}}
\newcommand{\im}{{\rm im}}
\newcommand{\Tr}{{\rm Tr}}
\newcommand{\Trf}{{\rm Trf}}
\newcommand{\id}{{\rm id}}
\newcommand{\ul}{\underline}
\newcommand{\ol}{\overline}
\newcommand{\RS}{\mc{A}}
\newcommand{\Ext}{\mathrm{Ext}}
\newcommand{\drw}{{\rm dRW}}
\newcommand{\cdrw}{{\widehat{\rm dRW}}}
\newcommand{\XP}{{(X,\Phi)}}
\newcommand{\YP}{{(Y,\Psi)}}
\newcommand{\sA}{{\mathcal A}}
\newcommand{\sC}{{\mathcal C}}
\newcommand{\sF}{{\mathcal F}}
\newcommand{\sG}{{\mathcal G}}
\newcommand{\sH}{{\mathcal H}}
\newcommand{\sI}{{\mathcal I}}
\newcommand{\sK}{{\mathcal K}}
\newcommand{\sL}{{\mathcal L}}
\newcommand{\sN}{{\mathcal N}}
\newcommand{\sO}{{\mathcal O}}
\newcommand{\sR}{{\mathcal R}}
\newcommand{\A}{{\mathbb A}}
\newcommand{\C}{{\mathbb C}}
\newcommand{\G}{{\mathbb G}}
\newcommand{\N}{{\mathbb N}}
\renewcommand{\P}{{\mathbb P}}
\newcommand{\Q}{{\mathbb Q}}
\newcommand{\Z}{\mathbb{Z}}
\newcommand{\fU}{\mathfrak{U}}
\newcommand{\fm}{\mathfrak{m}}
\newcommand{\OO}{\sO}
\newcommand{\triples}{(F_*,F^*,T,e)}
\newcommand{\pr}{{\rm pr}}
\newcommand{\GrAb}{\text{\bf{GrAb}}}
\newcommand{\CH}{{\rm CH}}
\newcommand{\hH}{\hat{\sH}}
\begin{document}

\title{Hodge-Witt cohomology and Witt-rational singularities}
%\title{Witt-rational singularities}

\begin{abstract}
%Let $f:X\xr{} Y$ be a morphism 
We prove the vanishing modulo torsion of the higher direct images of the sheaf of Witt vectors
(and the Witt canonical sheaf) for a purely inseparable projective alteration 
between normal finite 
quotients over a perfect field. 
For this, we show that the relative Hodge-Witt cohomology admits an action of 
correspondences. As an application we define Witt-rational singularities 
which form a broader class than rational singularities. In particular, 
finite quotients have Witt-rational singularities. In addition, we prove
that the torsion part of the Witt vector cohomology of a smooth, proper scheme 
is a birational invariant.    
\end{abstract}

\author{Andre Chatzistamatiou and Kay R\"ulling}
\address{Fachbereich Mathematik \\ Universit\"at Duisburg-Essen \\ 45117 Essen, Germany}
\email{a.chatzistamatiou@uni-due.de}
\email{kay.ruelling@uni-due.de}

\thanks{This work has been supported by the SFB/TR 45 ``Periods, moduli spaces and arithmetic of algebraic varieties''}

\maketitle

\tableofcontents

\section*{Introduction}
%\addcontentsline{toc}{chapter}{Appendices}
\addtocontents{toc}{\protect\setcounter{tocdepth}{1}}

An important class of singularities over fields of characteristic zero are the 
rational singularities. For example, quotient singularities and log terminal 
singularities are rational singularities. Over fields with positive 
characteristic the situation is more subtle. The definition of rational
singularities requires resolution of singularities which is not yet available in 
all dimensions. Moreover, quotient singularities are only rational singularities 
under a further tameness condition, but in general fail to be rational singularities.

The purpose of this paper is to define a broader class of singularities in 
positive characteristic, which we call Witt-rational singularities. 
The main idea is that we replace the structure sheaf $\OO_X$ and the 
canonical sheaf $\omega_X$ by the Witt sheaves $W\OO_{X,\Q}$ and $W \omega_{X,\Q}$.
One important difference is that multiplication with $p$ is invertible in 
$W\OO_{X,\Q}$ and $W \omega_{X,\Q}$.
Instead of resolution of singularities we can use alterations. 

Witt-rational singularities have been first introduced by Blickle and Esnault \cite{BE}.
In this paper we use a slightly different and more restrictive definition, which seems
to be more accessible. Conjecturally, our definition agrees with the one of Blickle-Esnault
by using a Grauert-Riemenschneider vanishing theorem for the Witt canonical sheaf $W \omega_{X,\Q}$.
We hope to say more about this in the future.   

\subsection{} 
Let $k$ be a perfect field of positive characteristic. We denote by $W=W(k)$
the ring of Witt vectors and by $K_0={\rm Frac}(W)$ the field of fractions.
For smooth projective $k$-scheme 
the crystalline cohomology $H^*_{\rm crys}(X/W)$ has, by the work of Bloch and Deligne-Illusie, a natural interpretation
as hypercohomology of the de Rham-Witt complex $W\Omega^\bullet_X$,
$$
H^*_{\rm crys}(X/W)\cong H^*(X,W\Omega^\bullet_X).
$$
After inverting $p$, the slope spectral sequence degenerates which yields 
a decomposition 
$$
H^{n}(X/K_0)=\bigoplus_{i+j=n} H^j(X, W\Omega^i_{X})\otimes_W K_0. 
$$
The de Rham-Witt complex is the limit of a pro-complex $(W_n\Omega^\bullet_X)_n$,
and for us $W_n\OO_X$ and $W_n\omega_X=W_n\Omega^{\dim X}_X$ will be most important. 
The sheaf $W_n\OO_X$ is the sheaf of Witt vectors of length $n$, and 
defines a scheme structure $W_nX$ on the topological space $X$. 
The structure map $\pi:X\xr{} \Spec(k)$ induces a morphism 
$W_n(\pi): W_n X\to \Spec W_n(k)$, but $W_n(\pi)$ is almost never flat.  
By the work of Ekedahl (see \cite{EI}) $W_n\omega_X$ equals $W_n(\pi)^! W_n[-\dim X]$, hence $W_n\omega_X$ is a dualizing sheaf for $W_nX$.

The main technical problem in order to define Witt-rational singularities 
is to prove the independence of the chosen alteration. Our approach 
is to use the action of algebraic cycles in a similar way as in \cite{CR}.
For this, we have to extend the work of Gros \cite{Gros} on the de Rham-Witt complex in Theorem \ref{thm1} below. 

For a $k$-scheme $S$ we denote by $\mc{C_S}$ the category whose objects are  
$S$-schemes which are smooth and quasi-projective over $k$. For two objects $f:X\to S$ and 
$g: Y\to S$ in $\mc{C}_S$, the 
morphisms $\Hom_{\mc{S}}(f:X\to S, g:Y\to S)$ are defined by $\varinjlim_Z \CH(Z)$, where the limit is taken over all proper correspondences over $S$ between $X$ and $Y$, i.e.~closed subschemes $Z\subset X\times_S Y$ such that the projection to $Y$ is proper ($\CH(Z)=\oplus_i \CH_i(Z)$ denotes the Chow group). The composition of two morphisms is defined using Fulton's refined intersection product.
The following theorem on the action of proper correspondences on relative Hodge-Witt cohomology is the main technical tool of the article.
\begin{introthm}[cf.~Proposition~\ref{proposition-C_S-to-dRW_S}]\label{thm1}
There is a functor
%\[\hat{\sH}(?/S): \mc{C_S}\to (W\sO_S-\text{modules}),\]
%which on objects is given by
%\[\hat{\sH}(f: X\to S /S):= \bigoplus_{i,j} R^if_*W\Omega^j_X.\]
\begin{align*}
\hat{\sH}(?/S): \mc{C_S}&\to (W\sO_S-\text{modules}),\\
\hat{\sH}(f: X\to S /S)&= \bigoplus_{i,j} R^if_*W\Omega^j_X,
\end{align*}
with the following properties.

If $h: X\to Y$ is an $S$-morphism between two smooth $k$-schemes and $\Gamma_h^t\subset Y\times_S X$ denotes the transpose of its graph, then
$\hat{\sH}([\Gamma_h^t]/S)$ is the natural pull-back.
%\[\hat{\sH}([\Gamma_h^t]/S)= h^* : \bigoplus_{i,j} R^i g_*W\Omega^j_Y\to \bigoplus_{i,j} R^if_*W\Omega^j_X,\]
%where $h^*$ is the natural pull-back.
If in addition $h$ is projective  then $\hat{\sH}([\Gamma_h]/S)$  %and $r=\dim X-\dim Y$, then 
%\[\hat{\sH}([\Gamma_h]/S): \bigoplus_{i,j} R^i f_*W\Omega^j_X\to \bigoplus_{i,j} R^{i-r}g_*W\Omega^j_Y(\dim Y-\dim X),\]
is the pushforward defined by Gros in \cite{Gros} using Ekedahl's duality theory \cite{EI}.

For a morphism $\alpha\in \Hom_{\mc{C}_S}(X/S, Y/S)$ in $\mc{C}_S$, the map $\hat{\sH}(\alpha/S)$ is compatible with Frobenius, Verschiebung and the differential.
\end{introthm}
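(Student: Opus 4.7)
The plan is to construct $\hH(\alpha/S)$ via the same pull-intersect-push formalism used for Chow groups in \cite{CR}, substituting the de Rham-Witt complex for the structure sheaf and replacing Fulton's refined intersection by cap product against a de Rham-Witt cycle class. Three building blocks are needed for smooth quasi-projective $k$-schemes: (i) a pullback along arbitrary morphisms $h: X \to Y$, coming from the functoriality of the de Rham-Witt complex of Illusie; (ii) a pushforward along projective morphisms between smooth $k$-schemes, namely Gros's pushforward \cite{Gros} built from Ekedahl's relative duality \cite{EI}; (iii) for a cycle $W$ supported on a closed subscheme $Z \subset X$, a de Rham-Witt cycle class $\mathrm{cl}(W)$ in local cohomology $H^*_Z(X, W\Omega^*_X)$ that is compatible with Frobenius, Verschiebung, and differential, again due to Gros.

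Given these ingredients, for a proper correspondence $\alpha \in \Ch(Z)$ with $Z \subset X \times_S Y$ and $q\colon Z \to Y$ proper, I would define
\[
\hH(\alpha/S) := q_* \bigl(\mathrm{cl}(\alpha) \cup p^*(-)\bigr),
\]
where $p$ and $q$ are the two projections from $X \times_S Y$. The stated compatibility of $\hH(\alpha/S)$ with $F$, $V$ and $d$ is then immediate because each of $p^*$, the cup product $\mathrm{cl}(\alpha) \cup -$, and $q_*$ has that property. Similarly, when $\alpha = [\Gamma_h^t]$ or $\alpha = [\Gamma_h]$ with $h$ a morphism between smooth $k$-schemes, the cycle class of the graph is the fundamental class of a regularly embedded smooth subvariety and the composition collapses to the ordinary pullback (resp.\ Gros's pushforward), as claimed.

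The main point, and the main obstacle, is functoriality: for composable correspondences $\alpha \in \Hom_{\mc{C}_S}(X/S, Y/S)$ and $\beta \in \Hom_{\mc{C}_S}(Y/S, Y'/S)$, one must prove
\[
\hH(\beta \circ \alpha/S) = \hH(\beta/S) \circ \hH(\alpha/S).
\]
Following the strategy of \cite{CR}, this reduces to two Hodge-Witt analogues of classical intersection-theoretic identities: a projection formula
$f_*(\mathrm{cl}(W) \cup f^*\xi) = f_*\mathrm{cl}(W) \cup \xi$
for $f$ projective between smooth schemes, and an excess intersection formula relating $\mathrm{cl}(W_1) \cup \mathrm{cl}(W_2)$ to the cycle class of the Fulton refined intersection product $W_1 \cdot W_2$.

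The projection formula is essentially built into the duality-theoretic definition of Gros's pushforward. The excess intersection formula is the technical heart of the argument: it requires controlling the behaviour of the de Rham-Witt cycle class under non-transverse intersections, and I would attack it by a deformation to the normal cone argument lifted to the de Rham-Witt pro-system, extending the corresponding Chow-theoretic argument of \cite{CR} while preserving compatibility with $F$, $V$ and $d$ at each truncation level $W_n$. Once this excess formula is established, functoriality will follow by the same formal manipulations as in loc.\ cit., and the identification of $\hH([\Gamma_h^t]/S)$ and $\hH([\Gamma_h]/S)$ with the classical pullback and Gros pushforward drops out of the definition.
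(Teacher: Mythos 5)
Your high-level picture -- pull back, cup with a de Rham--Witt cycle class, push forward -- matches the paper, but the route you propose to functoriality is genuinely different, and it is the part where the real work lies. You propose to prove an excess intersection formula directly by a deformation-to-the-normal-cone argument ``lifted to the de Rham--Witt pro-system.'' The paper deliberately avoids this. Instead, it shows that $(X,\Phi)\mapsto\bigoplus_{i,j} H^i_\Phi(X,W\Omega^j_X)$ (more precisely, the diagonal part $P\hat H$) is a \emph{weak cohomology theory with supports} in the axiomatic sense of \cite{CR} (Theorem~\ref{thm-sH-wct}), verifies semi-purity (Proposition~\ref{proposition-semi-purity}), and then checks the short list of concrete criteria in \cite[Theorem~1.2.3]{CR}: the finite-degree formula (Proposition~\ref{2.1.8}), equality of the cycle classes of $\{0\}$ and $\{\infty\}$ on $\P^1$, compatibility with a single smooth divisor, and the normalization of the cycle class on an irreducible closed subset (all in Proposition~\ref{proposition-cl}). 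Those conditions are local and relatively elementary; the abstract machinery of \cite{CR} then supplies the unique morphism $\CH\to P\hat H$ of weak cohomology theories, the correspondence category $Cor$, and functoriality of composition -- precisely what your deformation argument would have to re-derive. The dividend of the paper's approach is economy: one never has to control non-transverse intersections or compatibility of the de Rham--Witt cycle class with specialization to the normal cone, neither of which has an off-the-shelf proof in the de Rham--Witt literature and both of which would require substantial pro-level bookkeeping with $F$, $V$, $d$, and $\pi$.

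Two further points you elide. First, your formula $\hH(\alpha/S)=q_*(\mathrm{cl}(\alpha)\cup p^*(-))$ reads $p,q$ as ``the two projections from $X\times_S Y$,'' but $X\times_S Y$ is in general singular, so neither the Hodge--Witt cycle class of \ref{2.6.2} nor the duality-theoretic pushforward of Definition~\ref{2.1.6} is available there; the paper works on the smooth ambient $X\times Y$ equipped with the family of supports $P(X\times_S Y)$ of closed subsets of $X\times_S Y$ proper over $Y$, so that $\mathrm{cl}(\alpha)\in H^c_{P}(X\times Y,W\Omega^c_{X\times Y})$ and the pushforward is well-defined with those supports. You should state this explicitly, as your building block (iii) suggests you intend, since it is the mechanism that makes the whole construction apply to correspondences on singular fiber products. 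Second, the target of the functor is $(W\sO_S\text{-modules})$, not just $W(k)[F,V]$-modules, so one must verify that $\hH(\alpha/S)$ is $W\sO_S$-linear. This is not automatic from $F$, $V$, $d$ compatibility and requires an argument with the two structure maps of the fiber product over $S$; the paper handles it in Lemma~\ref{lemma-linear2} and the proof of Proposition~\ref{proposition-C_S-to-dRW_S}. Your proposal does not address it at all.
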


%% A key to obtain vanishing of certain higher direct images
%% are the vanishing Lemmas \ref{lemma-van1} and \ref{lemma-van2}, which state that if $Z$  is a proper correspondence between $X$ and $Y$ such that the projection to $X$ (resp. $Y$) has codimension at least 1, then 
%% $Z$ acts as zero on $R^if_*W\Omega^{\dim X}_{X,\Q}$ (resp. the projection of its image to $R^i g_*W\sO_{Y,\Q}$ is zero) for all $i$. 
%% (The Lemmas are in fact more precise.)
 
\subsection{} We say that an integral normal  $k$-scheme $X$ is a \emph{finite quotient} if there exists a finite and surjective morphism from a smooth $k$-scheme $Y\to X$ (e.g. $X=Y/G$ for some finite group $G$ acting on $Y$.)
We say that a normal integral scheme $X$ is  a {\em topological finite quotient} if there exists a finite, surjective and purely inseparable morphism  $u: X\to X'$, where $X'$ is a finite quotient.
The morphism  $u$ is in fact a universal homeomorphism. Finally we say that a morphism $f:X\to Y$ between two integral $k$-schemes is a {\em quasi-resolution of} $Y$  if
$X$ is a topologically finite quotient and the morphism $f$ is surjective, projective, generically finite and purely inseparable.
(In characteristic zero these conditions imply that $X$ is a finite quotient and $f$ is projective and birational.)
By a result of de Jong (see \cite{deJong1}, \cite{deJong2}) quasi-resolutions always exist. When working with $\Q$-coefficients the notion of quasi-resolutions
suffices to define an analog of rational singularities. This follows from the following theorem.
\begin{introthm}[Theorem~\ref{vanishing-HDI}]\label{thm2}
Let $Y$ be a topological finite quotient and $f: X\to Y$ a quasi-resolution. Then
\[Rf_*W\sO_{X,\Q}\cong W\sO_{Y,\Q}.\] 
\end{introthm}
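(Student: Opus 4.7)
The candidate for the asserted isomorphism is the adjunction map $\eta: W\sO_{Y,\Q}\to Rf_*W\sO_{X,\Q}$. My plan is to construct a trace $\tau$ going back, using the correspondence action of Theorem~\ref{thm1}, and to verify that the two compositions $\tau\circ\eta$ and $\eta\circ\tau$ are each a unit multiple of the identity. Since $f$ is generically finite and purely inseparable, $\deg(f)$ is a power of $p$ and hence invertible after $\otimes_\Z\Q$; the theorem will follow.

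\textbf{Two formal reductions.} First, by definition there is a finite surjective purely inseparable morphism $u: Y\to Y'$ with $Y'$ a finite quotient. The morphism $u$ is affine and $W_n\sO_Y$ admits a finite filtration by quasi-coherent subsheaves, so $Ru_*=u_*$. Moreover, in characteristic $p$ one has $p=VF$, and if the inseparable exponent of $u$ is $r$ then $F^r$ sends components of $W\sO_Y$ into $W\sO_{Y'}$, giving $p^r\cdot W\sO_Y\subset W\sO_{Y'}$; hence $u_*W\sO_{Y,\Q}\cong W\sO_{Y',\Q}$. Applying $u_*$ therefore reduces us to $Y$ a finite quotient. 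Second, by de Jong choose a smooth quasi-resolution $g:\tilde X\to X$; the composite $fg$ is again a quasi-resolution. Granting the theorem whenever the source is smooth, applying it to both $g$ and $fg$ yields $Rg_*W\sO_{\tilde X,\Q}\cong W\sO_{X,\Q}$ and $R(fg)_*W\sO_{\tilde X,\Q}\cong W\sO_{Y,\Q}$, and combining via $R(fg)_*=Rf_*Rg_*$ produces the statement for $f$.

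\textbf{Core case ($X$ smooth, $Y$ a finite quotient).} Choose $\pi: V\to Y$ finite surjective with $V$ smooth, form the fibre product $V\times_Y X$, and by de Jong take a smooth quasi-resolution $\rho: \tilde V\to V\times_Y X$ with structure maps $\tilde f: \tilde V\to V$ and $\tilde\pi: \tilde V\to X$. The map $\tilde f$ is itself a quasi-resolution between smooth schemes (base change preserves purely inseparable generic fibres, and composition of quasi-resolutions is a quasi-resolution). Each of $X$, $V$, $\tilde V$ lies in $\sC_Y$, so Theorem~\ref{thm1} provides $W\sO_Y$-linear pullback and pushforward operations between their relative Hodge--Witt cohomologies. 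Composing $\hat{\sH}([\Gamma_{\tilde\pi}^t]/Y)$, $\hat{\sH}([\Gamma_{\tilde f}]/Y)$, and the classical finite-cover trace $\tau_\pi: R\pi_*W\sO_V\to W\sO_Y$ (available after inverting $\deg(\pi)$, since $V$ is smooth and $Y$ is normal) produces a candidate trace $\tau: Rf_*W\sO_X\to W\sO_Y$. The standard identities $[\Gamma_h]\circ[\Gamma_h^t]=\deg(h)[\Delta]$ in the appropriate Chow group, together with its transpose $[\Gamma_h^t]\circ[\Gamma_h]=\deg(h)[\Delta_X]$ for purely inseparable $h$ (valid because the scheme-theoretic fibre product $X\times_Y X$ is supported on the diagonal with generic length $\deg(h)$), then show that $\tau\circ\eta$ and $\eta\circ\tau$ are each $\deg(\tilde f)\cdot\deg(\pi)\cdot\mathrm{id}$, which is a unit after $\otimes_\Z\Q$.

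\textbf{Main obstacle.} The principal difficulty lies in the core case: since $Y$ itself is not in $\sC_Y$, Theorem~\ref{thm1} does not produce a trace $Rf_*W\sO_X\to W\sO_Y$ directly, and one must engineer it via descent through the smooth cover $\pi$ and the alteration $\rho$. The delicate verification is that, after this engineering, both compositions with $\eta$ really equal a common unit multiple of the identity at the level of complexes on $Y$, not merely generically or on cohomology; this demands careful tracking of excess-intersection contributions introduced by the exceptional locus of $\rho$, and essential use of the functoriality and Frobenius/Verschiebung compatibility of the correspondence action guaranteed by Theorem~\ref{thm1}.
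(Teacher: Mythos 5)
Your overall plan — manufacture an inverse to the adjunction map via the correspondence machine and Gros' trace — is close in spirit to the paper, but both of its load-bearing steps have gaps that cannot be papered over.

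\textbf{The cycle identities you invoke are wrong for the maps in play.} For a quasi-resolution $\tilde f:\tilde V\to V$ between smooth schemes of the same dimension, the identity $[\Gamma_{\tilde f}]\circ[\Gamma_{\tilde f}^t]=\deg(\tilde f)\,[\Delta_V]$ does hold on the nose (it is pushforward of the diagonal and needs no hypothesis beyond generic finiteness); but the transposed identity $[\Gamma_{\tilde f}^t]\circ[\Gamma_{\tilde f}]=\deg(\tilde f)\,[\Delta_{\tilde V}]$ is \emph{false} once $\tilde f$ has positive-dimensional fibres: the scheme $\tilde V\times_V\tilde V$ is \emph{not} supported on the diagonal (it contains $E\times_V E$ for each positive-dimensional fibre $E$). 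The correct statement is an equality modulo cycles whose two projections land in the exceptional locus, and one then needs the vanishing Lemmas~\ref{lemma-van1} and~\ref{lemma-van2} to kill the error on the $W\sO$ and $W\omega$ parts after inverting integers. The identity is likewise false for the finite cover $\pi:V\to Y$, which is not radical: there $[\Gamma_\pi^t]\circ[\Gamma_\pi]=[V\times_Y V]$ has several components besides $[\Delta_V]$ (think of a Galois cover). So neither of the two identities you quote is available in the form you use it.

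\textbf{The claim that $\eta\circ\tau$ is a unit multiple of the identity is circular.} Your trace $\tau$ factors through $\pi_*W\sO_V$, and since $\pi$ is finite, $R^i\pi_*=0$ for $i>0$; moreover the correspondence $[\Gamma_{\tilde f}]\circ[\Gamma_{\tilde\pi}^t]$ has degree $0$ and therefore preserves the $(i,j)$-grading. Hence $\tau$ vanishes identically on $R^if_*W\sO_X$ for $i>0$. Consequently, ``$\eta\circ\tau = c\cdot\mathrm{id}$ on $\bigoplus_i R^if_*W\sO_{X,\Q}$'' is \emph{equivalent} to the vanishing $R^if_*W\sO_{X,\Q}=0$ for $i>0$, which is precisely the content of the theorem. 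There is no cycle-level computation that yields this: $\eta\circ\tau = f^*\circ\tau_\pi\circ\tilde f_*\circ\tilde\pi^*$ passes through $W\sO_Y$, and $Y$ is singular and not an object of $\sC_Y$, so the composite $f^*\circ\tau_\pi$ is not in the correspondence calculus. Your ``Main obstacle'' paragraph acknowledges the difficulty but does not resolve it — and the resolution is not a matter of ``careful tracking of excess-intersection contributions''; the approach as stated cannot produce the vanishing.

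\textbf{What the paper does instead.} The paper never goes through $W\sO_Y$ inside a correspondence; it sidesteps the obstruction by the key mechanism of Proposition~\ref{top-fin-quot-vs-smooth}: for a finite surjective $a:X'\to X$ with $X'$ smooth, $\bigoplus_i R^if_*(W\sO_X\oplus W\omega_X)_\Q$ is identified via $a^*$ with the image of an explicit idempotent $\hat{\sH}(\alpha/Y)$ ($\alpha=[X'\times_X X']$) acting on the relative Hodge--Witt cohomology of the smooth scheme $X'$, and likewise $(W\sO_Y\oplus W\omega_Y)_\Q\cong$ image of $\hat{\sH}(\beta/Y)$ on $b_*(W\sO_{Y'}\oplus W\omega_{Y'})_\Q$ with $b:Y'\to Y$ a finite smooth cover. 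Then a single comparison cycle $\gamma=[X'\times_Y Y']$ and its transpose are checked to intertwine the projectors up to the error terms killed by Lemmas~\ref{lemma-van1}, \ref{lemma-van2}. The vanishing $R^if_*W\sO_{X,\Q}=0$ for $i>0$ is then forced by degree reasons: $b_*W\sO_{Y'}$ sits in cohomological degree $0$ because $b$ is finite. In other words, the paper never proves ``$\eta\circ\tau$ is a scalar''; it proves that the two sides of the desired isomorphism are isomorphic direct summands of degree-$0$-concentrated smooth cohomology. Your first reduction (to $Y$ a finite quotient, via the universal homeomorphism) is correct, and your second reduction (to $X$ smooth) is valid but not taken in the paper — the paper handles singular $X$ directly through the projector $\alpha$. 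If you want to salvage your approach, you should replace the claim ``$\eta\circ\tau=c\cdot\mathrm{id}$'' by the direct-summand comparison of Proposition~\ref{top-fin-quot-vs-smooth}, and replace the incorrect cycle identities by equalities modulo exceptional loci, followed by the vanishing lemmas.
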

If $X$ and $Y$ are smooth and $f$ is birational, this is a direct consequence of Theorem \ref{thm1} and the vanishing Lemmas \ref{lemma-van1} and \ref{lemma-van2}. Indeed, in $\CH(X\times_Y X)$ the diagonal $\Delta_X\subset X\times_Y X$ can be written as $[\Gamma^t_f]\circ [\Gamma_f]+ E$, where $E$ is a cycle whose projections  
to $X$ have at least codimension $\ge 1$. Thus $E$ acts as zero on the $W\sO$ part and hence $[\Gamma^t_f]\circ [\Gamma_f]$ acts as the identity on $R^if_*W\sO_{X,\Q}$,
but it factors over $0$ for $i>0$; this will prove the theorem in case $X$ and $Y$ is smooth. 
Because the Frobenius is invertible when working with $\Q$-coefficients we can neglect all
purely inseparable phenomena. Therefore the main point in the general case is to realize the higher direct images of $R^if_*W\sO_{X,\Q}$  (and also for $Y$) as 
certain direct factors in the relative cohomology of smooth schemes, which is possible since $X$ and $Y$ are topological finite quotients.

\subsection{} Before explaining our definition of Witt-rational singularities we need to introduce some  notations.
If $X$ is a $k$-scheme of pure dimension $d$ and with structure map $\pi: X\to \Spec k$, then we define the Witt canonical sheaf of length $n$ by
\[W_n\omega_X:= H^{-d}(W_n(\pi)^! W_n).\]
It follows from the duality theory developed by Ekedahl in \cite{EI}, that these sheaves form a projective system $W_\bullet\omega_X$ with Frobenius, Verschiebung and Cartier morphisms.
Further properties are (the first two are due to Ekedahl, see \cite{EI} and Proposition \ref{properties-Witt-canonical})
\begin{enumerate}
 \item If $X$ is smooth, then $W_\bullet\omega_X\cong W_\bullet\Omega^d_X$.
 \item If $X$ is Cohen-Macaulay, then $W_n\omega_X[d]\cong W_n(\pi)^!W_n$, in particular $W_n\omega_X$ is dualizing.
 %% \item If $X$ is normal and $j: U\inj X $ the inclusion of the smooth locus in $X$, then $W_\bullet\omega_X\cong j_* W_\bullet\Omega^d_U$.
 %% \item  There is an exact sequence of $W_n\sO_X$-modules
 %%         \[0\to W_{n-1}\omega_X\to W_n\omega_X\to F_{X*}^{n-1}\omega_X, \]\
 %%        which is surjective on the right if $X$ is CM.
\item If $f: X\to Y$ is a proper morphism between $k$-schemes of the same pure dimension, then there is a $W_\bullet\sO_Y$-linear morphism 
        \[f_*: f_*W_\bullet\omega_X\to W_\bullet\omega_Y,\]
       which is compatible with composition and localization.
\end{enumerate}
We define $W\omega_X:=\varprojlim W_\bullet\omega_X$. 
%We remark that this sheaf cannot be viewed as a dualizing sheaf in any reasonable sense.

We say that an integral $k$-scheme $S$ has {\em Witt-rational singularities} (Definition~\ref{definition-Witt-rational}) if for any quasi-resolution 
 $f: X\to S$ the following conditions are satisfied:
\begin{enumerate}
 \item  $f^*: W\sO_{S,\Q}\xr{\simeq} f_*W\sO_{X,\Q}$ is an isomorphism,
 \item $R^if_* W\sO_{X,\Q}=0$, for all $i\ge 1$,
 \item $R^if_* W\omega_{X,\Q}=0$, for all $i\ge 1$.
\end{enumerate}
In case only the first two properties are satisfied we say that $S$ has {\em $W\sO$-rational singularities}. 
Condition (1) is satisfied provided that $S$ is normal.

Our main example for varieties with Witt-rational singularities are topologically finite quotients,
because the vanishing property in Theorem \ref{thm2} also holds  for $W\omega$. 
\begin{introthm}[Corollary~\ref{finite-quotients-are Witt-rational}]\label{thm3}
Topological finite quotient have Witt-rational singularities.
\end{introthm}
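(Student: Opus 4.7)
The plan is to verify the three conditions of Definition~\ref{definition-Witt-rational} for any quasi-resolution $f\colon X\to S$ of a topological finite quotient $S$. Since $S$ is by definition normal, condition (1) is immediate from the remark made in the excerpt. Condition (2) is a direct restatement of Theorem~\ref{thm2}: the isomorphism $Rf_*W\sO_{X,\Q}\cong W\sO_{S,\Q}$ in particular forces the higher direct images to vanish. Hence the entire content of the corollary is condition (3), namely the vanishing $R^if_*W\omega_{X,\Q}=0$ for $i\ge 1$.

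To establish (3) I would prove an exact analogue of Theorem~\ref{thm2} with $W\sO$ replaced by $W\omega$, using precisely the same two-step strategy outlined by the authors after Theorem~\ref{thm2}. First, assume the smooth birational case: $X$ and $Y$ smooth, $f$ projective and birational. Then $W\omega_X=W\Omega^d_X$ is the top piece of $\hat{\sH}(X/Y)$, so Theorem~\ref{thm1} supplies an action of $\CH(X\times_Y X)$ on $R^if_*W\omega_{X,\Q}$. Decompose the diagonal as $\Delta_X=[\Gamma_f^t]\circ[\Gamma_f]+E$ in $\CH(X\times_Y X)$, where $E$ is supported on cycles whose projection to $X$ has codimension $\ge 1$. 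The vanishing Lemmas~\ref{lemma-van1} and~\ref{lemma-van2} then force $E$ to act as zero on the top Hodge-Witt piece (just as they do on the $W\sO$-piece in the proof sketch of Theorem~\ref{thm2}), so $[\Gamma_f^t]\circ[\Gamma_f]$ acts as the identity on $R^if_*W\omega_{X,\Q}$. By Theorem~\ref{thm1} this composition is the composite of Gros' pushforward $f_*$ and the pullback $f^*$, hence factors through $R^if_*W\omega_{Y,\Q}\otimes_{W\sO_Y}f_*W\sO_{X,\Q}$; for $i\ge 1$ we may apply induction on dimension, or simply invoke the fact that the composite factors through objects whose $R^i$ vanishes, to conclude $R^if_*W\omega_{X,\Q}=0$.

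For the general case, $X$ and $Y$ are only topological finite quotients, and the main obstacle is to compare their Witt-canonical cohomology to smooth models. I would use that a purely inseparable, finite, surjective map $u\colon X\to X'$ induces an isomorphism on $W$-cohomology with $\Q$-coefficients, since Frobenius becomes invertible after inverting $p$; this reduces us to the case where $X$ and $Y$ are genuine finite quotients $\tilde X/G$ and $\tilde Y/H$ of smooth schemes. For a finite surjective $\pi\colon\tilde Y\to Y$ of degree $n$, the trace map $\pi_*W\omega_{\tilde Y}\to W\omega_Y$ composed with $\pi^*$ equals multiplication by $n$ and is therefore invertible with $\Q$-coefficients, exhibiting $R^if_*W\omega_{X,\Q}$ as a direct summand of the analogous object for a smooth cover where the first step applies. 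The hard part here is to verify that all the required trace and correspondence maps on $W\omega$ (for possibly non-smooth targets) are compatible with the $W\sO$-actions and with composition. This is exactly the content of the properties of $W_\bullet\omega$ listed in the excerpt together with Theorem~\ref{thm1}, and packaging these compatibilities cleanly for topological finite quotients is the principal technical hurdle; once that is in place, (3) follows, and the three conditions together yield the corollary.
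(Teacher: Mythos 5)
Your proposal is correct in approach and essentially matches what the paper does. A few remarks are in order.

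The main observation you have reconstructed — that the substance of the corollary is the vanishing of $R^if_*W\omega_{X,\Q}$ — is right, given only the introduction's statement of Theorem~\ref{thm2}. In the body of the paper, however, Theorem~\ref{vanishing-HDI} is stated more completely: it asserts \emph{both} that $f^*\colon W\sO_{Y,\Q}\to Rf_*W\sO_{X,\Q}$ is an isomorphism \emph{and} that $f_*\colon Rf_*W\omega_{X,\Q}\to W\omega_{Y,\Q}$ is an isomorphism. Given that version, Corollary~\ref{finite-quotients-are Witt-rational} is a one-line consequence of the equivalence in Proposition~\ref{properties-Witt-rational-sing}. So the work you are proposing to do is exactly what the paper's proof of Theorem~\ref{vanishing-HDI} does; the paper simply proves the $W\sO$- and $W\omega$-vanishing simultaneously in a single argument, treating $W\sO_X\oplus W\omega_X$ as a unit and invoking Lemma~\ref{lemma-van1} and Lemma~\ref{lemma-van2} in tandem rather than re-running the argument twice.

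One spot where your description is imprecise: the reason the composite $[\Gamma_f^t]\circ[\Gamma_f]$ kills $R^if_*W\omega_X$ for $i\geq 1$ is not a factorization through $R^if_*W\omega_{Y,\Q}\otimes_{W\sO_Y}f_*W\sO_{X,\Q}$ (this expression is not what one gets), but simply that the push-forward $f_*$ factors through $H^i(W\Omega_Y[0])$ — a complex concentrated in degree $0$ — which is zero for $i\geq 1$. So $f^*\circ f_*$ annihilates the $i\geq 1$ pieces outright; combined with $E$ being torsion on the $W\omega$-piece (Lemma~\ref{lemma-van2}), the identity is torsion there, which gives the vanishing. No induction on dimension is needed.

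Your reduction steps for the general case — inverting $p$ to dispose of universal homeomorphisms, and using finite trace/pullback to realize the cohomology of a finite quotient as a direct summand of the cohomology of a smooth cover — are exactly what Lemma~\ref{pb-pf-univ-homeo} and Proposition~\ref{top-fin-quot-vs-smooth} provide. The paper then implements the diagonal-decomposition idea indirectly by working with the explicit fiber-product cycles $\alpha=[X'\times_X X']$, $\beta=[Y'\times_Y Y']$, $\gamma=[X'\times_Y Y']$ on smooth covers $X'\to X$, $Y'\to Y$, proving relations~\eqref{vanishing-HDI1}--\eqref{vanishing-HDI4} modulo classes supported in higher codimension and then applying the vanishing lemmas. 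You correctly identified this as the ``principal technical hurdle'' and deferred the details; the paper resolves it precisely via Proposition~\ref{top-fin-quot-vs-smooth} and the correspondence formalism of Theorem~\ref{thm1}.
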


A particular case are normalizations of smooth schemes $X$ in a purely inseparable 
finite field extension of the function field of $X$. More generally, if 
$u:Y\xr{} X$ is a universal homeomorphism between normal schemes then $Y$ has Witt-rational 
singularities if and only if $X$ has Witt-rational singularities (Proposition~\ref{proposition-Witt-rational-universal-homeomorphism}).

Every scheme with rational singularities has Witt-rational singularities, but
varieties with Witt-rational singularities form a broader class. For example, 
finite quotients may fail to be Cohen-Macaulay and thus are in general not rational singularities.

A different definition of Witt-rational singularities has been introduced by 
Blickle and Esnault as follows. 
Let $S$ be an integral $k$-scheme and $f:X\xr{} S$ a generically \'etale alteration with $X$ a smooth $k$-scheme.
We say that $S$ has \emph{BE-Witt-rational singularities} if the 
natural morphism 
$$
W\OO_{S,\Q} \xr{} Rf_*W\OO_{X,\Q}
$$
admits a splitting in the derived category of sheaves of abelian groups on $X$. 
A scheme with Witt-rational singularities in our sense has BE-Witt-rational singularities (Proposition~\ref{comparison}). 
We conjecture that the converse is also true.

The existence of quasi-resolutions implies the following corollary.
\begin{introcorollary}[Corollary~\ref{independence}]\label{cor-1}
 Let $S$ be a $k$-scheme and $X$ and $Y$ two integral $S$-schemes. Suppose that 
there exists a commutative diagram
\[\xymatrix@-1pc{            &  Z\ar[dl]_{\pi_X}\ar[dr]^{\pi_Y}  &  \\
               X\ar[dr]_{f} &                                   &  Y\ar[dl]^{g}  \\
                        &   S,                               &  }\]
with  $\pi_X$ and $\pi_Y$ quasi-resolutions. Suppose that $X,Y$ have Witt-rational singularities.
Then we get induced isomorphisms in $D^b(W\OO_S)$
%\eq{intro-independence1}{ Rf_*W\sO_{X,\Q}\cong Rg_*W\sO_{Y,\Q}, \quad Rf_*W\omega_{X,\Q}\cong Rg_*W\omega_{Y,\Q}.}
\begin{equation*} 
\tag{1}\label{intro-independence1} Rf_*W\sO_{X,\Q}\cong Rg_*W\sO_{Y,\Q}, \quad Rf_*W\omega_{X,\Q}\cong Rg_*W\omega_{Y,\Q}.\end{equation*}
The isomorphisms are compatible with the action of the Frobenius and the Verschiebung. 
\end{introcorollary}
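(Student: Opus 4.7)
The idea is to cancel out both quasi-resolutions by using the defining properties of Witt-rational singularities, exploit the commutativity $f\circ\pi_X=g\circ\pi_Y$ on $Z$, and then push everything back down.

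First, since $X$ has Witt-rational singularities and $\pi_X:Z\to X$ is a quasi-resolution, conditions (1) and (2) of the definition combine to give an isomorphism
\[
R\pi_{X*}W\sO_{Z,\Q}\;\cong\; W\sO_{X,\Q}
\]
induced by the pull-back $\pi_X^*$, and the analogous isomorphism holds for $\pi_Y:Z\to Y$. For the Witt canonical sheaf, condition (3) supplies the vanishing $R^i\pi_{X*}W\omega_{Z,\Q}=0$ for $i\ge 1$, and property (3) of $W_\bullet\omega$ supplies the trace morphism $\pi_{X*}:\pi_{X*}W\omega_Z\to W\omega_X$. One then verifies that this trace becomes an isomorphism after tensoring with $\Q$, giving
\[
R\pi_{X*}W\omega_{Z,\Q}\;\cong\; W\omega_{X,\Q},
\]
and similarly for $\pi_Y$. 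The reason is that $\pi_X$ is surjective, proper, generically finite and purely inseparable of some generic degree $p^r$; composing the trace with a natural Frobenius-twisted pull-back yields multiplication by $p^r$ (up to a unit), which is invertible on $W\sO_{X,\Q}$.

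Second, apply $Rf_*$ and $Rg_*$ to the respective isomorphisms and use $f\circ\pi_X=g\circ\pi_Y$:
\[
Rf_*W\sO_{X,\Q}\;\cong\;Rf_*R\pi_{X*}W\sO_{Z,\Q}\;=\;R(g\pi_Y)_*W\sO_{Z,\Q}\;\cong\;Rg_*W\sO_{Y,\Q},
\]
and the same chain with $W\omega_{,\Q}$ in place of $W\sO_{,\Q}$ produces the second isomorphism. The compatibility with Frobenius and Verschiebung then follows because the structural maps $\pi_X^*$ and $\pi_{X*}$ are $F$- and $V$-equivariant by construction of the projective systems $W_\bullet\sO$ and $W_\bullet\omega$, so the whole composite isomorphism respects $F$ and $V$.

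\textbf{Main obstacle.} The only step requiring real work is the identification $\pi_{X*}W\omega_{Z,\Q}\cong W\omega_{X,\Q}$, i.e.~showing that the trace morphism is an isomorphism in bottom degree for a surjective, projective, generically finite, purely inseparable morphism between topological finite quotients. Given the dualizing-sheaf formalism of Ekedahl recalled in the introduction (properties (1)--(3) of $W_\bullet\omega$), this ultimately reduces to a computation at the generic point, where $\pi_X$ is finite purely inseparable and both $W\omega_{X,\Q}$ and $\pi_{X*}W\omega_{Z,\Q}$ are free of rank one over the common $W(k(X))_\Q=W(k(Z))_\Q$. Everything else is a formal manipulation with derived pushforwards.
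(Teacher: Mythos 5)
Your overall strategy — cancel the quasi-resolutions using the defining properties of Witt-rational singularities, use $f\pi_X=g\pi_Y$ to glue, then push forward — is exactly the paper's approach, and your treatment of the $W\sO$-part and the compatibility with $F,V$ is correct. However, there is a genuine gap in the step you yourself flag as the main obstacle, namely the claim that the trace $\pi_{X*}W\omega_{Z,\Q}\to W\omega_{X,\Q}$ is an isomorphism. Conditions (1)--(3) of Definition \ref{definition-Witt-rational} only give the higher vanishing $R^i\pi_{X*}W\omega_{Z,\Q}=0$ for $i\ge 1$; they say nothing about $H^0$. The argument you offer — invoke a ``Frobenius-twisted pull-back'' and check at the generic point — does not work: since $\pi_X$ is projective but not finite, no such pull-back for $W\omega$ exists globally (the pull-back of Definition \ref{pb-pf-for-Witt-canonical} is only defined for \emph{finite} surjective morphisms between normal integral schemes), and a generic-point computation only shows the trace is an isomorphism away from a proper closed subset. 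To conclude in the quotient category $\widehat{\drw}_{X,\Q}$ one needs kernel and cokernel of $\pi_{X*}W_n\omega_Z\to W_n\omega_X$ to be killed by an integer \emph{uniformly in $n$}, which is a much stronger statement than generic vanishing.

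The paper closes this gap with Lemma \ref{pf-of-Womega} (used inside Proposition \ref{properties-Witt-rational-sing}, part (4)): one applies $W_n$-level Grothendieck duality $D_{W_nY}\circ Rf_*\cong Rf_*\circ D_{W_nX}$ to the cone $C_n$ of $W_n\sO_Y\to R(W_n f)_*W_n\sO_X$. The hypothesis that $f^*:W\sO_{Y,\Q}\to Rf_*W\sO_{X,\Q}$ is an isomorphism forces all $H^i(C_n)$ to be killed by a single integer $M$, and then a bounded spectral sequence shows the kernel and cokernel of the trace in degree $-d$ are killed by a fixed power $M^r$ independent of $n$. You should either cite Lemma \ref{pf-of-Womega} / Proposition \ref{properties-Witt-rational-sing}(4) at this point, or reproduce the duality argument; the generic-point reasoning as written is not sufficient.
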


If $f:X\xr{} S$ and $g:Y\xr{} S$ are quasi-resolutions then the isomorphisms 
in \eqref{intro-independence1} are independent of the choice of $Z$ (Corollary~\ref{independence3}). In this 
way we obtain natural complexes 
\[\mc{WS}_{0,S}:=Rf_*W\sO_{X,\Q}, \quad \mc{WS}_{\dim(S), S}:= Rf_*W\omega_{S,\Q},\]
(Definition~\ref{canonical-complex-for-singularities}).
%%The cohomology  groups of these complexes also admit a description 
%%via alterations.

%% If the morphisms $f:X\xr{} S,\; g:Y\xr{} S,$ are quasi-resolutions
%% then the isomorphism of Corollary \ref{cor-1} is independent of the choice 
%% of $Z$. This yields canonical complexes in $D^b(W\OO_S)$. The cohomology 
%% groups of these complexes also admit a description via alterations.

\subsection{} 
By using the work of Berthelot-Bloch-Esnault Corollary \ref{cor-1} yields 
congruences for the number of rational points over finite fields. 

\begin{introcorollary}[Corollary~\ref{points}]\label{intro-points} 
 Let $S=\Spec k$ be a finite field. Let $X$ and $Y$ be as in %the assumptions of 
Corollary \ref{cor-1}, and suppose that $X,Y$ are proper. 
Then for any finite field extension $k'$ of $k$ we have
\[|X(k')|\equiv |Y(k')|\quad \text{mod } |k'|.\]
\end{introcorollary}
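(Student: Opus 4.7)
The plan is to combine Corollary \ref{cor-1} with the congruence formula of Berthelot--Bloch--Esnault for point counts on proper (possibly singular) schemes over finite fields. Set $q=|k|$ and fix a finite extension $k'/k$, so $|k'|=q^n$ with $n=[k':k]$.

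First, I would apply Corollary \ref{cor-1}: since $X$ and $Y$ have Witt-rational singularities and are dominated by the common quasi-resolution source $Z$, we obtain a Frobenius-equivariant isomorphism
\[
Rf_*W\sO_{X,\Q}\;\cong\;Rg_*W\sO_{Y,\Q}\qquad\text{in }D^b(W\sO_S).
\]
Because $S=\Spec k$ is a point, applying $R\Gamma(S,-)$ transforms this into a Frobenius-equivariant isomorphism $R\Gamma(X,W\sO_{X,\Q})\cong R\Gamma(Y,W\sO_{Y,\Q})$. Taking cohomology and using properness of $X$ and $Y$ to guarantee finite-dimensionality, I then get $F$-equivariant isomorphisms of finite-dimensional $K_0$-vector spaces
\[
H^i(X,W\sO_{X,\Q})\;\cong\;H^i(Y,W\sO_{Y,\Q})\qquad(i\ge 0).
\]

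Next, I would invoke the theorem of Berthelot--Bloch--Esnault which asserts that for any proper $k$-scheme $T$,
\[
|T(k')|\;\equiv\;\sum_{i\ge 0}(-1)^i\,\Tr\!\bigl(F^n\,\big|\,H^i(T,W\sO_{T,\Q})\bigr)\pmod{|k'|}.
\]
Applied to $T=X$ and $T=Y$, the two right-hand sides agree by the $F$-equivariance established in the previous step, so the two left-hand sides agree modulo $|k'|$, which is the desired congruence.

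The argument is essentially formal once Corollary \ref{cor-1} and the Berthelot--Bloch--Esnault formula are in place; there is no serious obstacle. The only points requiring attention are, first, to observe that the isomorphism in Corollary \ref{cor-1} is genuinely compatible with the Frobenius action (as explicitly asserted in the statement of that corollary), and second, that the Berthelot--Bloch--Esnault congruence is available for arbitrary proper schemes over a finite field, with no smoothness, normality, or Cohen--Macaulay assumption, which is the precise generality needed here since $X$ and $Y$ may be singular.
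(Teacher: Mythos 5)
Your proof is correct and follows essentially the same route as the paper's: specialize Corollary \ref{cor-1} to $S=\Spec k$ to get Frobenius-equivariant isomorphisms $H^i(X,W\sO_{X,\Q})\cong H^i(Y,W\sO_{Y,\Q})$, then invoke the Berthelot--Bloch--Esnault congruence formula (the paper cites \cite[Thm~1.1 and Cor.~1.3]{BBE}, passing through the slope $<1$ part of rigid cohomology, but this is just a rephrasing of the trace formula you stated directly in terms of $W\sO_{T,\Q}$).
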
 
If $X,Y$ are smooth this is a theorem due to Ekedahl \cite{E83}.

For a normal integral scheme $S$ with an isolated singularity $s\in S$ we can give 
a criterion for the $W\OO$-rationality of $S$, provided that a resolution
of singularities $f:X\xr{} S$ exists such that $f:f^{-1}(S\backslash \{s\})\xr{} S\backslash \{s\}$
is an isomorphism; we denote by $E:=f^{-1}(s)$ the fibre over $s$.
Then $S$ has $W\OO$-rational singularities if and only if
\begin{equation*}\label{eq-van}\tag{2}
H^i(E,W\OO_{E,\Q})=0 \quad \text{for all $i>0$,}
\end{equation*}
(Corollary~\ref{corollary-direct-image-isolated-singularity}).
This implies that a normal surface has $W\OO$-rational singularities if and
only if the exceptional divisor is a tree of smooth rational curves. 

For cones $C$ of smooth projective schemes $X$, we obtain that $C$ has 
$W\OO$-rational singularities if and only if $H^i(X,W\OO_{X,\Q})=0$ for $i>0$.
We can show that $C$ has Witt-rational singularities provided that Kodaira vanishing 
holds for $X$ (Section \ref{section-on-cones}). We expect that this assumption can be dropped; in general, 
a Grauert-Riemenschneider type vanishing theorem for $W\omega$ should imply 
that $W\OO$-rationality is equivalent to Witt-rationality. 
 
Over a finite field $k$ we use a weight argument to refine the criterion \eqref{eq-van}  if $E$ is a strict normal crossing divisor. 
Let $E_i$ be the irreducible components 
of $E$, via the restriction maps we obtain for all $t\geq 0$ a complex $C_t(E)$:
$$
\underset{\deg=0}{\underbrace{\bigoplus_{\imath_0} H^t(E_{\imath_0},W\OO_{E_{\imath_0},\Q})}}\xr{} \bigoplus_{\imath_0<\imath_1}
H^t(E_{\imath_0}\cap E_{\imath_1},W\OO_{E_{\imath_0}\cap E_{\imath_1},\Q})\xr{} \dots
$$

\begin{introthm}[Theorem~\ref{thm-criterion-WO-rational}]\label{thm-intro-direct-image-isolated-singularity}
Let $k$ be a finite field. In the above situation, $S$ has $W\OO$-rational singularities if and only 
if 
$$
H^i(C_t(E))=0 \quad \text{for all $(i,t)\neq (0,0)$.}
$$
\end{introthm}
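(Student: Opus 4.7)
The plan is to combine the criterion from Corollary~\ref{corollary-direct-image-isolated-singularity}---which reduces $W\sO$-rationality of $S$ to the vanishing $H^i(E,W\sO_{E,\Q})=0$ for $i>0$---with the spectral sequence associated to the stratification of the SNC divisor $E$ by intersections of its components, together with the weight structure coming from the finite base field $k$.

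First I would produce, on $E=\bigcup E_i$, the Čech/Mayer--Vietoris resolution
$$0\to W\sO_{E,\Q}\to \bigoplus_{\imath_0} W\sO_{E_{\imath_0},\Q}\to \bigoplus_{\imath_0<\imath_1}W\sO_{E_{\imath_0}\cap E_{\imath_1},\Q}\to\cdots,$$
with sheaves on strata pushed forward to $E$. For the structure sheaf this is the standard exact sequence attached to an SNC divisor; the Witt version follows by induction on the Witt length via $0\to\sO\to W_{n+1}\sO\to W_n\sO\to 0$ on each stratum, with any bounded torsion disappearing upon tensoring with $\Q$. The associated hypercohomology spectral sequence is
$$E_1^{p,q}=\bigoplus_{\imath_0<\cdots<\imath_p}H^q(E_{\imath_0\cdots\imath_p},W\sO_{\bullet,\Q})\;\Longrightarrow\;H^{p+q}(E,W\sO_{E,\Q}),$$
whose $d_1$ is the alternating-sum restriction map defining $C_t(E)$, so $E_2^{p,q}=H^p(C_q(E))$. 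The ``if'' direction is immediate: if $H^p(C_q(E))=0$ for $(p,q)\neq(0,0)$, then $E_2$ is concentrated in bidegree $(0,0)$, the spectral sequence degenerates, and the abutment vanishes in positive degrees.

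For the converse I would run a weight argument, which is the step that uses the hypothesis that $k$ is finite. Each stratum $E_{\imath_0\cdots\imath_p}$ is smooth and projective over $k$ (being closed in the proper fibre $E$), and by the slope spectral sequence $H^q(E_{\imath_0\cdots\imath_p},W\sO_{\bullet,\Q})$ is a Frobenius-stable direct summand of $H^q_{\rm crys}(E_{\imath_0\cdots\imath_p}/W)\otimes K_0$. Combining Katz--Messing with Deligne's purity, Frobenius acts on this summand with eigenvalues of archimedean absolute value $|k|^{q/2}$, so it is pure of weight $q$. Hence each $E_1^{p,q}$, and therefore every Frobenius-equivariant subquotient $E_r^{p,q}$, is pure of weight $q$. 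The differential $d_r\colon E_r^{p,q}\to E_r^{p+r,q-r+1}$ for $r\geq 2$ is a Frobenius-equivariant map between pure pieces of distinct weights $q$ and $q-r+1$, and so vanishes. Thus $E_\infty^{p,q}=E_2^{p,q}=H^p(C_q(E))$; since vanishing of the abutment $H^{p+q}(E,W\sO_{E,\Q})$ for $p+q>0$ forces every associated graded piece to vanish, we obtain $H^p(C_q(E))=0$ for all $(p,q)\neq(0,0)$.

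The main obstacle I anticipate is the construction and exactness of the Mayer--Vietoris resolution for $W\sO_{E,\Q}$ on the non-smooth scheme $E$: Witt vector sheaves are subtle on reducible singular schemes, and one must control torsion uniformly in the Witt length before passing to the inverse limit and inverting $p$. By contrast, the weight input for the converse direction is comparatively standard once the slope spectral sequence is used to realise $W\sO$-cohomology of a smooth projective variety as a direct summand of its crystalline cohomology.
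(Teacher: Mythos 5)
Your proposal follows the same route as the paper: Corollary~\ref{corollary-direct-image-isolated-singularity} reduces $W\sO$-rationality to the vanishing of $H^i(E,W\sO_{E,\Q})$ for $i>0$; the Mayer--Vietoris resolution of $W\sO_{E,\Q}$ (this is exactly \cite[Cor.~2.3]{BBE}, cited by the paper in Lemma~\ref{lemma-sectral-sequence-E_i}, so the obstacle you worry about is already handled in the literature) gives the spectral sequence with $E_2^{s,t}=H^s(C_t(E))$; and the Katz--Messing plus Riemann-hypothesis weight argument over the finite base forces $E_2$-degeneration, which is Proposition~\ref{proposition-spectral-sequence-degenerates}. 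The argument is correct and matches the paper's proof in all essential respects.
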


%We need the assumption that $k$ is a finite because we use the Weil conjectures.  
Theorem \ref{thm-intro-direct-image-isolated-singularity} is inspired by 
the results of Kerz-Saito \cite[Theorem~8.2]{KS} on the weight homology 
of the exceptional divisor.

For morphisms with generically smooth fibre with trivial Chow group of zero cycles we can 
show the following vanishing theorem. 

\begin{introthm}[Theorem \ref{thm-rational-connected-fibres}] \label{thm-intro-rational-connected-fibres}
Let $X$ be an integral scheme with Witt-rational singularities.
Let $f:X\xr{} Y$ be a projective morphism to an integral, normal and quasi-projective scheme $Y$.  
We denote by $\eta$ the generic point of $Y$, and $X_{\eta}$ denotes the generic
fibre of $f$. Suppose that $X_{\eta}$ is smooth and for every field extension 
$L\supset k(\eta)$ the degree map 
$$
\CH_0(X_{\eta}\times_{k(\eta)}L)\otimes_{\Z} \Q\xr{} \Q
$$
is an isomorphism.
Then, for all $i>0$, 
$$
R^if_*W\OO_{X,\Q}\cong H^i(\mc{WS}_{0,Y}) , \quad  R^if_*W\omega_{X,\Q}\cong H^i(\mc{WS}_{\dim(Y),Y}). 
$$
In particular, if $Y$ has Witt-rational singularities then 
$$
R^if_*W\OO_{X,\Q}=0 , \quad  R^if_*W\omega_{X,\Q}=0, \quad \text{for all $i>0$.} 
$$
\end{introthm}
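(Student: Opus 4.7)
The plan is to reduce to an isomorphism of relative cohomology sheaves between smooth $k$-schemes over $Y$, and then to produce mutually inverse correspondences via Theorem~\ref{thm1}, using a Bloch--Srinivas decomposition of the diagonal on the generic fibre.

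\emph{Reduction.} Choose a quasi-resolution $g\colon Y'\to Y$ with $Y'$ smooth (by de Jong) and a smooth quasi-resolution $\pi\colon Z\to X\times_Y Y'$; set $q:=\mathrm{pr}_1\circ\pi\colon Z\to X$ and $p:=\mathrm{pr}_2\circ\pi\colon Z\to Y'$, so that $fq=gp$. Since $g$ is purely inseparable and generically finite, so is $q$, which is therefore a quasi-resolution of $X$. Corollary~\ref{independence} applied to the Witt-rational schemes $X$ and $Z$ (smooth, hence Witt-rational by Theorem~\ref{thm3}) over $Y$, with common quasi-resolution $Z$ itself, yields
\[
Rf_*W\sO_{X,\Q}\cong R(gp)_*W\sO_{Z,\Q},\qquad Rf_*W\omega_{X,\Q}\cong R(gp)_*W\omega_{Z,\Q}.
\]
Thus it suffices to show that $[\Gamma_p]$ and its transpose induce isomorphisms $R(gp)_*W\sO_{Z,\Q}\xr{\sim}Rg_*W\sO_{Y',\Q}$ and $R(gp)_*W\omega_{Z,\Q}\xr{\sim}Rg_*W\omega_{Y',\Q}$ in $D^b(W\sO_Y)$.

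\emph{Decomposition of the diagonal.} Let $\eta,\eta'$ be the generic points of $Y,Y'$; the extension $k(\eta')/k(\eta)$ is purely inseparable. The generic fibre $Z_{\eta'}$ is smooth projective over $k(\eta')$ and admits a proper, generically finite, purely inseparable morphism to $X_\eta\otimes_{k(\eta)}k(\eta')$. Since $\Q$-coefficient Chow groups of zero-cycles are invariant under purely inseparable alterations, the hypothesis $\CH_0(X_\eta\otimes_{k(\eta)}L)_\Q\cong\Q$ propagates to $\CH_0(Z_{\eta'}\otimes_{k(\eta')}L')_\Q\cong\Q$ for every $L'/k(\eta')$. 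Applying the Bloch--Srinivas argument to $L'=k(Z_{\eta'})$ yields, in $\CH^{d}(Z_{\eta'}\times_{k(\eta')}Z_{\eta'})_\Q$, two decompositions of the diagonal
\[
\Delta_{Z_{\eta'}}=[Z_{\eta'}\times\{x\}]+\Gamma^{\sO}_{\eta'}=[\{x\}\times Z_{\eta'}]+\Gamma^{\omega}_{\eta'},
\]
with $x$ a zero-cycle of degree one, and $\Gamma^{\sO}_{\eta'}$ (resp.\ $\Gamma^{\omega}_{\eta'}$) supported on $V^{\sO}\times Z_{\eta'}$ (resp.\ $Z_{\eta'}\times V^{\omega}$), where $V^{\sO},V^{\omega}\subsetneq Z_{\eta'}$ are proper closed.

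\emph{Spreading out and conclusion.} Let $\bar x\subset Z$ be the closure of $x$; the map $\bar x\to Y'$ is generically finite of degree one, and we view $\bar x$ as a correspondence $\tilde\sigma\in\CH(Y'\times_Y Z)_\Q$. A direct computation of compositions of correspondences yields $[\Gamma_p]\circ\tilde\sigma=\Delta_{Y'}$ in $\CH(Y'\times_Y Y')_\Q$. Spreading the decompositions above over $Y'$ and pushing forward into $\CH(Z\times_Y Z)_\Q$ gives
\[
\Delta_Z=\tilde\sigma\circ[\Gamma_p]+\tilde\Gamma^{\sO}+E=[\Gamma_p]^t\circ\tilde\sigma^t+\tilde\Gamma^{\omega}+E',
\]
where $\tilde\Gamma^{\sO}$ (resp.\ $\tilde\Gamma^{\omega}$) has first (resp.\ second) projection to $Z$ of codimension $\ge 1$, and $E,E'$ live over a proper closed $W\subsetneq Y'$ so that (by surjectivity of $p$) both of their projections factor through $Z_W\subsetneq Z$ of codimension $\ge 1$. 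By Theorem~\ref{thm1} combined with the vanishing Lemmas~\ref{lemma-van1}, \ref{lemma-van2}, these error terms act as zero on $R^\bullet(gp)_*W\sO_{Z,\Q}$ and on $R^\bullet(gp)_*W\omega_{Z,\Q}$ respectively. Therefore $\tilde\sigma\circ[\Gamma_p]$ (resp.\ $[\Gamma_p]^t\circ\tilde\sigma^t$) acts as the identity on $W\sO$-cohomology (resp.\ $W\omega$-cohomology), which combined with $[\Gamma_p]\circ\tilde\sigma=\Delta_{Y'}$ yields the desired isomorphism.

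\emph{Main obstacle.} The delicate point is the spreading-out step and the verification that the appropriate projection of each error cycle has codimension $\ge 1$ in $Z$: for the $W\sO$-vanishing one needs codimension $\ge 1$ on the \emph{first} projection, while for $W\omega$ one needs it on the \emph{second}, which forces the use of the two ``dual'' decompositions above and the two corresponding vanishing lemmas. A secondary subtlety is that condition (3) of Witt-rationality only ensures the vanishing of $R^iq_*W\omega_{Z,\Q}$ for $i\ge 1$, so the identification $Rf_*W\omega_{X,\Q}\cong R(gp)_*W\omega_{Z,\Q}$ must be extracted from Corollary~\ref{independence} rather than from the definition directly.
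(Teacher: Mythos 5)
Your plan follows the same general template as the paper's proof: reduce to relative Hodge--Witt cohomology of smooth $k$-schemes over $Y$, decompose the diagonal over the generic point via Bloch--Srinivas, spread out, and eliminate the error terms with the vanishing Lemmas~\ref{lemma-van1} and~\ref{lemma-van2}. There are, however, two genuine problems.

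The first, and more serious, is the opening reduction: you choose a quasi-resolution $g\colon Y'\to Y$ with $Y'$ \emph{smooth}, citing de Jong, and likewise a smooth quasi-resolution $Z$ of $X\times_Y Y'$. De Jong, in the form recorded in Remark~\ref{de-Jong-did-it}, produces a quasi-resolution whose source is a \emph{finite quotient} $X''/G$ --- the auxiliary scheme $X''$ is smooth, but the source of the purely inseparable quasi-resolution is the (generally singular) quotient $X''/G$. A quasi-resolution with smooth source would amount to resolution of singularities, which is exactly what is not available here. Since the correspondence functor $\hat{\sH}(?/S)$ of Theorem~\ref{thm1} is only defined on $k$-\emph{smooth} $S$-schemes, you cannot run a Bloch--Srinivas argument on $Z\times_Y Z$ when $Z$ fails to be smooth. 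This is the crux of the matter, and the paper's workaround is the key idea you are missing: one keeps the smooth cover $X'$ of the finite quotient $X'/G$ and replaces the diagonal by the $G$-averaged correspondence $P=\sum_{g\in G}[\Gamma(g)]$ on $X'\times_{X'/G}X'$. Lemma~\ref{lemma-first-step-rational-connected-fibres} then shows that the projector $\hat{\sH}(P/Y)$ carves $\bigoplus_{i>0}R^if_*(W\OO_{X,\Q}\oplus W\omega_{X,\Q})$ out of $\hat{\sH}(X'/Y)_{\Q}$, and the Bloch--Srinivas decomposition (combined with Proposition~\ref{proposition-characterization-RS}) is applied to $P_{\eta}$ rather than to the diagonal of a smooth resolution.

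Second, your pairing of the two decompositions with the two vanishing lemmas is reversed; your own ``main obstacle'' paragraph makes the mistake explicit by asserting that $W\OO$-vanishing needs small \emph{first} projection and $W\omega$-vanishing small \emph{second} projection. It is the other way round. Lemma~\ref{lemma-van1} (small \emph{second} projection, $r=1$) forces the image of a degree-zero correspondence into $\bigoplus_{p\ge1,q\ge1}$; because such a correspondence preserves the bidegree $(p,q)$, this yields vanishing on the $p=0$ summand $\bigoplus_q R^qW\OO$. Lemma~\ref{lemma-van2} (small \emph{first} projection, $r=1$) only places $\bigoplus_{p=d\text{ or }q=d}$ in the kernel, which covers the $p=d$ summand $\bigoplus_q R^qW\omega$ but \emph{not} the $W\OO$-part in cohomological degree $<d$. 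Accordingly, the paper uses for $W\OO$ the decomposition whose error lies in $X'_\eta\times D_2$ (small second projection) with trivial term $A_1=\overline{\alpha\times_\eta X'_\eta}$, and for $W\omega$ the transposed decomposition. In your notation, $\Gamma^{\sO}$ and $\Gamma^{\omega}$ must trade roles, along with the compositions $\tilde\sigma\circ[\Gamma_p]$ and $[\Gamma_p]^t\circ\tilde\sigma^t$.
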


\subsection{} 
For smooth schemes we can show the following result which takes the torsion 
into account.

\begin{introthm}[Theorem \ref{torison1}] \label{intro-torison1}
 Let $S$ be a $k$-scheme. Let $f: X\to S$ and $g: Y\to S$ be two $S$-schemes which are integral and smooth over $k$ and have dimension $N$.
Assume $X$ and $Y$ are properly birational over $S$, i.e. there exists a closed integral subscheme $Z\subset X\times_S Y$, such that the projections
  $Z\to X$ and $Z\to Y$ are proper and birational.
There are isomorphisms in $D^b(S,W(k))$:
\[Rf_*W\OO_X\cong Rg_*W\OO_Y,\quad Rf_*W\Omega_X^N\cong Rg_*W\Omega^N_Y.\]
Taking cohomology we obtain isomorphisms of $W\sO_S$-modules which are compatible with Frobenius and Verschiebung: 
\[R^if_*W\sO_X\cong R^ig_*W\sO_Y, \quad R^i f_*W\Omega^N_X\cong R^i g_* W\Omega^N_Y,\quad \text{for all }i\ge 0.\] 
\end{introthm}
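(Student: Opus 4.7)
The plan is to use the cycle $Z\subset X\times_S Y$ and its transpose $Z^t\subset Y\times_S X$ as proper correspondences over $S$, and to realize the desired isomorphisms via the functor $\hat{\sH}(?/S)$ of Theorem~\ref{thm1}. Denote the two projections by $p_X:Z\to X$ and $p_Y:Z\to Y$. Since $p_Y$ is proper, $[Z]\in\Hom_{\mc{C}_S}(X/S,Y/S)$, and since $p_X$ is proper, $[Z^t]\in\Hom_{\mc{C}_S}(Y/S,X/S)$. Setting $\alpha:=\hat{\sH}([Z]/S)$ and $\beta:=\hat{\sH}([Z^t]/S)$, Theorem~\ref{thm1} produces $W\sO_S$-linear maps between $\hat{\sH}(X/S)$ and $\hat{\sH}(Y/S)$ that are automatically compatible with Frobenius, Verschiebung, and the differential.

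Next I would compute the compositions in $\mc{C}_S$. The composition $[Z^t]\circ[Z]\in\CH(X\times_S X)$ is defined via Fulton's refined intersection on $X\times_S Y\times_S X$. Birationality of $p_X$ gives a dense open $U\subset X$, with $D_X:=X\setminus U$ of codimension $\geq 1$, on which $p_X$ is an isomorphism. Restricting the composition to $U\times_S U$ reduces to a transverse situation whose pushforward is the diagonal, yielding a decomposition
\[
[Z^t]\circ[Z]\;=\;[\Delta_X]+E_X \quad\text{in }\CH(X\times_S X),
\]
where every component of $|E_X|$ maps into $D_X$ under at least one of the two projections $X\times_S X\to X$. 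By Lemmas~\ref{lemma-van1} and~\ref{lemma-van2}, such a cycle $E_X$ acts as zero on $R^if_*W\sO_X$ and on $R^if_*W\Omega^N_X$ for every $i\geq 0$, so $\beta\circ\alpha=\id$ on the $W\sO$- and $W\Omega^N$-summands of $\hat{\sH}(X/S)$. A symmetric analysis gives $[Z]\circ[Z^t]=[\Delta_Y]+E_Y$ with $E_Y$ acting trivially as well, so $\alpha\circ\beta=\id$ on the corresponding summands of $\hat{\sH}(Y/S)$. This already produces the isomorphisms $R^if_*W\sO_X\cong R^ig_*W\sO_Y$ and $R^if_*W\Omega^N_X\cong R^ig_*W\Omega^N_Y$ together with their $F$- and $V$-compatibility.

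The main obstacle is upgrading these cohomological identifications to isomorphisms in $D^b(S,W(k))$. For this I would refine the construction behind Theorem~\ref{thm1} to produce $\alpha$ and $\beta$ as honest morphisms $Rf_*W\sO_X\to Rg_*W\sO_Y$ and $Rf_*W\Omega^N_X\to Rg_*W\Omega^N_Y$ in the derived category. The ingredients entering the correspondence action -- flat pullback via the projections from $X\times_S Y\times_S X$, cup product with the refined cycle class of $Z$, and proper pushforward through Ekedahl--Gros duality -- are all available at the derived level, so such a lift should be attainable. Once $\alpha$ and $\beta$ are defined in $D^b(S,W(k))$, the identities $\beta\alpha=\id$ and $\alpha\beta=\id$ established on cohomology force $\alpha$ to be a quasi-isomorphism (its mapping cone is acyclic), and $F$- and $V$-compatibility is inherited directly from Theorem~\ref{thm1}.
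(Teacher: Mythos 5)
Your plan cannot work as written because Lemmas~\ref{lemma-van1} and~\ref{lemma-van2} are \emph{not} exact vanishing statements: each one only produces a natural number $N\geq 1$ for which $N$ times the relevant piece of the image (resp.\ kernel) vanishes. So after decomposing $[Z^t]\circ[Z]=[\Delta_X]+E_X$, the most you can extract from those lemmas is that the error term $E_X$ acts by $N$-torsion on $R^if_*W\sO_X$ and $R^if_*W\Omega^N_X$, not that it acts by zero. That gives $\beta\circ\alpha=\id$ only after inverting $N$, i.e.\ a statement modulo torsion, whereas the whole point of Theorem~\ref{intro-torison1} is the integral (torsion-aware) result. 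The paper itself flags this: the remark after Lemma~\ref{lemma-van2} says that one would need resolution of singularities to take $N=1$. So your route proves something genuinely weaker, namely the $\Q$-coefficient isomorphism already covered by Theorem~\ref{thm2}/Corollary~\ref{cor-1}.

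The paper avoids this obstacle by changing the category in which it works. It packages the correspondence action of $[Z]$ as a map $\sR([Z]/S)$ in the derived category of sheaves of modules over the Cartier--Dieudonn\'e--Raynaud ring $R$, checks that $R_n\otimes^L_R\sR([Z]/S)$ recovers the level-$n$ action, and then applies Ekedahl's Nakayama Lemma (Proposition~\ref{Ekedahl-Nakayama} / Corollary~\ref{cor-Ekedahl-Nakayama}). That lemma reduces the problem to showing that $R_1\otimes^L_R\sR([Z]/S)$ is an isomorphism on the summands $\Omega^0$ and $\Omega^N$, which is the finite-level (classical coherent) statement and is supplied by \cite[Thm.~3.2.6]{CR}. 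In other words, one does the decomposition-and-vanishing argument only at level one, where the relevant vanishing is an honest coherent-sheaf statement with no $N$-factors, and then lets the $R$-module formalism propagate the isomorphism through all levels and to the limit. If you want to rescue your approach you should replace the direct appeal to the $W\Omega$-level vanishing lemmas with this reduction-to-level-one step; your last paragraph already gestures in the right direction but stops short of identifying Ekedahl's Nakayama Lemma as the device that makes the lift integral rather than merely rational.
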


If $X$ and $Y$ are tame finite quotients and there exists a proper and birational 
morphism $h:X\xr{} Y$ then a similar statement holds (see Theorem \ref{torsion2}).

If $X$ and $Y$ are two smooth and proper $k$-schemes, which are birational and of pure dimension $N$. Then we obtain isomorphisms of $W(k)[F, V]$-modules
\[H^i(X, W\sO_X)\cong H^i(Y, W\sO_Y), \quad H^i(X, W\Omega^N_X)\cong H^i(Y, W\Omega^N_Y), \quad \text{for all } i\ge 0.\]

Modulo torsion the statement for $W\OO$ is a theorem due to Ekedahl.

\subsection{} We give a brief overview of the content of each section. In Section 1 we introduce the category $\drw_X$ of de Rham-Witt systems on a $k$-scheme $X$. In the language of Ekedahl \cite{EI} an object in $\drw_X$
is both, a direct and an inverse de Rham-Witt system at the same time. 
Furthermore, we introduce  the derived pushforward, derived cohomology with supports and $R\varprojlim$ on $D^b(\drw_X)$.
  We recall the definition of Witt-dualizing systems from \cite{EI}  in \ref{1.4}, and some facts about
residual complexes in \ref{section-Residual-complexes-and-traces}. In particular, we observe that if $f: X\to Y$ is an morphism between $k$-schemes, which is proper along a family of supports $\Phi$ on $X$, then
for any residual complex $K$ on $Y$ the trace morphism $f_*f^\Delta K\to K$, which always exists as a map of graded sheaves, induces a morphism of complexes $f_*\ul{\Gamma}_\Phi f^\Delta K\to K$.
In \ref{section-Witt-residual-complexes} we show that for any $\pi: X\to \Spec k$ the residual complexes $W_n\pi^\Delta W_n(k)$ form a projective system $K_X$, which is term-wise a Witt-dualizing system.
In \ref{section-The-dualizing-functor} we define the functor $D_X=\sHom(-, K_X)$ on $D(\drw_{X,{\rm qc}})^o$. (It is only defined on complexes of quasi-coherent de Rham-Witt systems.) 
In \ref{section-E-results} we recall the results of Ekedahl in the smooth case relating $K_X$ to $W_\bullet\Omega^{\dim X}_X$, and in \ref{section-The-trace-morphism-for-a-regular-closed-immersion} we calculate the trace morphism for a regular closed immersion.
A similar description is given in \cite{Gros}, but it refers to work in progress by Ekedahl, which we could not find in the literature, therefore we give another argument.

In Section 2 we introduce relative Hodge-Witt cohomology with supports on smooth and quasi-projective $k$-schemes, which are defined over some base scheme $S$. We define a pullback for arbitrary morphisms
and using the trace map from Section 1 also a pushforward for morphisms which are proper along a family of supports. Then in \ref{section-Compatibility} we give an explicit description of the pushforward in the case of a regular closed immersion and 
also for the projection $\P^n_X\to X$, where $X$ is a smooth scheme $X$. From this description we deduce the expected compatibility between pushforward and pullback with respect to maps in a certain cartesian diagram.

In Section 3 we collect and prove the remaining facts, which we need to show that $(X,\Phi)\mapsto \oplus_{i,j} H^i_\Phi(X, W\Omega^j_X)$ is a weak cohomology theory with supports in the sense of \cite{CR}.
In particular, we need the cycle class constructed by Gros in \cite{Gros}.
From this we deduce Theorem \ref{thm1} above. In \ref{section-vanishing-results} we prove the two vanishing Lemmas, which give a criterion for certain correspondences to act as zero on certain parts of the Hodge-Witt cohomology.
In \ref{section-dRW-mod-torsion} we introduce the notation $\drw_{X,\Q}$, which is the $\Q$-linearization of $\drw_X$. In general, 
for $M\in \drw_X$ the notation 
$M_\Q$ means the image of $M$ in $\drw_{X,\Q}$ (which is not the same as $M\otimes_\Z\Q$).

In Section 4 we introduce the Witt canonical system $W_\bullet\omega_X$ for a pure-dimensional $k$-scheme $X$ and prove some of its properties. Moreover we show in \ref{section-top-finite-quot} that the cohomology  of $W\sO$ and $W\omega$ 
for a topological finite quotient is a direct summand of the Hodge-Witt cohomology of a certain smooth scheme. Then we prove Theorem \ref{thm2}
and define Witt rational singularities. It follows some elaboration on this notion, in particular the Theorems \ref{thm3}, \ref{thm-intro-direct-image-isolated-singularity}, \ref{thm-intro-rational-connected-fibres}.

Finally in Section 5 we prove some results on torsion, as in Theorem \ref{intro-torison1}. In order to do this, we show that a correspondence actually gives rise to a morphism in the derived category
of modules over the Cartier-Dieudonn\'e-Raynaud ring and then use Ekedahl's Nakayama Lemma to deduce the statement from \cite{CR}.

We advise the reader who is mostly interested in the geometric application to start for a first time reading with Section \ref{1.1} and \ref{drws} to get some basic notations and then jump directly to Section 4.

\subsection{Notation and general conventions}
We are working over a perfect ground field $k$ of characteristic $p>0$. We denote by $W_n=W_n(k)$ the ring of Witt vectors of length $n$ over $k$ and by $W=W(k)$ the ring of infinite Witt vectors.
By a $k$-scheme we always mean a scheme $X$, which is separated and of finite type over $k$. If $X$ and $Y$ are $k$-schemes, then a morphism $X\to Y$ is always assumed to be a $k$-morphism.

\addtocontents{toc}{\protect\setcounter{tocdepth}{2}}
\section{De Rham-Witt systems after Ekedahl} \label{section-De-Rham-Witt-systems-after-Ekedahl}

\subsection{Witt schemes}\label{1.1} For the following facts see e.g. \cite[0.1.5]{IlDRW}, \cite[Appendix A]{LZ}.
Let $X$ be a $k$-scheme. For $n\ge 1$, we denote 
\[W_n X=(|X|, W_n\sO_X)=\Spec W_n\sO_X,\] 
where $W_n\sO_X$ is the sheaf of rings of Witt vectors of length $n$. 
This construction yields a functor from the category of $k$-schemes to the category of separated, finite type $W_n$-schemes. 
If $f: X\to Y$ is a separated (resp. finite type, proper or \'etale) morphism of $k$-schemes, then $W_nf: W_nX\to W_nY$ is a separated (resp. finite type, proper or \'etale) morphism
of $W_n$-schemes. If $f$ is an open (resp. closed) immersion, so is $W_nf$. 
We denote by $ i_n : W_{n-1}X \inj W_nX$ (or sometimes by $i_{n, X}$) the nilimmersion induced by the restriction $W_n\sO_X\to W_{n-1}\sO_X$.
We will write $\pi: W_n\sO_X\to i_{n*}W_{n-1}\sO_X$ instead of $i_n^*$. 
The absolute Frobenius on $X$ is denoted by $F_{X}: X\to X$. The morphism $W_n(F_X) : W_nX\to W_nX$ is finite for all $n$.
With this notation the Frobenius and Verschiebung morphisms on the Witt vectors become morphisms of $W_n\sO_X$-modules 
\[ F=W_n(F_X)^*\circ \pi: W_n\sO_X\to (W_n(F_X)i_n)_*W_{n-1}\sO_X,\]
\[ V: (W_n(F_X)i_n)_*W_{n-1}\sO_X\to W_n\sO_X.\]
Further ``lift and multiply by $p$ '' induces a morphism of $W_n\sO_X$-modules
\[\ul{p}: i_{n*}W_{n-1}\sO_X\to W_n\sO_X.\]
If $f:X\to Y$ is a morphism of $k$-schemes, then we have $W_n(f)  i_{n,X}=i_{n,Y}W_{n-1} f$ and $W_n(f) W_n(F_X)=W_n(F_Y)W_n(f)$. 
If $f: X\to Y$ is {\em \'etale}, then the following diagrams are {\em cartesian}:
\eq{1.1.1}{
\xymatrix{W_{n-1} X\arir[r]^{i_n}\ar[d]_{W_{n-1}f}  &        W_nX\ar[d]^{W_nf}\\
                    W_{n-1}Y\arir[r]^{i_n}                     &  W_nY, }\quad\quad\quad
\xymatrix{W_n X\ar[r]^{W_n(F_X)}\ar[d]_{W_nf}  &        W_nX\ar[d]^{W_nf}\\
                    W_nY\ar[r]^{W_n(F_Y)}        &  W_nY. }
}

\subsection{De Rham-Witt systems}\label{drws}

\begin{definition}
For an integer $n\geq 1$ we denote by $\mc{C}_n$ the category of 
$\Z$-graded $W_n\OO_X$-modules on $X$. We define 
$$
\mc{C}_{\N}:=\prod_{n\in \Z, n\geq 1} \mc{C}_n.
$$
\end{definition}

%% Similarly, we denote by $\mc{C}'_n$ the category of graded $W_n(k)$-modules on $X$. 
%% And we define $\mc{C}'_{\N}$ as above. There is a forgetful functor 
%% $$
%% {\rm for}:\mc{C}_{\N}\xr{} \mc{C}'_{\N}.
%% $$

For an object $M\in \mc{C}_{\N}$ and $n\geq 1$ we denote by $M_n$ the $n$-th component. An object  $M$ in $\sC_\N$ is (quasi-)coherent, if all $M_n$ are (quasi-)coherent $W_n(\sO_X)$-modules.
We denote by $\sC_{\N, {\rm qc}}$ (resp. $\sC_{\N,{\rm c}}$) the full subcategory of (quasi-)coherent objects of $\sC_\N$.
There are two natural endo-functors: 
\begin{equation*}
\begin{split}
i_*: \mc{C}_{\N} &\xr{} \mc{C}_{\N} \\
(i_*M)_n&:=\begin{cases}i_{n*}M_{n-1} &\text{if $n>1$,} \\ 0 &\text{if $n=1$,}\end{cases}\\
\sigma: \mc{C}_{\N} &\xr{} \mc{C}_{\N} \\
(\sigma_*M)_n&:= W_n(F_X)_*M_n
\end{split}
\end{equation*}
The two functors commute 
\begin{equation}\label{isigmacomm}
\sigma_*i_*=i_*\sigma_*,
\end{equation}
since $W_n(F_X)_*i_{n*}=i_{n*}W_{n-1}(F_X)_*$. 

We will also need the following functor:
\begin{equation*} 
\begin{split}
\Sigma_*:\mc{C}_{\N}&\xr{} \mc{C}_{\N} \\
(\Sigma_*M)_n&:=W_n(F_X)^n_*M_n.
\end{split}
\end{equation*}
We have the equalities 
\begin{equation}\label{Sigmacomm}
\sigma_*i_*\Sigma_*=\Sigma_*i_*, \quad \sigma_*\Sigma_* = \Sigma_*\sigma_*.
\end{equation}

Furthermore, since the components of $M\in \mc{C}_{\N}$ are $\Z$-graded we can define
for all $i\in \Z$ the shift functor 
\begin{equation}
\label{firstshift}
M(i)_n:=M_n(i).
\end{equation}
The shift functor commutes in an obvious way with $i_*,\sigma_*,\Sigma_*$.

\begin{definition}\label{definition-W}
A \emph{graded Witt system} $(M,F,V,\pi,\ul{p})$ on $X$ is an object $M$ in $\mc{C}_{\N}$ equipped with 
morphisms in $\mc{C}_{\N}$:
$$
F:M\xr{} \sigma_*i_*M, \quad V: \sigma_*i_*M \xr{} M, \quad \pi: M \xr{} i_*M, \quad \ul{p}:i_*M\xr{} M,
$$ 
such that 
\begin{itemize}
\item[(a)] $V\circ F$ is multiplication with $p$,
\item[(b)] $F\circ V$ is multiplication with $p$,
\item[(c)] $\sigma_*i_*(\pi)\circ F= i_*(F) \circ \pi$,
\item[(d)] $\pi\circ V=i_*(V\circ \sigma_*(\pi))$,
\item[(e)] $i_*(\sigma_*(\ul{p})\circ F)=F\circ \ul{p}$,
\item[(f)] $V\circ \sigma_*i_*(\ul{p}) = \ul{p}\circ i_*(V),$
\item[(g)] $i_*(\ul{p}\circ \pi)=\pi \circ \ul{p}$.
\end{itemize} 
Graded Witt systems form in the obvious way a category which we denote by $W_X$.
It is straightforward to check that $W_X$ is abelian.
\end{definition}
We have an obvious forgetful functor $W_X\xr{} \mc{C}_{\N}$. We say that 
$(M,F,V,\pi,\ul{p})$ is {\em (quasi-)coherent} if $M_n$ is (quasi-)coherent for every $n$.

\begin{remark}
One should memorise (c) as ``$\pi\circ F=F\circ \pi$'', (d) as ``$\pi\circ V=V\circ \pi$'',
(e) as ``$\ul{p}\circ F=F\circ \ul{p}$'', (f) as ``$V\circ \ul{p}=\ul{p}\circ V$'', and
(g) as ``$\ul{p}\circ \pi=\pi \circ \ul{p}$''.  
\end{remark}

%\begin{remark} Let $R$ be an $\mathbb{F}_p$-algebra. Then we can define the category of graded Witt systems of $R$-modules: $W_R$. 
%\end{remark}

\begin{definition}\label{definition-drw}
A {\em de Rham-Witt system} $(M,d)$ is a graded Witt system $M$ together with a morphism 
in $W_X$:
$$
d:\Sigma_*M \xr{} \Sigma_*M(1), 
$$
such that the following conditions are satisfied: 
\begin{itemize}
\item [(a)] $\Sigma_*F(1) \circ d\circ \Sigma_*V= \sigma_*^2i_*d$ (we used \ref{Sigmacomm}), 
\item [(b)] $\Sigma_*\pi(1) \circ d= \sigma_*i_*d \circ  \Sigma_*(\pi)$ (we used \ref{Sigmacomm}),
\item [(c)] $d\circ \Sigma_*(\ul{p})= \Sigma(\ul{p})\circ \sigma_*i_*d$ (again, we used \ref{Sigmacomm}).
\item [(d)] $d(1)\circ d=0$. 
\end{itemize}
De Rham Witt systems form in the obvious way a category which we denote by $\drw_X$.
We say that a de Rham-Witt system is {\em (quasi-)coherent} if the underlying graded Witt system is.
We denote the category of (quasi-)coherent de Rham-Witt systems by $\drw_{X, {\rm qc}}$ (resp. $\drw_{X, {\rm c}}$).
It is straightforward to check that $\drw_X$, $\drw_{X,{\rm qc}}$ and $\drw_{X, {\rm c}}$ are abelian. 
We denote by $D^+(\drw_X)$, $D^+(\drw_{X,{\rm qc}})$ and $D^+(\drw_{X, {\rm c}})$ the corresponding derived categories of bounded below complexes.
\end{definition}

\begin{remark}
One should memorise (a) as ``$F\circ d \circ V=d$'', (b) as ``$\pi \circ d=d\circ \pi$'', and
(c) as ``$d\circ \ul{p}=\ul{p}\circ d$''.
\end{remark}

\begin{definition}\label{definition-cdRW}
 A \emph{de Rham-Witt module} $(M, F, V, d )$ is a graded $W\sO_X$-module $M$ together with morphisms of $W\sO_X$-modules
$$
F:M\xr{} W(F_X)_*M, \quad V: W(F_X)_*M \xr{} M
$$
and a morphism of $W(k)$-modules
\[d: M\to M(1)\]
such that
\begin{itemize}
 \item [(a)] $F\circ V$ is multiplication with $p$,
 \item [(b)] $V\circ F$ is multiplication with $p$,
 \item [(c)] $F\circ d\circ V=d$,
 \item [(d)] $d(1)\circ d=0$.
\end{itemize}
De Rham-Witt modules form in the obvious way a category which we denote by $\cdrw_X$.
It is straightforward to check that $\cdrw_X$ is abelian. We denote by $D^+(\cdrw_X)$ the derived category
of bounded below complexes of de Rham-Witt modules.

\end{definition}

\begin{example}\label{1.2.4}
Let $X$ be a $k$-scheme
\begin{enumerate}
\item The sheaves of Witt vectors of finite length on $X$  define a coherent graded Witt system
     \[W_\bullet\sO_X=(\{W_n\sO_X\}_{n\ge 1}, \pi, F, V, \ul{p}),\]
      which is concentrated in degree 0.
       If $X=\Spec k$, we simply write $W_\bullet$ instead of $W_\bullet k$.
\item The de Rham Witt complex of Bloch-Deligne-Illusie $W_\bullet\Omega_X$ is a coherent de Rham-Witt system (see \cite{IlDRW}) and 
         $W\Omega_X=\varprojlim_n W_n\Omega_X$ is a de Rham-Witt module.
\item Let $M$ be a de Rham-Witt system on $X$ and $i\in \Z$. Then we define 
       \[M(i):=(\{M_n(i)\}_{n\ge 1}, \pi_{M}, F_{M} , V_{M}, (-1)^i d_{M}, \ul{p}_{M} )\in \drw_X.\]
\end{enumerate}
\end{example}

\subsection{Direct image, inverse image and inverse limit} \label{direct-inverse-image} 

\subsubsection{} 
Let $f: X\to Y$ be a morphism between $k$-schemes. We get an induced 
functor 
$$
f_*:\mc{C}_{\N,X}\xr{} \mc{C}_{\N,Y}, \quad (M_n)\mapsto (W_n(f)_*M_n)
$$
which commutes in the obvious way with $i_*,\sigma,\Sigma_*$. We thus obtain 
a functor 
$$
f_*:\drw_X\xr{} \drw_Y.
$$

\subsubsection{}
Let $f: X\to Y$ be an \emph{\'etale} morphism between $k$-schemes. We get an induced 
functor 
$$
f^*:\mc{C}_{\N,Y}\xr{} \mc{C}_{\N,X},\quad (M_n)\mapsto (W_n(f)^*M_n)
$$
which by \eqref{1.1.1} commutes  with $i_*,\sigma,\Sigma_*$. We thus obtain 
a functor 
$$
f^*:\drw_Y\xr{} \drw_X.
$$

\subsubsection{}
Let $(M, F, V, \pi, \ul{p}, d)$ be a de Rham Witt system. Then $(M,\pi)$ forms naturally a projective system of $W\sO_X$-modules,
$F$ and $V$ induce morphisms of projective systems of $W\sO_X$-modules $F: (M,\pi)\to (W(F_X)_*M, W(F_X)_*\pi)$, 
$V:(W(F_X)_*M,W(F_X)_*\pi)\to (M,\pi)$ and induces a morphism of projective systems of $W(k)$-modules $d:(M,\pi)\to (M(1),\pi(1))$.
We thus obtain a functor
\[\varprojlim: \drw_X\to \cdrw_X.\]

\subsection{Global sections with support}
\begin{definition}\label{def:famsupp}
A family of supports $\Phi$ on $X$ is a non-empty set of closed subsets of $X$ such that
the following holds:
\begin{itemize}
\item[(i)] The union of two elements in $\Phi$ is contained in $\Phi$.
\item[(ii)] Every closed subset of an element in $\Phi$ is contained in $\Phi$. 
\end{itemize}  
\end{definition}

Let $A$ be any set of closed subsets of $X$. The smallest family of supports 
$\Phi_A$ which contains $A$ is given by
\begin{equation}\label{Phiset}
\Phi_A:=\{\bigcup_{i=1}^n Z'_i\,;\, Z'_i \underset{\text{closed}}{\subset} Z_i\in A\}.
\end{equation}
For a closed subset $Z\subset X$ we write $\Phi_Z$ for $\Phi_{\{Z\}}$.

\begin{notation}
Let $f:X\xr{} Y$ be a morphism of schemes and $\Phi$ resp. $\Psi$ a family of 
supports of $X$ resp. $Y$.
\begin{enumerate}
\item We denote by $f^{-1}(\Psi)$ the smallest family of supports on $X$ which contains 
      $\{f^{-1}(Z);Z\in \Psi\}$.
\item We say that $f\mid \Phi$ is proper if $f\mid Z$ is proper for every $Z\in \Phi$.
If $f\mid\Phi$ is proper then $f(\Phi)$ is a family of supports on $Y$. 
\item If $\Phi_1,\Phi_2$ are two families of supports then $\Phi_1\cap \Phi_2$ is a family of supports.
\item If $\Phi$ resp. $\Psi$ is a family of supports of $X$ resp. $Y$ then we denote 
by $\Phi\times \Psi$ the smallest family of supports on $X\times_k Y$ which contains
$\{Z_1\times Z_2; Z_1\in \Phi, Z_2\in \Psi\}$.
\end{enumerate}
\end{notation}

\subsubsection{} 
Let $\Phi$ be a family of supports on $X$. There is a well-known functor
$$
\ul{\Gamma}_{\Phi}:\mc{C}_{\N,X} \xr{} \mc{C}_{\N,X}, \quad (M_n)\mapsto (\ul{\Gamma}_{\Phi}(M_n)).
$$
Since $\ul{\Gamma}_{\Phi}$ commutes in the obvious way with $i_*,\sigma,\Sigma_*$ we obtain 
$$
\ul{\Gamma}_{\Phi}:\drw_X \xr{} \drw_X.
$$
For a closed subset $Z\subset X$  we also write $\ul{\Gamma}_Z$ instead of $\ul{\Gamma}_{\Phi_Z}$.

If $f: X\to Y$ is a morphism and $\Psi$ a family of supports on $Y$, then
\eq{supp-push}{\ul{\Gamma}_{\Psi}f_*=f_*\ul{\Gamma}_{f^{-1}(\Psi)}.} 

If $f:X\to Y$ is a morphism and $\Phi$ is a family of supports on $X$, then we define
\begin{align}
f_\Phi:= f_*\circ \ul{\Gamma}_\Phi&: \drw_X\to \drw_Y, \label{fPhi} \\
\hat{f}_\Phi:= \varprojlim\circ f_\Phi&: \drw_X\to \cdrw_Y \label{fPhihat}.
\end{align}
%\eq{fPhi}{f_\Phi:= f_*\circ \ul{\Gamma}_\Phi: \drw_X\to \drw_Y}
%and 
%\eq{fPhihat}{\hat{f}_\Phi:= \varprojlim\circ f_\Phi: \drw_X\to \cdrw_Y.}
Notice that if $\Phi=\Phi_Z$, with $Z$ a closed subset of $X$, then
\eq{Phihatrel}{\hat{f}_{\Phi_Z}= f_*\circ \Gamma_{\Phi_Z}\circ \varprojlim.}
This relation does not hold for arbitrary families of support on $X$.

\subsection{Derived functors}
\begin{lemma}\label{vanishing-of-Rlim} 
Let $(X,\OO_X)$ be a ringed space and $E=(E_n)$ a projective system of 
$\sO_X$-modules (indexed by integers $n\geq 1$). Let $\mc{B}$ be a basis of the topology 
of $X$. We consider the following two conditions:
\begin{enumerate}
 \item[a)] For all $U\in \mc{B}$, $H^i(U, E_n)=0$ for all $i,n \ge 1$.
 \item[b)] For all $U\in \mc{B}$, the projective system $(H^0(U,E_n))_{n\ge 1}$ satisfies the Mittag-Leffler condition.
\end{enumerate}
Then
\begin{enumerate}
 \item  If $E$ satisfies condition a), then $R^i\varprojlim_n E_n=0$, for all $i\ge 2$.
\item If $E$ satisfies the conditions a) and b), then $R^i\varprojlim_n E_n= 0$, for all $i\ge 1$, i.e. $E$ is $\varprojlim$-acyclic.
\end{enumerate}
\end{lemma}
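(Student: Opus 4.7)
\emph{Plan.} I will exhibit an explicit complex $K^\bullet$ of injective sheaves representing $R\varprojlim_n E_n$ whose cohomology on basic opens computes $\varprojlim^i$ of inverse systems of abelian groups, and then deduce the vanishing of the cohomology sheaves by passing to stalks.

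First choose a system of injective resolutions $E_n\to I_n^\bullet$ compatible with the transition maps, constructed inductively via the standard lifting property of injective resolutions. Form the double complex $C^{\bullet,\bullet}$ with $C^{0,q}=C^{1,q}=\prod_n I_n^q$, vertical differentials inherited from $I_n^\bullet$, and horizontal differential $1-T$, where $T$ is induced by the transitions $I_{n+1}^q\to I_n^q$. Let $K^\bullet:=\mathrm{Tot}(C^{\bullet,\bullet})$. Each $K^i$ is a finite sum of products of injective sheaves, hence itself injective (products of injectives in any abelian category with enough injectives are injective, and finite direct sums of injectives are injective). A standard Roos-type argument then shows that $K^\bullet$ represents $R\varprojlim_n E_n$ in $D^+(\sO_X)$.

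Next, for $U\in\mc{B}$ compute $H^i(K^\bullet(U))$ via the vertical-first spectral sequence of the double complex. Since products are exact in the category of abelian groups and $I_n^\bullet(U)$ computes $H^\ast(U,E_n)$, one has $H^q_{\mathrm{vert}}(\prod_n I_n^\bullet(U))=\prod_n H^q(U,E_n)$. By condition a), this vanishes for $q\geq 1$ and equals $\prod_n E_n(U)$ for $q=0$, so the spectral sequence collapses onto the row $q=0$, whose horizontal cohomology is the classical two-term Roos complex of the inverse system $(E_n(U))_n$ of abelian groups. Hence $H^i(K^\bullet(U))=\varprojlim^i E_n(U)$ for $i\in\{0,1\}$ and $H^i(K^\bullet(U))=0$ for $i\geq 2$.

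Finally, pass to stalks to compute $R^i\varprojlim_n E_n=\sH^i(K^\bullet)$. Since each $K^i$ is injective (hence flasque) and cohomology of a complex commutes with filtered colimits, using cofinality of $\mc{B}$ at $x\in X$ one obtains
\[
(R^i\varprojlim_n E_n)_x=\varinjlim_{x\in U\in\mc{B}} H^i(K^\bullet(U))=\varinjlim_{x\in U\in\mc{B}} \varprojlim^i E_n(U).
\]
For (1), $\varprojlim^i E_n(U)=0$ for $i\geq 2$ by the classical cohomological dimension of $\varprojlim$ on $\N$-indexed inverse systems of abelian groups. For (2), condition b) yields $\varprojlim^1 E_n(U)=0$ by Mittag-Leffler. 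In both cases the stalks vanish, giving the claim. The principal obstacle is the first step, namely justifying that $K^\bullet$ represents $R\varprojlim_n E_n$ in the derived category; this ultimately rests on the Roos-type fact that the derived product on injective sheaves coincides with the naive product, so that the total complex of the Roos double complex built from an injective resolution does compute $R\varprojlim$.
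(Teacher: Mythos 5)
Your proof is correct, and it takes a genuinely different route from the paper's. The paper works directly with an injective resolution $E\to I^\bullet$ in the category of projective systems of $\sO_X$-modules: it characterizes the injective objects there (levelwise injective with split surjective transition maps), checks that $\varprojlim_n I^q_n$ is an injective sheaf for each $q$, and then deduces the statement from the Grothendieck spectral sequence $R^i\varprojlim H^j(U,E_n)\Rightarrow H^{i+j}(U,R\varprojlim E_n)$ together with a stalk computation. You instead assemble a Roos double complex from a compatible family of levelwise injective resolutions of the individual $E_n$ and compute directly; in particular you never need the injectivity of $\varprojlim_n I^q_n$. The one step you gloss over---that the Roos total complex represents $R\varprojlim E$---is true but not automatic here, because the category of $\sO_X$-modules does not satisfy AB4$^*$ (countable products of epimorphisms of sheaves need not be epimorphisms), which is the hypothesis under which the Roos argument is routine. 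What is actually required is the distinguished triangle $R\varprojlim E\to RP(E)\xrightarrow{1-T}RP(E)\to+$ with $P=\prod_n$; establishing it amounts to checking that $1-T$ is surjective on $\prod_n I^q_n$ when $(I^q_n)_n$ is an injective projective system, and this surjectivity rests precisely on the split surjectivity of the transition maps that the paper spells out. So the two proofs ultimately use the same structural input about injective projective systems of sheaves, packaged differently. A minor remark: the parenthetical ``hence flasque'' in the stalk step is not needed, since $\sH^i(K^\bullet)_x=\varinjlim_{U\ni x}H^i(K^\bullet(U))$ holds for any complex of sheaves.
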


\begin{proof}
It is a basic fact that there are sufficiently many injective $\sO_X$-modules.

Notice that a projective system of $\sO_X$-modules $I=(I_n)$ is injective if and only if each $I_n$ is an injective $\sO_X$-module and the transition maps $I_{n+1}\to I_n$ are split surjective.
(The ``if'' direction is easy, as well as $I$ injective implies each $I_n$ is injective. If $I$ is injective, let $J$ be the projective system with $J_n=I_1\oplus\ldots \oplus I_n$ and projections as transition maps. We have an obvious inclusion of projective systems $I\inj J$, hence a surjection $\Hom(J, I)\to \Hom(I, I)$. Now a lift of the identity on $I$ together with the split surjectivity of the transition maps of $J$ gives the splitting of the transition maps of $I$.)

Now let $E\to I^\bullet$ be an injective resolution (which always exist). The 
transition maps of the projective system (of abelian groups)
$(\Gamma(U, I_n^q))_n$ are surjective (since split) for all $q\ge 0$ and all open subsets $U\subset X$. 
Hence they %projective systems $(\Gamma(U, I_n^q))_n$ 
satisfy the Mittag-Leffler 
condition and are $\varprojlim$-acyclic. 

On the other hand, $\varprojlim_n I^q_n$ is an injective $\OO_X$-module for 
every $q$.
%% \begin{equation}\label{H^ilim}
%% \varprojlim_n H^i(U, I_n^q)=H^i(U, \varprojlim_n I_n^q)
%% \end{equation}
%% holds for all $i,q$. In particular, $\varprojlim I_n^q$ is $\Gamma(U,\bullet)$
%% acyclic. Let us prove \ref{H^ilim}. For $i=0$ the statement is obvious. For 
%% $i>0$ we need to show that $H^i(U, \varprojlim_n I_n^q)=0$.
Indeed, since the transition 
maps $I^q_{n+1}\xr{} I^q_n$ are surjective and split, we may write 
$I^q_n\cong \oplus_{i=1}^n I'_{i}$ for $I'_i=\ker(I^q_{i}\xr{} I^q_{i-1})$. The $\sO_X$-modules
$I'_n$ are injective for all $n$, and the transition maps 
$$
\oplus_{i=1}^{n+1} I'_{i} \cong I^q_{n+1} \xr{} I^q_{n} \cong \oplus_{i=1}^{n} I'_{i}  
$$
are the obvious projections. Thus $\varprojlim_n I^q_n=\prod_{i\geq 1} I'_n$ is 
injective. 
 
By using $\varprojlim \circ \Gamma_U = \Gamma_U \circ \varprojlim$ we obtain a spectral sequence 
\[R^i\varprojlim H^j(U, E_n)\Longrightarrow H^{i+j}(U, R\varprojlim E_n),\]
where $R\varprojlim E_n= \varprojlim_n I^\bullet_n$. If $U\in \mc{B}$, condition a) implies $R^i\varprojlim H^0(U, E_n)=H^i(U,R\varprojlim E_n)=
H^i(\varprojlim I^{\bullet}(U))$.
We know that $R^i\varprojlim H^0(U, E_n)$ is zero for all $i\ge 2$ and in case condition b) is satisfied also for all $i\ge 1$.
Now the assertion follows from 
$$
\varprojlim_{U\in \mc{B}, U\ni x} H^i(U,R\varprojlim E_n)= \varprojlim_{U\in \mc{B}, U\ni x} H^i(\varprojlim I^{\bullet}(U)) = (R^i\varprojlim E_n)_x, 
$$ 
for all $x\in X$. 
\end{proof}

\begin{lemma}\label{supp-of-flasque-is-flasque}
Let $A$ be a sheaf of abelian groups on a noetherian topological space $X$. 
If $A$ is flasque, so is $\ul{\Gamma}_\Phi(A)$ for all families of supports $\Phi$ on $X$.
\end{lemma}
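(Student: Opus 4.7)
The goal is to show that for any open inclusion $V\subset U$ in $X$, the restriction map
\[
\ul{\Gamma}_\Phi(A)(U)\longrightarrow \ul{\Gamma}_\Phi(A)(V)
\]
is surjective. The plan is to take a section $s\in \ul{\Gamma}_\Phi(A)(V)$, extend it by zero as far as possible using the fact that its support is contained in an element of $\Phi$, and then invoke flasqueness of $A$ to extend the result to all of $U$, finally checking that the extension still has support in $\Phi$.

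More concretely, given $s\in \ul{\Gamma}_\Phi(A)(V)$, let $T:=\overline{\operatorname{supp}(s)}^{X}$ be the closure in $X$ of the support of $s$ (computed in $V$). By hypothesis there exists $Z\in\Phi$ with $\operatorname{supp}(s)\subset Z$; since $Z$ is closed in $X$ we get $T\subset Z$, and condition (ii) of Definition~\ref{def:famsupp} gives $T\in\Phi$. Next I note that $T\cap V=\operatorname{supp}(s)$: indeed $T\cap V$ is a closed subset of $V$ containing $\operatorname{supp}(s)$, while conversely $\operatorname{supp}(s)$ is already closed in $V$, so its closure in $X$ meets $V$ exactly in $\operatorname{supp}(s)$. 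Consequently $s$ vanishes on $V\setminus T$, and the sections $s$ on $V$ and $0$ on $U\setminus T$ agree on the overlap $V\setminus T$; they glue to a unique section $s'\in A(V')$, where
\[
V':=V\cup (U\setminus T)
\]
is open in $U$.

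Now I use that $A$ is flasque: the restriction $A(U)\to A(V')$ is surjective, so there exists $\tilde{s}\in A(U)$ with $\tilde{s}|_{V'}=s'$. Clearly $\tilde{s}|_V=s$, so it remains to bound the support of $\tilde s$. On $V'\setminus T=U\setminus T$ the section $\tilde s$ equals $0$ by construction, so the set of points where $\tilde s$ is nonzero is contained in $T\cup (U\setminus V')$. A direct calculation gives
\[
U\setminus V' \;=\; (U\setminus V)\cap T \;\subset\; T,
\]
whence $\operatorname{supp}(\tilde s)\subset T\in\Phi$, i.e.\ $\tilde s\in \ul{\Gamma}_\Phi(A)(U)$. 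This exhibits the required preimage and completes the proof.

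The only subtle point is the support calculation: one must be careful that extending $s'$ arbitrarily via flasqueness could a priori produce a section whose support is larger than $T$. The observation that rescues the argument is that the complement $U\setminus V'$ of the domain where we already control $s'$ is itself contained in $T$, so no matter how the flasque extension is chosen, the support of $\tilde s$ lies in $T$. Noetherianness of $X$ is not strictly needed for this argument, but it is the natural setting in which supports of sections behave well and families of supports as in Definition~\ref{def:famsupp} are commonly used.
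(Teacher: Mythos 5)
Your proof is correct, and it takes a genuinely different route from the paper's. The paper first reduces to a single closed subset $Z$ and shows $\ul{\Gamma}_Z(A)$ is flasque by appealing to the spectral sequence $H^i_Y(X,\sH^j_Z(A))\Rightarrow H^{i+j}_{Y\cap Z}(X,A)$ and the fact that a flasque sheaf has $\sH^j_Z(A)=0$ for $j\neq 0$, hence $H^1_Y(X,\ul{\Gamma}_Z(A))=H^1_{Y\cap Z}(X,A)=0$; it then uses noetherianness of $X$ to write $\ul{\Gamma}_\Phi(A)=\varinjlim_{Z\in\Phi}\ul{\Gamma}_Z(A)$ and conclude that a filtered colimit of flasque sheaves on a noetherian space is flasque. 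Your argument is a completely elementary gluing construction: you extend $s$ by zero onto $V\cup(U\setminus T)$ where $T=\overline{\operatorname{supp}(s)}^X\in\Phi$, invoke flasqueness of $A$ once, and use the containment $U\setminus V'\subset T$ to see that the resulting extension automatically has support inside $T$. This is more self-contained (no local cohomology machinery), and you are right that it does not use noetherianness of $X$ at any point; the paper's route needs noetherianness precisely in the colimit step. So you have in fact proved the slightly stronger statement without the noetherian hypothesis. Both proofs yield the lemma as stated; the paper's version is shorter if one already has the local-cohomology spectral sequence at hand, while yours is lower-tech and more general.
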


\begin{proof}
Let $Y$ and $Z$ be two closed subsets of $X$. Since $\ul{\Gamma}_Z(I)$ is injective if $I$ is (\cite[Exp. I, Cor. 1.4]{SGA2}), there exists a spectral sequence
$H^i_Y(X,\sH^j_Z(A))\Longrightarrow H^{i+j}_{Y\cap Z}(X, A).$
Now assume $A$ is flasque, then $\sH^j_Z(A)=0$ for $j\neq 0$. In particular
$H^1_Y(X, \ul{\Gamma}_Z(A))= H^1_{Y\cap Z}(X, A)=0.$
Thus $\ul{\Gamma}_Z(A)$ is flasque. The space $X$ is noetherian and therefore 
$\ul{\Gamma}_{\Phi}(A)=\varinjlim_{Z\in \Phi} \ul{\Gamma}_Z(A)$ is also flasque.
\end{proof}

\begin{definition}\label{flasque-dRW}
 We say that a de Rham-Witt system on a $k$-scheme $X$ is {\em flasque}, if for all $n$ 
\[0\to K_n\to M_n\xr{\pi} M_{n-1}\to 0\]
is an exact sequence of flasque abelian sheaves on  $X$, where $K_n=\Ker(\pi: M_{n}\to M_{n-1})$. 
\end{definition}

\begin{lemma}\label{properties-of-flasque}
Let $X$ be a $k$-scheme.
\begin{enumerate}
 \item Let $0\to M'\to M\to M''\to 0$
      be a short exact sequence of de Rham-Witt systems on $X$ and assume that $M'$ is flasque.
      Then $M$ is flasque iff $M''$ is.
\item Let $\Phi$ be a family of supports on $X$. Then $\ul{\Gamma}_\Phi$ restricts to an exact endo-functor on the full subcategory of flasque de Rham-Witt systems.
\item Let $f:X\to Y$ be a morphism. Then $f_*$ restricts to an exact functor between the full subcategories of flasque de Rham-Witt systems on $X$ and $Y$.
\item The functor $\varprojlim: \drw_X\to \cdrw_X$ restricts to an exact functor from the full subcategory of flasque de Rham-Witt systems to the
              full subcategory of flasque de Rham-Witt modules (i.e. de Rham-Witt modules, which are flasque as abelian sheaves on $X$). 
\end{enumerate}
\end{lemma}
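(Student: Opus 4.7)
All four statements reduce to properties of short exact sequences of flasque sheaves applied in each Witt-degree $n$, using that $\drw_X$ is abelian so short exact sequences are exact level-wise (in each Witt-degree and each internal $\Z$-degree). The common theme is that for a flasque $M\in\drw_X$, each transition sequence $0\to K_n\to M_n\xr{\pi} M_{n-1}\to 0$ is a short exact sequence of flasque sheaves, so any left-exact functor known to preserve flasqueness and to be exact on SES with flasque first term will automatically preserve flasqueness of de Rham--Witt systems.

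For part (1), I would apply the snake lemma to the $3\times 3$ diagram formed by the level-wise short exact sequences $0\to M'_?\to M_?\to M''_?\to 0$ in Witt-degrees $n$ and $n-1$, with the $\pi$ morphisms as vertical arrows. Since $\pi':M'_n\surj M'_{n-1}$ by flasqueness of $M'$, the snake lemma yields a short exact sequence $0\to K'_n\to K_n\to K''_n\to 0$ and the equivalence that $\pi:M_n\to M_{n-1}$ is surjective if and only if $\pi:M''_n\to M''_{n-1}$ is. Combined with the standard facts that an extension of flasque sheaves is flasque, and a quotient of a flasque sheaf by a flasque subsheaf is flasque, this gives at once that $M$ is flasque iff $M''$ is flasque.

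For (2) and (3), exactness of $\ul{\Gamma}_\Phi$ respectively $f_*$ on SES of flasque dRW systems is immediate from their exactness on SES of sheaves with flasque first term, applied in each Witt-degree. Preservation of flasqueness follows from Lemma~\ref{supp-of-flasque-is-flasque} for $\ul{\Gamma}_\Phi$ and from the classical fact that $f_*$ preserves flasqueness, combined with the identity $\ker(\pi\colon F(M_n)\to F(M_{n-1})) = F(K_n)$ valid for any left-exact functor $F$ together with the surjectivity of $\pi\colon F(M_n)\to F(M_{n-1})$, which follows from the flasqueness of $K_n$.

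The only nontrivial point is (4). Exactness of $\varprojlim$ on a SES of flasque dRW systems follows from Lemma~\ref{vanishing-of-Rlim} applied in each internal degree, since the transitions $\pi$ are surjective on flasque sheaves, so Mittag--Leffler holds and the $R^i\varprojlim$ vanish for $i\geq 1$. The delicate point is to show $\varprojlim_n M_n$ is flasque as a sheaf when $M$ is flasque. I would verify this degree-by-degree: given an open inclusion $U\subset V$ and a compatible system $(s_n)\in (\varprojlim_n M_n)(U)$, I construct a lift $(t_n)\in (\varprojlim_n M_n)(V)$ inductively. First lift $s_1$ to $t_1\in M_1(V)$ using flasqueness of $M_1$. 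Given $t_n$ lifting $s_n$, the surjectivity of $\pi\colon M_{n+1}(V)\to M_n(V)$ (a consequence of the flasqueness of $K_{n+1}$) produces some $u\in M_{n+1}(V)$ with $\pi(u)=t_n$; the difference $s_{n+1}-u|_U$ then lies in $K_{n+1}(U)$, and flasqueness of $K_{n+1}$ lets me lift it to $v\in K_{n+1}(V)$. The element $t_{n+1}:=u+v$ satisfies $\pi(t_{n+1})=t_n$ and $t_{n+1}|_U=s_{n+1}$. This inductive correction by an element of $K_{n+1}$, which simultaneously uses flasqueness of $M_{n+1}$ and of $K_{n+1}$, is the main obstacle and the essential new ingredient beyond the level-wise arguments used in (1)--(3).
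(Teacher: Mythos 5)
Your proof is correct and follows essentially the same route as the paper, which dismisses (1)--(3) as straightforward and gives a real argument only for (4). For (4), your inductive construction of the lift $(t_n)$ over $V$ -- choosing a preimage $u$ of $t_n$ under $\pi$ and correcting by an element of $K_{n+1}(V)$ -- is precisely the content of the paper's diagram chase proving that the transition maps $L_n\to L_{n-1}$, with $L_n=\Ker(\Gamma(X,M_n)\to\Gamma(U,M_n))$, are surjective (hence $R^1\varprojlim L_n=0$); both arguments invoke the flasqueness of $M_{n+1}$ and of $K_{n+1}$ at exactly the same steps.
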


\begin{proof}
 %% (1) First notice that the analog statement of (1) is true in the category of abelian sheaves on a topological space. 
The proof of (1) is straightforward.
%% Therefore it is enough to prove the following:
%% Consider the diagram
%% \[\xymatrix@=15pt{          & 0\ar[d] & 0\ar[d] & 0\ar[d] & \\
%%             0\ar[r] & K'_n\ar[d]\ar[r]& K_n\ar[d]\ar[r] & K''_n\ar[d]\ar[r] & 0\\
%%             0\ar[r] & M'_n\ar[d]\ar[r]& M_n\ar[d]^a\ar[r] & M''_n\ar[d]^b\ar[r] & 0\\
%%             0\ar[r] & M'_{n-1}\ar[d]\ar[r]& M_{n-1}\ar[r] & M''_{n-1}\ar[r] & 0.\\
%%                       & 0       &        &     }\] 
%% Assume that the $M_n$- and $M_{n-1}$-rows and all the columns are exact. Then the $K_n$-row is exact and $a$ is surjective iff $b$ is.
%% This is done by an easy diagram chase.
(2) follows from Lemma \ref{supp-of-flasque-is-flasque}. (3) is clear. Finally (4). It follows directly from the definition, that the transition maps on the sections over any open $U\subset X$
of a flasque de Rham-Witt systems are surjective. The exactness of $\varprojlim$ on the category of flasque de Rham-Witt systems, thus follows from Lemma \ref{vanishing-of-Rlim}, (2).
Now let $M$ be a flasque de Rham-Witt system. It remains to show that $\varprojlim M$ is flasque again. For this let $U\subset X$ be open and define
$L_n=\Ker(\Gamma(X,M_n)\to \Gamma(U,M_n))$. Thus we have an exact sequence
\eq{pof1}{\varprojlim \Gamma(X, M)\to \varprojlim\Gamma(U,M)\to R^1\varprojlim_n L_n.}
Consider the following diagram:
\[\xymatrix@=15pt{  &  & 0\ar[d] & 0\ar[d] & \\ 
              &  & \Gamma(X,K_n)\ar[d]\ar[r]& \Gamma(U,K_n)\ar[d]\ar[r] & 0\\
      0\ar[r]& L_n\ar[d]^a\ar[r] & \Gamma(X,M_n)\ar[d]\ar[r] & \Gamma(U,M_n)\ar[d]\ar[r] & 0\\
      0\ar[r]& L_{n-1}\ar[r] & \Gamma(X,M_{n-1})\ar[d]\ar[r] & \Gamma(U,M_{n-1})\ar[d]\ar[r] & 0\\
            &                      &      0                 &  0. &
}\]
All rows and columns are exact, since $M$ is flasque. Now it follows from an easy diagram chase that $a$ is surjective.
Therefore $R^1\varprojlim_n L_n=0$ and the flasqueness of $\varprojlim M$ follows from \eqref{pof1}.
\end{proof}

\begin{lemma}\label{enough-flasque}
The categories $\drw_X$ and $\cdrw_X$ have enough flasque objects, i.e. any $M$ in $\drw_X$ (or in $\cdrw_X$) admits an injection into a flasque object.
\end{lemma}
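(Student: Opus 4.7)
The plan is to reduce the lemma to the classical Godement construction $G(\sF)=\prod_{x\in X}(i_x)_*\sF_x$, which produces a functorial flasque envelope for any sheaf of abelian groups on $X$. For $\cdrw_X$ this is immediate: given $M\in\cdrw_X$, the sheaf $G(M)$ is flasque and inherits the $W\sO_X$-module structure as well as the operators $F,V,d$ by functoriality of $G$ (noting that on sheaves of abelian groups $W(F_X)_\ast$ is the identity, since $F_X$ is the identity on the underlying topological space); the axioms of Definition~\ref{definition-cdRW} pass from $M$ to $G(M)$, and the unit $M\inj G(M)$ is a monomorphism in $\cdrw_X$.

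The case of $\drw_X$ is more delicate, because the flasqueness of Definition~\ref{flasque-dRW} demands, in addition to term-wise flasqueness, that every transition $\pi_n\colon M_n\to M_{n-1}$ be surjective with flasque kernel, a condition that typically fails for $G$ applied term-wise. Given $M\in\drw_X$ I would therefore set
\[F(M)_n\ :=\ \bigoplus_{i=1}^n G(M_i),\]
with the $W_n\sO_X$-structure on the $i$-th summand given via the restriction $W_n\sO_X\twoheadrightarrow W_i\sO_X$, and with de Rham-Witt structure as follows: the transition $\pi$ is the projection that forgets the top summand; the Frobenius $F$ sends the summand $G(M_i)\subseteq F(M)_n$ to $G(M_{i-1})\subseteq F(M)_{n-1}$ via $G(F_i)$; the maps $V$ and $\underline{p}$ act summand-wise by shifting the summand index up by one via $G(V)$ and $G(\underline{p})$; and $d$ acts on each summand via $G(d)$. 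The embedding $\iota\colon M\inj F(M)$ at level $n$ sends $m\in M_n$ to the tuple whose $i$-th entry is the Godement image of $\pi^{n-i}m\in M_i$; it is injective because the top entry is just $G(m)$.

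Flasqueness of $F(M)$ in the sense of Definition~\ref{flasque-dRW} is then immediate: each $F(M)_n$ is a finite direct sum of flasque sheaves, $\pi$ is surjective as a projection, and $\ker\pi=G(M_n)$ is flasque. The main remaining work --- verifying the axioms of Definitions~\ref{definition-W} and~\ref{definition-drw} on $F(M)$ and checking that $\iota$ is a morphism in $\drw_X$ --- is a summand-wise bookkeeping exercise using naturality of $G$ together with the corresponding relations on $M$ (such as $\pi F=F\pi$, $\pi V=V\pi$, $\pi d=d\pi$ and $\pi\underline{p}=\underline{p}\pi$). The only slightly subtle identity is $VF=FV=p$: on the summand $G(M_i)$ with $i\geq 2$ it follows from the identity on $M_i$, while on $G(M_1)$ both compositions vanish since $F_1=0$ on $M$ and $p$ already annihilates the $W_1\sO_X=\sO_X$-module $G(M_1)$.
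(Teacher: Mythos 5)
Your proof is correct and is essentially the same construction the paper uses: the Godement functor applied directly for $\cdrw_X$, and the auxiliary object $\tilde{G}_n(M)=\bigoplus_{i\le n}G(M_i)$ (which you call $F(M)_n$) with the identical summand-shifting operators $\pi,F,V,\underline{p},d$ and the same embedding $m\mapsto(\pi^{n-1}m,\ldots,\pi m,m)$ for $\drw_X$. Your additional remarks on why $G$ commutes with $W(F_X)_*$ and on the $i=1$ case of $VF=FV=p$ are accurate, though the paper leaves these verifications to the reader.
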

\begin{proof}
For the de Rham-Witt modules this is just the usual Godement construction. For the de Rham-Witt systems this has to be refined as follows:
Let $M$ be a de Rham-Witt system. Denote by $G(M_n)$ the $W_n\sO_X$-module given by
      \[G(M_n)(U)=\prod_{x\in U } M_{n,x},\quad U\subset X \text{ open},\]
       with the restriction maps given by projection.
       These sheaves fit together to form a de Rham-Witt system $G(M)=\{G(M_n)\}_{n\ge 1}$, such that the natural map
       $M \to G(M)$ is a morphism of de Rham-Witt systems. 

 For $m<n$ we denote by $i_{m,n}: W_mX\inj W_nX$ the closed immersion induced by the restriction $W_n\sO_X\to W_m\sO_X$, in particular $i_{n-1,n}=i_n$. 
We set 
\[\tilde{G}_n(M):= i_{1, n*}G(M_1)\oplus \ldots \oplus i_{n-1, n*}G(M_{n-1})\oplus G(M_n).\]
Then $\tilde{G}_n(M)$ is a graded $W_n\sO_X$-module. We define $W_n\sO_X$-linear maps $\pi$, $F$, $d$,  $V$, $\ul{p}$, as follows
\[
\begin{array}{ll}
\pi: \tilde{G}_n\to i_{n*}\tilde{G}_{n-1},& (m_1, \ldots, m_n)\mapsto (m_1,\ldots, m_{n-1}),\\
F: \tilde{G}_n\to (W_n(F_X)i_n)_*\tilde{G}_{n-1}, & (m_1, \ldots, m_n)\mapsto (Fm_2, \ldots, Fm_n),\\
d: W_n(F^n_{X})_*\tilde{G}_n\to W_n(F^n_{X})_*\tilde{G}_n(1),& (m_1, \ldots, m_n)\mapsto (dm_1,\ldots, dm_n),\\
V: (W_n(F_X)i_n)_*\tilde{G}_{n-1}\to \tilde{G}_n,& (m_1, \ldots, m_{n-1})\mapsto (0, Vm_1,\ldots, Vm_{n-1}),\\
\ul{p}: i_{n*}\tilde{G}_{n-1}\to \tilde{G}_n,& (m_1,\ldots, m_{n-1})\mapsto (0, \ul{p}m_1,\ldots, \ul{p}m_{n-1}).
\end{array} \]
It is straightforward to check that $\tilde{G}(M)=(\{\tilde{G}_n(M)\}_{n\ge 1}, \pi, F,d,V,\ul{p})$ 
becomes a de Rham-Witt system and it is flasque by its definition. Also, the inclusion $M\inj G(M)$ induces  an inclusion 
\[M_n\inj \tilde{G}_n(M),\quad m\mapsto (\pi^{n-1}(m), \ldots, \pi(m), m).\]
By definition this yields an inclusion of de Rham-Witt systems $M\inj \tilde{G}(M)$ and we are done.
\end{proof}

%% \begin{remark}
%%  We don't know whether $\drw_X$  has enough injective objects.
%% \end{remark}

\begin{proposition}\label{derived-functors-exist}
 Let $f:X\to Y$ be a morphism between $k$-schemes and $\Phi$ a family of supports on $X$. Then the right derived functors 
\[R\ul{\Gamma}_\Phi: D^+(\drw_X)\to D^+(\drw_X),\]
\[Rf_*: D^+(\drw_X)\to D^+(\drw_Y),\]
\[R\varprojlim : D^+(\drw_X)\to D^+(\cdrw_X)\]
\[Rf_\Phi: D^+(\drw_X)\to D^+(\drw_Y),\]
\[R\hat{f}_\Phi: D^+(\drw_X)\to D^+(\cdrw_Y),\]
exist. Furthermore there are the following natural isomorphisms:
\begin{enumerate}
 \item Let $f:X\to Y$ and $g: Y\to Z$  morphisms, then $Rg_*Rf_*= R(g\circ f)_*$.
 \item Let $\Phi$ and $\Psi$ be two families of supports on $X$, then $R\ul{\Gamma}_{\Phi}R\ul{\Gamma}_\Psi= R\ul{\Gamma}_{\Phi\cap \Psi}$.
\item  Let $f: X\to Y$ be a morphism and $\Psi$ a family of supports on $Y$, then $R\ul{\Gamma}_{\Psi}Rf_*= Rf_* R\ul{\Gamma}_{f^{-1}(\Psi)}$.
\item Let $f:X\to Y$ be a morphism, then $R\varprojlim Rf_* =Rf_* R\varprojlim$.
\item Let $f:X\to Y$ be a morphism and $\Phi$ a family of supports on $X$. Then $Rf_\Phi= Rf_*R\ul{\Gamma}_\Phi$ and $R\hat{f}_\Phi= R\varprojlim Rf_\Phi$.
       If $Z$ is a closed subset of $X$ and $\Phi=\Phi_Z$, then also $R\hat{f}_{\Phi_Z}= Rf_*R\ul{\Gamma}_ZR\varprojlim$. 
\end{enumerate}
\end{proposition}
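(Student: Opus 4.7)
The plan is to apply the standard Grothendieck machinery for deriving left-exact functors, using the class of flasque de Rham--Witt systems (Definition~\ref{flasque-dRW}) as the adapted class. The three ingredients we need are: (i) $\drw_X$ and $\cdrw_X$ have enough flasques (Lemma~\ref{enough-flasque}); (ii) each of the functors $\ul{\Gamma}_\Phi$, $f_*$, $\varprojlim$ is exact on flasques and sends flasques to flasques (Lemma~\ref{properties-of-flasque}); (iii) flasques are acyclic for these functors. Once (i)--(iii) are in place, Grothendieck's composition-of-derived-functors formalism yields the derived functors $R\ul{\Gamma}_\Phi$, $Rf_*$, $R\varprojlim$ and all of the natural isomorphisms formally.

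First I would verify the acyclicity in (iii). For $\ul{\Gamma}_\Phi$ and $f_*$, if $M$ is a flasque de Rham--Witt system then each $M_n$ is flasque (since $0\to K_n\to M_n\to M_{n-1}\to 0$ is exact with $K_n$ and $M_{n-1}$ flasque, by induction), and flasqueness of ordinary sheaves implies the classical acyclicity statements; combined with Lemma~\ref{properties-of-flasque}(2)--(3) one concludes that flasque de Rham--Witt systems are acyclic for $\ul{\Gamma}_\Phi$ and $f_*$ in the category $\drw$. For $R\varprojlim$ I would invoke Lemma~\ref{vanishing-of-Rlim}: flasqueness of $M$ yields both the vanishing $H^i(U,M_n)=0$ for any open $U$ and $i\ge 1$, and the Mittag--Leffler property for $(\Gamma(U,M_n))_n$ (the transition maps are surjective on every open, since the kernels $K_n$ are flasque). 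Hence $R^i\varprojlim M=0$ for $i\ge 1$, giving acyclicity for $\varprojlim$. The acyclicity for the composed functors $f_\Phi=f_*\circ \ul{\Gamma}_\Phi$ and $\hat{f}_\Phi=\varprojlim\circ f_\Phi$ then follows at once from (ii), since flasques are mapped to flasques at each step.

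Having established existence of the five derived functors on the bounded-below derived categories, the natural isomorphisms (1)--(5) are formal consequences. In each case I would choose a flasque resolution $M\to I^\bullet$ of an object $M\in D^+(\drw_X)$ and compute both sides by applying the relevant functors term-wise, using that each intermediate functor preserves flasques. Concretely: (1) follows from $f_*$ preserving flasques together with the equality $(gf)_*=g_*f_*$; (2) combines $\ul{\Gamma}_\Phi\circ \ul{\Gamma}_\Psi=\ul{\Gamma}_{\Phi\cap \Psi}$ with preservation of flasques by $\ul{\Gamma}_\Psi$; (3) combines the identity \eqref{supp-push} with preservation of flasques by $\ul{\Gamma}_{f^{-1}(\Psi)}$ (noting that $f_*$ of a flasque is flasque, so both $R\ul{\Gamma}_\Psi Rf_*$ and $Rf_*R\ul{\Gamma}_{f^{-1}(\Psi)}$ are computed by the same flasque complex); (4) uses that $f_*$ and $\varprojlim$ commute on the nose and both preserve flasques; (5) follows by unwinding the definitions \eqref{fPhi}, \eqref{fPhihat}, together with \eqref{Phihatrel} in the case of a single closed support, where the relation $\hat{f}_{\Phi_Z}=f_*\circ \Gamma_{\Phi_Z}\circ \varprojlim$ on the level of abelian sheaves is available.

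The main obstacle is purely book-keeping rather than conceptual: one must confirm that the flasque objects produced by the Godement-style construction in Lemma~\ref{enough-flasque} are simultaneously acyclic for \emph{all} five functors, so that a single flasque resolution can be used throughout the compatibility arguments. Lemma~\ref{properties-of-flasque} is precisely what ensures this and eliminates the need for any further adapted-class arguments; in particular, no injective-resolution argument is required, which is convenient because quasi-coherence is not generally preserved by the Godement construction but we only need existence of flasque (not injective) resolutions in $\drw_X$.
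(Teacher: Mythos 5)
Your proposal is correct and is essentially identical to the paper's argument: both deduce existence of all five derived functors from the existence of enough flasque de Rham–Witt systems (Lemma~\ref{enough-flasque}) together with the stability and exactness properties in Lemma~\ref{properties-of-flasque}, and both obtain the compatibility isomorphisms (1)--(5) by using a single flasque resolution throughout. The only difference is presentational: the paper compresses the formalism into citations of \cite[I, Cor.~5.3~$\beta$ and Cor.~5.5]{Ha}, whereas you unwind what those corollaries say, which amounts to the same computation.
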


\begin{proof}
The existence follows from \cite[I, Cor. 5.3, $\beta$]{Ha} (take $P$ there to be the flasque objects) together with the Lemmas \ref{enough-flasque} and \ref{properties-of-flasque}.
The compatibility isomorphisms follow from \cite[I, Cor 5.5]{Ha} and Lemma \ref{properties-of-flasque}, (2)-(4).
\end{proof}

\begin{remark}\label{etale-pullback}
Let $f: X\to Y$ be an \'etale morphism between $k$-schemes. Then $W_n(f)$ is \'etale and thus $W_n(f)^*$ is exact on the category of $W_n(\sO_Y)$-modules.
Therefore $f^*: \drw_Y\to \drw_X$ is exact and thus extends to 
\[f^*: D^+(\drw_Y)\to D^+(\drw_X).\]
In case $j: U\inj X$ is an open immersion we write $M_{|U}$ instead of $j^*M$ for $M\in D^+(\drw_X)$.
\end{remark}

\subsubsection{Cousin-complex for de Rham-Witt systems.}\label{CousinWitt}
Let $X$ be a $k$-scheme and $Z^\bullet$ the codimension filtration of $X$, i.e. $Z^q$ is the family of supports on $X$ consisting of all closed subsets of $X$ whose codimension is at least
$q$. Let $M$ be a de Rham-Witt system on $X$. Take a complex of flasque de Rham-Witt systems $G$ on $X$, which is a resolution of $M$, i.e. there is a quasi-isomorphism $M[0]\to G$.
The filtration of complexes of de Rham-Witt systems 
\[G\supset \ul{\Gamma}_{Z^1}(G)\supset\ldots\supset \ul{\Gamma}_{Z^q}(G)\supset\ldots\]
defines a spectral sequence of de Rham-Witt systems
\[E_{1}^{i,j}= \sH^{i+j}_{Z^i/Z^{i+1}}(M)\Longrightarrow \sH^{i+j}(M),\]
where we put $\sH^{i+j}_{Z^i/Z^{i+1}}(M)= H^{i+j}(\ul{\Gamma}_{Z^i}(G)/\ul{\Gamma}_{Z^{i+1}}(G))$.
We define the {\em Cousin complex of $M$} (with respect to the codimension filtration) $E(M)$ to be the complex
$E_1^{\bullet, 0}$ coming from this spectral sequence, i.e. it is the complex of de Rham-Witt systems
\[E(M): \sH^{0}_{Z^0/Z^1}(M)\xr{d^{0,0}_1} \sH^{1}_{Z^1/Z^0}(M)\xr{d^{1,0}_1}\ldots \xr{}\sH^{i}_{Z^i/Z^{i+1}}(M)\xr{d^{i,0}_1}\ldots . \]
It satisfies the following properties:
\begin{enumerate}
 \item $(E(M))_n= E(M_n)$ is the usual Cousin complex associated to $M_n$ (see e.g. \cite[IV, \S 2]{Ha} or \cite[p. 107-109]{Co}).
 \item \[E^i(M)= \sH^i_{Z^i/Z^{i+1}}(M)=\bigoplus_{x\in X^{(i)}}i_{x*}H^{i}_x(M),\] 
        where $H^{i}_x(M)= (\varinjlim_{U\ni x}H^i_{\ol{\{x\}}\cap U}(U,M_n))_n$, which is a de Rham-Witt system on $\Spec \sO_{X,x}$ supported in the closed point $x$,
       $i_{x}: \Spec\sO_{X,x}\to X$ is the natural map and $X^{(i)}$ is the set of points $x$ of codimension $i$ in $X$ (i.e. $\dim \sO_{X,x}=i$). 
 \item The natural augmentation $M\to E(M)$ is a resolution of $M$ if and only if  $H^i_x(M_n)=0$ for all $x\in X^{(j)}$ with $j\neq i$ and for all $n\ge 1$.
\end{enumerate}
((1) holds since each $G_n$ is a flasque resolution of $M_n$; (2) follows from (1) and \cite[IV, \S 1, Var. 8, Motif F]{Ha};
      (3) follows from (1) and \cite[IV, Prop. 2.6, (iii)$\Longleftrightarrow$(iv)]{Ha} and \cite[IV, \S 1, Var. 8, Motif F]{Ha}.)

\begin{lemma}\label{drw-is-CM}
Let $X$ be a smooth $k$-scheme. Then $E(W_\bullet\Omega_X)$ is a flasque resolution of quasi-coherent de Rham-Witt systems of $W_\bullet\Omega_X$.
\end{lemma}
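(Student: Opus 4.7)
The plan is to verify the two defining conditions separately: (A) the augmentation $W_\bullet\Omega_X \to E(W_\bullet\Omega_X)$ is a quasi-isomorphism of de Rham-Witt systems, and (B) each term $E^i(W_\bullet\Omega_X)$ is a flasque and quasi-coherent de Rham-Witt system in the sense of Definition~\ref{flasque-dRW}.

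For (A), by property~(3) of the Cousin complex recalled in \ref{CousinWitt}, the task reduces to showing that for every $n \geq 1$, every $j \geq 0$, and every point $x$ of codimension $\ell$ in $X$, one has $H^i_x(W_n\Omega^j_X)=0$ for $i \neq \ell$; equivalently, that each $W_n\Omega^j_X$ satisfies the Cohen--Macaulay vanishing along the codimension filtration. I would proceed by induction on $n$. For $n=1$ the sheaf $\Omega^j_X$ is locally free on the smooth scheme $X$, so this is the standard Cohen--Macaulay property of $\sO_X$-modules. For the inductive step I would invoke Illusie's canonical filtration on $W_{n+1}\Omega^j_X$ (see \cite[I, Prop.~3.11]{IlDRW}) whose graded pieces are extensions of the coherent, locally free $\sO_X$-modules $\Omega^j_X / B_i$ and $Z_i\Omega^{j-1}_X / B_i$. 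Since each of these is Cohen--Macaulay as an $\sO_X$-module on $X$, and the property ``$H^i_x=0$ for $i \neq \mathrm{codim}(x)$'' is stable under extensions in the long exact sequence of local cohomology, the induction goes through.

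For (B), quasi-coherence of each $E^i(W_\bullet\Omega_X)_n = E^i(W_n\Omega^*_X)$ is immediate from property~(1) of \ref{CousinWitt} together with the quasi-coherence of $W_n\Omega^j_X$ on the noetherian scheme $W_nX$. Flasqueness of each $E^i(W_n\Omega^j_X)$ as an ordinary sheaf is classical for Cousin complexes (see \cite[IV, Prop.~3.1]{Ha}). To upgrade this to flasqueness as a de Rham-Witt system, I still need to show that for each $n \geq 2$, the restriction map $\pi\colon E^i(W_n\Omega_X) \to E^i(W_{n-1}\Omega_X)$ is surjective with flasque kernel. Applying local cohomology to the short exact sequence of coherent sheaves
\[0 \longrightarrow K_n^j \longrightarrow W_n\Omega^j_X \xrightarrow{\ \pi\ } W_{n-1}\Omega^j_X \longrightarrow 0,\]
where $K_n^j = \ker\pi$, one inductively sees by the same filtration argument that $K_n^j$ is again Cohen--Macaulay, so the long exact sequence of local cohomology collapses at each codimension-$i$ point into a short exact sequence of flasque skyscraper-type sheaves, giving the required short exact sequence of Cousin terms.

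The main obstacle is the Cohen--Macaulay assertion for $W_n\Omega^j_X$ (and for the kernels $K_n^j$); everything else is a formal consequence of the general Cousin complex formalism and the structural results of \ref{CousinWitt}. One could alternatively cite Ekedahl's result in \cite{EI} that $W_n\Omega^j_X$ is Cohen--Macaulay directly, bypassing the explicit filtration argument.
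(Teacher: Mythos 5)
Your proof is correct and follows the paper's main line: both deduce the vanishing $H^i_x(W_n\Omega^j_X)=0$ for $x$ of codimension $\neq i$ from the filtration on $W_n\Omega^j_X$ whose graded pieces are extensions of locally free $\sO_X$-modules (the paper cites \cite[I, Cor.~3.9]{IlDRW}), and conclude that $E(W_\bullet\Omega_X)$ is a quasi-coherent resolution. The two arguments diverge at the step showing that the transition maps $E^i(W_n\Omega_X)\to E^i(W_{n-1}\Omega_X)$ are surjective with flasque kernel, which is what flasqueness as a de~Rham--Witt system requires. You deduce this from the Cohen--Macaulay property of the kernels $K_n^j=\ker(W_n\Omega^j_X\to W_{n-1}\Omega^j_X)$: since $K_n^j=\gr^{n-1}W_n\Omega^j_X$ is the bottom piece of the same filtration, hence again an extension of locally free $\sO_X$-modules, $H^{i+1}_x(K_n^j)$ vanishes at every codimension-$i$ point, so the long exact sequence truncates to the desired short exact sequence. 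The paper instead proves the surjectivity of $H^i_x(W_n\Omega^j_X)\to H^i_x(W_{n-1}\Omega^j_X)$ directly, by writing these groups as $\varinjlim_r\Gamma(U,W_n\Omega_X)/([t_1]^r,\dots,[t_i]^r)$ using Teichm\"uller lifts of a regular system of parameters, for which the surjectivity is visible. Your CM-based route is a clean equivalent alternative; the paper's Koszul description has the side benefit of being reusable (cf.~Remark~\ref{1.7.8}). Two minor polish points: your phrase ``one inductively sees by the same filtration argument that $K_n^j$ is Cohen--Macaulay'' is better stated as the single observation that $K_n^j$ \emph{is} a graded piece of that filtration (no induction needed), and likewise the induction on $n$ for part (A) is unnecessary since the filtration argument applies to any fixed $n$ directly.
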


\begin{proof}
By \cite[I, Cor. 3.9]{IlDRW} the graded pieces of the standard filtration on $W_n\Omega^q_X$ are  extensions of locally free $\sO_X$-modules. 
Thus 
\eq{loc-coh-van}{H^i_x(W_n\Omega^q_X)=0\quad  \text{for all } x\in X^{(j)}, \text{ with }j\neq i, \text{ and all } q, n\ge 1.}
Thus $E(W_\bullet\Omega_X)$ is a quasi-coherent resolution of $W_\bullet\Omega_X$. 
Next we claim, that the transition morphisms
\eq{trans-surj}{H^i_x(W_n\Omega^q_X)\to H^i_x(W_{n-1}\Omega^q_X)}
 are surjective for all $x\in X^{(i)}$ and $n\ge 2$. 
Indeed, for $x\in X^{(i)}$ we can always find an open affine neighborhood 
$U=\Spec A$ of $x$ and sections $t_1,\ldots, t_i$ such that $\ol{\{x\}}\cap U=V(t_1,\ldots, t_i)$. This also implies for all $n\ge 1$,
$W_n(\ol{\{x\}})\cap W_nU=V([t_1],\ldots, [t_i])\subset W_n U$, where $[t]\in W_nA$ is the Teichm\"uller lift of $t\in A$. Then by \cite[Exp. II, Prop. 5]{SGA2} 
\[H^i_{\ol{\{x\}}\cap U}(U, W_n\Omega_X)= \varinjlim_r \frac{\Gamma(U,W_n\Omega_X)}{([t_1]^r,\ldots, [t_i]^r)\Gamma(U,W_n\Omega_X)}.\]
 In particular the transition maps \eqref{trans-surj} are surjective.
If we denote the kernel of the restriction morphism $W_n\Omega_X\to W_{n-1}\Omega_X$ by $K_n$, then this and \eqref{loc-coh-van} implies, that the sequence
\[0\to E^i(K_n)\to E^i(W_n\Omega_X)\to E^i(W_{n-1}\Omega_X)\to 0\]
is an exact sequence of flasque abelian sheaves on $X$ and this proves the lemma.
\end{proof}

\subsection{Witt-dualizing systems}\label{1.4} 

\subsubsection{}\label{fflat}
Let $f:X\to Y$ be a finite morphism between two finite dimensional noetherian schemes.
Using the notation from \cite[III, \S6]{Ha} we denote by $f^\flat: D^+(\sO_Y)\to D^+(\sO_X)$, the functor which sends a complex $C$ to 
\[f^\flat(C)=f^{-1}R\sHom_{\sO_Y}(f_*\sO_X, C)\otimes_{f^{-1}f_*\sO_X}\sO_X.\]
Evaluation by 1 induces the finite trace morphism on $D^+_{\rm qc}(\sO_Y)$ (see \cite[III, Prop. 6.5]{Ha})
\eq{Trf}{\Trf: f_*f^\flat\to \id_{D^+_{\rm qc}(\sO_Y)}} 
and composition with the natural map 
\eq{epsilonf}{\epsilon_f: f_*R\sHom_X(-,-)\to R\sHom_Y(f_*(-),f_*(-)) }
induces an isomorphism for any $ A\in D^-_{\rm qc}(\sO_X)$, $B\in D^+_{\rm qc}(\sO_Y)$
\eq{finite-duality}{\theta_f=\Trf_f\circ\epsilon_f: f_*R\sHom_X(A, f^\flat B)\xr{\simeq} R\sHom_Y(f_*A, B).}
In particular, we see that for any morphism $\varphi: f_*A\to B$ in $D_{\rm qc}(\sO_Y)$ , with $A$ bounded above and $B$ bounded below there exists a morphism
$^a\varphi: A\to f^\flat B$ in $D_{\rm qc}(\sO_X)$, such that $\varphi$ equals the composition
\[f_*A\xr{f_*(^a\varphi)} f_*f^\flat B\xr{\Trf_f} B.\]
We call $^a\varphi$ {\em the adjoint} of $\varphi$.

\subsubsection{}\label{iflat-sigmaflat}
Let $X$ be a $k$-scheme and denote by $D(\sC_{\N, X})=\prod_{n\ge 1} D(\sC_{n, X})$ the derived category of $\sC_\N$. 
Since the morphisms $i_n$ and $W_n(F_X)$ are finite for all $n$, the functors $i_*$, $\sigma_*$, $\Sigma_*$ are exact and extend to functors
on $D(\sC_{\N })$, which still satisfy the identities \eqref{isigmacomm}, \eqref{Sigmacomm}. On $D^+_{\rm qc}(\sC_\N)$ 
we define $i^\flat$, $\sigma^\flat$, $\Sigma^\flat$ as follows:
\[(i^\flat M)_n:= i_{n+1}^\flat M_{n+1},\quad (\sigma^\flat M)_n:= W_n(F_X)^\flat M_n,\quad (\Sigma^\flat M)_n= W_n(F^n_X)^\flat M_n.\]
There is an obvious way to define $\Trf_i$, $\Trf_\sigma$, $\Trf_\Sigma$, $\epsilon_i$, $\epsilon_\sigma$, $\epsilon_\Sigma$ such that the compositions
\[i_*R\sHom(M, i^\flat N)\xr{\epsilon_i}R\sHom(i_*M, i_*i^\flat N)\xr{\Trf_i} R\sHom(i_*M, N),\]
\[\sigma_*R\sHom(M, \sigma^\flat N)\xr{\epsilon_\sigma}R\sHom(\sigma_*M, \sigma_*\sigma^\flat N)\xr{\Trf_\sigma} R\sHom(\sigma_*M, N),\]
\[\Sigma_*R\sHom(M, \Sigma^\flat N)\xr{\epsilon_\Sigma}R\sHom(\Sigma_*M, \Sigma_*\Sigma^\flat N)\xr{\Trf_\Sigma} R\sHom(\Sigma_*M, N)\]
are isomorphisms for $M\in D^-_{\rm qc}(\sC_\N)$ and $N\in D^+_{\rm qc}(\sC_\N)$.
 
\begin{definition}[{\cite[III, Def. 2.2]{EI}}]\label{1.4.1}
A {\em Witt quasi-dualizing system} on $X$ is a collection 
$(Q,\ul{p},C,V)$ where  $Q$ is an object in $\mc{C}_{\N,{\rm qc}}$ and  
$$
\ul{p}:i_*Q\xr{} Q, \quad C:\Sigma_*Q\xr{} Q, \quad V:\sigma_*i_*Q\xr{} Q
$$ 
are morphisms in $\mc{C}_{\N}$ such that the following holds:
\begin{itemize}
\item[(a)] $V\circ \sigma_*i_*C = C \circ \Sigma_*\ul{p}$,
\item[(b)] $\ul{p}\circ i_*V=V\circ \sigma_*i_*\ul{p}$.
\end{itemize}

A {\em Witt dualizing system} is a Witt quasi-dualizing system, which has the additional property, that the adjoints
\eq{eq-adjoints}{^a\ul{p}: Q\xr{\simeq} i^\flat Q, \quad ^aC: Q\xr{\simeq}\Sigma^\flat Q, \quad ^aV: Q\xr{\simeq}i^\flat \sigma^\flat Q}
are quasi-isomorphisms.
 
A morphism $\varphi$ between Witt (quasi-) dualizing systems is a morphism
in $\mc{C}_{\N}$ commuting with $\ul{p}, V,$ and $C$. 
\end{definition}

A Witt  (quasi-) dualizing system $(Q,\ul{p},C,V)$ on $X$ is called {\em coherent} if  $Q_n$ is coherent for all $n\ge 1$. 

\begin{example}\label{1.4.4}
 \begin{enumerate}
  \item The system 
 \[W_\bullet\omega:=(\{W_n\}_{n\ge 1},\, \ul{p},\, C:=\{ W_n(F_{\Spec k})^{-n}\}, \, V:=\{W_n(F_{\Spec k})^{-1}\ul{p}\})\] 
is a Witt dualizing system on $\Spec k$, where $\ul{p}$ is the usual map ``lift and multiply by $p$'', which is concentrated degree $0$.
For this, first notice that $W_n$ is an injective $W_n$-module for all $n\ge 1$. 
Then one easily checks that the following maps are isomorphisms and adjoint to $\ul{p}$, $C$ and $V$ respectively: 
\[ W_\bullet\xr{\simeq} i^\flat W_\bullet=Hom_{W_\bullet}(i_*W_\bullet, W_\bullet),\quad a \mapsto (b\mapsto \ul{p}ab),\] 
\[W_\bullet\xr{\simeq} \Sigma^\flat W_\bullet= Hom_{W_\bullet}((\Sigma_*W_\bullet), W_\bullet), \quad a \mapsto (b\mapsto Cab), \]
\[W_\bullet\xr{\simeq} i^\flat\sigma^\flat W_\bullet= Hom_{W_\bullet}(\sigma_* i_* W_\bullet, W_\bullet), \quad  a\mapsto (b\mapsto V ab).\]
\item Let $X$ be a smooth $k$-scheme of pure dimension $N$. Then 
\[W_\bullet\omega_X:=(\{W_n\Omega^N_X\}_{n\ge 1},\, \ul{p},\, C,\, V) \]
is a Witt dualizing system which by definition is concentrated in degree $N$. Here $\ul{p}$ is ``lift and multiply by $p$'' and $V$ is the Verschiebung. On the $n$-th level $C$ is defined as the composition:
\[C^n: (\Sigma_*W_\bullet\Omega^N_X)_n\to (\Sigma_*W_\bullet\Omega^N_X/d(W_\bullet\Omega^{N-1}_X))_n\xr{(C^{-n})^{-1}} W_n\Omega^N_X,\]
where $C^{-n}: W_n\Omega^N_X\xr{\simeq} W_n(F^n_X)_*W_n\Omega^N_X/d(W_n\Omega^{N-1}_X)$ is the inverse Cartier isomorphism from \cite[III, Prop. (1.4).]{IR}.
One easily checks that $\ul{p}$, $C^n$ and $V$ satisfy the relations (a), (b) in Definition \eqref{1.4.1}. The condition on the adjoints \eqref{eq-adjoints} is harder and follows
from Ekedahl's result $W_n\Omega^N_X= W_n(f)^!W_n$, with $f: X\to \Spec k$ the structure map, see \cite[I and II, Ex. 2.2.1.]{EI}.
Notice that $W_\bullet\omega_{\Spec k}= W_\bullet\omega$.
\end{enumerate}

\end{example}

\subsubsection{} \label{1.4.3}
Let $(Q,\ul{p},C,V)$ be a Witt dualizing system on $X$. 
We may express the equalities in \eqref{eq-adjoints} as 
\[H^0(^a\varphi): Q\xr{\simeq} H^0(f^\flat Q) \quad \text{and} \quad H^0(f^\flat Q)[0]\cong f^\flat Q,\]
 where $(f,\varphi)\in \{(i,\ul{p}),(\sigma,V), (\Sigma,C)\}$. 
Therefore by the definition of the adjoints, $\ul{p}$, $C$ and $V$ factor as follows:
\[\ul{p}: i_{*}Q\xr{\simeq,\, H^0(^a\ul{p})}i_{*}H^0(i^\flat Q)\xr{H^0(\Trf_{i})} Q,\]
\[C: \Sigma_*Q\xr{\simeq, \, H^0(^aC)} \Sigma_*H^0(\Sigma^\flat Q)\xr{H^0(\Tr_{\Sigma})} Q,\]
\[V: \sigma_*i_*Q\xr{\simeq, \, H^0(^aV)} \sigma_* i_* H^0(i^\flat \sigma^\flat Q)\xr{H^0(\Tr_{\sigma i})} Q.\]

Furthermore it follows from \ref{iflat-sigmaflat}, that the natural transformations
\[\ul{p}\circ \epsilon_{i}: i_*\sHom_{\mc{C}_{\N}}((-), Q)\xr{} \sHom_{\mc{C}_{\N}}(i_{*}(-),Q),\]
\[C\circ \epsilon_{\Sigma}: \Sigma_*\sHom_{\mc{C}_{\N}}((-), Q)\xr{} \sHom_{\mc{C}_{\N}}(\Sigma_*(-),Q),\]
\[V\circ\epsilon_{\sigma i}: \sigma_*i_*\sHom_{\mc{C}_{\N}}((-), Q)\xr{} \sHom_{\mc{C}_{\N}}(\sigma_* i_*(-),Q)\]
are isomorphisms when restricted to the category $\sC_{\N, {\rm qc}}$.

 \subsubsection{}\label{dualizing-functor} Let $M$ be a \emph{quasi-coherent} de Rham-Witt system and $Q$ a Witt dualizing system on $X$. Then we may define
maps $\pi$, $F$, $V$, $d$ and $\ul{p}$ on $\sHom_{\sC_\N}(M,Q)\in\sC_\N$ as follows: 
\[\pi: \sHom(M, Q)\xr{\circ \ul{p}} \sHom(i_*M, Q)\xr{(\ul{p}\circ \epsilon_{i})^{-1}} i_{*}\sHom(M, Q),  \]
\[F: \sHom(M, Q)\xr{\circ V}\sHom(\sigma_*i_*M, Q)
             \xr{(V\circ\epsilon_{\sigma i})^{-1}}\sigma_*i_*\sHom(M, Q),\]
\[V: \sigma_*i_*\sHom(M, Q)\xr{(V\circ\epsilon_{\sigma i})} \sHom(\sigma_*i_*M, Q)
             \xr{\circ F} \sHom(M,Q),\]
\[\ul{p}: i_{*}\sHom(M, Q)\xr{\ul{p}\circ\epsilon_{i}} \sHom(i_{*}M, Q)\xr{\circ \pi}\sHom(M, Q),\]
\mlnl{d: \Sigma_*\sHom(M, Q)\xr{(C\circ\epsilon_{\Sigma})}\sHom(\Sigma_*M, Q)\xr{\circ d} \sHom(\Sigma_*M(-1), Q)\\
                \xr{\simeq, \, \alpha} \sHom(\Sigma_*M, Q)(1)\xr{(C\circ\epsilon_{\Sigma})^{-1}} \Sigma_*\sHom(M, Q)(1), }
where the isomorphism $\alpha: \sHom(\Sigma_*M(-1), Q)\xr{\simeq} \sHom(\Sigma_*M, Q)(1)$ is given by multiplication with $(-1)^{q+1}$ in degree $q$.

\begin{proposition}[{\cite[III, 2.]{EI}}]\label{1.4.7}
The above construction yields a functor
\[\sHom(-,Q): (\drw_{X,{\rm qc}})^o\lra \drw_{X},\]
which has the following properties
\begin{enumerate}
 \item A morphism of Witt dualizing systems $Q\to Q'$ induces a natural transformation of functors $\sHom(-,Q) \to \sHom(-,Q')$.
 \item The functor $\sHom(-,Q)$  restricts to a functor 
$(\drw_{X,{\rm c}})^o\to \drw_{X, {\rm qc}}$ and if $Q$ is coherent, then also to 
       $(\drw_{X,{\rm c}})^o\to \drw_{X, {\rm c}}$.
 %% \item The natural maps $M_n\to \sHom_{{\rm gr}-W_n\sO_X}(\sHom_{{\rm gr}-W_n\sO_X}(M_n, Q_n),Q_n)$ induce  a natural transformation of functors on $\drw_{X,{\rm c}}$
 %%              \[\id\lra \sHom(\sHom(-,Q_\bullet), Q_\bullet).\]
\item For all $M\in \drw_{X,{\rm qc}}$ and all $m\in \Z$ there is a natural isomorphism 
      \[ \sHom(M(m),Q)\simeq \sHom(M,Q)(-m), \]
      given by multiplication with $(-1)^{qm+\frac{m(m-1)}{2}}$ in degree $q$ and a natural isomorphism
       \[\sHom(M,Q(m))\simeq \sHom(M,Q)(m)\]
      given by the identity in each degree. (There is some freedom in defining these isomorphisms; our choice is compatible with the sign convention for complexes in \cite{Co}.) 
    
\end{enumerate}
\end{proposition}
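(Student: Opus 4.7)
The plan is to verify, in order: first, that for any $M\in\drw_{X,{\rm qc}}$ the maps $\pi, F, V, \ul{p}, d$ constructed in \ref{dualizing-functor} endow $\sHom_{\sC_\N}(M,Q)$ with the structure of a de Rham-Witt system, i.e.\ that they satisfy axioms (a)--(g) of Definition \ref{definition-W} and (a)--(d) of Definition \ref{definition-drw}; second, that $M\mapsto \sHom(M,Q)$ is contravariantly functorial in $M$; and then to deduce (1)--(3).

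The first step is the technical heart. Each structure map on $\sHom(M,Q)$ is, by construction, a composition of two pieces: pre-composition with the opposite structure map of $M$, and an adjunction isomorphism built from the corresponding structure map of $Q$ (cf.\ \ref{iflat-sigmaflat} and \ref{1.4.3}, where the assumption that $Q$ is Witt dualizing makes these adjunctions isomorphisms on quasi-coherent objects). Consequently, each axiom on $\sHom(M,Q)$ unfolds, after sliding factors across the $\epsilon$-isomorphisms, into a pairing of an axiom on $M$ with the matching axiom on $Q$. For instance $V\circ F$ on $\sHom(M,Q)$ becomes pre-composition with $F\circ V=p$ on $M$, giving axiom (a); axioms (c)--(g) similarly reduce to the compatibilities among $\ul{p}, V, \pi, F$ on $M$ combined with relations (a), (b) of Definition \ref{1.4.1} on $Q$. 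For the $d$-axioms, (d) $d(1)\circ d=0$ on $\sHom(M,Q)$ follows from $d(1)\circ d=0$ on $M$ together with a sign verification: the two passages through the intertwiner $\alpha$ contribute a total factor $(-1)^{(q+1)+1}(-1)^{q+1}=1$ in degree $q$. Axioms (a)--(c) for $d$ reduce to the compatibility of $d$ with $F, V, \pi, \ul{p}$ on $M$ together with the identity $V\circ \sigma_*i_*C=C\circ \Sigma_*\ul{p}$ on $Q$.

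Functoriality in $M$ is immediate: any $\varphi: M\to M'$ in $\drw_{X,{\rm qc}}$ induces pre-composition $\varphi^*:\sHom(M',Q)\to\sHom(M,Q)$, and compatibility with all structure maps follows from the compatibility of $\varphi$. Property (1) is dual: a morphism $\psi: Q\to Q'$ of Witt dualizing systems commutes with $\ul{p}, C, V$ by definition, so post-composition with $\psi$ commutes with every structure map on $\sHom(M,-)$. Property (2) is a local computation: if $M$ is coherent and $Q_n$ is (quasi-)coherent, then each $\sHom_{W_n\sO_X}(M_n, Q_n)$ is (quasi-)coherent, and the finite maps $i_n, W_n(F_X), W_n(F_X)^n$ preserve these categories under $f_*$ and $f^\flat$.

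For (3) the underlying graded sheaves of $\sHom(M(m),Q)$ and $\sHom(M,Q)(-m)$ coincide, and likewise for $\sHom(M,Q(m))$ and $\sHom(M,Q)(m)$. The only non-formal point is compatibility with the differential: by \eqref{firstshift} and Example \ref{1.2.4}(3), $M(m)$ carries the shifted differential $(-1)^m d_M$, while the asymmetric intertwiner $\alpha$ appearing in the definition of $d$ on $\sHom$ contributes additional signs. A direct count shows the correct intertwiner is multiplication by $(-1)^{qm+m(m-1)/2}$ in degree $q$, while for the second isomorphism the identity suffices. The main obstacle in the whole argument — essentially the only source of possible error — is the sign bookkeeping in the $d$-axioms and in (3); all the rest is a formal diagram chase with the dualizing adjunction.
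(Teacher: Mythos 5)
Your proposal is correct and fills in exactly the verifications the paper declares ``straightforward to check the relations in Definition~\ref{definition-drw}''; in particular the decomposition of each axiom on $\sHom(M,Q)$ into a pairing of the matching axiom on $M$ (via pre-composition) with the relations of Definition~\ref{1.4.1} on $Q$ (via the $\ul{p}\circ\epsilon_i$, $C\circ\epsilon_\Sigma$, $V\circ\epsilon_{\sigma i}$ adjunction isomorphisms of \ref{1.4.3}) is precisely the right way to organise the diagram chase. Two harmless slips worth fixing: in your verification of axiom~(a), the composite $V_{\Hom}\circ F_{\Hom}$ is pre-composition with $V_M\circ F_M$ (not $F_M\circ V_M$); and your sign tally $(-1)^{(q+1)+1}(-1)^{q+1}$ equals $-1$, not $1$ — though this is immaterial since $d^2$ on $\sHom(M,Q)$ already vanishes by pre-composition with $d_M(1)\circ d_M=0$, independently of the $\alpha$-signs.
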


\begin{proof}
It is straightforward to check the relations in Definition \ref{definition-drw}. %%  For $d\circ\pi=\pi\circ d$ and $d\circ\ul{p}=\ul{p}\circ d$ use the equality
\end{proof}

\subsection{Residual complexes and traces}\label{section-Residual-complexes-and-traces}
In this section $A$  will always be a regular local ring, all schemes are of finite type over $A$ and all morphisms will be $A$-morphisms.
The results of this section will be applied in the next sections in the case $A=W$ and schemes of finite type over $W_n$, some $n$. 

\subsubsection{Review of residual complexes.}\label{1.6.1} 
The general references for residual complexes are \cite[VI]{Ha}, \cite[3.2]{Co}. Let $X$ be an $A$-scheme.
A {\em residual complex on $X$} is a bounded complex $K$ of quasi-coherent and injective
$\sO_X$-modules, which has {\em coherent cohomology} and such that there exists an isomorphism of $\sO_X$-modules
$\bigoplus_{q\in\Z} K^q\cong \bigoplus_{x\in X} i_{x*}J(x)$, where 
$i_x: \Spec \sO_{X,x}\inj X$ is the inclusion and $J(x)$ is an injective hull of $k(x)$ in $\sO_{X,x}$
(i.e. it is an injective $\sO_{X,x}$-module which contains $k(x)$ and such that,
for any $0\neq a\in J(x)$ exists a $b\in\sO_{X,x}$ with $0\neq ba\in k(x)$). It follows that $i_{x*}J(x)$ is 
supported in $\overline{\{x\}}$. 
{\em The codimension function on $X$ associated to $K$} is the unique function 
$d_{K}: |X|\to \Z$ such that $K^q=\bigoplus_{d_{K^\bullet}(x)=q} i_{x*}J(x)$ for all $q$.
If  $x_0$ is an immediate specialization of $x\in X$, then $d_{K}(x_0)=d_{K}(x)+1$.
{\em The filtration $\ldots\subset Z^q_{K}\subset Z^{q-1}_{K}\subset\ldots\subset X$ associated
to $K$} is defined by $Z^q_{K}:=\{x\in X\,|\, d_{K}(x)\ge q\}$. On each irreducible
component of $X$ this filtration equals the shifted codimension filtration.

If $R\in D^b_{\rm c}(X)$ is a dualizing complex with associated codimension filtration $Z^\bullet$
(see \cite[3.1]{Co} for these notions), then the Cousin complex $E_{Z^\bullet}(R)$
of $R$ with respect to $Z^\bullet$ is a residual complex with associated filtration also $Z^\bullet$.
In $D^b_{\rm c}(X)$ we have $R\cong E_{Z^\bullet}(R)$ (since a dualizing complex
is Gorenstein).
 Particular examples of dualizing complexes
are $W_n$ on $X=\Spec W_n$,  $\sO_X[0]$ in case $X$ is regular, $\omega_X[N]=\Omega^N_{X/A}[N]$ in case $X$ is smooth of pure dimension $N$
and more general $f^!R \in D^b_c(X')$, where $f: X'\to X$ is a finite type morphism and $R$ is a dualizing complex on $X$.

On the other hand any residual complex on $X$ is a dualizing complex when viewed as a complex in $D^b_c(X)$. 
Furthermore, if $K$ is a residual complex on $Y$ with filtration $Z^\bullet$, then we have an
equality of complexes $E_{Z_\bullet}(K)=K$.

\subsubsection{$(-)^!$ for residual complexes}\label{1.6.2}
Let $f: X\to Y$  be a finite type morphism between $A$-schemes and
$K$ a residual complex on $Y$ with associated filtration $Z^\bullet_{K}=:Z^\bullet$ (which exists by the above).
Then there is a functor $f^\Delta$ from the category of residual complexes with filtration $Z^\bullet$ on
$Y$ to the category of residual complexes on $X$ with a fixed filtration denoted by $f^\Delta Z^\bullet$,
having the following properties:
\begin{enumerate}
 \item  For two finite type morphisms $f: X\to Y$ and $g: Y\to Z$ of $A$-schemes, there is an isomorphism 
        $c_{f, g}: (gf)^\Delta\xr{\simeq}f^\Delta g^\Delta$, which is compatible with triple compositions
         and such that $c_{\id, f}=\id=c_{f,\id}$.
\item  If $f:X\to Y$ is smooth and separated of relative dimension $r$, then there is an isomorphism
    \[\varphi_f: f^\Delta K\xr{\simeq} E_{f^{-1}Z^\bullet[r]}(\Omega^r_{X/Y}[r]\otimes f^* K).\] 
      Here $E_{f^{-1}Z^\bullet[r]}(\Omega^r_{X/Y}[r]\otimes f^* K)$ is the Cousin complex 
       associated to the complex $\Omega^r_{X/Y}[r]\otimes f^* K$  and the filtration $f^{-1}Z^\bullet[r]$. 
       If we have two smooth separated maps of some fixed relative dimension $f$ and $g$ as in (1),
       then $c_{f,g}$ is compatible with the natural map on the right.
\item If $f: X\to Y$ is finite, then there is an isomorphism
       \[\psi_f: f^\Delta K\xr{\simeq}E_{f^{-1}Z^\bullet}(\bar{f}^*R\sHom_{\sO_Y}(f_*\sO_{X}, K))
            =\bar{f}^*\sHom_{\sO_Y}(f_*\sO_{X}, K),\]
       where we set $\bar{f}^*(-):=f^{-1}(-)\otimes_{f^{-1}f_*\sO_X}\sO_X$ (which is an exact functor).
       If we have two finite maps $f$ and $g$ as in (1), then $c_{f, g}$ is compatible with the natural map on the 
       right of the above isomorphism.
\item Let 
\[\xymatrix{X_U\ar[r]^{u'}\ar[d]^{f'}\ar[dr]^h & X\ar[d]^f \\
             U\ar[r]^{u} & Y}\]
be a cartesian diagram of $A$-schemes with $u$ \'etale. Then there is an isomorphism 
           \[d_{u,f} : {f'}^\Delta u^*\xr{\simeq}{u'}^*f^\Delta,\]
which is compatible with compositions in $u$ and $f$ and with the isomorphisms in (2) and (3).
Furthermore, by (2) we have $u^*\cong u^\Delta$ and ${u'}^*\cong {u'}^\Delta$ and the following diagram commutes
\[\xymatrix{ {f'}^\Delta u^*\ar[rr]^-{d_{u,f}}\ar[d]^\simeq &    &          {u'}^*f^\Delta\ar[d]^\simeq \\
              {f'}^\Delta u^\Delta & h^\Delta\ar[l]_-{c_{f',u}}\ar[r]^-{c_{u', f}} &  {u'}^\Delta f^\Delta.}  \]
(To prove this commutativity one may assume that $f$ factors as $X\xr{i} P\xr{\pi} Y$, with $i$ a closed immersion and $\pi$ smooth and then use (2) and (3), cf. \cite[(3.3.34)]{Co}.)
Finally if $f$ and $g$ are \'etale, then $c_{f,g }: (gf)^\Delta\cong f^\Delta g^\Delta$ corresponds to the natural isomorphism
$(gf)^*\cong f^*g^*$.
\end{enumerate}
In fact $f^\Delta$ is defined by locally factoring $f: X\to Y$ into a closed immersion followed by 
a smooth morphism and then use (2) and (3) and glue (and then show that what you obtain is independent of all the choices). In $D^b_{\rm c}(X)$ we have
$f^\Delta K\cong f^!K$. For details, as well as more compatibilities,
see \cite[VI]{Ha} and \cite[3.2]{Co}. 

\subsubsection{The Trace for residual complexes}\label{1.6.3} 
The reference for this section is \cite[VI, 4,5, VII, 2]{Ha} and \cite[3.4]{Co}. Let $f: X\to Y$ be a finite type morphism between $A$-schemes. Let
$K$ be a residual complex on $Y$.
Then there exists a morphism of {\em graded sheaves} (in general not of complexes, which we indicate by the dotted arrow)
\[\Tr_f: \xymatrix{f_*f^\Delta K\ar@{.>}[r] &  K,}\]
which satisfies the following properties (and is also uniquely determined by the first three of them):
\begin{enumerate}
 \item $\Tr_f$ is functorial with respect to maps between residual complexes with same associated filtration and $\Tr_\id=\id$.
 \item If $g: Y\to Z$ is another morphism of finite type between $A$-schemes, then
        \[\Tr_{gf}= \Tr_g\circ g_*(\Tr_f) \circ(gf)_*c_{f,g}.\]
 \item If $f$ is finite, then $\Tr_f$ is a morphism of complexes and equals the composition
       \[\Tr_f: f_*f^\Delta K\xr{\simeq,\, \psi_f} \sHom_{\sO_Y}(f_*\sO_X, K)
                   \xr{\text{ev. at } 1} K. \]
\item $\Tr_f$ is compatible with \'etale base change (using the maps $d_{u,f}$ from \ref{1.6.2}, (4)).
\item If $f: X\to Y$ is {\em proper}, then $\Tr_f: f_*f^\Delta K\to K$ is a morphism of complexes.
\item If $f: \P^n_X\to X$ is the projection, then $\Tr_f$ is the composition (in $D^b_{\rm c}(X)$)
       \[\Tr_f: f_*f^\Delta K\xr{\simeq,\, \varphi_f}Rf_*(\Omega^n_{\P^n_X/X}[n])\otimes  K\simeq
                   K,\]
        where for the first isomorphism we used \ref{1.6.2}, (2) and the projection formula, the second
        isomorphism is induced by base change from the isomorphism
         \[\Z\xr{\simeq} H^n(\P^n_\Z, \Omega^n_{\P^n_\Z/\Z})=\check{H}^n(\fU, \Omega^n_{\P^n_\Z/\Z}),\quad 
             1\mapsto (-1)^{\frac{n(n+1)}{2}} \frac{dt_1\wedge\ldots \wedge dt_n}{t_1\cdots t_n},
         \]
where $\fU=\{U_0, \ldots, U_n\}$ is the standard covering of $\P^n_\Z$ and the $t_i$'s are the coordinate functions
on $U_0$.
\item(Grothendieck-Serre duality, special case) 
       If $f: X\to Y$ is {\em proper}, then for any $C\in D^-_{\rm qc}(X)$ the composition
         \[Rf_*R\sHom_X(C, f^\Delta K)\xr{\epsilon_f}
           R\sHom_{Y}(Rf_*C, f_*f^\Delta K)\xr{\Tr_f}
            R\sHom_{Y}(Rf_*C, K) \] 
       is an isomorphism. It is compatible with \'etale base change. 
\end{enumerate}

\begin{definition}\label{1.6.3.5}
We denote by ${\rm Sch}_{A,*} $ the category with objects given by pairs $(X, \Phi)$, where $X$ is a scheme of finite type over $A$ and $\Phi$ is a family of supports on $X$,
and the morphisms $f: (X, \Phi)\to (Y, \Psi)$ are given by separated 
$A$-morphisms $f: X\to Y$, whose restriction to $\Phi$ is {\em proper} and 
which satisfy $f(\Phi)\subset \Psi$.
\end{definition}

\begin{remark}\label{1.6.3.5.5}
Let $(X,\Phi)$ be an object in ${\rm Sch}_{A,*}$ and $K$ a residual complex on $X$. Then $\ul{\Gamma}_{\Phi} K$ is a direct summand of $K$ and is a complex
of quasi-coherent and injective  $\sO_X$-modules. Indeed the isomorphism $\bigoplus_{q\in\Z} K^q\cong \bigoplus_{x\in X} i_{x*}J(x)$ (see \ref{1.6.1}) implies
$\bigoplus_{q\in \Z}\ul{\Gamma}_\Phi K^q\cong \bigoplus_{\ol{\{x\}}\in \Phi} i_{x*}J(x).$
\end{remark}

\begin{corollary}\label{1.6.3.6}
 Let $f: (X, \Phi)\to (Y, \Psi)$ be a morphism in ${\rm Sch}_{A,*}$ and $K$ a residual complex on $Y$, then there exists a morphism of {\em complexes}
\[\Tr_f: f_{\Phi}f^\Delta K\to \ul{\Gamma}_{\Psi}(K),\]
where we set $f_{\Phi}:=f_*\circ\ul{\Gamma}_{\Phi}$, which satisfies the following properties:
\begin{enumerate}
 \item $\Tr_f$ is functorial with respect to maps between residual complexes with same associated filtration and $\Tr_\id=\id$.
\item The following diagram commutes
 \[\xymatrix{ f_\Phi f^\Delta K\ar[d]\ar[r]^{\Tr_f} & \ul{\Gamma}_\Psi (K) \ar[d]\\
                f_*f^\Delta K\ar@{.>}[r]^{\Tr_f} & K, }\]
 where the vertical maps are the natural ones and the lower horizontal map is the trace from \ref{1.6.3}, which is only a map of graded sheaves (visualized by the dotted arrow).
    By abuse of notation we write $f$ for both, the morphism $(X, \Phi)\to (Y,\Psi)$ in ${\rm Sch}_{A,*}$ and the underlying morphism of schemes $X\to Y$.
 \item If $g: (Y, \Psi)\to (Z, \Xi)$ is another morphism in ${\rm Sch}_{A, *} $, then
        \mlnl{\Tr_{gf}= \Tr_g\circ g_*(\Tr_f) \circ(gf)_*c_{f,g}: \\
           (gf)_{\Phi}(gf)^\Delta K\xr{c_{f,g}} g_{\Psi}f_{\Phi}f^\Delta g^\Delta K\xr{\Tr_{f}} g_{\Psi}g^\Delta K\xr{\Tr_g} \ul{\Gamma}_{\Xi}(K).}
\item $\Tr_f$ is compatible with \'etale base change in the following sense: Let 
      \[\xymatrix{X_U\ar[r]^{u'}\ar[d]_{f'} & X\ar[d]^f\\
                  U\ar[r]^u & Y }\]
      be a cartesian square of finite type $A$-schemes with $u$ \'etale; let $\Phi$, $\Psi$, $\Phi'$ and $\Psi'$ be families of supports on $X$, $Y$, $X_U$ and $U$, such that
      $f:(X, \Phi)\to (Y,\Psi)$ and $f': (X_U, \Phi')\to (U, \Psi')$ are in ${\rm Sch}_{A, *}$ and additionally $u^{-1}(\Psi)\subset \Psi'$, ${u'}^{-1}(\Phi)\subset \Phi'$.
      Then $u^*K\simeq u^\Delta K$ is a residual complex and the following diagram commutes:
  \[\xymatrix{u^*f_\Phi f^\Delta K\ar[r]^{u^*(\Tr_f)}\ar[d] & u^*\ul{\Gamma}_\Psi K\ar[d]\\
              f'_{\Phi'}{f'}^\Delta u^* K\ar[r]^{\Tr_{f'}} & \ul{\Gamma}_{\Psi'}u^* K.}\]
Here the vertical maps are given as follows: First, the composition $\ul{\Gamma}_\Psi\to \ul{\Gamma}_\Psi u_*u^*\simeq u_*\ul{\Gamma}_{u^{-1}(\Psi)}u^*\to u_*\ul{\Gamma}_{\Psi'}u^*$
gives by adjunction a map $u^*\ul{\Gamma}_{\Psi}\to \ul{\Gamma}_{\Psi'}u^*$, yielding the vertical map on the right in the diagram; similar we have a map ${u'}^*\ul{\Gamma}_{\Phi}\to \ul{\Gamma}_{\Phi'}{u'}^*$ inducing a map $u^*f_\Phi\to f'_{\Phi'}{u'}^*$; the vertical map on the left in the diagram is then given by the composition
$u^*f_\Phi f^\Delta\to f'_{\Phi'}{u'}^*f^\Delta \xr{d_{u,f}^{-1}} f'_{\Phi'}{f'}^\Delta u^*$, where $d_{u,f}$ is the map from \ref{1.6.2}, (4).
\item Let $j:U\to X$ be an open immersion such that $\Phi$ 
is contained in $U$. Then $j: (U, \Phi)\inj \XP$ is a morphism in ${\rm Sch}_{A,*}$ and $\Tr_j: j_\Phi j^\Delta  K\to \ul{\Gamma}_{\Phi} K$ is the 
       excision isomorphism, more precisely: $\Tr_j$ is given by the composition 
       \eq{1.6.3.6.1}{\Tr_j: j_\Phi j^\Delta K= \ul{\Gamma}_{\XP}j_* j^\Delta K\simeq \ul{\Gamma}_{\XP}j_* j^* K \stackrel{\text{exc.}}{=} \ul{\Gamma}_{\XP} K . }
\end{enumerate}
\end{corollary}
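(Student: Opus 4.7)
The plan is to derive the corollary from the ordinary graded trace of \ref{1.6.3} by restricting to supports and reducing the complex-versus-graded issue to the proper case.

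\emph{Step 1 (graded trace and property (2)).} By Remark \ref{1.6.3.5.5}, $\ul{\Gamma}_\Phi f^\Delta K$ is a direct graded summand of $f^\Delta K$, so the ordinary graded trace from \ref{1.6.3} restricts to a morphism of graded sheaves
$f_\Phi f^\Delta K \hookrightarrow f_* f^\Delta K \to K$.
Its image is set-theoretically supported on $f(\Phi) \subset \Psi$, hence lies in the graded direct summand $\ul{\Gamma}_\Psi K$. This defines $\Tr_f$ as a morphism of graded sheaves and gives property (2).

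\emph{Step 2 (promotion to a morphism of complexes).} The main step is to promote this to a map of complexes. Apply Nagata compactification to factor $f = \bar f \circ j$ with $j : X \hookrightarrow \bar X$ an open immersion and $\bar f : \bar X \to Y$ proper. Since $f$ is separated and $f|Z$ is proper for every $Z \in \Phi$, the general fact that $v$ is proper when $u \circ v$ is proper and $u$ is separated forces each such $Z$ to be closed in $\bar X$, so $\Phi$ is also a family of supports on $\bar X$ and excision yields $j_* \ul{\Gamma}_\Phi j^* = \ul{\Gamma}_\Phi$ on sheaves on $\bar X$. Using the identification $j^\Delta K \cong j^* K$ (from \ref{1.6.2}, (2) in relative dimension zero) together with the composition isomorphism $c_{j, \bar f}$, one obtains a canonical identification $f_\Phi f^\Delta K \cong \bar f_\Phi \bar f^\Delta K$ under which the graded map $\Tr_f$ becomes the composition $\bar f_\Phi \bar f^\Delta K \hookrightarrow \bar f_* \bar f^\Delta K \xr{\Tr_{\bar f}} K$ factored through $\ul{\Gamma}_\Psi K$. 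By \ref{1.6.3}, (5), $\Tr_{\bar f}$ is already a morphism of complexes because $\bar f$ is proper, and the exact functors $\ul{\Gamma}_\Phi$ and $\ul{\Gamma}_\Psi$ preserve complexes; hence $\Tr_f$ is a morphism of complexes. The underlying graded map being independent of the factorization (by Step 1) shows the resulting complex structure is canonical.

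\emph{Step 3 (remaining properties).} Property (1) is immediate from functoriality of $\ul{\Gamma}_\Phi$ and \ref{1.6.3}, (1). Property (3) is obtained by applying $\ul{\Gamma}_\Xi$ to the composition formula \ref{1.6.3}, (2) and using \eqref{supp-push} together with the naturality of $c_{f,g}$ (the factor $g_\Psi f_\Phi = (gf)_\Phi$ coming from $f(\Phi) \subset \Psi$). Property (4) follows from \ref{1.6.3}, (4) combined with the natural transformations $u^* \ul{\Gamma}_\Psi \to \ul{\Gamma}_{\Psi'} u^*$ and ${u'}^* \ul{\Gamma}_\Phi \to \ul{\Gamma}_{\Phi'} {u'}^*$ that appear in the statement. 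For property (5) apply Step 2 with the trivial compactification $\bar j = \mathrm{id}_X$ and $j' = j$: the identification of Step 2 then collapses to the excision chain \eqref{1.6.3.6.1}, and $\Tr_{\mathrm{id}_X} = \mathrm{id}$ by \ref{1.6.3}, (1) yields the claimed description.

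The main obstacle is Step 2: the ordinary trace of \ref{1.6.3} fails to respect differentials in general, and Nagata compactification is what converts the properness of $f | \Phi$ into the properness of a genuine morphism $\bar f$, allowing the invocation of \ref{1.6.3}, (5).
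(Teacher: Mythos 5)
Your proposal is correct and follows essentially the same route as the paper: define $\Tr_f$ by applying $\ul{\Gamma}_\Psi$ to the graded trace of \ref{1.6.3} (equivalently, by restriction to supports), then use a Nagata compactification $f=\bar f\circ j$ together with the excision identity and the identification $j^\Delta K\cong j^*K$ to reduce the compatibility with differentials to the proper case \ref{1.6.3}(5). The only noticeable difference is organizational: the paper establishes property (5) first (via the base-change compatibility (4)) and then invokes it as a building block in the Nagata argument, whereas you build the Nagata identification directly from $\varphi_j$, $c_{j,\bar f}$ and excision, and read off (5) afterwards as the special case $\bar X = X$; both orderings are valid, and the underlying ingredients — \ref{1.6.2}(2), \ref{1.6.3}(2) and \ref{1.6.3}(5) — are the same.
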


\begin{proof}
We define $\Tr_f : f_\Phi f^\Delta K\to \ul{\Gamma}_\Psi K$ to be the following composition
\eq{1.6.3.6.2}{f_\Phi f^\Delta K\to f_*\ul{\Gamma}_{f^{-1}(\Psi)} f^\Delta K\simeq  \xymatrix{\ul{\Gamma}_\Psi f_*f^\Delta K \ar@{.>}[r]^{\text{\ref{1.6.3}}}  & \ul{\Gamma}_{\Psi} K}.}
A priori this is only a map of graded sheaves. But we already observe, that the properties 1)- 4) follow immediately from the definition and the corresponding properties in \ref{1.6.3}. 
If $j:(U,\Phi)\inj (X,\Phi)$ is an open immersion as in (5), we may apply the excision identity $\ul{\Gamma}_{\XP}=\ul{\Gamma}_{\XP}j_*j^*$ to the map of graded sheaves 
$\Tr_j: \xymatrix{j_*j^\Delta (-)\ar@{.>}[r] & (-)}$ to obtain a commutative diagram
\[\xymatrix{ j_\Phi j^\Delta K=\ul{\Gamma}_\XP j_*j^\Delta K\ar@{=}[d]\ar@{.>}[r]^-{\Tr_j} & \ul{\Gamma}_\XP K\ar@{=}[d]\\
             \ul{\Gamma}_\XP j_*j^*(j_*j^\Delta K)\ar@{.>}[r]^-{j_*j^*(\Tr_j)} & \ul{\Gamma}_{\XP}j_*j^* K.    }\]
Now using the compatibility with base change as in (4) (in the situation $j=u=f$) implies that going around the diagram from the top left corner to the top right corner counterclockwise is
the isomorphism ({\em of complexes}) \eqref{1.6.3.6.1}. This gives (5) and in particular $\Tr_j$ is a morphism of complexes. 

It remains to show that $\Tr_f$ as defined in \eqref{1.6.3.6.2}  is in fact a morphism of complexes. For this we factor $f: X\to Y$ into an open immersion
$j:X\inj \bar{X}$ followed by a proper $A$-morphism $\bar{f}:\bar{X}\to Y$ (Nagata compactification). Since the restriction of $f$ to $\Phi$ is proper, it
follows that $\Phi$ also defines a family of supports on $\bar{X}$.  We consider  the following diagram  
\[\xymatrix{ f_\Phi f^\Delta K\ar[d]_\simeq\ar[r] &  \ul{\Gamma}_\Psi f_*f^\Delta K\ar@{.>}[r]^-{\Tr_f}\ar@{.>}[d]_{\Tr_j\circ c_{\bar{f}, j}} & \ul{\Gamma}_\Psi(K)\\
             \bar{f}_\Phi \bar{f}^\Delta K\ar[r] & \ul{\Gamma}_\Psi \bar{f}_*\bar{f}^\Delta K \ar[ur]_{\Tr_{\bar{f}}}, }\]
where the vertical isomorphism on the left is a morphism of complexes, which is given by 
\[f_*\ul{\Gamma}_{(X,\Phi)}f^\Delta K=\bar{f}_*\ul{\Gamma}_{(\bar{X}, \Phi)}j_* f^\Delta K  \xr{c_{\bar{f},j}} \bar{f}_*\ul{\Gamma}_{(\bar{X}, \Phi)}j_*j^\Delta \bar{f}^\Delta K
\xr{\Tr_j,\, (5)} \bar{f}_*\ul{\Gamma}_{(\bar{X}, \Phi)}\bar{f}^\Delta K.\]
Further $\Tr_{\bar{f}}$ is a morphism of complexes by (\ref{1.6.3}, (5)).
The diagram is commutative by \ref{1.6.3}, 2) and hence $\Tr_f$ as defined in \eqref{1.6.3.6.2} is a morphism of complexes.
\end{proof}

\subsubsection{}\label{1.6.4} Let $X$ be a finite type $A$-scheme and 
$\emptyset=Z^r\subset \ldots \subset Z^{q+1}\subset Z^q\subset\ldots \subset Z^s=X $ be a filtration with $r>s\in \Z$ and
such that $Z^q$ is stable under specialization and any $y\in Z^q\setminus Z^{q+1}$ is not a 
specialization of any other point in $Z^q$. Recall that a {\em Cousin complex on $X$ with respect to $Z^\bullet$}
is a complex $C^\bullet$ of quasi-coherent $\sO_X$-modules, such that for all $q$
the terms $C^q$ are {\em supported in the $Z^q/Z^{q+1}$-skeleton}, i.e.
$C^q\cong \bigoplus_{x\in Z^q\setminus Z^{q+1}} i_{x*}M_x$, where
$i_x:\Spec \sO_{X,x}\inj X$ is the inclusion and $M_x$ is a quasi-coherent sheaf on $\Spec \sO_{X,x}$
supported only in the closed point $x$. Notice that $i_{x*}M_x$ is the extension by zero 
of the constant sheaf $M_x$ on $\overline{\{x\}}$. Any residual complex with associated filtration $Z^\bullet$ 
is in particular a Cousin complex with respect to $Z^\bullet$. If $G$ is any complex of quasi-coherent 
$\sO_X$-modules, then $E_{Z^\bullet}(G)$ is a Cousin complex with respect to $Z^\bullet$. 

\begin{lemma}\label{1.6.4.5}
If $f: X\to Y$ is finite and $D$ is a Cousin complex on $X$ 
with respect to $f^{-1}Z^\bullet$ ($Z^\bullet$ as above), 
then $f_*D$ is a Cousin complex on $Y$ with respect to $Z^\bullet$.
\end{lemma}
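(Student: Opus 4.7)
The plan is to apply $f_*$ termwise to the Cousin decomposition of $D$ and regroup by the image point. Since $f$ is finite, the functor $f_*$ is exact, preserves quasi-coherence, and commutes with direct sums of quasi-coherent sheaves, so writing
\[D^q \cong \bigoplus_{x \in f^{-1}Z^q\setminus f^{-1}Z^{q+1}} i_{x*}M_x\]
with each $M_x$ a quasi-coherent sheaf on $\Spec\sO_{X,x}$ supported at the closed point, the computation reduces to analyzing $f_*i_{x*}M_x$ for a single $x$.

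Set $y:=f(x)$. The key observation is that $f\circ i_x$ factors as $i_y\circ\bar{f}_x$ for a canonical affine morphism $\bar{f}_x\colon\Spec\sO_{X,x}\to\Spec\sO_{Y,y}$ corresponding to the local ring map $\sO_{Y,y}\to\sO_{X,x}$ induced by $f^\#$: indeed every point of $\Spec\sO_{X,x}$ generalizes $x$, hence its image under $f$ generalizes $y=f(x)$ and therefore lies in $\Spec\sO_{Y,y}$. It follows that $f_*i_{x*}M_x\cong i_{y*}(\bar{f}_{x*}M_x)$. To see that $\bar{f}_{x*}M_x$ is supported at the closed point $y$ of $\Spec\sO_{Y,y}$, it suffices to check that $\bar{f}_x^{-1}(y)=\{x\}$, for then $\bar{f}_x^{-1}(\Spec\sO_{Y,y}\setminus\{y\})=\Spec\sO_{X,x}\setminus\{x\}$, on which $M_x$ vanishes. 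This in turn is the standard fact that fibers of a finite morphism admit no specialization relations: if $x'\in\Spec\sO_{X,x}$ satisfies $f(x')=y$, then $x'$ is a generalization of $x$ in $X$ with the same image as $x$, and since $f$ preserves dimensions of closures, $\dim\overline{\{x'\}}=\dim\overline{\{y\}}=\dim\overline{\{x\}}$, forcing $x'=x$.

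The last step is to regroup the direct sum by image point. For $y\in Z^q\setminus Z^{q+1}$ the whole fiber $f^{-1}(y)$ is contained in $f^{-1}Z^q\setminus f^{-1}Z^{q+1}$, and conversely every $x$ in the latter set has $f(x)\in Z^q\setminus Z^{q+1}$. Consequently
\[f_*D^q\cong\bigoplus_{y\in Z^q\setminus Z^{q+1}} i_{y*}\Bigl(\bigoplus_{x\in f^{-1}(y)}\bar{f}_{x*}M_x\Bigr),\]
and the inner direct sum is a quasi-coherent sheaf on $\Spec\sO_{Y,y}$ supported only at $y$, which is exactly the Cousin condition for $f_*D$ with respect to $Z^\bullet$. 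There is no genuine obstacle here; the only point requiring mild care is the identification $\bar{f}_x^{-1}(y)=\{x\}$ via the dimension argument for fibers of finite morphisms, after which the statement is pure bookkeeping between pushforward, direct sums, and stalks.
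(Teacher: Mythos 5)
Your proof is correct and follows essentially the same route as the paper's: push forward termwise, regroup by image point, and check that the result is supported at the right place. The paper organizes the regrouping slightly differently (via the base change $X_{(y)}=X\times_Y\Spec\sO_{Y,y}$ rather than summing $\bar f_{x*}M_x$ over $x\in f^{-1}(y)$), but the content is the same. One small remark: your dimension argument for $\bar f_x^{-1}(y)=\{x\}$ is correct but more elaborate than needed; it suffices to note that the fiber of a finite morphism is Spec of a finite algebra over a field, hence Artinian and discrete, so it has no nontrivial specializations --- or, even more directly, that $\supp(\bar f_{x*}M_x)\subset\overline{\{\bar f_x(x)\}}=\{y\}$ since $y$ is closed in $\Spec\sO_{Y,y}$.
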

\begin{proof}
Write $D^q\cong \bigoplus_{x\in f^{-1}Z^q\setminus f^{-1}Z^{q+1}} i_{x*}N_x$ as above, in particular $N_x$ is supported in $x$. Then 
\[f_*D^q\cong \bigoplus_{y\in Z^q\setminus Z^{q+1}} i_{y*}M_y,\quad \text{with } 
                                M_y:= f_{|X_{(y)}*}\bigoplus_{x\in f^{-1}(y)} i_{x, f^{-1}(y)*}N_x,\]
where $i_{x, f^{-1}(y)}: \Spec \sO_{X, x}\inj X_{(y)}=X\times_Y\Spec \sO_{Y,y}$ is the inclusion.
$M_y$ is supported in $y$ and this gives the claim.
\end{proof}

\begin{corollary}\label{1.6.5}
 Let $f: X\to Y$ be a {\em finite} morphism between finite-type $A$-schemes
       and $K$ a residual complex on $Y$ with filtration $Z^\bullet$ and 
       $C$ a Cousin complex on $X$ with respect to $f^{-1}Z^\bullet$. 
   Then the isomorphism of \ref{1.6.3}, (7) induces an isomorphism
   \[\Hom_X(C, f^\Delta K)\cong \Hom_Y(f_*C,  K).\]
  This isomorphism is compatible with \'etale base change and is concretely induced by
       sending a morphism of complexes $\varphi:C\to f^\Delta K$ to the composition
       $f_*C\xr{f_*(\varphi)} f_*f^\Delta K\xr{\Tr_f} K$.
\end{corollary}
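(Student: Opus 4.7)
The plan is to deduce the claimed strict $\Hom$-level isomorphism from the derived-category statement in \ref{1.6.3}, (7), by showing that on both sides the derived $\Hom$ coincides with the $\Hom$ of complexes.

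First, I would specialise \ref{1.6.3}, (7) to the situation at hand. Since $f$ is finite it is proper and affine, so $Rf_*=f_*$ on quasi-coherent sheaves, and $C$ is bounded (the filtration $Z^\bullet$ being finite) and quasi-coherent, hence lies in $D^b_{\rm qc}(X)\subset D^-_{\rm qc}(X)$. Applying \ref{1.6.3}, (7) and taking $H^0R\Gamma$ of both sides yields an isomorphism
\[
\Hom_{D(X)}(C, f^\Delta K)\;\xr{\simeq}\;\Hom_{D(Y)}(f_*C, K),
\]
and unwinding the composition $\Tr_f\circ\epsilon_f$ that defines it shows that the class of $\varphi:C\to f^\Delta K$ is sent to the class of $\Tr_f\circ f_*(\varphi):f_*C\to K$. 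Note that, since $f$ is finite, $\Tr_f$ is already a morphism of complexes by \ref{1.6.3}, (3), so the formula makes sense on strict morphisms.

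Second, I would show that on each side the derived $\Hom$ coincides with the strict $\Hom$. Both $f^\Delta K$ and $K$ are residual complexes, hence bounded complexes of quasi-coherent injective $\sO$-modules, so morphisms in the derived category into them agree with morphisms in the homotopy category. It remains to see that every null-homotopy vanishes. By Lemma \ref{1.6.4.5}, $f_*C$ is a Cousin complex on $Y$ with respect to $Z^\bullet$, so on both sides one has a Cousin complex mapping into a residual complex with the \emph{same} associated filtration. A null-homotopy in degree $q$ gives a map from $C^q$ (a sum of sheaves $i_{x*}M_x$ with $x$ in the level-$q$ stratum) into $\bigoplus_{y} i_{y*}J(y)$ with $y$ in the level-$(q-1)$ stratum. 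Since the level-$q$ skeleton is closed under specialization inside the level-$(q-1)$ skeleton, no such $y$ is a specialization of any such $x$; combined with the fact that $J(y)$ is $\fm_y$-torsion (so has no nonzero subsheaf with associated points outside $\{y\}$), this forces every component of the null-homotopy to be zero.

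Third, the concrete formula $\varphi\mapsto \Tr_f\circ f_*(\varphi)$ is the reading of $\theta_f$ from \eqref{finite-duality} at the level of strict $\Hom$'s, and compatibility with \'etale base change is inherited from the corresponding compatibility in \ref{1.6.3}, (7), together with the base-change isomorphism $d_{u,f}$ of \ref{1.6.2}, (4) identifying $u^\Delta\simeq u^*$ for \'etale $u$. The main obstacle is the vanishing of null-homotopies in the second step: this is not a formality but relies on the specific support/injective-hull structure of Cousin and residual complexes with a common filtration, while the rest of the argument just unwinds the derived-category statement using the injectivity of the targets and the finiteness of $f$.
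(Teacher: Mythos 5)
Your proof is correct and follows the same route as the paper: invoke \ref{1.6.3}(7), pass to homotopy classes (both targets being bounded complexes of quasi-coherent injectives), and then kill null-homotopies using Lemma \ref{1.6.4.5} together with the fact that degree-$q$ terms of Cousin and residual complexes with a common filtration are supported in the $Z^q/Z^{q+1}$-skeleton. You spell out the support/torsion argument for the vanishing of homotopies in more detail than the paper, which simply says "Thus there is only the trivial homotopy," but the underlying mechanism is identical.
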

\begin{proof}
 Since $K$ and $f^\Delta K$ are complexes of injectives and $f$ is finite, 
 \ref{1.6.3}, (7) immediately gives (for all $C$)
  \[\Hom_X(C, f^\Delta K)/{\rm homotopy}\cong \Hom_Y(f_*C,  K)/{\rm homotopy}.\]
By Lemma \ref{1.6.4.5} above $f_*C^q$ is supported in the $Z^q/Z^{q+1}$-skeleton, which by definition also holds for
$K^q$, for all $q$. Thus there is only the trivial homotopy between $f_*C^\bullet$ and $K$. 
Similar with $f^\Delta K$ and $C$.
\end{proof}

\begin{lemma}\label{1.6.6}
 Let $f: X\to Y$ be a finite morphism between finite type $A$-schemes and  $K$ a residual complex
on $Y$ with associated filtration $Z^\bullet$. 
Then for all $q\in \Z$ the following equality holds in $D^b_{\rm qc}(X)$
\[(f^\Delta K)^q\cong \bar{f}^*\sHom_Y(f_*\sO_X, K^q)\cong f^{\flat}(K^q)\cong 
\bigoplus_{x\in f^{-1}(Z^q)\setminus f^{-1}(Z^{q+1})} i_{x*}J(x),\]
where $\bar{f}^*(-):=f^{-1}(-)\otimes_{f^{-1}f_*\sO_X}\sO_X$, $i_x:\Spec \sO_{X,x}\inj X$ is the 
inclusion and $J(y')$ is an injective hull of $k(y')$ in $\sO_{Y',y'}$. 
\end{lemma}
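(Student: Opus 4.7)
The plan is to establish the three isomorphisms in succession, combining \ref{1.6.2}(3) with Matlis duality and the skeletal structure of a residual complex.

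For the first isomorphism, I would apply the isomorphism of complexes $\psi_f : f^\Delta K \xr{\simeq} \bar{f}^*\sHom_{\sO_Y}(f_*\sO_X, K)$ from \ref{1.6.2}(3); since $f_*\sO_X$ is concentrated in degree zero, passing to the $q$-th term yields $(f^\Delta K)^q \cong \bar{f}^*\sHom_{\sO_Y}(f_*\sO_X, K^q)$.

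For the second isomorphism, I would compare this with the definition $f^\flat(K^q) = f^{-1}R\sHom_{\sO_Y}(f_*\sO_X, K^q) \otimes_{f^{-1}f_*\sO_X} \sO_X$ recalled in \ref{fflat}. By \ref{1.6.1}, $K^q$ is a quasi-coherent injective $\sO_Y$-module, so $R\sHom_{\sO_Y}(f_*\sO_X, K^q) = \sHom_{\sO_Y}(f_*\sO_X, K^q)$. For the underived versus derived tensor product, I would observe that for every $x \in X$ with $y = f(x)$ the stalk $(f^{-1}f_*\sO_X)_x = (f_*\sO_X)_y$ decomposes as $\prod_{x' \in f^{-1}(y)} \sO_{X,x'}$, because $f$ is finite; thus $\sO_{X,x}$ is a direct factor of $(f_*\sO_X)_y$, the tensor product is exact at $x$, and the left-hand side agrees with $f^\flat(K^q)$.

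For the third isomorphism I would use the explicit description $K^q = \bigoplus_{y \in Z^q \setminus Z^{q+1}} i_{y*}J(y)$ from \ref{1.6.1}. For each such $y$, the sheaf $\sHom_{\sO_Y}(f_*\sO_X, i_{y*}J(y))$ is supported on $\overline{\{y\}}$ with stalk at $y$ equal to
\[ \Hom_{\sO_{Y,y}}((f_*\sO_X)_y, J(y)) = \prod_{x \in f^{-1}(y)} \Hom_{\sO_{Y,y}}(\sO_{X,x}, J(y)). \]
By Matlis duality for the finite local extension $\sO_{Y,y} \to \sO_{X,x}$, each factor $\Hom_{\sO_{Y,y}}(\sO_{X,x}, J(y))$ is an injective hull of $k(x)$ over $\sO_{X,x}$, hence equals $J(x)$. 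Applying $\bar{f}^*$ to the summand indexed by $y$ then splits off the contribution at each $x \in f^{-1}(y)$ as $i_{x*}J(x)$, and the disjoint union over $y \in Z^q \setminus Z^{q+1}$ is precisely $f^{-1}(Z^q) \setminus f^{-1}(Z^{q+1})$, giving the claimed direct sum decomposition.

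The main point to verify carefully is the combination of Matlis duality with the bookkeeping under $\bar{f}^*$: one must check that the direct-factor decomposition of $(f_*\sO_X)_y$ into the local rings $\sO_{X,x'}$ translates, after applying $\bar{f}^*$, into the sheaves $i_{x*}J(x)$ with the correct supports on $X$; the other two isomorphisms are essentially formal consequences of \ref{1.6.2}(3) and the injectivity of each $K^q$.
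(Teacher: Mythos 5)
Your overall plan matches the paper's (very terse) proof: first isomorphism from \ref{1.6.2}(3), second from injectivity of $K^q$, third by a Matlis-duality computation, which the paper simply delegates to \cite[VI, Lem.~4.1]{Ha}. However, there is a concrete step that fails: the decomposition $(f_*\sO_X)_y\cong\prod_{x\in f^{-1}(y)}\sO_{X,x}$, which you invoke twice, is false for a general finite morphism. The stalk $(f_*\sO_X)_y$ is a semi-local finite $\sO_{Y,y}$-algebra, but it is a product of the local rings at the fiber points only if the relevant idempotents lift, which need not happen. For instance take $Y=\Spec\Z_{(7)}$, $X=\Spec\Z_{(7)}[x]/(x^2-2)$: then $(f_*\sO_X)_y$ is a domain with two maximal ideals above $(7)$ (since $2\equiv 3^2\bmod 7$), so it cannot be a product of two local rings, although $f^{-1}(y)$ has two points.

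For the second isomorphism the error is harmless but also unnecessary: the ring map $(f^{-1}f_*\sO_X)_x\to\sO_{X,x}$ is just the localization of the semi-local ring at one of its maximal ideals, hence flat, so $\bar f^*$ is exact without any direct-factor assumption (and on quasi-coherent modules it is even an equivalence of categories, since $f$ is affine). For the third isomorphism, though, the error sits in the middle of the computation: you cannot split $\Hom_{\sO_{Y,y}}\bigl((f_*\sO_X)_y,J(y)\bigr)$ as a product over $x\in f^{-1}(y)$ by decomposing $(f_*\sO_X)_y$. The correct statement, which is what \cite[VI, Lem.~4.1]{Ha} records, is that for $A=\sO_{Y,y}$ local Noetherian and $B=(f_*\sO_X)_y$ finite over $A$, the injective $B$-module $\Hom_A(B,E_A(k))$ is supported on the finitely many maximal ideals of $B$ and therefore decomposes as a direct sum $\bigoplus_{\mathfrak n}E_{B_{\mathfrak n}}(B/\mathfrak n)$ of injective hulls; one then checks by localizing at each $\mathfrak n$ that every summand occurs exactly once. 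This yields your desired conclusion, but it is a structural fact about the injective $B$-module on the Hom side, not a consequence of a splitting of $B$ itself.
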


\begin{proof}
 The first isomorphism follows from \ref{1.6.2}, (3), the second holds since $K^q$ is injective 
and the third is \cite[VI, Lem. 4.1]{Ha}.
\end{proof}

\subsection{Witt residual complexes}\label{section-Witt-residual-complexes}

Let $X$ be a $k$-scheme.
\subsubsection{} Let $C(\mc{C}_{\N})$ be the category of complexes of $\mc{C}_{\N}$. 
Recall that it is equipped with endo-functors $i_*,\sigma_*,\Sigma_*$. 

\begin{notation}
We say that a complex $K$ in $C(\mc{C}_{\N})$ is a \emph{residual complex} if $K_n$ is a residual complex on $W_n(X)$ for all $n$ and
the associated filtrations on $|X|=|W_n(X)|$ are all the same.

We say that a complex $C$ in $C(\mc{C}_{\N})$ is a \emph{Cousin complex} if $C_n$ is a Cousin complex for all $n$ with respect to the same filtration.
\end{notation}

For a residual complex $K\in C(\mc{C}_{\N})$ we define 
\begin{equation*}
\begin{split}
(i^\Delta K)_n&:=i_{n+1}^{\Delta} K_{n+1}\\
(\sigma^\Delta K)_n&:=W_n(F_X)^{\Delta} K_{n}\\
(\Sigma^\Delta K)_n&:=W_n(F_X^n)^{\Delta} K_{n}.
\end{split}
\end{equation*}
This yields  endo-functors on the full subcategory of residual complexes with some fixed filtration of $C(\sC_\N)$. 

In view of Corollary \ref{1.6.5} we get 
\begin{equation} \label{Cousin-adjoint-residual}
\begin{split}
\Hom_{C(\mc{C}_{\N})}(i_*C,K)&=\Hom_{C(\mc{C}_{\N})}(C,i^\Delta K)\\
\Hom_{C(\mc{C}_{\N})}(\sigma_*C,K)&=\Hom_{C(\mc{C}_{\N})}(C,\sigma^\Delta K)\\
\Hom_{C(\mc{C}_{\N})}(\Sigma_*C,K)&=\Hom_{C(\mc{C}_{\N})}(C,\Sigma^\Delta K)
\end{split}
\end{equation}
if $C$ is a Cousin complex with respect to the filtration of $K$.

\begin{definition}\label{definition-Wittresidual}
Let $X$ be a $k$-scheme. A {\em Witt residual complex on $X$} is a collection
$(K,\ul{p},C,V)$ where $K\in C(\mc{C}_{\N})$ is a complex  and 
\[\ul{p}: i_{*}K\to K,\, C : \Sigma_{*}K\to K,\, V: \sigma_*i_*K\to K,\]
are morphisms of complexes such that:
\begin{itemize}
\item [(a)] $K$ a residual complex,  
\item [(b)] the morphisms $\ul{p}$, $C$,  $V$, satisfy the relations
\[V\circ \sigma_*i_*C = C \circ \Sigma_*\ul{p}, \quad \ul{p}\circ i_*V=V\circ \sigma_*i_*\ul{p},\] 
and the adjoints of $\ul{p}$, $C$ and $V$ under \eqref{Cousin-adjoint-residual} 
\[^a\ul{p}:K\xr{\simeq} i^\Delta K,\, 
  ^aC: K\xr{\simeq} {\Sigma}^\Delta K,\, 
   ^aV: K\xr{\simeq} i^{\Delta}\sigma^\Delta K\]
are isomorphisms of complexes. 
\end{itemize}
A morphism between Witt residual complexes is a morphism in $C(\mc{C}_{\N})$ which
is compatible with $\ul{p}, C, V$ in the obvious sense. 

%By convention the the morphism $C$ is denoted on the $n$-th level by 
%\[C^n: (\Sigma_* K)_n\to K_n.\]
%  (Since it usually will be induced by the $n$-th iterated Cartier operator.)
\end{definition}

\begin{remark}
 One should memorise the relations in (b) above as $VC=C\ul{p}$ and $V\ul{p}=\ul{p}V$.
\end{remark}

\begin{remark}\label{1.6.8}
It follows from Lemma \ref{1.6.6}, that if $(K,\ul{p},C,V)$ is a Witt residual complex on $X$ , then
for all $q\in \Z$ the systems $(\{K^q_n\}_{n\ge 1}, \ul{p}^q, C^q , V^q)$ 
are Witt dualizing systems. Furthermore, if $Z^\bullet$ is the filtration of 
$K_1$, then
\[K^q_n=\bigoplus_{x\in Z^q\setminus Z^{q+1}} i_{W_n(x)*} J_n(x),\]
where $i_{W_n(x)}: \Spec W_n\sO_{X,x}\inj W_n X$ is the inclusion and $J_n(x)$ is an injective hull of 
$k(x)$ in $W_n\sO_{X,x}$.
\end{remark}

\subsubsection{}\label{1.6.10}
By extending the notions from \ref{1.6.2} and Corollary \ref{1.6.3.6} term by term to $C(\sC_\N)$ we obtain:
For any finite-type morphism  $f: X\to Y$ between $k$-schemes a functor $f^{\Delta}$ of residual complexes in $C(\mc{C}_{\N,Y})$ with associated filtration $Z^\bullet$ to  
residual complexes in $C(\mc{C}_{\N,X})$ with filtration $f^\Delta Z^\bullet$.  For $g: Y\to Z$ another morphism a canonical isomorphism $c_{f,g}: (g\circ f)^\Delta\cong f^\Delta g^\Delta$
of natural transformations on the category of residual complexes on $C(\sC_{\N, Z})$. For $f:(X,\Phi)\to (Y,\Psi)$ a morphism in ${\rm Sch}_{k,*}$ (see Definition \ref{1.6.3.5}) 
a natural transformation $\Tr_f: f_\Phi f^\Delta\to \ul{\Gamma}_\Psi$ on the category of residual complexes with fixed filtration in $C(\sC_{\N, Y})$.
And these data satisfies the compatibilities from \ref{1.6.2} and Corollary \ref{1.6.3.6}.

\subsubsection{}\label{1.6.10.5}
Let $f: X\to Y$ be a finite-type morphism between $k$-schemes. Let $K$ be a Witt
residual complex.
We use \eqref{Cousin-adjoint-residual} to define  morphisms of complexes
$$
\ul{p}: i_{*}f^\Delta K\to f^\Delta K, \quad
C: \Sigma_{*} f^\Delta K\to f^\Delta K, \quad  
V: \sigma_*i_* f^\Delta K \to f^\Delta K,
$$
as adjoints of the compositions 
\[f^\Delta K\xr{f^\Delta(^a\ul{p}_{K}),\, \simeq} 
        f^\Delta i^\Delta K \cong i^\Delta f^\Delta K\]
\[f^\Delta K\xr{f^\Delta(^aC_K),\,\simeq} 
         f^\Delta \Sigma^\Delta K\cong \Sigma^\Delta f^\Delta K,\]
\[f^\Delta K\xr{f^\Delta(^aV_K),\,\simeq} 
      f^\Delta  i^{\Delta}\sigma^\Delta K\cong i^{\Delta}\sigma^\Delta f^\Delta K.\]
Here $\ul{p}_K$, $C_K$ and $V_K$ denote the corresponding morphisms for $K$ and the isomorphisms on the right are induced by the isomorphisms 
$c_{W_nf, i_{n,Y}}$ etc. from \ref{1.6.2}, (1).

\begin{proposition}\label{1.6.11}
Let $f: X\to Y$ be a finite-type morphism between $k$-schemes and $K$ a Witt residual complex 
on $Y$. Then the system
\[(f^\Delta K, \ul{p}, C, V)\]
defined above is a Witt residual complex on $X$. Furthermore, 
if $g:Y\to Z$ is another finite-type morphism of $k$-schemes, then 
the isomorphism $c_{f,g}: (g\circ f)^\Delta K\cong f^\Delta g^\Delta K$
from \ref{1.6.10} defines an isomorphism of Witt residual complexes,
which is compatible with triple compositions.

If $f:X\to Y$ is \'etale, then  $f^* K$ is isomorphic to $f^\Delta K$. If $f: X\to Y$ and $g: Y\to Z$ are \'etale,
then the isomorphism $(gf)^\Delta \cong f^\Delta g^\Delta$ is induced by the isomorphisms $(gf)^*\cong f^*g^*$.
\end{proposition}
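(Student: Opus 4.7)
The plan is to verify the four axioms of Definition~\ref{definition-Wittresidual} for $(f^\Delta K, \ul{p}, C, V)$ and then check the compatibility claims for $c_{f,g}$ and for \'etale $f$. First, the residual-complex property is immediate: each $(f^\Delta K)_n = (W_n f)^\Delta K_n$ is a residual complex on $W_n X$ by \ref{1.6.1}--\ref{1.6.2}, and the associated filtrations agree with the common pullback $f^\Delta Z^\bullet$ of the filtration $Z^\bullet$ underlying $K$. By the very definition in \ref{1.6.10.5}, the adjoints $^a\ul{p}$, $^aC$, $^aV$ on $f^\Delta K$ are the compositions of the isomorphisms $f^\Delta(^a\ul{p}_K)$, $f^\Delta(^aC_K)$, $f^\Delta(^aV_K)$ with the coherence isomorphisms $c$ that swap $f^\Delta$ past $i^\Delta$, $\Sigma^\Delta$, $i^\Delta\sigma^\Delta$ respectively. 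Both factors are isomorphisms of complexes, so the adjoints are too, which gives the second half of axiom (b) for free.

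\medskip

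The substantial work is verifying the two algebraic relations $V\circ \sigma_*i_*C = C\circ \Sigma_*\ul{p}$ and $\ul{p}\circ i_*V=V\circ \sigma_*i_*\ul{p}$ on $f^\Delta K$. I would pass to adjoints throughout: since $f^\Delta K$ is termwise a direct sum of shifts of injective hulls (Remark~\ref{1.6.8}) and since $i_*, \sigma_*, \Sigma_*$ preserve Cousin complexes, every morphism involved lives in the adjunction realm \eqref{Cousin-adjoint-residual}. Term by term the two identities become equalities between compositions of $f^\Delta$ applied to the corresponding adjoint arrows on $K$, sandwiched between suitable $c$-isomorphisms. The coherence of $c$ with respect to triple composition (\ref{1.6.2}(1)) together with the commutation identities \eqref{isigmacomm}, \eqref{Sigmacomm} then reduce the relations on $f^\Delta K$ to those on $K$. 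This is the main obstacle: a concrete but lengthy diagram chase in which one must track carefully which ordering of $i^\Delta, \sigma^\Delta, \Sigma^\Delta$ appears on each side, and glue the various $c$-cells coherently.

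\medskip

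For the composition statement, I would apply the same strategy to $c_{f,g}: (gf)^\Delta K \to f^\Delta g^\Delta K$: passing to adjoints and using the coherence of $c$ under triple composition, one shows that $c_{f,g}$ intertwines the operators $\ul{p}, C, V$ on source and target. The compatibility with triple compositions of $c$'s as isomorphisms of Witt residual complexes is then automatic from the corresponding scheme-theoretic statement in \ref{1.6.2}(1). For \'etale $f$, I would invoke \ref{1.6.2}(2) with $r = 0$: since $W_n f$ is \'etale and $K_n$ is termwise a direct sum of extensions by zero of injective hulls at points of $W_n Y$, the pullback $W_n(f)^*K_n$ is already a Cousin complex with respect to $(W_n f)^{-1}Z^\bullet$, so $E(f^*K_n) = f^*K_n$ and the canonical map $f^*K \to f^\Delta K$ is an isomorphism. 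Under this identification, the constructions of $\ul{p}, C, V$ in \ref{1.6.10.5} reduce to the evident pullbacks, because the coherence isomorphisms involved coincide with the \'etale base-change compatibility of \eqref{1.1.1}. Finally, the assertion about $c_{f,g}$ for two \'etale morphisms follows directly from the last sentence of \ref{1.6.2}(4).
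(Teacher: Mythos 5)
Your proposal is correct and takes essentially the same approach as the paper's proof: check the residual-complex axiom on each level $n$ (the paper does this by computing the codimension functions $d_{(W_nf)^\Delta K_n}$ via the formula $d_{f^\Delta K}(x)=d_K(f(x))-{\rm trdeg}(k(x)/k(f(x)))$, which is the precise form of your observation that the filtrations agree), note that the adjoints $^a\ul{p}$, $^aC$, $^aV$ on $f^\Delta K$ are isomorphisms by construction, and then reduce the relations $VC=C\ul{p}$ and $V\ul{p}=\ul{p}V$ to their adjoint forms ${^aV}\,{^aC}={^aC}\,{^a\ul{p}}$ and ${^aV}\,{^a\ul{p}}={^a\ul{p}}\,{^aV}$, which are verified by a coherence diagram whose cells commute by functoriality of the $c_{f,g}$'s and their compatibility with triple composition. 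The paper writes out this $3\times 3$ diagram explicitly rather than leaving it as the ``main obstacle'' you identify, and it handles the \'etale case by invoking \ref{1.6.2}(4) directly, but these are presentational rather than substantive differences.
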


\begin{proof}
First notice that if $K$ and $L$ are two residual complexes, then their associated filtrations are
the same iff their associated codimension functions (see \ref{1.6.1}) are the same.
Since the codimension functions of the $K_n$'s  are the same by assumption we obtain
(using the formula of \cite[(3.2.4)]{Co}) for any $x\in |X|=|W_nX|$
\begin{eqnarray*}
 d_{(W_nf)^\Delta K_n}(x) & = & d_{K_n}(W_nf(x))-{\rm trdeg}(k(x)/ k(W_nf(x)))\\
                                  &= &  d_{K_1}(f(x))-{\rm trdeg}(k(x)/ k(f(x)))\\
                                  &=& d_{f^\Delta K_1}(x).
\end{eqnarray*}
Thus the $(W_nf)^\Delta K_n$ are residual complexes and all have the same associated filtration.
The condition on the adjoints of $\ul{p}$, $C$ and $V$ holds by definition. It remains to check
the relations $VC=C\ul{p}$ and $V\ul{p}=\ul{p}V$, which are equivalent to
${^aV}\, {^aC}= {^aC}\, {^a\ul{p}}$ and ${^aV}\, {^a\ul{p}}= {^a\ul{p}} \,{^aV}$.
To prove the first equality consider the following diagram 
\[\xymatrix{f^\Delta K\ar[r]^{^a\ul{p}_K}\ar[d]_{^aC_K} &
            f^\Delta i^\Delta K\ar[r]^\simeq\ar[d]^{^aC_K} &
            i^\Delta f^\Delta K\ar[d]^{^aC_K}\\
            f^\Delta \Sigma^\Delta K\ar[r]^{^a V_K}\ar[d]_\simeq &
            f^\Delta i^\Delta \Sigma^\Delta K\ar[r]^\simeq\ar[d]^\simeq &
            i^\Delta f^\Delta (\Sigma)^\Delta K\ar[d]^\simeq\\
            \Sigma^\Delta f^\Delta K\ar[r]^-{^a V_K} &
            \Sigma^\Delta f^\Delta i^\Delta\sigma^\Delta K\ar[r]^-\simeq &
            i^\Delta \Sigma^\Delta f^\Delta  K.
}\]
Here all arrows labeled by $\simeq$ are induced by compositions of the isomorphisms $c_{f,g}$ 
and their inverses with $f,g$ appropriate and
also in the two arrows labeled by $^aV_K$ there is such an isomorphism involved. (We also need the relation \eqref{Sigmacomm}.) The square in the upper
left commutes since $K$ is a Witt residual complex, all other squares commute because of
the functoriality of the $c_{f,g}$'s and their compatibility with compositions. Thus going around the diagram
from the top left corner to the lower right corner clockwise is the same as going around counter clockwise,
which yields ${^aC}\, {^a\ul{p}}={^aV}\, {^aC}$. The other relation is proved by drawing a similar 
diagram. Thus $f^\Delta K$ is a Witt residual complex. 

The second statement amounts to prove that $c_{g, f}$ is compatible with $\ul{p}$, $C^n$ and $V$.
This follows again from the functoriality of the $c_{(-,-)}$'s and their compatibility with compositions by
drawing diagrams as above, which we omit. Finally, the statement about \'etale morphisms follows from \ref{1.6.2}, (4).
\end{proof}

\begin{lemma}\label{1.6.13}
Let $(X,\Phi)\to (Y, \Psi)$ be a morphism in ${\rm Sch}_{k,*}$ and $K$ a Witt residual complex on $Y$. Then the complex $f_\Phi f^\Delta K$, with  $f_\Phi:= f_*\circ \ul{\Gamma}_\Phi$,
is a complex of Witt quasi-dualizing systems (see Definition \ref{1.4.1}) and each term
 $(f_\Phi f^\Delta K)_n$ is a bounded complex of quasi-coherent, flasque sheaves.
Furthermore 
\[\Tr_f: f_\Phi f^\Delta K \lra \ul{\Gamma}_\Psi K, \]
is a morphism of complexes of Witt quasi-dualizing systems,
which is compatible with composition and \'etale base change, as in Corollary \ref{1.6.3.6}, (3) and (4), and it is functorial with respect to maps between  Witt residual complexes with the same associated filtration
(e.g. isomorphisms). Finally, if $f$ is an open immersion and $\Psi=\Phi$, then $\Tr_f$ is the excision isomorphism as in Corollary \ref{1.6.3.6}, (5).
\end{lemma}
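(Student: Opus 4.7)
The plan is to install the $\ul{p}, C, V$ structure on $f_\Phi f^\Delta K$ by pushing forward the structure already present on $f^\Delta K$, then check that $\Tr_f$ respects this structure; the rest of the statement will follow directly from Corollary~\ref{1.6.3.6}. First I would observe that by Proposition~\ref{1.6.11} the complex $f^\Delta K$ is itself a Witt residual complex on $X$, so by Remark~\ref{1.6.8} each of its graded pieces with the induced $\ul{p}, C, V$ forms a Witt dualizing system and the relations $VC = C\ul{p}$ and $V\ul{p} = \ul{p}V$ hold as identities of morphisms of complexes. The endofunctors $\ul{\Gamma}_\Phi$ and $f_*$ on $C(\mc{C}_\N)$ commute with $i_*$, $\sigma_*$ and $\Sigma_*$ (the former trivially, since $i_n$ and $W_n(F_X)$ are homeomorphisms on the underlying topological space, the latter by the identities $W_n(f) \circ i_{n,X} = i_{n,Y} \circ W_{n-1}(f)$ and $W_n(f) \circ W_n(F_X) = W_n(F_Y) \circ W_n(f)$ from \ref{1.1}). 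Therefore $f_\Phi = f_* \circ \ul{\Gamma}_\Phi$ transports $\ul{p}, C, V$ together with both relations to $f_\Phi f^\Delta K$, which is then a complex of Witt quasi-dualizing systems. For the second claim, Remark~\ref{1.6.3.5.5} shows that each $(\ul{\Gamma}_\Phi f^\Delta K)_n$ is a bounded complex of quasi-coherent injective (hence flasque) sheaves on $W_nX$, and $f_*$ preserves both properties since $f$ is of finite type between noetherian schemes.

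The main obstacle is verifying that $\Tr_f$ commutes with $\ul{p}, C, V$; that it is a morphism of complexes is already given by Corollary~\ref{1.6.3.6}. For $\ul{p}$, I would unwind the definition in \ref{1.6.10.5}: the map $\ul{p}$ on $f^\Delta K$ is the adjoint under \eqref{Cousin-adjoint-residual} of the composition $f^\Delta K \xr{f^\Delta({}^a\ul{p}_K)} f^\Delta i^\Delta K \simeq i^\Delta f^\Delta K$, where the last isomorphism is the canonical $c$-isomorphism from \ref{1.6.2}~(1). The required identity $\ul{p}_K \circ i_*\Tr_f = \Tr_f \circ \ul{p}_{f^\Delta K}$ then reduces, via the adjunction \eqref{Cousin-adjoint-residual} for $i$, to commutativity of a diagram built out of ${}^a\ul{p}_K$, the $c$-isomorphisms, and $\Tr$, which commutes by two inputs: naturality of $\Tr$ with respect to morphisms of residual complexes with the same associated filtration (Corollary~\ref{1.6.3.6}~(1)) applied to ${}^a\ul{p}_K$, and the composition formula Corollary~\ref{1.6.3.6}~(3) applied to the two equal compositions $i_{n,Y} \circ W_{n-1}(f) = W_n(f) \circ i_{n,X}$. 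The arguments for $C$ and $V$ are formally identical, replacing $({}^a\ul{p}_K, i)$ by $({}^aC_K, \Sigma)$ and by $({}^aV_K, i\sigma)$ respectively, and invoking instead the Frobenius compatibility $W_n(F_Y) \circ W_n(f) = W_n(f) \circ W_n(F_X)$ (and its $n$-fold iterate).

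With these compatibilities in hand, the remaining assertions are immediate. The compatibility of $\Tr_f$ with composition, \'etale base change and functoriality in $K$ is inherited from Corollary~\ref{1.6.3.6}~(3), (4) and (1) respectively; the relevant squares of graded sheaves each commute with the transported $\ul{p}, C, V$ by the same adjunction argument as above, so they upgrade to commutative diagrams of complexes of Witt quasi-dualizing systems. Finally, when $f$ is an open immersion with $\Psi = \Phi$, the identification of $\Tr_f$ with the excision isomorphism is already the content of Corollary~\ref{1.6.3.6}~(5).
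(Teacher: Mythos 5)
Your proof is correct and follows essentially the same route as the paper: you transport the $\ul{p},C,V$ structure through $f_\Phi$ using the commutation of $\ul{\Gamma}_\Phi$ and $f_*$ with $i_*,\sigma_*,\Sigma_*$, and you verify the compatibility of $\Tr_f$ with $\ul{p}$ (and by the same pattern with $C$, $V$) from exactly the two inputs the paper uses — functoriality of the trace applied to ${}^a\ul{p}_K$ (Corollary~\ref{1.6.3.6},~(1)) and the composition formula (Corollary~\ref{1.6.3.6},~(3)) applied to the two factorizations $i_{n,Y}\circ W_{n-1}(f)=W_n(f)\circ i_{n,X}$. The paper makes the adjunction argument explicit by drawing the commutative diagram that your "reduces via adjunction" step produces, but the content is the same.
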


\begin{proof}
We define the maps $\ul{p}$, $C$ and $V$ on $f_\Phi f^\Delta K$ to be the following compositions
\[\ul{p}: i_*f_\Phi f^\Delta K \cong f_\Phi i_* f^\Delta K\xr{\ul{p}_{f^\Delta K }} f_\Phi f^\Delta K,\]
\[C : \Sigma_*f_\Phi f^\Delta K\cong f_\Phi \Sigma_* f^\Delta K\xr{C_{f^\Delta K}} f_\Phi f^\Delta K,\]
\[V: \sigma_*i_* f_\Phi f^\Delta K\cong f_\Phi \sigma_* i_* f^\Delta K\xr{V_{f^\Delta}} f_\Phi f^\Delta K.\]
These maps obviously make $f_\Phi f^\Delta K$ into a complex of Witt quasi-dualizing systems. 
Also the $(f^\Delta K)_n$ are bounded complexes of quasi-coherent injective $W_n\sO_X$-modules (since the $(f^\Delta K)_n$ are residual complexes), thus all the $(f_\Phi f^\Delta K)_n$ are complexes of quasi-coherent and 
flasque sheaves. It remains to check, that the trace morphism commutes with $\ul{p}$, $C^n$ and  $V$. We will prove
\eq{1.6.13.1}{\Tr_f\circ\ul{p}= \ul{p}\circ i_*(\Tr_f).}
For this, we consider the following diagram:
\[\xymatrix{ &  i_* f_\Phi f^\Delta K\ar[r]^\simeq\ar[d]^{^a\ul{p}}\ar[dl]_{\Tr_f}  & f_\Phi i_*f^\Delta K\ar[d]^{^a\ul{p}}\\
             i_*\ul{\Gamma}_\Psi K\ar[dr]_{^a\ul{p}} &   i_*f_\Phi f^\Delta i^\Delta K\ar[d]^{\Tr_f}\ar[r]^\simeq & f_\Phi i_* f^\Delta i^\Delta K\ar[d]^\simeq\\
            & i_*\ul{\Gamma}_\Psi i^\Delta  K\ar[d]^{\Tr_i} &  f_\Phi i_*i^\Delta f^\Delta K\ar[d]^{\Tr_i}\\
           & \ul{\Gamma}_\Psi K &   f_\Phi f^\Delta K\ar[l]_{\Tr_f}.
}\]
The triangle in the upper left (which should be a square) commutes by the functoriality of $\Tr_f$ (Corollary \ref{1.6.3.6}, (1)), the upper square in the middle commutes by the functoriality of the
isomorphism $i_* f_\Phi \cong f_\Phi i_*$ and the big square in the middle commutes by Corollary \ref{1.6.3.6}, (3). Thus going around the diagram from the upper left to the lower left corner
clockwise, which is  $\Tr_f\circ\ul{p}$, is the same as going around counterclockwise, which is $\ul{p}\circ i_*(\Tr_f)$. This gives \eqref{1.6.13.1}.
Similar diagrams prove the relations $\Tr_f\circ C\cong C\circ \Sigma_*(\Tr_f)$ and $V\circ \sigma_*i_*(\Tr_f)\cong \Tr_f\circ V$. The compatibility of
$\Tr_f$ with composition and \'etale base change follows directly from Corollary \ref{1.6.3.6}, (3) and (4). The functoriality statement follows from the corresponding statement in
\ref{1.6.3.6}, (1). If $f$ is an open immersion, then so is $W_nf$ for all $n$ and hence the last statement follows from \ref{1.6.3.6}, (5).
\end{proof}

\subsection{The dualizing functor}\label{section-The-dualizing-functor}
\subsubsection{}\label{1.6.14}
Let $X$ be a $k$-scheme and $K$ a Witt residual complex on $X$. If $M$ is a complex of {\em quasi-coherent} de Rham-Witt systems on $X$, then by Proposition \ref{1.4.7} and
Remark \ref{1.6.8}, we have de Rham-Witt systems $\sHom(M^i, K^j)\in \drw_X$ for all $i,j$. Thus we obtain a complex of de Rham-Witt systems on $X$, 
$\sHom(M, K)\in C(\drw_X)$
which is defined by
\[\sHom^q(M, K):=\prod_{i\in\Z}\sHom(M^i, K^{i+q}),\]
 \[ d^q_{\sHom}((f^i)):=(d^{i+q}_{K}\circ f^i + (-1)^{q+1} f^{i+1}\circ d^i_{M}). \]
In this way we clearly obtain a functor 
\[D_K: C(\drw_{X, {\rm qc}})^o\to C(\drw_X), \quad M\mapsto \sHom(M, K),\]
It restricts to a functor $D_K: C(\drw_{X,{\rm c}})^o\to C(\drw_{X, {\rm qc}})$.
Since $K_n$ is a bounded complex of injective $W_n\sO_X$-modules for each $n\ge 1$, $\sHom(M, K)$ is acyclic, whenever $M$ is (\cite[II, Lem. 3.1]{Ha}). 
Thus $D_K$ preserves quasi-isomorphisms and therefore induces functors
\[D_K: D(\drw_{X, {\rm qc}})^o\to D(\drw_X),\quad D_K: D(\drw_{X, {\rm c}})^o\to D(\drw_{X, {\rm qc}}).\]
A morphism $K\to L$ of Witt residual complexes clearly induces a natural transformation of functors 
\eq{1.6.14. 1}{D_K\to D_L.}
\begin{notation}\label{KX}
Let $X$ be a $k$-scheme with structure map $\rho_X:X\to \Spec k$. Then
we denote by  $K_X$ the Witt residual complex $\rho_X^\Delta W_\bullet\omega$ and write
\[D_X=D_{K_X}=\sHom(-,K_{X}).\]
Notice that $K_X$ is concentrated in degree 0, since $W_\bullet\omega$ is.
\end{notation}
 
\begin{remark}\label{1.6.14.4}
Let $X$ be a scheme, $\Phi$ a family of supports on $X$ and $K$ a Witt residual complex. Then we have the following equalities on $C(\sC_{\N,{\rm qc}})$
\[\ul{\Gamma}_\Phi D_K(-)= \ul{\Gamma}_\Phi\sHom(-, K)= \sHom(-, \ul{\Gamma}_\Phi K).\]
Therefore, if $M$ is a complex of quasi-coherent de Rham-Witt systems on $X$, then $\sHom(M, \ul{\Gamma}_\Phi K)$ inherits the structure of a complex of de Rham-Witt systems from
$\ul{\Gamma}_\Phi D_K(M)$. Furthermore $\ul{\Gamma}_\Phi(K)_n$ is a complex of injectives (by Remark\ref{1.6.3.5.5}), 
\[\sHom(-, \ul{\Gamma}_\Phi K):D(\drw_{X,{\rm qc}})^o\to D(\drw_{X}).\]
\end{remark}

\begin{proposition}\label{1.6.14.4.5}
Let $f: (X,\Phi)\to (Y,\Psi)$ be a morphism in ${\rm Sch}_{k,*}$, $K$ a Witt residual complex on $Y$ and $M$ a bounded above complex of quasi-coherent de Rham-Witt systems on $X$.
Let 
\[\vartheta_f: f_\Phi D_{f^\Delta K}(M)\to\ul{\Gamma}_\Psi D_K(f_*M)\]
be the composition  
\[f_\Phi \sHom(M, f^\Delta K)\xr{\rm nat.} \sHom(f_*M, f_\Phi f^\Delta K) \xr{\Tr_f} \sHom(f_*M, \ul{\Gamma}_\Psi K). \]
Then $\vartheta_f$ is a morphism of complexes of de Rham-Witt systems on $Y$ and it has the following properties: 
\begin{enumerate}
\item It defines a natural transformation between the bifunctors $f_\Phi\sHom(-,f^\Delta -)$ and 
$\sHom(f_*-, \ul{\Gamma}_\Psi(-))$ defined on the product of
the category $C^-(\drw_{X, {\rm qc}})$ with the category of Witt residual complexes with the same associated filtration. 
\item It is compatible with composition, i.e.  
if $g: (Y, \Psi)\to (Z, \Xi)$ is another morphism in ${\rm Sch_{k,*}}$,
then  $\vartheta_{gf}\cong \vartheta_g\circ g_*(\vartheta_f)\circ c_{f,g}$. 
\item It is compatible with \'etale base change as in Corollary \ref{1.6.3.6}, (4).
\item Let $j: U\inj X$ be an open immersion, such that $\Phi$ is contained in $U$, then 
$\vartheta_j: j_\Phi\sHom(-, j^\Delta K)\xr{\simeq} \sHom(j_*(-), \ul{\Gamma}_\Phi K)$ is the excision isomorphism (cf. Corollary \ref{1.6.3.6}, (5)).
%\item If $M$ is $f_*$-acyclic and $\Phi=f^{-1}(\Psi)$, then $\vartheta_f$ is a quasi-isomorphism.
\end{enumerate}
\end{proposition}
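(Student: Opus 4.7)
The plan is to factor $\vartheta_f$ through the natural ``push-and-Hom'' morphism followed by functoriality in the target, and then deduce the listed properties from the corresponding ones for $\Tr_f$ established in Lemma \ref{1.6.13} and Corollary \ref{1.6.3.6}. By Lemma \ref{1.6.13}, $f_\Phi f^\Delta K$ is a complex of Witt quasi-dualizing systems and $\Tr_f: f_\Phi f^\Delta K \to \ul{\Gamma}_\Psi K$ is a morphism of such; hence, by the functoriality statement of Proposition \ref{1.4.7} (1) applied term-by-term to complexes, post-composition with $\Tr_f$ gives a morphism of complexes of de Rham-Witt systems $\sHom(f_*M, f_\Phi f^\Delta K) \to \sHom(f_*M, \ul{\Gamma}_\Psi K)$.

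The bulk of the work is to verify that the ``natural map'' $\mathrm{nat}: f_\Phi\sHom(M, f^\Delta K) \to \sHom(f_*M, f_\Phi f^\Delta K)$ is likewise a morphism of complexes of de Rham-Witt systems. Using the identity $\ul{\Gamma}_\Phi\sHom(M,-) = \sHom(M, \ul{\Gamma}_\Phi(-))$ from Remark \ref{1.6.14.4}, this map reduces at each level $n$ to the usual pushforward $W_n(f)_*\sHom(M_n, N_n) \to \sHom(W_n(f)_*M_n, W_n(f)_*N_n)$ applied to $N = \ul{\Gamma}_\Phi f^\Delta K$. Since the de Rham-Witt structure on both sides is built in \ref{dualizing-functor} by pre/post-composition with the structure maps of the target together with the $\epsilon$-isomorphisms of \ref{iflat-sigmaflat}, and since $W_n(f)_*$ is naturally compatible with these $\epsilon$'s (they stem from finite-trace adjunctions that are preserved by further pushforward), a routine diagram chase yields compatibility with each of $\pi$, $F$, $V$, $\ul{p}$, and $d$. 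Composition then produces $\vartheta_f$ as a morphism in the category of complexes of de Rham-Witt systems on $Y$.

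The four listed properties follow formally. Bifunctoriality (1) is inherited from the bifunctoriality of $\sHom$ together with the functoriality of $\Tr_f$ with respect to morphisms of Witt residual complexes sharing the same filtration (Lemma \ref{1.6.13}). For (2), one writes out $\vartheta_{gf}$ using the identity $\Tr_{gf} = \Tr_g \circ g_*(\Tr_f) \circ (gf)_* c_{f,g}$ from Corollary \ref{1.6.3.6} (3) and invokes the naturality of $\mathrm{nat}$ applied to $g$. For (3), the \'etale base change statement for $\Tr_f$ in Corollary \ref{1.6.3.6} (4) transfers to $\vartheta_f$ because $u^*$ and ${u'}^*$ commute with $\sHom$ of quasi-coherent sheaves (recall $u^\Delta \cong u^*$ for \'etale $u$ by \ref{1.6.2} (2)). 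Finally, (4) is a direct consequence of Corollary \ref{1.6.3.6} (5): when $j:U\inj X$ is an open immersion with $\Phi \subset U$, both $\mathrm{nat}$ and $\Tr_j$ reduce to excision identities, so $\vartheta_j$ is the expected excision isomorphism. The principal technical obstacle lies in the second paragraph: unwinding the adjoint-based definition of the de Rham-Witt structure on $\sHom$ and checking term-by-term compatibility with pushforward. This is a lengthy but strictly mechanical exercise using the explicit descriptions of $\ul{p}, C, V$ via their adjoints given in \ref{1.4.3}.
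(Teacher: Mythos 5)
Your proposed factorization has a genuine gap: the intermediate object $\sHom(f_*M,\, f_\Phi f^\Delta K)$ is \emph{not} a complex of de Rham-Witt systems, so neither of the two maps you want to compose can individually be exhibited as a morphism in $C(\drw_Y)$. The issue is that the de Rham-Witt structure on $\sHom(-,Q)$ constructed in \ref{dualizing-functor} requires $Q$ to be a Witt \emph{dualizing} system: the maps $\pi$, $F$, and $d$ are defined using the inverses of $\ul{p}\circ\epsilon_i$, $V\circ\epsilon_{\sigma i}$, and $C\circ\epsilon_\Sigma$, and these are isomorphisms only under the dualizing hypothesis (cf.\ \ref{1.4.3}, which relies on the adjoints \eqref{eq-adjoints} being quasi-isomorphisms). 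By Lemma \ref{1.6.13}, $f_\Phi f^\Delta K$ is merely a complex of Witt \emph{quasi}-dualizing systems, and in general it is not dualizing (for instance, $f_*$ of a residual complex is typically not residual). In particular, when you invoke Proposition \ref{1.4.7}(1) to upgrade post-composition with $\Tr_f$ into a morphism of complexes of dRW systems, you are applying it outside its hypotheses: that statement is about morphisms $Q\to Q'$ of Witt dualizing systems, and $f_\Phi f^\Delta K$ does not qualify.

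The correct strategy (and the one the paper takes) is not to split $\vartheta_f$ into two dRW-morphisms, but to check directly that the \emph{composite} commutes with each of $\pi$, $F$, $d$, $V$, $\ul{p}$. One writes out the diagrams from \ref{dualizing-functor} that uniquely characterize each structure map on the source $f_\Phi\sHom(M,f^\Delta K)$ and on the target $\sHom(f_*M,\ul{\Gamma}_\Psi K)$, and then uses (i) the second part of \ref{1.4.3} (which only requires the target $Q$ to be quasi-coherent and the relevant natural transformations to be isomorphisms on that side) and (ii) the fact from Lemma \ref{1.6.13} that $\Tr_f$ is a morphism of complexes of Witt \emph{quasi}-dualizing systems, i.e.\ it commutes with the $\ul{p}$, $C$, $V$ available on $f_\Phi f^\Delta K$. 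The partial structure on the intermediate object is exactly enough to push the diagrams through for the composite even though it cannot be used to promote either leg of the composition separately. Your reductions of properties (1)--(4) to the corresponding properties of $\Tr_f$ in Lemma \ref{1.6.13} and Corollary \ref{1.6.3.6} are essentially right; only the central compatibility argument in your second paragraph needs to be reorganized around the full composite rather than the two legs.
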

\begin{proof}
We have to show, that $\vartheta_f$ commutes with $\pi, F, d, V, \ul{p}$. Recall from \ref{dualizing-functor}, how the de Rham-Witt system structure on 
$\sHom(M, Q)$ is defined, where $M$ is a de Rham-Witt - and $Q$ a Witt dualizing system. For example the map $\pi$ is uniquely determined by the property that it makes the following diagram commutative:
\[\xymatrix{ \sHom(M, Q)\ar[r]^{\circ \ul{p}_M}\ar[d]^{\pi} & \sHom(i_*M_, Q_)\\
             i_*\sHom(M, Q)\ar[r]^-{\epsilon_i} & \sHom(i_*M, i_*Q)\ar[u]_{\ul{p}_Q}.   }\]
The maps $F,d,V, \ul{p}$ are defined uniquely by making similar diagrams commutative. Using these diagrams, the second part of  \ref{1.4.3} and Lemma \ref{1.6.13}  it is straightforward to check 
the compatibility of $\vartheta_f$ with the de Rham-Witt system structure. (Notice, that $\vartheta_f$ factors over 
$\sHom(f_*M,f_\Phi f^\Delta K)$, which in general will not be a de Rham-Witt system, since $f_\Phi f^\Delta K$ will not be a complex of Witt dualizing systems. But it
is a complex of Witt quasi-dualizing systems, which is sufficient to conclude.) Thus $\vartheta_f$ gives a morphism of complexes of de Rham-Witt systems. The functoriality of $\vartheta_f$ in $M$ is clear and in 
$K$ follows from the functoriality of $\Tr_f: f_\Phi f^\Delta \to \id$ (see Lemma \ref{1.6.13}). 
The properties (2)-(4) follow from the corresponding properties of $\Tr_f$, see Lemma \ref{1.6.13}.

\end{proof}

\subsection{Ekedahl's results}\label{section-E-results}

\begin{thm}[{\cite[II, Thm 2.2 and III, Prop. 2.4]{EI}}]\label{1.4.8}
Let $X$ be a smooth $k$-scheme of pure dimension $N$. Then the morphism induced by multiplication 
\[W_\bullet\Omega_X\xr{\simeq} \sHom(W_\bullet\Omega_X, W_\bullet\omega_X),\quad    \alpha\mapsto (\beta\mapsto \alpha\beta ).\] 
is an  isomorphism of de Rham-Witt systems. Furthermore
\[\sExt^i_{W_n\sO_X}(W_n\Omega_X, W_n\omega_X)=0,\quad \text{for all } i,n\ge 1.\] 
\end{thm}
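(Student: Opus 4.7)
The plan is to prove both statements simultaneously by induction on $n\ge 1$, using the standard filtration on $W_n\Omega^\bullet_X$ from Illusie and the compatibility of Witt constructions with étale localization. Both assertions are local on $X$, and one can work Zariski-locally (the formation of $W_n\Omega^\bullet$, $W_n\omega$, and the multiplication map are all étale-local on $X$ via Remark~\ref{etale-pullback} and the étale compatibility of $W_n(f)^*W_n\omega_Y\cong W_n\omega_X$). I would treat all $j$ at once.

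\emph{Base case $n=1$.} Here $W_1\Omega^\bullet_X=\Omega^\bullet_X$ and $W_1\omega_X=\Omega^N_X$, with multiplication being the wedge-product pairing $\Omega^j_X\otimes\Omega^{N-j}_X\to\Omega^N_X$. Since $\Omega^j_X$ is locally free of rank $\binom{N}{j}$ on a smooth scheme, this pairing is perfect, proving the isomorphism, and $\sExt^i_{\sO_X}(\Omega^j_X,\Omega^N_X)=0$ for $i\ge 1$ follows from local freeness.

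\emph{Inductive step.} Assume both statements for $W_{n-1}\Omega^\bullet_X$. Consider the short exact sequence coming from restriction
\[ 0\to \sK^j_n\to W_n\Omega^j_X\xr{R} W_{n-1}\Omega^j_X\to 0, \]
where $\sK^j_n=\V^{n-1}W_1\Omega^j_X+d\V^{n-1}W_1\Omega^{j-1}_X$. The sheaf $\sK^j_n$ is annihilated by $\V W_{n-1}\sO_X$, hence is an $\sO_X$-module, and it has a two-step filtration whose successive quotients are $F^{n-1}_*(\Omega^j_X/B_{n-1}\Omega^j_X)$ and $F^{n-1}_*(\Omega^{j-1}_X/Z_{n-1}\Omega^{j-1}_X)$, both \emph{locally free} $\sO_X$-modules (via the inverse Cartier isomorphism, cf.~\cite{IlDRW}, I.3). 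Applying $\sHom_{W_n\sO_X}(-,W_n\omega_X)$ to the sequence with $j$ replaced by $N-j$ and chasing the long exact sequence of Ext, the induction reduces both assertions to two sub-claims: (a) $\sHom_{W_n\sO_X}(W_{n-1}\Omega^{N-j}_X,W_n\omega_X)\cong \sHom_{W_{n-1}\sO_X}(W_{n-1}\Omega^{N-j}_X,W_{n-1}\omega_X)$ with vanishing higher Ext, and (b) $\sExt^{\ge 1}_{W_n\sO_X}(\sL,W_n\omega_X)=0$ for locally free $\sO_X$-modules $\sL$, together with an explicit description of $\sHom_{W_n\sO_X}(\sL,W_n\omega_X)$. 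Claim (a) follows from the inductive hypothesis together with the change-of-rings identity and the fact that the adjoint $^a\ul{p}\colon W_{n-1}\omega_X\xr{\simeq} i^\flat W_n\omega_X$ of the map $\ul{p}$ of the dualizing system $W_\bullet\omega_X$ (Example~\ref{1.4.4}(2)) is an isomorphism. Claim (b) follows from the analogous property for the adjoint $^aV\colon \sO_X$-module part $\xr{\simeq} i^\flat\sigma^\flat(\cdot)$; concretely, one uses the Koszul resolution of $\sO_X$ as a $W_n\sO_X$-module together with the fact that $W_nX$ is Cohen--Macaulay with dualizing module $W_n\omega_X$ (this is Ekedahl's $W_n\omega_X\cong W_n(\pi)^!W_n[-N]$), so that $R\sHom_{W_n\sO_X}(\sO_X,W_n\omega_X)$ is concentrated in a single degree and identified with a Frobenius twist of $\Omega^N_X$.

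The main obstacle is the bookkeeping in the inductive step: one has to match up the multiplication map on $W_n\Omega^\bullet_X$ with the boundary maps in the long exact Ext sequence, using the compatibilities $\V(\alpha\cdot F\beta)=\V\alpha\cdot\beta$, the Leibniz rule for $d$, and the dualizing-system compatibilities $VC=C\ul{p}$, $V\ul{p}=\ul{p}V$ recorded in Definition~\ref{definition-Wittresidual}. Once Claim (b) is set up, this matching is forced by the structure of $W_\bullet\omega_X$ as a Witt dualizing system, which is exactly the content of the adjointness isomorphisms in Example~\ref{1.4.4}(2). This is the route taken by Ekedahl in \cite{EI}; any alternative approach (e.g.~reducing to $X=\A^N_k$ by étale base change and computing with explicit Witt-differential generators $\V^i[t]^a\,d\V^{i_1}[t]^{a_1}\wedge\dots$) ultimately faces the same compatibility check.
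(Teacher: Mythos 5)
The paper gives no argument for this theorem beyond citing Ekedahl~[EI, II Thm 2.2 and III Prop.~2.4] together with a remark about the sign appearing in the $d$-compatibility, so your proposal is really an attempt to reconstruct Ekedahl's proof from scratch. Unfortunately, two concrete claims in your inductive scheme are false, and the second one is fatal to the structure of the argument.

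First, $\sK^j_n = V^{n-1}\Omega^j_X + dV^{n-1}\Omega^{j-1}_X$ is \emph{not} annihilated by $V W_{n-1}\sO_X$, so it is not an $\sO_X$-module (it only has a two-step filtration with $\sO_X$-module graded pieces). Using Leibniz and $FdV=d$, for $a\in W_{n-1}\sO_X$ and $\omega'\in \Omega^{j-1}_X$ one computes
\[
V(a)\cdot dV^{n-1}\omega' \;=\; -\,dV(a)\cdot V^{n-1}\omega' \;=\; -\,V^{n-1}\big(F^{n-2}(da)\cdot \omega'\big),
\]
which is nonzero in general; already for $n=2$ it equals $-V(da\cdot\omega')$. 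Second — and this is the serious problem — the multiplication map does not respect the restriction filtrations on the two factors, i.e.\ $\sK^j_n\cdot\sK^{N-j}_n\neq 0$. The same computation yields
\[
V^{n-1}\omega\cdot dV^{n-1}\eta' \;=\; V^{n-1}(d\omega\cdot\eta'),
\]
and for instance on $X=\A^2_k$ with $n=2$, $\omega=s\,dt$, $\eta'=s^{p-1}t^{p-1}$ this is $V(s^{p-1}t^{p-1}\,ds\wedge dt)\neq 0$. Consequently the image of $\sK^j_n$ under the multiplication map is \emph{not} contained in $\sHom(W_{n-1}\Omega^{N-j}_X,W_n\omega_X)$, so the long exact Ext sequence for the restriction sequence in degree $N-j$ does not fit into a commutative ladder with the restriction sequence in degree $j$. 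The pairing is anti-filtered: the orthogonal complement of $\text{Fil}^{n-1}W_n\Omega^j_X$ is $\text{Fil}^1 W_n\Omega^{N-j}_X$, not $\text{Fil}^{n-1}$, so $\sK^j_n$ pairs perfectly with the quotient $\Omega^{N-j}_X=W_n\Omega^{N-j}_X/\text{Fil}^1$, not with $W_{n-1}\Omega^{N-j}_X$. Setting up the induction with the correct pair of opposite filtrations and identifying the resulting graded pieces via the Cartier isomorphism is precisely where the real content of Ekedahl's theorem lies; it is not a matter of bookkeeping, and your claims (a) and (b) as stated do not reduce the statement to the inductive hypothesis.
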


\begin{proof}
  This is all due to Ekedahl. Let us just point out that $W_\bullet\omega_X$ sits in degree $N$, hence the multiplication map is a graded morphism; and that $C^nd=0$ gives the equality
\[C^n(xdy)= (-1)^{q+1}C^n(d(x)y), \quad \text{for all } x\in W_n\Omega^q_X,\, y\in W_n\Omega^{N-q}_X\]
and this together with the sign $\alpha$ introduced in the definition of $d$ in \ref{dualizing-functor} (which is missing in \cite{EI}) gives the compatibility with $d$.
\end{proof}

\begin{lemma}[{\cite[Prop. 8.4, (ii)]{BER}}]\label{1.6.12.6}
Let $X'$ be a smooth $W_n$-scheme and denote by $X$ its reduction modulo $p$.
Let $\varphi: W_n\Omega_X\xr{\simeq} \sigma^n_*\sH^\bullet_{\rm DR}(X'/W_n)$, with $\sigma=W_n(F_{\Spec k})$, be the canonical isomorphism of graded $W_n$-algebras from \cite[III, (1.5)]{IR}. 
Then $\varphi$ is the unique morphism, which makes the following diagram commutative 
\[\xymatrix{W_{n+1}\Omega_{X'/W_n} \ar[d]\ar[r]^{F^n} & Z\Omega_{X'/W_n}\ar[d]\\
             W_n\Omega_X\ar[r]^{\varphi} & \sH^\bullet_{\rm DR}(X'/W_n).  }\]
Here $W_{n+1}\Omega_{X'/W_n}$ is the relative de Rham-Witt complex defined in \cite{LZ} and the
vertical maps are the natural surjections.
In particular we have
\eq{1.6.12.6.1}{\varphi(\sum_{i=0}^{n-1}V^i([a_i]))=\sum_{i=0}^{n-1}F^nV^i([\tilde{a}_i])=
                    \tilde{a}_0^{p^n}+p\tilde{a}_1^{p^{n-1}}+\ldots+ p^{n-1}\tilde{a}_{n-1}^p}
and 
\eq{1.6.12.6.2}{\varphi(\sum_{i=0}^{n-1}dV^i([a_i]))=\sum_{i=0}^{n-1}F^ndV^i([\tilde{a}_i])
       =\tilde{a}_0^{p^n-1}d\tilde{a}_0+ \tilde{a}_1^{p^{n-1}-1}d\tilde{a}_1+\ldots +
                                                           \tilde{a}_{n-1}^{p-1}d\tilde{a}_{n-1},} 
where the $\tilde{a}_i$ are any liftings of $a_i\in\sO_{X}$ to $\sO_{X'}$.
\end{lemma}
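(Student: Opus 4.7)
The plan has three parts: prove uniqueness via surjectivity of the left-hand projection, produce the map by descending $F^n$, and derive the explicit formulas from standard Witt-complex identities. For uniqueness, I would invoke the surjectivity of the Langer--Zink projection $W_{n+1}\Omega_{X'/W_n}\twoheadrightarrow W_n\Omega_X$ from \cite{LZ}, which forces any $\varphi$ fitting into the square to be uniquely determined by its values on the image of $F^n$.

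For existence, I would first show that $F^n$ lands in the cocycles $Z\Omega^\bullet_{X'/W_n}$. Iterating the Witt-complex relation $dF=pFd$ gives $dF^n=p^nF^nd$, which vanishes since $p^n=0$ in $\Omega^\bullet_{X'/W_n}$ (as $X'$ is a $W_n$-scheme). Next, I would check that $F^n$ composed with the projection to $\sH^\bullet_{\rm DR}(X'/W_n)$ kills the kernel of the projection to $W_n\Omega_X$. On the generating elements $V^n([a])$ and $dV^n([a])$ of the kernel this follows from $F^nV^n=p^n=0$ and $F^ndV^n([a])=d[a]$, the latter being a coboundary in the de Rham complex. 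The resulting induced map from $W_n\Omega_X$ to $\sH^\bullet_{\rm DR}(X'/W_n)$ must agree with $\varphi$ by checking on algebra generators.

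The explicit formulas then follow by direct calculation. For \eqref{1.6.12.6.1}, the identity $F^iV^i=p^i$ immediately yields $F^nV^i([\tilde a_i])=F^{n-i}(p^i[\tilde a_i])=p^i\tilde a_i^{p^{n-i}}$. For \eqref{1.6.12.6.2}, the identity $F^idV^i=d$ (from iterating $FdV=d$) reduces the computation to $F^{n-i}(d\tilde a_i)$; an induction on $k$ using $F(d[a])=[a]^{p-1}d[a]$ yields $F^k(d[a])=[a]^{p^k-1}d[a]$, which produces the stated formula.

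The hard part will be identifying the descended map with Illusie--Raynaud's $\varphi$, since the latter is defined through the crystalline--de Rham comparison in \cite[III, (1.5)]{IR} rather than in terms of Frobenius. To close this gap one traces through the IR construction and checks agreement on Teichmüller lifts, where both maps send $[a]\in W_n\sO_X$ to $\tilde a^{p^n}\in\sO_{X'}/p^n$; multiplicativity and compatibility with the differential then pin down the map on all of $W_n\Omega_X$.
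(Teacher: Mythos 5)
The paper itself does not prove this lemma --- it is quoted from \cite[Prop.~8.4(ii)]{BER} --- so there is no internal proof to compare against. Your overall strategy (uniqueness via surjectivity of the left vertical, existence by descending $F^n$, explicit formulas via $F^nV^i=p^iF^{n-i}$ and $F^ndV^i=F^{n-i}d$) is the natural one and presumably what \cite{BER} does.

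There is, however, a gap in the descent step. You claim the kernel of $W_{n+1}\Omega_{X'/W_n}\twoheadrightarrow W_n\Omega_X$ is generated by $V^n([a])$ and $dV^n([a])$. Even for the restriction kernel $\ker(R)=\operatorname{Fil}^nW_{n+1}\Omega_{X'/W_n}$ this is not a generating set: one needs $V^n\omega$ and $dV^n\eta$ for arbitrary $\omega,\eta\in\Omega^{\bullet}_{X'/W_n}$, not just Teichm\"uller lifts (your argument $F^nV^n=p^n=0$, $F^ndV^n=d$ does work for these, so this is a small fix). More substantially, the natural surjection to $W_n\Omega_X$ factors through reduction modulo $p$ (the target lives over $k$, not $W_n$), so its kernel also contains the contribution from $W(p\sO_{X'})$ and its image under $d$; for instance already at level $0$ the map $W_n\sO_{X'}\to W_n\sO_X$ has non-trivial kernel $W_n(p\sO_{X'})$, and $W_n\Omega^{\bullet}_{X'/W_n}\not\cong W_n\Omega^{\bullet}_X$. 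One must separately check that $F^n$ kills this piece; this can be done by a $p$-adic valuation count (e.g.\ $F^nV^i([pb])=p^{i+p^{n-i}}b^{p^{n-i}}$ and $i+p^{n-i}\ge n$), but it is missing from your sketch. Finally, as you note, identifying the descended map with the Illusie--Raynaud $\varphi$ requires actually unwinding \cite[III,(1.5)]{IR}; checking on $[a]$ alone is not enough, since $W_n\sO_X$ is only additively generated by $V^i[a]$, and $V$ is not ring multiplication --- the cleanest route is to show directly that $\varphi$ itself fits in the square and then invoke uniqueness.
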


\begin{thm}[Ekedahl]\label{1.6.12}
Let $X$ be a smooth $k$-scheme of pure dimension $N$ with structure map $\rho_X: X\to \Spec k$. 
Then $(W_n\rho_X)^!W_n\in D^b_c(W_n X)$ is concentrated in degree $-N$, for all $n\ge 1$ and there is a quasi-isomorphism of complexes of Witt dualizing systems
(see Example \ref{1.4.4}, (1) and (2))
\[\tau: W_\bullet\omega_X(N)[N]\xr{\rm qis} K_{X}=\rho_X^\Delta W_\bullet\omega,\]
such that
\begin{enumerate}
 \item The map $\tau_1: \Omega^N_X[N]\xr{\rm qis} \rho_X^\Delta k$ is the classical (ungraded) quasi-isomorphism, i.e. it is the composition of the natural quasi-isomorphism
        $\Omega^N_X[N]\to E_{Z^\bullet[N]}(\Omega^N_X[N])$ with the inverse of \ref{1.6.2}, (2). Here $Z^\bullet[N]$ is the shifted codimension filtration of $X$ and $E_{Z^\bullet[N]}$ the Cousin functor.
\item  $\tau$ is compatible with \'etale pullback, i.e. if $f: U\to X$ is \'etale, then the following diagram commutes
       \[\xymatrix{f^*W_\bullet\omega_X(N)[N]\ar[d]^\simeq\ar[r]^{\tau_X} & f^*K_X=f^*\rho_X^\Delta W_\bullet\omega\ar[d]^\simeq\\
                    W_\bullet\omega_U(N)[N]\ar[r]^{\tau_U} & K_U=\rho_U^\Delta W_\bullet\omega,
         }\] 
where $\rho_U: U\to \Spec k$ is the structure morphism of $U$, the vertical isomorphism on the right is induced by $f^*\rho_X^\Delta\cong f^\Delta\rho_X^\Delta\cong \rho_U^\Delta$
and the isomorphism on the left is induced by $(W_nf)^*W_n\Omega^N_X=W_n\Omega^N_U$, $n\ge 1$ (see \cite[I, Prop. 1.14]{IlDRW}).
\end{enumerate}
 
\end{thm}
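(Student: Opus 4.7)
The plan is to construct $\tau$ explicitly via the Cousin resolution. I would first observe that both sides, at each finite level $n$, are Cousin complexes on $W_nX$ with the same associated codimension function. For $K_X = \rho_X^\Delta W_\bullet\omega$, the codimension-function formula $d_{(W_n\rho_X)^\Delta W_n}(x) = -\text{trdeg}(k(x)/k) = i - N$ for $x\in X^{(i)}$ yields
\[
(K_X)^q_n = \bigoplus_{x\in X^{(q+N)}} i_{W_n(x)*} J_n(x),
\]
while by Lemma~\ref{drw-is-CM} the shifted Cousin resolution $E(W_\bullet\omega_X)(N)[N]$ has $q$-th term $\bigoplus_{x\in X^{(q+N)}} i_{x*}H^{q+N}_x(W_n\Omega^N_X)$, sitting in internal degree $0$. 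By Corollary~\ref{1.6.5}, a morphism between these Cousin complexes is determined by its values pointwise; this would reduce the construction of $\tau_n$ to producing a compatible family of local isomorphisms $H^i_x(W_n\Omega^N_X) \xrightarrow{\simeq} J_n(x)$ for $x\in X^{(i)}$.

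Second, I would construct the local isomorphism at each point. Around $x\in X^{(i)}$ choose a regular system of parameters $t_1,\dots,t_i$ cutting out $\overline{\{x\}}$; then $H^i_x(W_n\Omega^N_X)$ admits an explicit description (as in the proof of Lemma~\ref{drw-is-CM}) as the direct limit of quotients of $W_n\Omega^N_X$ by powers of $([t_1],\dots,[t_i])$. Using Lemma~\ref{1.6.12.6} to realize elements via Teichm\"uller lifts and differentials of a smooth $W_n$-lift, and taking a residue with respect to $[t_1]\cdots[t_i]$, I obtain an identification of this local cohomology module with an injective hull of $k(x)$ over $W_n\sO_{X,x}$, which is the required $J_n(x)$. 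For $n=1$ this is the classical residue identification, which is exactly the statement of (1).

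Third, I would verify the compatibility of $\tau$ with the Witt dualizing system structure $(\ul{p},V,C)$. The $\ul{p}$ and $V$ parts reduce to straightforward manipulations with Witt-vector expressions: on the source, $\ul{p}$ and $V$ are the classical maps on $W_n\Omega^N_X$, and on the target they are obtained from the canonical isomorphisms $\rho_X^\Delta i^\Delta \cong i^\Delta \rho_X^\Delta$ and $\rho_X^\Delta(\sigma^\Delta i^\Delta)\cong (\sigma^\Delta i^\Delta)\rho_X^\Delta$ applied to $\ul{p}$ and $V$ on $W_\bullet\omega$. The compatibility with $C$ is the technical heart: on the source it comes from the inverse Cartier isomorphism $C^{-n}: W_n\Omega^N_X \xrightarrow{\simeq} W_n(F^n_X)_*W_n\Omega^N_X/dW_n\Omega^{N-1}_X$ of \cite{IR}, whereas on $K_X$ it is $\rho_X^\Delta$ applied to $C$ on $W_\bullet\omega$; matching the two requires the explicit formulas \eqref{1.6.12.6.1} and \eqref{1.6.12.6.2} together with a local residue computation expressing that the Teichm\"uller-based residue intertwines the two Cartier maps. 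Once $\tau$ is a quasi-isomorphism, the concentration of $(W_n\rho_X)^! W_n$ in degree $-N$ is an immediate consequence, since $K_X$ represents it in $D^b_{\rm c}$.

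Finally, property (2) follows because every ingredient (Cousin resolutions, local cohomology, choice of parameters, Lemma~\ref{1.6.12.6} and the residue identification) behaves naturally under \'etale localization and the isomorphisms $c_{f,g}$ commute with \'etale base change by \ref{1.6.2}(4). The main obstacle I expect is precisely the compatibility with the Cartier operator $C$: reconciling the de Rham-Witt incarnation of $C^{-n}$ with its incarnation inside $K_X$ (which comes abstractly from the $\rho_X^\Delta$-functoriality of the structure map on $W_\bullet\omega$) is delicate and forces one to work with an explicit smooth $W_n$-lift, precisely as packaged by Lemma~\ref{1.6.12.6}.
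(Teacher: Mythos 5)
Your construction differs in strategy from the paper's: the paper defines $\tau_n$ first at the level of $D^b_c(W_nX)$ by citing Ekedahl's isomorphism $W_n\Omega^N_X\cong (W_n\rho_X)^!W_n[-N]$, then applies the equivalence $E_{Z^\bullet}$ between dualizing complexes and residual complexes (\cite[Lem.~3.2.1]{Co}) to get the actual morphism of complexes. Your plan is to build $\tau_n$ directly as a morphism of Cousin complexes by prescribing it stalk by stalk. That route has a genuine gap you do not close: a family of local isomorphisms
$H^i_x(W_n\Omega^N_X)\xr{\simeq}J_n(x)$ indexed by points only assembles into a morphism of complexes if it commutes with the Cousin differential $d_1^{i,0}:\bigoplus_{x\in X^{(i)}}i_{x*}H^i_x\to\bigoplus_{y\in X^{(i+1)}}i_{y*}H^{i+1}_y$ on the source and the corresponding differential of $K_{X,n}$ on the target. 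Corollary~\ref{1.6.5} gives you that homotopies between Cousin complexes are trivial, i.e.\ uniqueness, but it does not give existence: you still have to verify the square with the two differentials, and for $n>1$ on $W_nX$ this is not the classical residue compatibility — it is essentially the content of Ekedahl's theorem, which is precisely what the paper's derived-category route is designed to avoid rechecking.

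There is a second, intertwined issue in your step producing the local isomorphism. To claim that $H^i_x(W_n\Omega^N_X)$ is an injective hull of $k(x)$ over $W_n\sO_{X,x}$ for $x\in X^{(i)}$ is, in effect, the assertion that $W_n\Omega^N_X$ is a pointwise dualizing module for $W_nX$; without first establishing that, "taking a residue" does not land in anything you are entitled to call $J_n(x)$. Ekedahl's actual argument (Remark~\ref{1.6.12.5} is the sketch) circumvents this by choosing a smooth $W_n$-lift $X'$, forming the finite morphism $\epsilon: X'\to W_nX$ induced by $\varphi^0$, and then using known duality for the smooth scheme $X'$ together with $\Tr_\epsilon$ to transport dualizing behaviour across $\epsilon$; the factorization through $\epsilon_*\sH^N_{\rm DR}(X'/W_n)$ is what makes the whole thing a well-posed derived-category isomorphism rather than a stalkwise prescription. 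You gesture at exactly these ingredients through Lemma~\ref{1.6.12.6}, but you would need to use them to construct a map in $D^b_c(W_nX)$ and only then apply $E_{Z^\bullet}$, rather than to write down a pointwise map and hope it is a cochain map. Finally, you rightly flag the $C$-compatibility as the hardest check, but it remains unaddressed; in the paper this and the $\ul{p},V$-compatibility are disposed of by citing \cite[I, Lem 3.3 and II, Lem.~2.1]{EI}, and a self-contained version would have to carry out the comparison of the Teichm\"uller-residue with $\rho_X^\Delta$ of the Cartier structure on $W_\bullet\omega$ in detail.
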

\begin{proof}
We do not write the shift $(N)$ in the following (the statement about the grading being obvious). In \cite[I,Thm.~4.1]{EI} it is proven that one has an isomorphism $\tau_n: W_n\Omega^N_X \xr{\simeq} (W_n\rho_X)^!W_n[-N]$ in $D^b_c(W_n X)$. (We give a sketch of the construction in case $X$ admits a smooth lift over $W_n$
in the remark below.) Let $Z^\bullet$ be the filtration by codimension on $W_n X$.
For $C$ any complex let $E_{Z^\bullet}(C)$ be the associated Cousin complex. Then $E_{Z^\bullet}$ induces an equivalence from the category of dualizing complexes with filtration $Z^\bullet$
in $D^b_c(W_n X)$ and the category of residual complexes with associated filtration $Z^\bullet$ on $W_n X $ with quasi inverse the natural localization functor (see \cite[Lem. 3.2.1]{Co}). Thus we obtain
an isomorphism of residual complexes
\[ E_{Z^\bullet}(W_n\Omega^N_X)\xr{\simeq} E_{Z^\bullet}((W_n\rho_X)^!W_n[-N])\cong (W_n\rho_X)^\Delta W_n[-N].\]
By Lemma \ref{drw-is-CM} the natural map $W_n\Omega^N_X\to E_{Z^\bullet}(W_n\Omega^N_X)$ is a resolution, for all $n\ge 1$ (by \cite[IV, Prop. 2.6.]{Ha}). 
Therefore we obtain a quasi-isomorphism (by abuse of notation again denoted by $\tau_n$)
\[\tau_n: W_n\Omega^N_X[N]\xr{qis} (W_n\rho)^\Delta W_n.\]
It follows from \cite[I, Lem 3.3 and II, Lem. 2.1]{EI}, that the $\tau_n$, $n\ge 1$, are compatible with $\ul{p}, C^n, V$ and hence induce a morphism of complexes of Witt dualizing systems as in the statement.
The property (1) is proved at the end of the proof of \cite[I, Thm. 4.1, p. 198 ]{EI} and (2) follows from the construction of $\tau_n$ in \cite[I, 2.]{EI} (see in particular the second paragraph on page 194).
\end{proof}

\begin{remark}\label{1.6.12.5}
 Let us sketch for later purposes how the isomorphism 
 $\tau_n : W_n\omega_X\to (W_n\rho_X)^! W_n [-N]$ is constructed in case $X$ admits a smooth lifting
$\rho_{X'}: X'\to \Spec W_n$. For details see \cite[I, 2.]{EI}. 
Let 
\eq{1.6.12.5.1}{\varphi: W_n\Omega_X\xr{\simeq} \sigma^n_*\sH^\bullet_{\rm DR}(X'/W_n)}
be the  canonical isomorphism of graded $W_n$-algebras of \cite[III, (1.5)]{IR} (see Lemma \ref{1.6.12.6}).
The composition
$W_n\sO_X\xr{\varphi^0}\sigma^n_*\sH^0_{\rm DR}(X'/W_n)\to \sO_{X'} $ defines a finite morphism 
$\epsilon: X'\to W_n X$, which fits into a commutative diagram
\[\xymatrix{X'\ar[r]^\epsilon\ar[d]_{\rho_{X'}} & W_nX\ar[d]^{W_n\rho_X}\\
             \Spec W_n\ar[r]^{\sigma^n} & \Spec W_n. }\]
It follows that $\varphi$ becomes an isomorphism $W_n\Omega_X\xr{\simeq} \epsilon_*\sH^\bullet_{\rm DR}(X'/W_n)$.
(Here we abuse the notation $\epsilon_*$ to indicate where the $W_n\sO_X$-module structure on the $W_n$-module $\sH^\bullet_{\rm DR}(X'/W_n)$ is coming from, which is
of course not an $\sO_{X'}$-module.)
Since $\rho_{X'}$ is smooth we have a canonical isomorphism 
$\tau_{X'}: \omega_{X'/W_n}\xr{\simeq} \rho_{X'}^! W_n[-N]$. Now Ekedahl shows, that
the composition
\mlnl{\epsilon_*\omega_{X'/W_n}\xr{\tau_{X'}} \epsilon_*\rho_{X'}^! W_n[-N]\simeq
            \epsilon_*\rho_{X'}^!{\sigma^n}^!W_n[-N]\\ \simeq \epsilon_*\epsilon^!(W_n\rho_X)^!W_n[-N]
        \xr{\Tr_{\epsilon}} (W_n\rho_X)^!W_n[-N] }
factors over $\epsilon_* \sH^N_{\rm DR}(X'/W_n)$. Then $\tau_n$ is defined as the composition
\[W_n\omega_X\xr{\varphi,\, \simeq}\epsilon_* \sH^N_{\rm DR}(X'/W_n)\to (W_n\rho_X)^!W_n[-N].\]
\end{remark}

\begin{corollary}\label{1.6.17}
Let $X$ be a smooth $k$-scheme of pure dimension $N$. Denote by $E(W_\bullet\omega_X)$ the Cousin complex associated to $W_\bullet\omega_X$ with respect to the codimension filtration on $X$ (cf. \ref{CousinWitt}). 
Then the maps $\ul{p}$, $C^n$ , $V$ on $W_\bullet\omega_X$ induce morphisms of complexes on the system $\{E(W_n\omega_X)(N)[N]\}_n$; this
defines a Witt residual complex $E(W_\bullet\omega_X)(N)[N]$, which is isomorphic as Witt residual complex to $K_X$ (see Notation \ref{KX}). This isomorphism is compatible with
\'etale base change. 
\end{corollary}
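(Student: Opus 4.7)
The plan is to transport the structure through the Cousin functor $E = E_{Z^\bullet}$ with respect to the codimension filtration $Z^\bullet$ on $X$. Theorem \ref{1.6.12} provides a quasi-isomorphism $\tau: W_\bullet\omega_X(N)[N] \xr{\rm qis} K_X$ of complexes of Witt dualizing systems, and the Corollary essentially says that applying $E$ termwise upgrades this quasi-isomorphism to an honest isomorphism of Witt residual complexes.

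First I would check that $E(W_\bullet\omega_X)(N)[N]$ is termwise a residual complex with associated filtration $Z^\bullet$. By Lemma \ref{drw-is-CM}, the natural map $W_n\omega_X \to E(W_n\omega_X)$ is a flasque resolution, hence $W_n\omega_X(N)[N]\to E(W_n\omega_X)(N)[N]$ is a quasi-isomorphism to a Cousin complex with filtration $Z^\bullet$. On the other side, $(K_X)_n=(W_n\rho_X)^\Delta W_n$ is a residual complex with the same associated filtration, so $E((K_X)_n)=(K_X)_n$. Since the Cousin functor induces an equivalence between dualizing complexes in $D^b_c$ with filtration $Z^\bullet$ and residual complexes with the same filtration (\cite[Lem.~3.2.1]{Co}), applying it to $\tau_n$ produces an isomorphism of residual complexes
\[\tilde\tau_n: E(W_n\omega_X)(N)[N] \xr{\simeq} (K_X)_n,\]
which in particular endows the left-hand side with the structure of a residual complex.

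Next I would transfer the Witt residual complex structure $(\ul{p}, C, V)$. Since $\tau$ is a morphism of complexes of Witt dualizing systems by Theorem \ref{1.6.12}, the maps $\ul{p}, C, V$ on $W_\bullet\omega_X$ correspond under $\tau$ to the structure maps of $K_X$. The Cousin functor $E$ commutes with the finite pushforwards $i_*, \sigma_*, \Sigma_*$ (by Lemma \ref{1.6.4.5}, since all three preserve the property of being Cousin with respect to the given filtration), so $\ul{p}, C, V$ induce morphisms of complexes on $\{E(W_n\omega_X)(N)[N]\}_n$ making $\tilde\tau=\{\tilde\tau_n\}_n$ compatible with them. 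The required property that the adjoints $^a\ul{p}, {}^aC, {}^aV$ are isomorphisms of complexes is inherited from $K_X$ through the isomorphism $\tilde\tau$, so $E(W_\bullet\omega_X)(N)[N]$ is a genuine Witt residual complex in the sense of Definition \ref{definition-Wittresidual}, and $\tilde\tau$ is an isomorphism in that category. Étale compatibility follows from Theorem \ref{1.6.12}(2) together with the fact that \'etale pullback $f^*$ is exact, preserves injective hulls of residue fields, and matches the codimension filtrations of $X$ and $U$, so $f^* E(W_n\omega_X) = E(W_n\omega_U)$ canonically.

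The main obstacle I anticipate is the bookkeeping required to confirm that the adjointness condition for $(\ul{p}, C, V)$ in Definition \ref{definition-Wittresidual} genuinely transfers; this amounts to checking that the isomorphisms $i^\Delta, \sigma^\Delta, \Sigma^\Delta$ of residual complexes commute with the Cousin functor in a manner compatible with $\tilde\tau$. Since the $(-)^\Delta$ operations only involve finite morphisms (Lemma \ref{1.6.6}) and, on residual complexes, are computed termwise by $\bar f^*\sHom(f_*\sO,-)$, this compatibility reduces to the functoriality of $E$ and the already-established fact that $\tilde\tau_n$ is an isomorphism of complexes for each $n$.
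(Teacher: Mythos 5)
Your proof is correct and follows the same route implicit in the paper: the paper gives no separate argument for Corollary \ref{1.6.17}, and the intended content is precisely the factorization of $\tau_n$ through $E_{Z^\bullet}(W_n\Omega^N_X)[N]$ already constructed in the proof of Theorem \ref{1.6.12}, plus the observation that the finite pushforwards $i_*,\sigma_*,\Sigma_*$ commute with the Cousin functor so that the Witt dualizing structure of $W_\bullet\omega_X$ transports termwise. The only small imprecision is the appeal to Lemma \ref{1.6.4.5} to justify $E\circ f_*\cong f_*\circ E$ for finite $f$: that lemma only shows $f_*$ preserves Cousin complexes, and the actual commutation uses in addition that $f_*$ is exact (so $f_*E(M)$ resolves $f_*M$) and that a Cousin resolution with prescribed filtration is unique up to the identity morphism; this is harmless but should be stated.
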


\begin{corollary}\label{1.6.18}
Let the notations be as above. Then there is an isomorphism in $C^b(\drw_{X, {\rm qc}})$ 
                  \[\mu: E(W_\bullet\Omega_X)\xr{\simeq} D_X(W_\bullet\Omega_X)(-N)[-N],\]
given by the composition
\mlnl{E(W_\bullet\Omega_X)\xr{E(\ref{1.4.8}),\, \simeq}E(\sHom(W_\bullet\Omega_X, W_\bullet\omega_X))\xr{\simeq}\\
\sHom(W_\bullet\Omega_X, E(W_\bullet\omega_X))\xr{\text{Cor.~\ref{1.6.17}}, \,\simeq}\sHom(W_\bullet\Omega_X, K_X)(-N)[-N].}
The map $\mu$ is compatible with \'etale base change.

\end{corollary}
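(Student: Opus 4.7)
The plan is to verify that each of the three arrows in the displayed composition is an isomorphism in $C^b(\drw_{X,\mathrm{qc}})$ and then to track naturality under \'etale pullback through each step.

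For the leftmost arrow, one simply applies the Cousin functor $E$ to the isomorphism of de Rham-Witt systems $W_\bullet\Omega_X \xr{\simeq} \sHom(W_\bullet\Omega_X, W_\bullet\omega_X)$ of Theorem~\ref{1.4.8}. Since $E$ is a functor on $\drw_{X,\mathrm{qc}}$ built out of local cohomology, it preserves isomorphisms, so $E(\ref{1.4.8})$ is an isomorphism. For the rightmost arrow, one applies $\sHom(W_\bullet\Omega_X,-)$ to the isomorphism of Witt residual complexes $E(W_\bullet\omega_X)(N)[N] \xr{\simeq} K_X$ supplied by Corollary~\ref{1.6.17}, and then shifts using the natural identifications of Proposition~\ref{1.4.7}(3) to pull the $(N)[N]$ out as $(-N)[-N]$ on the other side.

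The middle arrow is the technical heart of the argument. What is needed is the interchange
\[
E\bigl(\sHom(W_\bullet\Omega_X, W_\bullet\omega_X)\bigr) \;\cong\; \sHom\bigl(W_\bullet\Omega_X, E(W_\bullet\omega_X)\bigr),
\]
which level-by-level in $n$ reduces to showing, for each codimension $i$ and each $x \in X^{(i)}$, that
\[
H^i_x\bigl(\sHom_{W_n\sO_X}(W_n\Omega_X, W_n\omega_X)\bigr) \;\cong\; \sHom_{W_n\sO_{X,x}}\bigl(W_n\Omega_{X,x}, H^i_x(W_n\omega_X)\bigr).
\]
Here $W_n\Omega_X$ is coherent over $W_n\sO_X$, and the vanishing $\sExt^j_{W_n\sO_X}(W_n\Omega_X, W_n\omega_X) = 0$ for $j \geq 1$ from Theorem~\ref{1.4.8} causes the local-to-global spectral sequence for local Ext to collapse, giving the required identification. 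One checks that the graded identification is compatible with the six structure maps of a de Rham-Witt system ($\pi, F, V, \ul p, d$ and the Cousin differential) by unwinding the definitions in \ref{dualizing-functor} and \ref{CousinWitt}; each structure on either side is inherited from $W_\bullet\omega_X$ via the adjunction formulas of \ref{1.4.3}, so the compatibility is forced by the coherence of $W_\bullet\Omega_X$.

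For the compatibility with \'etale base change, each of the three ingredients is already known to be \'etale-local. The multiplication map of Theorem~\ref{1.4.8} is obviously compatible with \'etale pullback since $(W_nf)^*W_n\Omega_X = W_n\Omega_U$ for $f\colon U \to X$ \'etale (as recalled after Remark~\ref{etale-pullback}). The Cousin functor $E$ commutes with \'etale pullback because \'etale maps preserve codimension and local cohomology commutes with flat pullback. Finally, Corollary~\ref{1.6.17} explicitly packages the \'etale compatibility of $\tau$ inherited from Theorem~\ref{1.6.12}(2). Composing these three naturalities gives the asserted \'etale compatibility of $\mu$. I expect the main obstacle to be not the construction of the middle isomorphism on underlying graded objects—this is essentially formal from the $\sExt$-vanishing—but the verification that this isomorphism respects all the de Rham-Witt system structure maps simultaneously, which is a bookkeeping exercise using the sign conventions fixed in \ref{dualizing-functor} and Proposition~\ref{1.4.7}(3).
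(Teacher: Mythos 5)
Your argument is correct and reconstructs precisely what the paper leaves implicit: the corollary carries no separate proof because each arrow of the displayed composition is immediate from Theorem~\ref{1.4.8}, the Cousin formalism of~\ref{CousinWitt}, and Corollary~\ref{1.6.17}, with the $\sExt$-vanishing doing the essential work on the middle arrow. Your pointwise reduction of the middle identification to $H^i_x(\sHom(W_n\Omega_X,W_n\omega_X))\cong\sHom(W_n\Omega_{X,x},H^i_x(W_n\omega_X))$ (using the $\sExt$-vanishing, the Cohen--Macaulay concentration of $H^i_x(W_n\omega_X)$, and its injectivity as a summand of a residual complex term), together with the observation that naturality propagates the de Rham--Witt system structure and the \'etale compatibility, is exactly the right way to fill in these details.
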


\subsection{The trace morphism for a regular closed immersion}\label{section-The-trace-morphism-for-a-regular-closed-immersion}

\subsubsection{Local Cohomology}\label{1.7.6}
Let $X=\Spec A$ be an affine   Cohen-Macaulay scheme and $Z\subset X$ a closed subscheme of pure codimension $c$, defined by the ideal $I\subset A$.
Let $t=t_1,\ldots,t_c\in I$ be an $A$-regular sequence with $\sqrt{(t)}=\sqrt{I}$, here $(t)$ denotes the ideal $(t_1,\ldots, t_c)\subset A$.
(After shrinking $X$ such a sequence always exists.)
We denote by $K^\bullet(t)$ the  Koszul complex of the sequence $t$,
i.e. $K^{-q}(t)=K_q(t)= \bigwedge^q A^c$, $q=0,\ldots, c$, and if $\{e_1,\ldots, e_c\}$ is the standard basis of $A^c$ and
$e_{i_1,\ldots, i_q}:= e_{i_1}\wedge\ldots\wedge e_{i_q}$, then the differential is given by
\[d^{-q}_{K^\bullet}(e_{i_1,\ldots, i_q})=d^{K_\bullet}_{q}(e_{i_1,\ldots, i_q})=
                        \sum_{j=1}^{q}(-1)^{j+1} t_{i_j}e_{i_1,\ldots,\widehat{i_j},\ldots i_q}.\]
We define the complex 
\[K^\bullet(t,M):= \Hom_A(K^{-\bullet}(t), M),\]
and denote its $n$-th cohomology by $H^n(t, M)$. The map 
\[\Hom_A(\bigwedge^c A^c, M)\to M/(t)M,\quad \varphi\mapsto \varphi(e_{1,\ldots,c})\] induces a canonical isomorphism $H^c(t, M)\simeq M/(t)M$. 

If $t$ and $t'$  are two sequences as above with $(t')\subset (t)$, then there exists a $c\times c$-matrix $T$ with coefficients in $A$ such that
$t'= T t$ and $T$ induces a morphism of complexes $K^\bullet(t')\to K^\bullet(t)$, which is the unique (up to homotopy) morphism lifting
the natural map $A/(t')\to A/(t)$. Furthermore we observe that, for any pair of sequences $t$, $t'$
as above there exists an $N\ge 0$ such that $(t^N)\subset (t')$, where $t^N$ denotes the sequence $t_1^N,\ldots, t_c^N$. Thus 
the sequences $t$ form a directed set and $H^c(t, M)\to H^c(t', M)$, $(t')\subset (t)$, becomes a direct system. 
It follows from \cite[Exp. II, Prop. 5]{SGA2}, that we have an isomorphism
\eq{1.7.6.1}{\varinjlim_t M/(t)M=\varinjlim_t H^c(t, M) \cong H^c_Z(X, \widetilde{M}),}
where the limit is over all $A$-regular sequences $t=t_1,\ldots, t_c$  with $V((t))=Z$ and $\widetilde{M}$ is the sheaf associated to $M$. In fact it is enough to take the limit over the powers 
$t^n=t_1^n,\ldots, t_c^n$, $n\ge 1$, of just one regular sequence $t$ with $V((t))=Z$.
We denote by
\[\genfrac{[}{]}{0pt}{}{m}{t}\]
the image of $m\in M$ under the composition 
\[M\to M/(t)M\to H^c(t, M)\to H_Z^c(X, \widetilde{M}).\]
It is a consequence of the above explanations that we have the following properties:
\begin{enumerate}
 \item Let $t$ and $t'$ be two sequences as above with $(t')\subset (t)$. Let $T$ be a $c\times c$-matrix 
       with $t'=Tt$, then
      \[\genfrac{[}{]}{0pt}{}{\det(T)\,m}{t'}=\genfrac{[}{]}{0pt}{}{m}{t}.\]
\item\[\genfrac{[}{]}{0pt}{}{m+m'}{t}=\genfrac{[}{]}{0pt}{}{m}{t}+ \genfrac{[}{]}{0pt}{}{m'}{t}, \quad 
                 \genfrac{[}{]}{0pt}{}{t_i m}{t}=0 \quad\text{all }i.  \]
\item If $M$ is any $A$-module, then
        \[H_Z^c(X,\sO_X)\otimes_A M\xr{\simeq} H^c_Z(X, \widetilde{M}), \quad 
                                    \genfrac{[}{]}{0pt}{}{a}{t}\otimes m\mapsto \genfrac{[}{]}{0pt}{}{am}{t}\]
is an isomorphism.
\end{enumerate}
 
\begin{remark}\label{1.7.7}
Since for an $A$-regular sequence $t$ as above $K^\bullet(t)\to A/(t)$ is a free resolution, we have an isomorphism
\[\Ext^n(A/(t), M)\simeq \Hom^\bullet_A(K^\bullet(t), M).\] 
 Notice that we also have an isomorphism 
\[\Hom^\bullet_A(K^\bullet(t), M)\simeq K^\bullet(t,M),\]
which is multiplication with $(-1)^{n(n+1)/2}$ in degree $n$ (see \cite[(1.3.28)]{Co}). 
We obtain an isomorphism
\[\psi_{t,M}: \Ext^c(A/(t),M)\xr{\simeq} H^c(t,M)= M/(t)M, \]
which has the sign $(-1)^{c(c+1)/2}$ in it. In particular under the composition
\[\Ext^c(A/(t), M)\xr{\psi_{t,M}} M/(t)M \to H^c_Z(X,\widetilde{M})\]
the class of a map $\varphi\in \Hom_A(\bigwedge^c A^c,M)$ is sent to 
\[(-1)^{c(c+1)/2}\genfrac{[}{]}{0pt}{}{\varphi(e_{1,\ldots,c})}{t}.\]
\end{remark}

\begin{remark}\label{1.7.8} 
 If $X=\Spec A$ is a $k$-scheme and $t=t_1,\ldots, t_c$ is a regular sequence of sections of $\sO_X$, then for any
$n$ the sequence  $[t]=[t_1],\ldots, [t_c]$ of sections of $W_n\sO_X$ is a regular sequence. 
Here $[t_i]$ denotes the Teichm\"uller lift of $t_i$. Indeed the sequence $[t]$ is regular iff
its Koszul complex is a resolution of $W_n A/([t])$ and thus the statement follows by induction from the
short exact sequence
\[0\to K_\bullet([t]^p, W_{n-1}A)\xr{V} K_\bullet([t], W_n A)\to K_\bullet(t,A)\to 0.\] 
\end{remark}

\begin{proposition}\label{1.7.10}
Let $i: Z\inj X$ be a closed immersion between smooth $k$-schemes of pure dimension $N_Z$, $N_X$ and structure maps $\rho_Z$, $\rho_X$ and let $c$ be the codimension of $Z$ in $X$, i.e. $c=N_X-N_Z$. 
Then we have the following isomorphism in $D^b_{\rm qc}(W_n\sO_X)$,
\eq{1.7.10.0}{R\ul{\Gamma}_Z (W_n\omega_X) \cong \sH^c_Z (W_n\omega_X)[-c] }
Assume furthermore, that the ideal sheaf of
$Z$ is generated by a regular sequence $t=t_1, \ldots, t_c$ of global sections of $\sO_X$
and define a morphism $\imath_{Z,n}$ by
\[\imath_{Z,n}: (W_n i)_*W_n\omega_Z\lra \sH^c_Z(W_n\omega_X), \quad 
\alpha\mapsto (-1)^c \genfrac{[}{]}{0pt}{}{d[t]\tilde{\alpha}}{[t]},\] 
with $\tilde{\alpha}\in W_n\Omega^{N_Z}_X$ any lift of $\alpha$, and $d[t]=d[t_1]\cdot\ldots\cdot d[t_c]$.
Then the following diagram in $D^b_{\rm qc}(W_n\sO_X)$ is commutative
\[\xymatrix{ (W_n i)_*(W_n \rho_Z)^\Delta W_n\ar[r]^-{c_{W_ni, W_n\rho_X}}& (W_ni)_*(W_ni)^\Delta (W_n\rho_X)^\Delta W_n\ar[r]^-{\Tr_{W_ni}} &  R\ul{\Gamma}_Z(W_n\rho_X)^\Delta W_n\\ 
            (W_n i)_*W_n\omega_Z[N_Z]\ar[u]^{\tau_{Z,n}}_\simeq\ar[r]^{\imath_{Z,n}} & \sH^c_Z( W_n\omega_X) [N_Z] \ar[r]^{\simeq} & R\ul{\Gamma}_Z( W_n\omega_X) [N_X]\ar[u]_{\tau_{X,n}}^\simeq         }\]
where $\Tr_{W_n i}$ is the $n$-th level of the trace morphism from Lemma \ref{1.6.13}. 
\end{proposition}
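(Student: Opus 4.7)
The plan is to first prove the vanishing \eqref{1.7.10.0} and then, after reducing \'etale-locally to a smooth $W_n$-lift, establish the diagram via Ekedahl's explicit construction of $\tau_n$ together with the classical residue description of the trace for a regular closed immersion.

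For the vanishing, $W_n\omega_X=W_n\Omega^{N_X}_X$ carries the standard filtration whose graded pieces are extensions of locally free $\sO_X$-modules (\cite[I, Cor.~3.9]{IlDRW}); since $Z\subset X$ has pure codimension $c$, depth considerations give $\sH^j_Z(W_n\omega_X)=0$ for $j\ne c$ (as in Lemma~\ref{drw-is-CM}), whence $R\ul\Gamma_Z(W_n\omega_X)\cong \sH^c_Z(W_n\omega_X)[-c]$. For the diagram, all ingredients are compatible with \'etale base change: $\Tr_{W_ni}$ by Lemma~\ref{1.6.13}, the quasi-isomorphisms $\tau_X$ and $\tau_Z$ by Theorem~\ref{1.6.12}(2), the canonical isomorphism $c_{\bullet,\bullet}$ by \S\ref{1.6.2}(4), and $\imath_{Z,n}$ because the Teichm\"uller lift and $d$ commute with \'etale pullback. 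I would therefore shrink $X$ so that it admits a smooth $W_n$-lift $X'=\Spec A'$ with lifts $\tilde t_1,\dots,\tilde t_c\in A'$ of the $t_i$ cutting out a smooth closed $W_n$-subscheme $Z'\subset X'$ whose reduction modulo $p$ is $Z$.

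Next I would invoke the factorization of $\tau_n$ from Remark~\ref{1.6.12.5}: $\tau_{X,n}$ decomposes via the canonical graded isomorphism $\varphi_X:W_n\Omega_X\simeq\epsilon_{X*}\sH^\bullet_{DR}(X'/W_n)$ (Lemma~\ref{1.6.12.6}), the classical $\tau_{X'}:\omega_{X'/W_n}[N_X]\xr{\simeq}\rho_{X'}^\Delta W_n$, and the finite trace $\Tr_{\epsilon_X}$ along $\epsilon_X:X'\to W_nX$; analogously for $\tau_{Z,n}$. Naturality of $\varphi$ with respect to $i$ yields a commutative square
\[\xymatrix{Z'\ar[r]^{i'}\ar[d]_{\epsilon_Z}&X'\ar[d]^{\epsilon_X}\\ W_nZ\ar[r]^{W_ni}& W_nX,}\]
and Corollary~\ref{1.6.3.6}(3) gives $\Tr_{W_ni}\circ (W_ni)_*(\Tr_{\epsilon_Z})\cong \Tr_{\epsilon_X}\circ \epsilon_{X*}(\Tr_{i'})$ up to canonical identifications. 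This reduces the proposition to the well-known classical statement: the trace for the smooth regular closed immersion $i':Z'\hookrightarrow X'$ is locally the residue $\alpha\mapsto (-1)^c\bigl[d\tilde t\,\tilde\alpha/\tilde t\bigr]$ (see, e.g., \cite[Ch.~3]{Co}). To match this residue with $\imath_{Z,n}$, the explicit formulas \eqref{1.6.12.6.1}--\eqref{1.6.12.6.2} give $\varphi_X([t_i])=\tilde t_i^{p^n}$ and $\varphi_X(d[t_i])=\tilde t_i^{p^n-1}d\tilde t_i$, so
\[\varphi_X\!\left(\genfrac{[}{]}{0pt}{}{d[t]\,\tilde\alpha}{[t]}\right) = \genfrac{[}{]}{0pt}{}{\tilde t^{\,p^n-1}\,d\tilde t\,\varphi_X(\tilde\alpha)}{\tilde t^{\,p^n}} = \genfrac{[}{]}{0pt}{}{d\tilde t\,\varphi_X(\tilde\alpha)}{\tilde t}\]
by property (1) of \S\ref{1.7.6} applied to the transition matrix $T=\operatorname{diag}(\tilde t_i^{p^n-1})$ with $\det T=\tilde t^{\,p^n-1}$; this is exactly the classical residue of $\varphi_X(\tilde\alpha)$.

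The principal difficulty will be the sign and compatibility bookkeeping. Verifying the naturality square for $\epsilon_X,\epsilon_Z,i',W_ni$ requires chasing the ring map $\varphi^0$ through the closed immersion, which is conceptually clear but technical; one must also align the $(-1)^c$ in the definition of $\imath_{Z,n}$ and the $(-1)^{c(c+1)/2}$ from Remark~\ref{1.7.7} with the sign normalizations built into Ekedahl's $\tau$ (Theorem~\ref{1.6.12}(1)) and the Koszul conventions of the classical residue symbol.
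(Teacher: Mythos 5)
Your proposal is correct and follows essentially the same route as the paper's proof: vanishing from local-cohomology depth considerations, local reduction to a smooth $W_n$-lift $i':Z'\hookrightarrow X'$, Ekedahl's factorization of $\tau_n$ through $\epsilon_X,\epsilon_Z$ and the classical $\tau_{X'},\tau_{Z'}$, the $\Tr$-composition compatibility, and then matching $\imath_{Z,n}$ with the classical residue via the key identity $\bigl[\tilde t^{\,p^n-1}d\tilde t\,\cdot/\tilde t^{\,p^n}\bigr]=\bigl[d\tilde t\,\cdot/\tilde t\bigr]$, which is exactly the computation the paper performs through the maps $\sigma_Z,\sigma_X$. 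Where you write the argument informally ("Naturality of $\varphi$ with respect to $i$ yields a commutative square ... up to canonical identifications"), the paper carries this out by assembling a single large commutative diagram and checking each constituent square via naturality of $c_{f,g}$ and the transitivity formula (TRA1) for traces, and by interposing the explicit maps $\sigma_Z,\sigma_X$ to transport the comparison to the lifted picture where the classical residue formula applies; your acknowledgment that the sign and compatibility bookkeeping is the main labor is accurate, and filling it in would reproduce the paper's diagram \eqref{1.7.10.1}.
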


\begin{proof}
We write $i_n=W_ni$, $\rho_{X,n}=W_n\rho_X$ and $W_n\rho_Z=\rho_{Z,n}$.
By \eqref{loc-coh-van} we have $\sH^i_Z (W_n\omega_X)=0$ for $i<c$. Furthermore $\sH^i_Z (W_n\omega_X)=0$ for $i>c$, by $\check{C}$ech considerations. (Locally the ideal of $Z$ is generated by a regular sequence of length $c$
and thus $U\setminus Z$ may locally be covered by $c$ affine open subschemes.)  This gives the isomorphism \eqref{1.7.10.0}.
To prove the commutativity of the diagram in the statement of the proposition we have to show that two elements in $\Hom_{W_n\sO_X}(i_{n*}W_n\omega_Z, \sH^c_Z (W_n\omega_X) )$
are equal. This is a local question. We may thus assume, that the closed immersion $i$ lifts to 
a closed immersion $i': Z'\inj X'$ between smooth $W_n$-schemes and that there exists a regular sequence
$t'=t_1',\ldots, t_c'$ in $\sO_{X'}$ which generates the ideal of $Z'$ and reduces modulo $p$ to $t$.
As in Remark \ref{1.6.12.5} we obtain a commutative diagram
\[\xymatrix{Z'\ar[r]^{\epsilon_Z}\ar@{^(->}[d]^{i'}\ar@/_1.8pc/[dd]_{\rho_{Z'}}  &  W_nZ\ar@{^(->}[d]_{i_n}\ar@/^ 1.8pc/[dd]^{\rho_{Z,n}}\\
            X'\ar[r]^{\epsilon_X}\ar[d]^{\rho_{X'}} & W_nX\ar[d]_{\rho_{X,n}}\\
              \Spec W_n\ar[r]^{\sigma^n} & \Spec W_n,   } \]
where we write $\sigma^n$ instead of $W_n(F^n_{\Spec k})$.

Consider the following diagram, in which we set $\Lambda:=W_n[-N_Z]$ and write $(-)^!$ instead of $(-)^\Delta$:
\begin{tiny}
\eq{1.7.10.1}{\xymatrix@-1pc{ i_{n*}\epsilon_{Z*}\omega_{Z'/W_n}\ar[d]\ar[r]^-\simeq & i_{n*} \epsilon_{Z*} \rho_{Z'}^!\Lambda\ar[d]\ar[r]^-\simeq & \epsilon_{X*}i'_*{i'}^!\rho_{X'}^!\Lambda\ar[d]\ar[r]^-{\Tr_{i'}} &
                  \epsilon_{X*}R\ul{\Gamma}_{Z} \rho_{X'}^!\Lambda\ar[d] & \epsilon_{X*}R\ul{\Gamma}_{Z}\omega_{X'/W_n}[c]\ar[d]\ar[l]_-{\simeq}  \\
         i_{n*}\epsilon_{Z*}\sH^{N_Z}_{\rm DR}(Z')\ar@{.>}[r]^-{\exists} & i_{n*}\rho_{Z,n}^!\Lambda \ar[r]^-\simeq & i_{n*}i_n^!\rho_{X,n}^!\Lambda\ar[r]^-{\Tr_{i_n}} & R\ul{\Gamma}_Z \rho_{X,n}^!\Lambda  & 
                                                     \epsilon_{X*}R\ul{\Gamma}_Z(\sH^{N_X}_{\rm DR}(X'))[c]\ar@{.>}[l]_-{\exists}   \\
              i_{n*}W_n\omega_Z\ar[u]^\simeq\ar[ur]_{\tau_{Z,n}}  &    &     &    &  R\ul{\Gamma}_Z (W_n\omega_X)[c]\ar[u]_\simeq\ar[ul]^{\tau_{X,n}}\\ 
                                                                              & & & & \sH^c_Z( W_n\omega_X).\ar[u]_\simeq\\
                 } }
\end{tiny}
We give some explanations: We have $\Lambda={\sigma^n}^!\Lambda$ (see Example \ref{1.4.4}, (1)) and the three vertical maps in the middle are the compositions
\[i_{n*} \epsilon_{Z*} \rho_{Z'}^!\Lambda\cong i_{n*} \epsilon_{Z*} \rho_{Z'}^!{\sigma^n}^!\Lambda \cong i_{n*} \epsilon_{Z*} \epsilon_Z^! \rho_{Z,n}^! \Lambda \xr{\Tr_{\epsilon_Z}} i_{n*}\rho_{Z,n}^!\Lambda,\]
\mlnl{\epsilon_{X*}i'_*{i'}^! \rho_{X'}^! \Lambda\cong i_{n*}\epsilon_{Z*}{i'}^! \rho_{X'}^! {\sigma^n}^!\Lambda\cong i_{n*}\epsilon_{Z*}{i'}^!\epsilon_X^!\rho_{X,n}^!\Lambda
             \\ \xr{\simeq} i_{n*}\epsilon_{Z*}\epsilon_Z^! {i_n}^!\rho_{X,n}^! \Lambda \xr{\Tr_{\epsilon_Z}}  i_{n*}i_n^! \rho_{X,n}^!\Lambda,}
\[\epsilon_{X*}R\ul{\Gamma}_Z\rho_{X'}^! \Lambda\cong R\ul{\Gamma}_Z\epsilon_{X*}\rho_{X'}^!{\sigma^n}^!\Lambda\cong R\ul{\Gamma}_Z\epsilon_{X*}\epsilon_X^! \rho_{X,n}^! \Lambda
                                                                                                                                       \xr{\Tr_{\epsilon_X}}  R\ul{\Gamma}_Z\rho_{X,n}^! \Lambda. \]
Now one easily checks that the two squares in the middle commute. In fact, to see that the first of the two middle squares commute, one only needs, that $c_{(f,g)}: (gf)^!\xr{\simeq} f^!g^!$ is a natural transformation,
which is compatible with triple compositions (see \cite[p.139, (VAR1)]{Co}) and that $\Tr_{\epsilon_Z}: \epsilon_{Z*}\epsilon_Z^!\to \id$ is a natural transformation. For the commutativity of the second
middle square we need the naturality of $\Tr_{i'}: i'_*{i'}^!\to \id$ and the formula (see \cite[Lem 3.4.3, (TRA1)]{Co})
\mlnl{\Tr_{i_n}\circ i_{n*}(\Tr_{\epsilon_Z})\circ (i_n\epsilon_Z)_*(c_{\epsilon_Z, i_n})=\Tr_{i_n\epsilon_Z}\\ 
             =\Tr_{\epsilon_Xi'}=\Tr_{\epsilon_X}\circ \epsilon_{X*}(\Tr_{i'})\circ (\epsilon_Xi')_*(c_{i',\epsilon_X}).}
That there exist two unique dotted arrows, which make the two outer diagrams commutative was proved by Ekedahl, see Remark \ref{1.6.12.5}. The two vertical maps in the two lower triangles
are the isomorphisms from Lemma \ref{1.6.12.6}. Thus the two triangles commute by definition of $\tau_{Z,n}$ and $\tau_{X,n}$, see Remark \ref{1.6.12.5}. It follows that the whole diagram commutes.

Let $\sigma_Z$ and $\sigma_X$ be the following compositions
\[\sigma_Z: \epsilon_{Z*}\omega_{Z'/W_n} \to \epsilon_{Z*}\sH^{N_Z}_{\rm DR}(Z'/W_N)\xr{\simeq} W_n\omega_Z,\]
\[\sigma_X: \epsilon_{X*}\sH^c_{Z}(\omega_{X'/W_n})\to \epsilon_{X*}\sH^c_Z(\sH^{N_X}_{\rm DR}(X'/W_n))\xr{\simeq} \sH^c_Z( W_n\omega_X).\]
We obtain maps
\eq{1.7.10.2}{\xymatrix{ & \Hom(\epsilon_{X*} i_{n*}\omega_{Z'/W_n}, \epsilon_{X*}\sH^c_{Z} (\omega_{X'/W_n} ))\ar[d]^{\sigma_X}\\
            \Hom(i_{n*} W_n\omega_Z, \sH^c_Z (W_n\omega_X))\ar@{^(->}[r]^-{D(\sigma_Z)} & \Hom(\epsilon_{X*} i_{n*}\omega_{Z'/W_n},\sH^c_Z( W_n\omega_X)), } }
with the horizontal map being injective (since $\sigma_Z$ is surjective).
We denote by $a$ the following composition
\[a:=H^0(\tau_{X,n})^{-1}\circ H^0(\Tr_{i_n}\circ c_{i_n, \rho_{X,n}}\circ\tau_{Z,n}) \in \Hom(i_{n*} W_n\omega_Z, \sH^c_Z (W_n\omega_X)). \]
The commutativity of the diagram in the statement of the proposition means, that $a$ equals $\imath_{Z,n}$. Thus it is enough to show
\eq{1.7.10.3}{ D(\sigma_Z)(a)=D(\sigma_Z)(\imath_{Z,n}) \quad \text{in } \Hom(\epsilon_{X*} i_{n*}\omega_{Z'/W_n},\sH^c_Z (W_n\omega_X)).}

We define $a'\in \Hom(i'_*\omega_{Z'/W_n}, \sH^c_Z(\omega_{X/W_n}))$ to be the following composition

\[a': H^0\left( i'_*\omega_{Z'/W_n}\simeq  i'_* \rho_{Z'}^!\Lambda\simeq i'_*{i'}^!\rho_{X'}^!\Lambda\xr{\Tr_{i'}} R\ul{\Gamma}_{Z}( \rho_{X'}^!\Lambda)\simeq R\ul{\Gamma}_{Z}(\omega_{X'/W_n})[c]   \right).\]
Then diagram \eqref{1.7.10.1} says
\eq{1.7.10.4}{D(\sigma_Z)(a)= \sigma_X(\epsilon_{X*}(a')). }

We define $\imath_{Z'}$ by
\[\imath_{Z'}: i_{Z'*}\omega_{Z'/W_n}\to \sH^{c}_Z(\omega_{X'/W_n}),\quad \beta\mapsto (-1)^c\genfrac{[}{]}{0pt}{}{dt'\tilde\beta}{t'},\]
with $\tilde\beta\in\Omega^{N_Z}_{X'/W_n}$ a lift of $\beta$.  
We have
\eq{1.7.10.5}{D(\sigma_Z)(\imath_{Z,n}) = \sigma_X(\epsilon_{X*}(\imath_{Z'})).}
Indeed, this follows from  the concrete description of $\sigma_X$ and $\sigma_Z$ given by Lemma \ref{1.6.12.6} and from the fact that by \ref{1.7.6}, 1) the following equality holds for all $\gamma\in W_{n+1}\Omega^{N_Z}_{X'/W_n}$
\[\genfrac{[}{]}{0pt}{}{dt'F^n(\gamma)}{t'}= \genfrac{[}{]}{0pt}{}{{t'}^{p^n-1}dt'F^n(\gamma)}{{t'}^{p^n}}=F^n\left(\genfrac{[}{]}{0pt}{}{d[t']\gamma}{[t']}\right),\]
where we set $ {t'}^{p^n-1}:= {t'_1}^{p^n-1}\cdots {t'_c}^{p^n-1}$.
By \eqref{1.7.10.4} and \eqref{1.7.10.5} we are thus reduced to show 
\[ \imath_{Z'}= a' \quad \text{in } \Hom(i'_*\omega_{Z'/W_n}, \sH^c_{Z}( \omega_{X'/W_n} )),\]
which is well-known (see e.g. \cite[Lemma A.2.2]{CR}).
\end{proof}

\section{Pushforward and pullback for Hodge-Witt cohomology with supports}

\noindent In this section all schemes are assumed to be quasi-projective over $k$. We fix a $k$-scheme $S$.

\subsection{Relative Hodge-Witt cohomology with supports}
\begin{definition}\label{definition-Sm_*^*}
We denote by $(Sm_*/S)$ and $(Sm^*/S)$ the following two categories: Both have as objects pairs $(X, \Phi)$ with $X$ an $S$-scheme, which is smooth and {\em quasi-projective} over $k$ (for short we will say $X$ is a smooth $k$-scheme over $S$) and $\Phi$ a family of supports on $X$ and the morphisms are given by
\[\Hom_{Sm_*}((X,\Phi), (Y, \Psi))= \{f\in \Hom_S(X, Y)\,|\,  f_{|\Phi} \text{ is proper and } f(\Phi)\subset \Psi \} \]
 and 
\[\Hom_{Sm^*}((X,\Phi), (Y, \Psi))=\{f\in \Hom_S(X, Y)\,|\,  f^{-1}(\Psi)\subset \Phi \}.\]
\end{definition}

 If $X$ is a smooth $k$-scheme over $S$ and $Z\subset X$ a closed subset, we write $(X, Z)=(X, \Phi_Z)$ and $X=(X,X)=(X, \Phi_X)\in {\rm Obj}(Sm_*)={\rm Obj}(Sm^*)$ 
(see Definition \ref{def:famsupp} for the notation).

We will say that a morphism $f:\XP\to \YP$ in $(Sm_*/S)$ or $(Sm^*/S)$ is \'etale, flat, smooth, etc. if the corresponding property holds for the underlying morphism of schemes $X\to Y$.
We will say that a diagram
\[\xymatrix{ (X',\Phi')\ar[r]\ar[d] & (Y',\Psi')\ar[d]\\
             \XP\ar[r] & \YP}\]
is cartesian, if the underlying diagram of schemes is cartesian.

%\begin{definition}\label{2.1.0}
%A {\em graded de Rham-Witt system on $S$} is a de Rham-Witt system $M_\bullet\in \drw_S$, which is equipped with a direct sum decomposition $M_\bullet=\bigoplus_{e\in \Z} M^e_\bullet$ with $M^e_\bullet$ a Witt-system on $S$ 
%(see Definition \ref{1.2.1}). A morphism between graded de Rham-Witt systems is a morphism of de Rham-Witt systems, which preserves the grading.
%We denote by $\grdrw_S^*$ the category of graded de Rham-Witt systems on $S$, which have the additional property that the map $d: F^n_{S*}M_n\to F^n_{S*}M_n(1)$ sends $M_n^e\to M_n^{e+1}$, and
%by $\grdrw_{S,*}$ the category of graded de Rham-Witt systems  for which $d$ sends $M_n^e$ to $M_n^{e-1}$. In the same way we define the categories of graded de Rham-Witt modules (cf. Definition \ref{1.2.7.5}) 
%$\widehat{\grdrw}^*_S$ and $\widehat{\grdrw}_{S,*}$.
%\end{definition}

\subsubsection{}\label{2.1.1}
For $(X, \Phi)\in {\rm obj}(Sm_*/S)={\rm obj}(Sm^*/S)$, with structure map $a: X\to S$ we denote by $\sH(\XP/S)$ the de Rham-Witt system 
\[ \sH(\XP/S):= \bigoplus_{i\ge 0} R^i a_\Phi W_\bullet\Omega_X\in \drw_S\]
 and its level $n$ part by
$ \sH_n(\XP/S)$. We denote by $\hat{\sH}(\XP/S)$ the de Rham-Witt module
\[\hat{\sH}(\XP/S):= \bigoplus_{i\ge 0} R^i\hat{a}_\Phi W_\bullet\Omega_X\in \widehat{\drw}_S.\]
We write
\[\sH^{i,q}(\XP/S):=R^i a_\Phi W_\bullet\Omega_X^q,\quad \hat{\sH}^{i,q}(\XP/S):=R^i \hat{a}_\Phi W_\bullet\Omega_X^q.\]
%We denote by $\sH_\bullet^*(\XP/S)$ the graded de Rham-Witt system whose $e$-th degree is given by
%\[\sH^e_\bullet(\XP/S)=\bigoplus_{i+j=e} R^i f_\Phi W_\bullet\Omega^j_X. \]
%We denote by $\sH_{\bullet, *}(\XP/S)$ the graded de Rham-Witt system
%\[\sH_{\bullet, *}(\XP/S):=\bigoplus_r \sH_\bullet^{2\dim X_r-*}((X_r,\Phi\cap X_r)/S)\]
%where the $X_r$ are the connected components of $X$. In the same way we define graded de Rham-Witt modules $\hat{\sH}^*(\XP/S)$ and $\hat{\sH}_*(\XP/S)$.

By definition we have:
\begin{enumerate}
 %\item $\sH^*_\bullet(\XP/S)\in \grdrw^*_S$, $\sH_{\bullet,*}(\XP/S)\in \grdrw_{S,*}$.
%\item  $\hat{\sH}^*(\XP/S)\in \widehat{\grdrw}^*_S$, $\hat{\sH}_*(\XP/S)\in \widehat{\grdrw}_{S,*}$.
 \item If $Z\subset X$ is closed then, $\hat{\sH}((X,Z)/S)=\bigoplus R^i a_Z W\Omega_X$. In particular $\hat{\sH}((X,Z)/X)= \bigoplus \sH^i_Z(W\Omega_X)$, in particular $\hat{\sH}(X/X)=W\Omega_X$.
%\item There is a short exact sequence graded de Rham-Witt systems
%       \[0\to R^1\varprojlim \sH^{*-1}_\bullet(\XP/S)\to \hat{\sH}^*(\XP/S)\to \varprojlim \sH^*_\bullet(\XP/S)\to 0.\]
\item Let $U\subset S$ be open and set $X_U:= X\times_S U$ and $\Phi_U=\Phi\cap X_U$. Then 
       \[\sH(\XP/S)_{|U}= \sH((X_U, \Phi_U)/U),\quad \hat{\sH}(\XP/S)_{|U}= \hat{\sH}((X_U,\Phi_U)/U).\]
\end{enumerate}

\subsection{Pullback}\label{2.1.1.5}

 \begin{definition}\label{2.1.2}
Let $f: \XP\to \YP$ be a morphism in $(Sm^*/S)$. Then applying $R\ul{\Gamma}_\Psi$ to the functoriality morphism $W_\bullet\Omega_Y\to Rf_*W_\bullet\Omega_X$ in $D^+(\drw_Y)$
and composing it with the natural map $R\ul{\Gamma}_\Psi Rf_*= Rf_* R\ul{\Gamma}_{f^{-1}(\Psi)}\to Rf_*R\ul{\Gamma}_\Phi$
yields a morphism in $D^+(\drw_Y)$
\eq{2.1.2.1}{R\ul{\Gamma}_\Psi(W_\bullet\Omega_Y)\to R\ul{\Gamma}_\Psi Rf_*W_\bullet\Omega_X=Rf_*R\ul{\Gamma}_{f^{-1}(\Psi)}W_\bullet\Omega_X\to  Rf_*R\ul{\Gamma}_\Phi W_\bullet\Omega_X.}

Let $a: Y\to S$ be the structure morphism. Then we define  
\[f^*:= \bigoplus_i H^i(Ra_*\eqref{2.1.2.1}): \sH(\YP/S)\to  \sH(\XP/S) \quad \text{in } \drw_S\]
and 
\[f^*:=\bigoplus_i H^i(R\varprojlim Ra_*\eqref{2.1.2.1}) :\hat{\sH}(\YP/S)\to \hat{\sH}(\XP/S)\quad \text{in }\widehat{\drw}_S.\]
We call these morphisms the {\em pullback morphisms}. Notice that by definition $f^*$ always factors as
\[f^*: \sH(\YP/S)\to \sH((X, f^{-1}(\Psi)/S))\xr{\rm nat.} \sH(\XP/S), \]
same with $\hat{\sH}$.
\end{definition}

\begin{proposition}\label{2.1.3}
The assignments  
\[\sH: (Sm^*/S)^o\to \drw_S,\quad \XP\mapsto \sH(\XP/S)\]
and
\[\hat{\sH}: (Sm^*/S)^o \to \cdrw_S,\quad \XP\mapsto \hat{\sH}(\XP/S)\]
define functors, where we set $\sH(f)=f^*$ and $\hat{\sH}(f)=f^*$, for a morphism  $f:\XP\to\YP$ in $(Sm^*/S)$.

Furthermore, if $U\subset S$ is open and $f_U: (X_U, \Phi_U)\to (Y_U,\Psi_U)$ is the pullback of $f:\XP\to \YP$ over $U$, then
$f_U^*= (f^*)_{|U}$ on $\sH((Y_U,\Psi_U)/U)=\sH(\YP/S)_{|U}$ (resp. on $\hat{\sH}$).
\end{proposition}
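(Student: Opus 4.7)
The plan is to split the proof into three independent checks: that $f^*$ is a morphism in $\drw_S$ (resp.\ in $\cdrw_S$), that pullback respects composition and identities, and the compatibility with restriction to opens of $S$.

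For the first point, I would observe that every ingredient in Definition \ref{2.1.2} already lives at the de Rham--Witt level. The functoriality map $W_\bullet\Omega_Y\to Rf_*W_\bullet\Omega_X$ is produced in $D^+(\drw_Y)$ since $W_\bullet\Omega_X$ is a de Rham--Witt system (Example \ref{1.2.4}(2)) and $Rf_*$, $R\ul{\Gamma}_\Psi$, $R\varprojlim$, $Ra_*$ are all defined at this level by Proposition \ref{derived-functors-exist}. The identification $R\ul{\Gamma}_\Psi Rf_* \cong Rf_*R\ul{\Gamma}_{f^{-1}(\Psi)}$ used in \eqref{2.1.2.1} is furnished by Proposition \ref{derived-functors-exist}(3), and the composition with $R\ul{\Gamma}_{f^{-1}(\Psi)}\to R\ul{\Gamma}_\Phi$ is a morphism in $D^+(\drw_Y)$ because $\Phi\subset f^{-1}(\Psi)$. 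Taking $\bigoplus_i H^i$ of a morphism in $D^+(\drw_S)$ (resp.\ $D^+(\cdrw_S)$) yields a morphism with the graded de Rham--Witt structure, so $f^*$ is automatically compatible with $\pi, F, V, \ul{p}$ and $d$.

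For functoriality, let $g:\YP\to\ZX$ be another morphism in $(Sm^*/S)$ with structure map $b:Z\to S$. The composition $f^*\circ g^*$ is obtained from the two-step morphism
\[R\ul{\Gamma}_\Xi W_\bullet\Omega_Z \lra Rg_* R\ul{\Gamma}_\Psi W_\bullet\Omega_Y \lra Rg_*Rf_*R\ul{\Gamma}_\Phi W_\bullet\Omega_X,\]
while $(g\circ f)^*$ is obtained from the one-step morphism
\[R\ul{\Gamma}_\Xi W_\bullet\Omega_Z \lra R(g\circ f)_* R\ul{\Gamma}_\Phi W_\bullet\Omega_X.\]
The equality of these two reduces to (i) the naturality of $W_\bullet\Omega_{(-)}\to R(-)_*W_\bullet\Omega_{(-)}$ with respect to $f$ and $g$; (ii) the inclusions $\Psi\subset g^{-1}(\Xi)$ and $\Phi\subset f^{-1}(\Psi)$, which allow $R\ul{\Gamma}_\Xi\to R\ul{\Gamma}_\Psi$ and $R\ul{\Gamma}_\Psi\to R\ul{\Gamma}_\Phi$ to factor appropriately; and (iii) Proposition \ref{derived-functors-exist}(1),(3) which supply $R(gf)_*\cong Rg_*Rf_*$ together with the base-change identification relating $R\ul{\Gamma}$ and $R(-)_*$. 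Pushing down by $Ra_*$, applying $R\varprojlim$ in the $\cdrw$-case, and then $\bigoplus_i H^i$ finishes the check; the identity case $\id^* = \id$ is immediate from the construction.

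For the restriction statement, I would use that the open pullback $j^*$ associated to $j:U\inj S$ is exact on de Rham--Witt systems (Remark \ref{etale-pullback}), commutes with $R\ul{\Gamma}$, $Rf_*$ and $R\varprojlim$ by applying Proposition \ref{derived-functors-exist}(1)--(4) to the cartesian square defining $X_U$, and satisfies $(W_\bullet\Omega_X)_{|X_U}= W_\bullet\Omega_{X_U}$. Restricting the defining morphism of $f^*$ term by term over $U$ produces $f_U^*$. The main obstacle is purely organizational: to arrange the various natural transformations from Proposition \ref{derived-functors-exist} so that the functoriality diagram closes at the level of the derived category of $\drw$, before taking $\bigoplus_i H^i$. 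Once that bookkeeping is done, no new input is required.
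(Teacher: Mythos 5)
Your decomposition into three checks---that $f^*$ lands in $\drw_S$ (resp.\ $\cdrw_S$), that pullback respects composition and identities, and the compatibility with restriction to opens---is the natural way to organize this and is exactly what the paper implicitly has in mind when it dismisses the proof as ``all straightforward.'' Your identification of the ingredients (naturality of $W_\bullet\Omega_{(-)}\to R(-)_*W_\bullet\Omega_{(-)}$, the compatibilities of Proposition~\ref{derived-functors-exist}, exactness of \'etale pullback from Remark~\ref{etale-pullback}) is also correct.

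There is, however, a consistent slip in the direction of the inclusions of families of supports, and the argument would fail as written. By Definition~\ref{definition-Sm_*^*}, the condition for $f$ to be a morphism in $(Sm^*/S)$ is $f^{-1}(\Psi)\subset\Phi$, not $\Phi\subset f^{-1}(\Psi)$ as you claimed. For any sheaf $A$ and families of supports $\Phi_1\subset\Phi_2$, one has a natural \emph{inclusion} $\ul{\Gamma}_{\Phi_1}(A)\hookrightarrow\ul{\Gamma}_{\Phi_2}(A)$, so the comparison map $R\ul{\Gamma}_{f^{-1}(\Psi)}W_\bullet\Omega_X\to R\ul{\Gamma}_{\Phi}W_\bullet\Omega_X$ in \eqref{2.1.2.1} exists precisely because $f^{-1}(\Psi)\subset\Phi$. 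Under the reversed hypothesis you stated, the arrow would go the wrong way and the construction of $f^*$ would break. The same reversal reappears in your functoriality step, where you wrote $\Psi\subset g^{-1}(\Xi)$ and $\Phi\subset f^{-1}(\Psi)$; the correct inclusions are $g^{-1}(\Xi)\subset\Psi$ and $f^{-1}(\Psi)\subset\Phi$. Moreover, the intermediate maps you denoted $R\ul{\Gamma}_\Xi\to R\ul{\Gamma}_\Psi$ and $R\ul{\Gamma}_\Psi\to R\ul{\Gamma}_\Phi$ do not typecheck, since $\Xi$, $\Psi$, $\Phi$ live on different schemes; what is meant is the comparison map on $Y$ induced by $g^{-1}(\Xi)\subset\Psi$ (after identifying $R\ul{\Gamma}_\Xi Rg_*\cong Rg_*R\ul{\Gamma}_{g^{-1}(\Xi)}$), and similarly on $X$. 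With these corrections the argument closes.
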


\begin{proof}
 This is all straightforward.
\end{proof}

\subsection{Pushforward}\label{2.1.5}
Let $X$ be a smooth $k$-scheme of pure dimension $N$ with structure map $\rho: X\to \Spec k$. Recall that we set $K_X=\rho_X^\Delta W_\bullet \omega$
and that this is isomorphic to $E(W_\bullet\omega_X)(N)[N]$, where $E$ denotes the Cousin complex with respect to the codimension filtration (see Corollary \ref{1.6.17}).
Furthermore we have the  dualizing functor 
\[D_X(-)=\sHom(-, K_X)\]
(see \ref{dualizing-functor}), which we may view as a functor from $C(\drw_{X, {\rm qc}})^o$ to $C(\drw_X)$
or from $D(\drw_{X, {\rm qc}})^o$ to $D(\drw_{X})$.

For any morphism $f:\XP\to \YP$ in $(Sm_*/S)$ the composition of the isomorphism 
$K_X\simeq f^\Delta K_Y$ (see Proposition \ref{1.6.11}) with $\vartheta_f$ from Proposition \ref{1.6.14.4.5}, 
defines a natural transformation of functors on $C(\drw_{X,{\rm qc}})^o$ 
\eq{2.1.5.1}{\theta_f: f_\Phi D_X(-)\to \ul{\Gamma}_\Psi D_Y(f_*(-)). }
The map $\theta_f$ is compatible with compositions and \'etale base change. Furthermore, recall that the multiplication pairing 
$W_\bullet\Omega^i_X\otimes_W W_\bullet\Omega^j_X\to W_\bullet\Omega_X^{i+j}$ induces an isomorphism  in $C^b(\drw_{X, {\rm qc}})$ (see Corollary \ref{1.6.18})
\eq{2.1.5.2}{\mu_X: E(W_\bullet\Omega_X)\xr{\simeq} D_X(W_\bullet\Omega_X)(-N)[-N].}
The map $\mu_X$ is compatible with \'etale base change.

\begin{definition}\label{2.1.6}
Let $f: \XP\to \YP$ be a morphism in $(Sm_*/S)$. We assume that $X$ and $Y$ are of pure dimension $N_X$ and $N_Y$.
Consider the following  composition of morphisms of complexes of de Rham-Witt systems on $Y$
\ml{2.1.6.1}{ f_\Phi E(W_\bullet\Omega_X)(N_X)[N_X]\xr{\mu_X} f_\Phi D_X(W_\bullet\Omega_X)\xr{\theta_f}\ul{\Gamma}_\Psi D_Y(f_*W_\bullet\Omega_X)\\
                   \xr{D(f^*)} \ul{\Gamma}_\Psi D_Y(W_\bullet\Omega_Y)\xr{\mu^{-1}_Y}\ul{\Gamma}_\Psi E(W_\bullet\Omega_Y)(N_Y)[N_Y].}
By Lemma \ref{drw-is-CM} this induces a morphism in $D^b(\drw_Y)$
\eq{derived-pf}{Rf_\Phi W_\bullet\Omega_X\to R\ul{\Gamma}_\Psi W_\bullet\Omega_Y(-r)[-r],} 
where $r=N_X-N_Y$ is the relative dimension.

Let $a: Y\to S$ be the structure morphism. Then we define
\[f_*:= \bigoplus_i H^i(Ra_* \eqref{derived-pf}): \sH(\XP/S)\to \sH(\YP/S)(-r)\quad \text{in } \drw_S\]
and
\[f_*:= \bigoplus_i H^i(R\varprojlim Ra_* \eqref{derived-pf}): \hat{\sH}(\XP/S)\to \hat{\sH}(\YP/S)(-r)\quad \text{in } \cdrw_S.\]
We extend these definitions additively to all morphisms $f:\XP\to \YP$ in $(Sm_*/S)$ (where $X$ and $Y$ don't need to be pure dimensional).
We call these morphisms the  {\em push-forward morphisms}. Notice, that since $f_{|\Phi}$ is proper, $f(\Phi)$ is a family of supports on $Y$ and
$f_*$ always factors as
\[f_*: \sH(\XP/S)\to \sH((Y, f(\Phi))/S)(-r)\xr{\rm nat.} \sH(\YP/S)(-r),\]
 same with $ \hat{\sH}$.
\end{definition}

\begin{remark}\label{2.1.7}
Without supports and for a {\em proper} morphism between smooth $k$-schemes a similar push-forward was already defined in \cite[II, Def. 1.2.1]{Gros}.
(Although the verification, that it is compatible with $F$, $V$, $d$ and $\pi$ was omitted.)
\end{remark}

\begin{proposition}\label{2.1.9}
The assignments 
\[\sH: (Sm_*/S)\to \drw_S,\quad \XP\mapsto \sH(\XP/S)\]
and 
\[\hat{\sH}: (Sm_*/S) \to \cdrw_S,\quad \XP\mapsto \hat{\sH}(\XP/S)\]
define functors, where we set $\sH(f)=f_*$ and $\hat{\sH}(f)=f_*$, for $f:\XP\to\YP$ a morphism in $(Sm_*/S)$.

Furthermore: (1) If $U\subset S$ is open and $f_U: (X_U, \Phi_U)\to (Y_U,\Psi_U)$ is the pullback of $f:\XP\to \YP$ over $U$, then
$f_{U*}= (f_*)_{|U}$ on $\sH((Y_U,\Psi_U)/U)=\sH(\YP/S)_{|U}$ (resp. on $\hat{\sH}$).
(2) If $j:(U,\Phi)\inj \XP$ is an open immersion in $(Sm_*/S)$, then $j_*$ is the excision isomorphism.
\end{proposition}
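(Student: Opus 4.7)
The plan is to deduce all three assertions from the naturality properties of the four constituent arrows in the defining composition \eqref{2.1.6.1}, namely $\mu_X$, $\theta_f$, $D(f^*)$, and $\mu_Y^{-1}$, each of which was shown earlier to have good compatibility with composition and with \'etale base change.

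First I would check that $f_*$ is a well-defined morphism in $\drw_S$ (resp.~in $\cdrw_S$). Each arrow in \eqref{2.1.6.1} is a morphism of complexes of de Rham-Witt systems by its construction: for $\theta_f$ this is Proposition~\ref{1.6.14.4.5}, for $\mu_X$ and $\mu_Y$ this comes from Corollary~\ref{1.6.18} combined with Theorem~\ref{1.4.8}, and $D(f^*)$ is induced by a morphism of de Rham-Witt systems by Proposition~\ref{1.4.7}. Applying $H^i Ra_*$ or $H^i R\varprojlim Ra_*$ then yields a morphism of the appropriate type. When $X$ or $Y$ fails to be pure-dimensional, one reduces to the pure-dimensional case and uses the stated additive extension.

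For functoriality $(gf)_* = g_*\circ f_*$ with $f:\XP\to\YP$ and $g:\YP\to\ZX$ in $(Sm_*/S)$, the key inputs are: the compatibility of $\theta$ with composition (Proposition~\ref{1.6.14.4.5}, (2)), which gives $\theta_{gf}\cong\theta_g\circ g_*(\theta_f)\circ c_{f,g}$; the functoriality of the pullback (Proposition~\ref{2.1.3}) applied after dualising; and the additivity of the relative dimensions $r_f+r_g=r_{gf}$. Stringing together two copies of \eqref{2.1.6.1} and invoking these compatibilities produces the desired equality already at the level of complexes, which is then preserved by $H^i Ra_*$. Property (1) follows the same pattern: every ingredient in \eqref{2.1.6.1} is compatible with \'etale base change ($\theta_f$ by Proposition~\ref{1.6.14.4.5}, (3); $\mu_X$ by Corollary~\ref{1.6.18}; the Cousin complex and pullback trivially), and restriction along an open immersion $U\hookrightarrow S$ is a particular case.

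I expect the main obstacle to be property (2), since it requires identifying the composition \eqref{2.1.6.1} with an honest excision isomorphism. For an open immersion $j:(U,\Phi)\inj\XP$ with $\Phi\subset U$ we have $N_U=N_X$ and $r=0$, so no shift appears. By Proposition~\ref{1.6.14.4.5}, (4), $\theta_j$ is the excision isomorphism $j_\Phi D_U(-)\xr{\simeq}\ul{\Gamma}_\Phi D_X(j_*(-))$; the isomorphisms $\mu_U$ and $\mu_X$ correspond under $j^*$ by Corollary~\ref{1.6.18}; and the map $D(j^*)$ becomes the identity after excision, because $j^*$ is simply the restriction functor on (quasi-coherent) sheaves, making $j_*W_\bullet\Omega_X\to W_\bullet\Omega_U$ the natural restriction whose dual under excision is the identity. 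Collapsing \eqref{2.1.6.1} using these three identifications yields the excision isomorphism $j_\Phi E(W_\bullet\Omega_U)\xr{\simeq}\ul{\Gamma}_\Phi E(W_\bullet\Omega_X)$ in $C^b(\drw_X)$, which after applying $H^iRa_*$ (resp.~$H^iR\varprojlim Ra_*$) gives exactly the claim.
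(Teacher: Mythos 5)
Your proposal is correct and takes essentially the same approach as the paper's own proof, which simply cites the compatibility properties of $\theta_f$ (Proposition~\ref{1.6.14.4.5}) and $\mu$ (Corollary~\ref{1.6.18}) for the composition \eqref{2.1.6.1}. Your elaboration of part (2) --- in particular the observation that $D(j^*)$ becomes invertible once composed with $\ul{\Gamma}_\Phi$, because $\ul{\Gamma}_\Phi K_X$ is supported on $U$ --- fills in a step the paper leaves implicit.
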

\begin{proof}
 This follows from the corresponding properties of \eqref{2.1.6.1}, which follow from Proposition \ref{1.6.14.4.5} and Corollary \ref{1.6.18}.
\end{proof}

 For later use we record:

\begin{proposition}[{\cite[II, Prop. 4.2.9]{Gros}}]\label{2.1.8}
 Let $f: X\to Y$ be a finite morphism between two connected smooth $k$-schemes. We may view $f$ as a morphism in $(Sm_*/Y)$ or as a morphism in $(Sm^*/Y)$.
Then the composition 
\[f_*\circ f^*: W_\bullet\Omega_Y\to W_\bullet\Omega_Y\]
equals multiplication with the degree of $f$.
\end{proposition}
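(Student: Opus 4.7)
The strategy is to establish a projection formula for $f_*$ and then reduce to the computation $f_*(1_X)=\deg(f)\cdot 1_Y$ in $W_\bullet\sO_Y$. Since $X,Y$ are connected smooth $k$-schemes, the finite map $f$ either has trivial degree or is surjective with $\dim X=\dim Y=:N$, in which case $f$ is flat by miracle flatness and $f_*\sO_X$ is locally free of rank $d:=\deg(f)$ over $\sO_Y$.

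For the projection formula, I would unpack Definition~\ref{2.1.6}: the goal is to show that the composition \eqref{2.1.6.1} induces an $f^{-1}W_\bullet\Omega_Y$-linear map $f_*:f_*W_\bullet\Omega_X\to W_\bullet\Omega_Y$, i.e.\ $f_*(f^*(\alpha)\cdot\beta)=\alpha\cdot f_*(\beta)$ for local sections $\alpha\in W_\bullet\Omega_Y$ and $\beta\in W_\bullet\Omega_X$. Each of the four ingredients of \eqref{2.1.6.1} respects the relevant $W_\bullet\Omega_Y$-module structure: $\mu_X$ from Corollary~\ref{1.6.18} is built from multiplication in $W_\bullet\Omega_X$ via Theorem~\ref{1.4.8} and hence is $W_\bullet\Omega_X$-linear; $\theta_f$ is induced by the Witt residual trace $\Tr_f$ which by Lemma~\ref{1.6.13} is a morphism of complexes of de Rham-Witt systems; $D(f^*)$ is precomposition with the graded ring map $f^*$; and $\mu_Y^{-1}$ is again multiplicative. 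Combining these yields the projection formula.

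Applying the projection formula with $\beta=1_X$ yields $(f_*\circ f^*)(\alpha)=\alpha\cdot f_*(1_X)$, so it remains to verify $f_*(1_X)=d\cdot 1_Y$. Note that $f_*(1_X)$ lies in $W_\bullet\sO_Y$ since $f_*$ preserves internal grading. At level $n=1$, tracing through \eqref{2.1.6.1} using Theorem~\ref{1.6.12} and Corollary~\ref{1.6.18} identifies the degree-zero component of $f_*$ with the classical finite trace $f_*\sO_X\to\sO_Y$, which sends $1_X$ to $d\cdot 1_Y$ since $f_*\sO_X$ is locally free of rank $d$. To lift to all $n$, observe that $f_*$ is a morphism in $\drw_Y$ and hence commutes with $F$, $V$, $\pi$; compatibility with $F$ (together with $F(1_X)=1_X$) and reduction mod $V$ to $d\cdot 1_Y$ force $f_*(1_X)=d\cdot 1_Y$ at every Witt level.

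The main obstacle will be the projection formula itself, since $f_*$ is built from a chain of dualities rather than directly as a multiplicative trace; the verification requires chasing $\mu_X$, $\theta_f$, $D(f^*)$, and $\mu_Y^{-1}$ while respecting the sign conventions of Proposition~\ref{1.4.7}(3) on internal shifts of $\sHom$. Once this is secured, the computation of $f_*(1_X)$ reduces cleanly to the classical finite-flat trace formula via the construction of $\tau$ in Theorem~\ref{1.6.12}.
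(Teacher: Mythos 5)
The paper offers no proof of this proposition: it is cited directly from Gros \cite[II, Prop.~4.2.9]{Gros}, so there is no argument in the paper to compare against. Your overall strategy — establish a projection formula for $f_*$, reduce to computing $f_*(1_X)$, and show $f_*(1_X)=\deg(f)\cdot 1_Y$ — is the natural one and is in the spirit of what Gros does. The projection formula sketch and the level--$1$ identification of $f_*$ with the classical finite--flat trace on $\sO$ are both sound (in the interesting case $\dim X=\dim Y$, where $f$ is flat by miracle flatness).

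The gap is in the final lifting step. Write $a_n:=f_{*,n}(1_X)\in\Gamma(Y,W_n\sO_Y)$. Compatibility with $\pi$ gives $\pi(a_n)=a_{n-1}$, compatibility with $F$ gives $F(a_n)=a_{n-1}$, and the level--$1$ computation gives $a_1=\bar d$. Together these only force $a_n-d\cdot 1_Y\in V^{n-1}\Gamma(Y,\sO_Y)$: for $a_n=(x_0,\dots,x_{n-1})$ the conditions pin down $x_0=\bar d$ and force $x_i\in\F_p$ for $i\le n-2$, but leave the top component $x_{n-1}$ free, and passing to higher levels only constrains it to lie in $\F_p$ rather than equal the relevant digit of the $p$-adic expansion of $d$. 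The compatibilities with $V$ and $\ul p$ yield nothing new (they only say $\delta:=f_*\circ f^*-d$ kills $V W_{n-1}\sO_Y$ and $p W_n\sO_Y$, which is automatic once $\delta_1=0$). So "$F$-compatibility plus reduction mod $V$" does not force the conclusion. Moreover one cannot simply repeat the level--$1$ algebra--trace computation at level $n$: for $f:\A^1\to\A^1$, $t\mapsto t^p$, one checks directly that $W_2(k[t])$ is \emph{not} flat over $W_2(k[t^p])$ (already $\Tor_1$ against $W_2(k[t^p])/(p)$ is nonzero), so $(W_nf)_*W_n\sO_X$ is in general not a locally free $W_n\sO_Y$-algebra and the naive rank-$d$ trace argument is unavailable. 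Nor does restricting to a dense open where $f$ is \'etale help, since $f$ can be nowhere generically \'etale (purely inseparable). A correct argument must compute $\Tr_{W_nf}\circ f^*$ on $W_n\omega_Y$ at each level, e.g.~by factoring $f$ through $\P^N_Y$ and using the explicit residue formula of Proposition~\ref{1.7.10} together with Lemma~\ref{2.1.10}, rather than by lifting the level--$1$ identity through the de Rham--Witt operators.
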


\subsection{Compatibility of pushforward and pullback}\label{section-Compatibility}

\begin{proposition}[Gros]\label{1.7.9}
Let $i: Z\inj (X,Z)$ be a closed immersion of pure codimension $c$ in $(Sm_*/S)$.
Then 
\[R\ul{\Gamma}_Z W_\bullet \Omega_X[c]\cong \sH^c_Z(W_\bullet\Omega_X) \quad \text{in } D^+(\drw_X).\]
Suppose further the ideal sheaf of $Z$ in $\sO_X$ is generated by a regular sequence $t=t_1,\ldots, t_c$  of global sections of $X$, then the projection of 
$i_*: \sH(Z/X)\to \sH((X,Z)/X)$  to the $n$-th level is given by 
  \eq{1.7.9.1}{{W_n(i)}_*W_n\Omega_Z\to \sH^c_Z (W_n\Omega_X)(c), 
                                 \quad \alpha\mapsto (-1)^c\genfrac{[}{]}{0pt}{}{d[t]\tilde\alpha}{[t]}, }
       where we set $[t]:=[t_1], \ldots, [t_c]$ and $d[t]:=d[t_1]\cdots d[t_c]\in W_n\Omega^c_X$, 
       with $[t_i]\in W_n\sO_X$ the Teichm\"uller lift of $t_i$, and $\tilde{\alpha}$ is any lift
       of $\alpha\in {W_n(i)}_*W_n\Omega_Z$ to $W_n\Omega_X$.
\end{proposition}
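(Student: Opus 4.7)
The plan is to split the proposition into two independent assertions and handle them separately, leveraging the already-proven local computation in Proposition~\ref{1.7.10}.

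For the first assertion, I would show that $\sH^i_Z(W_n\Omega^q_X) = 0$ for $i \neq c$ by local arguments. The vanishing for $i < c$ is exactly \eqref{loc-coh-van} (and can also be read off the Cousin resolution of Lemma~\ref{drw-is-CM}). For the vanishing for $i > c$, I would argue locally using a $\check{C}$ech covering: since the ideal sheaf of $Z$ in $\sO_X$ is locally generated by $c$ elements, the open $X \setminus Z$ is locally the union of $c$ affine opens, and $\check{C}$ech cohomology then kills $\sH^i_Z(W_n\Omega^q_X)$ for $i > c$. These vanishings are evidently compatible with the operators $F, V, \pi, \ul{p}, d$, so $R\ul{\Gamma}_Z(W_\bullet\Omega_X)[c] \simeq \sH^c_Z(W_\bullet\Omega_X)$ as an object of $D^+(\drw_X)$.

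For the second assertion, I would unravel Definition~\ref{2.1.6}: at level $n$ the map $i_*$ is obtained by composing the duality isomorphism $\mu_X$, the trace transformation $\theta_i$, the dual of pullback $D(i^*)$, and $\mu_Y^{-1}$. The strategy is to reduce the computation in degree $q$ to the top-degree computation already done in Proposition~\ref{1.7.10}, by exploiting the multiplication pairing of Theorem~\ref{1.4.8}. Concretely, since $\mu_X$ and $\mu_Z$ are induced by multiplication $W_n\Omega^p \otimes W_n\Omega^{N-p} \to W_n\omega$, the compatibility of $\theta_i$ with this pairing (which is part of the naturality of $\vartheta_i$ in Proposition~\ref{1.6.14.4.5}, i.e.\ a form of Grothendieck--Serre duality for the finite morphism $W_n(i)$) translates the abstract composition into the concrete identity
$$ i_*(\alpha) \cdot \beta \;=\; \imath_{Z,n}\bigl(\alpha \cdot i^*\beta\bigr) \quad \text{in } \sH^c_Z(W_n\omega_X) $$
for all $\alpha \in W_n\Omega^q_Z$ and $\beta \in W_n\Omega^{N_Z - q}_X$, where $\imath_{Z,n}$ is precisely the top-degree map appearing in Proposition~\ref{1.7.10}.

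Once this identity is established, the formula follows: by Proposition~\ref{1.7.10},
$$ \imath_{Z,n}(\alpha \cdot i^*\beta) \;=\; (-1)^c \genfrac{[}{]}{0pt}{}{d[t]\,\widetilde{\alpha \cdot i^*\beta}}{[t]} \;=\; (-1)^c \genfrac{[}{]}{0pt}{}{d[t]\,\tilde{\alpha}\,\beta}{[t]}, $$
where one takes $\tilde{\alpha}\beta$ as a lift. Using the $W_n\sO_X$-module structure on local cohomology from \ref{1.7.6}(3), this equals $\bigl((-1)^c \genfrac{[}{]}{0pt}{}{d[t]\,\tilde{\alpha}}{[t]}\bigr)\cdot \beta$, and the non-degeneracy of the multiplication pairing on the smooth scheme $X$ then forces formula~\eqref{1.7.9.1}. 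The main obstacle I anticipate is bookkeeping: carefully tracking the shifts $(N)[N]$, the sign $(-1)^{qm + m(m-1)/2}$ built into Proposition~\ref{1.4.7}(3), the sign in $\mu_X$ from the Cousin functor, and the sign $(-1)^c$ in Proposition~\ref{1.7.10}, to confirm that the net sign is exactly $(-1)^c$ and not some other power. Everything else is a formal consequence of the functoriality of $\theta_i$ and $\mu$ under the multiplication pairing.
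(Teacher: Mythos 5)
Your proposal is essentially the paper's own argument: the first assertion is proved exactly as in \eqref{1.7.10.0} (vanishing below $c$ from \eqref{loc-coh-van}, above $c$ by \v{C}ech considerations), and for the second the paper likewise unwinds Definition~\ref{2.1.6} at level $n$, invokes Proposition~\ref{1.7.10} together with the multiplication isomorphisms $\mu_{Z,n}$, $\mu_{X,n}$ of Theorem~\ref{1.4.8}, and reduces to precisely your pairing identity $i_*(\alpha)\cdot\beta = \imath_{Z,n}(\alpha\cdot i^*\beta)$, from which the explicit formula follows upon taking $H^{-N_Z}$. Your caution about the sign bookkeeping is well placed — the paper dismisses it with ``straightforward to check'' — but the net sign does come out to $(-1)^c$.
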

The above proposition is proved in \cite[II, 3.4]{Gros}, but since the proof uses a result by Ekedahl, 
which is referred to as work in progress and which we could not find in the literature we reprove the above proposition 
(using Proposition \ref{1.7.10}, instead of a comparison of Ekedahl's trace with Berthelot's trace in crystalline cohomology as Gros does). 

\begin{proof}
The first statement is proven as in \eqref{1.7.10.0}. It remains to prove the explicit description \eqref{1.7.9.1}. Let 
$\rho_{Z,n}: W_n Z\to \Spec W_n$ and $\rho_{X,n}: W_n X\to \Spec W_n$ be the structure maps,
$i_n:=W_n(i) : W_n Z\inj W_n X$ the closed immersion 
(for this proof we don't use the $i_n$ defined in \ref{1.1}) and write $\dim X=N_X$, $\dim Z=N_Z$.  By Definition \ref{2.1.6} the projection to the $n$-th level of $i_*$ is given by the 
following composition in $D^b_{\rm qc}(\sC_{X,n})$ 
\begin{eqnarray*}
i_{n*}W_n\Omega_Z (N_Z)[N_Z]   & \xr{\mu_{Z,n},\, \simeq} &  i_{n*} R\sHom(W_n\Omega_Z, K_{Z,n})\\
                               &  \xr{\theta_{i_n}}&   R\ul{\Gamma}_Z R\sHom(i_{n*}W_n\Omega_Z, K_{X,n} )\\
               & \xr{ D(i^*_n)} & R\sHom(W_n\Omega_X, \ul{\Gamma}_Z K_{X,n} )\\
              & \xr{\mu_{X,n}^{-1}, \, \simeq} & R\ul{\Gamma}_Z(W_n\Omega_X)(N_X)[N_X].
\end{eqnarray*}
Here $\mu_{Z,n}$, $\mu_{X,n}$ and $\theta_{i_n}$ are images in the derived category of the projections to the $n$-th level of the corresponding maps from \ref{2.1.5}.
We denote by $\imath_{Z,n}$ the following morphism
\[\imath_{Z,n}: (W_n i)_*W_n\omega_Z\lra  \sH^c_Z(W_n\omega_X)(c), \quad 
\alpha\mapsto (-1)^c\genfrac{[}{]}{0pt}{}{d[t]\tilde{\alpha}}{[t]},\] 
with $\tilde{\alpha}\in W_n\Omega^{N_Z}_X$ any lift of $\alpha$. Then it follows from Proposition \ref{1.7.10}, that in the derived category $(i_*)_n$ equals
\begin{eqnarray*}
i_{n*}W_n\Omega_Z(N_Z)[N_Z]   & \xr{\text{multipl.}} &  \sHom(i_{n*}W_n\Omega_Z, i_{n*}W_n\omega_Z(N_Z)[N_Z])\\
                     & \xr{\imath_{Z,n}} &\sHom(i_{n*}W_n\Omega_Z,\sH^c_Z(W_n\omega_X)(N_X)[N_Z] ) \\
               & \xr{D(i_n^*)}& \sHom(W_n\Omega_X, \sH^c_Z(W_n\omega_X)(N_X)[N_Z])\\
               & \xr{\simeq,\,(*)} & \sH^c_Z(W_n\Omega_X)(N_X)[-c][N_X]\\
              &\xr{\simeq} & R\ul{\Gamma}_Z (W_n\Omega_X)(N_X)[N_X].
\end{eqnarray*}
Here the  isomorphism $(*)$ is the inverse of
\[\sH^c_Z(W_n\Omega_X)(N_X)\xr{\simeq} \sHom(W_n\Omega_X, \sH^c_Z(W_n\omega_X)), \quad
   \genfrac{[}{]}{0pt}{}{\alpha}{t^n}\mapsto \left(\beta\mapsto  \genfrac{[}{]}{0pt}{}{\alpha\beta}{t^n}\right).\]
It is straightforward to check, that taking $H^{-N_Z}(-)$ of this composition gives \eqref{1.7.9.1} and hence the claim.
\end{proof}

\begin{corollary}\label{2.1.9.5}
Let $i: Y\inj X$ be a closed immersion between smooth affine $k$-schemes and assume that the ideal of $Y$ in $X$ is generated by a regular sequence $t_1,\ldots, t_c$.
Let $Z\subset Y$ be a closed subset which is equal to the vanishing set $V(f_1,\ldots f_i)$ of global sections $f_1,\ldots, f_i\in \Gamma(Y,\sO_Y)$. Denote by
$i_Z: (Y,Z)\inj (X,Z)$ the morphism in $(Sm_*/\Spec k)$ induced by $i$. Then the projection  to the $n$-th level of $i_{Z*}$ is given by
\[i_{Z*,n}: H^i_Z(Y,W_n\Omega_Y)\to H^{i+c}_Z(X, W_n\Omega_X)(c),\quad \genfrac{[}{]}{0pt}{}{\alpha}{[f]}\mapsto (-1)^c \genfrac{[}{]}{0pt}{}{d[t]\tilde{\alpha}}{[t],[f]},\]
where $\alpha\in \Gamma(Y,W_n\Omega_Y)$, $\tilde{\alpha}\in \Gamma(X, W_n\Omega_X)$ is some lift, $[f]=[f_1],\ldots, [f_i]$ and $[t]$, $d[t]$ as in in Lemma \ref{1.7.9}.
\end{corollary}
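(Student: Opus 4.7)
The plan is to deduce the formula from Proposition~\ref{1.7.9} by exhibiting the pushforward $i_{Z*,n}$ as $R\ul{\Gamma}_Z$ applied to the pushforward $i_{*,n}\colon i_{n*}W_n\Omega_Y\to \sH^c_Y(W_n\Omega_X)(c)$ of that proposition, and then tracking the Koszul representative through this identification. By the construction of the pushforward in Definition~\ref{2.1.6}, the level-$n$ map $i_{Z*,n}$ is built from the pieces $\mu_Y$, $\theta_{i_Z}$, $D(i^*)$, $\mu_X^{-1}$ with $\ul{\Gamma}_Z$ inserted at the appropriate places, whereas Proposition~\ref{1.7.9} handles the same construction with supports $Y$ on both $Y$ and $X$. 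The two should differ only by the insertion of $R\ul{\Gamma}_Z$, using: the commutation $Ri_*\ul{\Gamma}_Z\cong \ul{\Gamma}_Z Ri_*$ following from \eqref{supp-push} and the fact that $i$ is a closed immersion with $i^{-1}(Z)=Z$; the identity $\ul{\Gamma}_Z\ul{\Gamma}_Y=\ul{\Gamma}_Z$ since $Z\subset Y$; the compatibility of $D$ with $\ul{\Gamma}_Z$ from Remark~\ref{1.6.14.4}; and the naturality of $\theta$ from Proposition~\ref{1.6.14.4.5}. This yields a commutative square in $D^b(W_n\sO_X)$ whose lower horizontal map is $R\ul{\Gamma}_Z(i_{*,n})$ and whose upper horizontal map is $i_{Z*,n}$.

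With that compatibility in hand, the remaining work is a Koszul calculation. By Remark~\ref{1.7.8}, the sequence $[f_1],\ldots,[f_i]$ is $W_n\sO_Y$-regular and cuts out $Z$, so by \ref{1.7.6} the class $\genfrac{[}{]}{0pt}{}{\alpha}{[f]}\in H^i_Z(Y,W_n\Omega_Y)$ is represented by $\alpha\in \Gamma(Y,W_n\Omega_Y)$. Applying the sheaf-level formula of Proposition~\ref{1.7.9} produces the section $(-1)^c\genfrac{[}{]}{0pt}{}{d[t]\tilde\alpha}{[t]}$ of $\sH^c_Y(W_n\Omega_X)$. Since $\sH^j_Y(W_n\Omega_X)=0$ for $j\ne c$ by \eqref{loc-coh-van}, one has $R\ul{\Gamma}_Y W_n\Omega_X\cong \sH^c_Y(W_n\Omega_X)[-c]$, and applying $H^i(X,R\ul{\Gamma}_Z(-))$ yields an isomorphism $H^i(X,R\ul{\Gamma}_Z\sH^c_Y(W_n\Omega_X))\cong H^{i+c}_Z(X,W_n\Omega_X)$. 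Under this isomorphism the Koszul denominator $[f]$ of the source class is absorbed into the $[t]$-denominator of the section to give the combined denominator $[t],[f]$ on the target, since the lifted sequence $[t_1],\ldots,[t_c],[\tilde f_1],\ldots,[\tilde f_i]$ is $W_n\sO_X$-regular near $Z$ and cuts out $Z$ (again by Remark~\ref{1.7.8}, noting that $t,\tilde f$ is $\sO_X$-regular near $Z$ because $f$ is $\sO_Y$-regular). This yields the claimed formula.

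The hard part will be the first paragraph: one must check carefully, in the derived category of de Rham--Witt systems with supports, that each of the pieces $\mu$, $\theta_i$, $D(i^*)$, $\mu^{-1}$ entering Definition~\ref{2.1.6} commutes appropriately with $R\ul{\Gamma}_Z$, so that the commutative square expressing $i_{Z*,n}=R\ul{\Gamma}_Z(i_{*,n})$ holds on the nose and not merely up to a homotopy one has to track. Once this is in place the final Koszul bookkeeping is forced by the naturality of the Koszul description of local cohomology from \ref{1.7.6}.
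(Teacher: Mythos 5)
Your proposal follows essentially the same route as the paper's proof: one first reduces $i_{Z*,n}$ to the sheaf-level pushforward $i_{Y/X*,n}$ of Proposition~\ref{1.7.9} (viewed over $X$) by applying local cohomology with support in the lifted locus $\tilde{Z}=V(\tilde f_1,\ldots,\tilde f_i)$, and then identifies the resulting iterated Koszul symbol $\genfrac{[}{]}{0pt}{}{\alpha}{[f]}\mapsto(-1)^c\genfrac{[}{]}{0pt}{}{d[t]\tilde\alpha}{[t],[f]}$ via the isomorphism $\sK([t])\otimes\sK([\tilde f])\cong\sK([t],[\tilde f])$. The paper is terser on the first step (attributing it to the first part of Proposition~\ref{1.7.9}) and more explicit on the Koszul diagram chase, but the structure of the argument is the same as yours.
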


\begin{proof}
Choose $\tilde{f}_1,\ldots, \tilde{f}_i\in \Gamma(X, \sO_X)$ lifts of $f_1,\ldots,f_i$ and define the set $\tilde{Z}\subset X$ as the vanishing locus of these $\tilde{f}_i$'s.
Denote by $i_{Y/X}$ the closed immersion $Y\inj (X,Y)$ viewed as a morphism in $(Sm_*/X)$. Then it follows from the first part of Proposition \ref{1.7.9} that
$i_{Z*,n}$ equals 
\mlnl{H^i_{\tilde{Z}}(X, i_{Y/X*,n}): H^i_Z(Y, W_n\Omega_Y)\cong H^i_{\tilde{Z}}(X, i_*W_n\Omega_Y)\\ \lra H^i_{\tilde{Z}}(X, \sH_Y^c(W_n\Omega_X))\cong H^{i+c}_{Z}(X, W_n\Omega_X)[c].}
Now denote by $[t]=[t_1],\ldots, [t_c]$ and $[\tilde{f}]=[\tilde{f}_1],\ldots, [\tilde{f}_i]$ the Teichm\"uller lifts of the sequences $t$ and $f$ to $\Gamma(X,W_n\sO_X)$ and
by $\sK([t])$ and $\sK([\tilde{f}])$ the corresponding sheafified Koszul complexes. We define a morphism of complexes
\[\varphi: W_n\Omega_X[0]\to \sHom(\sK^{-\bullet}([t]), W_n\Omega_X(c))[c], \quad \alpha\mapsto (e_1\wedge\ldots\wedge e_c\mapsto (-1)^c d[t]\alpha) \]
where $e_1,\ldots, e_c$ is the standard basis of $(W_n\sO_X)^c$. Then the second part of Proposition \ref{1.7.9} maybe rephrased by saying that the diagram
\[\xymatrix{W_n\Omega_X\ar[d]_{i^*}\ar[r]^-{\varphi} & \sHom(\sK^{-\bullet}([t]), W_n\Omega_X(c))[c]\ar[d]^{\eqref{1.7.6.1}}\\
            i_*W_n\Omega_Y\ar[r]^-{i_{Y/X*}} & \sH_Y^c(W_n\Omega_X)(c) }\]
commutes. This yields the following commutative diagram of complexes (in which the lower square is concentrated in degree 0):
\[\xymatrix{\Hom(\sK^{-\bullet}([\tilde{f}]), W_n\Omega_X)[i]\ar[r]^-{\varphi}\ar[d]_{\eqref{1.7.6.1}} & \Hom(\sK^{-\bullet}([\tilde{f}]), \sHom(\sK^{-\bullet}([t]), W_n\Omega_X(c))[c])[i]\ar[d]^{\eqref{1.7.6.1}}\\
              H^i_{\tilde{Z}}(X,W_n\Omega_X)\ar[d]_{i^*}\ar[r]^-{\varphi} & H^i_{\tilde{Z}}(X,\sHom(\sK^{-\bullet}([t]), W_n\Omega_X(c))[c])\ar[d]^{\eqref{1.7.6.1}} \\
            H^i_{\tilde{Z}}(X, i_*W_n\Omega_Y)\ar[r]^-{i_{Y/X*}} & H^i_{\tilde{Z}}(X,\sH_Y^c(W_n\Omega_X)(c)).   }\]
Now the claim follows from $\sK([t])\otimes_{W_n\sO_X}\sK([\tilde{f}])=\sK([t], [\tilde{f}])$ and the commutativity of the following diagram, in which we denote $W_n\Omega_X(c)$ simply by $\Lambda$ 
\[\xymatrix{ \Hom(\sK^{-\bullet}([\tilde{f}]), \sHom(\sK^{-\bullet}([t]), \Lambda)[c])[i]\ar[d]\ar[r]^-{\simeq} & \Hom(\sK^{-\bullet}([t],[\tilde{f}]), \Lambda)[c+i]\ar[d]\\
             H^i_{\tilde{Z}}(X, \sH^c_Y(\Lambda))\ar[r]^-\simeq  &  H^{i+c}_{Z}(X, \Lambda).  }\]
\end{proof}

\begin{lemma}\label{2.1.10}
Let $Y$ be a smooth $k$-scheme, $f:X=\P^r_Y\to Y$ the projection and $\Psi$ a family of supports on $Y$. We denote by 
$f_{\Psi/S}:(X, f^{-1}(\Psi))\to (Y,\Psi)$ the morphism in $(Sm_*/S)$ induced by $f$ and by 
\[f_{\Psi/S*}: \sH((X, f^{-1}(\Psi))/S)\to \sH(\YP/S)(-r)\]
the  corresponding push-forward (same for $\hat{\sH}$). Let $a: Y\to S$ be the structure morphism. Then:
\begin{enumerate}
 \item The higher direct images $R^if_*W_\bullet\Omega_X$ vanish for $i\ge r+1$. Therefore there is a natural morphism in $D^+(\drw_Y)$
       \[e:Rf_*W_\bullet\Omega_X\to R^rf_*W_\bullet\Omega_X[-r]\]
       and the push-forward $f_{\Psi/S*}$ factors as
     \[\bigoplus_i H^i(Ra_\Psi Rf_*W_\bullet\Omega_X)\xr{e} \bigoplus_i R^{i-r}a_\Psi( R^r f_*W_\bullet\Omega_X)\xr{f_{Y/Y*}} \bigoplus_i Ra^{i-r}_\Psi W_\bullet\Omega_Y(-r).\]
 Similar for $\hat{\sH}$ (replace $a_\Psi$ by $\hat{a}_\Psi$ in the above formula).
 \item The push-forward $f_{Y/Y*}: \sH(X/Y)\to \sH(Y/Y)(-r)$ induces an isomorphism
        \[f_{Y/Y*}:R^r f_*W_\bullet \Omega_X\xr{\simeq} W_\bullet \Omega_Y(-r),\]
        with inverse induced by the push-forward $i_*:\sH_\bullet(Y/Y)(-r)\to \sH_\bullet(X/Y)$, where $i: Y\to X$ is any section of $f$.
\end{enumerate}

\end{lemma}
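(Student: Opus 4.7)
The plan is to handle (1) by the standard filtration argument, then deduce the factorization from the way $f_{\Psi/S*}$ is defined, and prove (2) by combining functoriality with a local computation.

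For (1), Illusie's standard filtration \cite[I, 3.9]{IlDRW} exhibits each $W_n\Omega^q_X$ as an iterated extension of locally free $\sO_X$-modules. Since $f: \P^r_Y \to Y$ has relative dimension $r$, the higher direct image of any quasi-coherent sheaf on $X$ vanishes in cohomological degrees $> r$, and this survives iterated extensions, so $R^i(W_nf)_*W_n\Omega^q_X = 0$ for $i > r$ and all $n, q$. Computing $Rf_*W_\bullet\Omega_X$ in $D^+(\drw_Y)$ levelwise via flasque resolutions (Lemma~\ref{enough-flasque}), the same vanishing carries over, so there is a canonical edge morphism $e: Rf_*W_\bullet\Omega_X \to R^rf_*W_\bullet\Omega_X[-r]$. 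The derived pushforward map \eqref{derived-pf} for $\Psi = Y$ has source $Rf_*W_\bullet\Omega_X$, hence factors uniquely through $e$; inspection of Definition~\ref{2.1.6} identifies the resulting morphism $R^rf_*W_\bullet\Omega_X \to W_\bullet\Omega_Y(-r)$ with $f_{Y/Y*}$. For general $\Psi$, applying $R\ul{\Gamma}_\Psi \circ Ra_*$, which commutes with the edge morphism, and then taking cohomology yields the claimed factorization; the $\hat{\sH}$ variant follows by additionally applying $R\varprojlim$.

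For (2), functoriality of the pushforward (Proposition~\ref{2.1.9}) applied to $f\circ i = \id_Y$ gives $f_* \circ i_* = \id_*$ on $\sH(Y/Y)$. The degree shifts cancel ($i_*$ shifts by $(r)$ and $f_*$ by $(-r)$), and restricting to the summand $W_\bullet\Omega_Y \subset \sH(Y/Y)$ gives $f_{Y/Y*} \circ i_* = \id$ on $W_\bullet\Omega_Y(-r)$. In particular $i_*$ is a section of $f_{Y/Y*}$, so $i_*$ is injective and $f_{Y/Y*}$ is surjective; it remains to show either is an isomorphism. By compatibility of both morphisms with \'etale base change on $Y$ (Proposition~\ref{2.1.9}(1) and Proposition~\ref{1.6.11}), we may assume $Y$ is affine and work with each fixed $n$. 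An induction on $n$ using the restriction short exact sequence $0 \to K_n \to W_n\Omega_X \to W_{n-1}\Omega_X \to 0$ (where $K_n$ is the kernel of $\pi$) together with the associated long exact sequence of higher direct images and the five lemma reduces to the case $n = 1$. There the classical projective bundle computation on $\P^r_Y/Y$ gives $R^rf_* \Omega^q_{\P^r_Y/Y} = \sO_Y$ for $q = r$ and zero otherwise, which matches $W_1\Omega_Y(-r)$ in each Hodge degree and settles the base case.

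The principal obstacle is showing $f_{Y/Y*}$ is injective in (2): functoriality provides only the section $i_*$, so proving that $i_*$ generates $R^rf_*W_\bullet\Omega_X$ requires an explicit levelwise computation (or equivalently a projective bundle formula for Hodge-Witt cohomology, cf.~Gros). Keeping the Frobenius, Verschiebung and differential compatible throughout the induction on $n$ is the main technical point.
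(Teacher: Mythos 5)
Your treatment of (1) is essentially correct and takes a slightly different route than the paper: you derive the vanishing $R^if_*W_n\Omega^q_X=0$ for $i>r$ from Illusie's standard filtration rather than from Gros's decomposition $Rf_*W_n\Omega_X\simeq\bigoplus_{i=0}^r W_n\Omega_Y(-i)[-i]$, which is more elementary. Your claim that the push-forward ``factors uniquely through $e$'' is terse but correct: the target $W_n\Omega_Y(-r)[-r]$ is a single (graded) module concentrated in degree $r$, so for any complex $C$ in degrees $[0,r]$ one has $\Hom_D(C,W_n\Omega_Y(-r)[-r])\cong\Hom(H^r(C),W_n\Omega_Y(-r))$; this is a cleaner argument than the paper's use of the decomposition and the vanishing of $\Ext^{i-r}$ for $i<r$.

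The reduction to $n=1$ in (2) is where your argument has a genuine gap. You propose to induct on $n$ using the restriction sequence $0\to K_n\to W_n\Omega_X\to W_{n-1}\Omega_X\to 0$ and the five lemma, but the kernel $K_n=\ker(W_n\Omega^q_X\to W_{n-1}\Omega^q_X)$ is \emph{not} $\Omega^q_X$: its graded pieces are quotients of the form $\Omega^q_X/B_{n-1}\Omega^q_X$ and $\Omega^{q-1}_X/Z_{n-1}\Omega^{q-1}_X$ (higher boundaries and cycles, Frobenius twisted). So establishing the isomorphism on $K_n$ is \emph{not} the $n=1$ case of the map $f_{Y/Y*}$ and the induction does not close. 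The paper circumvents this by invoking Ekedahl's Nakayama Lemma (\cite[I, Prop. 1.1, Cor. 1.1.3]{EII}): since $f_{Y/Y*}$ is a morphism of $R$-modules ($R$ the Cartier-Dieudonn\'e-Raynaud ring) and $W_n\Omega_X\cong R_n\otimes^L_R W\Omega_X$ by \cite[II, Thm. 1.2]{IR} together with \cite[I, Thm. 4.1.11, Cor. 4.1.12]{Gros}, it suffices to prove that $R_1\otimes^L_R f_{Y/Y*}$, i.e.\ the level-$1$ map, is an isomorphism. This is the device that makes the reduction legitimate; your five-lemma induction is not a substitute for it.

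Your level-$1$ computation is also incomplete. The projective bundle formula you cite is for relative differentials $\Omega^j_{\P^r_Y/Y}$, whereas the push-forward concerns absolute differentials $\Omega^q_X$; passing between the two requires the K\"unneth decomposition $\Omega^q_X=\bigoplus_{i+j=q}f^*\Omega^i_Y\otimes\Omega^j_{X/Y}$. More importantly, what has to be shown is that the specific morphism $f_*:R^rf_*\Omega^q_X\to\Omega^{q-r}_Y$ \emph{is} the projective bundle isomorphism, not merely that an abstract isomorphism exists. The paper does this by unwinding $f_*$ through duality: $f_*$ is $H^r$ of the composition $Rf_*\Omega^q_X\cong Rf_*R\sHom(\Omega^{N_X-q}_X,\omega_X)\cong R\sHom(Rf_*\Omega^{N_X-q}_X,\omega_Y)[-r]\xr{D(f^*)[-r]}\Omega^{q-r}_Y[-r]$, and the projective bundle formula for $Rf_*\Omega^{N_X-q}_X$ shows $H^0(D(f^*))$ is an isomorphism. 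You should carry out an argument of this kind rather than appealing to ``matching in each Hodge degree.''
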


\begin{proof} First of all observe that it suffices to prove the statements on each finite level $n$ separately.
 
(1) By \cite[I, Cor 4.1.12]{Gros} we have an isomorphism in $D(W_n\Omega_Y-\text{dga})$ (with $W_n\Omega_Y-\text{dga}$ the category of differential graded $W_n\Omega_Y$-modules)
\eq{2.1.10.1}{Rf_*W_n\Omega_X \simeq \bigoplus_{i=0}^r W_n\Omega_Y(-i)[-i].}
This gives the first statement of (1) and in particular an isomorphism in $D(W_n\Omega_Y-\text{dga})$
\eq{2.1.10.2}{Rf_*W_n\Omega_X \simeq \bigoplus_{i=0}^r R^if_*W_n\Omega_X[-i]}
and the morphism $e$ from (1) is the projection to the $r$-th summand. On the other hand $(f_{Y/Y*})_n$ is induced by a derived category morphism \eqref{derived-pf} 
 \eq{2.1.10.3}{Rf_*W_n\Omega_X\to W_n\Omega_Y(-r)[-r].}
 Since the composition
\[R^if_*W_n\Omega_X[-i]\xr{\text{via } \eqref{2.1.10.2}} Rf_*W_n\Omega_X\xr{\eqref{2.1.10.3}} W_n\Omega_Y(-r)[-r]\]
is an element in $\Ext^{i-r}(R^if_*W_n\Omega_X, W_n\Omega_Y(-r))$, this composition is zero for all $i<r$.
Thus the morphism \eqref{2.1.10.3} factors over $e$. This implies the second statement of (1).

(2) It suffices to check that $f_{Y/Y*}$ induces an isomorphism; $i_*$ will then automatically be the inverse for any section $i$ of $f$.
(Notice that by \eqref{2.1.10.1} we know $R^r f_*W_\bullet \Omega_X\simeq W_\bullet \Omega_Y(-r)$ but we don't know that this isomorphism is induced by $f_{Y/Y*}$.)
To this end, let $R$ be the Cartier-Dieudonn\'e-Raynaud ring of $k$, see \cite[I]{IR}. 
($R$ is a graded (non-commutative) $W$-algebra generated by symbols $F$ and $V$ in degree 0 and $d$ in degree 1, satisfying the obvious relations). 
Set $R_n:=R/V^nR+dV^nR$. We denote by $D^b(Y,R)$ the bounded derived category of $R$-modules. 
Any de Rham-Witt module is in particular an $R$-module. Thus we may view  $R^i\hat{f}_*W_\bullet\Omega_X=R^if_*W\Omega_X$, $i\ge 1$, as an object in $D^b(Y,R)$.
In $D^b(Y, R)$ we have the following isomorphisms 
\[R_n\otimes^L_R R^if_*W\Omega_X\simeq R_n\otimes_R^L W\Omega_Y(-i)\simeq W_n\Omega_Y(-i)\simeq R^if_*W_n\Omega_X, \]
where the first and the last isomorphism follow from \cite[I, Thm. 4.1.11, Cor. 4.1.12]{Gros} and the second isomorphism is \cite[II, Thm (1.2)]{IR}.
Since $f_{Y/Y*}$ is clearly a morphism of $R$-modules it follows that the projection of $f_{Y/Y*}$ to the $n$-th level, identifies with
\[R_n\otimes^L_R f_{Y/Y*}: R_n\otimes^L_R R^rf_*W\Omega_X\to R_n\otimes^L_R W\Omega_Y(-r).\]
We thus have to show that this is an isomorphism for all $n$. Now Ekedahl's version of Nakayama's Lemma  \cite[I, Prop. 1.1, Cor. 1.1.3]{EII} (also \cite[Prop. 2.3.7]{IlE}) implies that it is in fact enough
to prove that the projection of $f_{Y/Y*}$ to level 1 - in the following simply denoted by $f_*$ - is an isomorphism, i.e. we have to show (for all $q\ge 0$)
\[f_*: R^r f_*\Omega_X^q\xr{\simeq} \Omega_Y^{q-r}.\]
In fact, $f_*$ is given by $H^r$ of the following morphism in $D^b_c(\sO_Y)$
\mlnl{Rf_*\Omega_X^q\simeq Rf_*R\sHom(\Omega_X^{N_X-q}, \omega_X)\simeq R\sHom(Rf_*\Omega_X^{N_X-q}, \omega_Y)[-r]\\ \xr{D(f^*)[-r]} R\sHom(\Omega^{N_X-q}_Y, \omega_Y)[-r]\simeq \Omega^{q-r}_Y[-r], }
with $N_X=\dim X$.
Thus it is enough to show that $H^r(D(f^*)[-r])=H^0(D(f^*))$ is an isomorphism. This follows from the well-known formula 
\[Rf_*\Omega_X^{N_X-q}\simeq \bigoplus_{i=0}^r \Omega^{N_X-q-i}_Y[-i],\]
which might be proved using the K\"unneth decomposition $\Omega^q_X=\oplus_{i+j=q}f^*\Omega_Y^i\otimes_{\sO_X}\Omega^j_{X/Y}$ and \cite[Exp. XI, Thm 1.1, (ii)]{SGA7II}.
\end{proof}

\begin{proposition}\label{1.7.12}
 Let 
\[\xymatrix{(X',\Phi')\ar[r]^{f'}\ar[d]_{g_X} & (Y',\Psi')\ar[d]^{g_Y}\\
            \XP\ar[r]^f & \YP}\]
be a cartesian diagram, with $f,f'$ morphisms in $(Sm_*/S)$ and $g_X,g_Y$ morphisms in $(Sm^*/S)$. Assume that {\em one} of the following conditions is satisfied
\begin{enumerate}
 \item $g_Y$ is flat, or
 \item $g_Y$ is a closed immersion and $f$ is transversal to $g_Y$ (i.e. ${f'}^*\sN_{Y'/Y}=\sN_{X'/X}$).
\end{enumerate}
Then the following diagram commutes 
\eq{1.7.12.1}{\xymatrix{ \sH((X',\Phi')/S)\ar[r]^{f'_*} & \sH((Y',\Psi')/S)\\
                  \sH(\XP/S)\ar[u]^{g_X^*}\ar[r]^-{f_*} & \sH(\YP/S)\ar[u]_{g_Y^*} } }
and also with $\sH$ replaced by $\hat{\sH}$.
\end{proposition}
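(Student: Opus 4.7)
The plan is to reduce the statement to two basic geometric situations by factoring $f$ through a projective space, and then verify each situation using the explicit formulas for $f_*$ established earlier in this section.

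First I would factor $f\colon X\to Y$. Since $X$ is quasi-projective over $k$ and $Y$ is separated, the graph of $f$ combined with a locally closed immersion $X\hookrightarrow\P^n_k$ yields a factorization
\[
X\xrightarrow{\,i\,} U\xrightarrow{\,j\,} \P^n_Y\xrightarrow{\,p\,} Y,
\]
where $i$ is a closed immersion into an open $U\subset\P^n_Y$ and $p$ is the projection. Because $f|_\Phi$ is proper and $\P^n_Y\to Y$ is proper, every $Z\in\Phi$ is automatically closed in $\P^n_Y$, so $\Phi$ transfers to a family of supports on $U$ and on $\P^n_Y$, and $j$ induces the excision isomorphism by Proposition~\ref{2.1.9}(2). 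Thus $f_* = p_*\circ j_*^{-1}\circ i_*$. The same factorization base-changes along $g_Y$ (using $\P^n_Y\times_Y Y'=\P^n_{Y'}$ and the stability of flatness/transversality under base change) to give the analogous factorization of $f'$. This reduces the commutativity \eqref{1.7.12.1} to the three separate cases: $f$ is an open immersion (excision), $f$ is the projection $p$, or $f$ is a closed immersion $i$.

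The excision case is immediate from the naturality of $j_*^{-1}$ in Proposition~\ref{2.1.9}(2). For the projection case, Lemma~\ref{2.1.10}(1) shows that $p_*$ factors through the top direct image $R^np_*W_\bullet\Omega_{\P^n_Y}\cong W_\bullet\Omega_Y(-n)$, and one checks that this canonical isomorphism (from \cite[I, Cor.~4.1.12]{Gros}) is compatible with flat base change. In the transversal closed-immersion case for $g_Y$, one can further use Lemma~\ref{2.1.10}(2): pick any section $s$ of $p$ whose image lies outside the locus where $g_Y$ is not flat (or use that $p'_*$ is inverse to the pushforward by any section of $p'$) and deduce the compatibility from the already-established naturality of pullback (Proposition~\ref{2.1.3}).

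The main case is the closed immersion $i$. Working locally on $U$ we may assume the ideal of $X$ in $U$ is generated by a regular sequence $t_1,\ldots,t_c$. By Corollary~\ref{2.1.9.5}, $i_{*,n}$ is given by the explicit Koszul formula
\[
\genfrac{[}{]}{0pt}{}{\alpha}{[f]}\longmapsto (-1)^c\genfrac{[}{]}{0pt}{}{d[t]\tilde{\alpha}}{[t],[f]}.
\]
Under either hypothesis on $g_Y$, the pulled-back sequence $g_Y^*t_1,\ldots,g_Y^*t_c$ remains regular and cuts out $X'$ in the preimage of $U$ (for flatness this is standard, for the transversal closed-immersion case it is exactly what transversality asserts). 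Since $g_Y^*$ sends Teichm\"uller lifts to Teichm\"uller lifts and commutes with $d$, applying the same formula to $i'$ with the sequence $g_Y^*t_i$ shows that $g_Y^*\circ i_*$ and $i'_*\circ g_X^*$ coincide on the explicit generators. The main obstacle is verifying that the pullback $g_Y^*$ on local cohomology $\sH^c_Z(W_\bullet\Omega_Y)$ is indeed computed by the naive formula $\genfrac{[}{]}{0pt}{}{\alpha}{[t]^n}\mapsto\genfrac{[}{]}{0pt}{}{g_Y^*\alpha}{[g_Y^*t]^n}$; this is where the two hypotheses on $g_Y$ are essentially used, via the compatibility of the \v Cech / Koszul presentation of $\sH^c_Z$ of \ref{1.7.6} with flat or transversal base change. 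Once this is verified on the level of the explicit representatives, the statement for $\hat{\sH}$ follows formally by applying $R\varprojlim$.
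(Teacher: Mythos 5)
Your overall strategy matches the paper's: factor $f$ as a closed immersion into an open subset of $\P^n_Y$ followed by the projection, handle the open immersion by excision, and treat the closed immersion by the explicit Koszul formula of Proposition~\ref{1.7.9} / Corollary~\ref{2.1.9.5} after pulling the regular sequence back along $g_Y$ (which stays regular by either hypothesis). That part of your argument is essentially identical to the paper's Case~1.

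The gap is in the projection case. You assert that ``one checks that this canonical isomorphism $R^np_*W_\bullet\Omega_{\P^n_Y}\cong W_\bullet\Omega_Y(-n)$ is compatible with flat base change,'' but that isomorphism is, by Lemma~\ref{2.1.10}(2), nothing other than the pushforward $p_{Y/Y*}$ itself—so this assertion is essentially the statement you are trying to prove, not a self-evident property you can invoke. Likewise, your suggestion for the transversal case (``pick any section $s$ of $p$ whose image lies outside the locus where $g_Y$ is not flat'') is confused: $g_Y$ is a closed immersion, not a generically flat morphism, and the phrasing does not produce a usable argument. The paper closes this hole cleanly for both hypotheses at once: pick a section $i:Y\to X$ of $p$ with pullback $i':Y'\to X'$; then $g_X$ is automatically transversal to $i$, so Case~1 gives $g_X^*\circ i_*=i'_*\circ g_Y^*$, whence $(g_Y^*\circ p_* - p'_*\circ g_X^*)\circ i_* = g_Y^*\,p_*\,i_* - p'_*\,i'_*\,g_Y^* = 0$ by functoriality of pushforward ($p_*i_*=\mathrm{id}$); since $i_*$ is an isomorphism by Lemma~\ref{2.1.10}(2), the desired identity $g_Y^*\circ p__* = p'_*\circ g_X^*$ follows. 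You should replace your treatment of the projection case by this section argument (it works for flat $g_Y$ and for transversal closed immersions $g_Y$ alike); the rest of your proof then goes through.
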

In \cite[II, Prop. 2.3.2 and p.49 ]{Gros} a version of this Proposition (in the derived category) is proved in the case without supports with $f$ a closed immersion and $g_Y$ transversal to $f$.
(This case is also covered here by factoring $g_Y$ as a closed immersion followed by a smooth projection.)

\begin{proof} We distinguish two cases.

{\em 1. case: $f$ is a closed immersion and $g_Y$ is either flat or transversal to $f$.}  Since $\Phi$ and $\Phi'$ are also families of supports on $Y$ and $Y'$  the push-forward $f_*$ factors over $\sH((Y,\Phi)/S)$ and $f'_*$ factors over
$\sH((Y',\Phi')/S)$. Hence we may assume $\Psi=\Phi$ and $\Psi'=\Phi'$. Furthermore by Definition \ref{2.1.2}, the pull-back $g_X^*$ factors over $\sH((X', g_X^{-1}(\Phi))/S)$ and $g_Y^*$ factors over 
$\sH((Y',g_Y^{-1}(\Phi))/S)$.
Hence we may assume that in the diagram \eqref{1.7.12.1} we have $\Psi=\Phi$, $\Phi'=g_X^{-1}(\Phi)$, $\Psi'=g_Y^{-1}(\Phi)$. 
We may further assume, that $X,X',Y,Y'$ are equidimensional and set $c:=\dim Y-\dim X=\dim Y'-\dim X'$ and  $h:=f\circ g_X=g_Y\circ f'$.

We consider the following diagram in $D^+(\drw_Y)$
\eq{1.7.12.1.5}{\xymatrix{ Rh_*W_\bullet\Omega_{X'}\ar[r]^-{Rg_{Y*}(f'_*)}  & Rg_{Y*}R\ul{\Gamma}_{X'}W_\bullet\Omega_{Y'}(c)[c]\\
                           Rf_*W_\bullet\Omega_X\ar[u]^{Rf_*(g_X^*)}\ar[r]^-{f_*} & R\ul{\Gamma}_X W_\bullet\Omega_Y(c)[c]\ar[u]_{R\ul{\Gamma}_X(g_Y^*)},          }}
where the horizontal (resp. vertical) morphisms are induced by \eqref{derived-pf} (resp. \eqref{2.1.2.1}). By our assumption on the supports we obtain diagram
\eqref{1.7.12.1} if we apply $\oplus H^i(Ra_\Phi(-))$ (resp. $\oplus H^i(R\varprojlim Ra_\Phi(-))$) to diagram \eqref{1.7.12.1.5}, where $a: Y\to S$ is the structure morphism. 
Thus it suffices to show that \eqref{1.7.12.1.5} commutes.
By Proposition \ref{1.7.9} the diagram \eqref{1.7.12.1.5} is isomorphic to the outer square of the following diagram
\eq{1.7.12.2}{\xymatrix{  Rh_*W_\bullet\Omega_{X'}\ar[r]^-{Rg_{Y*}(f'_*)} & Rg_{Y*}\sH^c_{X'}(W_\bullet\Omega_{Y'})(c)\\
                          h_*W_\bullet\Omega_{X'}\ar[r]^-{g_{Y*}(f'_*)}\ar[u]^{\text{nat.}} & g_{Y*}\sH^c_{X'}(W_\bullet\Omega_{Y'})(c)\ar[u]_{\text{nat.}}\\
                  f_* W_\bullet\Omega_X\ar[u]^{g_X^*}\ar[r]^-{f_*} & \sH_X^c( W_\bullet\Omega_Y)(c).\ar[u]_{g_Y^*}}}
The upper square obviously commutes. Thus it suffices to check that the lower square of sheaves of de Rham-Witt systems commutes.
This may be checked locally and we may hence assume that the ideal sheaf of $X$ in $\sO_Y$ is generated by a regular sequence
$t= t_1, \ldots, t_c$. By our assumptions on  $g_Y$, the sequence $g_Y^*t= g_Y^*t_1, \ldots, g_Y^*t_c$ is again a regular sequence generating the ideal sheaf of $X'$ 
in $\sO_{Y'}$. Now the statement follows from the explicit formula \eqref{1.7.9.1}.

{\em 2. case: $X=\P^r_Y$ and $f:X\to Y$ is the projection map.} 
 In this situation we also have $X'=\P^r_{Y'}$ and $f':X'\to Y'$ is the projection map. The morphisms $f$ and $f'$ factor over $(X,f^{-1}(\Psi))\to\YP$ and  
$(X', {f'}^{-1}(\Psi'))\to (Y',\Psi')$, which are morphisms in  $(Sm_*/S)$. 
  Similar to the first case we conclude that it is enough to consider the case $\Phi=f^{-1}(\Psi)$, $\Psi'=g_Y^{-1}(\Psi)$ and $\Phi'= {f'}^{-1}(\Psi')$ . 
Now Lemma \ref{2.1.10}, (1) reduces us to show the commutativity of the following diagram (with $h=f\circ g_X=g_Y\circ f'$) 
\eq{1.7.12.5}{\xymatrix{ R^rh_*W_\bullet\Omega_{X'}\ar[rr]^-{H^0(Rg_{Y*}(f'_*))}  & & g_{Y*}W_\bullet\Omega_{Y'}(-r)\\
                         R^rf_*W_\bullet\Omega_X\ar[u]^{g_X^*}\ar[rr]^-{H^0(f_*)} & & W_\bullet\Omega_Y(-r).\ar[u]_{g_Y^*}         }   }
But now let $i: Y\to X$ be a section and $i': Y'\to X'$ its pull-back, viewed as morphisms over $Y$. We obtain push-forwards  
\[i_*:  W_\bullet\Omega_Y(-r)\to R^rf_*W_\bullet\Omega_X,\quad i'_*: g_{Y*}W_\bullet\Omega_{Y'}(-r)\to R^rh_*W_\bullet\Omega_{X'}.\]
And since $g_X$ is transversal to $i$ we may apply case 1. to obtain
\[(g_Y^*f_*-f'_*g_X^*)\circ i_*= g_Y^*f_*i_*- f'_*i'_*g_Y^*=0.\]
Thus the commutativity of \eqref{1.7.12.5} follows from Lemma \ref{2.1.10}, (2), which tells us that $i_*$ is an isomorphism (with inverse $f_*$).

{\em Proof in the general case.} Since $f: \XP\to \YP$ is quasi-projective we may factor it as follows
\[\XP\xr{i} (U,\Phi)\xr{j} (P,\Phi)\xr{\pi} \YP,\]
where $P=\P^r_Y$,  $i:\XP\to (U, \Phi)$ is a closed immersion, $j:(U,\Phi)\to (P,\Phi)$ is an open immersion and $\pi: (P,\Phi)\to \YP$ is the projection.
Notice that we use the properness of $f_{|\Phi}$ to conclude that $\Phi$ is also a family of supports on $U$ and on $P$ and that all the maps above
are in $(Sm_*/S)$. By base change we obtain a diagram
\[\xymatrix{(X',\Phi')\ar@/^2pc/[rrr]^{f'}\ar[r]^{i'}\ar[d]_{g_X} & (U_{Y'},\Phi')\ar[d]_{g_U}\ar[r]^{j'} & (P_{Y'},\Phi')\ar[r]^{\pi'}\ar[d]_{g_P} & (Y',\Psi')\ar[d]_{g_Y}\\  
             \XP\ar@/_2pc/[rrr]_f\ar[r]^{i}                     & (U_Y,\Phi)\ar[r]^{j}                 & (P,\Phi)\ar[r]^{\pi}                    & \YP,
           }\]
with the horizontal maps in $(Sm_*/S)$ and the vertical maps in $(Sm^*/S)$. Notice that if $g_Y$ satisfies condition (1) or (2) from the statement of the Proposition, then so does $g_U$.
By the functoriality of the push-forward the commutativity of the square \eqref{1.7.12.1} follows from the first two cases and the fact
that $j_*$ and $j'_*$ are just the excision isomorphisms by Proposition \ref{2.1.9}, (2). This proves the proposition.

\end{proof}

\section{Correspondences and Hodge-Witt cohomology}

\noindent In this section all schemes are assumed to be quasi-projective over $k$. We fix a $k$-scheme $S$.

\subsection{Exterior product}

\subsubsection{}\label{2.5.1} Let $X$ and $Y$ be two $k$-schemes and $M$ and $N$ two complexes of de Rham-Witt systems on $X$ and $Y$ respectively. Then for all $n\ge 1$ we set
\[M_n\boxtimes N_n:= p_1^{-1}M_n\otimes_\Z p_2^{-1}N_n\quad \text{as a complex of abelian sheaves on } X\times Y,\]
where $p_1: X\times Y\to X$ and $p_2: X\times Y \to Y$ are the two projection morphisms. It is obvious, that the maps
$\pi_M\otimes \pi_N$ make the family $\{M_n\boxtimes N_n\}_{n\ge 1}$ into a pro-complex of abelian sheaves on $X\times Y$, which we denote by
\eq{2.5.1.1}{M\boxtimes N.}

\subsubsection{Godement resolution}\label{2.5.2}
Let $X$ be a $k$-scheme. For a sheaf of abelian groups $\sA$ on $X$ we 
denote by $G(\sA)$ its Godement resolution. Then there is a natural way to equip the family $\{G(W_n\Omega_X)\}_{n\ge 1}$ with the structure of a complex of de Rham-Witt systems on $X$,
which we denote by $G(W_\bullet\Omega_X)$. It follows from the exactness of $G(-)$ and the surjectivity of the transition maps $W_{n+1}\Omega_X\to W_n\Omega_X$, that
each term $G^q(W_\bullet\Omega_X)$ is a flasque de Rham-Witt system (in the sense of \ref{flasque-dRW}). Thus
the natural augmentation map $W_\bullet\Omega_X\to G(W_\bullet\Omega_X)$, makes $G(W_\bullet\Omega_X)$ a flasque resolution of de Rham-Witt systems of $X$.

\subsubsection{Exterior product for Godement resolutions}\label{2.5.3}
Let $X$ and $Y$ be two $k$-schemes with families of supports.
 It follows from the general construction in \cite[II, \S 6. 1]{God}, that there is a morphism of {\em pro-complexes of abelian sheaves on $X\times Y$}
\[G(W_\bullet\Omega_X)\boxtimes G(W_\bullet\Omega_Y)\lra G(W_\bullet\Omega_X\boxtimes W_\bullet\Omega_Y).\]
Therefore, multiplication induces a morphism of complexes
\eq{2.5.3.1}{G(W_\bullet\Omega_X)\boxtimes G(W_\bullet\Omega_Y)\lra G(W_\bullet\Omega_{X\times Y}).}

\begin{definition}\label{2.5.4}
 Let $(X,\Phi)$ and $(Y,\Psi)$ be in $(Sm_*/S)$ and denote the structure maps by $a: X\to S$ and $b: Y\to S$. Then we define the exterior products
\[\times : \sH^i(\XP/S)\boxtimes \sH^j(\YP/S)\to \sH^{i+j}((X\times Y, \Phi\times \Psi)/S\times S),\]
\[\times : \hat{\sH}^i(\XP/S)\boxtimes \hat{\sH}^j(\YP/S)\to \hat{\sH}^{i+j}((X\times Y, \Phi\times \Psi)/S\times S)\]
as the composition of $H^{i+j}((a\times b)_{\Phi\times\Psi}\eqref{2.5.3.1})$ (resp. $H^{i+j}(\widehat{(a\times b)}_{\Phi\times\Psi} \eqref{2.5.3.1})$ )
with the natural map 
\mlnl{\sH^i(\XP/S)\boxtimes \sH^j(\YP/S) \to H^{i+j}((a\times b)_{\Phi\times\Psi}( G(W_\bullet\Omega_X)\boxtimes G(W_\bullet\Omega_Y) ) )}
(resp. with $(\hat{\phantom{-}})$ ), where we use $\sH^i(\XP/S)= H^i(a_\Phi G(W_\bullet\Omega_X))$, etc.
\end{definition}

\begin{lemma}\label{2.5.5}
 Let $X=\Spec A$ and $Y=\Spec B$ be two smooth affine $k$-schemes and $Z_X\subset X$, $Z_Y\subset Y$ closed subsets.
Assume there are sections $s_1,\ldots,s_i\in A$ and $t_1,\ldots, t_j\in B$ whose vanishing set equals $Z_X$ and $Z_Y$ respectively.
Then for all $n\ge 1$ and $q, r\ge 0$ the morphism
\[\times: H^i_{Z_X}(X, W_n\Omega^q_X)\otimes_\Z H^j_{Z_Y}(Y, W_n\Omega^r_Y)\to H^{i+j}_{Z_X\times Z_Y}(X\times Y, W_n\Omega^{q+r}_{X\times Y}) \]
induced by Definition \ref{2.5.4} is given by
\[\genfrac{[}{]}{0pt}{}{\alpha}{[s_1],\ldots ,[s_i]}\times \genfrac{[}{]}{0pt}{}{\beta}{[t_1],\ldots, [t_j]}= \genfrac{[}{]}{0pt}{}{p_1^*\alpha\cdot p_2^*\beta}{p_1^*[s_1],\ldots, p_1^*[s_i], p_2^*[t_1],\ldots, p_2^*[t_j]}, \]
where we use the notation of \ref{1.7.6}, $\alpha\in W_n\Omega_A^q$, $\beta\in W_n\Omega^r_B$ and $p_1: X\times Y\to X$ and $p_2: X\times Y\to Y$ are the two projection maps. 
\end{lemma}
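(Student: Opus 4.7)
The strategy is to identify the Godement-resolution definition of the exterior product with a Čech-theoretic cup product for well-chosen affine open covers, where the formula becomes transparent. The key point is that local cohomology with support in the vanishing locus of a regular sequence admits a double description — via the Koszul/fraction formalism of \ref{1.7.6} and via Čech cohomology of the natural affine cover of the complement — and the exterior product respects the latter.

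First I would reduce to the structure-sheaf case. Since the exterior product is $W_n$-bilinear and since $\genfrac{[}{]}{0pt}{}{\alpha}{[s]}=\alpha\cdot\genfrac{[}{]}{0pt}{}{1}{[s]}$ by \ref{1.7.6} (3) (viewing $H^i_{Z_X}(X,W_n\Omega^q_X)$ as a module over $\Gamma(X,W_n\Omega^q_X)$), it suffices to prove the formula for $\alpha=1$, $\beta=1$ in degrees $q=r=0$ and then multiply by $p_1^*\alpha\cdot p_2^*\beta$ afterwards. By Remark \ref{1.7.8} the lifted sequences $[s]=[s_1],\ldots,[s_i]$ and $[t]=[t_1],\ldots,[t_j]$ remain regular in $W_n A$ and $W_n B$, and their concatenation is regular in $\Gamma(W_n(X\times Y),W_n\OO_{X\times Y})$, so all relevant fractions are defined.

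Next I would pass to Čech cohomology. Since $|W_nX|=|X|$ and $X\setminus Z_X$ is covered by the affine opens $D([s_k])$, there is a canonical isomorphism
\[H^i_{Z_X}(X,W_n\OO_X)\cong \check{H}^{i-1}(\{D([s_k])\}_k,W_n\OO_X)\]
under which the class $\genfrac{[}{]}{0pt}{}{1}{[s]}$ corresponds to the top Čech cochain $1/([s_1]\cdots[s_i])$ on the multi-intersection $D([s_1])\cap\ldots\cap D([s_i])$ (this is the content of \ref{1.7.6} together with \cite[Exp.~II, Prop.~5]{SGA2}); the analogous description holds for $Y$. The product cover $\{D(p_1^*[s_k])\}_k\cup\{D(p_2^*[t_\ell])\}_\ell$ covers $(X\times Y)\setminus (Z_X\times Z_Y)$, and the Čech cup product sends the two top cochains to the top cochain $1/(p_1^*[s_1]\cdots p_1^*[s_i]\cdot p_2^*[t_1]\cdots p_2^*[t_j])$, which by \ref{1.7.6} represents exactly the right-hand side of the lemma.

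Finally I would verify that the exterior product of Definition \ref{2.5.4} agrees with this Čech cup product. The natural map from the Čech resolution attached to an affine open cover into the Godement resolution is a quasi-isomorphism and is multiplicative for tensor products: both constructions are instances of the general framework of \cite[II, \S 6]{God}, and the multiplication morphisms \eqref{2.5.3.1} descend from the same universal multiplicative structure on flasque resolutions of product sheaves. Consequently both realizations of the exterior product coincide on cohomology, and multiplying through by $p_1^*\alpha\cdot p_2^*\beta$ via \ref{1.7.6} (3) yields the asserted formula. The main obstacle is precisely this last compatibility of multiplicative structures between Čech and Godement resolutions; once that is in hand, the explicit computation on top Čech cochains is essentially formal.
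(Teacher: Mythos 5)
Your proposal takes a genuinely different route from the paper. The paper works directly with Koszul complexes: it constructs an explicit morphism of complexes $\epsilon\colon K_A\otimes_{\Z} K_B \to K_C$ (using combinatorics of tuples $L$, $L^{\le i}$, $L^{>i}$), identifies its degree-$0$ and top-degree components, and verifies that the comparison square \eqref{2.5.5.1} between Koszul cohomology and the Godement-defined exterior product commutes — the commutativity being the "straightforward calculation" the paper does not spell out. You instead reduce to $\alpha=\beta=1$ and $q=r=0$, transport everything into Čech cohomology, compute the Čech cup product of top cochains, and invoke the compatibility of Čech and Godement multiplicative structures. The gain of your approach is that it outsources the core verification to a known compatibility theorem (Godement vs.\ Čech cup products, which is indeed in \cite{God} and is in the same spirit as the citation of \cite[II, Thm 6.2.1]{God} in Proposition \ref{2.5.7}); the cost is that it introduces several auxiliary compatibilities that each require their own care.

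Two of those compatibilities are genuine gaps in the sketch. First, the reduction step. You write $\genfrac{[}{]}{0pt}{}{\alpha}{[s]}=\alpha\cdot\genfrac{[}{]}{0pt}{}{1}{[s]}$ using \ref{1.7.6}(3), which is a statement about the Koszul/fraction formalism, and then pull $p_1^*\alpha\cdot p_2^*\beta$ out of the Godement-based exterior product. For that to be legitimate you need to know that the $\Gamma(X,W_n\sO_X)$-module action of \ref{1.7.6}(3) on $H^i_{Z_X}$ is realized by the Godement cup product — i.e., that the element of $H^i_{Z_X}(X,W_n\Omega^q_X)$ corresponding to $\alpha\otimes\genfrac{[}{]}{0pt}{}{1}{[s]}$ under \ref{1.7.6}(3) equals the cup product $\alpha\cup\genfrac{[}{]}{0pt}{}{1}{[s]}$ in the same sense as Definition \ref{2.5.4}. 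This is a nontrivial compatibility of the same flavour as the lemma itself, and your argument assumes it rather than proving it; note also that the Godement product carries Koszul signs in nonzero degree while \ref{1.7.6}(3) is formulated sign-free, so an explicit check is needed. Second, the Čech identification is stated too loosely: the isomorphism $H^i_{Z_X}(X,W_n\sO_X)\cong\check H^{i-1}(\{D([s_k])\},W_n\sO_X)$ fails for $i=1$ (where one gets a cokernel), and in any case the mechanism \eqref{1.7.6.1} underlying the fraction notation is the directed limit of Koszul complexes over powers $[s]^m$, not the Čech complex; passing from one to the other is standard but carries a sign and a normalization that one must pin down precisely to land on the stated formula (with no sign). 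Closing these two points would bring your argument to the same level of rigour as the paper's, but as written they are assertions, not proofs.
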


\begin{proof}
 We denote by $C$ the tensor product $A\otimes_k B$ and by $K_A$, $K_B$ and $K_C$ the respective Koszul complexes (see \ref{1.7.6}) $K^\bullet([t], W_n\Omega_A^q)$, $K^\bullet([s], W_n\Omega_B^r)$
and $K^\bullet(p_1^*[t], p_2^*[s], W_n\Omega^{q+r}_C)$. 
For any ordered tuple $L=(l_1<\ldots < l_\xi)$ of natural numbers $l_\nu\in [1, i+j]$ we denote  $\sigma(L):=\max\{\nu\in [0,\xi]\,|\, l_\nu\le i\}$, where we set $l_0:=0$. 
Notice that $\sigma(L)=0$ iff $l_1>i$ or $L=\emptyset$.
We define
\[L^{\le i}=\begin{cases}(l_1<\ldots < l_{\sigma(L)}), & \text{ if } \sigma(L)\in [1, \xi], \\ \emptyset, &\text{ if } \sigma(L)=0, \end{cases}\]
\[L^{>i}=\begin{cases} ((l_{\sigma(L)+1}-i)< \ldots < (l_\xi-i)), &\text{ if } \sigma(L)< \xi,\\
                                \emptyset, & \text{ if } \sigma(L)=\xi\text{ or } L=\emptyset.
               \end{cases}     \]
Thus the natural numbers appearing in the tuple $L^{\le i}$ are contained in $[1,i]$, whereas those appearing in $L^{\ge i}$ are contained in $[1,j]$.

Let $\{e_1,\ldots e_i\}$, $\{f_1,\ldots, f_j\}$ and $\{h_1,\ldots, h_{i+j}\}$ be the standard bases of $W_n(A)^i$, $W_n(B)^j$ and $W_n(C)^{i+j}$, respectively.
If $L=(l_1<\ldots <l_\xi)$ is an ordered tuple with $l_\nu\in [1, i]$, then we write $e_L= e_{l_1}\wedge \ldots \wedge e_{l_\xi}\in \bigwedge^\xi W_n(A)^i$.
In case $L$ is the emptyset we define $e_L$ to be $1\in W_n(A)$. Similarly we define the notations $f_L$ and $h_L$. 

With this notation we can define the morphism
\mlnl{\epsilon:\bigoplus_{\mu+\nu=\xi}\left(\Hom_{W_n(A)}(\bigwedge^\mu W_n(A)^i, W_n\Omega_A^q)\times \Hom_{W_n(B)}(\bigwedge^\nu W_n(B)^j, W_n\Omega_B^r)\right) \\ 
                \lra \Hom_{W_n(C)}(\bigwedge^{\xi} W_n(C)^{i+j}, W_n\Omega_C^{q+r}),}
by sending a tuple of pairs of morphisms $\oplus_{\mu+\nu=\xi}(\varphi^\mu, \psi^\nu)$ to the morphism, which is uniquely determined by
\[h_L\mapsto p_1^*\varphi^{\sigma(L)}(e_{L^{\le i}})\cdot p_2^*\psi^{\xi-\sigma(L)}(f_{L^{>i}}).\] 
It is straightforward to check, that $\epsilon$ induces a morphism of complexes $K_A\otimes_\Z K_B\to K_C$, which in degree zero equals
\[\epsilon^0: W_n\Omega^q_A\otimes_\Z W_n\Omega^r_B\to W_n\Omega^{q+r}_C,\quad \alpha\otimes \beta\mapsto p_1^*\alpha\cdot p_2^*\beta\]
and in degree $i+j$ is given by
\[\epsilon^{i+j}: K^i_A\otimes_\Z K^j_B\to K^{i+j}_C, \quad \epsilon^{i+j}(\varphi\otimes \psi)(h_{[1, i+j]})= p_1^*\varphi(e_{[1,i]})\cdot p_2^*\psi(f_{[1,j]}).\] 
By the very definition of the symbols $\genfrac{[}{]}{0pt}{}{\alpha}{[s_1],\ldots, [s_i]}$, etc. in \ref{1.7.6} it remains to show, that the following diagram commutes:
\eq{2.5.5.1}{\xymatrix{H^i(K_A)\otimes H^j(K_B)\ar[r]^{\text{nat.}}\ar[d]^{\eqref{1.7.6.1}} & H^{i+j}(K_A\otimes K_B)\ar[r]^{\epsilon} & H^{i+j}(K_C)\ar[d]^{\eqref{1.7.6.1}}\\
                       H^i_{Z_X}\otimes H^j_{Z_Y}\ar[rr]^\times & & H^{i+j}_{Z_X\times Z_Y},    }}
where we abbreviate $H^i_{Z_X}=H^i_{Z_X}(X, W_n\Omega^q_X)$, etc. This is a  
straightforward calculation.
\end{proof}

\subsubsection{Exterior product for Cousin complexes}\label{2.5.6}
Let $X$ and $Y$ be smooth $k$-schemes and denote by  $Z^\bullet_X$ and $Z^\bullet_Y$ their respective filtrations by codimension. Then we obtain from \ref{2.5.2} a natural morphism
\[\frac{\ul{\Gamma}_{Z_X^a}G(W_\bullet\Omega_X)\boxtimes \ul{\Gamma}_{Z_Y^b}G(W_\bullet\Omega_Y) }{\ul{\Gamma}_{Z_X^{a+1}}\boxtimes\ul{\Gamma}_{Z_Y^b}+ \ul{\Gamma}_{Z_X^a}\boxtimes\ul{\Gamma}_{Z_Y^{b+1}}}
  \to \frac{\ul{\Gamma}_{Z_{X\times Y}^{a+b}}G(W_\bullet\Omega_{X\times Y})}{\ul{\Gamma}_{Z_{X\times Y}^{a+b+1}}G(W_\bullet\Omega_{X\times Y})}, \]
where we abbreviate the denominator on the left in the obvious way. With the notation from \ref{CousinWitt}, there is an obvious morphism from 
$$\sH^i_{Z^a_X/Z^{a+1}_X}(W_\bullet\Omega_X)\boxtimes \sH^j_{Z^b_X/Z^{b+1}_Y}(W_\bullet\Omega_Y)$$
to the $(i+j)$-th cohomology of the left hand side.
In composition we obtain a morphism
\[\sH^i_{Z^a_X/Z^{a+1}_X}(W_\bullet\Omega_X)\boxtimes \sH^j_{Z^b_Y/Z^{b+1}_Y}(W_\bullet\Omega_Y)\to \sH^{i+j}_{Z^{a+b}_{X\times Y}/Z^{a+b+1}_{X\times Y}}(W_\bullet\Omega_{X\times Y}),\]
which is compatible with Frobenius. It is straightforward to check that this pairing induces a morphism of {\em pro-complexes of sheaves of $W$-modules on $X\times Y$}
\eq{2.5.6.1}{E(W_\bullet\Omega_X)\boxtimes E(W_\bullet\Omega_Y)\to E(W_\bullet\Omega_{X\times Y}),}
where $E(-)$ denotes the Cousin complex with respect to the codimension filtration (see \ref{CousinWitt}).

\begin{proposition}\label{2.5.7}
The exterior products defined above satisfy the following properties:
\begin{enumerate}
 \item The exterior products (for $\sH$ and $\hat{\sH}$) can also be calculated by using the morphism \eqref{2.5.6.1} in Definition \ref{2.5.4} instead of the morphism \eqref{2.5.3.1}.
 \item The exterior products are associative.
 \item Let $f:\XP\to \YP$ and $f': (X', \Phi')\to (Y',\Psi')$ be two morphisms in $(Sm^*/S)$ and $\alpha\in \sH(\YP/S)$, $\alpha'\in\sH((Y', \Psi')/S)$, then
          \[(f^*\alpha)\times ({f'}^*\alpha')= (f\times f')^*(\alpha\times \alpha')\quad \text{in } \sH((X\times X', \Phi\times\Phi')/S\times S).\]
       Similar with $\hat{\sH}$.
 \item Let $f:\XP\to \YP$ be a morphism in $(Sm_*/S)$ and $\alpha\in \sH(\XP/S)$, $\alpha'\in\sH((X', \Phi')/S)$, then
        \[(f_*\alpha)\times \alpha'= (f\times \id_{X'})_*(\alpha\times \alpha')\quad \text{in } \sH((Y\times X', \Psi\times\Phi')/S\times S).\]
Similar with $\hat{\sH}$. 
 \item Let $\XP$ and $\YP$ be smooth $k$-schemes over $S$ with families of supports and $\alpha\in \sH^{i,q}(\XP/S)$, $\beta\in \sH^{j,r}(\YP/S)$. 
       Then (similar properties hold for $\hat{\sH}$):
       \begin{enumerate}
          \item The switching isomorphism 
                     \[\sH((X\times Y, \Phi\times \Psi)/S\times S)\cong \sH((Y\times X, \Psi\times \Phi)/S\times S)\]
                sends $\alpha\times \beta$ to $(-1)^{qr+ij}\beta\times\alpha$.
          \item \[F(\alpha)\times F(\beta)=F(\alpha\times\beta),\quad \pi(\alpha)\times \pi(\beta)=\pi(\alpha\times \beta),\]
                \[d(\alpha\times\beta)= (d \alpha)\times \beta+ (-1)^q \alpha\times d(\beta),\]
                for $\alpha,\beta\in \sH_n$.
          \item \[V(\alpha)\times\beta=V(\alpha\times F(\beta)),\quad \ul{p}(\alpha)\times \beta= \ul{p}(\alpha\times \pi(\beta))\]
                for $\alpha\in \sH_n$, $\beta\in\sH_{n+1}$.
          \end{enumerate}
\end{enumerate}
\end{proposition}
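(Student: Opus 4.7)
The plan is to treat the five items more or less independently, in roughly the stated order, reducing as much as possible to naturality of the Godement resolution and to the explicit local description provided by Lemma \ref{2.5.5} and Proposition \ref{1.7.9}.

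For (1), observe that both $E(W_\bullet\Omega_X)$ and $G(W_\bullet\Omega_X)$ are flasque resolutions of $W_\bullet\Omega_X$ in $\drw_X$ (the first by Lemma \ref{drw-is-CM}, the second by \ref{2.5.2}). Hence there is a morphism of complexes of de Rham-Witt systems $E(W_\bullet\Omega_X)\to G(E(W_\bullet\Omega_X))$ which is itself a flasque resolution, and by the projective nature of the Godement construction the augmentation $E(W_\bullet\Omega_X)\to G(W_\bullet\Omega_X)$ lifts uniquely up to homotopy to a morphism between these two resolutions. Applying the boxed product and comparing with \eqref{2.5.3.1} and \eqref{2.5.6.1}, one sees that both pairings agree after taking cohomology of $a_\Phi(-)\boxtimes b_\Psi(-)$, giving (1). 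For (2), associativity of the exterior product reduces to associativity of multiplication in $W_\bullet\Omega_{X\times Y\times Z}$ together with the fact that the natural map $G(A)\otimes G(B)\otimes G(C)\to G(A\otimes B\otimes C)$ from \cite{God} is associative.

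For (3), functoriality of the Godement resolution gives a natural map $(f\times f')^{-1}G(W_\bullet\Omega_{Y\times Y'})\to G(W_\bullet\Omega_{X\times X'})$ and the assertion then follows from compatibility with $(-)\boxtimes(-)$. For (5), the sign in (a) comes from the Koszul rule: the switching isomorphism on $p_1^{-1}M\otimes p_2^{-1}N$ introduces a sign $(-1)^{ij}$ from the tensor of complexes together with a sign $(-1)^{qr}$ from graded-commutativity of $W_\bullet\Omega_{X\times Y}$. The identities in (b) follow because $F,\pi,d$ are morphisms of de Rham-Witt systems and the product \eqref{2.5.3.1} is a morphism of such (the Leibniz rule being the differential-graded Leibniz in $W_\bullet\Omega_{X\times Y}$). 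The relations in (c) are the usual projection formulas $V(\alpha)\beta=V(\alpha F\beta)$ and $\ul p(\alpha)\beta=\ul p(\alpha\pi\beta)$ on $W_\bullet\Omega$, applied to $p_1^*\alpha\cdot p_2^*\beta$.

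The main obstacle is (4), the projection formula for the pushforward. Using the factorization of any morphism in $(Sm_*/S)$ into a closed immersion, an open immersion, and a projection $\P^r_Y\to Y$ (as in the proof of Proposition \ref{1.7.12}), and using the functoriality of both push-forward and exterior product together with the excision part of Proposition \ref{2.1.9}, I would reduce the assertion to the two cases where $f$ is a regular closed immersion, respectively the structure map of a projective bundle. In the first case, assuming $X\hookrightarrow Y$ is cut out by a regular sequence $t_1,\ldots,t_c$, Proposition \ref{1.7.9} gives $f_*(\alpha)=(-1)^c[d[t]\tilde\alpha/[t]]$, the product $f\times\id_{X'}$ is cut out by the pulled-back sequence $p_1^*t_1,\ldots,p_1^*t_c$, and then Lemma \ref{2.5.5} applied locally shows that both sides of the claimed identity are the class $(-1)^c[p_1^*(d[t]\tilde\alpha)\cdot p_2^*\tilde\beta/p_1^*[t],\text{etc.}]$; global compatibility is obtained by the usual gluing argument as in the proof of Proposition \ref{1.7.9}. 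For the projective bundle case one uses Lemma \ref{2.1.10} to identify $f_*$ with the isomorphism $R^rf_*W_\bullet\Omega_X\cong W_\bullet\Omega_Y(-r)$, after which the projection formula reduces to the K\"unneth-type decomposition already used in the proof of Lemma \ref{2.1.10}(2). Once these two cases are established, the factorization together with (3) and Proposition \ref{2.1.9}(2) yields (4) in general.
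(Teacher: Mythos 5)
Your overall plan mirrors the paper's proof: prove (1) via a comparison between the Godement and Cousin descriptions, handle (2), (3), (5) by naturality plus properties of the multiplication on $W_\bullet\Omega$, and reduce (4) by the quasi-projective factorization to explicit local calculations. However, two of your steps have genuine gaps.

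\textbf{Item (1).} You observe that both $E(W_\bullet\Omega_X)$ and $G(W_\bullet\Omega_X)$ are flasque resolutions and propose to lift the augmentation to a comparison map of resolutions. But the point of (1) is not merely that the two resolutions are quasi-isomorphic: one has to show that the comparison map intertwines the two product maps \eqref{2.5.3.1} and \eqref{2.5.6.1} (up to homotopy compatible with supports). This compatibility of products with the comparison of resolutions is exactly the content of the cited theorem \cite[II, Thm.~6.2.1]{God}; your sketch asserts the conclusion (``one sees that both pairings agree'') rather than proving it. Also, your phrase ``by the projective nature of the Godement construction the augmentation $E(W_\bullet\Omega_X)\to G(W_\bullet\Omega_X)$ lifts'' is not well-formed: the Godement construction is injective/flasque in character, and there is no ``augmentation'' $E\to G$ to lift. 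The clean route here is to invoke Godement's theorem as the paper does, or to carry out the homotopy-compatibility argument explicitly.

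\textbf{Item (4), projection case.} Your treatment of the projective bundle $f:\P^r_Y\to Y$ is too vague. Identifying $f_*$ with the isomorphism $f_{Y/Y*}:R^rf_*W_\bullet\Omega_X\cong W_\bullet\Omega_Y(-r)$ does not ``reduce the projection formula to the K\"unneth-type decomposition'': one still has to show that this identification commutes with the exterior product by a fixed $\alpha'$. The effective strategy (the one the paper uses, following the proof of Proposition~\ref{1.7.12}, case 2) is to reduce the projection case to the closed-immersion case via a section $i:Y\to \P^r_Y$: using Lemma~\ref{2.1.10}(2), $f_{Y/Y*}\circ i_*=\id$, so if the projection formula is already known for the regular closed immersion $i$ and one uses functoriality of pushforward, the formula for $f$ follows. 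Without this reduction the projection case is not established by your argument. For the closed-immersion case itself, the comparison is done degree-by-degree on the Cousin complex, so one works with the local cohomology groups $H^i_{Z_X}(X,W_n\Omega^q_X)$ and the supported push-forward formula of Corollary~\ref{2.1.9.5}, not merely Proposition~\ref{1.7.9}; your formula with the extra denominators is on the right track but the relevant citation is Corollary~\ref{2.1.9.5}.

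Items (2), (3), (5) as you outline them are fine and match the paper's one-line appeal to \cite[II, 6.5 (b), (d), (e)]{God} combined with the corresponding properties of the multiplication on $W_\bullet\Omega_{X\times Y}$.
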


\begin{proof}
 (1) follows from \cite[II, Thm. 6.2.1]{God}. (2), (3) and (5) easily follow from the properties \cite[II, 6.5, (b), (d), (e)]{God} and the corresponding properties
of the multiplication map $W_\bullet\Omega_X\boxtimes W_\bullet\Omega_Y\to W_\bullet\Omega_{X\times Y}$. It remains to check (4).
Since $f$ is quasi-projective we may factor it as the composition of a regular closed immersion followed by an open
embedding into a projective space over $Y$ followed by the projection to $Y$. Using similar arguments as in the proof of Proposition \ref{1.7.12} (together with Lemma \ref{2.1.10}),
we can reduce to the case that $\Phi=\Psi$ and that $f: (X, \Phi)\inj (Y,\Phi)$ is a regular closed immersion of pure codimension $c$. By (1) it thus suffices to show, that the following diagram commutes
\[\xymatrix{f_*(E(W_\bullet\Omega_X))\boxtimes E(W_\bullet\Omega_{X'})\ar[d]_{f_*\boxtimes \id}\ar[r]^-{\eqref{2.5.6.1}} &  E(W_\bullet\Omega_{X\times X'})\ar[d]^{(f\times\id_{X'})_*}\\
            (\ul{\Gamma}_{X}(E(W_\bullet\Omega_Y))(c)[c])\boxtimes E(W_\bullet\Omega_{X'})\ar[r]^-{\eqref{2.5.6.1}} &  \ul{\Gamma}_{X\times X'}E(W_\bullet\Omega_{Y\times X'})(c)[c],    }\]
where we denote by $f_*$ and $(f\times\id_{X'})_*$ the morphism defined in \eqref{2.1.6.1}. The question is local and we may therefore assume $X=\Spec A$, $X'=\Spec A'$ and $Y=\Spec B$ are affine
and that the ideal of $Y$ in $X$ is generated by a regular sequence $\tau_1,\ldots, \tau_c$. Furthermore we may check the commutativity in each degree separately  and thus by
the description of the terms of the Cousin complex in \ref{CousinWitt}, (2), we are reduced to show the commutativity of the following diagram:
\[\xymatrix{H^i_{Z_X}(X, W_n\Omega^q_X)\otimes H^i_{Z_{X'}}(X', W_n\Omega^q_{X'})\ar[d]_{f_*\otimes \id}\ar[r]^-\times & H^{i+j}_{Z_X\times Z_{X'}}(X\times X', W_n\Omega_{X\times X'}^{q+r})\ar[d]^{(f\times \id)_*}\\
            H^{i+c}_{Z_X}(Y, W_n\Omega^{q+c}_{Y\times X'})\otimes H^i_{Z_{X'}}(X', W_n\Omega^q_{X'})\ar[r]^-\times & 
                                                                  H^{i+j+c}_{Z_X\times Z_{X'}}(Y\times X', W_n\Omega_{Y\times X'}^{q+r+c}),  }\]
where $Z_X\subset X$ and $Z_{X'}\subset X'$ are integral closed subschemes of codimension $i$ and $j$ respectively. But this follows from the explicit formulas in 
Corollary \ref{2.1.9.5} and Lemma \ref{2.5.5}.
\end{proof}

\begin{remark}
 Notice that in the situation of (4) above the two elements  $\alpha'\times (f_*\alpha)$ and $(\id_{X'}\times f)_*(\alpha'\times \alpha)$ are in general only equal up to a sign.
This is the reason for the sign in \eqref{formula-KT}. 
\end{remark}

\subsection{The cycle class of Gros}\label{Gros-results}
We review some results of \cite{Gros}. 
\subsubsection{Witt log forms} Let $X$ be a smooth $k$ scheme. We denote by 
\[W_n\Omega^q_{X, \text{log}}\]
the abelian sheaf on the small \'etale  site $X_{\text{\'et}}$ of $X$ defined in \cite[I, 5.7]{IlDRW}; it is the abelian subsheaf of $W_n\Omega_{X_{\text{\'et}}}^q$ which is locally generated by
sections of the form 
\[\frac{d[x_1]}{[x_1]}\cdots \frac{d[x_q]}{[x_q]},\quad\text{with } x_1,\ldots, x_d\in \sO^\times_{X_{\text{\'et}}}.\]
By definition $W_n\Omega^0_{X,\text{log}}=\Z/p^n\Z$.

\subsubsection{The cycle class}\label{2.6.2}
Let $X$ be a smooth $k$-scheme and $Z\subset X$ a closed integral subscheme of codimension $c$. Then
by \cite[II, (4.1.6)]{Gros} the restriction map 
\eq{2.6.2.1}{H^c_Z(X_{\text{\'et}}, W_n\Omega^c_{X,\text{log}})\xr{\simeq}H^c_{Z\setminus Z_{\text{sing}}}((X\setminus Z_{\text{sing}})_{\text{\'et}}, W_n\Omega^c_{X,\text{log}})}
 is an isomorphism (where $Z_\text{sing}$ denotes the singular locus of $Z$). Therefore  following Gros we can define the {\em log-cycle class of $Z$ of level $n$} 
\[cl_{\text{log},n}(Z)\in H^c_Z(X_{\text{\'et}}, W_n\Omega_{X,\text{log}}^c)\]
as the image of $1$ under the Gysin isomorphism (\cite[II, Thm 3.5.8 and (3.5.19)]{Gros})
\eq{2.6.2.2}{H^0((Z\setminus Z_{\text{sing}})_{\text{\'et}}, \Z/p^n\Z)\xr{\simeq}H^c_{Z\setminus Z_{\text{sing}}}((X\setminus Z_{\text{sing}})_{\text{\'et}}, W_n\Omega^c_{X,\text{log}}) }
composed with the inverse of \eqref{2.6.2.1} (see \cite[II, Def. 4.1.7.]{Gros}). We define the {\em cycle class of $Z$ of level $n$}  
\[cl_n(Z)\in H^c_Z(X, W_n\Omega_X)\]
to be the image of $cl_{\text{log},n}(Z)$ under the natural morphism $H^c_Z(X_{\text{\'et}}, W_n\Omega_{X,\text{log}}^c)\to H^c_Z(X, W_n\Omega_X^c)$.
We have 
\[\pi(cl_{\text{log}, n}(Z))= cl_{\text{log}, n-1}(Z),\quad \pi(cl_ n(Z))= cl_{n-1}(Z). \]
Therefore $(cl_{\text{log},n}(Z))_n$ (resp. $(cl_n(Z))_n$) give rise to an element in the projective system $H^c_Z(X_{\text{\'et}}, W_\bullet\Omega^c_{X,\text{log}})$ (resp. $H^c_Z(X, W_\bullet\Omega^c_X)$),
which we simply denote by $cl_{\text{log}}(Z)$ (resp. $cl(Z)$).

Notice that we have an isomorphism
\eq{2.6.2.3}{H^c_Z(X, W\Omega^c_X)\cong \varprojlim H^c_Z(X, W_n\Omega^c_X).}
(Indeed by Proposition \ref{derived-functors-exist}, (5) we have a spectral sequence 
\[R^i\varprojlim H^j_Z(X, W_n\Omega^c_X)\Longrightarrow H^{i+j}_Z(X, W\Omega^c_X),\]
 which by Lemma \ref{vanishing-of-Rlim}, (1)
gives a short exact sequence
\[0\to R^1\varprojlim H^{c-1}_Z(X, W_n\Omega_X^c)\to H^c_Z(X, W\Omega_X^c)\to \varprojlim H^c_Z(X, W_n\Omega^c_X)\to 0.\]
But by Lemma \ref{drw-is-CM} $H^{c-1}_Z(X, W_n\Omega_X^c)$ equals $0$ for all $n$. Hence \eqref{2.6.2.3}.)
We define
\[\hat{c}l(Z)\in H^c_Z(X, W\Omega^c_X)\]
as the image of $\varprojlim_n cl_n(Z)$ via \eqref{2.6.2.3}.

\subsubsection{Properties of the cycle classes}\label{2.6.3}
The above cycle classes have the following properties (we only list them for $cl$ there are analogous properties for $cl_{\text{log}}$ and $\hat{c}l$):
\begin{enumerate}
 \item Let $Z\subset X$ be a closed integral subscheme of codimension $c$  and $U\subset X$ be open such that $Z\cap U$ is smooth. Denote by $i: Z\cap U\inj (U, Z\cap U)$ the morphism in $(Sm_*/\Spec k)$ which is induced
by the inclusion of $Z$ in $X$ and by $j:(U, Z\cap U)\inj (X, Z)$ in $(Sm^*/\Spec k)$ the morphism induced by the open immersion $U\subset X$. Then
     \[j^*cl(Z)=cl(Z\cap U)= i_*(1) \quad \text{in } H^c_Z(X, W_\bullet\Omega^c_X),\]
    where $1$ is the multiplicative unit in the pro-ring $H^0(Z\cap U, W_\bullet\sO_{Z\cap U})$.
\item Let $Z$ and $Z'$ be two closed integral subschemes of $X$ of codimension $c$ and $c'$ respectively intersecting each other properly (i.e. each irreducible component of $Z\cap Z'$ has codimension $c+c'$ in $X$).
      Then
       \[cl(Z.Z')= \Delta^*(cl(Z)\times cl(Z'))\quad \text{in } H^{c+c'}_{Z\cap Z'}(X, W_\bullet\Omega^{c+c'}). \]
   Here $Z.Z'$ is the intersection cycle $\sum_T n_T [T]$, where the sum is over the irreducible components of $Z\cap Z'$ and $n_T$ are the intersection multiplicities (computed via Serres tor-formula),
     $cl(Z.Z')$ is defined to be $\sum_T n_i cl(T)$ and $\Delta: (X, Z\cap Z')\to (X\times X, Z\times Z')$ in $(Sm^*/\Spec k)$ is induced by the diagonal morphism.
\item For a line bundle $\sL$ on $X$ we denote by $c_1(\sL)_{\text{log}}\in H^1(X_{\text{\'et}}, W_\bullet\Omega^1_{X,\text{log}})$ the sequence of elements $(dlog_n([\sL]))_n$, where
      $[\sL]$ denotes the class of $\sL$ in $H^1(X, \sO^\times_X)=H^1(X_{\text{\'et}}, \G_m)$ and $dlog_n$ is induced by taking $H^1$ of the map $\G_m\to W_n\Omega^1_{X,\text{log}}$, $a\to dlog [a]$.
      We denote by $c_1(\sL)$ the image of $c_1(\sL)_{\text{log}}$ in $H^1(X, W_\bullet\Omega^c_X)$. 

      Let $D\subset X$ be an integral subscheme of codimension 1, then
       \[cl(D)= c_1(\sO(D))\quad \text{in } H^1(X, W_\bullet\Omega^1_X).\]
\item Let $Z$ be as in (1), then:
\[F(cl(Z))=\pi(cl(Z)),\quad V(cl(Z))=\ul{p}(cl(Z)), \quad d( cl(Z))=0.\]
\end{enumerate}
(1) follows from the fact, that the Gysin morphism \eqref{2.6.2.2} is induced by the pushforward $i_{0*}$, where $i_0: Z\setminus Z_{\text{sing}}\to (X\setminus Z_{\text{sing}}, Z\setminus Z_{\text{sing}})$
in $(Sm_*/\Spec k)$ is induced by the inclusion. This follows from \cite[II, 3.4.]{Gros} (or alternatively from the description of the Gysin morphism in \cite[II, Prop. 3.5.6]{Gros} together with Proposition \ref{1.7.9}.)
(2) is \cite[II, Prop. 4.2.12.]{Gros} and (3) is \cite[II, Prop. 4.2.1]{Gros}. The first and the last equality in (4) follow from the definition, the second equality is implied by the first and $\pi(cl_n(Z))=cl_{n-1}(Z)$. 

\subsection{Hodge-Witt cohomology as weak cohomology theory with supports}

\subsubsection{Weak cohomology theories with supports}
Weak cohomology theories with supports have been introduced in \cite{CR}. 
For the convenience of the reader we recall the definitions. 

First, recall the definition of the categories $Sm_*=(Sm_*/k)$ and 
$Sm^*=(Sm^*/k)$ in
Definition \ref{definition-Sm_*^*}. For both categories $Sm^*$ and $Sm_*$ finite coproducts exist:
$$
(X,\Phi)\coprod (Y,\Psi)=(X\coprod Y,\Phi \cup \Psi).
$$  
For $(X,\Phi)$ let $X=\coprod_i X_i$ be the decomposition into connected components, 
then 
$$
(X,\Phi)=\coprod_i (X_i,\Phi\cap \Phi_{X_i}).
$$
In general products don't exist, and we define 
$$
(X,\Phi)\otimes (Y,\Psi):= (X\times Y,\Phi\times \Psi)
$$
which together with the unit object $\mathbf{1}={\rm Spec}(k)$ and the 
obvious isomorphism $(X,\Phi)\otimes (Y,\Psi) \xr{} (Y,\Psi)\otimes (X,\Phi)$ makes
$Sm_*$ and $Sm^*$ to a symmetric monoidal category (see \cite[VII.1]{ML}).

\subsubsection{}\label{triples}
A weak cohomology theory with supports consists of the following data $(F_*,F^*,T,e)$: 
\begin{enumerate}
\item Two functors $F_*:Sm_*\xr{}\GrAb$ and $F^*:(Sm^*)^{op} \xr{} \GrAb$ such that 
$F_*(X)=F^*(X)$ as abelian groups for every object $X\in {\rm ob}(Sm_*)={\rm ob}(Sm^*)$. 
We will simply write $F(X):=F_*(X)=F^*(X)$. We use lower indexes 
for the grading on $F_*(X)$, i.e. $F_*(X)=\oplus_i F_i(X)$, and upper indexes for $F^*(X)$.  
\item For every two objects $X,Y\in {\rm ob}(Sm_*)={\rm ob}(Sm^*)$ a morphism of graded abelian 
groups (for both gradings):
$$
T_{X,Y}: F(X)\otimes_{\Z} F(Y) \xr{} F (X\otimes Y).
$$
\item A morphism of abelian groups $e:\Z\xr{} F(\Spec(k))$. For all smooth schemes $\pi:X\xr{} \Spec(k)$ we denote by $1_X$ the image of 
$1\in \Z$ via the map 
$
\Z\xr{e} F^*(\Spec(k))\xr{F^*(\pi)} F^*(X).
$ 
\end{enumerate}

\subsubsection{} \label{triplescond} These data are required to satisfy the following conditions:
\begin{enumerate}
\item \label{triplescondsum} The functor $F_*$ preserves coproducts and $F^*$ maps coproducts to products. 
Moreover, for $(X,\Phi_1),(X,\Phi_2)\in {\rm ob}(Sm_*)$ with $\Phi_1\cap \Phi_2=\{\emptyset\}$
we require that the map  
$$
F^*(\jmath_1)+F^*(\jmath_2):F^*(X,\Phi_1)\oplus F^*(X,\Phi_2) \xr{} F^*(X,\Phi_1\cup \Phi_2),
$$
with $\jmath_1:(X,\Phi_1\cup \Phi_2)\xr{} (X,\Phi_1)$ and 
$\jmath_2:(X,\Phi_1\cup \Phi_2)\xr{} (X,\Phi_2)$ in $Sm^*$, is an isomorphism.  
\item The data $(F_*,T,e)$ and $(F^*,T,e)$ respectively define a (right-lax) symmetric monoidal functor.  
\item (Grading) For $(X,\Phi)$ such that $X$ is connected the equality   
$$F_i(X,\Phi)=F^{2\dim X-i}(X,\Phi)$$ holds for all $i$.
\item \label{MP} For all cartesian diagrams (cf.~\ref{definition-Sm_*^*})
\begin{equation*}
\xymatrix
{
(X',\Phi') \ar[r]^{f'} \ar[d]^{g_X}
&
(Y',\Psi')  \ar[d]^{g_Y}
\\
(X,\Phi) \ar[r]^{f} 
&
(Y,\Psi)  
}
\end{equation*}
with $g_X,g_Y\in Sm^*$ and  $f,f'\in Sm_*$ such that
either $g_Y$ is smooth or $g_Y$ is a closed immersion and $f$ is transversal to $g_Y$ the following
equality holds:
\begin{equation*}
 F^*(g_Y)\circ F_*(f) = F_*(f')\circ F^*(g_X). 
\end{equation*}
\end{enumerate}

Recall that $f$ is transversal to $g_Y$ if $(f')^*N_{Y'/Y}=N_{X'/X}$ where $N$ denotes
the normal bundle. The case $X'=\emptyset$ is also admissible; in this case the equality
\ref{triplescond}(\ref{MP}) reads:
$$
 F^*(g_Y)\circ F_*(f) = 0.
$$
 
In \cite[1.1.7]{CR} it is spelled out what it means for $(F_*,T,e)$ (and $(F^*,T,e)$) to be 
a right-lax symmetric monoidal functor. 
%% Recall that $(F_*,T,e)$ is called a right-lax symmetric monoidal functor if 
%% \begin{itemize}
%% \item $T$ is a natural transformation 
%% \begin{equation*}
%% T:(Sm_*\times Sm_* \xr{F_*\times F_*} \GrAb \times \GrAb \xr{\otimes} \GrAb)
%% \xr{} (Sm_*\times Sm_* \xr{\otimes} Sm_* \xr{F_*} \GrAb)
%% \end{equation*}
%% \item $T$ is associative, i.e. for $X,Y,Z\in {\rm ob}(Sm_*)$ 
%% the following diagram is commutative: 
%% $$
%% \xymatrix
%% {
%% F(X)\otimes F(Y)\otimes F(Z) \ar[r]^{id\otimes T}\ar[d]^{T\otimes id}
%% &
%% F(X)\otimes F(Y\otimes Z) \ar[d]^{T}
%% \\
%% F(X \otimes Y)\otimes F(Z) \ar[r]^{T} 
%% &
%% F(X\otimes Y\otimes Z).
%% }
%% $$
%% \item $T$ is commutative, i.e. for $X,Y\in {\rm ob}(Sm_*)$ the diagram
%% $$
%% \xymatrix
%% {
%% F(X)\otimes F(Y) \ar[r]^{T} \ar[d]
%% &
%% F(X\otimes Y) \ar[d]
%% \\
%% F(Y)\otimes F(X) \ar[r]^{T}
%% &
%% F(Y\otimes X)
%% }
%% $$ 
%% is commutative. Here, for two graded abelian groups $A,B$ the morphism 
%% $A\otimes B\xr{} B\otimes A$ maps $a\otimes b\mapsto (-1)^{\deg(a)\deg(b)} b\otimes a$.
%% \item The map $e:\Z\xr{} F(\Spec(k))$ renders commutative the diagrams
%% $$
%% \xymatrix
%% {
%% F(X)\otimes_{\Z} \Z \ar[r]^-{id\otimes e} \ar[rd]^{=} 
%% &
%% F(X)\otimes F(\Spec(k))\ar[r]^{T}
%% &
%% F(X\otimes \Spec(k)) \ar[dl]^{=}
%% \\
%% &
%% F(X),
%% &
%% \\
%% \Z\otimes_{\Z} F(X) \ar[r]^-{e\otimes id} \ar[rd]^{=}
%% &
%% F(\Spec(k))\otimes F(X)\ar[r]^{T}
%% &
%% F(\Spec(k)\otimes X) \ar[dl]^{=}
%% \\
%% &
%% F(X).
%% &
%% }
%% $$
%% \end{itemize}

\begin{definition}\label{definition-wct}
If the data $\triples$ as in \ref{triples} satisfy the conditions 
\ref{triplescond} then we call $\triples$ a \emph{weak cohomology theory with supports.}   
\end{definition}

\begin{definition}\label{definitionT}
Let $\triples, (G_*,G^*,U,\epsilon)$ be two weak cohomology theories with support. 
By a morphism 
\begin{equation}\label{morphismsT}
\triples \xr{} (G_*,G^*,U,\epsilon)
\end{equation}
we understand a morphism of graded abelian groups (for both gradings)
$$
\phi: F(X)\xr{} G(X) \quad \text{for every $X\in {\rm ob}(V_*)= {\rm ob}(V^*)$,}
$$
such that $\phi$ induces a natural transformation of (right-lax) symmetric monoidal functors 
$$
\phi:(F_*,T,e) \xr{} (G_*,U,\epsilon)\; \text{and}\; \phi:(F^*,T,e) \xr{} (G^*,U,\epsilon),
$$ 
i.e. $\phi$ induces natural transformations $F_*\xr{} G_*, F^*\xr{} G^*$, and 
\begin{equation}\label{morphismTsecond}
\phi\circ T = U \circ(\phi \otimes \phi), \quad \phi\circ e=\epsilon. 
\end{equation}
\end{definition}

\subsubsection{Chow theory as a weak cohomology theory with supports} \label{subsubsection-chow-theory}
An example of a weak cohomology theory with supports 
are the Chow groups $(\CH_*,\CH^*,\times,e)$ \cite[1.1]{CR}. We will briefly recall the definitions
for the convenience of the reader.

\begin{definition}[Chow groups with support]\label{definition-chowgroups-with-support}
Let $\Phi$ be a family of supports on $X$. We define: 
$$
\CH(X,\Phi)={\varinjlim}_{W\in \Phi}\CH(W).
$$
\end{definition}

The group $\CH(X,\Phi)$ is graded by dimension. 
We set
$$
\CH_{*}(X,\Phi)=\bigoplus_{d\ge 0} \CH_{d}(X,\Phi)[2d],
$$
where the bracket $[2d]$ means that $\CH_{d}(X,\Phi)$ is considered to be in degree $2d$. 

There is also a grading by codimension. Let $X=\coprod_i X_i$ be the decomposition into 
connected components then 
$
\CH^*(X,\Phi)=\bigoplus_i \CH^*(X_i,\Phi\cap \Phi_{X_i})
$ 
and 
$$
\CH^*(X_i,\Phi\cap \Phi_{X_i}) = \bigoplus_{d\geq 0} \CH^{d}(X_i,\Phi\cap \Phi_{X_i})[2d]
$$
where $\CH^{d}(X_i,\Phi\cap \Phi_{X_i})$ is generated by cycles $[Z]$ with $Z\in \Phi\cap \Phi_{X_i}$, $Z$ irreducible, and $\codim_{X_i}(Z)=d$.

Let $f:(X,\Phi)\xr{} (Y,\Psi)$ be a morphism in $Sm_*$.
There is a push-forward
\begin{equation}
\CH_*(f):\CH(X,\Phi)\xr{} \CH(Y,\Psi),
\end{equation}
defined by the usual push-forward of cycles 
$(f_{\mid W})_*:\CH(W)\xr{} \CH(f(W))\xr{} \CH(Y,\Psi)$ 
(for this we need that $f_{\mid W}$ is proper and $f(\Phi)\subset \Psi$).
We obtain a functor 
\begin{equation}
\CH_*: Sm_* \xr{} \GrAb, \quad \CH_*(X,\Phi):=\CH(X,\Phi),\quad f\mapsto \CH_*(f).
\end{equation}

In order to define a functor 
$$
\CH^*:(Sm^*)^{op} \xr{} \GrAb
$$
we use Fulton's work on refined Gysin morphisms \cite[\textsection6.6]{F}.

Let $f:X\xr{} Y$ be a morphism between smooth schemes and let $V\subset Y$ be a closed
subscheme, thus $f:(X,f^{-1}(V))\xr{} (Y,V)$ is a morphism in $Sm^*$. We define 
$$
\CH^*(f):=f^!:\CH(Y,V)=\CH(V)\xr{} \CH(f^{-1}(V))= \CH(X,f^{-1}(V)). 
$$ 
For the general case let $f:(X,\Phi)\xr{} (Y,\Psi)$ be any morphism in $Sm^*$.
For every $V\in \Psi$ the map $f$ induces $(X,f^{-1}(V))\xr{} (Y,V)$ in $Sm^*$.
We may define  
$$
\CH^*(f): \CH(Y,\Psi)={\varinjlim}_{V\in \Psi}\CH(Y,V) \xr{} {\varinjlim}_{W\in \Phi}\CH(X,W)=\CH(X,\Phi).
$$

The assignment 
\begin{equation*}
\CH^*:(Sm^*)^{op} \xr{} \GrAb, \quad \CH^*(X,\Phi)=\CH(X,\Phi), \quad f\mapsto \CH^*(f)
\end{equation*}
defines a functor. Together with the exterior product $\times$ (see \cite[\textsection1.10]{F}) and the obvious unit $1:\Z\xr{} \CH(\Spec(k))$, we obtain a weak cohomology theory
with supports $(\CH_*,\CH^*,\times,1)$.

\subsubsection{}\label{section-forhat}
From Proposition \ref{2.1.9} and Proposition \ref{2.1.3} we obtain two functors
\begin{align*}
\hat{H}_*&:Sm_*\xr{} \GrAb,\\
\hat{H}^*&:(Sm^*)^{op}\xr{} \GrAb, 
\end{align*} 
where for an object $(X,\Phi)\in {\rm ob}(Sm_*)={\rm ob}(Sm^*)$ with structure morphism $a:X\xr{} \Spec(k)$ we have
$$
\hat{H}(X,\Phi):=\hat{H}_*(X,\Phi):=\hat{H}^*(X,\Phi):=\hat{\sH}(\XP/\Spec k)=\bigoplus_{p,q\ge 0} R^q\hat{a}_\Phi W_\bullet\Omega^p_X 
$$
as abelian groups (see \eqref{fPhihat} for the definition of $\hat{a}_\Phi$). In view of Proposition \ref{derived-functors-exist} we get an equality 
$$
R^q\hat{a}_\Phi=H^q\circ R\varprojlim \circ Ra_* \circ R\ul{\Gamma}_\Phi.
$$ 
In particular, if $\Phi=\Phi_X$ is the set of all closed subsets of $X$ then 
$$
\hat{H}(X):=\hat{H}(X,\Phi_X)= \bigoplus_{p,q\ge 0} H^q(R\varprojlim R\Gamma (W_\bullet\Omega^p_X)). 
$$ 
Note that $R\varprojlim \circ R\Gamma = R\Gamma \circ R\varprojlim$ and 
$R\varprojlim(W_\bullet\Omega^p_X) = \varprojlim(W_{\bullet} \Omega^p_X)=W\Omega^p_X$ 
by Lemma \ref{vanishing-of-Rlim}. Thus we obtain 
$$
\hat{H}(X)=\bigoplus_{p,q\ge 0} H^q(X,W\Omega^p_X).
$$
In the following we will write $\hat{H}(X)$ for $\hat{H}(X,\Phi_X)$.

The grading for $\hat{H}^*$ is defined by 
$$
\hat{H}^i(X,\Phi)=\bigoplus_{p+q=i} R^q\hat{a}_\Phi W_\bullet\Omega^p_X.
$$
For $\hat{H}_*(X,\Phi)$ the grading is defined such that  \ref{triplescond}(4) holds.

For two objects $(X,\Phi), (Y,\Psi)$ with structure maps $a:X\xr{} \Spec(k), 
b:Y\xr{} \Spec(k)$, we define 
\begin{equation}\label{KT}
T: \hat{H}(X,\Phi)\otimes_{\Z} \hat{H}(Y,\Psi) \xr{} \hat{H}(X\times Y,\Phi\times \Psi)
\end{equation}
by the formula
\begin{equation}\label{formula-KT}
T(\alpha_{i,p}\otimes \beta_{j,q})=(-1)^{(i+p)\cdot j}(\alpha_{i,p}\times \beta_{j,q}),
\end{equation}
where $\alpha_{i,p}\in R^i\hat{a}_\Phi W_\bullet\Omega^p_X , 
\beta_{j,q}\in R^j\hat{b}_\Psi W_\bullet\Omega^q_Y,$
and $\times$ is the map in \ref{2.5.4}.

We define
$$
e:\Z\xr{} \hat{H}(\Spec(k))=W(k)
$$
by $e(n)=n$ for all $n\in \Z$.

\begin{proposition}\label{proposition-kuenneth}
The triples $(\hat{H}_*, T, e)$ and $(\hat{H}^*,T, e)$  define right-lax symmetric monoidal functors.
\begin{proof}
The morphism $T$ respects the grading $\hat{H}^*$ and the grading $\hat{H}_*$. 
In the following we will work with the upper grading $\hat{H}^*$. 
All arguments
will also work for the lower grading $\hat{H}_*$ because the difference 
between lower and upper grading is an even integer.

The axioms which involve $e$ do obviously hold.
By using the associativity of $\times$ (Proposition \ref{2.5.7}(2)) it  is 
straightforward to prove the associativity of $T$. 
Let us prove the commutativity of $T$, i.e. that the diagram 
\begin{equation}
\label{proof-kuenneth-Tcomm}
\xymatrix
{
H(X,\Phi)\otimes H(Y,\Psi) \ar[r]^{T} \ar[d]
&
H(X\times Y,\Phi\times \Psi) \ar[d]^{\epsilon}
\\
H(Y,\Psi)\otimes H(X,\Phi) \ar[r]^{T}
&
H(Y\times X,\Psi\times \Phi)
}
\end{equation}
is commutative. The left vertical map is defined by $\alpha\otimes \beta\mapsto 
(-1)^{\deg(\alpha)\deg(\beta)} \beta\otimes \alpha$. By using Proposition \ref{2.5.7}(5)(a)
we obtain 
\begin{multline*}
\epsilon(T(\alpha_{i,p}\otimes \beta_{j,q}))=\epsilon((-1)^{(i+p)j}\alpha_{i,p}\times \beta_{j,q})=(-1)^{(i+p)j}(-1)^{pq+ij}\beta_{j,q}\times\alpha_{i,p}\\ =(-1)^{pq+pj}\beta_{j,q}\times\alpha_{i,p}. 
\end{multline*}
On the other hand, 
\begin{multline*}
T((-1)^{(i+p)(j+q)} \beta_{j,q}\otimes \alpha_{i,p})=(-1)^{(i+p)(j+q)}(-1)^{(j+q)i}\beta_{j,q}\times \alpha_{i,p} \\ =(-1)^{pq+pj}\beta_{j,q}\times\alpha_{i,p}.
\end{multline*}
This proves the commutativity of diagram \eqref{proof-kuenneth-Tcomm}. 

The functoriality of $T$ with respect to $\hat{H}^*$ follows immediately 
from Proposition \ref{2.5.7}(3). The 
functoriality of $T$ with respect to $\hat{H}_*$ follows from Proposition
\ref{2.5.7})(4).
\end{proof}
\end{proposition}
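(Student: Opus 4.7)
The plan is to reduce everything to the properties of the external product $\times$ collected in Proposition \ref{2.5.7}, by carefully tracking the extra sign $(-1)^{(i+p)j}$ built into the definition \eqref{formula-KT} of $T$. First I would observe that since the lower grading on each bidegree $(p,q)$-piece of $\hat{H}$ differs from the upper grading by $2(\dim X - p - q) + 2p = 2\dim X - 2q$, which is even, any sign computation involving $(-1)^{\deg\alpha\cdot\deg\beta}$ gives the same result for both gradings. Therefore it suffices to handle the upper grading $\hat{H}^*$. The unit axioms for $e$ are immediate because $e$ sends $1$ to the multiplicative unit and the external product with a scalar is just scalar multiplication.

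Next, I would verify associativity of $T$. Write out $T(T(\alpha\otimes\beta)\otimes\gamma)$ and $T(\alpha\otimes T(\beta\otimes\gamma))$ with $\alpha\in R^i\hat{a}_\Phi W_\bullet\Omega^p$, $\beta\in R^j\hat{b}_\Psi W_\bullet\Omega^q$, $\gamma\in R^l\hat{c}_\Xi W_\bullet\Omega^r$. On one side the total sign is $(-1)^{(i+p)j}\cdot(-1)^{(i+p+j+q)l}$; on the other it is $(-1)^{(j+q)l}\cdot(-1)^{(i+p)(j+l)}$; these agree modulo $2$. Then associativity of $T$ reduces to the associativity of $\times$, which is Proposition \ref{2.5.7}(2).

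The main obstacle is commutativity, and this is where the sign in \eqref{formula-KT} is designed to work. I would check that
\[
\epsilon\circ T(\alpha_{i,p}\otimes\beta_{j,q}) = T\bigl((-1)^{(i+p)(j+q)}\beta_{j,q}\otimes\alpha_{i,p}\bigr),
\]
where $\epsilon$ is the switching isomorphism; using Proposition \ref{2.5.7}(5)(a) which produces a sign $(-1)^{pq+ij}$, both sides unfold to $(-1)^{pq+pj}\,\beta\times\alpha$, as the author writes. The clean cancellation here is precisely the point of the sign $(-1)^{(i+p)j}$ in the definition of $T$, and is the key computation in the proof.

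Finally, functoriality of $T$ is almost automatic: for morphisms in $Sm^*$ it follows directly from Proposition \ref{2.5.7}(3), since pullback commutes with $\times$ and the sign depends only on the bidegrees, which are preserved. For morphisms in $Sm_*$, Proposition \ref{2.5.7}(4) gives $(f_*\alpha)\times\alpha' = (f\times\id)_*(\alpha\times\alpha')$ and the analogous identity with $f$ on the other factor; since pushforward along $f:(X,\Phi)\xr{}(Y,\Psi)$ of relative dimension $r$ sends bidegree $(i,p)$ to $(i-r,p-r)$, the parity of $(i+p)j$ is unchanged, so the sign in $T$ is compatible with pushforward on either side. This exhausts the list of axioms for a right-lax symmetric monoidal functor.
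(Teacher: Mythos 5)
Your proof is correct and follows essentially the same route as the paper's: reduce to the upper grading by evenness of the degree difference, verify unit, associativity and commutativity of $T$ by direct sign-tracking, and deduce functoriality from Proposition \ref{2.5.7}(3) and (4). You spell out the associativity sign computation and the parity check in the pushforward case in a bit more detail than the paper (which merely states these follow), and there is a small arithmetic slip in your expression for the difference between the two gradings, but the conclusion that it is even is correct and the argument is unaffected.
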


\begin{thm}\label{thm-sH-wct}
The datum $(\hat{H}_*,\hat{H}^*,T,e)$ is a weak cohomology theory with supports (cf.~Definition \ref{definition-wct}). 
\begin{proof}
We have to verify the properties in section \ref{triplescond}. Property \ref{triplescond}(1) is obvious. Proposition \ref{proposition-kuenneth} 
implies \ref{triplescond}(2). The compatibility of the gradings (\ref{triplescond}(3)) is satisfied by definition. Proposition \ref{1.7.12} yields \ref{triplescond}(4). 
\end{proof}
\end{thm}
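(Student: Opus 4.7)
The plan is to verify the four axioms in \ref{triplescond} one by one, leaning heavily on the structural results already established in Sections~1--3.

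First I would dispose of the easy axioms. Axiom \ref{triplescond}(3) is automatic from the definition of the grading on $\hat{H}_*$ at the end of \ref{section-forhat}, since we set up $\hat{H}_*$ precisely so that the shift by $2\dim X$ holds on connected components. Axiom \ref{triplescond}(2) is exactly the content of Proposition~\ref{proposition-kuenneth}. Axiom \ref{triplescond}(4) is Proposition~\ref{1.7.12} applied with $S=\Spec k$: the statement there covers both the case where $g_Y$ is flat (in particular smooth) and the case where $g_Y$ is a closed immersion transversal to $f$.

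The remaining work is axiom \ref{triplescond}(1). For the first half, suppose $(X,\Phi) = (X_1,\Phi_1) \coprod (X_2,\Phi_2)$ with $X = X_1 \coprod X_2$. Writing $j_\nu : X_\nu \inj X$ for the open-and-closed immersions, we have $W_\bullet\Omega_X = j_{1*}W_\bullet\Omega_{X_1} \oplus j_{2*}W_\bullet\Omega_{X_2}$ as de Rham--Witt systems, and $\ul{\Gamma}_{\Phi} = \ul{\Gamma}_{\Phi_1} \oplus \ul{\Gamma}_{\Phi_2}$ on this decomposition. Since $Rf_*$ and $R\varprojlim$ commute with finite direct sums, applying them termwise and taking cohomology yields $\hat{H}(X,\Phi) = \hat{H}(X_1,\Phi_1) \oplus \hat{H}(X_2,\Phi_2)$. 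To see that this is functorial both as $\hat{H}_*$ (coproducts to coproducts) and as $\hat{H}^*$ (coproducts to products), one just notes that pushforward respects the decomposition by construction (a morphism $f \in Sm_*$ sends each $X_\nu$ into one of the components of the target), while pullback respects it by \ref{2.1.2.1}.

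For the second half of axiom \ref{triplescond}(1), consider $(X,\Phi_1),(X,\Phi_2) \in \mathrm{ob}(Sm_*)$ with $\Phi_1 \cap \Phi_2 = \{\emptyset\}$. The key point is that for any $Z_1 \in \Phi_1$ and $Z_2 \in \Phi_2$ the intersection $Z_1 \cap Z_2$ is empty, so the Mayer--Vietoris sequence
\[
\cdots \to R\ul{\Gamma}_{\Phi_1 \cap \Phi_2} \to R\ul{\Gamma}_{\Phi_1} \oplus R\ul{\Gamma}_{\Phi_2} \to R\ul{\Gamma}_{\Phi_1 \cup \Phi_2} \to \cdots
\]
applied termwise to $W_\bullet\Omega_X$ has vanishing $R\ul{\Gamma}_{\Phi_1 \cap \Phi_2}$-terms (since $\Phi_1 \cap \Phi_2 = \{\emptyset\}$ implies $\ul{\Gamma}_{\Phi_1 \cap \Phi_2} = 0$). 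Hence the natural map $R\ul{\Gamma}_{\Phi_1} \oplus R\ul{\Gamma}_{\Phi_2} \xr{\simeq} R\ul{\Gamma}_{\Phi_1 \cup \Phi_2}$ is a quasi-isomorphism of de Rham--Witt systems, and applying $R\varprojlim \circ Ra_*$ and taking cohomology gives the desired isomorphism, which by construction is $\hat{H}^*(\jmath_1) + \hat{H}^*(\jmath_2)$.

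No step is really hard; the main thing to be careful about is bookkeeping between $\hat{H}_*$ and $\hat{H}^*$ gradings in axiom (1) and in citing Proposition~\ref{proposition-kuenneth}, together with making sure all the direct-sum decompositions genuinely pass through $R\varprojlim$ (which is fine since the sums are finite).
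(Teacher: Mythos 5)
Your proof is correct and follows the same route as the paper's: citing Proposition~\ref{proposition-kuenneth} for axiom~\ref{triplescond}(2), the definition of the gradings for \ref{triplescond}(3), and Proposition~\ref{1.7.12} (noting smooth $\Rightarrow$ flat) for \ref{triplescond}(4). The paper merely dismisses \ref{triplescond}(1) as ``obvious''; you spell it out, and your argument is sound — though note that since $\Phi_1\cap\Phi_2=\{\emptyset\}$ forces $Z_1\cap Z_2=\emptyset$ for all $Z_\nu\in\Phi_\nu$, one already has $\ul\Gamma_{\Phi_1\cup\Phi_2}=\ul\Gamma_{\Phi_1}\oplus\ul\Gamma_{\Phi_2}$ at the underived level (a section supported on a disjoint union of two closed sets splits canonically), so the Mayer--Vietoris triangle is a slightly heavier tool than necessary here.
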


\subsubsection{}
In the same way as in section \ref{section-forhat} we define 
\begin{align*}
P\hat{H}_*&:Sm_*\xr{} \GrAb,\\
P\hat{H}^*&:(Sm^*)^{op}\xr{} \GrAb, 
\end{align*} 
where for an object $(X,\Phi)\in {\rm ob}(Sm_*)={\rm ob}(Sm^*)$ with structure morphism $a:X\xr{} \Spec(k)$ we have
$$
P\hat{H}(X,\Phi)=P\hat{H}_*(X,\Phi)=P\hat{H}^*(X,\Phi)=\bigoplus_{p\ge 0} R^p\hat{a}_\Phi W_\bullet\Omega^p_X 
$$
as abelian groups. The grading is defined in the obvious way: 
$$
P\hat{H}^{2p}(X,\Phi)=R^p\hat{a}_\Phi W_\bullet\Omega^p_X,
$$
and zero for all odd degrees.
Of course, for $P\hat{H}_*(X,\Phi)$ the grading is defined such that  \ref{triplescond}(4) holds.

For two objects $(X,\Phi), (Y,\Psi)$, we get an induced map 
\begin{equation*}
T: P\hat{H}(X,\Phi)\otimes_{\Z} P\hat{H}(Y,\Psi) \xr{} P\hat{H}(X\times Y,\Phi\times \Psi)
\end{equation*}
from \eqref{KT}.

Theorem \ref{thm-sH-wct} implies the following statement.

\begin{corollary}\label{corollary-sH-wct}
The datum $(P\hat{H}_*,P\hat{H}^*,T,e)$ is a weak cohomology theory with supports (cf.~Definition \ref{definition-wct}). Induced by the inclusion 
$
P\hat{H}(X,\Phi)\subset \hat{H}(X,\Phi),
$
we get a morphism of weak cohomology theories with supports (cf.~Definition \ref{definitionT})
$$
(P\hat{H}_*,P\hat{H}^*,T,e) \xr{} (\hat{H}_*,\hat{H}^*,T,e).
$$
\end{corollary}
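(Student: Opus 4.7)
The plan is to observe that $P\hat{H}$ is simply the ``Hodge diagonal'' inside the bigraded theory $\hat{H}$, i.e.~$P\hat{H}^{2p}(X,\Phi)=\hat{H}^{p,p}(X,\Phi)$, and then to verify that each of the four structural operations (pullback, pushforward, exterior product $T$, unit $e$) of the weak cohomology theory $(\hat{H}_*,\hat{H}^*,T,e)$ restricts to the diagonal. Once this closure is established, the axioms in \ref{triplescond} for $P\hat{H}$ are immediate from Theorem \ref{thm-sH-wct}, and the inclusion $P\hat{H}\subset \hat{H}$ is tautologically compatible with all structure maps, hence defines a morphism of weak cohomology theories with supports in the sense of Definition \ref{definitionT}.

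I would carry out the closure check in four short steps. First, for a morphism $f:\XP\to\YP$ in $Sm^*$ with structure maps $a,b$, Definition \ref{2.1.2} constructs $f^*$ as the cohomology of a derived map $R\ul{\Gamma}_\Psi W_\bullet\Omega_Y\to Rf_*R\ul{\Gamma}_\Phi W_\bullet\Omega_X$; after applying $H^i\circ R\varprojlim\circ Rb_*$, this preserves the form degree and cohomological degree separately, so $f^*$ maps $R^p\hat{b}_\Psi W_\bullet\Omega^p_Y$ into $R^p\hat{a}_\Phi W_\bullet\Omega^p_X$, and thus $P\hat{H}^*$ is a subfunctor of $\hat{H}^*$. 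Second, for $f:\XP\to\YP$ in $Sm_*$ of relative dimension $r=\dim X-\dim Y$, Definition \ref{2.1.6} produces a derived-category morphism $Rf_\Phi W_\bullet\Omega_X\to R\ul{\Gamma}_\Psi W_\bullet\Omega_Y(-r)[-r]$, whose effect on bigraded pieces is $f_*\colon R^i\hat{a}_\Phi W_\bullet\Omega^q_X\to R^{i-r}\hat{b}_\Psi W_\bullet\Omega^{q-r}_Y$; taking $i=q=p$ gives the image in bidegree $(p-r,p-r)$, again on the diagonal. Hence $P\hat{H}_*$ is a subfunctor of $\hat{H}_*$.

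Third, by Proposition \ref{2.5.7}(5)(b) the exterior product is bilinear of bidegree $(0,0)$, $\times\colon \hat{H}^{i,p}(X,\Phi)\otimes \hat{H}^{j,q}(Y,\Psi)\to \hat{H}^{i+j,p+q}(X\times Y,\Phi\times\Psi)$, so sending the diagonal pair $(i,p)=(p,p)$, $(j,q)=(q,q)$ into bidegree $(p+q,p+q)$, still on the diagonal. The sign in \eqref{formula-KT} specializes on diagonal classes to $(-1)^{(p+p)q}=(-1)^{2pq}=1$, so $T$ restricts to the (now symmetric) exterior product on $P\hat{H}$. The unit $e\colon \Z\to W=P\hat{H}^0(\Spec k)$ obviously takes values in the diagonal. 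This shows $(P\hat{H}_*,P\hat{H}^*,T,e)$ is well-defined as a subtriple of $(\hat{H}_*,\hat{H}^*,T,e)$, closed under all operations.

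Finally, each of the axioms \ref{triplescond}(1)–(4) for $P\hat{H}$ is inherited directly from Theorem \ref{thm-sH-wct}: functoriality and disjoint-union compatibility are clear from the subfunctor property; the right-lax symmetric monoidal axioms hold on $\hat{H}$ by Proposition \ref{proposition-kuenneth} and thus on the closed subfunctor $P\hat{H}$; the grading identity $P\hat{H}_i=P\hat{H}^{2\dim X-i}$ on connected $X$ is immediate from $P\hat{H}^{2p}=R^p\hat{a}_\Phi W_\bullet\Omega^p_X$ together with the analogous identity for $\hat{H}$; and the base change identity \ref{triplescond}(4) is the restriction of Proposition \ref{1.7.12} to diagonal classes. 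The inclusion $P\hat{H}\hookrightarrow \hat{H}$ is by construction a natural transformation commuting with $T$ and $e$, so it satisfies \eqref{morphismTsecond}, giving the desired morphism of weak cohomology theories with supports. Since every verification reduces to a bookkeeping statement about bidegrees and a trivial sign computation, there is no substantive obstacle here; the content of the corollary is entirely the observation that the Hodge diagonal is preserved by each operation.
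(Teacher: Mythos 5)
Your proof is correct and takes essentially the same approach as the paper, which simply asserts that the corollary follows from Theorem \ref{thm-sH-wct}; you have spelled out the implicit verification that the Hodge diagonal $P\hat{H}^{2p}=R^p\hat{a}_\Phi W_\bullet\Omega^p_X$ is closed under pullback, pushforward, exterior product, and unit. The bidegree bookkeeping (both form degree and cohomological degree shift by the same amount under $f_*$, both are preserved by $f^*$, both are additive under $\times$) and the sign observation $(-1)^{(p+p)q}=1$ are exactly the content one needs.
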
 

\subsection{Cycle classes}
One of the main goals of \cite{CR} was to give a criterion when a weak cohomology 
theory $(F_*,F^*,T,e)$ admits a morphism 
$$
(\CH_*,\CH^*,\times,1) \xr{} (F_*,F^*,T,e).
$$  
In order to apply \cite[Theorem~1.2.3]{CR} in our situation, i.e.~for 
$(P\hat{H}_*,P\hat{H}^*,T,e)$, we first need to prove semi-purity.

Recall that $(P\hat{H}_*,P\hat{H}^*,T,e)$ satisfies the semi-purity condition (cf. \cite[Definition~1.2.1]{CR}) if the following holds:
\begin{itemize}
\item For all smooth schemes $X$ and irreducible closed subsets $W\subset X$ the 
groups 
$
P\hat{H}_i(X,W) 
$
vanish if $i>2\dim W$.
\item For all smooth schemes $X$, closed subsets $W\subset X$, and open
sets $U\subset X$ such that $U$ contains the generic point of every irreducible 
component of $W$, we require the map 
$$
P\hat{H}^*(\jmath):P\hat{H}_{2\dim W}(X,W)\xr{} P\hat{H}_{2\dim W}(U,W\cap U), 
$$ 
induced by $\jmath:(U,W\cap U)\xr{} (X,W)$ in $Sm^*$, to be \emph{injective}.
\end{itemize}

\begin{proposition}\label{proposition-semi-purity}
The weak cohomology theory with supports $(P\hat{H}_*,P\hat{H}^*,T,e)$ satisfies the 
semi-purity condition. 
\begin{proof}
To verify the two conditions we may assume that $X$ is connected of dimension $d$.
Since 
$$
P\hat{H}_{2i}(X,W)=H^{d-i}_{W}(X,W\Omega_X^{d-i}),
$$
we need to show that $H^{d-i}_W(X,W\Omega_X^{d-i})$ vanishes if $2i>2\dim W$ 
(or equivalently, $d-i<\codim_X W$). Indeed, we have 
$$
H^{d-i}_W(X,W\Omega_X^{d-i})=R^{d-i}a_*R\ul{\Gamma}_{W}R\varprojlim (W_n\Omega_X^{d-i})
$$ 
with $a:X\xr{} \Spec(k)$ the structure map. Since $R\ul{\Gamma}_{W} R\varprojlim=R\varprojlim R\ul{\Gamma}_{W}$, we need
to show that 
\eq{semi-purity1}{R^j\ul{\Gamma}_{W}W_n\Omega_X^{d-i}=0,}

for $j<\codim_X W$. This follows from Lemma \ref{drw-is-CM} (which follows from the fact that
the graded pieces of the standard filtration on $W_n\Omega^{d-i}_X$ are  extensions of locally free $\sO_X$-modules  \cite[I, Cor. 3.9]{IlDRW}).

For the second condition, let $W\subset X$ be a closed subset and $U\subset X$ open,
such that $U$ contains the generic point of every irreducible component of $W$.
We need to prove that the restriction map
$$
H^{d-\dim W}_{W}(X,W\Omega_X^{d-\dim W}) \xr{} H^{d-\dim W}_{U\cap W}(U,W\Omega_U^{d-\dim W})
$$ 
is injective. But the kernel is a quotient of $H^{d-\dim W}_{W\backslash U}(X,W\Omega_X^{d-\dim W})$, which 
vanishes because $d-\dim W<\codim_X (W\backslash U)$.
\end{proof}
\end{proposition}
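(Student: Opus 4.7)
Both conditions reduce to vanishing statements for local cohomology of the sheaves $W\Omega^q_X$, so I will first unwind the definitions. Since $X$ can be assumed connected of dimension $d$, the isomorphism $P\hat{H}_{2i}(X,\Phi)=R^{d-i}\hat{a}_\Phi W_\bullet\Omega_X^{d-i}$ together with $R\ul{\Gamma}_\Phi\circ R\varprojlim=R\varprojlim\circ R\ul{\Gamma}_\Phi$ from Proposition~\ref{derived-functors-exist} identifies
\[
P\hat{H}_{2i}(X,W)=H^{d-i}_{W}\bigl(X,W\Omega_X^{d-i}\bigr).
\]
Both parts of semi-purity will follow from the single claim
\begin{equation}\label{plan-key}
R^j\ul{\Gamma}_{W'}\bigl(W_n\Omega_X^q\bigr)=0\qquad\text{for every closed $W'\subset X$ and every $j<\codim_X W'$,}
\end{equation}
valid for all $n\ge 1$ and all $q\ge 0$; once \eqref{plan-key} is established, the spectral sequence of Lemma~\ref{vanishing-of-Rlim} (applied to the projective system $\{R^j\ul{\Gamma}_{W'}W_n\Omega_X^q\}_n$, which is zero in the relevant range) gives the corresponding vanishing for $W\Omega_X^q$.

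The plan for \eqref{plan-key} is to use the Cousin resolution. By Lemma~\ref{drw-is-CM} the Cousin complex $E(W_n\Omega_X^q)$ is a flasque resolution of $W_n\Omega_X^q$, and by construction
\[
E^j(W_n\Omega_X^q)=\bigoplus_{x\in X^{(j)}}i_{x*}H^j_x\bigl(W_n\Omega_X^q\bigr).
\]
Applying $\ul{\Gamma}_{W'}$ term-by-term kills every summand indexed by a point $x$ whose closure is not contained in $W'$; but if $j<\codim_X W'$, no point of codimension $j$ in $X$ can lie in $W'$, so $\ul{\Gamma}_{W'}E^j(W_n\Omega_X^q)=0$ in that range. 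Since $E(W_n\Omega_X^q)$ is flasque, this computes $R^j\ul{\Gamma}_{W'}W_n\Omega_X^q$ and yields \eqref{plan-key}.

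For the first semi-purity condition, with $W\subset X$ irreducible of dimension $\dim W$ and $i>2\dim W$, one has $d-i<d-\dim W=\codim_X W$, so the vanishing of $H^{d-i}_W(X,W\Omega_X^{d-i})$ is exactly a case of \eqref{plan-key} together with the $R\varprojlim$ discussion above. For the second condition, let $c=d-\dim W$ and consider the excision long exact sequence attached to the inclusion of the open $U$ in $X$, which produces an exact piece
\[
H^{c}_{W\setminus U}\bigl(X,W\Omega_X^{c}\bigr)\lra H^{c}_{W}\bigl(X,W\Omega_X^{c}\bigr)\lra H^{c}_{W\cap U}\bigl(U,W\Omega_U^{c}\bigr).
\]
The hypothesis that $U$ contains every generic point of $W$ forces $\dim(W\setminus U)<\dim W$, so $\codim_X(W\setminus U)>c$; applying \eqref{plan-key} to the closed subset $W\setminus U$ makes the left-hand group vanish, and the desired injectivity follows. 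The only real obstacle is \eqref{plan-key}; once one accepts the ``Cohen--Macaulay'' statement about $W_n\Omega_X^q$ supplied by Lemma~\ref{drw-is-CM} and the codimension description of the Cousin terms, everything else is formal bookkeeping with $R\varprojlim$ and excision.
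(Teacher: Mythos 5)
Your argument is correct and follows essentially the same route as the paper: reduce both semi-purity conditions to the vanishing $R^j\ul{\Gamma}_{W'}W_n\Omega_X^q=0$ for $j<\codim_X W'$, then handle the first part directly and the second via the excision sequence with support in $W\setminus U$. The only difference is cosmetic: where the paper cites Lemma~\ref{drw-is-CM} and leaves the deduction implicit, you spell out the step by observing that the Cousin term $E^j$ is a sum of sheaves supported on closures of codimension-$j$ points, none of which can lie in $W'$ when $j<\codim_X W'$, so $\ul{\Gamma}_{W'}E^j=0$.
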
 

\subsubsection{}
In view of \cite[Theorem~1.2.3]{CR} there is at most one morphism of
weak cohomology theories with support 
$$
(\CH_*,\CH^*,\times,1) \xr{} (P\hat{H}_*,P\hat{H}^*,T,e).
$$
 
\begin{proposition}\label{proposition-cl}
There is exactly one morphism (in the sense of Definition \ref{definitionT})
$$
(\CH_*,\CH^*,\times,1) \xr{} (P\hat{H}_*,P\hat{H}^*,T,e).
$$
\begin{proof}
We need to verify the criteria given in  \cite[Theorem~1.2.3]{CR}.

The condition \cite[Theorem~1.2.3]{CR}(1) is satisfied by Proposition \ref{2.1.8}.

For the second condition \cite[Theorem~1.2.3]{CR}(2) we need to show that for the $0$-point $\imath_0:\Spec(k)\xr{}\P^1$ and the $\infty$-point 
$\imath_{\infty}:\Spec(k)\xr{}\P^1$ the following equality holds:
$$P\hat{H}_*(\imath_0)\circ e=P\hat{H}_*(\imath_{\infty})\circ e.$$ 
By \ref{2.6.3}(1) the left hand side is $\hat{c}l(\{0\})$ and the right hand 
side equals $\hat{c}l(\{\infty\})$. In view of \ref{2.6.3}(3) we obtain 
$$
\hat{c}l(\{0\}) =c_1(\OO_{\P^1}(1))= \hat{c}l(\{\infty\}).
$$

For $W\subset X$ an irreducible closed subset, we set $cl_{(X,W)}:=\hat{c}l(W)$.
Then \ref{2.6.3}(1) implies that condition \cite[Theorem~1.2.3]{CR}(4) holds.

Finally, we prove \cite[Theorem~1.2.3]{CR}(3). Let $\imath:X\xr{} Y$ be a closed immersion between smooth schemes, and let $D\subset Y$ be an effective smooth divisor such that 
\begin{itemize}
\item $D$ meets $X$ properly, thus $D\cap X:=D\times_{Y} X$ is a divisor on X,
\item $D':=(D\cap X)_{red}$ is smooth and connected, and thus $D\cap X=n\cdot D'$
as divisors (for some $n\in \Z, n\geq 1$).
\end{itemize}
We denote by $\imath_X:X\xr{}(Y,X), \imath_{D'}:D'\xr{} (D,D')$ 
the morphisms in $Sm_*$ induced by $\imath$, and we define $g:(D,D')\xr{} (Y,X)$ in $Sm^*$ by  
the inclusion $D\subset Y$. Then the following equality is required to hold:
\begin{equation}\label{proof-cl-condition3}
P\hat{H}^*(g)(P\hat{H}_*(\imath_X)(1_X))=n\cdot P\hat{H}_*(\imath_{D'})(1_{D'}).
\end{equation}
Obviously, we may assume that $X$ and $Y$ are connected; we set $c:=\codim_YX$.
A priori, we need to prove the equality \eqref{proof-cl-condition3} in $H^c_{D'}(D,W\Omega^c_D)$. Since $\hat{H}^*(g)$, $\hat{H}_*(\imath_X)$ and $\hat{H}_*(\imath_{D'})$
are morphisms in $\cdrw_k$, they commute with Frobenius. Thus both sides of \eqref{proof-cl-condition3} are already contained in the part which is invariant under the Frobenius
$
H^c_{D'}(D,W\Omega^c_D)^F.
$
Denote by  $f:(D,D')\xr{} (Y,D')$ the morphism in $Sm_*$ which is induced by the inclusion $D\subset Y$.

{\em We claim that 
$$P\hat{H}_*(f): H^c_{D'}(D,W\Omega^c_D)\to H^{c+1}_{D'}(Y,W\Omega^{c+1}_Y) $$
 is injective on $H^c_{D'}(D,W\Omega^c_D)^F$.}

Indeed, by \cite[I, Thm. 5.7.2]{IlDRW} there is an exact sequence of pro-sheaves on $D_{\text{\'et}}$
\[0\to W_\bullet\Omega^c_{D,\text{log}}\to W_\bullet\Omega^c_D\xr{1-F} W_\bullet\Omega^c_D\to 0.\]
This yields an exact sequence of pro-groups
\[H^{c-1}_{D'}(D, W_\bullet\Omega^c_D)\to H^c_{D'}(D_{\text{\'et}}, W_\bullet\Omega^c_{D,\text{log}}) \to H^c_{D'}(D,W_\bullet\Omega^c_D)^F\to 0.\]
(Notice that $H^i_{D'}(D, W_\bullet\Omega^q_D)=H^i_{D'}(D_{\text{\'et}}, W_\bullet\Omega^q_D)$, since the $W_n\Omega^q_D$ are quasi-coherent on $W_n D$ and $ D_{\text{\'et}}=(W_n D)_{\text{\'et}}$.)
But $H^{c-1}_{D'}(D, W_\bullet\Omega^c_D)=0$ (see e.g. \eqref{semi-purity1}) and thus
\[\varprojlim_n H^c_{D'}(D_{\text{\'et}}, W_\bullet\Omega^c_{D,\text{log}}) =\varprojlim_n H^c_{D'}(D,W_\bullet\Omega^c_D)^F\stackrel{\eqref{2.6.2.3}}{=}H^c_{D'}(D,W\Omega^c_D)^F.\]
Combining this with Gros' Gysin isomorphism \eqref{2.6.2.2} we obtain an isomorphism
\[\Z_p\xr{\simeq} H^c_{D'}(D,W\Omega^c_D)^F,\quad 1 \mapsto \hat{c}l(D')\stackrel{\mbox{\tiny \ref{2.6.3}, (1)}}{=}P\hat{H}_*(\imath_{D'})(1_{D'}). \]
In the same way we also obtain an isomorphism
\[\Z_p\xr{\simeq} H^{c+1}_{D'}(Y,W\Omega^{c+1}_Y)^F,\quad 1 \mapsto \hat{c}l(D')\stackrel{\mbox{\tiny \ref{2.6.3}, (1)}}{=}P\hat{H}_*(f)\circ P\hat{H}_*(\imath_{D'})(1_{D'}). \]
This yields the claim.

Thus it suffices to prove the following equality
$$
P\hat{H}_*(f)P\hat{H}^*(g)(P\hat{H}_*(\imath_X)(1_X))=n\cdot P\hat{H}_*(f)P\hat{H}_*(\imath_{D'})(1_{D'}).
$$
For this denote by $g':D\xr{} (Y,D)$ in $Sm_*$  and $\Delta_Y:(Y,D')\xr{} (Y\times Y,D\times X)$ in $Sm^*$ the morphisms induced by the inclusion and the diagonal respectively.
Then
\begin{align*}
 P\hat{H}_*(f)P\hat{H}^*(g)(P\hat{H}_*(\imath_X)(1_X)) & =P\hat{H}_*(g')(1_D)\cup P\hat{H}_*(\imath_X)(1_X) \\
                                                &= P\hat{H}^*(\Delta_Y)(\hat{c}l(D)\times \hat{c}l(X))& \ref{2.6.3}, (1)\\
                                                &= \hat{c}l(D.X) & \ref{2.6.3}, (2)\\
                                                &=n \cdot \hat{c}l(D')\\
                                                &=n\cdot P\hat{H}_*(f\circ \imath_{D'})(1_{D'})&  \ref{2.6.3}, (1),\\
                                                &=n\cdot  P\hat{H}_*(f)P\hat{H}_*(\imath_{D'})(1_{D'})
\end{align*}
where $a\cup b:= P\hat{H}^*(\Delta_Y)(T(a\otimes b))$ and hence the first equality holds by the projection formula, see \cite[Proposition~1.1.11]{CR}.
This finishes the proof.
\end{proof}
\end{proposition}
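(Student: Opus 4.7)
The plan is to invoke the recognition criterion \cite[Theorem~1.2.3]{CR}, which, thanks to Proposition~\ref{proposition-semi-purity}, reduces the existence (and uniqueness) of a morphism $(\CH_*,\CH^*,\times,1)\to(P\hat{H}_*,P\hat{H}^*,T,e)$ to checking four items. Uniqueness is automatic from the theorem, so the entire burden is to verify these items for the candidate cycle class $\hat{cl}$ of Gros from \S\ref{Gros-results}.

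First I would dispose of the easy three conditions. Condition (1) of \cite[Theorem~1.2.3]{CR} asks that for a finite morphism $f\colon X\to Y$ of connected smooth $k$-schemes one has $f_*\circ f^*=\deg(f)\cdot\mathrm{id}$; this is exactly Proposition~\ref{2.1.8}. Condition (2), which compares the pushforwards under the $0$- and $\infty$-sections of $\P^1\to\Spec k$, follows from \ref{2.6.3}(1) identifying these pushforwards with $\hat{c}l(\{0\})$ and $\hat{c}l(\{\infty\})$, together with \ref{2.6.3}(3) which expresses both as $c_1(\sO_{\P^1}(1))$. Condition (4), the naturality of the cycle class with respect to open restriction, is the content of \ref{2.6.3}(1).

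The main obstacle is condition (3): given a closed immersion $\imath\colon X\hookrightarrow Y$ of smooth connected schemes, a smooth effective divisor $D\subset Y$ meeting $X$ properly with $(D\cap X)_{\mathrm{red}}=D'$ smooth and connected and $D\cap X=n\cdot D'$ as divisors, one must verify the equality
\[
P\hat{H}^*(g)\bigl(P\hat{H}_*(\imath_X)(1_X)\bigr)=n\cdot P\hat{H}_*(\imath_{D'})(1_{D'})\quad\text{in } H^c_{D'}(D,W\Omega^c_D),
\]
where $c=\codim_Y X$. My strategy is to push both sides further into $H^{c+1}_{D'}(Y,W\Omega^{c+1}_Y)$ via the morphism $f\colon (D,D')\to(Y,D')$ induced by the inclusion, and show that this further pushforward is injective on the Frobenius-fixed part. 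The injectivity follows from a Frobenius-fixed-part argument: the short exact sequence $0\to W_\bullet\Omega^c_{D,\log}\to W_\bullet\Omega^c_D\xrightarrow{1-F}W_\bullet\Omega^c_D\to 0$ of \cite[I, Thm.~5.7.2]{IlDRW}, combined with the vanishing $H^{c-1}_{D'}(D,W_\bullet\Omega^c_D)=0$ from \eqref{semi-purity1}, identifies $H^c_{D'}(D,W\Omega^c_D)^F$ with $\Z_p\cdot\hat{c}l(D')$ via Gros' Gysin isomorphism \eqref{2.6.2.2}; the same argument applied in $Y$ identifies $H^{c+1}_{D'}(Y,W\Omega^{c+1}_Y)^F$ with $\Z_p\cdot\hat{c}l(D')$, and the map $P\hat{H}_*(f)$ sends one generator to the other.

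After this reduction, the required identity becomes an equality in $H^{c+1}_{D'}(Y,W\Omega^{c+1}_Y)$ between $\hat{c}l(D.X)$ and $n\cdot\hat{c}l(D')$. Rewriting the left-hand side via the projection formula \cite[Prop.~1.1.11]{CR} and the diagonal identity $P\hat{H}_*(g')(1_D)\cup P\hat{H}_*(\imath_X)(1_X)=P\hat{H}^*(\Delta_Y)(\hat{c}l(D)\times \hat{c}l(X))$ from \ref{2.6.3}(1), the desired equation reduces to Gros' intersection formula \ref{2.6.3}(2), since $D.X=n\cdot D'$ by hypothesis. The principal conceptual content is therefore the Frobenius-invariance/log-sheaf argument that lets us replace the full group $H^c_{D'}(D,W\Omega^c_D)$ by a rank-one $\Z_p$-module in which all cycle classes live; once this is in place, the combinatorics of intersections and the projection formula do the rest.
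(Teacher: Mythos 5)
Your proposal is correct and follows essentially the same route as the paper: both verify the four conditions of \cite[Theorem~1.2.3]{CR}, with (1), (2), (4) handled via Proposition~\ref{2.1.8} and \ref{2.6.3}(1), (3), and both tackle condition (3) by pushing forward via $f:(D,D')\to(Y,D')$, establishing injectivity on the Frobenius-fixed parts using the logarithmic de Rham--Witt sequence and Gros' Gysin isomorphism to identify both sides with $\Z_p\cdot\hat{c}l(D')$, and then finishing with the projection formula and the intersection formula \ref{2.6.3}(2).
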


\begin{definition}\label{definition-cl}
We denote by
$$
\hat{c}l:(\CH_*,\CH^*,\times,1) \xr{} (\hat{H}_*,\hat{H}^*,T,e)
$$
the composition of the morphism in Proposition \ref{proposition-cl}
and Corollary \ref{corollary-sH-wct}. Note that there is no conflict
with the notation in section \ref{Gros-results}.
\end{definition}

\subsubsection{}
To a weak cohomology theory $F=(F_*,F^*,T,e)$ we can attach a graded additive 
category $Cor_F$ (\cite[Definition~1.3.5]{CR}). By definition we have
${\rm ob}(Cor_F)={\rm ob}(Sm_*)={\rm ob}(Sm^*)$, and 
$$
\Hom_{Cor_F}((X,\Phi),(Y,\Psi))=F(X\times Y,P(\Phi,\Psi)),
$$
where 
\begin{multline*}\label{definitionP}
P(\Phi,\Psi):=\{Z\subset X\times Y; \text{$Z$ is closed, ${\rm pr}_2 \mid Z$ is proper,} \\ 
\text{$Z\cap {\rm pr}_1^{-1}(W)\in {\rm pr}_2^{-1}(\Psi)$ for every $W\in \Phi$} \}.
\end{multline*}
The composition for correspondences is as usual:
$$
b\circ a= F_*(p_{13})(F^*(p_{12})(a)\cup F^*(p_{23})(b)),
$$ 
with $p_{ij}:X_1\times X_2\times X_3\xr{} X_i\times X_j$ the projection; the product $\cup$ is defined by
$
a\cup b:= F^*(\Delta)(T(a\otimes b))
$
(see \cite[1.3]{CR} for the details).

Moreover, there is a functor 
$$
\rho_F:Cor_F \xr{} ({\rm Ab})
$$
to  abelian groups defined by
\begin{equation*}
\begin{split}
  \quad &\rho_F(X,\Phi)=F(X,\Phi) \\
&\rho_F(\gamma)=(a\mapsto F_*({\rm pr}_2)(F^*({\rm pr}_1)(a)\cup \gamma)), \quad \text{for $\gamma\in F(X\times Y,P(\Phi,\Psi))$,}
\end{split}
\end{equation*}
(see \cite[1.3.9]{CR}).

The morphism 
$$
\hat{c}l: \CH=(\CH_*,\CH^*,\times,1) \xr{} \hat{H}=(\hat{H}_*,\hat{H}^*,T,e)
$$ 
(Definition \ref{definition-cl}) induces  a functor (see \cite[1.3.6]{CR}) 
$$
Cor(\hat{c}l): Cor_{\CH}\xr{} Cor_{\hat{H}}.
$$

For $(X,\Phi)\in Cor_{\hat{H}}$, the group $\rho_{\hat{H}}(X,\Phi)=\hat{H}(X,\Phi)$ is a de Rham-Witt module over $k$ (i.e.~an object in $\widehat{\drw}_k$). 
The next theorem states that the Chow correspondences act as morphism 
of de Rham-Witt modules.

\begin{thm}
The composition 
$$
Cor_{\CH}\xr{} Cor_{\hat{H}} \xr{\rho_{\hat{H}}} (Ab) 
$$
induces a functor 
\begin{equation}\label{cor-functor-k}
Cor_{\hat{c}l}: Cor_{\CH}\xr{} \widehat{\drw}_k.
\end{equation}
\begin{proof}
Let $(X,\Phi), (Y,\Psi)\in Sm$. 
Note that $Cor_{\CH}((X,\Phi),(Y,\Psi))$ is graded and is only non-trivial 
in even degrees. By definition 
$$
Cor^{2i}_{CH}((X,\Phi),(Y,\Psi))=\CH^{\dim X+i}(X\times Y,P(\Phi,\Psi)),
$$
(we may assume that $X$ is equidimensional),
where $\CH^{\dim X+i}(X\times Y,P(\Phi,\Psi))$ is generated by cycles $Z$ with
$\codim_{X\times Y}Z=\dim X+i$, as in \ref{subsubsection-chow-theory}.

We will show that a cycle 
$Z\in Cor^{2i}_{CH}((X,\Phi),(Y,\Psi))$ defines a morphism 
$$
\rho_{\hat{H}}(\hat{c}l(Z)): \hat{H}(X,\Phi)(-i)\xr{} \hat{H}(Y,\Psi)
$$
in $\widehat{\drw}_k$
(the shift by $-i$ concerns only the grading and the differential, which is multiplied with $(-1)^i$). 

Because pullback is a morphism of de Rham-Witt modules (Definition \ref{2.1.2})
and pushforward is a morphism of de Rham-Witt modules up to the
shift with the relative dimension (Definition \ref{2.1.6}), we can 
easily reduce the statement to the following claim. For all $(X,\Phi)$ and 
all irreducible closed subsets $Z\subset X$ of codimension $c$ the 
map 
\begin{align*}
\hat{H}(X,\Phi)&\xr{} \hat{H}(X\times X,\Phi\times \Phi_Z)(c) \\
a &\mapsto T(a\otimes \hat{c}l(Z))
\end{align*}
is a morphism in $\widehat{\drw}_k$. The compatibility with $F$ and $V$
follows immediately from Proposition \ref{2.5.7}(5) and the invariance 
$F(\hat{c}l(Z))=\hat{c}l(Z)$ (results of Gros \ref{2.6.3}(4)). 
The compatibility with $d$ also follows from Proposition \ref{2.5.7}(5)
and $d( \hat{c}l(Z))=0$ (\ref{2.6.3}(4)):
\begin{align*}
T(da\otimes \hat{c}l(Z))&\overset{{\rm dfn}}{=}(-1)^{\deg(da)\cdot c} da\times \hat{c}l(Z) &\text{\eqref{formula-KT},} \\
                        &= (-1)^{(\deg(a)+1)\cdot c} d(a\times  \hat{c}l(Z)) &\text{\ref{2.5.7}(5), \ref{2.6.3}(4),} \\
                        &=(-1)^{c}dT(a\otimes  \hat{c}l(Z))  &\text{\eqref{formula-KT}.} 
\end{align*}
\end{proof}
\end{thm}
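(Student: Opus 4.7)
The plan is to reduce the assertion to a compatibility of exterior multiplication with a cycle class. The degree $2i$ piece of $\Hom_{Cor_{\CH}}((X,\Phi),(Y,\Psi))$ is $\CH^{\dim X+i}(X\times Y,P(\Phi,\Psi))$, which is additively generated by classes $[Z]$ of irreducible closed subvarieties $Z\subset X\times Y$ of codimension $\dim X+i$ lying in $P(\Phi,\Psi)$. By the $\Z$-linearity of all the operations we may therefore assume $\gamma=[Z]$ for such a $Z$. Writing $p,q$ for the two projections from $X\times Y$ and $\Delta$ for the diagonal of $X\times Y$, unwinding the definitions of $\rho_{\hat{H}}$ and $\cup$ gives
\[
\rho_{\hat{H}}(\hat{c}l(Z))(a)\;=\;q_*\Delta^*\bigl(p^*(a)\times \hat{c}l(Z)\bigr).
\]

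Now $p^*$ and $\Delta^*$ are morphisms in $\widehat{\drw}_k$ by Proposition~\ref{2.1.3}, and $q_*$ is such a morphism up to the grading shift by the relative dimension of $q$ by Proposition~\ref{2.1.9}. Hence the whole statement reduces to the single claim that for every irreducible closed subset $Z$ of codimension $c$ in a smooth $k$-scheme $X'$, the map
\[
\hat{H}(X',\Phi')\longrightarrow \hat{H}(X',\Phi'\cap \Phi_Z)(c),\qquad a\mapsto a\times \hat{c}l(Z),
\]
commutes with $F$, $V$, and $d$.

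This last step is a direct calculation from two ingredients. First, Proposition~\ref{2.5.7}(5) records how the exterior product interacts with $F$, $V$, $d$, $\pi$, and $\underline{p}$. Second, Gros's identities in \ref{2.6.3}(4), after passing to the inverse limit, give $F(\hat{c}l(Z))=\hat{c}l(Z)$, $V(\hat{c}l(Z))=p\cdot \hat{c}l(Z)$, and $d(\hat{c}l(Z))=0$. For instance, $F$-equivariance follows from $F(a\times \hat{c}l(Z))=F(a)\times F(\hat{c}l(Z))=F(a)\times \hat{c}l(Z)$; $V$-equivariance from $V(a)\times \hat{c}l(Z)=V(a\times F(\hat{c}l(Z)))=V(a\times \hat{c}l(Z))$; and $d$-equivariance from the Leibniz rule, since $\hat{c}l(Z)$ is $d$-closed.

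The main technical point to keep track of is the sign bookkeeping: the grading shift by $(c)$ multiplies the differential by $(-1)^c$ (see Example~\ref{1.2.4}(3)), which must be reconciled with the Koszul sign $(-1)^{\deg a}$ in the Leibniz rule and with the sign $(-1)^{(i+p)\cdot j}$ appearing in the definition~\eqref{formula-KT} of $T$. Once the signs align, functoriality of the resulting assignment is automatic: $Cor_{\hat{c}l}\colon Cor_{\CH}\to Cor_{\hat{H}}$ has already been shown to be a functor and $\rho_{\hat{H}}$ is functorial into $(\mathrm{Ab})$, so the only point being added here is that the image of every Hom group actually lands in the subgroup of $\widehat{\drw}_k$-morphisms.
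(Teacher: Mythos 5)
Your proposal is correct and takes essentially the same route as the paper: reduce to the case of a single prime cycle, observe that $p^*$, $\Delta^*$ and $q_*$ are already morphisms of de Rham--Witt modules (up to a grading shift for $q_*$), and thereby reduce to showing that exterior multiplication with $\hat{c}l(Z)$ is a morphism in $\widehat{\drw}_k$, which follows from Proposition~\ref{2.5.7}(5) together with the Gros identities $F(\hat{c}l(Z))=\hat{c}l(Z)$ and $d(\hat{c}l(Z))=0$ from \ref{2.6.3}(4). Two small glitches worth flagging: the codomain of your reduced claim should be $\hat{H}(X'\times X',\Phi'\times\Phi_Z)(c)$ rather than $\hat{H}(X',\Phi'\cap\Phi_Z)(c)$, since $a\times\hat{c}l(Z)$ is the exterior product before applying $\Delta^*$; and the sign computation you defer is exactly the content the paper spells out (the $(-1)^{(i+p)j}$ in the definition of $T$ is what makes the $(-1)^c$ from the shift match the Koszul sign), so it should be carried out rather than merely noted.
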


\begin{proposition} \label{proposition-comparison-with-pull-push} 
\begin{itemize}
\item[(i)] For every $f:(X,\Phi)\xr{} (Y,\Psi)$ in $Sm^*$ the transpose of the graph 
$\Gamma(f)^t\subset Y\times X$ defines an element in 
$Cor_{\CH}^0((Y,\Psi),(X,\Phi))$ (i.e.~has degree $0$). The morphism 
$$
Cor_{\hat{c}l}(\Gamma(f)^t): \hat{H}(Y,\Psi)\xr{} \hat{H}(X,\Phi)
$$ 
is the same as $f^*$ in Definition \ref{2.1.2}. 
\item[(ii)] For every $f:(X,\Phi)\xr{} (Y,\Psi)$ in $Sm_*$ the graph 
$\Gamma(f)\subset X\times Y$ defines an element in 
$Cor_{\CH}^{-2r}((X,\Phi),(Y,\Psi))$ (i.e.~has degree $-2r=2(\dim(Y)-\dim(X))$). 
The morphism 
$$
Cor_{\hat{c}l}(\Gamma(f)): \hat{H}(X,\Phi)(r) \xr{} \hat{H}(Y,\Psi)
$$ 
is the same as $f_*$ in Definition \ref{2.1.6}.
\end{itemize}
\begin{proof}
For (i). Let $i_2:(X,f^{-1}\Psi) \xr{} (Y\times X,\pr_Y^{-1}\Psi)$ be 
the morphism 
in $Sm^*$ induced by the morphism of schemes $(f,id_X)$. 
Let $i_1:X\xr{} (Y\times X,\Gamma(f)^t)$ and  $i_3:(X,f^{-1}\Psi)\xr{} (Y\times X,\Gamma(f)^t\cap \pr_Y^{-1}\Psi)$ 
 be the morphism in $Sm_*$ induced
by $(f,id_X)$. 
For all $a\in \hat{H}(Y,\Psi)$ we use the projection formula 
\cite[Proposition~1.1.11]{CR} to obtain:
\begin{align*}
Cor_{\hat{c}l}(\Gamma(f)^t)(a)
&=\hat{H}_*(\pr_X)(\hat{H}^*(\pr_Y)(a)\cup \hat{c}l(\Gamma(f)^t)) \quad \text{by definition,}\\
&=\hat{H}_*(\pr_X)(\hat{H}^*(\pr_Y)(a)\cup \hat{H}_*(i_1)(1)) \quad \text{functoriality of $\hat{c}l$ (cf. \ref{definition-cl})}\\
&=\hat{H}_*(\pr_X)(\hat{H}_*(i_3)(\hat{H}^*(i_2)\hat{H}^*(\pr_Y)(a)) \quad \text{projection formula,} \\
&=\hat{H}_*(\pr_X)(\hat{H}_*(i_3)(\hat{H}^*(f)(a)) \\
&=\hat{H}^*(f)(a).
\end{align*}
The proof of (ii) is similar.
\end{proof}
\end{proposition}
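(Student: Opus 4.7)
My plan is to reduce both statements to the projection formula \cite[Proposition~1.1.11]{CR} after identifying the cycle classes of the graph and its transpose as pushforwards of $1$ along the graph embedding.

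For (i), consider $f:(X,\Phi)\to (Y,\Psi)$ in $Sm^*$ and let $j_f:=(f,\id_X):X\to Y\times X$. The image of $j_f$ is the transpose of the graph, so $j_f$ defines a closed immersion $i_1:X\hookrightarrow (Y\times X, \Gamma(f)^t)$ in $Sm_*$. By the naturality of $\hat{c}l$ (Definition \ref{definition-cl}) we get $\hat{c}l(\Gamma(f)^t)=\hat{H}_*(i_1)(1)$. I would then introduce the auxiliary morphisms $i_2:(X,f^{-1}\Psi)\to (Y\times X,\pr_Y^{-1}\Psi)$ in $Sm^*$ and $i_3:(X,f^{-1}\Psi)\to (Y\times X,\Gamma(f)^t\cap \pr_Y^{-1}\Psi)$ in $Sm_*$ (both coming from $j_f$), so that the projection formula rewrites the cup product $\hat{H}^*(\pr_Y)(a)\cup \hat{H}_*(i_1)(1)$ as $\hat{H}_*(i_3)\bigl(\hat{H}^*(i_2)\hat{H}^*(\pr_Y)(a)\bigr)$. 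Since $\pr_Y\circ i_2=f$ (in $Sm^*$) and $\pr_X\circ i_3=\id_X$ (by construction), functoriality of $\hat{H}^*$ and $\hat{H}_*$ collapses the expression to $\hat{H}^*(f)(a)=f^*(a)$, as required.

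For (ii), the argument is entirely symmetric: with $f:(X,\Phi)\to (Y,\Psi)$ in $Sm_*$, take $\tilde{\jmath}_f:=(\id_X,f):X\to X\times Y$, which realizes the graph as a closed subscheme and, since $f|_\Phi$ is proper, gives a morphism $\tilde\imath_1:X\hookrightarrow (X\times Y,\Gamma(f))$ in $Sm_*$ with $\hat{c}l(\Gamma(f))=\hat{H}_*(\tilde\imath_1)(1)$. Applying the projection formula to $\hat{H}^*(\pr_X)(a)\cup \hat{H}_*(\tilde\imath_1)(1)$ and then using $\pr_X\circ \tilde{\jmath}_f=\id_X$ and $\pr_Y\circ \tilde{\jmath}_f=f$ (matched with the appropriate family of supports $\pr_Y^{-1}\Psi\cap \Gamma(f)=\tilde{\jmath}_f(\Phi)$, which is contained in $\pr_Y^{-1}\Psi$) yields $\hat{H}_*(f)(a)=f_*(a)$. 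The grading shift $(r)$ that enters in (ii) matches automatically because $\Gamma(f)$ has codimension $\dim(Y)$ in $X\times Y$ and thus the graph cycle has Chow degree $-2r=2(\dim(Y)-\dim(X))$, so the composition of Chow pushforward/pullback degrees is consistent with Definition \ref{2.1.6}.

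The only delicate point I anticipate is the bookkeeping of families of supports in the intermediate factorizations, in particular verifying that the cartesian square needed to invoke the projection formula with supports (as in \cite[Proposition~1.1.11]{CR}) actually involves morphisms in $Sm^*$ resp.\ $Sm_*$ with the correct support conditions. Once the support-indexing is set up carefully, the identification of $\hat{c}l(\Gamma(f)^t)$ (resp.\ $\hat{c}l(\Gamma(f))$) with $\hat{H}_*(i_1)(1)$ (resp.\ $\hat{H}_*(\tilde\imath_1)(1)$) is really the crucial input, and everything else is formal manipulation using the axioms of a weak cohomology theory with supports.
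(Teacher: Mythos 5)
Your proposal is correct and follows essentially the same route as the paper's proof: identify $\hat{c}l(\Gamma(f)^t)$ (resp.\ $\hat{c}l(\Gamma(f))$) as $\hat{H}_*(i_1)(1)$ by functoriality of $\hat{c}l$, apply the projection formula \cite[Proposition~1.1.11]{CR} via the auxiliary morphisms $i_2,i_3$, and collapse using $\pr_Y\circ i_2=f$ and $\pr_X\circ i_3=\id_X$. The support bookkeeping you flag as the delicate point is exactly what the paper also handles by the same choice of $i_1,i_2,i_3$, so the two arguments match.
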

 
\subsection{Correspondence action on relative Hodge-Witt cohomology}\label{Corr-on-rel-HW} For a scheme $S$ over $k$ we need a relative version of 
the functor \eqref{cor-functor-k} with values in de Rham-Witt modules $\cdrw_S$ over $S$.   

Let $f:X\xr{} S$ and $g:Y\xr{} S$ be two schemes over $S$. Suppose that $X,Y$ are smooth over $k$. Since $X\times_S Y\subset X\times Y$ defines a 
closed subset we can define the family of supports on $X\times Y$
\eq{relsupport}{P(X\times_S Y):= P(\Phi_X,\Phi_Y)\cap X\times_S Y,}
this is the family of supports consisting of all closed subsets of $X\times_S Y$, which are proper over $Y$.
(Recall that $\Phi_X$ denotes the family of all closed subsets of $X$.)
If  $h: Z\to S$ is another $S$-schemes which is smooth over $k$, then 
\begin{equation*}
\begin{cases} \text{$p_{13} \mid p_{12}^{-1}(P(X\times_S Y))\cap p_{23}^{-1}(P(Y\times_S Z))$ is proper,} 
\\
p_{13}(p_{12}^{-1}(P(X\times_S Y))\cap p_{23}^{-1}(P(Y\times_S Z))\subset P(X\times_S Z). \end{cases}
\end{equation*} 
By using the fact that $\CH$ defines a weak cohomology theory with supports we obtain a composition (see \cite[1.3]{CR} for details)
\begin{equation}\label{C_S-composition}
\CH(X\times Y, P(X\times_S Y)) \times \CH(Y\times Z, P(Y\times_S Z))  \xr{\circ} \CH(X\times Z, P(X\times_S Z)),
\end{equation}
$$
(a,b)\mapsto b\circ a: = \CH_*(p_{13})(\CH^*(p_{12})(a)\cup \CH^*(p_{23})(b)), 
$$
with morphisms $p_{ij}$ induced by the projections:
\begin{align*} 
p_{12}:(X\times Y\times Z, P(X\times_S Y)\times Z) &\xr{} (X\times Y, P(X\times_S Y)) \in Sm^*, \\
p_{23}:(X\times Y\times Z, X\times P(Y\times_S Z)) &\xr{} (Y\times Z, P(Y\times_S Z)) \in Sm^*, \\
p_{13}:(X\times Y\times Z, P(X\times_S Y)\cap P(Y\times_S Z)) &\xr{} (X\times Z, P(X\times_S Z)) \in Sm_*. 
\end{align*}
 
\begin{definition}\label{definition-C_S}
We define $\mc{C}_S$ to be the (graded) additive category whose objects are given by $k$-morphisms 
$
f:X\xr{} S,
$
where $X$ is a smooth scheme over $k$. Sometimes we will by abuse of notation write $X$ instead of $f:X\to S$.
The morphisms are defined by 
$$
\Hom_{\mc{C}_S}(X,Y):=\CH(X\times Y, P(X\times_S Y)),
$$
and the composition is as in \eqref{C_S-composition}.
For $f:X\xr{} S$ the identity element in $\Hom_{\mc{C}_S}(X,X)$ is the
diagonal $\Delta_X$.
\end{definition}

The verification that the composition defines a category is a straightforward
calculation. Notice that if $X\to S$ and $Y\to S$ are proper, then $\Hom_{\mc{C}_S}(X,Y)= \CH(X\times_S Y)$. The full subcategory of $\mc{C}_S$ whose objects are proper $S$-schemes, which are smooth over $k$, 
has been defined and studied in \cite[Definition~2.8]{CH}. 

\subsubsection{} Let $a\in \Hom_{\mc{C}_S}(f:X\xr{} S,g:Y\xr{} S)=\CH(X\times Y, P(X\times_S Y))$, with $X$ and $Y$ integral. Suppose 
that $a$ is of degree $i$, i.e.~is contained in $\CH^{\dim X+i}$. 
For every open subset $U\subset S$
we get by restriction an induced cycle 
$$
a_U\in \CH(f^{-1}(U)\times g^{-1}(U), P(f^{-1}(U)\times_U g^{-1}(U))).
$$
By using the functor $Cor_{\hat{c}l}$ \eqref{cor-functor-k} we obtain a 
morphism in $\cdrw_k$: 
$$
Cor_{\hat{c}l}(a_U):\hat{H}(f^{-1}(U))(-i) \xr{} \hat{H}(g^{-1}(U)). 
$$
Recall that $\hat{H}(f^{-1}(U))=\bigoplus_{p,q\ge 0} H^q(f^{-1}(U),W\Omega^p_{f^{-1}(U)})$;
we denote by 
\begin{equation*}
U\mapsto \sF(U)=\hat{H}(f^{-1}(U))(-i), \quad U\mapsto \sG(U):=\hat{H}(g^{-1}(U)), 
\end{equation*}
the obvious presheaves on $S$. 

\begin{lemma}\label{lemma-correspondences-relative} 
Let $a\in \Hom_{\mc{C}_S}(f:X\xr{} S,g:Y\xr{} S)=\CH(X\times Y, P(X\times_S Y))$.
The collection $(Cor_{\hat{c}l}(a_U))_{U\subset S}$ defines a morphism of presheaves
$$
(Cor_{\hat{c}l}(a_U))_{U\subset S}: \sF \xr{} \sG.
$$
By sheafification we obtain a morphism of sheaves 
\begin{equation}\label{lemma-correspondences-relative-statement}
(Cor_{\hat{c}l}(a_U))_{U\subset S}: \bigoplus_{p,q\ge 0} R^qf_*W\Omega^p_X(-i) \xr{} \bigoplus_{p,q\ge 0} R^qg_*W\Omega^p_Y.
\end{equation}
\begin{proof}
Let $U,V$ be two open subsets of $S$ such that $V\subset U$.
We denote by $\Gamma(f^{-1}(V)\xr{} f^{-1}(U))^t\subset f^{-1}(U)\times f^{-1}(V)$ 
the transpose of the graph of the inclusion $f^{-1}(V)\xr{} f^{-1}(U)$, and 
similarly for $\Gamma(g^{-1}(V)\xr{} g^{-1}(U))^t$. 

Since the restrictions $\sF(U)\xr{} \sF(V)$ and $\sG(U)\xr{} \sG(V)$ are induced by 
the correspondences
$Cor_{\hat{c}l}(\Gamma(f^{-1}(V)\xr{} f^{-1}(U))^t)$ and 
$Cor_{\hat{c}l}(\Gamma(g^{-1}(V)\xr{} g^{-1}(U))^t)$, respectively, we only need to prove that
$$
\Gamma(g^{-1}(V)\xr{} g^{-1}(U))^t \circ a_U= a_V \circ \Gamma(f^{-1}(V)\xr{} f^{-1}(U))^t
$$
as morphisms in $Cor_{CH}$, i.e.~as cycles in 
$$\CH(f^{-1}(U)\times g^{-1}(V), P(f^{-1}(U)\times_S g^{-1}(V))).$$ 
Via the identification $f^{-1}(U)\times_S g^{-1}(V)=f^{-1}(V)\times_S g^{-1}(V)$ both 
sides equal $a_V$. 

Since the sheafification of $\sF$ is $\bigoplus_{p,q\ge 0} R^qf_*W\Omega^p_X(-i)$,
and similarly for $\sG$, we obtain \eqref{lemma-correspondences-relative-statement}. 
\end{proof}
\end{lemma}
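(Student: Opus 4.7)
The plan is to reduce the presheaf compatibility to an equality of Chow correspondences, and then verify that equality by a direct base change argument.

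First I would record the key functoriality: by Proposition~\ref{proposition-comparison-with-pull-push}(i), for an open immersion $j:W\hookrightarrow Z$ between smooth $k$-schemes the pullback $j^*: \hat{H}(Z)\to \hat{H}(W)$ coincides with the correspondence action $Cor_{\hat{c}l}([\Gamma_j^t])$. Applied to the open immersions $f^{-1}(V)\hookrightarrow f^{-1}(U)$ and $g^{-1}(V)\hookrightarrow g^{-1}(U)$ for $V\subset U\subset S$, this identifies the restriction maps $\sF(U)\to \sF(V)$ and $\sG(U)\to \sG(V)$ with the action of the corresponding transpose-of-graph correspondences.

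Next, since $Cor_{\hat{c}l}$ is a \emph{functor} on the correspondence category, the required presheaf compatibility
\[
Cor_{\hat{c}l}(a_V)\circ Cor_{\hat{c}l}([\Gamma^t_{f^{-1}(V)\hookrightarrow f^{-1}(U)}]) = Cor_{\hat{c}l}([\Gamma^t_{g^{-1}(V)\hookrightarrow g^{-1}(U)}])\circ Cor_{\hat{c}l}(a_U)
\]
will follow once we verify the corresponding equality of Chow correspondences in $\CH(f^{-1}(U)\times g^{-1}(V), P(f^{-1}(U)\times_S g^{-1}(V)))$:
\[
[\Gamma^t_{g^{-1}(V)\hookrightarrow g^{-1}(U)}] \circ a_U \;=\; a_V \circ [\Gamma^t_{f^{-1}(V)\hookrightarrow f^{-1}(U)}].
\]
Both sides compute refined intersection products on a triple product (of the form $\mathrm{pr}_{13*}(\mathrm{pr}_{12}^*(-)\cup \mathrm{pr}_{23}^*(-))$). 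Using the fact that the graph of an open immersion $j:W\hookrightarrow Z$ is just the diagonal of $W$ sitting inside $Z\times W$, the triple product degenerates: pulling back $a_U$ along the projection that inserts the graph factor simply restricts $a_U$ to the open subscheme where the middle coordinate lies in $f^{-1}(V)$ (respectively $g^{-1}(V)$). Under the canonical identification
\[
f^{-1}(U)\times_S g^{-1}(V) \;=\; f^{-1}(V)\times_S g^{-1}(V),
\]
both compositions are therefore represented by the same cycle, namely $a_V$. This is essentially a formal check using the projection formula \cite[Prop.~1.1.11]{CR} and compatibility of Chow push-forward with flat base change in the open immersion direction.

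Having established the equality of correspondences, the collection $\{Cor_{\hat{c}l}(a_U)\}_U$ is a morphism of presheaves $\sF\to\sG$. Sheafification gives the claimed map \eqref{lemma-correspondences-relative-statement} of $W\OO_S$-modules, since the sheafification of $U\mapsto \hat{H}(f^{-1}(U))$ is $\bigoplus_{p,q}R^qf_*W\Omega^p_X$ (and analogously for $g$). The main (and only nontrivial) obstacle is the verification of the Chow-level identity above; I expect this to be a short direct computation via Fulton's refined intersection formalism, but it is the only place where one actually uses the specific shape of the supports $P(X\times_S Y)$ and the compatibility of restriction to open subsets with composition of correspondences.
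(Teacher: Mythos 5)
Your proposal is correct and follows essentially the same route as the paper: identify the restriction maps with the action of transpose-of-graph correspondences (via Proposition~\ref{proposition-comparison-with-pull-push}), reduce the presheaf compatibility to a Chow-level identity by functoriality of $Cor_{\hat{c}l}$, observe that both compositions coincide with $a_V$ under the identification $f^{-1}(U)\times_S g^{-1}(V)=f^{-1}(V)\times_S g^{-1}(V)$, and conclude by sheafification.
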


\begin{proposition}\label{proposition-C_S-to-dRW_S}
The assignment 
\begin{align*}
\hat{\sH}(?/S)&:\mc{C}_S \xr{} \cdrw_S, \\
\hat{\sH}(X/S)&:= \bigoplus_{p,q\ge 0} R^qf_*W\Omega^p_X, \\
\hat{\sH}(a/S)&:= (Cor_{\hat{c}l}(a_U))_{U\subset S},
\end{align*}
(cf.~Lemma \ref{lemma-correspondences-relative}) defines a functor to de Rham-Witt modules over $S$.
\end{proposition}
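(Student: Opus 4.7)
The plan is to verify three things in sequence: (i) that $\hat{\sH}(X/S)$ is naturally an object of $\cdrw_S$; (ii) that $\hat{\sH}(a/S)$ is a morphism in $\cdrw_S$, not just a morphism of sheaves of abelian groups; and (iii) functoriality, i.e.\ compatibility with composition and the fact that the diagonal $\Delta_X$ acts as the identity.

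For (i), on $W\Omega_X$ we have the Frobenius, Verschiebung, differential, and the $W\OO_X$-module structure. Pushing forward via $f$ and taking cohomology yields on $R^qf_*W\Omega^p_X$ induced operators $F,V,d$ together with a $W\OO_S$-module structure coming from the ring map $W\OO_S\to f_*W\OO_X$. A direct check (for instance by working locally on $S$ and using the Godement resolution $G(W_\bullet\Omega_X)$ from \ref{2.5.2}) shows that the axioms of Definition \ref{definition-cdRW} are satisfied.

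For (ii), Lemma \ref{lemma-correspondences-relative} already produces a morphism of sheaves of abelian groups $\hat{\sH}(a/S)$ by sheafifying the sectionwise correspondence action $(Cor_{\hat{c}l}(a_U))_U$. What is left is compatibility with $F,V,d$ and $W\OO_S$-linearity. The compatibility with $F,V,d$ is inherited from the theorem producing the functor $Cor_{\hat{c}l}: Cor_{\CH}\to \widehat{\drw}_k$ of \eqref{cor-functor-k}: for each open $U\subset S$ the map $Cor_{\hat{c}l}(a_U)$ is already a morphism in $\cdrw_k$, and since $F,V,d$ on $R^qf_*W\Omega^p_X$ are defined sectionwise, the commutations pass to the sheafification. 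The main new point is $W\OO_S$-linearity. This is where I expect the bulk of the work to lie. The idea is that for $s\in W\OO_S(U)$, $\alpha\in \hat{H}(f^{-1}(U))$ and a cycle $Z\in P(X\times_S Y)$ restricted to $U$, the action of $a_U$ factors as $\pr_{2*}(\pr_1^*(\alpha)\cup \hat{c}l(Z))$; then
\[
a_U(s\cdot \alpha) = \pr_{2*}\bigl(\pr_1^*(f^*s)\cup \pr_1^*\alpha\cup \hat{c}l(Z)\bigr),
\]
and because $Z$ is supported in $X\times_S Y$ one has $\pr_1^*(f^*s)\cup \hat{c}l(Z) = \pr_2^*(g^*s)\cup \hat{c}l(Z)$. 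By the projection formula \cite[Proposition~1.1.11]{CR} this reduces to $g^*s\cdot a_U(\alpha)$. Thus $\hat{\sH}(a/S)$ is $W\OO_S$-linear and hence a morphism in $\cdrw_S$.

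For (iii), given composable morphisms $a:X\to Y$ and $b:Y\to Z$ in $\mc{C}_S$, the composition $b\circ a$ is by definition computed by $\pr_{13*}(\pr_{12}^*a\cup \pr_{23}^*b)$, carried out in the weak cohomology theory $\CH$ with the supports inherited from $P(X\times_S Y)$ and $P(Y\times_S Z)$; for each open $U\subset S$ the restriction $(b\circ a)_U$ agrees with $b_U\circ a_U$ inside $Cor_{\CH}$. Functoriality of the functor $Cor_{\hat{c}l}$ then gives $Cor_{\hat{c}l}((b\circ a)_U)=Cor_{\hat{c}l}(b_U)\circ Cor_{\hat{c}l}(a_U)$, and sheafifying yields $\hat{\sH}((b\circ a)/S)=\hat{\sH}(b/S)\circ\hat{\sH}(a/S)$. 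Preservation of identities follows from Proposition \ref{proposition-comparison-with-pull-push}(i) applied to $\id_X$, since the transpose of the graph of the identity is $\Delta_X$ and the induced map is $\id_X^*=\id$. Combining (i)--(iii) gives the functor claimed. The step I expect to need the most care is the $W\OO_S$-linearity in (ii), which requires the above projection formula argument based crucially on the fact that cycles in $P(X\times_S Y)$ are supported in the fibre product $X\times_S Y$ rather than merely in $X\times Y$.
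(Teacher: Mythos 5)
Your proposal follows the same overall route as the paper: compatibility with composition from $(b\circ a)_U=b_U\circ a_U$ and the functoriality of $Cor_{\hat{c}l}$, compatibility with $F,V,d$ because each $Cor_{\hat{c}l}(a_U)$ is already a morphism in $\cdrw_k$, and $W\OO_S$-linearity via the projection formula by showing $\pr_1^*(f^*s)\cup\hat{c}l(Z)=\pr_2^*(g^*s)\cup\hat{c}l(Z)$. You correctly identified that the last identity is the crux of the argument.

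However, you assert that identity with the gloss ``because $Z$ is supported in $X\times_S Y$'', and this step is a genuine gap. The two classes $\pr_1^*(f^*s)$ and $\pr_2^*(g^*s)$ do not agree in $H^0(X\times Y, W\OO_{X\times Y})$ — they only agree after restriction to $Z$. Since $Z$ is an arbitrary irreducible closed subset, you cannot simply restrict to $Z$ and compare there. The paper's Lemma \ref{lemma-linear2} handles this by picking an open $U\subset X\times Y$ on which $Z\cap U$ is smooth, then invoking the semi-purity statement (Proposition \ref{proposition-semi-purity}) to see that the restriction map $H^c_Z(X\times Y,W\Omega^c_{X\times Y})\to H^c_{Z\cap U}(U,W\Omega^c_U)$ is injective; on $U$ one uses $\hat{c}l([Z\cap U])=\hat{H}_*(\imath_1)(1)$ and the projection formula to reduce to the tautology $g\circ\pr_2\circ\imath_2=f\circ\pr_1\circ\imath_2$ on $Z\cap U$. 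Without the semi-purity input your argument doesn't close; this injectivity is precisely what makes it legitimate to ``restrict to $Z$'' as your intuition demands.
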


For the proof of the Proposition we will need the following lemma.
\begin{lemma} \label{lemma-linear2}
Let $S=\Spec(R)$. Let $X,Y$ be  $S$-schemes which are smooth over
$k$. We denote by $f^*:W(R)\xr{} H^0(X,W\OO_X)$ and 
$g^*:W(R)\xr{} H^0(Y,W\OO_Y)$ the maps induced by $f:X\xr{} S$
and $g:Y\xr{} S$, respectively. We denote by $\pr_1:X\times Y\xr{} X$
and $\pr_2:X\times Y\xr{} Y$ the projections.

Let $Z\subset X\times_S Y$ be an irreducible closed subset which is proper over $Y$; we set 
$c=\codim_{X\times Y}Z$. Then 
$$
 \hat{H}^*(\pr_2)(g^*(r))\cup \hat{c}l([Z])=  \hat{H}^*(\pr_1)(f^*(r))\cup \hat{c}l([Z])
$$
in $H^c_Z(X\times Y, W\Omega^c_{X\times Y})$.
\begin{proof}
Choose an open set $U\subset X\times Y$ such that $Z\cap U$ is nonempty and smooth. 
Since the natural map 
$$
H^c_Z(X\times Y, W\Omega^c_{X\times Y})\to H^c_{Z\cap U}(U, W\Omega^c_{U})
$$ 
is injective (Proposition \ref{proposition-semi-purity}),
it suffices to check the equality on $H^c_{Z\cap U}(U, W\Omega^c_{U})$. 
We write $\imath_1:Z\cap U\xr{} (U,Z\cap U)$ in $Sm_*$ and $\imath_2:Z\cap U \xr{} U$ in $Sm^*$ for the obvious morphisms. 
By using the projection formula and 
$\hat{c}l([Z\cap U])=\hat{H}_*(\imath_1)(1)$ (\ref{2.6.3}(1)) we reduce to the statement 
$$
\hat{H}^*(\imath_2)\hat{H}^*(\pr_2)(g^*(r))=\hat{H}^*(\imath_2)\hat{H}^*(\pr_1)(f^*(r)).
$$
This follows from $g\circ \pr_2 \circ \imath_2 = f\circ \pr_1 \circ \imath_2$. 
\end{proof}
\end{lemma}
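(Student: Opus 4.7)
}
The strategy is to shrink to a smooth dense open of $Z$ using semi-purity, rewrite the cycle class there as a pushforward from the smooth locus of $Z$ via Gros' formula, apply the projection formula to move both factors on the left of the cup product back to the smooth locus, and finally exploit the fact that $Z\subset X\times_S Y$ to identify the resulting two pullbacks of $r\in W(R)$.

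First I would pick a nonempty smooth open subscheme $Z^\circ\subset Z$ and an open $U\subset X\times Y$ with $U\cap Z=Z^\circ$. Since $H^c_Z(X\times Y, W\Omega^c_{X\times Y})$ is precisely the $(p,q)=(c,c)$--summand of $P\hat{H}_{2\dim Z}(X\times Y, Z)$, and $Z^\circ$ contains the generic point of $Z$, the semi-purity Proposition~\ref{proposition-semi-purity} guarantees that the restriction
\[
H^c_Z(X\times Y, W\Omega^c_{X\times Y})\longrightarrow H^c_{Z^\circ}(U, W\Omega^c_U)
\]
is injective. So it suffices to check the asserted equality after restricting to $U$, which by the compatibility of $\cup$ and $\hat{c}l$ with restriction reduces the task to proving the identity with $Z$ replaced by $Z^\circ$ and $X\times Y$ replaced by $U$.

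Next, write $\imath_1:Z^\circ\to(U,Z^\circ)$ for the inclusion viewed as a morphism in $Sm_*$ and $\imath_2:Z^\circ\to U$ for the same inclusion viewed in $Sm^*$. By the formula of Gros recalled in \ref{2.6.3}(1) we have $\hat{c}l([Z^\circ])=\hat{H}_*(\imath_1)(1)$. Applying the projection formula \cite[Proposition~1.1.11]{CR} to both sides rewrites the desired equality as
\[
\hat{H}_*(\imath_1)\bigl(\hat{H}^*(\imath_2)\hat{H}^*(\pr_2)g^*(r)\bigr)
=\hat{H}_*(\imath_1)\bigl(\hat{H}^*(\imath_2)\hat{H}^*(\pr_1)f^*(r)\bigr),
\]
so it is enough to show that $\hat{H}^*(\pr_2\circ\imath_2)\circ g^*=\hat{H}^*(\pr_1\circ\imath_2)\circ f^*$ as maps $W(R)\to H^0(Z^\circ, W\OO_{Z^\circ})$.

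This last step is the geometric content of the hypothesis $Z\subset X\times_S Y$: the compositions $g\circ\pr_2\circ\imath_2$ and $f\circ\pr_1\circ\imath_2$ coincide as $k$-morphisms $Z^\circ\to S$ (both equal the structure morphism of $Z^\circ$ over $S$), and hence by the contravariant functoriality of $\hat{H}^*$ (Proposition~\ref{2.1.3}) they induce identical maps $W(R)\to H^0(Z^\circ, W\OO_{Z^\circ})$. The only mildly delicate point in the plan is applying semi-purity with the correct bookkeeping of degrees, namely noting that $c=\codim_{X\times Y}Z$ is precisely the degree at which the injectivity half of Proposition~\ref{proposition-semi-purity} applies to the pair $(X\times Y, Z)$; everything else is a formal manipulation with Gros' cycle class and the projection formula.
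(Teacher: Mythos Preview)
Your proposal is correct and follows essentially the same approach as the paper's proof: reduce via semi-purity to a smooth open of $Z$, express the cycle class there as $\hat{H}_*(\imath_1)(1)$ using \ref{2.6.3}(1), apply the projection formula, and conclude from $g\circ\pr_2\circ\imath_2=f\circ\pr_1\circ\imath_2$. The only cosmetic difference is your notation $Z^\circ$ for what the paper calls $Z\cap U$.
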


\begin{proof}[Proof of Proposition \ref{proposition-C_S-to-dRW_S}] 
For two composable morphisms $a,b$ in $\mc{C}_S$, we clearly have 
$$
(b\circ a)_U=b_U\circ a_U
$$
for every open $U\subset S$. 
This implies $\hat{H}(b\circ a/S)=\hat{H}(b/S)\circ \hat{H}(a/S)$.

Moreover, $Cor_{\hat{c}l}(a_U)$ is a morphism in $\cdrw_k$ for all $U$, 
thus $\hat{\sH}(a/S)$ commutes with $F,V,d$. 
Finally, we need to show that $\hat{\sH}(a/S)$ induces a morphism of $W(\OO_S)$-modules. 
For this, we may assume that $S=\Spec(R)$ is affine, and it suffices to show that 
$$
Cor_{\hat{c}l}(a):\hat{H}(X)(-i) \xr{} \hat{H}(Y) 
$$
is $W(R)$-linear. We proceed as in the proof of \cite[Theorem~3.2.3]{CR}.
 The ring homomorphism  $f^*:W(R)\xr{} H^0(X,W\OO_X)$
and $g^*:W(R)\xr{} H^0(Y,W\OO_Y)$ induce the $W(R)$-module structures on 
$\hat{H}(X)$ and $\hat{H}(Y)$ via the $\cup$-product: 
$$
r\cdot a= f^*(r)\cup a, 
$$
for all $r\in W(R)$ and $a\in \hat{H}(X)$; similarly for $\hat{H}(Y)$.
We have to prove the following equality for all $r\in R, a\in \hat{H}(X)$, and 
irreducible closed subsets $Z\subset X\times_S Y$:
$$g^*(r)\cup  \hat{H}_*(\pr_2)( \hat{H}^*(\pr_1)(a)\cup \hat{c}l([Z]))= \hat{H}_*(\pr_2)( \hat{H}^*(\pr_1)(f^*(r)\cup a)\cup \hat{c}l([Z])).$$
For this, it is enough to show that
$$
 \hat{H}^*(\pr_2)(g^*(r))\cup \hat{c}l([Z])=  \hat{H}^*(\pr_1)(f^*(r))\cup \hat{c}l([Z])
$$
in $H^c_Z(X\times Y, W\Omega^c_{X\times Y})$, with $c$ the codimension of $Z$.
Lemma \ref{lemma-linear2} implies the claim.
\end{proof}

\begin{proposition}\label{proposition-comparison-with-pull-push-over-S}
Let $X$ and $Y$ be two $S$-schemes which are smooth over $k$.
\begin{itemize} 
\item[(i)] If $Z$ is the transpose of the graph of a morphism $h:X\xr{} Y$ over $S$ then $\hat{\sH}([Z]/S)$ is the pullback morphism defined in \ref{2.1.2}.
\item[(ii)] If $Z$ is the graph of a proper morphism $h:X\xr{} Y$ over $S$ then $\hat{\sH}([Z]/S)$ is the pushforward morphism defined in \ref{2.1.6}.
\end{itemize}
\begin{proof}
The statement follows from Proposition \ref{proposition-comparison-with-pull-push}.
\end{proof}
\end{proposition}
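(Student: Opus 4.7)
The plan is to reduce the statement to its absolute counterpart, Proposition~\ref{proposition-comparison-with-pull-push}, by exploiting the fact that $\hat{\sH}(a/S)$ was defined in Lemma~\ref{lemma-correspondences-relative} as the sheafification of the presheaf $U\mapsto Cor_{\hat{c}l}(a_U)$ on $S$, where $a_U$ denotes the restriction of the cycle $a$ to $f^{-1}(U)\times g^{-1}(U)$.

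The first step is to observe that graph constructions commute with restriction to open subsets of the base. More precisely, if $h:X\to Y$ is an $S$-morphism and $U\subset S$ is open, then the restriction $h_U:f^{-1}(U)\to g^{-1}(U)$ satisfies $[\Gamma(h)]_U=[\Gamma(h_U)]$ and $[\Gamma(h)^t]_U=[\Gamma(h_U)^t]$ as cycles on $f^{-1}(U)\times g^{-1}(U)$ respectively on $g^{-1}(U)\times f^{-1}(U)$. In case~(ii), note that properness of $h$ restricts to properness of $h_U$, so $\Gamma(h_U)$ is indeed proper over $g^{-1}(U)$ as needed.

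The second step is to apply Proposition~\ref{proposition-comparison-with-pull-push} term by term. Thus for every open $U\subset S$, in case~(i) we obtain
\[
Cor_{\hat{c}l}([\Gamma(h_U)^t])=(h_U)^*:\hat{H}(g^{-1}(U))\lra \hat{H}(f^{-1}(U)),
\]
where the right-hand side is the absolute pullback of Definition~\ref{2.1.2}; and in case~(ii)
\[
Cor_{\hat{c}l}([\Gamma(h_U)])=(h_U)_*:\hat{H}(f^{-1}(U))(r)\lra \hat{H}(g^{-1}(U)),
\]
with $r$ the relative dimension and the right-hand side the absolute pushforward of Definition~\ref{2.1.6}.

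The third step is to identify the sheafifications of the two presheaves $U\mapsto (h_U)^*$ and $U\mapsto (h_U)_*$ with the relative pullback and pushforward $h^*$, $h_*$ of Definitions~\ref{2.1.2} and \ref{2.1.6}. This is exactly the content of the localization statements in Proposition~\ref{2.1.3} and Proposition~\ref{2.1.9}(1), which assert that the relative constructions are compatible with passage to opens on the base: both the natural map \eqref{2.1.2.1} and the trace-type map \eqref{2.1.6.1} are defined via functors ($Rf_\Phi$, $R\varprojlim$, Godement/Cousin resolutions) that commute with restriction to open subsets of $S$, so taking sections over $U\subset S$ recovers the absolute construction for $h_U$. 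The main (but minor) point to be careful about is this compatibility of the relative constructions with open restriction; once granted, the proposition follows formally from the two steps above.
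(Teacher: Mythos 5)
Your proposal is correct and implements precisely the reduction the authors had in mind, since the paper's proof is the one-liner ``follows from Proposition \ref{proposition-comparison-with-pull-push}''; you have simply unwound what that citation entails (restriction of graphs to opens $U\subset S$, application of the absolute statement, and sheafification).

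One small imprecision in step~3: the cited parts of Proposition~\ref{2.1.3} and Proposition~\ref{2.1.9}(1) compare the \emph{relative} pullback/pushforward over $S$, restricted to $U$, with the \emph{relative} construction over $U$ --- both are sheaf maps on $U$. What you actually need is the statement that the natural edge maps
$\hat{H}(g^{-1}(U))\to\Gamma\bigl(U,\bigoplus_i R^i g_*W\Omega_Y\bigr)$ intertwine the \emph{absolute} $(h_U)^*$ (resp.\ $(h_U)_*$) with $\Gamma(U,h^*)$ (resp.\ $\Gamma(U,h_*)$), so that the presheaf morphism $U\mapsto (h_U)^*$ sheafifies to $h^*$. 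This holds, and for exactly the reason you indicate --- both the absolute and the relative morphisms are obtained from the same morphism of complexes~\eqref{2.1.2.1} (resp.~\eqref{2.1.6.1}) on $Y_U$ by applying $R\Gamma(Y_U,-)=R\Gamma(U,Rg_{U*}(-))$ versus $Rg_*(-)$, and the edge maps are natural --- but it is a different assertion from those localization statements, and it is worth stating it that way rather than attributing it to Propositions~\ref{2.1.3}/\ref{2.1.9}(1). You flag this as the ``main (but minor) point'', so the awareness is there; the proof is sound.
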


\subsubsection{Local cup product}\label{local-cup-product}
Let $X$ be a smooth equidimensional $k$-scheme and $Z\subset X$ an integral closed subscheme of codimension $c$. We have (see e.g. \eqref{semi-purity1})
\[\sH^i_Z(X, W\Omega_X)=0,\quad \text{for all } i<c.\]
Hence there is a natural morphism in the derived category of de Rham-Witt modules on $X$
\eq{supp-sheaf-to-derived-cat}{\sH^c_Z(W\Omega_X)\to R\ul{\Gamma}_Z W\Omega_X[c]}
inducing an isomorphism
\eq{supp-local-to-global}{H_Z^c(X, W\Omega_X)\cong H^0(X, \sH^c_Z(W\Omega_X)).}
We may thus define a local version of the cup product with the cycle class of $Z$, $\hat{c}l(Z)\in H^0(X, \sH^c_Z(W\Omega_X))$
\eq{localcup}{W\Omega_X\to \sH^c_Z(W\Omega_X)(c),\quad \alpha\mapsto \alpha\cup \hat{c}l([Z]):=\Delta^*((-1)^{c\cdot\deg \alpha}(\alpha\times \hat{c}l([Z]))),}
as the composition 
\mlnl{W\Omega_X\xr{\text{nat.}}\pr_{1*}\pr_1^{-1}W\Omega_X\xr{(-1)^{c\cdot\deg \alpha}(-\times \hat{c}l([Z]))} \pr_{1*}\sH^c_{X\times Z}(W\Omega_{X\times X})(c)\\
                 \xr{\Delta^*} \pr_{1*}\Delta_*\sH^c_{Z}(W\Omega_X)(c)=\sH^c_{Z}(W\Omega_X)(c) }
where $\Delta: (X, Z)\to (X\times X, X\times Z)$ in $(Sm^*/X\times X)$ is induced by the diagonal, $\Delta^*$ is the pullback constructed in \ref{2.1.2} and
 $\times$ is the exterior product from \ref{2.5.4}. Notice that by Lemma \ref{2.5.5} $\alpha\cup \hat{c}l(Z)$ equals $(-1)^{c\cdot\deg \alpha}\alpha\cdot \hat{c}l([Z])$.

\begin{lemma}\label{comparison-global-local-cup}
In the above situation the cup product with $\hat{c}l([Z])$,
\[H^i(X, W\Omega_X)\to H^{i+c}_Z(X, W\Omega_X)(c),\quad a\mapsto a\cup \hat{c}l([Z]),\]
factors via the local cup product, i.e. equals the composition
\[H^i(X, W\Omega_X)\xr{\eqref{localcup}} H^i(X, \sH^c_Z(W\Omega_X)(c))\xr{\eqref{supp-sheaf-to-derived-cat}} H^{i+c}_Z(X, W\Omega_X)(c).\] 
\end{lemma}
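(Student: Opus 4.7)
The plan is to show that both sides of the claimed equality arise from applying $H^i R\Gamma(X,-)$ to a single derived-category morphism $\varphi: W\Omega_X \to R\ul{\Gamma}_Z W\Omega_X(c)[c]$ in $D^+(\widehat{\mathrm{dRW}}_X)$ (or merely $D^+$ of the relevant abelian sheaves). Once this derived-category description is in place, the factorization through $\sH^c_Z(W\Omega_X)(c)$ is automatic: indeed, the map \eqref{supp-sheaf-to-derived-cat} is the unique lift of the identity on $\sH^c_Z(W\Omega_X)$ to the derived category (a consequence of the vanishing $\sH^i_Z(W\Omega_X)=0$ for $i<c$ and \eqref{supp-local-to-global}), and the composition $\eqref{supp-sheaf-to-derived-cat} \circ \eqref{localcup}$ is tautologically a lift of the right-hand side of the statement.

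Concretely, I would first write down $\varphi$ as the derived version of \eqref{localcup}, namely
\[
\varphi: W\Omega_X \lra R\pr_{1*}\pr_1^{-1}W\Omega_X \xrightarrow{-\times \hat{c}l([Z])} R\pr_{1*}R\ul{\Gamma}_{X\times Z}(W\Omega_{X\times X})(c)[c] \xrightarrow{\Delta^*} R\ul{\Gamma}_Z W\Omega_X(c)[c],
\]
where the middle arrow is induced by Proposition \ref{2.5.7}(1) (Cousin version of the exterior product) paired against the local class $\hat{c}l([Z]) \in H^0(X,\sH^c_Z(W\Omega_X))$ coming from \eqref{supp-local-to-global}, and the last arrow is the derived diagonal pullback from \ref{2.1.1.5}. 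The key vanishing $\sH^i_{X\times Z}(W\Omega_{X\times X})=0$ for $i<c$ (which follows as in \eqref{semi-purity1}, using that the standard filtration on $W_n\Omega^\bullet_{X\times X}$ has locally free graded pieces, and passing to the limit) implies that $R\ul{\Gamma}_{X\times Z}(W\Omega_{X\times X})[c]$ has zeroth cohomology sheaf equal to $\sH^c_{X\times Z}(W\Omega_{X\times X})$ and vanishes in negative degrees. Hence the second arrow in the display above factors through $\pr_{1*}\sH^c_{X\times Z}(W\Omega_{X\times X})(c)[0]$, and unwinding shows that this factorization is precisely the sheaf-level exterior-product-with-$\hat{c}l([Z])$ used in \eqref{localcup}. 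Combining with the compatibility of $R\Delta^*$ with the sheaf-level $\Delta^*$ on $\sH^c$ (again forced by the vanishing), we obtain that $\varphi = \eqref{supp-sheaf-to-derived-cat}\circ\eqref{localcup}$.

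It remains to identify $H^iR\Gamma(X,\varphi)$ with the global cup product $a\mapsto a\cup\hat{c}l([Z])$ defined in the statement. For this I would invoke Proposition \ref{2.5.7}(1), which allows the exterior product appearing in the definition $a\cup\hat{c}l([Z]) = \Delta^*((-1)^{c\deg a}(a\times\hat{c}l([Z])))$ to be computed using the Cousin resolution \eqref{2.5.6.1} instead of the Godement resolution of Definition \ref{2.5.4}. Applying $R\Gamma$ termwise to the derived construction of $\varphi$ and using the compatibility of the exterior product and diagonal pullback with $R\Gamma$ (Proposition \ref{2.1.9} and Proposition \ref{2.1.3}, plus the sign convention from \eqref{formula-KT}), one sees that the output is exactly the global cup product morphism.

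The principal technical obstacle is the careful bookkeeping that allows one to replace the \emph{derived} exterior product and derived diagonal pullback by their \emph{sheaf-level} analogues without introducing spurious higher-cohomology contributions. The essential ingredient is the codimension-$c$ concentration of $R\ul{\Gamma}_{X\times Z}(W\Omega_{X\times X})$ and $R\ul{\Gamma}_Z(W\Omega_X)$ in the derived category, which lets one replace these complexes by the shifted sheaves $\sH^c_{X\times Z}(W\Omega_{X\times X})[-c]$ and $\sH^c_Z(W\Omega_X)[-c]$ up to a unique map in the derived category; given this replacement, all the compatibilities collapse to the identities already used in constructing \eqref{localcup}, and the equality of the two compositions in the statement is forced.
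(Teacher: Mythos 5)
Your proposal captures the same essential strategy as the paper's proof: reduce via $\Delta^*$ to the exterior product with $\hat{c}l([Z])$, then use the vanishing $\sH^i_{Z'}(W\Omega)=0$ for $i<c$ (so that $R\ul{\Gamma}_{Z'}W\Omega[c]$ is concentrated in non-negative degrees) together with Proposition \ref{2.5.7}(1) (Cousin-vs-Godement comparison) to identify the derived exterior product with the sheaf-level one. The paper carries this out by writing an explicit commutative diagram of pro-complexes involving the Cousin resolution $E(W_\bullet\Omega_X)$, which makes the factorization and, crucially, the sign $(-1)^{ic}$ (arising from the shift isomorphism $E\boxtimes \ul{\Gamma}_Z E[c]\cong (E\boxtimes \ul{\Gamma}_Z E)[c]$, cf.\ \eqref{shift-sign}) completely explicit; your outline identifies this bookkeeping as the main obstacle but does not actually execute it — in particular you do not trace the appearance of $(-1)^{ic}$ and check that it matches the sign built into \eqref{localcup}, which is the one place the argument could silently go wrong. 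Modulo spelling out that diagram and sign check, the two proofs are the same.
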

\begin{proof}
 Recall that for $a\in H^i(X, W\Omega^q_X)$ the cup product $a\cup \hat{c}l([Z])$ equals 
\[H^*(\Delta)((-1)^{(i+q)c}(a\times \hat{c}l([Z]))),\]
where $\Delta: (X, Z)\to (X\times X, X\times Z)$ in $(Sm^*/\Spec\,k)$ is the diagonal. Therefore it suffices to show that the exterior product
\eq{comparison-local-global-cup1}{\times \hat{c}l([Z]): H^i(X, W\Omega_X^q) \to H^{i+c}_{X\times Z}(X\times X, W\Omega_{X\times X}^{q+c})}
factors via $H^i(X,-)$ applied to
\[\pr_1^{-1}W\Omega^q_X\to \sH^c_{X\times Z}(W\Omega_{X\times X}),\quad \alpha\mapsto (-1)^{ic}(\alpha\times \hat{c}l([Z])),\]
composed with the natural map $\sH^c_{X\times Z}(W\Omega_{X\times X})\to R\ul{\Gamma}_{X\times Z}W\Omega_{X\times X}[c]$.
Let $E(W_\bullet\Omega_X)$ be the Cousin complex of $W_\bullet\Omega_X$ (see \ref{CousinWitt}, Lemma \ref{drw-is-CM}), then the complex $\ul{\Gamma}_Z E(W_\bullet\Omega_X)$
equals zero in all degrees $<c$, hence there is a morphism of {\em complexes} 
\eq{comparison-local-global-cup2}{\sH^c_Z(W_\bullet\Omega_X)\to  \ul{\Gamma}_Z E(W_\bullet\Omega^c_X)[c],}
which, after applying $\varprojlim$ to it, represents \eqref{supp-sheaf-to-derived-cat}.
We obtain the following commutative diagram
\[\xymatrix{\pr_1^{-1}E(W_\bullet\Omega^q_X)\ar[r]^-{\boxtimes \,\hat{c}l([Z])} & E(W_\bullet\Omega^q_X)\boxtimes \sH^c_Z(W_\bullet\Omega^c_X)\ar[r] & \ul{\Gamma}_{X\times Z}E(W_\bullet\Omega^{q+c}_{X\times X})[c]\\
           \pr_1^{-1} W_\bullet\Omega^q_X\ar[u]\ar[r]^-{\boxtimes \,\hat{c}l([Z])} & W_\bullet\Omega^q_X\boxtimes \sH^c_Z(W_\bullet\Omega^c_X)\ar[r]^\times\ar[u] & 
                                                                                                                       \sH^c_{X\times Z}(W_\bullet\Omega^{q+c}_{X\times X})\ar[u]_{\eqref{comparison-local-global-cup2}},
            }\]
where the top right arrow is the composition 
\mlnl{E(W_\bullet\Omega_X^q)\boxtimes \sH^c_Z(W_\bullet\Omega^c_X) \xr{\id\boxtimes \eqref{comparison-local-global-cup2}} E(W_\bullet\Omega^q_X)\boxtimes \ul{\Gamma}_Z E(W_\bullet\Omega^c_X)[c]\\ 
       \xr{\simeq} (E(W_\bullet\Omega^q_X)\boxtimes \ul{\Gamma}_Z E(W_\bullet\Omega^c_X))[c]\xr{\eqref{2.5.6.1}} \ul{\Gamma}_{X\times Z}E(W_\bullet\Omega^{q+c}_{X\times X})[c]. }
Notice that the isomorphism 
\eq{shift-sign}{(E\boxtimes \ul{\Gamma}_Z E[c])\cong (E\boxtimes \ul{\Gamma}_Z E)[c]}
is given by multiplication with $(-1)^{ic}$ on $E^i\boxtimes \ul{\Gamma}_Z E^j$, for all $i,j$ (see the sign convention in \cite[(1.3.6)]{Co}).
Now we apply $H^i(X,-)\circ\varprojlim$ to the above diagram, use Proposition \ref{2.5.7}, (1), take care about the sign from \eqref{shift-sign} and use \eqref{2.6.2.3} and   
we see that \eqref{comparison-local-global-cup1} factors as desired.
\end{proof}

\subsubsection{}\label{CStoCTviaforget}
Let $h: S\to T$ be a morphism of $k$-schemes. Then any two objects $X, Y\in \mc{C}_S$ naturally define objects in $\mc{C}_T$ (via $h$) and $X\times_S Y\subset X\times_T Y$ is a closed subscheme.
This gives a natural map $\CH(X\times Y, P(X\times_S Y))\to \CH(X\times Y, P(X\times_T Y))$. In this way $h$ induces a functor 
\[\mc{C}_S\to \mc{C}_T.\]
If $h$ is fixed, we denote the image of $a\in \Hom_{\mc{C}_S}(X,Y)$ in  $\Hom_{\mc{C}_T}(X, Y)$ via this functor again by $a$. But notice that this functor is in general not faithful.

\begin{proposition}\label{correspondences-and-change-of-basis}
Let $h: S\to T$ be a morphism of $k$-schemes. Let $f: X\to S$ and $g: Y\to S$ be two objects in $\mc{C}_S$ and assume that $X$ and $Y$ are integral and  $f$ and $g$ are \emph{affine}.
Let $Z\subset X\times_S Y$ be a closed integral subscheme which is \emph{finite and surjective} over $Y$, therefore giving rise to a morphism in $\cdrw_S$ (by Proposition \ref{proposition-C_S-to-dRW_S})
\[\hat{\sH}([Z]/S): f_*W\Omega_X\to g_* W\Omega_Y.\]
Then we have the following equality of morphisms in $\cdrw_T$
\[\hat{\sH}([Z]/T)=\bigoplus_i  R^ih_*\hat{\sH}([Z]/S): \bigoplus_i R^i(h f)_*W\Omega_X\to \bigoplus_i R^i(h g)_*W\Omega_Y. \]
\end{proposition}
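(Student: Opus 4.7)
The plan is to factor $\hat{\sH}([Z]/?)$ in the derived category as pullback, cup product with $\hat{c}l([Z])$, and pushforward, and then observe that each constituent commutes with $Rh_*$. Since $f$ and $g$ are affine, $R^qf_*W_\bullet\Omega_X=0=R^qg_*W_\bullet\Omega_Y$ for $q>0$, so $\hat{\sH}([Z]/S)$ is simply a morphism $f_*W\Omega_X\to g_*W\Omega_Y$ in $\cdrw_S$.

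Let $\pr_1,\pr_2$ denote the projections of $X\times Y$, set $q_X=f\circ\pr_1$, $q_Y=g\circ\pr_2$, and $c=\codim_{X\times Y}(Z)=\dim X$ (using that $Z\to Y$ is finite and surjective). First I would assemble the composition in $D^+(\drw_S)$
\[
\Phi_S\colon Rf_*W_\bullet\Omega_X\xrightarrow{\pr_1^*}Rq_{X*}W_\bullet\Omega_{X\times Y}\xrightarrow{\cup\hat{c}l([Z])}Rq_{Y*}R\ul{\Gamma}_Z W_\bullet\Omega_{X\times Y}(c)[c]\xrightarrow{\pr_{2*}}Rg_*W_\bullet\Omega_Y,
\]
where the middle arrow combines the local cup product \eqref{localcup} with the natural map \eqref{supp-sheaf-to-derived-cat}, using the identification $Rq_{X*}R\ul{\Gamma}_Z=Rq_{Y*}R\ul{\Gamma}_Z$ (valid because $R\ul{\Gamma}_Z$ is supported on $Z\subset X\times_S Y$, where $q_X$ and $q_Y$ agree as maps to $S$), and the last arrow is the pushforward of Definition \ref{2.1.6} for $\pr_2\colon(X\times Y,Z)\to(Y,Y)$ in $(Sm_*/S)$ (the shift $(c)[c]$ cancels the relative-dimension shift).

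Next I would verify that $R\varprojlim\Phi_S$ induces $\hat{\sH}([Z]/S)$ in $\cdrw_S$. Evaluating on an affine open $U\subset S$, the right-hand side equals the correspondence action $Cor_{\hat{c}l}(Z|_U)$ by construction (Proposition \ref{proposition-C_S-to-dRW_S}), while the left-hand side unfolds as precisely the pullback-cup-pushforward formula defining $Cor_{\hat{c}l}$, via Proposition \ref{proposition-comparison-with-pull-push} (to identify $\pr_1^*$ and $\pr_{2*}$ with the correspondence actions of the graph transposes) and Lemma \ref{comparison-global-local-cup} (to identify the local cup product with the ordinary cup product). The analogous construction over $T$ produces $\Phi_T\in D^+(\drw_T)$ with $R\varprojlim\Phi_T$ inducing $\hat{\sH}([Z]/T)$ upon taking $\bigoplus_i R^i$.

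Finally, each constituent of $\Phi_S$ is a morphism between derived pushforwards of sheaves on $X$, $X\times Y$, or $Y$, none of which depends on the base. Applying $Rh_*$, using $Rh_*\circ Rq_{X*}=R(hq_X)_*$ and analogous identities, together with the compatibility of the pushforward of Definition \ref{2.1.6} with composition (Proposition \ref{2.1.9}(1)), yields $Rh_*\Phi_S=\Phi_T$ in $D^+(\drw_T)$. Combined with $Rh_*\circ R\varprojlim=R\varprojlim\circ Rh_*$ (Proposition \ref{derived-functors-exist}(4)) and taking $\bigoplus_i R^i$, this gives the desired equality $\hat{\sH}([Z]/T)=\bigoplus_i R^ih_*\hat{\sH}([Z]/S)$. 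The hard part will be the identification $R\varprojlim\Phi_S\simeq \hat{\sH}([Z]/S)$: the sheafified correspondence action is defined via sheafifying a presheaf of cohomological maps (Lemma \ref{lemma-correspondences-relative}), and recasting this as the derived composition $R\varprojlim\Phi_S$ requires carefully combining Proposition \ref{proposition-comparison-with-pull-push}, Lemma \ref{comparison-global-local-cup}, and some \v{C}ech cohomology considerations to handle non-affine opens.
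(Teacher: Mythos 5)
Your plan — factoring the correspondence action as the derived-category composition of $\pr_1^*$, cup product with $\hat{c}l([Z])$, and $\pr_{2*}$, then commuting with $Rh_*$ — is exactly the route the paper takes, and the bookkeeping you sketch works. Two points are off, though. First, the composition you call $\Phi_S$ does \emph{not} live in $D^+(\drw_S)$: the identification $Rq_{X*}R\ul{\Gamma}_Z(\cdot)\cong Rq_{Y*}R\ul{\Gamma}_Z(\cdot)$ uses only that $q_X|_Z=q_Y|_Z$, which gives an equality of abelian sheaves on $S$ but not of $W\sO_S$-modules (the $W\sO_S$-action on a pushforward is through all of $X\times Y$, not just $Z$); the paper is explicit that this arrow exists only in the derived category of abelian sheaves. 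This is harmless for the argument because the endpoints $f_*W\Omega_X$, $g_*W\Omega_Y$ and the target morphism $\hat{\sH}([Z]/S)$ are already $\cdrw$-objects and $\cdrw$-morphisms by Proposition~\ref{proposition-C_S-to-dRW_S}, but you should not assert $\Phi_S\in D^+(\drw_S)$. Second, the \v{C}ech worry you raise for identifying $R\varprojlim\Phi_S$ with $\hat{\sH}([Z]/S)$ is a misdiagnosis: the affineness of $f$ and $g$ (which you already invoke to conclude the source and target sheaves are concentrated in degree $0$) degenerates the Leray spectral sequence over every open $U$, so $H^i(U,f_*W\Omega_X)=H^i(X_U,W\Omega_{X_U})$ without any restriction to affine $U$ and without \v{C}ech input. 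With that, for every open $U\subset T$ one identifies $Cor_{\hat{c}l}([Z_U])$ with $H^i(h^{-1}(U),\text{composition})$ via Lemma~\ref{comparison-global-local-cup}, and sheafifying both presheaves gives the claim directly (the $h=\id$ case delivers your step (a')). This is in fact cleaner than introducing a separate $\Phi_T$ and proving $Rh_*\Phi_S=\Phi_T$, which the paper avoids; also note that Proposition~\ref{2.1.9}(1) concerns restriction to opens rather than composition, so the reference is misplaced — the fact you actually need is simply that \eqref{derived-pf} and \eqref{2.1.2.1} are morphisms over $Y$ and $X$ that do not refer to the base at all, so applying $Rh_*$ commutes with them by $Rh_*Rf_*=R(hf)_*$.
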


\begin{proof}
We consider the following composition  in the derived category of abelian sheaves on $S$:
\begin{align}\label{local-corr}
 f_* W\Omega_X & \xr{\pr_1^*} R(f\pr_1)_*W\Omega_{X\times Y}\\
               & \xr{\eqref{localcup}} R(f\pr_1)_*\sH^c_Z(W\Omega_{X\times Y})(c)\nonumber\\
               & \xr{\simeq, \, Z\subset X\times_S Y} R(g\pr_2)_*\sH^c_Z(W\Omega_{X\times Y})(c) \nonumber\\
               & \xr{\eqref{supp-sheaf-to-derived-cat}} g_*R{\pr_2}_*R\ul{\Gamma}_Z (W\Omega_{X\times Y})[c](c)\nonumber\\
               & \xr{\eqref{derived-pf}} g_*W\Omega_Y\nonumber.
\end{align}
Notice that the third arrow only exists in the category of abelian sheaves, it is not respecting the $W\sO_S$-module structure.
We claim that the composition \eqref{local-corr} equals $\hat{\sH}([Z]/S)$ and that $\oplus_i R^ih_*\eqref{local-corr}$ equals $\hat{\sH}([Z]/T)$.
This will prove the statement. Clearly it suffices to prove the last claimed equality, the first then follows with $h=\id$. 

To this end, let $U\subset T$ be an open subset. We denote by $X_U$, $Y_U$ and $Z_U$  the pullbacks of $X$, $Y$ and $Z$  over $U$.
Then $Cor_{\hat{c}l}([Z_U]): H^i(X_U, W\Omega_{X_U})\to H^i(Y_U, W\Omega_{Y_U})(c) $ is given by the following composition 
\begin{align*}
H^i(X_U, W\Omega_{X_U}) & \xr{\pr_1^*}  H^i(X_U, R\pr_{1*}W\Omega_{X_U\times Y_U})\\
                        &\xr{\simeq} H^i(Y_U, R\pr_{2*}W\Omega_{X_U\times Y_U})\\
                        & \xr{\cup\, \hat{c}l([Z_U])} H^i(Y_U, R\pr_{2*}R\ul{\Gamma}_{Z_U} W\Omega_{X_U\times Y_U}[c](c))\\
                        & \xr{\eqref{derived-pf}} H^i(Y_U, W\Omega_{Y_U}).
\end{align*}
Since $f$ and $g$ are affine this composition equals by Lemma \ref{comparison-global-local-cup} the composition $H^i(h^{-1}(U), \eqref{local-corr})$.
By definition $\hat{\sH}([Z]/T)$ is the sheafification of $U\mapsto Cor_{\hat{c}l}([Z_U])$ and the sheafification of
$U\mapsto H^i(h^{-1}(U), \eqref{local-corr})$ is $R^ih_*(\eqref{local-corr})$. This proves the claim.

\end{proof}

\subsection{Vanishing results} \label{section-vanishing-results}
Recall from Proposition \ref{proposition-C_S-to-dRW_S} that we have 
a functor 
\begin{align*}
\hat{\sH}(?/S)&:\mc{C}_S \xr{} \cdrw_S, \\
\hat{\sH}(X/S)&:= \bigoplus_{p,q\ge 0} R^qf_*W\Omega^p_X 
\end{align*}
(cf. Definition \ref{definition-C_S} for $\mc{C}_S$). Let 
$f:X\xr{} S,g:Y\xr{} S$
be objects in $\mc{C}_S$, i.e. $S$-schemes  which are smooth over $k$. 
For simplicity let us assume that $X$ and $Y$ are connected.
For an equidimensional closed subset $Z\subset X\times_S Y$ which is proper over $Y$ and has 
codimension $\dim X+i$ in $X\times Y$ we obtain a morphism in $\cdrw_S$:
$$
\hat{\sH}([Z]/S):\left(\bigoplus_{p,q\ge 0} R^qf_*W\Omega^p_X\right)(-i)\xr{} \bigoplus_{p,q\ge 0} R^qg_*W\Omega^p_Y.
$$

\begin{lemma}\label{lemma-van1}
Let $Z\subset X\times_S Y$ be closed subset which is proper over $Y$. Let $r\geq 0$ be an integer. Suppose that for 
every point $z\in Z$  the image $\pr_2(z)$ is a point of codimension $\geq r$
in $Y$.
Then there is a natural number $N\geq 1$ such that 
$$
N\cdot {\rm image}(\hat{\sH}([Z]/S)) \subset \bigoplus_{p\ge r,q\ge r} R^qg_*W\Omega^p_Y.
$$ 
In other words, the projection of ${\rm image}(\hat{\sH}([Z]/S))$ to 
$$
\bigoplus_{p<r\text{ or }q<r} R^qg_*W\Omega^p_Y
$$
is killed by $N$.
\begin{proof}
We may assume that $Z$ is irreducible and $X,Y$ are connected. We set $i=\dim Y-\dim Z=\codim_{X\times Y} Z-\dim X$.

By using an alteration we can find 
a smooth equidimensional scheme $D$ of dimension $\dim Y-r$ together with a 
proper morphism $\pi:D\xr{} Y$ such that $\pi(D)\supset \pr_2(Z)$. 
In particular, $Z$ is contained in the image of the map $id_X\times_S \pi:X\times_S D\xr{} X\times_S Y$. 

Let $Z_D\subset X\times_S D$ be an irreducible closed subset with 
$\dim(Z_D)=\dim(Z)$ and $(id_X\times \pi)(Z_D)=Z$. ($Z_D$ is automatically proper over $D$.) We define $N$ to be 
the degree of the field extension $k(Z)\subset k(Z_D)$. 

We obtain three maps:
\begin{align*}
\hH(Z/S)&:\hH(X/S)(-i)\xr{}\hH(Y/S), \\
\hH(Z_D/S)&:\hH(X/S)(-i)\xr{}\hH(D/S)(-r), \\
\hH(\Gamma(\pi)/S)&:\hH(D/S)(-r)\xr{}\hH(Y/S),
\end{align*}
($\Gamma(\pi)$ denotes the graph of $\pi$).
We claim that 
$$
N\cdot \hH(Z/S)= \hH(\Gamma(\pi)/S) \circ\hH(Z_D/S).
$$
Indeed, by functoriality it is sufficient to prove 
$$
N\cdot [Z] =  [\Gamma(\pi)] \circ [Z_D]
$$
where $\circ$ is the composition in $\mc{C}_S$ (see \eqref{C_S-composition}).
This is an easy computation in intersection theory.

Thus it is sufficient to show that 
$$
{\rm image}(\hat{\sH}([\Gamma(\pi)]/S)) \subset \bigoplus_{p\ge r,q\ge r} R^qg_*W\Omega^p_Y.
$$
Proposition \ref{proposition-comparison-with-pull-push-over-S} implies that
$\hat{\sH}([\Gamma(\pi)]/S)$ is the push-forward $\pi_*$ defined in \ref{2.1.6}.
Thus 
$$
\hat{\sH}([\Gamma(\pi)]/S)(R^q(g\circ \pi)_*W\Omega^p_D)\subset R^{q+r}g_*W\Omega^{p+r}_Y, 
$$
for all $(p,q)$, which completes the proof.
\end{proof}
\end{lemma}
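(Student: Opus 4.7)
The idea is to factor the correspondence $[Z]$, up to an integer multiple, through a smooth $k$-scheme $D$ of dimension $\dim Y - r$; since pushforward along a proper morphism of relative dimension $-r$ raises both the horizontal and vertical degree in the bicomplex by $r$ (Definition \ref{2.1.6}), this will place the image in the desired range.

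I would first reduce to the case that $X, Y$ are connected and $Z$ is irreducible; this is immediate by additivity of $\hat{\sH}(-/S)$. By hypothesis $\pr_2(Z)$ is set-theoretically contained in a closed subset $W \subset Y$ of codimension at least $r$. Appealing to de Jong's alteration theorem, pick a smooth equidimensional $k$-scheme $D$ of pure dimension $\dim Y - r$ and a proper $k$-morphism $\pi : D \to Y$ with $\pi(D) \supset \pr_2(Z)$; view $D$ as an $S$-scheme via $g\circ\pi$. Because $Z \subset (\id_X \times_S \pi)(X \times_S D)$, one can choose an irreducible closed subset $Z_D \subset X \times_S D$ of the same dimension as $Z$ which maps onto $Z$ under $\id_X \times_S \pi$; it is automatically proper over $D$ since $Z$ is proper over $Y$. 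Set $N := [k(Z_D) : k(Z)]$.

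The heart of the argument is the cycle-theoretic identity
\[
N \cdot [Z] \;=\; [\Gamma(\pi)] \circ [Z_D] \qquad \text{in } \Hom_{\mc{C}_S}(X, Y).
\]
Unwinding the definition \eqref{C_S-composition} of the composition, the right hand side equals $\pr_{13*}(\pr_{12}^*[Z_D] \cdot \pr_{23}^*[\Gamma(\pi)])$ on $X \times D \times Y$. The intersection on $X\times D \times Y$ is supported on the graph-like locus $\{(x,d,\pi(d)) : (x,d) \in Z_D\}$, which is proper and generically finite of degree $1$ over $Z_D$, and projecting to $X \times Y$ it maps generically finitely of degree $N$ onto $Z$. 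Hence the pushforward is $N \cdot [Z]$ by the standard projection/degree formula. Applying the functor $\hat{\sH}(-/S)$ and invoking its compatibility with composition (Proposition \ref{proposition-C_S-to-dRW_S}) gives
\[
N \cdot \hat{\sH}([Z]/S) \;=\; \hat{\sH}([\Gamma(\pi)]/S) \circ \hat{\sH}([Z_D]/S).
\]

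Finally, Proposition \ref{proposition-comparison-with-pull-push-over-S} identifies $\hat{\sH}([\Gamma(\pi)]/S)$ with the pushforward $\pi_*$ of Definition \ref{2.1.6}. Since $\pi$ has relative dimension $-r$, this pushforward takes $R^q(g\pi)_* W\Omega^p_D$ into $R^{q+r} g_* W\Omega^{p+r}_Y$ for all $p,q \geq 0$. Therefore the image of $N \cdot \hat{\sH}([Z]/S)$ is contained in $\bigoplus_{p\ge r,\, q\ge r} R^q g_* W\Omega^p_Y$, as required. The main obstacle will be verifying the cycle identity $N\cdot[Z] = [\Gamma(\pi)]\circ[Z_D]$ carefully in the refined intersection product; everything else is an assembly of already-established pieces (existence of alterations, functoriality of $\hat{\sH}$, and the bidegree-shift from Definition \ref{2.1.6}).
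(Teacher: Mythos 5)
Your proposal is correct and follows essentially the same route as the paper: reduce to $Z$ irreducible, use de Jong to produce a smooth $D$ of dimension $\dim Y - r$ with proper $\pi:D\to Y$ covering $\pr_2(Z)$, lift $Z$ to $Z_D$ of the same dimension, define $N=[k(Z_D):k(Z)]$, establish $N\cdot[Z]=[\Gamma(\pi)]\circ[Z_D]$, and conclude from the fact that $\pi_*$ raises both indices by $r$. You supply slightly more detail on the cycle identity than the paper (which just calls it an easy intersection-theory computation), but the argument is identical in structure.
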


\begin{lemma}\label{lemma-van2}
Let $Z\subset X\times_S Y$ be closed subset, which is proper over $Y$. Suppose that $X$ is equidimensional
of dimension $d$. Let $r\geq 0$ be an integer. Suppose that for 
every point $z\in Z$  the image $\pr_1(z)$ is a point of codimension $\geq r$
in $X$.
Then there is a natural number $N\geq 1$ such that 
$$
N\cdot \left(\bigoplus_{p>d-r\text{ or }q>d-r} R^qf_*W\Omega^p_X\right) \subset  {\rm ker}(\hat{\sH}([Z]/S)).
$$
\begin{proof}
The proof is analogous to the proof of Lemma \ref{lemma-van1}.

We may assume that $Z$ is irreducible and $Y$ connected. 
We set $i=\dim Y-\dim Z=\codim_{X\times Y} Z-\dim X$.
  
By using an alteration we can find 
a smooth equidimensional scheme $D$ of dimension $d-r$ together with a 
proper morphism $\pi:D\xr{} X$ such that $\pi(D)\supset \pr_1(Z)$. 
In particular, $Z$ is contained in the image of the map $\pi\times_S id_Y:D\times_S Y\xr{} X\times_S Y$. 

Let $Z_D\subset D\times_S Y$ be an irreducible closed subset with 
$\dim(Z_D)=\dim(Z)$ and $(\pi\times id_Y)(Z_D)=Z$. ($Z_D$ is automatically proper over $Y$.) We define $N$ to be 
the degree of the field extension $k(Z)\subset k(Z_D)$. 

We obtain three maps:
\begin{align*}
\hH(Z/S)&:\hH(X/S)(-i)\xr{}\hH(Y/S), \\
\hH(Z_D/S)&:\hH(D/S)(-i)\xr{}\hH(Y/S), \\
\hH(\Gamma(\pi)^t/S)&:\hH(X/S)\xr{}\hH(D/S),
\end{align*}
($\Gamma(\pi)^t$ denotes the transpose of the graph of $\pi$).
We claim that 
$$
N\cdot \hH(Z/S)=\hH(Z_D/S)\circ \hH(\Gamma(\pi)^t/S).
$$
Indeed, by functoriality it is sufficient to prove 
$$
N\cdot [Z] = [Z_D] \circ [\Gamma(\pi)^t]
$$
where $\circ$ is the composition in $\mc{C}_S$ (see \eqref{C_S-composition}).
This is a straightforward calculation in intersection theory.

Because $\dim D=d-r$, the map $\hH(\Gamma(\pi)^t/S)$ vanishes on 
$$
\bigoplus_{p>d-r\text{ or }q>d-r} R^qf_*W\Omega^p_X,
$$ which proves the statement.
\end{proof}
\end{lemma}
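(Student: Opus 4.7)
The plan is to mirror the proof of Lemma~\ref{lemma-van1}, exchanging the roles of the two projections: there the key geometric input was a pushforward from a low-dimensional scheme (which shifts cohomological and form degrees upward), here it will be a pullback to a low-dimensional scheme (which forces vanishing of the target in high degrees).

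First I would reduce to the case where $Z$ is irreducible and $Y$ is connected. The hypothesis that every point of $Z$ projects to a point of codimension $\geq r$ in $X$ gives $\dim \overline{\pr_1(Z)} \leq d-r$. By de Jong's alteration (as in the analogous step of Lemma~\ref{lemma-van1}) I obtain a smooth, equidimensional $k$-scheme $D$ of dimension $d-r$ together with a proper morphism $\pi: D \to X$ whose image contains $\pr_1(Z)$. Then I lift $Z$ to an irreducible closed subset $Z_D \subset D\times_S Y$ with $\dim Z_D = \dim Z$ and $(\pi\times \id_Y)(Z_D)=Z$; this $Z_D$ is automatically proper over $Y$. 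Set $N := [k(Z_D):k(Z)]$.

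Next I would establish the cycle-theoretic identity
\[
N\cdot[Z] \;=\; [Z_D]\circ [\Gamma(\pi)^t] \qquad \text{in } \Hom_{\mc{C}_S}(X,Y),
\]
where $\Gamma(\pi)^t \subset X\times D$ denotes the transpose of the graph of $\pi$ (a cycle proper over $D$). This is a direct computation: on $X\times D\times Y$ the cycle $\pr_{XD}^{-1}(\Gamma(\pi)^t)\cap \pr_{DY}^{-1}(Z_D)$ identifies with $Z_D$ under the projection to $D\times Y$, and its image under $\pr_{XY}$ is $Z$ with multiplicity equal to the degree $N$. Applying the functor $\hat{\sH}(?/S)$ from Proposition~\ref{proposition-C_S-to-dRW_S} yields
\[
N\cdot \hat{\sH}([Z]/S) \;=\; \hat{\sH}([Z_D]/S)\circ \hat{\sH}([\Gamma(\pi)^t]/S),
\]
so it suffices to prove that $\hat{\sH}([\Gamma(\pi)^t]/S)$ annihilates the summand $\bigoplus_{p>d-r\text{ or }q>d-r} R^q f_*W\Omega^p_X$.

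By Proposition~\ref{proposition-comparison-with-pull-push-over-S}(i) the morphism $\hat{\sH}([\Gamma(\pi)^t]/S)$ is the pullback $\pi^{*}$ and factors componentwise as $R^qf_*W\Omega^p_X \to R^q(f\pi)_*W\Omega^p_D$. For $p>d-r$ the sheaf $W\Omega^p_D$ vanishes identically since $\dim D=d-r$. For $q>d-r$ (and $p\leq d-r$) the target vanishes by Grothendieck's vanishing theorem on the noetherian topological space $|D|$ of dimension $d-r$: since the transition maps $W_{n+1}\Omega^p_D\to W_n\Omega^p_D$ are surjective on affine opens, Lemma~\ref{vanishing-of-Rlim}(2) gives $R\varprojlim W_\bullet\Omega^p_D = W\Omega^p_D$, whence $R^q(\widehat{f\pi})_*W_\bullet\Omega^p_D = R^q(f\pi)_*W\Omega^p_D$, and this vanishes for $q>\dim D=d-r$. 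The only step that requires care is the construction of $D$ and $Z_D$ with the right dimensions; everything else is a verbatim dualization of the preceding lemma.
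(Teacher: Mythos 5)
Your proposal is correct and follows essentially the same route as the paper's own proof: same reduction, same alteration $D\to X$, same lift $Z_D$, same cycle identity $N\cdot[Z]=[Z_D]\circ[\Gamma(\pi)^t]$, same conclusion that the pullback $\hat{\sH}([\Gamma(\pi)^t]/S)$ kills the relevant summand. The only difference is that you spell out the final vanishing step (splitting into the cases $p>d-r$ where $W\Omega^p_D=0$, and $q>d-r$ where Grothendieck vanishing on $|D|$ together with Lemma~\ref{vanishing-of-Rlim}(2) applies), whereas the paper treats this as immediate from $\dim D=d-r$.
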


\begin{remark}
Resolution of singularities implies that  
Lemma \ref{lemma-van1} and \ref{lemma-van2} hold for $N=1$.
\end{remark}

\subsection{De Rham-Witt systems and modules modulo torsion}\label{section-dRW-mod-torsion}

\subsubsection{} Let $\mc{A}$ be an abelian category (only $\mc{A}=\drw_X$ and $\mc{A}=\widehat{\drw}_X$ 
will be important for us). 
There are 
two natural Serre subcategories attached to torsion objects of $\mc{A}$.
We define 
$$
\mc{A}_{b-tor}:=\{X\in {\rm ob}(\mc{A})\mid \exists n\in \Z\backslash\{0\}: n\cdot id_X=0\}
$$
as full subcategory of $\mc{A}$. We define 
\begin{multline*}
\mc{A}_{tor}:=\{X\in {\rm ob}(\mc{A})\mid \exists (X_i)_{i\in I}\in \mc{A}_{b-tor}^I \exists \phi:\bigoplus_{i\in I} X_i\xr{} X  
\text{ epimorphism} \}
\end{multline*}
as full subcategory of $\mc{A}$.
Note that the index set $I$ is  not finite in general. Obviously,
$$
\mc{A}_{b-tor}\subset \mc{A}_{tor}.
$$ 
\subsubsection{}
If $\mc{A}$ is well-powered, e.g.~$\mc{A}=\widehat{\drw}_X$, then the 
quotient categories $\mc{A}/\mc{A}_{b-tor}$ and $\mc{A}/\mc{A}_{tor}$ exist and
are abelian categories. We refer to \cite[Chapitre~III]{G} for quotient categories.
The functors 
$$
q:\mc{A}\xr{} \mc{A}/\mc{A}_{b-tor}, \quad q':\mc{A}\xr{} \mc{A}/\mc{A}_{tor},
$$
are exact. Moreover, $q(X)=0$ if and only if $X\in \mc{A}_{b-tor}$; the same
statement holds for $q'$ and $\mc{A}_{tor}$. There is an obvious 
factorization 
$$
\mc{A}\xr{} \mc{A}/\mc{A}_{b-tor}\xr{} \mc{A}/\mc{A}_{tor}.
$$
If $\mc{A}$ is the category of (left) modules over a ring $R$ then 
$$
\mc{A}/\mc{A}_{tor}\cong (\text{$R\otimes_{\Z}\Q$-modules}).
$$
We define 
$$
\mc{A}_{\Q}:=\mc{A}/\mc{A}_{b-tor}.
$$
For future reference, we record the following special case.

\begin{definition}\label{definition-drw-Q}
Let $X$ be a scheme over $k$. We define
$$
\widehat{\drw}_{X,\Q}:=\widehat{\drw}_X/\widehat{\drw}_{X,b-tor}
$$
as quotient category. We denote by $q$ the projection functor 
$$
q:\widehat{\drw}_{X}\xr{} \widehat{\drw}_{X,\Q}.
$$
We use the same definitions for $\drw_X$.
\end{definition} 

The main reason for working with the quotient  $\mc{A}/\mc{A}_{b-tor}$ instead 
of $\mc{A}/\mc{A}_{tor}$ is that the Homs are well-behaved.

\begin{proposition}\label{proposition-Hom-A-Q}
Let $\mc{A}$ be a well-powered abelian category. Let $X,Y\in {\rm ob}(\mc{A})$.
Then 
$$
\Hom_{\mc{A}_{\Q}}(q(X),q(Y)) 
$$
is naturally a $\Q$-module and the map 
$$
\Hom_{\mc{A}}(X,Y)\otimes_{\Z}\Q \xr{} \Hom_{\mc{A}_{\Q}}(q(X),q(Y)) 
$$
is an isomorphism.
\begin{proof}
For any $n\in \Z\backslash \{0\}$ the morphism 
$q(Y)\xr{\cdot n} q(Y)$ in $\mc{A}_{\Q}$ is invertible; therefore
$$
\Q\subset \Hom_{\mc{A}_{\Q}}(q(Y),q(Y)).
$$
Via the composition
$$
\Hom(q(X),q(Y))\times \Hom(q(Y),q(Y))\xr{} \Hom(q(X),q(Y))
$$
we see that $\Hom(q(X),q(Y))$ is a $\Q$-module. The $\Q$-module 
structure induced by $\Q\subset \Hom(q(X),q(X))$ and 
$$
 \Hom(q(X),q(X)) \times \Hom(q(X),q(Y))\xr{} \Hom(q(X),q(Y))
$$
is the same, because 
$$
f\circ (q(X)\xr{\cdot n} q(X))= (q(Y)\xr{\cdot n} q(Y))\circ f=nf.
$$
We need to show that the canonical map 
$$
\Hom_{\mc{A}}(X,Y)\otimes_{\Z}\Q \xr{} \Hom_{\mc{A}_{\Q}}(q(X),q(Y)) 
$$
is an isomorphism.

For the injectivity it is sufficient to prove that for all 
$f\in \Hom_{\mc{A}}(X,Y)$ with $q(f)=0$ it follows that $n\cdot f=0$ 
for some $n\in \Z\backslash \{0\}$. Indeed, $q(f)=0$ implies ${\rm image}(f)\in \mc{A}_{b-tor}$; 
thus there is an integer $n\neq 0$ with $n\cdot id_{{\rm image}(f)}=0$. It follows 
that $nf=0$.

For the surjectivity, let $f:X\xr{} Y$ be a morphism in $\mc{A}$ such that 
$\ker(f),{\rm coker}(f)\in \mc{A}_{b-tor}$; equivalently $q(f)$ is an isomorphism.
We need to show that the inverse map $q(f)^{-1}$ is contained in the image of 
$\Hom_{\mc{A}}(Y,X)\otimes_{\Z}\Q$. Choose an integer $n_1\neq 0$ such that 
$n_1\cdot id_{\ker(f)}=0$. Then there exists $g_1:\im(f)\xr{} X$ such that 
$g_1\circ f=n_1\cdot id_{X}$. Let $n_2\neq 0$ be an integer such that 
$n_2\cdot id_{{\rm coker}(f)}=0$. Then $Y\xr{\cdot n_2} Y$ factors through 
$\im(f)$ and we obtain $g_2:Y\xr{\cdot n_2} \im(f)\xr{g_1} X$. 
Thus the image of $g_2\otimes (n_1\cdot n_2)^{-1}$ is the inverse
of $q(f)$. 
\end{proof}
\end{proposition}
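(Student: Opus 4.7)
The plan is to establish the $\Q$-module structure first, then verify injectivity, and finally prove surjectivity in two stages. For the $\Q$-structure, I would observe that for any $Z \in \mc{A}$ and any nonzero $n \in \Z$, both the kernel and cokernel of $n \cdot \id_Z$ are annihilated by $n$, hence lie in $\mc{A}_{b-tor}$. Since $q$ is exact and vanishes on $\mc{A}_{b-tor}$, the morphism $q(n \cdot \id_Z)$ is both a monomorphism and an epimorphism in $\mc{A}_\Q$, hence an isomorphism. This yields a canonical ring homomorphism $\Q \to \mathrm{End}_{\mc{A}_\Q}(q(Z))$, and thus a $\Q$-module structure on every Hom-set in $\mc{A}_\Q$; the actions induced from source and target coincide by the naturality of composition.

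Injectivity of the comparison map is then immediate: if $f \in \Hom_\mc{A}(X, Y)$ satisfies $q(f) = 0$, then $q(\im f) = 0$, so $\im f \in \mc{A}_{b-tor}$ and some nonzero $n$ annihilates $\im f$, forcing $nf = 0$. Thus the kernel of $\Hom_\mc{A}(X, Y) \to \Hom_{\mc{A}_\Q}(q(X), q(Y))$ equals the $\Z$-torsion, which disappears upon tensoring with $\Q$.

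The main obstacle will be surjectivity, and the crucial technical lemma to prove is: for any morphism $s : Z \to W$ in $\mc{A}$ with $\ker(s), \mathrm{coker}(s) \in \mc{A}_{b-tor}$ (equivalently, $q(s)$ is invertible), there exist a morphism $t : W \to Z$ in $\mc{A}$ and a nonzero integer $n$ with $ts = n \cdot \id_Z$ and $st = n \cdot \id_W$, so that $q(s)^{-1} = q(t)/n$. To build $t$, I would pick nonzero $n_1, n_2$ annihilating $\ker(s)$ and $\mathrm{coker}(s)$ respectively. Since $n_1 \cdot \id_Z$ kills $\ker(s)$, it factors through the canonical epimorphism $Z \twoheadrightarrow \im(s)$ as some $g_1 : \im(s) \to Z$; dually, $n_2 \cdot \id_W$ factors through the canonical monomorphism $\im(s) \hookrightarrow W$ as some $h : W \to \im(s)$. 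Setting $t := g_1 \circ h$, a direct check using the two factorization identities together with the fact that the epi $Z \twoheadrightarrow \im(s)$ and mono $\im(s) \hookrightarrow W$ can be cancelled on the appropriate sides yields $ts = n_1 n_2 \cdot \id_Z$ and $st = n_1 n_2 \cdot \id_W$.

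Finally, to treat an arbitrary $\phi \in \Hom_{\mc{A}_\Q}(q(X), q(Y))$, I would invoke Gabriel's description of the Serre quotient: $\phi$ is represented by a roof $X \hookleftarrow X' \xrightarrow{\tilde\phi} Y/Y' \twoheadleftarrow Y$ with $X/X'$ and $Y'$ in $\mc{A}_{b-tor}$, so that $\phi = q(\pi)^{-1} \circ q(\tilde\phi) \circ q(\iota)^{-1}$ with $\iota : X' \hookrightarrow X$ and $\pi : Y \twoheadrightarrow Y/Y'$. Applying the previous lemma to both $\iota$ and $\pi$ produces quasi-inverses $t_\iota : X \to X'$ and $t_\pi : Y/Y' \to Y$ and integers $n_\iota, n_\pi \neq 0$, giving $\phi = q(t_\pi \circ \tilde\phi \circ t_\iota)/(n_\iota n_\pi)$, which lies visibly in the image of $\Hom_\mc{A}(X, Y) \otimes_\Z \Q$. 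Everything hinges on constructing the quasi-inverse $t$ in the third paragraph; once that is available, the rest follows formally from composition and the Gabriel roof decomposition.
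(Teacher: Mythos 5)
Your proposal is correct and follows the paper's strategy closely: same $\Q$-structure argument, same injectivity argument via $\im(f) \in \mc{A}_{b\text{-}tor}$, and the same construction of the quasi-inverse $g_1 \circ h$ with $n_1$ killing $\ker(s)$ and $n_2$ killing $\mathrm{coker}(s)$. The one place you add material is the final step: the paper's surjectivity argument only exhibits, for a morphism $f$ in $\mc{A}$ with $q(f)$ invertible, a preimage of $q(f)^{-1}$, and leaves it to the reader to combine this with the Gabriel description of morphisms in a Serre quotient (every $\phi: q(X) \to q(Y)$ factors as $q(\text{mor in }\mc{A})$ composed with inverses of images of morphisms whose kernel and cokernel are torsion). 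You spell out this roof decomposition explicitly, and you also state and verify the key two-sided identity $ts = n_1 n_2 \id$, $st = n_1 n_2 \id$ rather than merely asserting, as the paper does, that $g_2 \otimes (n_1 n_2)^{-1}$ maps to $q(f)^{-1}$. Both are legitimate; yours is more self-contained, the paper's is terser.
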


\begin{corollary}\label{corollary-F-Q}
Let $F:\mc{A}\xr{} \mc{B}$ be a functor between well-powered abelian categories.
There is a natural functor $F_{\Q}:\mc{A}_{\Q}\xr{} \mc{B}_{\Q}$ defined by 
$$
F_{\Q}(q(X))=F(X)
$$
for every object $X$ in $\mc{A}$, and 
\begin{align*}
F_{\Q}&:\Hom_{\mc{A}_{\Q}}(q(X),q(Y))\xr{} \Hom_{\mc{B}_{\Q}}(F_{\Q}q(X),F_{\Q}q(Y))\\
F_{\Q}&:=F\otimes_{\Z} id_{\Q}
\end{align*}
via the isomorphism of Proposition \ref{proposition-Hom-A-Q}. 
\end{corollary}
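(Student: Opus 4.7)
The plan is to construct $F_{\Q}$ by applying the universal property of the Serre quotient, and then to verify that the resulting morphism map has the explicit description $F\otimes_{\Z}\id_{\Q}$ via Proposition~\ref{proposition-Hom-A-Q}. Since $F$ is a functor between abelian categories it is additive, so for any $X\in\mc{A}_{b-tor}$ with $n\cdot\id_X=0$ we have $n\cdot\id_{F(X)}=F(n\cdot\id_X)=0$, hence $F(X)\in\mc{B}_{b-tor}$. Thus the composite $q_{\mc{B}}\circ F:\mc{A}\to\mc{B}_{\Q}$ sends $\mc{A}_{b-tor}$ to zero objects. By the universal property of the quotient category (\cite[Chapitre~III]{G}), this composite factors uniquely through $q_{\mc{A}}$, giving a functor $F_{\Q}:\mc{A}_{\Q}\to\mc{B}_{\Q}$ with $F_{\Q}\circ q_{\mc{A}}=q_{\mc{B}}\circ F$. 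On objects this reads $F_{\Q}(q(X))=q(F(X))$, which matches the formula in the statement.

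Next I would identify the morphism map. By Proposition~\ref{proposition-Hom-A-Q} applied to $\mc{A}$ and $\mc{B}$ we have canonical isomorphisms
\[
\Hom_{\mc{A}_{\Q}}(q(X),q(Y))\cong\Hom_{\mc{A}}(X,Y)\otimes_{\Z}\Q,\qquad \Hom_{\mc{B}_{\Q}}(q(F(X)),q(F(Y)))\cong\Hom_{\mc{B}}(F(X),F(Y))\otimes_{\Z}\Q.
\]
Under these identifications, the relation $F_{\Q}\circ q_{\mc{A}}=q_{\mc{B}}\circ F$ shows that for $f\in\Hom_{\mc{A}}(X,Y)$ the image $F_{\Q}(f\otimes 1)$ is $F(f)\otimes 1$; by $\Q$-linearity (established in the proof of Proposition~\ref{proposition-Hom-A-Q}) the induced map on Hom's is precisely $F\otimes_{\Z}\id_{\Q}$.

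The functoriality of $F_{\Q}$ (preservation of identities and of composition) is automatic from the construction via the universal property, but can also be checked directly: $F_{\Q}(\id_{q(X)})=F(\id_X)\otimes 1=\id_{F(X)}\otimes 1$, and for composable $f\otimes r$, $g\otimes s$ one has $(g\otimes s)\circ(f\otimes r)=(g\circ f)\otimes(sr)$ in $\mc{A}_{\Q}$ because the composition in the quotient is induced from the composition in $\mc{A}$ and the $\Q$-module structures on both sides of the isomorphism of Proposition~\ref{proposition-Hom-A-Q} coincide; applying $F\otimes\id_{\Q}$ preserves this.

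There is essentially no obstacle here: once $F$ is known to preserve the Serre subcategory $\mc{A}_{b-tor}$, which follows immediately from additivity, the rest is the formal universal property of the Serre quotient combined with the explicit $\Hom$-description already established. The only point that might require a sentence of care is verifying that the two a~priori different $\Q$-module structures on $\Hom_{\mc{A}_{\Q}}(q(X),q(Y))$ (one coming from pre-composition with multiplication by $n$ on $q(X)$, one from post-composition on $q(Y)$) agree, but this is already handled in the proof of Proposition~\ref{proposition-Hom-A-Q}.
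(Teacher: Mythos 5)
Your argument is essentially the one the paper itself mentions in the Remark immediately following the Corollary: deduce $F(\mc{A}_{b-tor})\subset \mc{B}_{b-tor}$ and then invoke the universal property of the Serre quotient, after which the explicit description of the morphism map drops out of Proposition~\ref{proposition-Hom-A-Q}. So the route is the one the authors intended, and the verification of $\Q$-linearity and functoriality is handled correctly. One point to fix: the sentence ``Since $F$ is a functor between abelian categories it is additive'' is not true in general --- an arbitrary functor between abelian categories need not be additive. Additivity of $F$ is an implicit \emph{hypothesis} here (it is forced by the statement: the expression $F\otimes_{\Z}\id_{\Q}$ only makes sense if $F$ acts $\Z$-linearly on $\Hom$-sets), and it is additivity that gives you $F(n\cdot\id_X)=n\cdot\id_{F(X)}$ and hence $F(\mc{A}_{b-tor})\subset \mc{B}_{b-tor}$. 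You should state that $F$ is (implicitly) assumed additive rather than claim it comes for free.
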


\begin{remark}
The statement of Corollary \ref{corollary-F-Q} also follows from
$$
F(\mc{A}_{b-tor})\subset \mc{B}_{b-tor}
$$ 
by using the universal property of the quotient category.
\end{remark}

\begin{proposition}
Let $F:\mc{A}\xr{} \mc{B}$ be a left-exact functor between well-powered 
abelian categories. Suppose that the left derived functor 
$$
RF:D^+(\mc{A})\xr{} D^+(\mc{B})
$$ 
exists. Suppose that there exist sufficiently many $F$-acyclic objects in $\mc{A}$.
Then 
$$
F_{\Q}:\mc{A}_{\Q}\xr{} \mc{B}_{\Q}
$$
is left-exact and the left derived functor 
$$
RF_{\Q}:D^+(\mc{A}_{\Q})\xr{} D^+(\mc{B}_{\Q})
$$
exists. Moreover, the diagram 
$$
\xymatrix
{
D^+(\mc{A})\ar[r]^{RF}\ar[d]_{D^+(q)}
& 
D^+(\mc{B})\ar[d]^{D^+(q)}
\\
D^+(\mc{A}_{\Q})\ar[r]^{RF_{\Q}}
& 
D^+(\mc{B}_{\Q})
}
$$
is commutative.
\begin{proof}
Let 
\begin{equation}\label{equation-X'Y'Z'-exact-sequence}
0\xr{} X' \xr{f'} Y' \xr{g'} Z' \xr{} 0 
\end{equation}
be an exact sequence in $\mc{A}_{\Q}$. By 
\cite[Chapitre~III,\textsection1, Corollaire~1]{G} we can find 
an exact sequence 
\begin{equation}\label{equation-XYZ-exact-sequence}
0\xr{} X \xr{f} Y \xr{g} Z \xr{} 0 
\end{equation}
in $\mc{A}$ and isomorphisms $X'\xr{\cong} q(X),Y'\xr{\cong} q(Y),Z'\xr{\cong}q(Z),$
such that the diagram 
$$
\xymatrix
{
0\ar[r] 
&
X' \ar[r]^{f'} \ar[d]^{\cong}
&
Y' \ar[r]^{g'} \ar[d]^{\cong}
&
Z' \ar[r] \ar[d]^{\cong}
& 
0 
\\
0\ar[r] 
&
q(X) \ar[r]^{f} 
&
q(Y) \ar[r]^{g} 
&
q(Z) \ar[r] 
& 
0 
}
$$
is commutative. Since $q$ is exact and $qF=F_{\Q}q$, it follows that $F_{\Q}$
is left exact.

We define 
$$
P_{\Q}:=\{X'\in {\rm ob}(\mc{A}_{\Q})\mid \exists X\in {\rm ob}(\mc{A}): q(X)\cong X',\; R^iF(X)\in \mc{B}_{b-tor} \; \text{for all $i>0$}\}
$$
as a full subcategory of $\mc{A}_{\Q}$. If $X\in {\rm ob}(\mc{A})$ is an $F$-acyclic
object then $q(X)\in P_{\Q}$. Therefore every object $Y\in {\rm ob}(\mc{A}_{\Q})$
admits a monomorphism $\imath:Y\xr{} X$ with $X\in {\rm ob}(P_{\Q})$.

Suppose that in the exact sequence \eqref{equation-X'Y'Z'-exact-sequence} we know
that $X'\in {\rm ob}(P_{\Q})$. We claim that the following holds
$$
Y'\in {\rm ob}(P_{\Q}) \Leftrightarrow Z'\in {\rm ob}(P_{\Q}).
$$
As above, we may prove the claim for \eqref{equation-XYZ-exact-sequence} instead
of \eqref{equation-X'Y'Z'-exact-sequence}.
It is easy to see that  $R^iF(X)\in \mc{B}_{b-tor}$ for all $i>0$, and 
we conclude that 
$$
qR^iF(Y)\xr{\cong} qR^iF(Z) \quad \text{for all $i>0$.}
$$
This implies the claim.

In the same way we can see that if \eqref{equation-X'Y'Z'-exact-sequence} is 
an exact sequence with objects in $P_{\Q}$ then 
$$
0\xr{} F_{\Q}(X')\xr{} F_{\Q}(Y')\xr{} F_{\Q}(Z')\xr{} 0
$$
is exact. From \cite[I, Cor. 5.3, $\beta$]{Ha} it follows that the left derived
functor 
$$
RF_{\Q}:D^{+}(\mc{A}_{\Q})\xr{} D^{+}(\mc{B}_{\Q})
$$ 
exists. Moreover, if $X$ is $F$-acyclic then $q(X)$ is $F_{\Q}$ acyclic. 
Therefore the diagram 
$$
\xymatrix
{
D^+(\mc{A})\ar[r]^{RF}\ar[d]_{D^+(q)}
& 
D^+(\mc{B})\ar[d]^{D^+(q)}
\\
D^+(\mc{A}_{\Q})\ar[r]^{RF_{\Q}}
& 
D^+(\mc{B}_{\Q})
}
$$
is commutative.

\end{proof}
\end{proposition}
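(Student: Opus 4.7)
The plan is to mirror the standard construction of a right-derived functor, but work inside the quotient categories, using the fact that $F_\Q$ factors through $q$ in the sense $F_\Q \circ q = q \circ F$ (from Corollary~\ref{corollary-F-Q}), together with Gabriel's description of exact sequences in a quotient of a well-powered abelian category.

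First I would verify that $F_\Q$ is left-exact. Given a short exact sequence $0 \to X' \to Y' \to Z' \to 0$ in $\mc{A}_\Q$, Gabriel's lifting lemma (\cite[Chapitre~III,\textsection1, Corollaire~1]{G}) produces a short exact sequence $0 \to X \to Y \to Z \to 0$ in $\mc{A}$ together with isomorphisms $q(X)\cong X'$, etc. Since $q$ is exact and $F$ is left exact,
\[
0 \to qF(X) \to qF(Y) \to qF(Z)
\]
is exact, and using $qF=F_\Q q$ we get left-exactness of $F_\Q$ on the original sequence.

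Next I would construct a sufficient supply of $F_\Q$-acyclic objects. Define $P_\Q \subset \mc{A}_\Q$ to be the full subcategory of objects isomorphic to some $q(X)$ with $R^iF(X)\in\mc{B}_{b\text{-tor}}$ for all $i>0$; in particular $F$-acyclic objects map to $P_\Q$, so every object of $\mc{A}_\Q$ admits a monomorphism into an object of $P_\Q$ (lift monomorphisms via Gabriel again and then embed into an $F$-acyclic object). The crucial closure property is: if $0\to X'\to Y'\to Z'\to 0$ is exact in $\mc{A}_\Q$ with $X'\in P_\Q$, then $Y'\in P_\Q \iff Z'\in P_\Q$. Lifting the sequence to $\mc{A}$ and applying the long exact sequence of $R^\bullet F$, the vanishing of $qR^iF(X)$ for $i>0$ forces $qR^iF(Y)\xrightarrow{\cong} qR^iF(Z)$, which gives the equivalence. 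The same argument shows that a short exact sequence inside $P_\Q$ is mapped by $F_\Q$ to a short exact sequence in $\mc{B}_\Q$.

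With these ingredients, \cite[I, Cor. 5.3, $\beta$]{Ha} applies (taking $P_\Q$ as the class of acyclic objects), yielding the right-derived functor $RF_\Q:D^+(\mc{A}_\Q)\to D^+(\mc{B}_\Q)$. Finally, for the commutativity of the square with $q$, I would use that any $F$-acyclic $X\in\mc{A}$ satisfies $q(X)\in P_\Q$, so $q$ sends $F$-acyclic resolutions to $F_\Q$-acyclic resolutions; computing $RF$ and $RF_\Q$ on these resolutions and using $qF=F_\Q q$ term-wise gives the natural isomorphism $D^+(q)\circ RF \cong RF_\Q \circ D^+(q)$.

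The main obstacle I anticipate is the two-out-of-three closure property of $P_\Q$: it requires one to lift an arbitrary short exact sequence from $\mc{A}_\Q$ to $\mc{A}$ without losing the acyclicity hypothesis on $X'$, and then chase the long exact sequence modulo $b$-torsion. Here one has to be careful that the $b$-torsion subcategory $\mc{B}_{b\text{-tor}}$ is itself a Serre subcategory (closed under subobjects, quotients, and extensions), which is what makes the kernel/cokernel pieces in the long exact sequence behave well after applying $q$.
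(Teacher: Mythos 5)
Your proposal matches the paper's proof essentially step for step: the same Gabriel lifting lemma for short exact sequences, the same class $P_{\Q}$ of ``$q$ of an object with $b$-torsion higher derived functors,'' the same two-out-of-three argument via the long exact sequence modulo torsion, and the same appeal to \cite[I, Cor.~5.3, $\beta$]{Ha}. Your closing remark that $\mc{B}_{b\text{-tor}}$ must be a Serre subcategory is well taken and is indeed built into the paper's setup (it is stated earlier that $\mc{A}_{b\text{-tor}}$ and $\mc{A}_{\text{tor}}$ are Serre subcategories), so the argument goes through.
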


\begin{remark}
Let $f:X\to Y$ be a morphism between $k$-schemes and $\Phi$ a family of supports on $X$.
The assumptions of the Proposition are satisfied for the functors 
\begin{align*}
\ul{\Gamma}_\Phi&: \drw_X\to \drw_X,\\
f_*&: \drw_X\to \drw_Y,\\
\varprojlim &: \drw_X\to \cdrw_X,\\
f_\Phi&: \drw_X \to \drw_Y,\\
\hat{f}_\Phi&: \drw_X\to \cdrw_Y,
\end{align*}
of Proposition \ref{derived-functors-exist}.
\end{remark}

\begin{notation}\label{Q-notation}
In general, we will denote by a subscript $\Q$ the image of an object of $\widehat{\drw}_X$ (resp.~of $D^+(\widehat{\drw}_X)$) under the localization functor
 $q$ (resp.~$D^+(q)$). If $F:\widehat{\drw}_X\to \widehat{\drw}_Y$ is a functor we will 
by abuse of notation denote $F_{\Q}$ again by $F$ and $RF_{\Q}$ again by $RF$. 
 Thus $q(Rf_*W\Omega_X)$ will be denoted by $Rf_* W\Omega_{X,\Q}$, etc. (Warning: $W\Omega_{X,\Q}$ is not the same as  $W\Omega_X\otimes_\Z \Q$.)
\end{notation}

\section{Witt-rational singularities}
All schemes in this section are quasi-projective over $k$.
\subsection{The Witt canonical system}
\subsubsection{}\label{recall-Witt-residual}
Recall from Definition \ref{definition-Wittresidual} the notion of a Witt residual complex. Let $X$ be a $k$-scheme with structure map $\rho_X: X\to \Spec k$, then there is a canonical Witt residual complex 
$K_X=\rho_X^\Delta W_\bullet \omega$ (see Notation \ref{KX}). This complex has the following properties:
\begin{enumerate}
 \item If $X$ is smooth of pure dimension $d$, then there is a quasi-isomorphism of graded complexes $\tau_X: W_\bullet\omega_X\to K_X(-d)[-d]$, which is compatible with localization.
        (Ekedahl, see Theorem \ref{1.6.12}.) 
 \item If  $j: U\inj X$ is an open subscheme, then $j^*K_X\cong K_U$ (see Proposition \ref{1.6.11}).
 \item If $f:X\to Y$ is a morphism of $k$-schemes, then there is a canonical isomorphism $f^\Delta K_Y\cong K_X$ induced by $c_{f, \rho_Y}$, where $\rho_Y$ is the structure map of $Y$.
       This isomorphism is compatible with composition and localization and in case $f$ is an open embedding also with the isomorphism in (2) (via $f^*\cong f^\Delta$).
        (See Proposition \ref{1.6.11}.)
 \item For a proper $k$-morphism $f:X \to Y$, there is a trace map $\Tr_f: f_*K_X\to K_Y$, which is a morphism of complexes of Witt quasi-dualizing systems (see Definition \ref{1.4.1}); it is compatible
       with composition and localization (see Lemma \ref{1.6.13}).
\item We have a functor
   \[D_X:=\sHom(-,K_X): C(\drw_{X,{\rm qc}})^o \to  C(\drw_X).\]
      It preserves quasi-isomorphisms and hence also induces a functor from $D(\drw_{X, {\rm qc}})^o$ to $D(\drw_X)$ (see \ref{1.6.14}).
\item Let $f: X\to Y$ be a finite morphism. We denote by
         \[\vartheta_f: f_*D_X(-)\to D_Y(f_*(-))\]
      the composition 
\[f_*\sHom(-, K_X)\xr{\rm nat.} \sHom(f_*(-), f_*K_X)\xr{\Tr_f} \sHom(-, K_Y).\] 
      Then $\vartheta_f$ is an isomorphism of functors on $C^-(\drw_{X, {\rm qc}})$; it is compatible with composition and localization.
      (It is an isomorphism on each level by duality theory, see \ref{1.6.3}, (7); for the other assertions see Proposition \ref{1.6.14.4.5}.)
\end{enumerate}

\begin{definition}\label{definition-Witt-canonical}
 Let $X$ be a $k$-scheme of pure dimension $d$. The {\em Witt canonical system on $X$} is defined to be the $(-d)$-th cohomology of $K_X$ sitting in degree $d$ and is denoted by $W_\bullet\omega_X$, i.e.
    \[W_\bullet\omega_X:=H^{-d}(K_X)(-d).\]
 Since $W_\bullet \sO_X$ is a Witt system (i.e. a de Rham-Witt system with zero differential), 
       $W_\bullet\omega_X$  inherits the structure of a Witt system from the canonical isomorphism $K_X\cong \sHom(W_\bullet\sO_X, K_X)$ and \ref{recall-Witt-residual}, (5).
       We denote the limit with respect to $\pi$ by
\[W\omega_X:=\varprojlim_\pi W_\bullet\omega_X.\]
\end{definition}

\begin{remark}\label{Witt-canonical-old-new}
 Notice that in case $X$ is smooth the above definition of $W_\bullet\omega_X$ coincides (up to canonical isomorphism) with our earlier
 definition of $W_\bullet\omega_X=W_\bullet\Omega_X^d$, by \ref{recall-Witt-residual}, (1). But observe, in Example \ref{1.4.4}, (2) we viewed it as a Witt dualizing system,
whereas now as a Witt system. This will not cause any confusion.
\end{remark}

\begin{proposition}\label{properties-Witt-canonical}
Let $X$ be a $k$-scheme of pure dimension $d$. Then $W_\bullet\omega_X$ has the following properties:
\begin{enumerate}
\item The sheaf $W_1\omega_X$ equals the usual canonical sheaf on $X$ if $X$ is normal.
 \item The complex $K_X$ is concentrated in degree $[-d, 0]$, hence there is a natural morphism of complexes
        \[W_\bullet\omega_X\to K_X(-d)[-d].\]
        This morphism is a quasi-isomorphism if $X$ is CM.
\item For each $n$ the sheaf $W_n\omega_X$ is a coherent sheaf on $W_n X$, which is  $S_2$.
\item Let $j: U\inj X$ be an open subscheme which contains all 1-codimensional points of $X$. Then
      we have an isomorphism of Witt systems
       \[W_\bullet\omega_X\xr{\simeq,\, \text{nat.}} j_*j^*W_\bullet\omega_X= j_*W_\bullet\omega_U.\]
        If $X$ is normal, $U$ can be chosen to be smooth, in which case we have an isomorphism 
               \[j_*W_\bullet\omega_U\simeq j_*W_\bullet\Omega^d_U,\]
        which is induced by \ref{recall-Witt-residual}, (1) and (2). In particular the transition maps $W_n\omega_X\to W_{n-1}\omega_X$ are surjective if $X$ is normal.
\item  Assume $X$ is normal. Then there is a natural isomorphism for all $n$
        \[\sHom(W_n\omega_X, W_n\omega_X)\cong H^{-d}(D_{X,n}(W_n \omega_X)(-d)),\]
        where $D_{X,n}(-)=\sHom(-, K_{X,n})$.
      Therefore $\sHom(W_\bullet\omega_X, W_\bullet\omega_X)$ is naturally equipped with the structure of a Witt system and
      multiplication induces an isomorphism of Witt systems
       \[W_\bullet\sO_X\xr{\simeq} \sHom(W_\bullet\omega_X, W_\bullet\omega_X).\]
\item Let $f:X\to Y$ be a proper morphism between $k$-schemes, which are both of pure dimension $d$. 
      Then we  define the pushforward $f_*: f_*W_\bullet\omega_X\to W_\bullet\omega_Y$ as the composition in $\drw_Y$
         \[f_*: f_*W_\bullet\omega_X=f_*H^{-d}(K_X)(-d)=H^{-d}(f_* K_X)(-d)\xr{\Tr_f} H^{-d}(K_Y)(-d)= W_\bullet\omega_Y\]
         This morphism is compatible with composition and localization and in case $X$ and $Y$ are smooth coincides with the pushforward defined in Definition \ref{2.1.6}
          (for $S=Y$.)
\item   The sequence of $W_n\sO_X$-modules
       \[0\to i_{n*}W_{n-1}\omega_X\xr{\ul{p}} W_n\omega_X\xr{F^{n-1}} i_* F^{n-1}_{X*}\omega_X\]
       is exact for any $n\ge 1$.  Furthermore, if $X$ is CM, then the map on the right is also surjective. (Here we write $\omega_X:=W_1\omega_X$.)
\end{enumerate}
\end{proposition}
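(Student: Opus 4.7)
The overall strategy is systematic descent from the smooth locus, where Ekedahl's theorem (\ref{recall-Witt-residual}(1)) identifies $K_X$ with a shifted $W_\bullet\Omega^d_X$ and Theorem \ref{1.4.8} controls the multiplication pairing. The $S_2$-property of $W_n\omega_X$ will serve as the gluing tool. Concretely, (2) and (3) come directly from Grothendieck duality; (4), (1), and (5) will follow by $S_2$-descent from the smooth locus; (6) is an immediate application of the trace morphism \ref{recall-Witt-residual}(4); and (7) will be obtained by dualizing the standard short exact sequence of Witt vectors through $\sHom(-, K_{X,n})$. The main obstacle will be the identification in (7) of the dualized maps with the intrinsic Witt-system maps $\ul{p}$ and $F^{n-1}$ on $W_\bullet\omega_X$.

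For (2), I would compute that the codimension function of $K_{X,n}$ at $x$ equals $-\mathrm{trdeg}(k(x)/k)$ by the standard formula, hence takes values in $[-d,0]$ when $X$ has pure dimension $d$; this gives the concentration range. In the Cohen--Macaulay case, $\omega_X[d]$ is itself a dualizing complex on $X$, so by uniqueness of pointwise dualizing complexes up to shifted invertible sheaves, $K_{X,n}$ is quasi-isomorphic to $H^{-d}(K_{X,n})[d]$ in $D^b_c(W_n X)$, and the natural map is then a quasi-isomorphism. For (3), coherence is automatic since $K_{X,n}$ is residual. The $S_2$-property is a standard consequence of local duality: for any pointwise dualizing complex $R$ on a pure $d$-dimensional noetherian scheme, $H^{-d}(R)$ satisfies Serre's $S_2$ condition.

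For (4), since $X$ and $W_n X$ share the same underlying space and the same codimension filtration, the $S_2$-property of $W_n\omega_X$ obtained in (3) yields $W_n\omega_X \xr{\sim} j_*W_n\omega_U$ for any open $j:U\inj X$ containing every codimension-$1$ point. The Witt-system maps $\pi,F,V,\ul{p}$ are preserved because $W_\bullet\omega_U$ carries the induced structure via $j^*$. When $X$ is normal I take $U = X^{\mathrm{sm}}$ and apply Remark \ref{Witt-canonical-old-new} to identify $W_\bullet\omega_U$ with $W_\bullet\Omega^d_U$; surjectivity of the transition maps then descends from the smooth case. Property (1) is then immediate since $j_*\Omega^d_{X^{\mathrm{sm}}}$ is by definition the canonical sheaf of the normal scheme $X$. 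For (5), Theorem \ref{1.4.8} on $X^{\mathrm{sm}}$ provides the multiplication isomorphism $W_n\sO_{X^{\mathrm{sm}}} \xr{\sim} \sHom(W_n\omega_{X^{\mathrm{sm}}}, W_n\omega_{X^{\mathrm{sm}}})$ together with the vanishing of higher $\sExt$s. Applying $j_*$ and using $W_n\sO_X \xr{\sim} j_*W_n\sO_{X^{\mathrm{sm}}}$ (a consequence of $X$ being normal) extends the isomorphism to $X$; the vanishing of higher $\sExt$s on the smooth locus then produces the identification $\sHom(W_n\omega_X, W_n\omega_X) = H^{-d}(D_{X,n}(W_n\omega_X)(-d))$.

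For (6), I will set $f_* := H^{-d}(\Tr_f)(-d)$ and observe that Lemma \ref{1.6.13} guarantees $\Tr_f$ is a morphism of complexes of Witt quasi-dualizing systems, whence $f_*$ is a morphism of Witt systems; compatibilities with composition and localization follow immediately from the corresponding properties of $\Tr_f$. To verify agreement with Definition \ref{2.1.6} in the smooth case, I would trace through the composition \eqref{2.1.6.1}: after the identifications $\tau_X, \tau_Y$ and $\mu_X, \mu_Y$, both constructions reduce to $H^{-d}(\Tr_f)$ followed by the multiplication isomorphism of Theorem \ref{1.4.8}. For (7), the maps $\ul{p}$ and $F^{n-1}$ are intrinsic to the Witt-system structure. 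To prove the exactness I would apply $\sHom(-,K_{X,n})$ to the standard exact sequence of Witt vectors
\[0 \to \sigma^{n-1}_*\sO_X \xr{V^{n-1}} W_n\sO_X \xr{R} W_{n-1}\sO_X \to 0;\]
since $K_{X,n}$ is a complex of injectives, this yields a short exact sequence of complexes, and taking $H^{-d}$ produces the desired left-exact sequence using the identifications $H^{-d}\sHom(W_m\sO_X, K_{X,n}) = W_m\omega_X$ for $m \le n$. In the Cohen--Macaulay case, (2) implies that all other cohomologies of $K_{X,n}$ vanish, so the long exact sequence degenerates and the surjectivity on the right follows. The hard point, as noted, is identifying the dualized maps with $\ul{p}$ and $F^{n-1}$; this will be carried out using the compatibility conditions (e) and (f) of Definition \ref{definition-W} that link $\ul{p}, F, V$ on Witt systems, via the adjunction between $\pi$ (dualizing to $\ul{p}$) and $V^{n-1}$ (dualizing to $F^{n-1}$ up to Frobenius twist).
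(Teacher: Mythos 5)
Your overall organization mirrors the paper's: duality for (2)--(3), $S_2$-descent from the smooth locus for (1), (4), (5), the trace morphism for (6), and dualization of the Witt vector sequence for (7). Most of these steps are sound. However, there is a real gap in your treatment of the Cohen--Macaulay case of (2), and a minor misreading in (7).

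For (2), the claim you need is that $K_{X,n}$, a residual complex on $W_n X$, has cohomology concentrated in degree $-d$. Your argument invokes the fact that $\omega_X[d]$ is a dualizing complex on $X$ and uniqueness of dualizing complexes --- but this reasoning lives entirely on $X$, which is the level $n=1$ scheme. It does not carry over to $W_nX$ for $n>1$: $W_n X$ is a different ringed space, and the relevant uniqueness statement (every dualizing complex on a CM scheme is a shifted sheaf) would require knowing that $W_n X$ is Cohen--Macaulay, which you have not established. In fact the paper, following Ekedahl, avoids this question entirely by inducting on $n$ via the short exact sequence $0\to i_*F_{X*}^{n-1}\sO_X\xr{V^{n-1}} W_n\sO_X\to i_{n*}W_{n-1}\sO_X\to 0$, dualizing it against $K_{X,n}$ to obtain a triangle $i_{n*}K_{X,n-1}\to K_{X,n}\to (iF^{n-1}_X)_*K_{X,1}\to{}$, and concluding by induction on $n$. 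That triangle carries the whole weight: it propagates concentration from level $n=1$ (where $X$ CM does the work) to all levels. If you insist on the uniqueness route you would still need an inductive depth argument using essentially the same short exact sequence to show $W_n\sO_{X,x}$ has depth $\geq d$ for all $x$ closed; simply asserting the conclusion is not sufficient.

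On (7), you call the identification of the dualized maps with $\ul p$ and $F^{n-1}$ ``the hard point'' and propose to prove it from conditions (e), (f) of Definition \ref{definition-W}. This is unnecessary: the Witt-system structure on $W_\bullet\omega_X$ is by construction the one inherited from $K_X\cong\sHom(W_\bullet\sO_X,K_X)$ and the formulas of \ref{dualizing-functor}, so once you dualize the Witt vector sequence via the triangle above, the resulting maps on $H^{-d}$ are those maps, essentially by definition (modulo the adjunction isomorphisms $\ul p\circ\epsilon_i$, $V\circ\epsilon_{\sigma i}$ of \ref{1.4.3}). Note also that the statement you deduce from CM requires part (2) as input, so the gap in (2) propagates here.
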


\begin{proof}
(1) is clear by construction. (In the derived category of coherent sheaves $K_{X,1}$ is isomorphic to $\rho_X^! k$, where $\rho_X: X\to \Spec \,k$ is the structure map.)
It suffices to prove the other statements on a fixed finite level $n$.

(2). The codimension function associated to $K_{X,n}$ (see \ref{1.6.1}) is given by 
\[d_{K_{X,n}}(x)=-{\rm trdeg} (k(x)/k)=\dim \sO_{X,x}-d\]
 (see \cite[(3.2.4)]{Co}).
  This already gives the first statement of (2). For the second statement the same argument as in \cite[I, 2.]{EI} works (there for smooth schemes). For the convenience
 of the reader we recall the argument. Let $X$ be CM. We have to show that $H^i(K_{X,n})=0$ for all $i\neq -d$. For $n=1$ this follows from (1). We have an exact sequence of coherent $W_n\sO_X$-modules
\[0\to i_*F_{X*}^{n-1}\sO_X\xr{V^{n-1}} W_n\sO_X\xr{\pi} i_{n*} W_{n-1}\sO_X,\]
where $F_X$ denotes the absolute Frobenius on $X$ and $i$, $i_n$ denote the closed embeddings $X\inj W_n X$, $W_{n-1} X\inj W_n X$. Applying $R\sHom_{W_n\sO_X}(-, K_{X,n})$ to it and using
duality for the finite morphisms $iF_X^{n-1}$ and $i_n$ (cf. \ref{recall-Witt-residual}, (6)) we obtain a triangle in $D(W_n\sO_X)$
\eq{Witt-residual-triangle}{i_{n*}K_{X,n-1}\to K_{X,n}\to (iF^{n-1}_X)_*K_{X,1}\to i_{n*}K_{X,n-1}[1].}
Therefore the statement follows by induction.

(3). Since $K_{X,n}$ is a residual complex, it has coherent cohomology by definition. To prove the $S_2$ property of $W_n\omega_X$ it suffices to show
\eq{S2-property}{\begin{cases}
   \Ext^0_{W_n\sO_{X,x}}(k(x), W_n\omega_{X,x})=0, & \text{for all } x\in X^{(1)}\\
   \Ext^i_{W_n\sO_{X,x}}(k(x), W_n\omega_{X,x})=0, & \text{for } i=0,1 \text{ and all } x\in X^{(2)},
  \end{cases}
} 
where $X^{(c)}$ denotes the points of codimension $c$. By the formula for the codimension function associated to $K_{X,n}$ above and  \cite[V, \S 7]{Ha}, we have 
\eq{ext-for-residual}{\Ext^i_{W_n\sO_{X,x}}(k(x), K_{X,n})=\begin{cases}
                                           0, & \text{if } x\not\in X^{(i+d)},\\
                                           k(x),& \text{if } x\in X^{(i+d)}. 
                                       \end{cases}
}
Thus the vanishing \eqref{S2-property} can easily be deduced from the spectral sequence 
\[E_2^{i,j}=\Ext^i(k(x), H^j(K_{X,n}))\Rightarrow \Ext^{i+j}(k(x), K_{X,n})\]
and the vanishing $E^{i,j}_2=0$ if $i<0$ or $j\not\in [-d,0]$ (by (2)).

(4). The first morphism is bijective by (3) and \cite[Exp. III, Cor. 3.5]{SGA2}.

(5). The first isomorphism is obtained by considering the spectral sequence 
 \[\Ext^i(W_n\omega_X, H^j(K_{X,n}))\Rightarrow \Ext^{i+j}(W_n\omega_X, K_{X,n})= H^{i+j}(D_{X,n}(W_n\omega_X))\]
   and using that $H^j(K_{X,n})\neq 0$ only for $j\in [-d,0]$, by (2).
 The second isomorphism can easily be deduced from (4), the isomorphism $W_n\sO_X\xr{\simeq}j_*W_n\sO_U$ ($W_n\sO_X$ is $S_2$) and the corresponding statement for smooth schemes, see \ref{1.4.8}.

(6) The equality $f_*H^{-d}(K_X)=H^{-d}(f_* K_X)$ follows from the spectral sequence $R^if_*H^j(K_X)\Rightarrow R^{i+j}f_*K_X$, (2) above and from $Rf_*K_X= f_*K_X$.
    The other statements follow from \ref{recall-Witt-residual}, (4).

(7) Recall that $F$ and $\ul{p}$ on $W_\bullet\omega_X$ are defined by the isomorphism $K_X\cong \sHom(W_\bullet\sO_X, K_X)$ and the formulas in \ref{dualizing-functor}.
     Thus the exact sequence in (7) is the result of applying $H^{-d}$ to the triangle \eqref{Witt-residual-triangle} above and using the isomorphisms $\ul{p}\circ \epsilon_i$
     and $V\circ \epsilon_{\sigma i}$ from \ref{1.4.3}.
\end{proof}

\begin{definition}\label{pb-pf-for-Witt-canonical}
Let $f:X\to Y$ be a finite and surjective $k$-morphism between integral normal schemes both of dimension $d$. 
\begin{enumerate}
 \item We define a pullback morphism in $\drw_Y$ 
      \[f^*: W_\bullet\omega_Y\to f_*W_\bullet\omega_X\]
       as follows: Choose open and smooth subschemes $j_X: U\inj X$ and $j_Y: V\inj Y$, which contain all $1$-codimensional points of $X$ and $Y$ respectively and such that $f$ restricts to a morphism
        $f': U\to V$. We define $f^*$ as the composition
       \mlnl{W_\bullet\omega_Y\xr{\simeq,\, \ref{properties-Witt-canonical}, (5)} j_{Y*}W_\bullet\Omega^d_V \xr{{f'}^*} j_{Y*}f'_*W_\bullet\Omega^d_U\xr{\simeq,\, \ref{Witt-canonical-old-new}} f_*j_{X*}W_\bullet\omega_U\\
                                                         \xr{\simeq,\, \ref{properties-Witt-canonical}, (5)} f_*W_\bullet\omega_X.}
       It is straightforward to check, that this morphism is independent of the choice of $U$. We also write $f^*$ for the sum of the natural pullback on $W_\bullet\sO$ with the just defined pullback, i.e.
        \[f^*: W_\bullet\sO_Y\oplus W_\bullet\omega_Y\to f_*(W_\bullet\sO_X\oplus W_\bullet\omega_X).\]
       Taking the limit we obtain a pullback in $\widehat{\drw}_Y$
       \[f^*: W\sO_Y\oplus W\omega_Y\to f_*(W\sO_X\oplus W\omega_X).\]
\item We define a pushforward in $\drw_Y$
        \[f_*: f_*W_\bullet\sO_X\to W_\bullet\sO_Y\]
        as the composition
\begin{align*}
f_*W_\bullet\sO_X & \xr{\simeq,\, \ref{properties-Witt-canonical}, (5)} f_*\sHom(W_\bullet\omega_X, W_\bullet\omega_X)\\
                  & \xr{\simeq,\, \ref{properties-Witt-canonical}, (5)} H^{-d}(f_*D_X(W_\bullet\omega_X)(-d))\\
                  & \xr{\simeq,\,\vartheta_f} H^{-d}(D_Y(f_*W_\bullet\omega_X)(-d))\\
                  & \xr{D_Y(f^*), \, (1)} H^{-d}(D_Y(W_\bullet\omega_Y)(-d))\\
                  & \xr{\simeq, \ref{properties-Witt-canonical}, (5)} \sHom(W_\bullet\omega_Y, W_\bullet\omega_Y)\\
                  & \xr{\simeq,\, \ref{properties-Witt-canonical}, (5)} W_\bullet\sO_Y.
\end{align*}
We also write $f_*$ for the sum of the pushforward on $f_*W_\bullet\omega_X$ defined in Proposition \ref{properties-Witt-canonical}, (6) with the just defined pushforward.
Taking the limit we obtain a pushforward in $\widehat{\drw_Y}$
\[f_*: f_*(W\sO_X\oplus W\omega_X)\to W\sO_Y\oplus W\omega_Y.\]
\end{enumerate}
\end{definition}

\begin{lemma}\label{properties-can-pb-pf}
 Let $f:X\to Y$ be a finite and surjective $k$-morphism between normal integral schemes of dimension $d$. 
\begin{enumerate}
 \item Let $j_Y: V\inj Y$ be an open smooth subscheme, which contains all 1-codimensional points of $Y$ and such that $U:=f^{-1}(V)\subset X$ is smooth and contains all 1-codimensional points of $X$
        (e.g. $V=Y\setminus f(X_{\text{sing}})$). Denote by $j_X: U\inj X$ and $f': U=f^{-1}(V)\to V$ the pullbacks.
    Then
     \[j_X^*\circ f^*= {f'}^*\circ j_Y^*,\quad j_Y^*\circ f_*=  f'_*\circ j_X^*,\]
      where ${f'}^*$ is the pullback defined in Definition \ref{2.1.2} and $f'_*$ is the pushforward defined in Definition \ref{2.1.6} (with $S=V$).
\item The composition $f_*\circ f^*$ equals the multiplication with the degree of $f$.
\end{enumerate}
\end{lemma}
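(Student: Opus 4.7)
The plan is to establish (1) by unwinding the definitions and then use (1) together with Proposition~\ref{2.1.8} to deduce (2) via a restriction-injectivity argument.

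For (1), both $f^*$ on $W_\bullet\omega$ and $f_*$ on $W_\bullet\sO$ in Definition~\ref{pb-pf-for-Witt-canonical} are built by passing to a smooth open subscheme via the normality isomorphism of Proposition~\ref{properties-Witt-canonical}(5). Given any admissible $V$ with $U=f^{-1}(V)$ smooth, I want to check that applying $j_Y^*$ to $f^*$ produces exactly $(f')^*$. The isomorphism $W_\bullet\omega_Y \xr{\simeq} j_{Y*}W_\bullet\Omega^d_V$ becomes the identity after applying $j_Y^*$ (using \ref{recall-Witt-residual}(2)), and likewise on $X$, so the construction collapses to the smooth pullback of Definition~\ref{2.1.2}. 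The corresponding statement for the trace $f_*$ on $f_*W_\bullet\omega_X$ is the compatibility with localization in Proposition~\ref{properties-Witt-canonical}(6). For $f_*$ on $W_\bullet\sO$, the definition threads through $\sHom(W_\bullet\omega_X,W_\bullet\omega_X)$ and the duality transformation $\vartheta_f$; its compatibility with localization is \ref{recall-Witt-residual}(6), so once restricted to $V$ the construction reproduces the smooth pushforward of Definition~\ref{2.1.6}, invoking Theorem~\ref{1.4.8} to identify $W_\bullet\sO_V\cong\sHom(W_\bullet\Omega^d_V,W_\bullet\Omega^d_V)$.

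For (2), combining (1) with Proposition~\ref{2.1.8}, which gives $f'_*\circ (f')^* = \deg(f')\cdot\id$ on $V$, and observing that $\deg(f')=\deg(f)$ since $f$ is finite surjective between integral schemes, I obtain
\[
 j_Y^*\circ (f_*\circ f^*) = f'_*\circ j_X^*\circ f^* = f'_*\circ (f')^*\circ j_Y^* = \deg(f)\cdot j_Y^*.
\]
To upgrade this back to $Y$, I need the unit map $\id \to j_{Y*}j_Y^*$ to be injective on $W_\bullet\sO_Y\oplus W_\bullet\omega_Y$. For the $W_\bullet\omega$-summand this is exactly the isomorphism in Proposition~\ref{properties-Witt-canonical}(4). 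For $W_n\sO_Y$, normality of $Y$ combined with $\codim(Y\setminus V)\geq 2$ gives $\sO_Y\xr{\simeq} j_{Y*}\sO_V$; feeding this into the short exact sequences $0\to F^{n-1}_{Y*}\sO_Y\xr{V^{n-1}} W_n\sO_Y\xr{\pi} W_{n-1}\sO_Y\to 0$ and inducting on $n$ (using left-exactness of $j_{Y*}$ and the five lemma) yields $W_n\sO_Y\xr{\simeq} j_{Y*}W_n\sO_V$, which suffices.

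The main obstacle should be the bookkeeping in part (1): the definition of $f_*$ on $W_\bullet\sO$ passes through several layers of natural isomorphisms ($H^{-d}$ of $K$, the $\sHom(W_\bullet\omega,W_\bullet\omega)$ identification, the dualizing transformation $\vartheta_f$, and the $W_\bullet\omega\cong W_\bullet\Omega^d$ identification on smooth loci), and one must verify coherently that each is compatible with $j_Y^*$. However, every naturality statement required is already established in Section~\ref{section-De-Rham-Witt-systems-after-Ekedahl}, so this reduces to a diagram chase rather than a genuine difficulty.
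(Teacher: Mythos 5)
Your proposal is correct and follows essentially the same route as the paper: part (1) is a localization/naturality check, and part (2) reduces via (1) and Proposition~\ref{2.1.8} to showing the endomorphism $f_*\circ f^*-\deg(f)$ of $W_\bullet\sO_Y\oplus W_\bullet\omega_Y$ vanishes on a big open $V$, then invoking injectivity of restriction to $V$ (the paper phrases this through the identifications $\sHom(W_\bullet\sO_Y,W_\bullet\sO_Y)\cong W_\bullet\sO_Y\cong\sHom(W_\bullet\omega_Y,W_\bullet\omega_Y)$ of Proposition~\ref{properties-Witt-canonical}(5), but the underlying fact is the same). One minor quibble: in your induction establishing $W_n\sO_Y\xr{\simeq}j_{Y*}W_n\sO_V$, the bottom row is only left-exact after applying $j_{Y*}$, so a direct diagram chase (rather than a literal appeal to the five lemma) is needed — but the chase does go through, and in any case only injectivity is required for the conclusion.
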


\begin{proof}
The first part of (1) follows from the fact that all the maps in the definitions of $f_*$ and $f^*$ are compatible with localization; the second part follows immediately from the definitions.
For (2) we have to check that $f_*\circ f^*-\deg f=0 $ in $\sHom(W_\bullet\sO_Y, W_\bullet\sO_Y)=W_\bullet\sO_Y$ and in $\sHom(W_\bullet\omega_Y, W_\bullet\omega_Y)= W_\bullet\sO_Y$.
It suffices to check this on some open $V\subset Y$ and hence the assertion follows from (1) and Proposition \ref{2.1.8} (Gros). 
\end{proof}

\subsection{Topological finite quotients}\label{section-top-finite-quot}

\subsubsection{Universal homeomorphisms}\label{univ-homeo} Recall that a morphism of $k$-schemes $u:X\to Y$ is a {\em universal homeomorphism} if for any $Y'\to Y$ the base change morphism
$u': X\times_Y Y'\to Y'$ is a homeomorphism. By \cite[Cor. (18.12.11)]{EGAIV4} this is equivalent to say that $u$ is finite, surjective and radical. 
In case $X$ and $Y$ are integral and $Y$ is normal it follows from \cite[Prop. (3.5.8)]{EGAI}, that $u$ is a universal homeomorphism if and only if 
$u$ is finite, surjective and purely inseparable (i.e. $k(Y)\subset k(X)$ is purely inseparable).

\subsubsection{Relative Frobenius}\label{rel-Frob}
We denote by $\sigma: \Spec k \to \Spec k$ the Frobenius (we will not use the notation from section \ref{drws} any longer); for a $k$-scheme $X$
we denote by $X^{(n)}$ the pullback of $X$ along $\sigma^n: \Spec k \to \Spec k$ and by $\sigma^n_X : X^{(n)}\to X$ the projection.
Notice that since $k$ is perfect, $\sigma_X^n$ is an isomorphism of $\mathbb{F}_p$-schemes. 
The $n$-th relative Frobenius of $X$ over $k$ is by definition the unique $k$-morphism $F_{X/k}^n: X\to X^{(n)}$ which satisfies 
$\sigma_X^n\circ F^n_{X/k}= F^n_X$, where $F_X: X\to X$ is the absolute Frobenius morphism of $X$. Clearly $F^n_{X/k}$ is a universal homeomorphism.

\begin{lemma}\label{univ-homeo-and-Frobenius}
Let $u: X\to Y$ be a morphism between integral $k$-schemes, which is a universal homeomorphism and assume that $Y$ is normal.
Then $\deg u= p^n$ for some natural number $n$ and there exits a universal homeomorphism $v: Y\to X^{(n)}$ such that the following diagram commutes
\[\xymatrix{X\ar[r]^-{F^n_{X/k}}\ar[d]_u &    X^{(n)}\ar[d]^{u\times\id_{\Spec k}},\\
            Y\ar[ur]^v\ar[r]_-{F^n_{Y/k}} &   Y^{(n)}.
            }\]

\end{lemma}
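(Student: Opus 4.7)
The plan is to exploit that $u$ being a universal homeomorphism to the normal scheme $Y$ gives a purely inseparable function field extension, so $A^{p^n} \subset B$ locally, and then use this to manufacture $v$ by $p^n$-th power.

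Concretely, by the characterization recalled in \ref{univ-homeo}, $u$ is finite, surjective, and $k(X)/k(Y)$ is a purely inseparable field extension. Its degree is a power $p^n$ of $p$, which immediately gives $\deg u=p^n$. To build $v$, I work affine locally on $Y$: choose $V=\Spec B\subset Y$ and set $U:=u^{-1}(V)=\Spec A$, so that $u|_U$ corresponds to an injection $B\hookrightarrow A$. Every $a\in A$ satisfies $a^{p^n}\in k(X)^{p^n}\subset k(Y)$, and $a^{p^n}$ is integral over $B$ since $A$ is finite over $B$; normality of $Y$ forces $B$ to be integrally closed in $k(Y)$, so $A^{p^n}\subset B$. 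Consequently the assignment $a\otimes\lambda\mapsto \lambda a^{p^n}$ defines a $k$-algebra homomorphism
\[
\phi_V:\; A^{(n)}=A\otimes_{k,\sigma^n}k\;\longrightarrow\;B.
\]
These maps glue over a cover of $Y$ by such $V$ to yield a morphism of $k$-schemes $v:Y\to X^{(n)}$. The commutativity of both triangles is then a direct computation on rings using the explicit descriptions of $F^n_{X/k}$ and $F^n_{Y/k}$ from \ref{rel-Frob}: the map $F^n_{X/k}$ corresponds to $A^{(n)}\to A$, $a\otimes\lambda\mapsto\lambda a^{p^n}$, and $v\circ u$ corresponds to $A^{(n)}\xrightarrow{\phi_V}B\hookrightarrow A$, which is the same formula; similarly on $B^{(n)}$ for the other triangle.

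It remains to verify that $v$ is a universal homeomorphism. Locally, every $b\in B$ satisfies $T^{p^n}-\phi_V(b\otimes 1)=0$, so $B$ is integral over the subring $\phi_V(A^{(n)})$. Since $Y$ is of finite type over $k$, $B$ is a finitely generated $k$-algebra, and together with integrality this forces $B$ to be a \emph{finite} module over $\phi_V(A^{(n)})$, hence over $A^{(n)}$ via $\phi_V$. So $v$ is finite. Surjectivity is inherited from surjectivity of $v\circ u=F^n_{X/k}$. Finally, the induced extension of function fields $k(X^{(n)})\to k(Y)$ is purely inseparable because every $b\in k(Y)$ satisfies $b^{p^n}\in k(X)^{p^n}$, which lies in the image. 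Thus $v$ is finite, surjective, and radical, and so is a universal homeomorphism by \ref{univ-homeo}.

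The only substantive step is the inclusion $A^{p^n}\subset B$; everything else is formal manipulation of Frobenius twists. This inclusion is precisely where the normality of $Y$ enters, and without it the $p^n$-th power map produces elements of $k(Y)$ rather than of $B$, so no morphism $v$ could be defined.
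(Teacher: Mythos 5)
Your proof is correct and follows the same route as the paper's: reduce to the affine case (or an affine cover compatible with $u$), observe that normality of $Y$ together with integrality of $\sO_X$ over $\sO_Y$ forces $p^n$-th powers of sections of $\sO_X$ to land in $\sO_Y$, and read $v$ off as the resulting factorization of $F^n_{X/k}$. The paper is terser — it leaves the commutativity of the diagram and the verification that $v$ is a universal homeomorphism as immediate — while you spell those checks out, but the key idea and the way normality is used are identical.
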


\begin{proof}
 We may assume $X=\Spec B$ and $Y=\Spec A$. Then by \ref{univ-homeo} $u^*: A\inj B$ is an inclusion of $k$-algebras which makes $k(A)\subset k(B)$ a purely inseparable field extension of degree $p^n$.
$F^{n*}_{X/k}$ is given by $B\otimes_{k,\sigma^n} k\to B$, $b\otimes\lambda\mapsto b^{p^n}\lambda$. But $b^{p^n}\in k(A)\cap B=A$ by the normality of $Y$. Therefore
$F^{n*}_{X/k}$ factors via $u^*: A\inj B$. We obtain a homomorphism of $k$-algebras $B\otimes_{k,\sigma^n} k\to A$, which gives rise to a $k$-morphism $v: Y\to X^{(n)}$. It follows that $v$ is a universal homeomorphism,
which makes the diagram in the statement commutative.
\end{proof}

\begin{lemma}\label{pb-pf-univ-homeo}
Let $u: X\to Y$ be a universal homeomorphism between two integral and normal schemes. 
Let $u_*$ and $u^*$ be the pushforward and the pullback from Definition \ref{pb-pf-for-Witt-canonical}.
Then
\[u_*u^*=\deg u\cdot \id_{(W\sO_Y\oplus W\omega_Y)},\quad u^*u_*=\deg u\cdot\id_{u_*(W\sO_X\oplus W\omega_X)}.\] 
\end{lemma}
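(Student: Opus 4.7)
The first identity $u_*\circ u^* = \deg u \cdot \id$ follows immediately from Lemma~\ref{properties-can-pb-pf}(2), applied to the finite surjective morphism $u$ between integral normal schemes. The plan for the second identity is to reduce to the smooth locus and then translate the composition $u^*\circ u_*$ into an intersection-theoretic calculation with correspondences.

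First, I will choose an open $j_Y:V\inj Y$ with $V$ smooth, containing every codimension-one point of $Y$, and such that $U:=u^{-1}(V)$ is smooth and contains every codimension-one point of $X$; one may take $V=Y^{\mathrm{sm}}\setminus u(X^{\mathrm{sing}})$, which works because $X,Y$ are normal and $u$ is a homeomorphism, so the singular loci and $u(X^{\mathrm{sing}})$ all have codimension $\geq 2$. The sheaves $W_n\sO_X$ are $S_2$ by induction on $n$ (using normality of $X$ and the short exact sequences relating $W_n$ and $W_{n-1}$), and $W_n\omega_X$ is $S_2$ by Proposition~\ref{properties-Witt-canonical}(3). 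Consequently any endomorphism of $W\sO_X\oplus W\omega_X$ is determined by its restriction along $j_X:U\inj X$, and by Lemma~\ref{properties-can-pb-pf}(1) that restriction of $u^*\circ u_*$ equals $(u')^*\circ (u')_*$ for $u':=u|_U$, computed with the pullback and pushforward of Definition~\ref{2.1.2} and Definition~\ref{2.1.6}.

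By Proposition~\ref{proposition-comparison-with-pull-push-over-S}, in the category $\mc{C}_V$ we have $(u')_* = \hat{\sH}([\Gamma_{u'}]/V)$ and $(u')^* = \hat{\sH}([\Gamma_{u'}^t]/V)$, reducing the problem to computing the composition $[\Gamma_{u'}^t]\circ [\Gamma_{u'}]$ in $\CH(U\times U, P(U\times_V U))$. The intersection $p_{12}^{-1}(\Gamma_{u'})\cap p_{23}^{-1}(\Gamma_{u'}^t)\subset U\times V\times U$ is the scheme-theoretic fibre product $U\times_V U$ embedded via $(x,z)\mapsto(x,u'(x),z)$. Both pulled-back cycles are regular embeddings of codimension $\dim U$, and their intersection has the expected dimension $\dim U$, so the intersection is proper and the refined product equals the fundamental cycle $[U\times_V U]$. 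The projection $p_{13}$ restricts to a closed immersion $U\times_V U\inj U\times U$, yielding $[\Gamma_{u'}^t]\circ[\Gamma_{u'}] = [U\times_V U]$ in $\CH(U\times U)$.

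Finally, because $u$ is purely inseparable, $U\times_V U$ is supported on $\Delta_U$, and its multiplicity at the generic point of $\Delta_U$ equals the length of the Artin local ring $\sO_{U\times_V U,\eta_{\Delta_U}}\cong k(U)\otimes_{k(V)}k(U)$, which is $[k(U):k(V)]=\deg u$. Hence $[\Gamma_{u'}^t]\circ[\Gamma_{u'}] = \deg u\cdot [\Delta_U]$, and since $[\Delta_U]$ acts as the identity (Proposition~\ref{proposition-comparison-with-pull-push-over-S} applied to $\id_U$), we obtain $(u')^*\circ(u')_* = \deg u\cdot \id$ on $U$ and therefore on $X$. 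The main delicacy is the reduction to the smooth locus, which depends on the $S_2$-property of the Witt sheaves and on the localization compatibility of Lemma~\ref{properties-can-pb-pf}(1); the subsequent identification $[U\times_V U] = \deg u\cdot [\Delta_U]$ via the multiplicity of the purely inseparable fibre product is then routine.
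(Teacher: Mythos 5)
Your proof is correct and follows essentially the same route as the paper: first identity from Lemma~\ref{properties-can-pb-pf}(2), reduction to a smooth open containing all codimension-one points via the $S_2$-properties, translation of $u^*\circ u_*$ into $\hat{\sH}\bigl([\Gamma_{u'}^t]\circ[\Gamma_{u'}]/V\bigr)$ via Proposition~\ref{proposition-comparison-with-pull-push-over-S}, and then the intersection computation producing $\deg u\cdot[\Delta_U]$. The one place you diverge slightly is the intersection-theoretic step: the paper invokes miracle flatness (a finite surjective morphism between integral smooth schemes is flat) to identify $[\Gamma^t]\circ[\Gamma]$ with the flat pullback $[X\times_Y X]$, whereas you check that the two pulled-back graphs are regular embeddings meeting in the expected dimension and conclude the refined product is the fundamental cycle. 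That conclusion does need one more word than you give it — proper intersection of regular embeddings alone does not produce the fundamental cycle; you also need the vanishing of the higher Tor's, which here follows because the two subschemes $p_{12}^{-1}(\Gamma_{u'})\cong U\times U$ and $p_{23}^{-1}(\Gamma_{u'}^t)\cong U\times U$ are smooth, hence Cohen--Macaulay. With that remark inserted, the argument is airtight; the subsequent multiplicity computation $\ell\bigl(k(U)\otimes_{k(V)}k(U)\bigr)=[k(U):k(V)]=\deg u$ is the same as the paper's.
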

\begin{proof}
The equality on the left is a particular case of Lemma \ref{properties-can-pb-pf}, (2). To prove the equality on the right we may assume that $X$ and $Y$ are smooth (by Proposition \ref{properties-Witt-canonical} and the
corresponding statement for $W\sO$). Then by Lemma \ref{properties-can-pb-pf}, (1) and Proposition \ref{proposition-comparison-with-pull-push-over-S}, we have
 $u_*=\hat{\sH}([\Gamma]/Y)$ and $u^*=\hat{\sH}([\Gamma^t]/Y)$, where $\Gamma\subset X\times Y$ is the graph of $u$ and $\Gamma^t$ its transpose.
Therefore we are reduced to show 
\eq{pb-pf-univ-homeo1}{[\Gamma^t]\circ[\Gamma]=\deg u\cdot [\Delta_X] \quad \text{in } \CH(X\times X, P(X\times_Y X)),}
where $\Delta_X\subset X\times X$ is the diagonal. But since $u$ is flat (being a finite and surjective morphism between integral and smooth schemes) and a homeomorphism, we have
\[[\Gamma^t]\circ[\Gamma]=[X\times_Y X]=\dim_{k(X)}(k(X)\otimes_{k(Y)} k(X))\cdot [\Delta_X]=\deg u \cdot [\Delta_X].\]
\end{proof}

\begin{definition}\label{finite-quotients}
Let $X$ be a normal and equidimensional scheme. We say that $X$ is a {\em finite quotient} if there exists a finite and surjective morphism from a smooth scheme $Y\to X$.
We say that $X$ is a {\em tame finite quotient} if this morphism can be chosen to have its degree (as a locally constant function on $Y$) not divisible by $p$.
We say that $X$ is a {\em topological finite quotient} if there exists a universal homeomorphism $u: X\to X'$ to a finite quotient $X'$.
\end{definition}

\begin{remark}
If $X'$ is a finite quotient, so is ${X'}^{(n)}$. It follows from Lemma \ref{univ-homeo-and-Frobenius} that if $X$ is normal and equidimensional
       and if there exists a universal homeomorphism $u: X'\to X$ with a source
a finite quotient $X'$, then $X$ is a topological finite quotient.
\end{remark}

Topological finite quotients are good to handle because of the following Proposition.

\begin{proposition}\label{top-fin-quot-vs-smooth}
Consider morphisms
\[\xymatrix{           & Z\ar[d]^u \\ 
            X\ar[r]^f  & Y,}\]
where $X$ is smooth and integral, $Y$, $Z$ are normal and integral, $f$ is finite and surjective and $u$ is a universal homeomorphism.
Let $a: Y\to S$ be a morphism to some $k$-scheme $S$.
Set 
\[\beta:=\pr_{1,3*}[X\times_Y Z\times _Y X] \text{ in } \Hom_{\mc{C}_S}^0(X,X)= \CH^{\dim X}(X\times X, P(X\times_S X)), \]
where $\pr_{1,3}: X\times Z\times X \to X\times X$ is the projection (see \ref{Corr-on-rel-HW} for the notation).
Then for all i the composition
\[f^*\circ u_*: R^i (au)_*(W\sO_Z\oplus W\omega_Z)_{\Q}\to R^i(af)_*(W\sO_X\oplus W\omega_X)_{\Q} \]
induces an isomorphism in $\widehat{\drw}_S$ 
\[R^i (au)_*(W\sO_Z\oplus W\omega_Z)_{\Q}\cong \hat{\sH}(\beta/S)(R^i(af)_*(W\sO_X\oplus W\omega_X)_{\Q}),\]
where $\hat{\sH}(\beta/S)$ is the morphism from Proposition \ref{proposition-C_S-to-dRW_S} (see Notation \ref{Q-notation} for the meaning of the subscript $\Q$).
\end{proposition}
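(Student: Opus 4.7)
The plan is to identify the cycle $\beta$ explicitly, then compare the correspondence action $\hat{\sH}(\beta/S)$ with the composition $f^*f_*$ modulo torsion, and finally use degree identities to match images.

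First I observe that the projection $X\times_Y Z\times_Y X\to X\times_Y X$ dropping the middle $Z$-coordinate is the base change of $u:Z\to Y$ along the structure morphism $X\times_Y X\to Y$. Since $u$ is a universal homeomorphism of degree $\deg u$, so is this base change, and therefore
$$\beta=\pr_{1,3*}[X\times_Y Z\times_Y X]=\deg(u)\cdot[X\times_Y X]\quad\text{in }\CH(X\times X, P(X\times_S X)),$$
so that $\hat{\sH}(\beta/S)=\deg(u)\cdot\hat{\sH}([X\times_Y X]/S)$.

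Next I collect the degree formulas. By Lemma \ref{properties-can-pb-pf}(2), $f_*\circ f^*=\deg(f)\cdot\id$ on $(W\sO_Y\oplus W\omega_Y)_\Q$, and by Lemma \ref{pb-pf-univ-homeo}, $u_*\circ u^*=u^*\circ u_*=\deg(u)\cdot\id$ on the relevant Witt sheaves modulo torsion. Combining these identities,
$$(u^*f_*)\circ(f^*u_*)=\deg(f)\deg(u)\cdot\id,\qquad(f^*u_*)\circ(u^*f_*)=\deg(u)\cdot f^*f_*.$$
The first identity shows that $f^*u_*$ is split injective modulo torsion, with retraction $\tfrac{1}{\deg(f)\deg(u)}u^*f_*$.

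The crux of the argument is the identification $\hat{\sH}([X\times_Y X]/S)=f^*f_*$ modulo torsion, which I expect to be the main obstacle. Using Proposition \ref{correspondences-and-change-of-basis} applied to $a:Y\to S$ (valid since $f$ is finite, hence affine, and each integral component of $X\times_Y X$ is finite and surjective over $X$), the problem reduces to the analogous identity over $Y$ in $\widehat{\drw}_Y$. This is a base change statement for the self-cartesian square of $f$: on the smooth locus $V\subset Y$ the restriction of $f$ is a finite morphism between smooth schemes, hence flat, so Proposition \ref{1.7.12}(1) combined with the explicit local description \eqref{local-corr} of the correspondence action gives the identity there. To pass from $V$ to all of $Y$ I exploit that $Y$ is normal, so $Y\setminus V$ has codimension at least two; the relevant pushforward sheaves $f_*W_n\sO_X$ and $f_*W_n\omega_X$ are then $S_2$ on $W_nY$ (using smoothness of $X$ together with Proposition \ref{properties-Witt-canonical}), and therefore any two morphisms between them agreeing on $V$ coincide on all of $Y$ modulo torsion.

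Combining everything, $\hat{\sH}(\beta/S)=\deg(u)\cdot f^*f_*=(f^*u_*)\circ(u^*f_*)$. The inclusion $\im\hat{\sH}(\beta/S)\subseteq\im(f^*u_*)$ is immediate from this factorization. Conversely, for $\xi=f^*u_*(\eta)\in\im(f^*u_*)$ one computes
$$\hat{\sH}(\beta/S)(\xi)=(f^*u_*)(u^*f_*f^*u_*)(\eta)=\deg(f)\deg(u)\cdot\xi,$$
so $\xi=\tfrac{1}{\deg(f)\deg(u)}\hat{\sH}(\beta/S)(\xi)\in\im\hat{\sH}(\beta/S)$. Together with the split injectivity of $f^*u_*$ from the second step, this produces the desired isomorphism $R^i(au)_*(W\sO_Z\oplus W\omega_Z)_\Q\simeq\hat{\sH}(\beta/S)\bigl(R^i(af)_*(W\sO_X\oplus W\omega_X)_\Q\bigr)$ in $\widehat{\drw}_S$.
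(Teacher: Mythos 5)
Your proposal is correct and follows essentially the same route as the paper: reduce the identification of $\hat{\sH}(\beta/Y)$ with the appropriate push--pull composite to the smooth open locus of $Y$ (using normality and the $S_2$ property of $W_n\sO$ and $W_n\omega$), transport it to $S$ via Proposition \ref{correspondences-and-change-of-basis}, and conclude from the degree identities $f_*f^*=\deg f$ and $u_*u^*=u^*u_*=\deg u$. The only minor variation is that you pre-simplify $\beta$ to $\deg(u)\cdot[X\times_Y X]$ and appeal to Proposition \ref{1.7.12}(1) together with \eqref{local-corr} on the smooth locus, whereas the paper keeps the middle $Z$-factor, writes $\beta=[\Gamma_f^t]\circ[\Gamma_u]\circ[\Gamma_u^t]\circ[\Gamma_f]$ there, and invokes Proposition \ref{proposition-comparison-with-pull-push-over-S} directly (so $\hat{\sH}(\beta/Y)=f^*u_*u^*f_*$), which is the cleaner way to carry out the step you flag as the crux.
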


\begin{proof}
First of all notice that both $f$ and $u$ are finite and universally equidimensional. It follows that $\beta$ defines an element in $\CH^{\dim X}(X\times X, P(X\times_Y X))$,
a fortiori in $\CH^{\dim X}(X\times X, P(X\times_S X))$. In particular, Proposition \ref{proposition-C_S-to-dRW_S} yields a morphism
\[\hat{\sH}(\beta/Y): f_*(W\sO_X\oplus W\omega_X)\to f_*(W\sO_X\oplus W\omega_X).\]
We claim 
\eq{top-fin-quot-vs-smooth1}{\hat{\sH}(\beta/Y)= f^* u_*u^*f_*,}
with $f_*, u_*,u^*, f^*$ as in Definition \ref{pb-pf-for-Witt-canonical}.
By Proposition \ref{properties-Witt-canonical}, (4) and Lemma \ref{properties-can-pb-pf}, (1) and since $\hat{\sH}(\beta/Y)$ is compatible with localization in $Y$ (just by construction),
we may assume, that $X$, $Y$ and $Z$ are smooth. Then 
\[\beta= [\Gamma_f^t]\circ[\Gamma_u]\circ[\Gamma_u^t]\circ[\Gamma_f],\]
where we denote by $\Gamma_f^t\subset Y\times X$ the transpose of the graph of $f$, etc. Therefore claim \eqref{top-fin-quot-vs-smooth1} follows from Proposition \ref{proposition-comparison-with-pull-push-over-S}.
Thus Proposition \ref{correspondences-and-change-of-basis} implies that
\mlnl{\hat{\sH}(\beta/S)(R^i(af)_*(W\sO_X\oplus W\omega_X)\\
         =\text{Image}(f^* u_*u^*f_*: R^i(af)_*(W\sO_X\oplus W\omega_X)\to R^i(af)_*(W\sO_X\oplus W\omega_X)).}
Thus the assertion follows from $(u^*f_*)(f^*u_*)=\deg f\deg u\cdot\id$ (by Lemma \ref{properties-can-pb-pf}, (a) and Lemma \ref{pb-pf-univ-homeo}). 
\end{proof}

\subsection{Quasi-resolutions and relative Hodge-Witt cohomology}

\begin{definition}\label{quasi-res}
 We say that a morphism between two integral $k$-schemes $f:X \to Y$ is a {\em quasi-resolution} if the following conditions are satisfied: 
\begin{enumerate}
\item $X$ is a topological finite quotient, 
\item $f$ is projective, surjective, and 
generically finite, 
\item the extension of the function fields $k(Y)\subset k(X)$ is purely inseparable.
\end{enumerate}
\end{definition}

Condition (2) and (3) are for example satisfied if $f$ is projective and birational.

Let $X,Y$ be integral and normal. 
In general, every projective, surjective and generically finite morphism
$f:X\xr{} Y$ can be factored through the normalization $Y'$ of $Y$ in the 
function field $k(X)$ of $X$:
$$
f:X\xr{f'} Y' \xr{u} Y. 
$$
The morphism $f'$ is birational and $u$ is finite.
If $k(Y)\subset k(X)$ is purely inseparable then $u$ is a  universal homeomorphism.

\begin{remark}\label{de-Jong-did-it}
Let $X$ be an integral $k$-scheme. It follows from the proof of \cite[Cor. 5.15]{deJong2} (cf. also \cite[Cor. 7.4]{deJong1}), that 
there exists a finite and surjective morphism from a normal integral scheme $u:X'\to X$, such that $k(X)\subset k(X')$ is purely inseparable and a smooth, integral and quasi-projective scheme $X''$, 
with a finite group $G$ acting on it  such that there is a projective and birational morphism $f: X''/G\to X'$. In particular $X$ has a quasi-resolution
\[X''/G\xr{f} X'\xr{u} X.\]
\end{remark}

\begin{thm}\label{vanishing-HDI}
Let $Y$ be a topological finite quotient and $f: X\to Y$ a quasi-resolution.
Then the pullback $f^*$ and the pushforward (see Proposition \ref{properties-Witt-canonical}, (6)) induce isomorphisms in $D^b(\widehat{\drw}_{Y,\Q})$
\[f^*: W\sO_{Y,\Q}\xr{\simeq} Rf_*W\sO_{X,\Q},\quad  Rf_*W\omega_{X,\Q}\cong f_*W\omega_{X,\Q}[0]\xr{f_*,\, \simeq} W\omega_{Y,\Q}. \]
\end{thm}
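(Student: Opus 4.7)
The plan is to first handle the case where both $X$ and $Y$ are smooth over $k$, via a correspondence-theoretic argument on the decomposition of the diagonal combined with the vanishing Lemmas~\ref{lemma-van1} and~\ref{lemma-van2}, and then to reduce the general case to it using Proposition~\ref{top-fin-quot-vs-smooth}. Throughout we work with $\Q$-coefficients, so the generic degree $d=[k(X):k(Y)]$, which is a power of $p$, is invertible. For the smooth case, assume $X$ and $Y$ are smooth; then in the group $\Hom_{\mc{C}_Y}^0(X,X)\otimes\Q = \CH^{\dim X}(X\times X,P(X\times_Y X))\otimes \Q$ one has an identity
\[[\Delta_X]=\tfrac{1}{d}\bigl([\Gamma_f^t]\circ[\Gamma_f]\bigr)-\tfrac{1}{d}[E],\]
where $E$ is an effective cycle supported on $(X\times_Y X)\setminus \Delta_X$, and in particular every component of $E$ has both projections $\pr_i\colon X\times_Y X\to X$ landing in a subset of codimension $\geq 1$. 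Applying the functor $\hat{\sH}(-/Y)$ from Proposition~\ref{proposition-C_S-to-dRW_S} and invoking Proposition~\ref{proposition-comparison-with-pull-push-over-S}, the action of $[\Gamma_f^t]\circ[\Gamma_f]$ on $R^if_*W\Omega^p_{X,\Q}$ factors through the pushforward $f_*\colon R^if_*W\Omega^p_{X,\Q}\to R^i(\id_Y)_*W\Omega^p_{Y,\Q}$, which vanishes for $i>0$. By Lemma~\ref{lemma-van1} (applied to $\pr_2$ with $r=1$) the image of $\hat{\sH}([E]/Y)$ lies in $\bigoplus_{p\geq 1,\,q\geq 1} R^q f_* W\Omega^p_{X,\Q}$, while by Lemma~\ref{lemma-van2} (applied to $\pr_1$ with $r=1$) the kernel of $\hat{\sH}([E]/Y)$ contains $\bigoplus_q R^qf_*W\omega_{X,\Q}$. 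Since $[\Delta_X]$ acts as the identity, projecting to the $p=0$ component yields $\id=0$ on $R^if_*W\OO_{X,\Q}$ for $i>0$, and restricting to the $p=\dim X$ component gives $\id=0$ on $R^if_*W\omega_{X,\Q}$ for $i>0$; the statements in degree $i=0$ follow from the normality of $Y$ and the $S_2$-property recorded in Proposition~\ref{properties-Witt-canonical}.

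\textbf{Reduction to the smooth case.} In general, choose a universal homeomorphism $u_X\colon X\to X'$ to a finite quotient $X'$ together with a finite surjective morphism $\pi_X\colon \tilde X\to X'$ with $\tilde X$ smooth, and analogous data $u_Y,\pi_Y,\tilde Y$ for $Y$. By Proposition~\ref{top-fin-quot-vs-smooth}, combined with Proposition~\ref{correspondences-and-change-of-basis}, the sheaves $R^if_*(W\OO_X\oplus W\omega_X)_{\Q}$ are realized as direct summands, cut out by correspondences $\beta_X,\beta_Y$ coming from the smooth covers, of analogous sheaves built from a morphism between the smooth alterations. Applying the smooth case above to an auxiliary quasi-resolution of $\tilde Y$ by $\tilde X$ (which exists by de Jong's theorem, cf.~Remark~\ref{de-Jong-did-it}) yields the vanishing on the smooth side, and this vanishing then descends to the direct summand $R^if_*(W\OO_X\oplus W\omega_X)_{\Q}$.

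\textbf{Main obstacle.} The delicate point is the reduction step, because $f$ does not in general factor through either of the universal homeomorphisms $u_X$ or $u_Y$, so one cannot simply replace it by a morphism between the smooth covers. The workaround is to encode $f$ as a correspondence between $\tilde X$ and $\tilde Y$, making essential use of the invertibility of the Frobenius on $\Q$-coefficient Witt cohomology (Lemmas~\ref{univ-homeo-and-Frobenius} and~\ref{pb-pf-univ-homeo}) and the compatibility of the correspondence action with change of base from Proposition~\ref{correspondences-and-change-of-basis}; the bookkeeping needed to match this encoding with the summand-correspondences $\beta_X,\beta_Y$ is the most technical aspect of the argument.
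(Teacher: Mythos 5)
Your plan correctly identifies the key ingredients (the diagonal decomposition $[\Gamma_f^t]\circ[\Gamma_f]-d[\Delta_X]=[E]$, the vanishing Lemmas~\ref{lemma-van1} and~\ref{lemma-van2}, and Proposition~\ref{top-fin-quot-vs-smooth} for realizing the sheaves as direct summands), and the argument you give for the case where $X$ and $Y$ are both smooth is correct. However, the reduction step is not carried out, and as stated it is not well-formed: you propose to ``apply the smooth case to an auxiliary quasi-resolution of $\tilde Y$ by $\tilde X$,'' but $\tilde X$ and $\tilde Y$ are smooth covers of the two \emph{different} schemes $X$ and $Y$, so there is no morphism $\tilde X\to\tilde Y$ at all, let alone a quasi-resolution; the composite $\tilde X\to X\xr{f} Y$ is not purely inseparable in general either (the cover $\tilde X\to X$ can have arbitrary degree), so the smooth case cannot be invoked on the smooth side. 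What is actually required is to show that the correspondence $\gamma=[X'\times_Y Y']\in \CH(X'\times Y',P(X'\times_Y Y'))$, which is \emph{not} the graph of any morphism, induces an isomorphism between the summand of $\hat{\sH}(X'/Y)$ cut out by $\alpha=[X'\times_X X']$ and the summand of $\hat{\sH}(Y'/Y)$ cut out by $\beta=[Y'\times_Y Y']$. The paper proves this by establishing four identities in Chow groups modulo cycles supported in the exceptional loci --- the identities \eqref{vanishing-HDI1}--\eqref{vanishing-HDI4} --- and then feeding them into the vanishing lemmas; this is exactly the ``bookkeeping'' your final paragraph explicitly defers. That bookkeeping is the substance of the proof, not a routine matching of projectors, and the argument does not, in fact, proceed via a separately-proved smooth case: the smooth-case reasoning is absorbed into the verification of the $\gamma$-identities over a dense open in $Y$. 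As written, the proposal is therefore incomplete, with the central step identified but not executed, and the ``reduction'' framing obscures rather than simplifies what must be proved.
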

\begin{proof}
We can assume that $X$ and $Y$ are integral schemes of dimension $d$ and (by Lemma \ref{pb-pf-univ-homeo}) also that they are finite quotients. Thus there exist smooth integral schemes $X'$ and $Y'$ together
with finite and surjective morphisms $a: X'\to X$ and $b: Y'\to Y$. Let
   \[X\xr{\pi}X_1\xr{u} Y,\]
 be a factorization of $f$, with $\pi$ projective and birational, $X_1$ normal and $u$ a universal homeomorphism. We can find a non-empty smooth open subscheme $U_0\subset Y$, such that 
  $U_1:=u^{-1}(U_0)$, $U_2:= \pi^{-1}(U_1)$, $U_0':=b^{-1}(U_0)$ and $U_2':= a^{-1}(U_2)$ are smooth and $\pi|U_2$ is an isomorphism. 
Notice that $a|U_2'$, $b|U_0'$ and $u|U_1$ are then automatically flat.
Set $Z_0':= Y'\setminus U_0'$ and $Z_2':=X'\setminus U_2'$. 
We define
\[\alpha:=[X'\times_X X']\in \CH^d(X'\times X', P(X'\times_X X')),\]
\[\beta:= [Y'\times_Y Y']\in \CH^d(Y'\times Y', P(Y'\times_Y Y')),\]
\[\gamma:= [X'\times_Y Y']\in \CH^d(X'\times Y', P(X'\times_Y Y')).\]
(See \ref{Corr-on-rel-HW} for the notation.) These cycles are well-defined since $a: X'\to X$ and $b: Y'\to Y$ are universally equidimensional.
We claim
\eq{vanishing-HDI1}{\deg b\cdot(\gamma\circ \alpha)-\deg a\cdot(\beta\circ \gamma)\in \text{image}(\CH(Z'_2\times_Y Z'_0)),}
\eq{vanishing-HDI2}{\deg a\cdot(\gamma^t\circ \beta)-\deg b\cdot(\alpha\circ \gamma^t)\in \text{image}(\CH(Z'_0\times_Y Z'_2)),}
\eq{vanishing-HDI3}{(\gamma^t\circ \gamma\circ \alpha)-(\deg a\deg b\deg u)\cdot \alpha\in \text{image}(\CH(Z'_2\times_Y Z'_2)),}
\eq{vanishing-HDI4}{(\gamma\circ \gamma^t\circ \beta)-(\deg a\deg b\deg u)\cdot \beta\in \text{image}(\CH(Z'_0\times_Y Z'_0)).}
Observe that $(U'_2\times_Y Y')\cup (X'\times_Y U'_0)= U'_2\times_{U_0} U'_0$ etc. Thus using the localization sequence we see that it suffices to prove
\begin{align*}
\deg b\cdot(\gamma_{|U_2'\times U_0'}\circ \alpha_{|U_2'\times U_2'})& =\deg a\cdot(\beta_{|U_0'\times U_0'}\circ \gamma_{|U_2'\times U'_0})  \in \CH(U'_2\times_{U_0} U'_0),\\
\deg a\cdot(\gamma^t_{|U_0'\times U_2'}\circ \beta_{|U_0'\times U_0'})& =\deg b\cdot(\alpha_{|U_2'\times U_2'}\circ \gamma^t_{|U_0'\times U_2'})  \in \CH(U_0'\times_{U_0} U_2'),\\
(\gamma^t_{|U_0'\times U_2'}\circ \gamma_{|U_2'\times U_0'}\circ \alpha_{|U_2'\times U_2'})& =(\deg a\deg b\deg u)\cdot \alpha_{|U_2'\times U_2'}  \in \CH(U_2'\times_{U_0} U_2'),\\
(\gamma_{|U_2'\times U_0'}\circ \gamma^t_{|U_0'\times U_2'}\circ \beta_{|U_0'\times U_0'})& =(\deg a\deg b\deg u)\cdot \beta_{|U_0'\times U_0'} \in \CH(U_0'\times_{U_0} U_0').
\end{align*}
Obviously
\begin{align*}
\alpha_{|U_2'\times U_2'} & = [\Gamma_{a|U_2'}^t]\circ [\Gamma_{a|U_2'}],\\
\beta_{|U_0'\times U_0'} & = [\Gamma_{b|U_0'}^t]\circ [\Gamma_{b|U_0'}],\\
\gamma_{|U_2'\times U_0'}& = [\Gamma_{b|U_0'}^t]\circ [\Gamma_{u|U_1}]\circ [\Gamma_{\pi|{U_2}}]\circ [\Gamma_{a|U_2'}].
\end{align*}
Thus the claim follows from
\[[\Gamma_{u|U_1}]\circ [\Gamma_{u|U_1}^t]=\deg u\cdot[\Delta_{U_0}],\quad [\Gamma_{u|U_1}^t]\circ [\Gamma_{u|U_1}]=\deg u\cdot[\Delta_{U_1}];\]
see \eqref{pb-pf-univ-homeo1} for the equality on the right.

In view of the vanishing Lemmas \ref{lemma-van1} and \ref{lemma-van2}, we see that  \eqref{vanishing-HDI1} implies, that 
$\hat{\sH}(\gamma/Y)$ induces a morphism
\[\hat{\sH}(\alpha/Y)\left(\bigoplus_i R^i(fa)_*(W\sO_{X'}\oplus W\omega_{X'})_\Q\right)\to \hat{\sH}(\beta/Y)\left( b_*(W\sO_{Y'}\oplus W\omega_{Y'})_\Q\right) \]
and  \eqref{vanishing-HDI2} implies that $\hat{\sH}(\gamma^t/Y)$ induces a morphism
\[\hat{\sH}(\beta/Y)\left(b_*(W\sO_{Y'}\oplus W\omega_{Y'})_\Q\right)\to \hat{\sH}(\alpha/Y)\left(\bigoplus_i R^i(fa)_*(W\sO_{X'}\oplus W\omega_{X'})_\Q\right).\]
By \eqref{vanishing-HDI3} and \eqref{vanishing-HDI4} these morphisms are inverse to each other, up to multiplication with $(\deg a\deg b\deg u)$.
By Proposition \ref{top-fin-quot-vs-smooth} $a^*$ induces for all $i$ an isomorphism
$R^if_* (W\sO_X\oplus W\omega_X)_\Q\to \hat{\sH}(\alpha/Y)(R^i(fa)_*(W\sO_{X'}\oplus W\omega_{X'})_\Q)$ and
$b^*$ induces an isomorphism $(W\sO_Y\oplus W\omega_Y)_\Q\to \hat{\sH}(\beta/Y)( b_*(W\sO_{Y'}\oplus W\omega_{Y'})_\Q)$. 
This gives $R^if_*W\sO_{X,\Q}=0=R^if_*W\omega_{X,\Q}$, for all $i\ge 1$. It also gives isomorphisms in cohomological degree 0, but it is not immediately clear that these coincide with 
pullback and pushforward. But since $X_1$ is normal and $\pi: X\to X_1$ is birational, the pullback $\pi^*: W\sO_{X_1}\to \pi_*W\sO_{X}$ clearly is an isomorphism and
hence so is $f^*: W\sO_{Y,\Q}\to f_*W\sO_{X,\Q}$, by Lemma \ref{pb-pf-univ-homeo}. Now the statement follows from the next lemma.
\end{proof}

\begin{lemma}\label{pf-of-Womega}
Let $f:X\to Y$ be a proper morphism between $k$-schemes of the same pure dimension $d$ and assume
that the pullback morphism $f^*: W\sO_{Y,\Q}\xr{\simeq} Rf_* W\sO_{X,\Q}$ is an isomorphism in $D(\widehat{\drw}_{Y,\Q})$. Then
the pushforward (see Proposition \ref{properties-Witt-canonical}, (6))
\[f_*: f_*W\omega_{X,\Q}\xr{\simeq} W\omega_{Y,\Q}\]
is an isomorphism. More precisely, there exists a natural number $N\ge 1$ such that kernel and cokernel of $f_*W_n\omega_X\to W_n\omega_Y$ are $N$-torsion for all $n\ge 1$.
\end{lemma}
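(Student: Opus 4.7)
The plan is to deduce the statement about $W\omega$ from the hypothesis on $W\sO$ via Grothendieck--Serre duality. Since $W_n\omega_X = H^{-d}(K_{X,n})(-d)$ and the pushforward $f_*\colon f_*W_n\omega_X\to W_n\omega_Y$ is defined as $H^{-d}$ of the trace map $\Tr_f\colon f_*K_X\to K_Y$ (see \ref{properties-Witt-canonical}(6)), the task is to show that $\Tr_f$ is an isomorphism after inverting torsion, with a uniform torsion bound across all levels $n$.

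First I would establish, for each $n\ge 1$, the duality identification
\[
Rf_*K_{X,n} \;\cong\; R\sHom_{W_n\sO_Y}\bigl(Rf_*W_n\sO_X,\,K_{Y,n}\bigr),
\]
obtained by combining Grothendieck--Serre duality for the proper morphism $W_nf\colon W_nX\to W_nY$ (property (7) of \ref{1.6.3}) with the canonical isomorphism $f^\Delta K_Y\cong K_X$ from \ref{recall-Witt-residual}(3) and the tautology $D_{X,n}(W_n\sO_X)=K_{X,n}$. Under this identification the trace $\Tr_f$ corresponds to the $R\sHom(-,K_{Y,n})$-dual of the pullback $f^*\colon W_n\sO_Y\to Rf_*W_n\sO_X$. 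These identifications are compatible with the transition maps (and with $F$, $V$), so one obtains an analogous statement in the projective limit.

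Next, the hypothesis that $f^*\colon W\sO_{Y,\Q}\to Rf_*W\sO_{X,\Q}$ is an isomorphism in $D(\widehat{\drw}_{Y,\Q})$ means, by definition of the localized category, that every cohomology sheaf of the cone of $f^*$ lies in $\widehat{\drw}_{Y,b-tor}$. Since $f$ is proper, $Rf_*W\sO_X$ is cohomologically bounded, so only finitely many such sheaves occur and a single integer $N\ge 1$ annihilates the entire cone. Running the same argument levelwise, using that the finite-level trace maps are compatible with the transition morphisms so that the finite-level cones fit into a coherent projective system, yields a uniform $N$ such that the cone of $f^*\colon W_n\sO_Y\to Rf_*W_n\sO_X$ has cohomology annihilated by $N$ for every $n$. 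Applying the exact functor $R\sHom(-,K_{Y,n})$ transfers this bound to the cone of $\Tr_f\colon Rf_*K_{X,n}\to K_{Y,n}$.

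Finally I would take $H^{-d}$. Because $K_{X,n}$ has cohomology concentrated in $[-d,0]$ by \ref{properties-Witt-canonical}(2), the spectral sequence $R^pf_*H^q(K_{X,n})\Rightarrow H^{p+q}(Rf_*K_{X,n})$ has only the contribution $(p,q)=(0,-d)$ in total degree $-d$, whence $H^{-d}(Rf_*K_{X,n})=f_*H^{-d}(K_{X,n})=f_*W_n\omega_X(d)$, and similarly $H^{-d}(K_{Y,n})=W_n\omega_Y(d)$. Thus $H^{-d}(\Tr_f)$ coincides, up to the grading shift $(d)$, with the pushforward $f_*\colon f_*W_n\omega_X\to W_n\omega_Y$, and the uniform bound $N$ on the cone forces its kernel and cokernel to be $N$-torsion for every $n$. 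The main obstacle will be the first step: verifying that Grothendieck--Serre duality applied levelwise is fully compatible with the Witt residual complex structure and the projective system indexed by $n$, so that one and the same integer $N$ controls the torsion at every finite level. Once this compatibility is in place, the rest of the argument is formal.
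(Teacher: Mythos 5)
Your overall strategy—dualize the cone of $f^*$ via Grothendieck--Serre duality and identify $\Tr_f$ as the dual of $f^*$, then control torsion—is exactly the paper's approach, and the final step (taking $H^{-d}$ and identifying the middle term with $f_*$) is fine. However, there are two genuine gaps where you have glossed over precisely the work the paper does.

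First, the passage from the hypothesis on $W\sO$ (the projective limit) to a uniform torsion bound on the finite-level cones of $f^*:W_n\sO_Y\to Rf_*W_n\sO_X$ is not obtained by ``running the same argument levelwise''---the hypothesis is stated only at the limit level, so there is no argument to rerun. The paper instead uses the exact sequence $0\to W\sO\xr{V^n}W\sO\to W_n\sO\to 0$ to descend the bound to each finite level; this changes the constant (to $M=N^2$ in the paper) but keeps it uniform in $n$.

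Second, and more seriously, the claim that ``applying the exact functor $R\sHom(-,K_{Y,n})$ transfers this bound to the cone of $\Tr_f$'' is false as stated. Knowing that each cohomology sheaf $H^i(C_n)$ of the cone is killed by $N$ does \emph{not} imply that the cohomology of $R\sHom(C_n,K_{Y,n})$ is killed by $N$: the hypercohomology spectral sequence $E_2^{i,j}=\sExt^i(H^{-j}(C_n),K_{Y,n})\Rightarrow \sExt^{i+j}(C_n,K_{Y,n})$ has $N$-torsion $E_2$-terms, but the abutment is assembled from these by extensions of length equal to the amplitude, so it is only $N^r$-torsion where $r$ is the length of the filtration. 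This is exactly the subtlety the paper addresses, and the crucial additional observation—which your sketch omits—is that $r$ is bounded uniformly in $n$, so a single power of $N$ works for all levels. Without this uniformity the ``More precisely'' clause of the Lemma would not follow. Both gaps are fixable and lie on the same road the paper takes, but as written your proposal asserts two steps that would fail if taken literally.
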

\begin{proof}
By assumption we find a natural number $N\ge 1$ such that  $R^if_*W\sO_X$,  $i\ge 1$, as well as kernel and cokernel of $f^*: W\sO_Y\to f_*W\sO_X$ are all $N$-torsion.
It follows from the short exact sequence 
\[0\to W\sO\xr{V^n}W\sO\to W_n\sO\to 0\]
that there exists a natural number $M\ge 1$ (e.g. $M=N^2$) such that $R^if_*W_\bullet\sO_X$, $i\ge 1$, as well as kernel and cokernel of $f^*: W_\bullet\sO_Y\to f_*W_\bullet\sO_X$
are $M$-torison. Let $C_n$ be the cone of $f^*: W_n\sO_Y\to Rf_*W_n\sO_X$ in $D^b_{\rm qc}(W_n\sO_Y)$. (Here we write $f_*$ instead of $W_n(f)_*$ etc.)
Then the above can be rephrased by saying, that $H^i(C_n)$ is $M$-torsion for all $i\in \Z$ and all $n\ge 1$.
Now applying the dualizing functor $D_{W_nY}$ to the triangle in $D^b_{\rm qc}(W_n\sO_Y)$
\[W_n\sO_Y\to Rf_*W_n\sO_X\to C_n\to W_n\sO_Y[1]\]
and using the duality isomorphism $D_{W_nY} Rf_*\cong Rf_*D_{W_nX}$
yields a triangle in $D^b_{\rm qc}(W_n\sO_Y)$
\[D_{W_nY}(C_n)\to Rf_*K_{X,n}\to K_{Y,n}\to D_{W_nY}(C_n[-1]). \]
Taking $H^{-d}$, we obtain an exact sequence
\[\sExt^{-d}(C_n, K_{Y,n})\to R^{-d}f_*K_{X,n}\to W_n\omega_Y\to \sExt^{-d}(C_n[-1], K_{Y,n}).\]
As in Proposition \ref{properties-Witt-canonical}, (6) the morphism in the middle is just the pushforward $f_{*}: f_*W_n\omega_X\to W_n\omega_Y$.
Now the filtration of the spectral sequence 
\[E_2^{i,j}=\sExt^i(H^{-j}(C_n), K_{Y,n})\Rightarrow \sExt^{i+j}(C_n, K_{Y,n})\]
is finite and the $E_2$-terms are $M$-torsion. Thus the groups $\sExt^{-d}(C_n[-1], K_{Y,n})$ and $\sExt^{-d}(C_n, K_{Y,n})$ are $M^r$-torsion, for some $r>>0$.
In fact $r$ only depends on the length of the filtration of the above spectral sequence and since this length is bounded for all $n\ge 1$, we may choose $r$ to work for all $n$.
It follows that kernel and cokernel of $f_*: f_*W_n\omega_X\to W_n\omega_Y$ are $M^r$-torsion; hence kernel and cokernel of the limit
$f_*: f_*W\omega_X\to W\omega_Y$ as well.  This yields the statement.
\end{proof}

\subsection{Rational and Witt-rational singularities}

A special class of singularities which appear naturally in higher dimensional geometry
are the \emph{rational} singularities. Essentially, rational singularities do not affect
the cohomological properties of the structure sheaf.

\begin{definition}[{\cite[p. 50]{kempf}}]\label{definition-rational-singularities}
Let $S$ be a normal variety 
and $f:X\xr{} S$ a resolution of singularities (i.e. $f$ is projective and birational and $X$ is smooth). We say that 
$f$ is a \emph{rational resolution} if 
\begin{enumerate}
\item $R^if_*\OO_X=0$, for $i>0$,
\item $R^if_*\omega_X=0$ for $i>0$ (this always holds by Grauert-Riemenschneider  if the characteristic of the ground field is zero, but is needed in positive characteristic).
\end{enumerate}

We say that $Y$ has \emph{rational singularities} if a rational resolution exists. 
\end{definition}   

An immediate problem with the definition of rational singularities in  
positive characteristic is that the existence of a resolution of singularities
is assumed. For example, tame quotient singularities are rational singularities
provided that a resolution of singularities exists \cite[Theorem~2]{CR}.

If an integral normal scheme $Y$ over a field has one rational resolution, then all resolutions are rational,
i.e.~rational singularities are an intrinsic property of $Y$.
(In characteristic zero, this was proved by Hironaka, see \cite[Theorem 1]{CR} for the characteristic $p$ case.)

In characteristic zero, Kov\'acs \cite{K} observed that one can replace condition (1)
in Definition \ref{definition-rational-singularities} by the following condition:
there is an alteration $f:X\xr{} S$ such that the natural morphism 
$$
\OO_S\xr{} Rf_*\OO_X
$$ 
admits a splitting in the derived category of coherent sheaf on $S$. The main
tool in the proof is Grauert-Riemenschneider vanishing, and this characterization
does not hold in positive characteristic.

In order to study congruence formulas for the number of points of a 
variety over a finite field, Blickle and Esnault \cite{BE} introduced the notion
of Witt-rational singularities.

\begin{definition}[{\cite[Def. 2.3]{BE}}]\label{definition-Witt-rational-BE}
Let $S$ be an integral $k$-scheme and $f:X\xr{} S$ a generically \'etale alteration with $X$ a smooth $k$-scheme.
We say that $S$ has \emph{BE-Witt-rational singularities} if the 
natural morphism 
$$
W\OO_S\otimes_\Z \Q \xr{} Rf_*W\OO_X\otimes_\Z\Q
$$
admits a splitting in the derived category of sheaves of abelian groups on $X$. 
\end{definition}

We call the singularities defined in \cite{BE} BE-Witt-rational singularities,
rather than Witt-rational singularities, because we will redefine 
Witt-rational singularities in \ref{definition-Witt-rational}.
We remark:

\begin{proposition}\label{one-then-all}
 The notion of  BE-Witt-rational singularities is independent of the chosen generically \'etale alteration.
More precisely, if an integral scheme $S$ has BE-Witt-rational singularities, then for any alteration
(not necessarily generically \'etale) $g: Y\to S$, with $Y$ smooth, the pullback $g^*: W\sO_S\otimes_\Z \Q\to Rg_*W\sO_Y\otimes_\Z \Q$
admits a splitting in the derived category of sheaves of abelian groups on $X$. 
\end{proposition}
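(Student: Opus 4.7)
The plan is to deduce the splitting for an arbitrary alteration $g:Y\to S$ by bridging through a smooth scheme $Z$ which dominates both $X$ and $Y$ over $S$, and transporting the given splitting of $f^*$ through a trace-like construction. First, using Remark \ref{de-Jong-did-it}, I would pick an irreducible component $W$ of $X\times_S Y$ which dominates $X$ (such a $W$ exists because $X\times_S Y\to X$ is proper and surjective), and then choose a smooth integral $k$-scheme $Z$ together with a proper generically finite morphism $Z\to W$. Composing with the two projections gives morphisms $p_X:Z\to X$ and $p_Y:Z\to Y$ satisfying $f\circ p_X = g\circ p_Y$, with $p_X$ proper, surjective and generically finite of some degree $d\ge 1$, and $p_Y$ proper.

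The central step is the identity
\[
p_{X*}\circ p_X^* \;=\; d\cdot\mathrm{id}
\]
as an endomorphism of $W\sO_X\otimes_\Z\Q$ in the derived category of sheaves of abelian groups on $X$. By Proposition \ref{proposition-comparison-with-pull-push-over-S}, $p_X^*$ (as defined in \ref{2.1.2}) coincides with the correspondence action $\hat{\sH}([\Gamma_{p_X}^t]/X)$, and $p_{X*}$ (defined in \ref{2.1.6}, with no shift since $\dim Z=\dim X$) coincides with $\hat{\sH}([\Gamma_{p_X}]/X)$ on the $W\sO$-component. The composition is then computed inside $\Hom_{\mc{C}_X}(X,X)$: the refined intersection $\pr_{12}^*[\Gamma_{p_X}^t]\cdot \pr_{23}^*[\Gamma_{p_X}]$ on $X\times Z\times X$ is supported on $\{(x,z,x'):x=x'=p_X(z)\}\cong Z$, and $\pr_{13}$ collapses this cycle onto $\Delta_X$ via the degree-$d$ map $p_X$, yielding $[\Gamma_{p_X}]\circ[\Gamma_{p_X}^t]=d\cdot[\Delta_X]$. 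Applying the functor $\hat{\sH}(-/X)$ of Theorem \ref{thm1} and restricting to $W\sO$ gives the identity on $H^0$; it promotes to the derived category because $W\sO_X$ is concentrated in degree $0$.

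With the splitting $s:Rf_*W\sO_X\otimes_\Z\Q \to W\sO_S\otimes_\Z\Q$ of $f^*$ in hand, I would define
\[
\sigma: Rg_*W\sO_Y\otimes_\Z\Q \xrightarrow{Rg_*(p_Y^*)} R(fp_X)_*W\sO_Z\otimes_\Z\Q \xrightarrow{\tfrac{1}{d}Rf_*(p_{X*})} Rf_*W\sO_X\otimes_\Z\Q \xrightarrow{s} W\sO_S\otimes_\Z\Q,
\]
using the identification $R(gp_Y)_* = R(fp_X)_*$. Functoriality of pullback (Proposition \ref{2.1.3}) gives $Rg_*(p_Y^*)\circ g^* = (gp_Y)^* = Rf_*(p_X^*)\circ f^*$, so
\[
\sigma\circ g^* \;=\; s\circ \tfrac{1}{d}Rf_*(p_{X*}\circ p_X^*)\circ f^* \;=\; s\circ f^* \;=\; \mathrm{id},
\]
and $\sigma$ is the desired splitting of $g^*$.

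The main obstacle is the central identity $p_{X*}\circ p_X^* = d\cdot\mathrm{id}$ in the derived category. It combines a transparent cycle-theoretic computation on $X\times Z\times X$ with the nontrivial comparison between the correspondence action and the pullback/pushforward of \S2, which is precisely Proposition \ref{proposition-comparison-with-pull-push-over-S} (and ultimately Theorem \ref{thm1}). The only hypothesis needed for that comparison in our setting is that $p_X$ be proper between smooth schemes of the same dimension, which follows from $p_X$ being proper, surjective and generically finite. Once this identity is established, the rest of the argument is formal bookkeeping with pullbacks and pushforwards.
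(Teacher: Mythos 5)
Your proposal is correct and follows the same overall strategy as the paper's proof, but you handle the key intermediate step by a different route. The paper compresses the entire reduction (dominating both $X$ and $Y$ over $S$ by a common smooth $Z$ and transporting the splitting of $f^*$ through $p_X$ and $p_Y$) into the remark that ``it suffices to prove that $h_*\circ h^*$ is multiplication by $[k(Z):k(W)]$ for an alteration $h:Z\to W$ between smooth schemes,'' and then proves this degree formula directly: since both sides are endomorphisms of $W\OO_W$ and $W\OO_W\to j_*W\OO_U$ is injective for any dense open $j:U\inj W$, one can check the identity over an open $U$ where $h$ is finite, and there it is exactly Gros's Proposition~\ref{2.1.8}. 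You instead prove the degree formula by establishing the cycle identity $[\Gamma_{p_X}]\circ[\Gamma_{p_X}^t]=d\cdot[\Delta_X]$ in $\CH(X\times X, P(X\times_X X))$ (the transversal-intersection computation on $X\times Z\times X$ is correct) and then invoking the correspondence-action functor of Proposition~\ref{proposition-C_S-to-dRW_S} together with the comparison Proposition~\ref{proposition-comparison-with-pull-push-over-S}. This is valid, but it is heavier machinery for what the paper dispatches by a two-line localization argument; and at bottom both routes rest on Gros's result for finite morphisms, which enters your approach through the construction of the cycle class in Proposition~\ref{proposition-cl}. Your formal bookkeeping --- picking a dominating component $W$ of $X\times_S Y$, taking an alteration $Z\to W$, defining $\sigma$, and verifying $\sigma\circ g^*=\mathrm{id}$ --- is precisely the content of the paper's ``obviously,'' and is carried out correctly.
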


\begin{proof}
Obviously it suffices to prove that if $f: X\to Y$ is an alteration between smooth schemes, then the composition 
\[ W\sO_Y\xr{f^*} Rf_* W\sO_X\xr{f_*} W\sO_Y\]
is multiplication with $[k(X): k(Y)]$, where $f_*$ is the pushforward from Definition \ref{2.1.6}.
It suffices to check this on some non-empty open subscheme $U$ of $Y$ such that $f|f^{-1}(U)$ is finite. 
Thus the statement follows from Proposition \ref{2.1.8} (Gros). 
\end{proof}

\begin{definition}\label{definition-Witt-rational}
We say that an integral $k$-scheme $S$ has {\em Witt-rational singularities} if for any quasi-resolution (see Definition \ref{quasi-res}) $f: X\to S$ the following conditions are satisfied:
\begin{enumerate}
 \item  $f^*: W\sO_{S,\Q}\xr{\simeq} f_*W\sO_{X,\Q}$ is an isomorphism.
 \item $R^if_* W\sO_{X,\Q}=0$, for all $i\ge 1$.
 \item $R^if_* W\omega_{X,\Q}=0$, for all $i\ge 1$.
\end{enumerate}
In case only the first two properties are satisfied we say $S$ has {\em $W\sO$-rational singularities}. (See Notation \ref{Q-notation} for the meaning of the subscript $\Q$.)
\end{definition}

\begin{remark}\label{Witt-rational-and-normal}
Notice that if $S$ is normal, then condition (1) above is automatically satisfied.
Indeed, each quasi-resolution $f: X\to S$ can be factored as $X\xr{\pi} X_1\xr{u} S$ with $X_1$ normal, $\pi$ projective and birational and $u$ an universal homeomorphism; thus
condition (1) is satisfied by Lemma \ref{pb-pf-univ-homeo}.
\end{remark}

\begin{proposition}\label{properties-Witt-rational-sing}
 Let $S$ be an integral $k$-scheme. Then the following statements are equivalent:
        \begin{enumerate}
          \item $S$ has Witt-rational singularities.
          \item  There exists a quasi-resolution $f:X\to S$ satisfying (1), (2), (3) of Definition \ref{definition-Witt-rational}.
          \item There exists a quasi-resolution $f:X\to S$, such that there are isomorphisms in $D^b(\widehat{\drw}_{S,\Q})$
                   \eq{pWrs1}{f^*: W\sO_{S,\Q}\xr{\simeq} Rf_*W\sO_{X,\Q}, \quad Rf_*W\omega_{X,\Q}\cong f_*W\omega_{X,\Q}[0]\xr{f_*,\,\simeq} W\omega_{S,\Q}.}
          \item For all quasi-resolutions $f:X\to S$ the morphisms \eqref{pWrs1} are isomorphisms.
         \end{enumerate}
\end{proposition}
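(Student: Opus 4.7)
First I would disentangle the four conditions by observing that most of the equivalences are essentially formal once Lemma \ref{pf-of-Womega} is at hand. Indeed, for a fixed quasi-resolution $f:X\to S$ the conjunction of conditions (1) and (2) of Definition \ref{definition-Witt-rational} is equivalent to the statement that $f^*:W\sO_{S,\Q}\to Rf_*W\sO_{X,\Q}$ is an isomorphism in $D^b(\widehat{\drw}_{S,\Q})$, while condition (3) of Definition \ref{definition-Witt-rational} is equivalent to $Rf_*W\omega_{X,\Q}\cong f_*W\omega_{X,\Q}[0]$. Granting the first, Lemma \ref{pf-of-Womega} promotes the latter to the pushforward isomorphism $f_*: f_*W\omega_{X,\Q}\xr{\simeq} W\omega_{S,\Q}$. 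Conversely the isomorphisms in \eqref{pWrs1} immediately imply the three conditions of Definition \ref{definition-Witt-rational}. This gives (1)$\Leftrightarrow$(4) and (2)$\Leftrightarrow$(3). The implication (4)$\Rightarrow$(3) is trivial once we know that at least one quasi-resolution exists, which is guaranteed by Remark \ref{de-Jong-did-it}.

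The substantive part of the proposition is the implication (3)$\Rightarrow$(4), which encodes the independence of the quasi-resolution. Fix a quasi-resolution $f:X\to S$ satisfying (3) and let $g:Y\to S$ be an arbitrary quasi-resolution. I would dominate both by a third one: pick an irreducible component $W$ of $X\times_S Y$ whose projections to $X$ and $Y$ are both dominant (which exists because the function field extensions $k(X)/k(S)$ and $k(Y)/k(S)$ are purely inseparable, so the compositum embeds into the generic fibre), and apply Remark \ref{de-Jong-did-it} to $W$ to obtain a quasi-resolution $Z\to W$. The resulting morphisms $\pi_X:Z\to X$ and $\pi_Y:Z\to Y$ are then projective, surjective, generically finite and purely inseparable; since $X$ and $Y$ are topological finite quotients (they are sources of quasi-resolutions), $\pi_X$ and $\pi_Y$ are themselves quasi-resolutions, and by construction $f\circ\pi_X=g\circ\pi_Y=:h$.

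Now Theorem \ref{vanishing-HDI} applies to $\pi_X$ and to $\pi_Y$ (both target a topological finite quotient), giving
\[R\pi_{X*}W\sO_{Z,\Q}\cong W\sO_{X,\Q},\quad R\pi_{Y*}W\sO_{Z,\Q}\cong W\sO_{Y,\Q},\]
together with the analogous isomorphisms for $W\omega$, compatible with pullback and pushforward respectively. Combining the first with the hypothesis (3) for $f$ yields $Rh_*W\sO_{Z,\Q}\cong W\sO_{S,\Q}$, and then rewriting $Rh_*=Rg_*\circ R\pi_{Y*}$ and applying the $\pi_Y$-isomorphism again produces $Rg_*W\sO_{Y,\Q}\cong W\sO_{S,\Q}$, where the composite identification is the pullback $g^*$ by naturality of the isomorphisms. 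The identical argument on $W\omega$ (using the pushforward part of Theorem \ref{vanishing-HDI} and of (3)) yields the pushforward isomorphism $Rg_*W\omega_{Y,\Q}\cong W\omega_{S,\Q}[0]$. Hence $g$ also satisfies the isomorphisms \eqref{pWrs1}, proving (4).

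The main technical hurdle will be verifying that the isomorphisms one obtains for $g$ are indeed induced by the pullback $g^*$ and pushforward $g_*$ (rather than by some abstract isomorphism). This is a matter of chasing through the naturality statements in Theorem \ref{vanishing-HDI} and the defining factorisations of pullback and pushforward for $W\sO$ and $W\omega$ on topological finite quotients; if it becomes cumbersome for the pushforward side, one can short-circuit by invoking Lemma \ref{pf-of-Womega} applied to $g$ itself, since at that point the $W\sO$-isomorphism for $g$ is already established.
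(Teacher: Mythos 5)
Your proposal is correct and takes essentially the same approach as the paper: the paper's terse proof also records (1)$\Rightarrow$(2) and (4)$\Rightarrow$(1) as immediate, handles (2)$\Rightarrow$(3) via Lemma \ref{pf-of-Womega}, and settles (3)$\Rightarrow$(4) by dominating two quasi-resolutions by a third (Remark \ref{de-Jong-did-it}) and then invoking Theorem \ref{vanishing-HDI}. Your write-up spells out what the paper leaves implicit (the construction of the dominating quasi-resolution, the compatibility of the resulting identifications with $g^*$ and $g_*$ via functoriality, and the shortcut through Lemma \ref{pf-of-Womega} on the $W\omega$ side), but the underlying decomposition and ingredients are the same.
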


\begin{proof}
Clearly (1) $\Rightarrow$ (2) and (4) $\Rightarrow$ (1). (2) $\Rightarrow$ (3) follows from Lemma \ref{pf-of-Womega}. For (3) $\Rightarrow$ (4) notice 
that by de Jong (see Remark \ref{de-Jong-did-it}) any two quasi-resolutions of $S$ can be dominated by a third one. Thus the statement follows from Theorem \ref{vanishing-HDI}.
\end{proof}
There is an obvious analog of this proposition for $W\sO$-rational singularities.

\begin{corollary}\label{finite-quotients-are Witt-rational}
Topological finite quotients over $k$ have Witt-rational singularities.
\end{corollary}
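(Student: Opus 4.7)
The plan is essentially a direct application of the machinery already established: Theorem~\ref{vanishing-HDI} is tailor-made for precisely the situation of the corollary, and Proposition~\ref{properties-Witt-rational-sing} reduces the verification of Witt-rationality to the existence of a single good quasi-resolution.

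First I would invoke Remark~\ref{de-Jong-did-it} (de~Jong) to produce, for any topological finite quotient $X$, a quasi-resolution $f\colon Z\to X$; concretely, one takes a purely inseparable cover by a normal integral scheme of the form $Z''/G$ with $Z''$ smooth and quasi-projective, composed with a birational morphism to $X$. This gives an object of the shape required by Definition~\ref{quasi-res}.

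Next I would apply Theorem~\ref{vanishing-HDI} to the pair $(Z,X,f)$. Since $X$ is a topological finite quotient by hypothesis, the theorem yields isomorphisms
\[
f^{*}\colon W\sO_{X,\Q}\xr{\simeq} Rf_{*}W\sO_{Z,\Q},\qquad f_{*}\colon Rf_{*}W\omega_{Z,\Q}\xr{\simeq} W\omega_{X,\Q}
\]
in $D^{b}(\widehat{\drw}_{X,\Q})$; the second isomorphism uses the concentration of $Rf_{*}W\omega_{Z,\Q}$ in degree zero which is part of the conclusion of that theorem (via Lemma~\ref{pf-of-Womega}). These are exactly the data required by condition~(3) of Proposition~\ref{properties-Witt-rational-sing} for the single chosen quasi-resolution $f$.

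Finally I would invoke the equivalence (3)$\Leftrightarrow$(1) of Proposition~\ref{properties-Witt-rational-sing}, which upgrades these isomorphisms (for one quasi-resolution) to the full set of vanishings for \emph{every} quasi-resolution, i.e.\ to Witt-rationality of $X$ in the sense of Definition~\ref{definition-Witt-rational}. There is no real obstacle here: the non-trivial analytic content is entirely absorbed into Theorem~\ref{vanishing-HDI}, whose proof in turn rested on the correspondence action from Proposition~\ref{proposition-C_S-to-dRW_S} together with the vanishing Lemmas~\ref{lemma-van1} and~\ref{lemma-van2}. The only mildly subtle point is making sure Remark~\ref{Witt-rational-and-normal} is silently used so that condition~(1) of Definition~\ref{definition-Witt-rational} requires no separate argument beyond normality of $X$, which holds by definition of a topological finite quotient.
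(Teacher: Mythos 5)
Your proof is correct and follows essentially the same route as the paper, which leaves the corollary proofless precisely because it is immediate from Theorem~\ref{vanishing-HDI}. One small remark: the detour through Remark~\ref{de-Jong-did-it} and the equivalence (3)$\Leftrightarrow$(1) of Proposition~\ref{properties-Witt-rational-sing} is not strictly needed here, since the hypothesis of Theorem~\ref{vanishing-HDI} quantifies over \emph{all} quasi-resolutions $f\colon X\to Y$ of a topological finite quotient $Y$, so its conclusion already furnishes conditions (1)--(3) of Definition~\ref{definition-Witt-rational} for every quasi-resolution at once (with the concentration in degree zero for $Rf_*W\omega_{X,\Q}$ supplied by Lemma~\ref{pf-of-Womega}, as you note).
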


\begin{remark}\label{finite-quot-not-CM}
In characteristic $0$ finite quotients always have rational singularities (see e.g. \cite[Prop. 5.13]{KM}). In characteristic $p>0$ this is not the case.
Indeed let $G=\Z/p^n\Z$ act linearly on a finite dimensional $k$-vector space, where $k$ is assumed to be algebraically closed.
Then it is shown in \cite{Ellingsrud}, that  $\A(V)/G$ is not CM, provided that $\dim_k V> \dim_k V^G +2$. In particular $\A(V)/G$ cannot have
rational singularities in the sense of Definition \ref{definition-rational-singularities}, which are always CM.
This also shows that Witt-rational singularities do no need to be CM.
\end{remark}

\begin{proposition}\label{proposition-Witt-rational-universal-homeomorphism}
Let $u:Y\xr{} Y'$ be a universal homeomorphism between normal schemes.
Then $Y$ has Witt-rational singularities if and only if $Y'$ has Witt-rational 
singularities.
\begin{proof}
If $f:X\xr{} Y$ is a quasi-resolution then $u\circ f$ is a quasi-resolution. 
For all $i>0$ we get 
$$
R^i(u\circ f)_*W\OO_{X,\Q}=u_*R^if_*W\OO_{X,\Q}, \quad R^i(u\circ f)_*W\omega_{X,\Q}=u_*R^if_*W\omega_{X,\Q},
$$
and thus
\begin{align*}
R^i(u\circ f)_*W\OO_{X,\Q}=0 &\Leftrightarrow R^if_*W\OO_{X,\Q}=0,\\
R^i(u\circ f)_*W\omega_{X,\Q}=0 &\Leftrightarrow R^if_*W\omega_{X,\Q}=0.
\end{align*}
\end{proof}
\end{proposition}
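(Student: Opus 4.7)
The plan is to reduce Witt-rationality to a single chosen quasi-resolution via Proposition \ref{properties-Witt-rational-sing}, and then transfer the vanishing conditions along $u$ by exploiting that a finite universal homeomorphism has an exact pushforward that reflects the zero sheaf.

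First I would pick a quasi-resolution $f : X \to Y$ (which exists by de Jong, see Remark \ref{de-Jong-did-it}) and observe that $u \circ f : X \to Y'$ is again a quasi-resolution of $Y'$: it is projective, surjective, and generically finite (composition of such maps), the source $X$ is a topological finite quotient by hypothesis, and the function field extension $k(Y') \subset k(X)$ is purely inseparable since both $k(Y') \subset k(Y)$ (by \ref{univ-homeo}, as $u$ is a universal homeomorphism between normal integral schemes) and $k(Y) \subset k(X)$ are. Since $Y$ and $Y'$ are both normal, Remark \ref{Witt-rational-and-normal} shows that condition (1) of Definition \ref{definition-Witt-rational} is automatic for both, so it remains only to compare the higher direct image vanishings for $f$ and for $u \circ f$.

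The crucial computation is that for all $i \ge 1$,
\[R^i(u \circ f)_* W\OO_{X,\Q} = u_*\, R^if_*W\OO_{X,\Q}, \qquad R^i(u \circ f)_* W\omega_{X,\Q} = u_*\, R^if_*W\omega_{X,\Q}.\]
This holds at every finite Witt level because $u$ is affine, and it survives the passage to the inverse limit (as $u$ is finite, so $u_*$ commutes with $\varprojlim$) and to the quotient category $\widehat{\drw}_{Y',\Q}$ (as $u_*$ is exact and carries $b$-torsion to $b$-torsion, $u$ being finite). Because $u$ is a universal homeomorphism, its underlying map of topological spaces is a bijection, so $u_*$ on sheaves of abelian groups is exact and moreover \emph{reflects} the zero object. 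Consequently each vanishing on the left is equivalent to the corresponding vanishing on the right.

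No genuine obstacle arises; the only delicate point is checking that $R(u\circ f)_*$ really factors as $u_*\circ Rf_*$ after descending through the $(-)_{\Q}$-localization, but this is a routine consequence of the finiteness of $u$. Given the equivalence of vanishings, the criterion in Proposition \ref{properties-Witt-rational-sing} applied to the quasi-resolutions $f$ and $u\circ f$ shows that $Y$ has Witt-rational singularities if and only if $Y'$ does.
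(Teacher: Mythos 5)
Your proposal is correct and follows essentially the same route as the paper's proof: compose the quasi-resolution $f:X\to Y$ with $u$, use that $u_*$ is exact and reflects zero (since $u$ is a finite universal homeomorphism) to identify $R^i(u\circ f)_*$ with $u_*R^if_*$ for $i>0$, and conclude the vanishings are equivalent. You merely spell out (correctly) the small checks about passing to $\varprojlim$ and to the $\Q$-localization that the paper leaves implicit.
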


\begin{definition}\label{quasi-bir}
 Let $S$ be a $k$-scheme and $X$ and $Y$ two integral $S$-schemes. We say that $X$ and $Y$ are {\em quasi-birational over $S$} if there exists a commutative diagram
\[\xymatrix@-1pc{            &  Z\ar[dl]_{\pi_X}\ar[dr]^{\pi_Y}  &  \\
               X\ar[dr] &                                   &  Y\ar[dl]  \\
                        &   S,                               &  }\]
with  $\pi_X$ and $\pi_Y$ quasi-resolutions (see Definition \ref{quasi-res}). We say that the triple $(Z, \pi_X,\pi_Y)$  (or just $Z$ if we do not need to specify $\pi_X$ and $\pi_Y$)
is a {\em quasi-birational correspondence between $X$ and $Y$}. 
\end{definition}
Since quasi-resolutions always exist (see Remark \ref{de-Jong-did-it}), two integral {\em projective} $S$-schemes $X$ and $Y$ are quasi-birational over $S$ if and only if
the generic points of $X$ and $Y$ map to the same point $\eta$ in $S$ and there exists a field  $L$ with a homomorphism $\sO_{S,\eta}\to L$ and inclusions
of $\sO_{S,\eta}$-algebras $k(X)\inj L$, $k(Y)\inj L$, which make $L$ a finite and purely inseparable field extension of $k(X)$ and $k(Y)$.
In particular this is the case if $k(X)$ and $k(Y)$ are isomorphic as $\sO_{S,\eta}$-algebras.

\begin{corollary}\label{independence}
Let $S$ be a $k$-scheme and $f: X\to S$ and $g: Y\to S$ two $S$-schemes, which are integral, have Witt-rational singularities and are quasi-birational over $S$ to each other.
Then the choice of a quasi-birational correspondence between $X$ and $Y$ induces isomorphisms in $D^b(\widehat{\drw}_S)$
\eq{independence1}{ Rf_*W\sO_{X,\Q}\cong Rg_*W\sO_{Y,\Q}, \quad Rf_*W\omega_{X,\Q}\cong Rg_*W\omega_{Y,\Q}.}
Moreover, two quasi-birational correspondences $Z$ and $Z'$ induce the same isomorphisms if there exists a field $L$ with a homomorphism $\sO_{S,\eta}\to L$ ($\eta\in S$ being the common image of the generic points
of $X$ and $Y$) and inclusions
of $\sO_{S,\eta}$-algebras $k(Z)\inj L$, $k(Z')\inj L$, which make $L$ a finite and purely inseparable field extension of $k(Z)$ and $k(Z')$, such that
the composite inclusions $k(X), k(Y)\inj k(Z)\inj L$ and $k(X), k(Y)\inj k(Z')\inj L$ are equal.
\end{corollary}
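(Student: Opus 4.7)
The plan is to first construct the isomorphisms from a single quasi-birational correspondence using the characterization of Witt-rational singularities in Proposition~\ref{properties-Witt-rational-sing}, and then to establish the compatibility statement by producing a third quasi-birational correspondence that dominates both of the given ones.

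Given the quasi-birational correspondence $(Z,\pi_X,\pi_Y)$, the morphisms $\pi_X:Z\to X$ and $\pi_Y:Z\to Y$ are quasi-resolutions (Definition~\ref{quasi-res}). Since $X$ and $Y$ have Witt-rational singularities, Proposition~\ref{properties-Witt-rational-sing} provides isomorphisms
\[
\pi_X^*:W\sO_{X,\Q}\xr{\simeq} R\pi_{X*}W\sO_{Z,\Q},
\qquad \pi_{X*}:R\pi_{X*}W\omega_{Z,\Q}\xr{\simeq} W\omega_{X,\Q},
\]
and likewise with $\pi_Y$. Applying $Rf_*$ and using the identification $f\pi_X=g\pi_Y$, the isomorphisms \eqref{independence1} are defined as the compositions $(Rf_*\pi_Y^*)^{-1}\circ Rf_*\pi_X^*$ for $W\sO$ and $Rf_*\pi_{Y*}\circ (Rf_*\pi_{X*})^{-1}$ for $W\omega$.

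For the ``moreover''-part I would construct from the data $L,Z,Z'$ a third quasi-birational correspondence $W$ refining both as follows. The compatibility of the $k(X)$-composites produces a morphism $\Spec L\to Z\times_X Z'$; let $W_0\subseteq Z\times_X Z'$ be the closure with reduced structure of its image. Then $W_0$ is integral with function field $k(W_0)\subseteq L$ a finite purely inseparable extension of both $k(Z)$ and $k(Z')$. Since $\pi_X$ and $\pi'_X$ are projective, so are the two projections $W_0\to Z$ and $W_0\to Z'$; by construction both are also surjective, generically finite, and purely inseparable on function fields. Applying de Jong's theorem (Remark~\ref{de-Jong-did-it}) gives a quasi-resolution $p:W\to W_0$, and the compositions $p_Z:W\to W_0\to Z$ and $p_{Z'}:W\to W_0\to Z'$ are then themselves quasi-resolutions. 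The identities $\pi_X\circ p_Z=\pi'_X\circ p_{Z'}$ hold automatically since $W_0\subseteq Z\times_X Z'$, while the agreement $\pi_Y\circ p_Z=\pi'_Y\circ p_{Z'}$ follows from the compatibility of the $k(Y)$-composites: two morphisms of integral schemes into the separated scheme $Y$ that agree on the generic point agree everywhere. Thus $W$, with these common maps to $X$ and $Y$, is a quasi-birational correspondence refining both $Z$ and $Z'$.

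It then remains to check that the isomorphism built from $W$ equals those built from $Z$ and from $Z'$. A direct diagram chase using the factorizations $\pi_X^{W*}=R\pi_{X*}(p_Z^*)\circ\pi_X^*$ and the analogous one for $\pi_Y^W$, together with the fact that $p_Z^*:W\sO_{Z,\Q}\xr{\simeq}Rp_{Z*}W\sO_{W,\Q}$ is an isomorphism by Theorem~\ref{vanishing-HDI}, shows that the $W\sO$-isomorphism via $W$ agrees with the one via $Z$; by symmetry it equals the one via $Z'$. The $W\omega$-case is entirely parallel, using the pushforward functoriality of Proposition~\ref{properties-Witt-canonical}(6) and the isomorphism $p_{Z*}:Rp_{Z*}W\omega_{W,\Q}\xr{\simeq} W\omega_{Z,\Q}$. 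The main obstacle I anticipate is the construction of $W_0$ in the correct fiber product: using $Z\times_X Z'$ rather than $Z\times_S Z'$ is essential for the projections $W_0\to Z,Z'$ to be projective (so that de Jong's quasi-resolution of $W_0$ lifts to honest quasi-resolutions $W\to Z,Z'$), while the separate compatibility of the $k(Y)$-composites is then the precise additional input needed to force the two induced maps $W\to Y$ to coincide.
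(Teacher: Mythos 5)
Your proof is correct and follows essentially the same strategy as the paper: construct the isomorphisms from a single quasi-birational correspondence via Proposition \ref{properties-Witt-rational-sing}, and for compatibility produce a third correspondence dominating both given ones, then reduce to the case where one correspondence refines the other. The one place where you add genuine clarity is the discussion of $Z\times_X Z'$ versus $Z\times_S Z'$: the paper takes the closure of the image of $\Spec L$ in $Z\times_S Z'$, but since the compatibility of the $k(X)$-composites forces that image to lie in the closed subscheme $Z\times_X Z'$, the two closures coincide, and working in $Z\times_X Z'$ makes the projectivity of the projections $W_0\to Z$ and $W_0\to Z'$ manifest (as base changes of the projective maps $\pi_X,\pi'_X$), whereas the paper leaves this implicit. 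Your observation that the compatibility of the $k(Y)$-composites then forces $\pi_Y\circ p_Z=\pi'_Y\circ p_{Z'}$ by separatedness of $Y$ is precisely the right additional input, and the final diagram chase using $p_Z^*$ (resp.~$p_{Z*}$) being isomorphisms via Theorem \ref{vanishing-HDI} is exactly what the paper's ``first notice'' step amounts to.
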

 
\begin{proof}
A quasi-birational correspondence $(Z,\pi_X,\pi_Y)$ between $X$ and $Y$ induces an isomorphism
\[Rf_*W\sO_{X,\Q}\xr{\pi_X^*, \,\simeq} Rf_* R\pi_{X*}W\sO_{Z,\Q}\cong Rg_*R\pi_{Y*}W\sO_{Z,\Q}\xl{\simeq,\,\pi_Y^*} Rg_*W\sO_{Y,\Q},\]
and similar for $W\omega$. For the second statement first notice that if $(Z,\pi_X, \pi_Y)$ and $(Z',\pi'_X,\pi'_Y)$ are two
quasi-birational correspondences between $X$ and $Y$ and if there is a quasi-resolution  $a: Z'\to Z $ such that $\pi'_X=\pi_X\circ a$ and $\pi'_Y=\pi_Y\circ a$, then
they induce the same isomorphisms \eqref{independence1}. If we are given two arbitrary quasi-birational correspondences $Z$ and $Z'$ between $X$ and $Y$ and a field $L$
as is the statement of the corollary, then we can take a quasi-resolution of the closure of the image of $\Spec L\to Z\times_S Z'$ to obtain a quasi-birational correspondence $Z''$ between
$X$ and $Y$ mapping via a quasi-resolution to $Z$ and $Z'$ and is compatible with $\pi_X$, $\pi'_X$, $\pi_Y$, $\pi'_Y$ in the obvious sense. 
This proves the statement. 
\end{proof}

\begin{corollary}\label{independence2}
 In the situation of Corollary \ref{independence} assume that $S$ is integral and $f$ and $g$ are generically finite and purely inseparable.
Then any quasi-birational correspondence between $X$ and $Y$ induces the {\em same} isomorphism in $D^b(\widehat{\drw}_S)$
\[ Rf_*W\sO_{X,\Q}\cong Rg_*W\sO_{Y,\Q}, \quad Rf_*W\omega_{X,\Q}\cong Rg_*W\omega_{Y,\Q}.\]
\end{corollary}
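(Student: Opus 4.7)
The strategy is to reduce the statement to the uniqueness criterion provided in the second part of Corollary~\ref{independence}. Let $\eta\in S$ denote the generic point, so $\sO_{S,\eta}=k(S)$. Because $f$ and $g$ are generically finite and purely inseparable, the function field extensions $k(S)\subset k(X)$ and $k(S)\subset k(Y)$ are finite and purely inseparable; moreover, for any quasi-birational correspondence $(Z,\pi_X,\pi_Y)$ between $X$ and $Y$ over $S$, the morphism $\pi_X$ is purely inseparable by definition of a quasi-resolution, so $k(Z)$ is finite purely inseparable over $k(X)$ and hence over $k(S)$.

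The key elementary fact I would record first is the following: given a field $F$, a purely inseparable extension $K/F$, and an $F$-algebra homomorphism $\varphi\colon K\to K'$ into another field, $\varphi$ is uniquely determined. Indeed, for $x\in K$ pick $n\ge 0$ with $x^{p^n}\in F$; then $\varphi(x)^{p^n}=x^{p^n}$ forces $\varphi(x)$ to be the unique $p^n$-th root of $x^{p^n}$ in $K'$.

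Next I would fix a perfect closure $M$ of $k(S)$. Every finite purely inseparable extension of $k(S)$ admits a (necessarily unique) $k(S)$-algebra embedding into $M$; in particular we obtain canonical copies $\overline{k(X)},\overline{k(Y)},\overline{k(Z)},\overline{k(Z')}\subset M$. By the uniqueness statement above, the composite $k(X)\hookrightarrow k(Z)\hookrightarrow M$ coincides with the canonical embedding $k(X)\hookrightarrow M$, and the analogous statement holds with $Z$ replaced by $Z'$ and with $X$ replaced by $Y$. Now define
\[
L:=\overline{k(Z)}\cdot\overline{k(Z')}\subset M,
\]
the compositum inside $M$. Then $L/k(S)$ is finite purely inseparable, hence so are $L/\overline{k(Z)}$ and $L/\overline{k(Z')}$. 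By construction the inclusions $k(Z)\hookrightarrow L$ and $k(Z')\hookrightarrow L$ satisfy all the compatibilities required in the hypothesis of Corollary~\ref{independence}: the two composite maps $k(X)\hookrightarrow k(Z)\hookrightarrow L$ and $k(X)\hookrightarrow k(Z')\hookrightarrow L$ both agree with the canonical inclusion $k(X)\hookrightarrow M$, and similarly for $k(Y)$. The conclusion is immediate from Corollary~\ref{independence}.

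The main (modest) obstacle is producing the common overfield $L$ together with the matching composite embeddings of $k(X)$ and $k(Y)$. Passing to the perfect closure of $k(S)$ and exploiting the rigidity of $F$-algebra maps out of purely inseparable extensions handles this cleanly; I do not expect any further difficulty.
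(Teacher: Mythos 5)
Your argument is correct and is exactly what the paper leaves implicit (no proof is given for Corollary~\ref{independence2}): the uniqueness criterion in Corollary~\ref{independence} reduces the claim to producing a common overfield $L$ with compatible embeddings, and since every extension in sight is finite purely inseparable over $k(S)$, the rigidity of $k(S)$-algebra maps out of purely inseparable extensions — embedding everything into the perfect closure and taking the compositum — does the job.
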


\begin{corollary}
Let $S$ be a $k$-scheme and $f:X\to S$ be an integral and projective $S$-scheme, which has Witt-rational singularities. Let $k(X)^{\rm perf}$ be the perfect closure of $k(X)$ and 
$\eta\in S$ the image of the generic point of $X$. Then ${\rm End}_{\sO_{S,\eta}-{\rm alg}}(k(X)^{\rm perf})$ is acting on  $Rf_*W\sO_{X,\Q}$ and $Rf_*W\omega_{X,\Q}$ as objects in
$D^b(\widehat{\drw}_S)$. 
\end{corollary}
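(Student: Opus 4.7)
The plan is to realize each $\sigma\in \mathrm{End}_{\sO_{S,\eta}\text{-alg}}(k(X)^{\rm perf})$ as a quasi-birational self-correspondence of $X$ over $S$ and then invoke Corollary~\ref{independence} to produce its action $\hat\sigma$. First I would fix a smooth quasi-resolution $a:Y\to X$ (which exists by Remark~\ref{de-Jong-did-it}); since $a$ is generically finite and purely inseparable, $k(Y)^{\rm perf}=k(X)^{\rm perf}$, and, as $X$ has Witt-rational singularities, Proposition~\ref{properties-Witt-rational-sing} yields isomorphisms $Rf_*W\sO_{X,\Q}\cong R(fa)_*W\sO_{Y,\Q}$ and $Rf_*W\omega_{X,\Q}\cong R(fa)_*W\omega_{Y,\Q}$ in $D^b(\widehat{\drw}_S)$. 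It thus suffices to construct a monoid action on these cohomologies computed from the smooth $Y$.

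For each $\sigma$ I would take the closure $Z^Y_\sigma\subset Y\times_S Y$ of the image of $\Spec k(Y)^{\rm perf}\to Y\times_S Y$ determined by $(\mathrm{incl},\sigma|_{k(Y)})$. It is an integral closed subscheme proper over $Y$ (since $Y/S$ is projective), with function field $K^Y_\sigma=k(Y)[\sigma(k(Y))]\subset k(Y)^{\rm perf}$ and both projections $\pi_i:Z^Y_\sigma\to Y$ generically finite and purely inseparable. Hence $[Z^Y_\sigma]\in \Hom_{\mc{C}_S}(Y,Y)$, and Proposition~\ref{proposition-C_S-to-dRW_S} together with Proposition~\ref{proposition-comparison-with-pull-push-over-S} assigns to it a morphism on $\hat{\sH}(Y/S)$ in $\widehat{\drw}_S$. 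Picking a quasi-resolution $\tilde Z^Y_\sigma\to Z^Y_\sigma$ and composing its two projections with $a$, one obtains a quasi-birational self-correspondence of $X$ over $S$; Corollary~\ref{independence} then lifts the previous sheaf-level morphism to an isomorphism $\hat\sigma$ of $Rf_*W\sO_{X,\Q}$ and of $Rf_*W\omega_{X,\Q}$ in $D^b(\widehat{\drw}_S)$. The second half of Corollary~\ref{independence}, applied with $L=k(Y)^{p^{-N}}$ for $N$ large, shows that $\hat\sigma$ is independent of $\tilde Z^Y_\sigma$, and dominating two choices of $Y$ by a third quasi-resolution (Remark~\ref{de-Jong-did-it}) gives independence of $Y$.

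It remains to verify $\widehat{\mathrm{id}}=\mathrm{id}$ and $\widehat{\sigma\tau}=\hat\sigma\circ\hat\tau$. The identity case is immediate since $Z^Y_{\mathrm{id}}=\Delta_Y$. For composition, the functoriality of $\hat{\sH}(?/S)$ reduces the statement to the equality $[Z^Y_\sigma]\circ [Z^Y_\tau]=[Z^Y_{\sigma\tau}]$ in $\Hom_{\mc{C}_S}(Y,Y)$ up to cycles acting as zero on the $W\sO$- and $W\omega$-components. Unfolding the definition in $\mc{C}_S$, one rewrites the composition as $(p_{13})_*(p_{12}^*[Z^Y_\tau]\cdot p_{23}^*[Z^Y_\sigma])$; the intersection $p_{12}^{-1}(Z^Y_\tau)\cap p_{23}^{-1}(Z^Y_\sigma)\subset Y^3$ has at its generic point the twisted tensor product $K^Y_\sigma\otimes_{k(Y),\tau}K^Y_\tau$, which as a subfield of $k(Y)^{\rm perf}$ coincides with $K^Y_{\sigma\tau}$, and $p_{13}$ is generically an isomorphism onto $Z^Y_{\sigma\tau}$ of multiplicity one. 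Any remaining irreducible components of the intersection project to closed subsets of $Y$ of positive codimension on at least one side, so Lemmas~\ref{lemma-van1} and~\ref{lemma-van2} ensure they contribute zero to $R^\bullet(fa)_*W\sO_{Y,\Q}$ and to $R^\bullet(fa)_*W\omega_{Y,\Q}$.

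The main obstacle is precisely this last step. The local identification $K^Y_\sigma\otimes_{k(Y),\tau}K^Y_\tau\cong K^Y_{\sigma\tau}$ inside $k(Y)^{\rm perf}$ is an explicit computation in purely inseparable extensions (transparent in the toy case $k(Y)=k(t)$, $\sigma=\tau$ with $\tau(t)=t^{1/p}$, where the tensor product becomes $k(s)[u]/(u^p-s)=k(t^{1/p^2})=K^Y_{\tau^2}$), but the corresponding global multiplicity analysis and the control of non-reduced or excess-intersection contributions demand care. The key geometric input is that the projections $Z^Y_\sigma\to Y$ and $Z^Y_\tau\to Y$ are finite and purely inseparable, bounding the dimensions of the auxiliary intersection components enough that the vanishing lemmas apply uniformly.
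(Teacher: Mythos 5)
Your proof follows the paper's route in its essentials: for each $\sigma$, encode it as a quasi-birational self-correspondence of $X$ over $S$ and appeal to Corollary~\ref{independence} (and its uniqueness clause) for the well-defined isomorphism. Where you detour through a smooth quasi-resolution $Y$ and work explicitly with cycles $Z_\sigma^Y$ in $\Hom_{\mc{C}_S}(Y,Y)$, the paper just applies the remark after Definition~\ref{quasi-bir} directly to $X$ (using the factorisation $\sigma|_{k(X)}\colon k(X)\to L\subset k(X)^{\rm perf}$ for some finite purely inseparable $L/k(X)$), which is shorter. Your explicit verification of $\widehat{\sigma\tau}=\hat\sigma\circ\hat\tau$ is more than the paper's proof spells out; the intended computation is right in outline, but note that the identification you write of the twisted tensor product with $K^Y_{\sigma\tau}$ \emph{as a subfield of $k(Y)^{\rm perf}$} is not quite accurate — in the toy case the two images in $k(Y)^{\rm perf}$ are $k(t^{1/p})$ and $k(t^{1/p^2})$ respectively; what is true, and what you actually need, is only that the corresponding prime ideals of $k(Y)\otimes_{\sO_{S,\eta}}k(Y)$ agree, i.e.\ that $p_{13}$ applied to the dominant component of $p_{12}^{-1}(Z^Y_\tau)\cap p_{23}^{-1}(Z^Y_\sigma)$ yields $Z^Y_{\sigma\tau}$ as a closed subscheme of $Y\times_S Y$. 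The multiplicity-one claim and the disposal of embedded/excess components via Lemmas~\ref{lemma-van1} and~\ref{lemma-van2} are plausible but left informal (the vanishing lemmas only kill the $W\sO$-component of the image and the $W\omega$-component of the source, so the bookkeeping for the $(W\sO\oplus W\omega)$-block deserves one more sentence along the lines of the argument in Theorem~\ref{vanishing-HDI}).

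There is a genuine gap shared by your argument and the paper's terse proof: both assume that $L$ (equivalently your $K^Y_\sigma$) is finite and purely inseparable not only over $k(X)$ but also over $\sigma(k(X))$, so that the \emph{second} projection of $Z_\sigma$ is also a quasi-resolution, which the remark after Definition~\ref{quasi-bir} requires. This is automatic when $\sigma$ is an automorphism (then $\sigma(k(X)^{\rm perf})=k(X)^{\rm perf}$), but it fails for non-surjective $\sigma\in{\rm End}_{\sO_{S,\eta}\text{-}{\rm alg}}(k(X)^{\rm perf})$. Concretely, take $S=\Spec k$, $X=\P^1_k$, $p$ odd, and $\sigma$ the $k$-algebra endomorphism of $k(t)^{\rm perf}$ with $\sigma(t)=t^2$; then $\sigma(k(X))=k(t^2)$ and $k(t)/k(t^2)$ is a degree-two \emph{separable} extension, so no $L\subset k(t)^{\rm perf}$ can be purely inseparable over both $k(t)$ and $k(t^2)$, and $\pr_2\colon Z_\sigma\to X$ is not purely inseparable. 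So as written, your proof (like the paper's) only produces the action of the submonoid of those $\sigma$ for which $k(X)/\sigma(k(X))$ is purely inseparable — in particular of ${\rm Aut}_{\sO_{S,\eta}\text{-}{\rm alg}}(k(X)^{\rm perf})$ — not of the full endomorphism monoid. This should be flagged, or the construction amended for non-surjective $\sigma$.
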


\begin{proof}
 An element $\sigma$ in ${\rm End}_{\sO_{S,\eta}-{\rm alg}}(k(X)^{\rm perf})$ will when composed with $k(X)\inj k(X)^{\rm perf}$ factor over a finite and purely inseparable extension $L$
of $k(X)$. By the remark after Definition \ref{quasi-bir} it hence gives rise to a quasi-birational correspondence of $X$ with itself and thus yields the promised
well-defined action by Corollary \ref{independence}.
\end{proof}

\begin{remark}
 The above corollaries have obvious analogs for $W\sO$-rational singularities.
\end{remark}

\begin{corollary}\label{rig-slope1}
 Let $X$ and $Y$ be two integral $k$-schemes, which have $W\sO$-rational singularities and are  quasi-birational over $k$.
Then in $\widehat{\drw}_{k,\Q}$
\[H^i(X,W\sO_{X,\Q})\cong H^i(Y, W\sO_{Y,\Q}), \quad \text{for all }i\ge 0.\]
In particular, if $X$ and $Y$ are projective then \cite[Thm 1.1]{BBE} yields a Frobenius equivariant isomorphism
\[H^i_{\rm rig}(X/K)^{<1}\cong H^i_{\rm rig}(Y/K)^{<1}\quad \text{for all } i\ge 0,\]
where $K={\rm Frac}(W(k))$ and $H^i_{\rm rig}(X/K)^{<1}$ denotes the part of rigid cohomology on which the Frobenius acts with slope $<1$. 
\end{corollary}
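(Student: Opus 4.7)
The proof is essentially a direct consequence of Corollary \ref{independence} in the $W\sO$-version noted just after Corollary \ref{independence2}, specialised to $S=\Spec k$. The plan is as follows. First I would fix a quasi-birational correspondence $(Z,\pi_X,\pi_Y)$ between $X$ and $Y$ over $\Spec k$, which exists by hypothesis (and ultimately by Remark \ref{de-Jong-did-it}). Writing $f:X\to \Spec k$ and $g:Y\to \Spec k$ for the structure maps, the $W\sO$-analogue of Corollary \ref{independence} then yields an isomorphism
\[ Rf_*W\sO_{X,\Q}\cong Rg_*W\sO_{Y,\Q} \]
in $D^b(\widehat{\drw}_{k,\Q})$. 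Taking $i$-th cohomology on both sides produces the desired isomorphism $H^i(X,W\sO_{X,\Q})\cong H^i(Y,W\sO_{Y,\Q})$ in $\widehat{\drw}_{k,\Q}$ for every $i\geq 0$, and it is automatically compatible with Frobenius (and Verschiebung) since all morphisms in $\widehat{\drw}_{k,\Q}$ are by definition compatible with these operators.

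For the second assertion, suppose $X$ and $Y$ are projective. Then $H^i(X,W\sO_X)$ and $H^i(Y,W\sO_Y)$ are finitely generated $W$-modules, so the localisation functor $(-)_\Q$ restricted to such modules is, forgetting the differential, essentially $\otimes_W K$. Thus the isomorphism from the first paragraph, viewed through the obvious forgetful functor, gives a Frobenius-equivariant $K$-linear isomorphism
\[ H^i(X,W\sO_X)\otimes_W K\cong H^i(Y,W\sO_Y)\otimes_W K. \]
Now I would invoke \cite[Thm 1.1]{BBE} of Berthelot--Bloch--Esnault, which for a proper $k$-scheme $X$ identifies $H^i(X,W\sO_X)\otimes_W K$ Frobenius-equivariantly with the slope $<1$ part $H^i_{\rm rig}(X/K)^{<1}$ of rigid cohomology. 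Applying this identification to both $X$ and $Y$ gives the promised Frobenius-equivariant isomorphism $H^i_{\rm rig}(X/K)^{<1}\cong H^i_{\rm rig}(Y/K)^{<1}$.

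There is no serious obstacle: all the technical content (the construction of the correspondence action, the vanishing lemmas, and in particular the independence statement of Corollary \ref{independence}) has already been carried out. The only minor point that needs attention is to check that the cohomology groups, which live in the quotient category $\widehat{\drw}_{k,\Q}$, do in the projective case translate faithfully to $H^i(X,W\sO_X)\otimes_W K$ — this is true because finite generation over $W$ implies that the quotient functor is compatible with tensoring by $K$ on Homs, by the general description given in Proposition \ref{proposition-Hom-A-Q}.
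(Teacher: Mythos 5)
Your proof takes the same route as the paper: the paper's own argument is simply "Apply Corollary \ref{independence} in the case $S=k$," and you do exactly that (correctly going through the $W\sO$-analogue noted after Corollary \ref{independence2}), with some extra but harmless elaboration on why the passage from $\widehat{\drw}_{k,\Q}$ to the statement about $H^i(X,W\sO_X)\otimes_W K$ is legitimate in the projective case.
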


\begin{proof}
 Apply Corollary \ref{independence} in the case $S=k$.
\end{proof}

We will also give some results on the torsion, see Theorem \ref{torison1} and Theorem \ref{torsion2}.

\begin{corollary}\label{points}
 Let $k$ be a finite field. Let $X$ and $Y$ be two quasi-birational integral and projective $k$-schemes, which have $W\sO$-rational singularities.
Then for any finite field extension $k'$ of $k$ we have
\[|X(k')|\equiv |Y(k')|\quad \text{mod } |k'|.\]
\end{corollary}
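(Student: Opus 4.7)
The plan is to deduce the congruence from Corollary~\ref{rig-slope1} combined with the Lefschetz trace formula in rigid cohomology. Set $K={\rm Frac}(W(k))$. By Corollary~\ref{rig-slope1}, for each $i\geq 0$ we obtain Frobenius-equivariant isomorphisms
\[
H^i_{\rm rig}(X/K)^{<1}\cong H^i_{\rm rig}(Y/K)^{<1},
\]
where the superscript $<1$ denotes the subspace on which Frobenius acts with slope strictly less than $1$. This is the only input from the theorems in this paper.

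Next I would fix a finite extension $k'\supset k$, set $q:=|k'|$, $K':={\rm Frac}(W(k'))$, and let $F_{k'}$ denote the $q$-linear geometric Frobenius. By the Lefschetz trace formula for rigid cohomology of proper (possibly singular) $k'$-varieties one has
\[
|X(k')|=\sum_{i\geq 0}(-1)^i\operatorname{tr}\bigl(F_{k'}\mid H^i_{\rm rig}(X_{k'}/K')\bigr),
\]
and similarly for $Y(k')$. I would then split each $H^i_{\rm rig}(-/K')$ by the slope filtration into a part of slope $<1$ and a complementary part of slope $\geq 1$. Any eigenvalue $\alpha$ of $F_{k'}$ on the slope $\geq 1$ part satisfies $v_p(\alpha)\geq v_p(q)$, hence $\alpha/q$ is an algebraic integer; since the trace on this part is a sum of such eigenvalues, is Galois-invariant, and lies in $\Z$, it belongs to $q\Z_p\cap\Z=q\Z$. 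Consequently
\[
|X(k')|\equiv \sum_{i\geq 0}(-1)^i\operatorname{tr}\bigl(F_{k'}\mid H^i_{\rm rig}(X_{k'}/K')^{<1}\bigr)\pmod{q},
\]
and analogously for $Y(k')$.

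Finally, base-changing the Frobenius-equivariant isomorphism from Corollary~\ref{rig-slope1} from $K$ to $K'$ preserves the action of $F_{k'}=F_k^{[k':k]}$, so the two right-hand sides agree, yielding the desired congruence. The main external ingredients are the identification $H^i_{\rm rig}(X/K)^{<1}\cong H^i(X,W\OO_{X,\Q})$ of Berthelot-Bloch-Esnault already used in Corollary~\ref{rig-slope1}, and the Lefschetz trace formula for rigid cohomology of proper $k'$-schemes without smoothness hypothesis; the latter is the main technical input one should be careful to cite, since neither $X$ nor $Y$ is assumed smooth.
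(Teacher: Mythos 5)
Your argument is correct and is essentially the paper's proof: the paper deduces the congruence directly from Corollary~\ref{rig-slope1} together with \cite[Cor.~1.3]{BBE}, and what you have written is precisely the content of that cited corollary of Berthelot--Bloch--Esnault (Lefschetz trace formula for rigid cohomology of proper schemes via Etesse--Le Stum, plus the slope $\geq 1$ divisibility argument) unfolded. The only thing to add is that you could simply cite \cite[Cor.~1.3]{BBE} for the reduction $|X(k')|\equiv \sum(-1)^i\operatorname{tr}(F_{k'}\mid H^i_{\rm rig}(X_{k'}/K')^{<1}) \bmod q$, which packages the integrality of Frobenius eigenvalues on rigid cohomology of proper (possibly singular) varieties that your argument implicitly uses.
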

\begin{proof}
This follows from Corollary \ref{rig-slope1} and \cite[Cor.~1.3]{BBE}.
\end{proof}

In the case where $X$ and $Y$ are smooth, integral and proper, the above corollary was proved in \cite[Cor. 3, (i)]{E83}.
In case there is a morphism $f: X\to Y$, which is birational and $X$ is smooth and projective and $Y=Z/G$ is the quotient under a finite group $G$ of a smooth projective scheme $Z$,
this was proved in \cite[Thm 4.5.]{C}.

We investigate the properties of Witt-rational singularities a little bit further.

\begin{proposition}\label{comparison}
 Consider the following properties on an integral $k$-scheme $S$:
\begin{enumerate}
 \item $S$ has rational singularities.
 \item $S$ has Witt-rational singularities.
\item $S$ has $W\sO$-rational singularities.
\item $S$ has BE-Witt-rational singularities.
\end{enumerate}
Then 
\[(1)\Rightarrow (2)\Rightarrow (3)\Rightarrow (4).\]
Furthermore the first implication is strict, by Remark \ref{finite-quot-not-CM} above.
\end{proposition}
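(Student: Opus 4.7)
The plan is to prove each implication separately; $(2)\Rightarrow(3)$ is immediate from Definition \ref{definition-Witt-rational}, since $W\sO$-rationality requires only conditions (1) and (2) of Witt-rationality, with no additional constraint.

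For $(1)\Rightarrow(2)$, let $f\colon X\to S$ be a rational resolution. It is in particular a quasi-resolution ($X$ smooth is a topological finite quotient, and $f$ is projective birational), and condition (1) of Definition \ref{definition-Witt-rational} follows from the normality of $S$ by Remark \ref{Witt-rational-and-normal}. To obtain conditions (2) and (3) I will establish the stronger \emph{integral} vanishing $R^if_*W\sO_X=0=R^if_*W\omega_X$ for $i\ge 1$. First, induct on $n$ using the short exact sequences
$$0\to F^{n-1}_{X*}\sO_X\xr{V^{n-1}}W_n\sO_X\xr{\pi}W_{n-1}\sO_X\to 0,\qquad 0\to W_{n-1}\omega_X\xr{\ul{p}}W_n\omega_X\xr{F^{n-1}}F^{n-1}_{X*}\omega_X\to 0,$$
where the second sequence is short exact thanks to the Cohen-Macaulayness of $X$, by Proposition \ref{properties-Witt-canonical}(7). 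The hypotheses $R^if_*\sO_X=0=R^if_*\omega_X$ propagate through the long exact sequences to yield $R^if_*W_n\sO_X=0=R^if_*W_n\omega_X$ for every $n$ and $i\ge 1$. Since $Rf_*W(-)\cong R\varprojlim Rf_*W_n(-)$ by Proposition \ref{derived-functors-exist}(4), a Grothendieck spectral sequence argument together with the Mittag-Leffler property of the projective systems (the transition maps are surjective on sections over affine opens) and Lemma \ref{vanishing-of-Rlim} delivers the required vanishing in the limit.

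For $(3)\Rightarrow(4)$, fix a quasi-resolution $g\colon Z\to S$; by assumption $g^*\colon W\sO_{S,\Q}\xr{\sim}Rg_*W\sO_{Z,\Q}$ in $D^b(\widehat{\drw}_{S,\Q})$. By Definition \ref{finite-quotients} and Lemma \ref{univ-homeo-and-Frobenius} there exists a finite surjective morphism $a\colon\tilde Z\to Z$ with $\tilde Z$ smooth, and Lemma \ref{properties-can-pb-pf}(2) gives $a_*a^*=\deg(a)\cdot\id$ on $W\sO_{Z,\Q}$. Consequently, the pullback $(ga)^*$ admits the splitting
$$\sigma:=\deg(a)^{-1}\cdot(g^*)^{-1}\circ Rg_*(a_*).$$
Using de Jong, pick a generically \'etale alteration $h\colon X\to S$ with $X$ smooth, and let $Y$ (again via de Jong) be a smooth alteration of an irreducible component of $X\times_S\tilde Z$ dominating both factors, with projections $\gamma\colon Y\to X$ and $\delta\colon Y\to\tilde Z$ satisfying $h\gamma=ga\delta$. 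Since $\gamma$ and $\delta$ are alterations between smooth schemes, restricting to an open on which they are finite and invoking Proposition \ref{2.1.8} (exactly as in the proof of Proposition \ref{one-then-all}) yields $\gamma_*\gamma^*=[k(Y):k(X)]\cdot\id$ and $\delta_*\delta^*=[k(Y):k(\tilde Z)]\cdot\id$. Setting
$$\tau:=[k(Y):k(\tilde Z)]^{-1}\cdot\sigma\circ R(ga)_*(\delta_*),$$
one verifies $\tau\circ(ga\delta)^*=\id$, so that $\rho:=\tau\circ Rh_*(\gamma^*)$ is the desired splitting: $\rho\circ h^*=\tau\circ(h\gamma)^*=\tau\circ(ga\delta)^*=\id$.

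The main technical obstacle is the bookkeeping in $(3)\Rightarrow(4)$: one must carefully combine push-pull formulas for several alterations between smooth schemes into a single compatible splitting in the derived category. The crucial input is the identity $f_*f^*=\deg(f)\cdot\id$ for any alteration $f$ between smooth integral schemes, obtained by restricting to an open set on which $f$ is finite and applying Gros' Proposition \ref{2.1.8}. The remaining ingredients are the existence of generically \'etale alterations (de Jong) and the standard compatibilities of $Rf_*$, $R\varprojlim$, and $R\ul{\Gamma}$ from Proposition \ref{derived-functors-exist}.
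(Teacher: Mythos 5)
Your proof is correct and follows essentially the same strategy as the paper: the same short exact sequences and $R\varprojlim$-argument for $(1)\Rightarrow(2)$, the same triviality for $(2)\Rightarrow(3)$, and a splitting via alterations for $(3)\Rightarrow(4)$.

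Two small remarks on $(3)\Rightarrow(4)$. First, you derive a finite surjective smooth cover $a\colon\tilde Z\to Z$ from Definition \ref{finite-quotients} and Lemma \ref{univ-homeo-and-Frobenius}; this does work (one has to reverse the universal homeomorphism $Z\to Z'$ using Lemma \ref{univ-homeo-and-Frobenius} to get $Z'\to Z^{(n)}$, take a smooth cover of $Z'$, and then untwist by $\sigma^{-n}$), but the paper avoids this by appealing directly to Remark \ref{de-Jong-did-it}, which produces a quasi-resolution $X\to S$ with $X$ already a finite quotient, and thus immediately a finite surjective smooth $X'\to X$. Second, your argument is in fact more scrupulous than the paper's on one point: you explicitly chase the splitting all the way to a \emph{generically \'etale} alteration $h\colon X\to S$, as Definition \ref{definition-Witt-rational-BE} requires. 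The paper produces a splitting only for $h=fg$ (which need not be generically \'etale) and invokes ``a fortiori'', implicitly relying on the independence-of-alteration argument of Proposition \ref{one-then-all}. Both are valid; yours is more self-contained on this point, at the cost of a somewhat longer chain of push-pull identities.
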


\begin{proof}
 (1) $\Rightarrow$ (2): By assumption there exists a resolution $f:X\to S$. The exact sequences  
\[0\to W_{n-1}\sO_X\xr{V}W_n\sO_X\to \sO_X\to 0\]
 and (see Proposition \ref{properties-Witt-canonical}, (7)) 
 \[0\to W_{n-1}\omega_X\xr{\ul{p}} W_n\omega_X\xr{F^{n-1}} \omega_X\to 0\]
give us $R^if_*W_n\sO_X=0=R^if_*W_n\omega_X$ for all $n,i\ge 1$ and also $W_n\sO_S\cong f_*W_n\sO_X$. Since $S$ is CM, the isomorphism $W_n\sO_S\cong Rf_*W_n\sO_X$ also gives the isomorphism 
$f_*W_n\omega_X\cong W_n\omega_S$ via duality (see Proposition \ref{properties-Witt-canonical}, (2)). Now the statement follows from the exact sequence (see Lemma \ref{vanishing-of-Rlim})
\[0\to R^1\varprojlim_n R^{i-1}f_*E_n\to R^if_*(\varprojlim_n E_n)\to \varprojlim_n R^if_*E_n\to 0,\]
where $E_n\in \{W_n\sO_X, W_n\omega_X\}$. (For the vanishing of $R^1f_*W\omega_X$ use that $f_*W_n\omega_X\cong W_n\omega_S$ and hence the projection maps $f_*W_n\omega_X\to f_*W_{n-1}\omega_X$ are surjective,
by Proposition \ref{properties-Witt-canonical}, (4).)

(2) $\Rightarrow$ (3): trivial.

(3) $\Rightarrow$ (4): By Remark \ref{de-Jong-did-it} we find a quasi-resolution of $S$ of the form
                          $f:X\to S$ with  $X$ a finite quotient.
                       We thus find a smooth scheme $X'$ with a finite and surjective morphism $g: X'\to X$. Then $h:=f g: X'\to S$ is an alteration and
                       $h^*: W\sO_{S,\Q}\to Rg_*W\sO_{X',\Q}$ splits by Proposition \ref{properties-Witt-rational-sing}, (1) $\Rightarrow$ (2), and Lemma \ref{properties-can-pb-pf}, (2);
                      a fortiori $h^*: W\sO_S\otimes\Q\to Rg_*W\sO_{X'}\otimes\Q$ splits.
 \end{proof}

\subsection{Complexes and sheaves attached to singularities of schemes}

\begin{corollary}\label{independence3}
 Let $S$ be an integral scheme and $f: X\to S$ and $g: Y\to S$ be quasi-resolutions.
Then $X$ and $Y$ are quasi-birational over $X$ and any quasi-birational correspondence between $X$ and $Y$ induces the {\em same} isomorphism in  the  derived category $D^b(\widehat{\drw}_S)$
\[ Rf_*W\sO_{X,\Q}\cong Rg_*W\sO_{Y,\Q}, \quad Rf_*W\omega_{X,\Q}\cong Rg_*W\omega_{Y,\Q}.\]
Furthermore, if $Z$ is another integral scheme and $h:Z\to S$ a quasi-resolution, then the isomorphisms
\[ Rf_*W\sO_{X,\Q}\cong Rh_*W\sO_{Z,\Q}, \quad Rf_*W\omega_{X,\Q}\cong Rh_*W\omega_{Z,\Q}\]
induced by any quasi-birational correspondence between $X$ and $Z$ equals the isomorphism obtained by composing the isomorphisms
induced by quasi-birational correspondences between  $X$ and $Y$ and between  $Y$ and $Z$. 
\end{corollary}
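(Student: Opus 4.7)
The plan is to reduce both statements to Corollary~\ref{independence}, exploiting the observation that every function field occurring in a quasi-birational correspondence over an integral scheme $S$ is a finite purely inseparable extension of $k(\eta_S)$, where $\eta_S$ denotes the generic point of $S$. Since $f$ and $g$ are quasi-resolutions, $X$ and $Y$ are topological finite quotients, hence have Witt-rational singularities by Corollary~\ref{finite-quotients-are Witt-rational}, and $k(X),k(Y)$ are finite purely inseparable over $k(\eta_S)$. To see that $X$ and $Y$ are quasi-birational over $S$, note that $k(X)\otimes_{k(\eta_S)}k(Y)$ is local Artinian with residue field the compositum $k(X)\cdot k(Y)$, so $X\times_S Y$ has a unique generic point mapping to $\eta_S$; applying de Jong (Remark~\ref{de-Jong-did-it}) to the reduced closure of this point produces a quasi-resolution $Z\to X\times_S Y$ giving the required correspondence.

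For the uniqueness of the induced isomorphism, fix once and for all an embedding of $k(\eta_S)$ into a perfect closure $k(\eta_S)^{\rm perf}$. Every finite purely inseparable extension of $k(\eta_S)$ has a unique embedding into $k(\eta_S)^{\rm perf}$ as a $k(\eta_S)$-algebra, so for any quasi-birational correspondence $(W,\pi_X,\pi_Y)$ between $X$ and $Y$ the resulting embedding $k(W)\hookrightarrow k(\eta_S)^{\rm perf}$ automatically restricts to the preferred embeddings of $k(X)$ and $k(Y)$. Given two correspondences $W,W'$, their compositum $L:=k(W)\cdot k(W')\subset k(\eta_S)^{\rm perf}$ is finite and purely inseparable over both $k(W)$ and $k(W')$, and the compatibility hypothesis of the second assertion of Corollary~\ref{independence} is automatic. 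Hence $W$ and $W'$ induce the same isomorphism in $D^b(\widehat{\drw}_{S,\Q})$.

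For the cocycle property with correspondences $W_{XY},W_{YZ},W_{XZ}$, the idea is to construct a single integral $k$-scheme $W$ equipped with a quasi-resolution structure to each of $X,Y,Z$ and with compatible dominating maps to all three correspondences, so that $W$ itself is simultaneously a quasi-birational correspondence for each pair. Such a $W$ is obtained by quasi-resolving the unique generic-point-closure of the iterated fibre product $W_{XY}\times_X W_{XZ}\times_Z W_{YZ}$, which is nonempty by the local Artinian argument applied to the compositum of all function fields involved. By the first part of the corollary, $W$ induces the same isomorphism as each of $W_{XY},W_{YZ},W_{XZ}$. Writing the isomorphism associated to $W$ (on $W\OO$) as $\phi_{AB}=(\pi_B^{W*})^{-1}\circ\pi_A^{W*}$ as in the proof of Corollary~\ref{independence} (and dually, with pushforwards, on $W\omega$), the cocycle relation $\phi_{YZ}\circ\phi_{XY}=\phi_{XZ}$ is an immediate cancellation. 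The main obstacle is the careful setup of the fibre products and the verification that the purely inseparable arguments really do force the compatibility of embeddings needed to invoke Corollary~\ref{independence}; once this is in place, both statements are formal consequences of that corollary.
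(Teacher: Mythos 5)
Your proof follows the paper's strategy exactly: establish existence via the local Artin algebra $k(X)\otimes_{k(S)}k(Y)$, reduce uniqueness to Corollary~\ref{independence} by exploiting that every function field in play is finite purely inseparable over $k(S)$ (so $k(S)$-embeddings into a fixed ambient field, e.g.\ the compositum inside a perfect closure, are unique), and prove the cocycle by producing a single correspondence dominating everything. The only cosmetic difference is in the cocycle step: where you quasi-resolve the triple fibre product $W_{XY}\times_X W_{XZ}\times_Z W_{YZ}$, the paper instead takes a quasi-birational correspondence $V''$ between $V$ and $V'$ \emph{over $Y$}, which has the minor advantage that the two composite maps to $Y$ agree by construction, whereas in your version that compatibility requires the (easy, and already available to you) remark that $k(S)$-algebra embeddings of the purely inseparable extension $k(Y)$ into $k(W)$ are unique.
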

\begin{proof}
 First we show that $X$ and $Y$ are quasi-birational over $S$. For this notice that $k(X)\otimes_{k(S)} k(Y)$ is a local Artin algebra. Denote by $L$ its residue field.
Then we can take a quasi-resolution of the closure of the image of $\Spec L$ in $X\times_S Y$ to obtain a quasi-birational correspondence between $X$ and $Y$. 
Let $V$ and $V'$ be two quasi-birational correspondences between $X$ and $Y$. Denote by $L''$ the residue field of the local Artin algebra $k(V)\otimes_{k(S)} k(V')$.
This is a purely inseparable field extension of $k(S)$. Hence there is only one embedding over $k(S)$ of $k(X)$ and $k(Y)$ into $L''$.
Thus by Corollary \ref{independence} the two isomorphisms induced by $V$ and $V'$ are equal. Finally if we have the three quasi-resolutions $f,g,h$, we find a diagram
of $S$-morphisms
\[\xymatrix{ V''\ar[r]\ar[d]& V'\ar[d]\ar[r] & Z\\
             V\ar[d]\ar[r]  & Y\        &  \\
             X, }\]
in which $V$ is a quasi-birational correspondence between $X$ and $Y$, $V'$ is a quasi-birational correspondence between $Y$ and $Z$ and $V''$ is a quasi-birational correspondence between $V$ and $V'$
(and also between $X$ and $Z$). The last statement of the Corollary follows.
\end{proof}

\begin{definition}\label{canonical-complex-for-singularities}
 Let $S$ be an integral $k$-scheme of dimension $d$. We define in $D^b(\cdrw_{S,\Q})$
\[\mc{WS}_{0,S}:=Rf_*W\sO_{X,\Q}, \quad \mc{WS}_{d, S}:= Rf_*W\omega_{X,\Q},\]
where $f:X\to S$ is any quasi-resolution. This definition is independent (up to a canonical isomorphism) of the choice of the quasi-resolution $f$  by Corollary \ref{independence3}.
\end{definition}

It follows, that $S$ has $W\sO$-rational singularities if and only if $\mc{WS}_{0,S}\cong W\sO_S$ and it has Witt-rational singularities if and only if in addition
we have $H^i(\mc{WS}_{d, S})=0$ for all $i\ge 1$ (which is equivalent to $\mc{WS}_{d, S}\cong W\omega_{S,\Q}$).

Next we want to give a characterization of Witt-rational singularities using alterations.

\begin{proposition}\label{alterations-and-projectors}
Let $S$ be an integral $k$-scheme of dimension $d$ and $f: X\to S$ an alteration with $X$ smooth.  
Set 
\[\epsilon_f:=\frac{1}{\deg f} [X\times_S X]^0\in \CH^d(X\times X, P(X\times_S X))_\Q,\]
where $[X\times_S X]^0$ is the cycle associated to the closure of $X_\eta\times_\eta X_\eta$ in $X\times_S X$  with $\eta$ the generic point of $S$.
Further set
\[\hat{\sH}^{*,0}(X/S):=\bigoplus_i R^if_*W\sO_X,\quad \hat{\sH}^{*, d}(X/S):=\bigoplus_i R^if_*W\Omega^d_X,\]
\[\hat{\sH}^{*,(0,d)}(X/S):= \hat{\sH}^{*,0}(X/S)\oplus \hat{\sH}^{*,d}(X/S).\]
Then:
\begin{enumerate}
 \item The restriction of $\hat{\sH}(\epsilon_f/S)$ to $\hat{\sH}^{*,(0,d)}(X/S)_\Q$ is a projector, which we denote by $e_f$. (See Proposition \ref{proposition-C_S-to-dRW_S} for the notation.)
 \item The pullback $f^*$ induces a natural morphism of Witt modules over $S$
      \[f^*: W\sO_{S,\Q} \to e_f \hat{\sH}^{*,(0,d)}(X/S)_\Q.\]
 \item If $g: Y\to S$ is another alteration with $Y$ smooth. Then set 
       \[\gamma_{f,g}:= \frac{1}{\deg f}[X\times_S Y]^0 \in \CH^d(X\times Y, P(X\times_S Y)),\]
        where $[X\times_S Y]^0$ is the cycle associated to the closure of $X_\eta\times_\eta Y_\eta$ in $X\times_S Y$.
        Then $\hat{\sH}(\gamma_{f,g}/S): \hat{\sH}(X/S)\to \hat{\sH}(Y/S)$  induces an isomorphism 
         \eq{alterations-and-projectors1}{\hat{\sH}(\gamma_{f,g}/S): e_f(\hat{\sH}^{*,(0,d)}(X/S))_\Q\xr{\simeq} e_g(\hat{\sH}^{*,(0,d)}(Y/S))_\Q,}
          which is compatible with the pullback morphism from (2).
\end{enumerate}
\end{proposition}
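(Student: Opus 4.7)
The plan is to verify the three statements in order, exploiting in each case that every relevant cycle is supported on $X\times_S X$ (or $X\times_S Y$, $Y\times_S X$) and is therefore controlled by its restriction to the generic point $\eta$ of $S$.

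First, for (1), I would show that $\epsilon_f\circ\epsilon_f-\epsilon_f$ acts as zero on $\hat{\sH}^{*,(0,d)}(X/S)_\Q$. Because $k$ is perfect and $X$ is smooth, the extension $k(X)/k(S)$ is separable, so $X_\eta$ is finite étale over $\eta$ of degree $\deg f$, and a direct correspondence-composition calculation gives $[X_\eta\times_\eta X_\eta]^{\circ 2}=\deg f\cdot[X_\eta\times_\eta X_\eta]$, hence $\epsilon_f^2|_\eta=\epsilon_f|_\eta$. Since distinct irreducible closed subschemes of $X_\eta\times X_\eta$ are linearly independent in its Chow group, no irreducible component of $\epsilon_f^2-\epsilon_f$ dominates $S$. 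Consequently every such component $W_j$ satisfies $\pr_i(W_j)\subseteq f^{-1}(T_j)\subsetneq X$ for some proper closed $T_j\subset S$, so $\pr_i(W_j)$ has codimension $\geq 1$ in $X$. Because $W_j$ has codimension $d$ in $X\times X$, the map $\hat{\sH}(W_j/S)$ preserves bidegrees; combining Lemma \ref{lemma-van1} (with $r=1$, applied to $\pr_2$) with Lemma \ref{lemma-van2} (with $r=1$, applied to $\pr_1$) then shows that modulo torsion its restriction to the source $\hat{\sH}^{*,0}(X/S)$ has image in $\bigoplus_{p\geq 1,q\geq 1}$ (forcing the bidegree-preserving image in the $p=0$ part to be torsion), while the source summand $\hat{\sH}^{*,d}(X/S)$ is annihilated. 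Since $\hat{\sH}(\epsilon_f/S)$ itself preserves bidegrees, this yields $e_f^2=e_f$.

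For (2), I would compute $\hat{\sH}(\epsilon_f/S)\circ f^*$ on $W\sO_{S,\Q}$ by the projection formula. Since $\pr_1^*f^*=\pr_2^*f^*$ after restriction to the support of $\hat{c}l([X\times_S X]^0)$,
\begin{equation*}
\hat{\sH}(\epsilon_f/S)(f^*x)=\tfrac{1}{\deg f}\,\pr_{2*}\bigl(\pr_2^*f^*(x)\cup\hat{c}l([X\times_S X]^0)\bigr)=\tfrac{1}{\deg f}\,f^*(x)\cup\pr_{2*}\hat{c}l([X\times_S X]^0),
\end{equation*}
and compatibility of $\hat{c}l$ with proper pushforward (Definition \ref{definition-cl}) together with the generic degree $\deg f$ of $\pr_2|_{[X\times_S X]^0}\to X$ give $\pr_{2*}\hat{c}l([X\times_S X]^0)=\deg f\cdot 1$. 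Thus $e_f\circ f^*=f^*$, which is exactly the required factorization through $e_f\hat{\sH}^{*,(0,d)}(X/S)_\Q$.

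For (3), the candidate inverse of $\hat{\sH}(\gamma_{f,g}/S)$ is $\hat{\sH}(\gamma_{g,f}/S)$ with $\gamma_{g,f}=\tfrac{1}{\deg g}[Y\times_S X]^0$. A generic-fibre calculation identical to that in (1) yields
\begin{equation*}
\gamma_{g,f}\circ\gamma_{f,g}|_\eta=\epsilon_f|_\eta,\qquad \gamma_{f,g}\circ\gamma_{g,f}|_\eta=\epsilon_g|_\eta,
\end{equation*}
so both composites agree with $\epsilon_f$, resp.\ $\epsilon_g$, modulo correspondences supported over proper closed subsets of $S$, which act trivially on the $(0,d)$-summands by the same vanishing-lemma argument as in (1). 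Hence $\hat{\sH}(\gamma_{g,f}/S)\circ\hat{\sH}(\gamma_{f,g}/S)=e_f$ and symmetrically, giving the isomorphism \eqref{alterations-and-projectors1}. Compatibility with pullback is the same projection-formula calculation as in (2), now with $\pr_2|_{[X\times_S Y]^0}\to Y$ of generic degree $\deg f$, yielding $\hat{\sH}(\gamma_{f,g}/S)(f^*x)=g^*(x)$.

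The main technical point will be the bookkeeping in step (1): one has to be careful that every component of $\epsilon_f^2-\epsilon_f$ is ``vertical'' over $S$ (which relies on the linear independence of irreducible cycles in $X_\eta\times X_\eta$), and if $X$ is merely smooth and equidimensional rather than integral one must argue component by component using surjectivity of $f$. The projection-formula step and the compatibility $\pr_{2*}\hat{c}l([-]^0)=\hat{c}l(\pr_{2*}[-]^0)$ follow from the properties of the cycle class established in Section \ref{Gros-results} and Proposition \ref{proposition-cl}, so no new input is required there.
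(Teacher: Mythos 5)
Your proposal follows essentially the same strategy as the paper: reduce the cycle identities $\epsilon_f^2=\epsilon_f$, $\gamma_{g,f}\circ\gamma_{f,g}=\epsilon_f$, etc.\ to the generic fibre (the paper works over a dense open $U\subset S$ where the alterations become finite flat, which amounts to the same thing via the localization sequence), then invoke Lemmas \ref{lemma-van1} and \ref{lemma-van2} with $r=1$ to kill the discrepancy on the $(0,d)$-bidegree part, using that a codimension-$d$ correspondence preserves the Hodge bigrading. That last observation, which you state as ``preserves bidegrees,'' is correct and is the key structural point; it is what makes the restriction to $\hat{\sH}^{*,(0,d)}$ into a genuine idempotent, not just a map whose $(0,d)$-component agrees with one.

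Two points deserve attention. First, the assertion that ``because $k$ is perfect and $X$ is smooth, the extension $k(X)/k(S)$ is separable, so $X_\eta$ is finite \'etale over $\eta$'' is false: smoothness of $X$ over $k$ tells you nothing about separability of $X$ over $S$ (consider the relative Frobenius of a smooth curve), and alterations in this paper are allowed to be generically inseparable. Fortunately this does no harm: the identity $[X_\eta\times_\eta X_\eta]^{\circ 2}=\deg f\cdot[X_\eta\times_\eta X_\eta]$ holds for any finite flat $f_\eta$, since it reduces to $f_{\eta*}f_\eta^*=\deg f$, which needs only flatness. Similarly, ``distinct irreducible closed subschemes are linearly independent in the Chow group'' is not what you really need (and is not true of points on a curve); the correct mechanism is the limit formula $\CH(X_\eta\times_\eta X_\eta)=\varinjlim_U\CH(X_U\times_U X_U)$ plus the localization sequence, which shows that a class vanishing at $\eta$ is represented by a cycle supported over a proper closed subset. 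Second, in part (2) the step $\pr_1^*f^*(x)\cup\hat{c}l([X\times_SX]^0)=\pr_2^*f^*(x)\cup\hat{c}l([X\times_SX]^0)$ is not automatic from the support of the cycle class; it is precisely the content of Lemma \ref{lemma-linear2}, which you should cite explicitly. With those adjustments the argument is complete and matches the paper's.
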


\begin{proof}
Take a non-empty open and smooth subset $U$ of $S$ such that the pullback of $f$ - and in case (3) also of $g$ - over $U$ is finite and surjective and hence also flat; 
the pullbacks are denoted by $f_U: X_U\to U$ and $g_U: X_U\to U$.
Notice that the restriction of $[X\times_S X]^0$ and $[X\times_S Y]^0$ over $U$ equal $[X_U\times_U X_U]$ and $[X_U\times_U Y_U]$.

 (1) We have to show  $e_f\circ e_f- e_f=0$ on $\hat{\sH}^{*,(0,d)}(X/S)_\Q$. By the same argument as in the proof of Theorem \ref{vanishing-HDI} 
(using the vanishing Lemmas \ref{lemma-van1} and \ref{lemma-van2}) it suffices to prove
\[[X_U\times_U X_U]\circ [X_U\times_U X_U]-\deg f \cdot [X_U\times_U X_U]=0. \]
This follows immediately from $[X_U\times_U X_U]=[\Gamma_{f_U}^t]\circ [\Gamma_{f_U}]$.

(2) The morphism $e_f$, in particular gives a projector $e_f: f_*W\sO_{X,\Q}\to f_*W\sO_{X,\Q}$. Thus we need to show, that $e_f\circ f^*=f^*$ on $W\sO_{S,\Q}$.
    It suffices to prove this over $U$; thus the statement follows from $\deg f\cdot e_{f_U}=f_{U}^*\circ f_{U*}$.

(3) To prove that $\gamma_{f,g}$ is an isomorphism on $e_f(\hat{\sH}^{*, (0,d)}(X/S)_\Q)$ with inverse $\frac{1}{\deg g}\hat{\sH}([Y\times_S X]^0/S)$ it suffices
  to show that the following cycles in $\CH^d(X\times Y, P(X\times_S Y))$, etc.,
\begin{align*}
 \deg g\cdot[X\times_S Y]^0\circ [X\times_S X]^0- \deg f\cdot [Y\times_S Y]^0\circ [X\times_S Y]^0\\
 \deg f\cdot [Y\times_S X]^0\circ [Y\times_S Y]^0- \deg g \cdot[X\times_S X]^0\circ [Y\times_S X]^0\\
 [Y\times_S X]^0\circ [X\times_S Y]^0 \circ [X\times_S X]^0-(\deg f\deg g)\cdot [X\times_S X]^0\\
[X\times_S Y]^0\circ [Y\times_S X]^0\circ [Y\times_S Y]^0- (\deg f\deg g)\cdot [Y\times_S Y]^0
\end{align*}
act as zero on $\hat{\sH}^{*,(0,d)}(X/S)_\Q$ and $\hat{\sH}^{*,(0,d)}(Y/S)_\Q$ respectively.
By the same argument as in the proof of Theorem \ref{vanishing-HDI} (using the vanishing Lemmas \ref{lemma-van1} and \ref{lemma-van2}) it suffices to prove that
the pullback along $U$ of the above cycles vanish. This follows easily from $[X_U\times_U X_U]=[\Gamma_{f_U}^t]\circ [\Gamma_{f_U}]$, 
$[X_U\times_U Y_U]= [\Gamma_{g_U}^t]\circ [\Gamma_{f_U}]$ and $[\Gamma_{f_U}]\circ [\Gamma_{f_U}^t]= \deg f\cdot [\Delta_U]$, etc.
This yields the isomorphism \eqref{alterations-and-projectors1}. It is compatible with the pullback from (2), since on $W\sO_{S,\Q}$ we have
$\hat{\sH}(\gamma_{f,g}/S)f^*= g^*$. Indeed over $U$ we have $\hat{\sH}(\gamma_{f_U,g_U}/U)=\frac{1}{\deg f}\cdot g_U^*\circ f_{U*}$; thus it holds on $U$ and hence on all of $S$.
\end{proof}

\begin{definition}\label{RS}
Let $S$ be an integral $k$-scheme of dimension $d$ and $f:X\to S$ an alteration with $X$ smooth. 
Then using the notations from Proposition \ref{alterations-and-projectors} we define 
\[\RS^0(X/S):= {\rm coker} (W\sO_{S,\Q}\xr{f^*} e_f\hat{\sH}^{*, 0}(X/S)_\Q)= \frac{f_*W\sO_{X,\Q} }{W\sO_{S,\Q}} \bigoplus_{i\ge 1} e_f R^if_*W\sO_{X,\Q} \]
and 
\[\RS^d(X/S):= e_f\hat{\sH}^{>0,d}(X/S)=\bigoplus_{i\ge 1} e_f R^if_*W\Omega^d_{X,\Q}.\]
Then by Proposition \ref{alterations-and-projectors} $\RS^0(X/S)$ and $\RS^d(X/S)$ are independent of $f:X\to S$ up to a canonical isomorphism and are therefore
denoted 
\[\RS^0(S) := \RS^0(X/S), \quad \RS^d(S) := \RS^d(X/S), \quad \RS(S):=\RS^0(S)\oplus \RS^d(S). \] 
\end{definition}

\begin{remark}\label{direct-summand}
If $S$ is normal, then $\RS(S)$ is a direct summand of $\sH^{*,(0,d)}(X/S)_\Q$ for any alteration $f: X\to S$ with $X$ smooth.
\end{remark}

\begin{thm}\label{Witt-rational-via-alterations}
Let $S$ be an integral $k$-scheme. Then there are isomorphisms in $\widehat{\drw}_{S,\Q}$
\[\RS^0(S)\cong \frac{H^0(\mc{WS}_{0,S})}{W\sO_{S,\Q}}\bigoplus_{i\ge 1} H^i(\mc{WS}_{0,S}), \quad \RS^d(S)\cong \bigoplus_{i\ge 1} H^i(\mc{WS}_{d,S}).\] 
In particular:
\begin{enumerate}
 \item $S$ has $W\sO$-rational singularities $\Longleftrightarrow$ $\RS^0(S)=0$.
 \item $S$ has Witt-rational singularities $\Longleftrightarrow$ $\RS(S)=0$.
\end{enumerate}
\end{thm}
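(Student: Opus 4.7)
The plan is to prove the isomorphisms by choosing one well-adapted smooth alteration of $S$ for which both sides can be compared directly. First I would invoke de~Jong's theorem (Remark~\ref{de-Jong-did-it}) to fix a quasi-resolution $f\colon Y\to S$ with $Y$ a finite quotient, and then pick a finite surjective morphism $\pi\colon X\to Y$ from a smooth integral $X$. The composite $\varphi:=f\pi\colon X\to S$ is then a smooth alteration, so Proposition~\ref{alterations-and-projectors} gives $\RS(S)\cong\RS(X/S)$; on the other side, $Rf_*W\sO_{Y,\Q}=\mc{WS}_{0,S}$ and $Rf_*W\omega_{Y,\Q}=\mc{WS}_{d,S}$ by Definition~\ref{canonical-complex-for-singularities}. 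What must be shown is that the projector $e_\varphi$ cuts out $R^if_*(W\sO_Y\oplus W\omega_Y)_\Q$ from $R^i\varphi_*(W\sO_X\oplus W\Omega^d_X)_\Q$, in a way compatible with the pullback from $W\sO_{S,\Q}$.

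For this I would apply Proposition~\ref{top-fin-quot-vs-smooth} with the trivial universal homeomorphism $u=\id_Y$, smooth finite cover $\pi\colon X\to Y$, and base map $a=f$. It produces an isomorphism, induced by $\pi^*$,
\[
R^if_*(W\sO_Y\oplus W\omega_Y)_\Q\;\xrightarrow{\;\simeq\;}\;\hat{\sH}(\beta/S)\bigl(R^i\varphi_*(W\sO_X\oplus W\omega_X)_\Q\bigr),
\]
where $\beta=\pr_{1,3*}[X\times_Y Y\times_Y X]=[X\times_Y X]$ viewed as an element of $\Hom_{\mc{C}_S}^0(X,X)=\CH^{\dim X}(X\times X,P(X\times_S X))$. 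The main obstacle is then to match $\hat{\sH}(\beta/S)$ with the projector $e_\varphi$ built from $\epsilon_\varphi=\tfrac{1}{\deg\varphi}[X\times_S X]^0$ (Definition~\ref{RS}), which reduces to the cycle identity
\[
[X\times_S X]^0\;=\;(\deg f)\cdot[X\times_Y X]\quad\text{in}\quad\CH^{\dim X}(X\times X,P(X\times_S X))_\Q.
\]

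To prove this identity, I would pass to the generic fibre over $S$: writing $K=k(S)$, $L=k(Y)$, $F=k(X)$, the purely inseparable hypothesis on $f$ makes $L/K$ purely inseparable of degree $\deg f$. Hence the surjection $F\otimes_K F\twoheadrightarrow F\otimes_L F$ has nilpotent kernel, so both rings share the same minimal primes, and the structural isomorphism $F\otimes_K F\cong(L\otimes_K L)\otimes_L(F\otimes_L F)$ together with $L\otimes_K L$ being $L$-free of rank $[L:K]$ yields that the local length at every minimal prime of $F\otimes_K F$ is exactly $[L:K]=\deg f$ times the corresponding length in $F\otimes_L F$; this gives the cycle identity. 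Consequently $\epsilon_\varphi=\tfrac{1}{\deg\pi}\beta$ and $\hat{\sH}(\beta/S)=(\deg\pi)\,e_\varphi$ on the $(0,d)$-summand, so the two operators have the same image (noting $\deg\pi\in\Q^\times$).

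Combining these, I obtain $e_\varphi R^i\varphi_*W\sO_{X,\Q}\cong H^i(\mc{WS}_{0,S})$ and $e_\varphi R^i\varphi_*W\Omega^d_{X,\Q}\cong H^i(\mc{WS}_{d,S})$, using $W\omega_X=W\Omega^d_X$ for smooth $X$. Since both isomorphisms are induced by $\pi^*$, the pullback $\varphi^*=\pi^*\circ f^*$ is identified on $i=0$ with $f^*\colon W\sO_{S,\Q}\to H^0(\mc{WS}_{0,S})$. Taking the cokernel in degree zero and retaining the higher direct images produces the claimed descriptions of $\RS^0(S)$ and $\RS^d(S)$; the equivalences~(1) and~(2) follow by unwinding the definitions of $W\sO$-rational and Witt-rational singularities.
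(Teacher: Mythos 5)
Your proposal is correct and follows the same overall strategy as the paper: pick (via de~Jong) a quasi-resolution $f\colon Y\to S$ with $Y$ a finite quotient, cover $Y$ by a smooth finite $\pi\colon X\to Y$, use Proposition~\ref{top-fin-quot-vs-smooth} (with $u=\id$) to realize $R^if_*(W\sO_Y\oplus W\omega_Y)_\Q$ as the image of $\hat{\sH}([X\times_Y X]/S)$, and then match this operator with the projector $e_{f\pi}$ via the cycle identity $[X\times_S X]^0=(\deg f)\,[X\times_Y X]$. The only place where you genuinely diverge from the paper is in the proof of this cycle identity. The paper restricts to a smooth dense open $U\subset S$ over which $Y$ is smooth and the normalization map is a universal homeomorphism, and then reduces to the graph-correspondence computation $[\Gamma^t_u]\circ[\Gamma_u]=\deg u\cdot[\Delta]$ of Lemma~\ref{pb-pf-univ-homeo}. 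You instead compute multiplicities directly over the generic point of $S$ by comparing $F\otimes_K F$ with $F\otimes_L F$. That argument is sound (both sides of the identity are sums of prime cycles dominating $S$, so the generic point determines them), and it has the virtue of being self-contained commutative algebra; but the step from ``$L\otimes_K L$ is $L$-free of rank $[L:K]$'' to the multiplicity formula is not immediate from freeness and needs a bit more: one should use that $A:=L\otimes_K L$ is Artin local with residue field $L$, filter $F\otimes_K F\cong A\otimes_L(F\otimes_L F)$ by the ideals $\mathfrak{m}_A^n\otimes_L(F\otimes_L F)$, and observe that the two $L$-actions coincide on each graded piece $\mathfrak{m}_A^n/\mathfrak{m}_A^{n+1}$, so each graded piece is a free $F\otimes_L F$-module of the appropriate rank. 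With this spelled out the length computation is correct, and the rest of your argument (identifying $e_\varphi$ with $\frac{1}{\deg\pi}\hat{\sH}([X\times_Y X]/S)$, matching $\varphi^*=\pi^*\circ f^*$ with $f^*$ in degree $0$, and unwinding the definitions of $W\sO$-rational and Witt-rational) goes through as you describe.
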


\begin{proof}
By de Jong (see Remark \ref{de-Jong-did-it}) there exists an alteration $f:X\to S$ which factors as 
\[f: X\xr{h} Y\xr{g} S,\]
where $X$ is smooth, $Y=X/G$, for $G$ a finite group, $h$ is the quotient map and $g$ is a quasi-resolution. 
Thus the following equalities hold by definition for all $i$
\[H^i(\mc{WS}_{0,S})= R^ig_*W\sO_{Y,\Q},\quad H^i(\mc{WS}_{d,S})= R^ig_* W\omega_{Y,\Q}.\]
Since $Y$ is normal and $h$ is finite and surjective it is  also universally equidimensional.
Thus $[X\times_Y X]\in \CH^d(X\times X, P(X\times_Y X))$, where $d=\dim S$. We denote by $[X\times_S X]^0$ the cycle associated to the closure of $X_\eta\times_\eta X_\eta$ in $X\times_S X$, where $\eta$ is the generic point of $S$. We claim 
 \eq{Witt-rational-via-alterations1}{[X\times_S X]^0=\deg g\cdot [X\times_Y X]\quad \text{in } \CH^d(X\times X, P(X\times_S X)).}
Indeed it suffices to check this over a smooth and dense open subscheme $U$ of $S$, over which $g$ is a universal homeomorphism and $Y$ is smooth. 
But then 
\begin{align*}
[X_U\times_U X_U]^0 & = [X_U\times_U X_U]\\
                    & = [\Gamma_{f_U}^t]\circ[\Gamma_{f_U}]\\
                    & =[\Gamma_{h_U}^t]\circ [\Gamma_{g_U}^t]\circ [\Gamma_{g_U}]\circ [\Gamma_{h_U}]\\
                    & = \deg g\cdot [\Gamma_{h_U}^t]\circ [\Gamma_{h_U}] & \text{by \eqref{pb-pf-univ-homeo1}}\\
                    & =\deg g\cdot[X_U\times_{Y_U} X_U].\\
\end{align*}
Hence the claim.
Now $\hat{\sH}([X\times_Y X]/Y)$ induces an endomorphism of $h_*(W\sO_X\oplus W\Omega^d_X)$ which actually equals $ h^*\circ h_*$, for $h_*$ and $h^*$ as in Definition \ref{pb-pf-for-Witt-canonical}
(see \eqref{top-fin-quot-vs-smooth1}.) It thus follows from Proposition \ref{correspondences-and-change-of-basis} and \eqref{Witt-rational-via-alterations1} that $e_f=\frac{1}{\deg f}\hat{\sH}([X\times_S X]^0/S)$ factors 
for each $i\ge 0$ as
\[\xymatrix{ R^if_*(W\sO_X\oplus W\Omega^d_{X})_\Q\ar[rr]^{e_f}\ar[dr]_{\frac{1}{\deg h}\cdot h_*}    &     &  R^if_*(W\sO_X\oplus W\Omega^d_{X})_\Q\\
                                            &             R^ig_*(W\sO_Y\oplus W\omega_Y)_\Q\ar[ur]_{h^*}.   }\]
Taking into account that $h_*\circ h^*$ is multiplication with the degree of $h$, this yields the statement of the theorem.
\end{proof}

\subsection{Isolated singularities}

In this section we will relate the sheaf $\RS^0(S)$ (Definition \ref{RS}) for a normal variety $S$ with an 
isolated singularity  to the Witt vector cohomology of the exceptional set
in a suitably good resolution of singularities $\tilde{S}$. For this, we
need to compute the higher direct images of $W(\OO_{\tilde{S}})_{\Q}$.
  
\begin{proposition}\label{proposition-vanishing-WI}
Let $f:\tilde{X}\xr{} \tilde{Y}$ be a proper morphism of schemes. Let 
$Y_0\subset \tilde{Y}$ be a closed subset, we denote by $Y=\tilde{Y}\backslash Y_0$
the complement. We consider the cartesian diagrams 
$$
\xymatrix
{
X \ar[r]\ar[d] \ar@{}[dr]|\square
&
\tilde{X}\ar[d]
&
X_0 \ar[l]\ar[d] \ar@{}[dl]|\square
\\
Y \ar[r]
&
\tilde{Y}
&
Y_0. \ar[l]
}
$$ 
Let $\mc{I}\subset \OO_{\tilde{X}}$ be a sheaf of ideals for $X_0$. Suppose that
$
R^if_*\OO_X=0 
$ 
for all $i>0$.
Then 
$$
R^if_*W(\mc{I})_{\Q}=0 \quad \text{for all $i>0$.} 
$$
\end{proposition}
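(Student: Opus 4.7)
The plan is to exploit the short exact sequence of Witt pro-sheaves on $\tilde{X}$
\begin{equation*}
0\to W_{\bullet}(\mc{I})\to W_{\bullet}\OO_{\tilde{X}}\to (i_0)_{*}W_{\bullet}\OO_{X_0}\to 0,
\end{equation*}
which exists because $W_n$ applied to the surjection of $\mathbb{F}_p$-algebras $\OO_{\tilde{X}}\twoheadrightarrow (i_0)_{*}\OO_{X_0}$ remains surjective with kernel consisting of those Witt vectors whose every component lies in $\mc{I}$. The pro-system is Mittag-Leffler (the transition maps are coordinate-wise restrictions), so upon applying $R\varprojlim$ one obtains the corresponding short exact sequence at the level of $W\OO$.

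The second step bootstraps the hypothesis to Witt level on the complement $X$. From the exact sequences $0\to F^{n-1}_{*}\OO_X\xrightarrow{V^{n-1}}W_n\OO_X\to W_{n-1}\OO_X\to 0$ and induction on $n$, the assumption $R^i(f_{|X})_{*}\OO_X=0$ for $i>0$ implies $R^i(f_{|X})_{*}W_n\OO_X=0$ for every $n\geq 1$ and $i>0$. Lemma \ref{vanishing-of-Rlim} and the resulting Mittag-Leffler condition then yield $R^i(f_{|X})_{*}W\OO_X=0$ for $i>0$. Applying proper base change along the open immersion $Y\hookrightarrow\tilde{Y}$ and using $W(\mc{I})_{|X}=W\OO_X=(W\OO_{\tilde{X}})_{|X}$ together with $((i_0)_{*}W\OO_{X_0})_{|X}=0$, one deduces that the higher direct images $R^if_{*}W(\mc{I})_{\Q}$ for $i>0$ are supported on the closed subset $Y_0$.

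The main obstacle, which is the hard part of the argument, is then to promote this support statement to an actual vanishing. My plan for this step is to apply Grothendieck's theorem on formal functions at each $y\in Y_0$: the completion $(R^if_{*}W_n(\mc{I}))^{\wedge}_{y}$ identifies with an inverse limit of cohomology groups on the successive infinitesimal thickenings of the reduced fiber $X_y$. The $\mathfrak{m}_y$-adic filtration on $W_n(\mc{I})/\mathfrak{m}^k_{y}W_n(\mc{I})$ has graded pieces controlled coordinate-wise by the quotients $\mc{I}^j/\mc{I}^{j+1}$, and combining the Artin-Rees lemma with the Verschiebung filtration on $W_n$ allows one to reduce to cohomology of coherent sheaves on these thickenings to which the hypothesis on $\OO_X$ applies. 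The rigidity furnished by the invertibility of Frobenius after $\otimes\Q$ should then force the positive-degree contributions to vanish uniformly in $n$ and $k$, yielding the claim after passing to the inverse limit.
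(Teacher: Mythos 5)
Your steps 1 and 2 are fine: the pro-short-exact-sequence $0\to W_\bullet(\mc{I})\to W_\bullet\OO_{\tilde{X}}\to (i_0)_*W_\bullet\OO_{X_0}\to 0$ exists, and the Verschiebung/induction argument plus restriction to $Y$ correctly shows that $R^if_*W(\mc{I})$ for $i>0$ is supported on $Y_0$. But you have correctly identified step 3 as the hard part, and what you offer there is not a proof but a wish; there is a genuine gap.

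The missing idea is the interplay of two specific facts, neither of which appears in your sketch. First, the Frobenius $F$ on $W_n(\mc{I})$ lands inside $W_n(\mc{I}^p)$ (because $F$ on Witt vectors raises components to $p$-th powers up to lower-order Verschiebung terms). Second, one needs a Berthelot--Bloch--Esnault-type statement (this is Lemma \ref{lemma-I-vanishing}, i.e.\ \cite[Lemma~2.7(i)]{BBE}): there exist $N,a$ such that for $r\geq N$, all $n\geq 1$ and all $i>0$, the transition map $R^if_*W_n(\mc{I}^{r+a})\to R^if_*W_n(\mc{I}^r)$ is zero. After replacing $\mc{I}$ by $\mc{I}^N$ (permissible by Lemma \ref{lemma-I-to-Ia}), these two facts combine to show that $F^a$ \emph{annihilates} $R^if_*W_n(\mc{I})$ for $i>0$ and all $n$. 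Passing to the (derived) inverse limit, $F^{2a}$ annihilates $R^if_*W(\mc{I})$, and then the relation $FV=p$ converts Frobenius-nilpotence into $p^{2a}$-torsion, which is exactly the vanishing after $\otimes\Q$.

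Your formal-functions/Artin--Rees plan is in the general vicinity of how one could try to \emph{prove} the BBE lemma, but it does not supply it, and more importantly your final appeal to ``rigidity furnished by the invertibility of Frobenius after $\otimes\Q$'' is circular: invertibility of $F$ on the $\Q$-localization is the conclusion you want to exploit, but you first need to establish that $F$ is nilpotent on the integral cohomology, and for that you need both the inclusion $F(W_n(\mc{I}))\subset W_n(\mc{I}^p)$ and the eventual vanishing of the transition maps among the $R^if_*W_n(\mc{I}^r)$. Neither appears in your argument, and without them the support-on-$Y_0$ statement of step 2 gives nothing about vanishing.
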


In order to prove Proposition \ref{proposition-vanishing-WI} we need several
Lemmas. 

\begin{lemma}\label{lemma-I-to-Ia}
Let $X$ be a scheme and $\mc{I}\subset \OO_X$ a sheaf of ideals. 
For all integers $a\geq 1$ the natural map 
$$
W(\mc{I}^a)_{\Q}\xr{} W(\mc{I})_{\Q}
$$
is an isomorphism.
\begin{proof}
The proof is the same as in \cite[Proposition~2.1(ii)]{BBE}.
\end{proof}
\end{lemma}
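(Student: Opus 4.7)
The strategy is to prove that the inclusion $W(\mc{I}^a) \hookrightarrow W(\mc{I})$ of sheaves of $W(\OO_X)$-modules is injective on the nose and has cokernel annihilated by a fixed power of $p$. By Definition \ref{definition-drw-Q} this immediately yields an isomorphism in the localization modulo bounded torsion, which is what the subscript $\Q$ here denotes.

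Injectivity is automatic since the map is induced componentwise by the inclusion $\mc{I}^a \subset \mc{I}$. For the cokernel, I would use the standard fact that every local section of $W(\mc{I})$ can be written uniquely as a (convergent) sum $\sum_{i \geq 0} V^i([x_i])$ with $x_i \in \mc{I}$, where $[x_i] = (x_i, 0, 0, \ldots)$ is the Teichmüller lift. The key computation is the Witt-vector identity
\[
p \cdot V^i([x]) = V^{i+1}([x^p]),
\]
valid for any local section $x \in \OO_X$; it follows from $p = FV$ (which holds in $W(\OO_X)$ because $X$ is of characteristic $p$), the relation $F([x]) = [x^p]$, and the fact that $p$ commutes with $V$ (a consequence of the projection formula $V(F(y)z) = y V(z)$). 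Iterating gives
\[
p^n \cdot V^i([x]) = V^{i+n}([x^{p^n}]).
\]

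Choose $n$ large enough that $p^n \geq a$. Then for any local section $w = \sum_i V^i([x_i]) \in W(\mc{I})$, we obtain
\[
p^n w \;=\; \sum_i V^{i+n}([x_i^{p^n}]),
\]
and each $x_i^{p^n}$ lies in $\mc{I}^{p^n} \subset \mc{I}^a$, so $p^n w \in W(\mc{I}^a)$. Hence $p^n$ annihilates the cokernel and we are done. The only nontrivial point is the multiplication-by-$p$ identity above, but this is a standard Witt-vector calculation with no real obstacle; I do not foresee any additional subtleties beyond keeping track of the fact that sections of $W(\mc{I})$ need not be finite sums but the identity is compatible with the $V$-adic topology.
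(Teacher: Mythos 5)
Your argument is correct and is essentially the same as the one in \cite[Proposition~2.1(ii)]{BBE} to which the paper defers: in both cases the point is that $p^{n}$ with $p^{n}\ge a$ kills the cokernel of $W(\mc{I}^a)\hookrightarrow W(\mc{I})$, because applying $p^{n}$ replaces the Witt components by their $p^{n}$-th powers, which land in $\mc{I}^{p^n}\subset\mc{I}^a$. You phrase this via the Teichm\"uller expansion $w=\sum V^i([x_i])$ and the identity $p^nV^i([x])=V^{i+n}([x^{p^n}])$, while BBE package it as $F^n(W\mc{I})\subset W(\mc{I}^{p^n})$ together with $p^n=V^nF^n$; these are the same computation, and your version has the small advantage of making it transparent that the cokernel is killed by a \emph{fixed} power of $p$, which is exactly what is needed for the quotient category $\widehat{\drw}_{X}/\widehat{\drw}_{X,b\text{-}tor}$ used in this paper (as opposed to merely tensoring with $\Q$).
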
 

\begin{lemma}\label{lemma-I-vanishing}
With the assumptions of Proposition \ref{proposition-vanishing-WI}.
There are $N,a\geq 0$ such that for all $r\geq N$
and all $n\geq 1$ the morphism 
$$
R^if_*W_n(\mc{I}^{r+a})\xr{}R^if_*W_n(\mc{I}^{r}), 
$$
induced by $\mc{I}^{r+a}\subset \mc{I}^{r}$,
vanishes for all integers $i>0$.
\begin{proof}
\cite[Lemma~2.7(i)]{BBE}.
\end{proof}
\end{lemma}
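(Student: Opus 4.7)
The plan is to reduce the claim to the coherent-sheaf case ($n=1$) by d\'evissage along the Verschiebung filtration on the Witt vectors of an ideal, and to treat that case by a graded Artin--Rees argument. The consequence of the hypothesis $R^if_*\sO_X=0$ for $i>0$ that will be exploited throughout is that the coherent $\sO_{\tilde Y}$-modules $R^if_*\sO_{\tilde X}$ are supported on $Y_0$ for $i>0$, being coherent with vanishing restriction to $Y=\tilde Y\setminus Y_0$.

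First I would establish the case $n=1$: there exist $N_0,a_0\ge 0$ such that for all $r\ge N_0$ and $i>0$ the inclusion $\mc{I}^{r+a_0}\subset \mc{I}^r$ induces the zero morphism $R^if_*\mc{I}^{r+a_0}\to R^if_*\mc{I}^r$. One views $\bigoplus_r R^if_*\mc{I}^r$ as a finitely generated graded module over the coherent Rees algebra $\bigoplus_r f_*\mc{I}^r$. Its image in the constant graded module $R^if_*\sO_{\tilde X}$ is annihilated by a fixed power of the ideal of $Y_0$ (since $R^if_*\sO_{\tilde X}$ is coherent and supported on $Y_0$ for $i>0$), and graded Artin--Rees applied to this finitely generated graded module then yields the required $N_0,a_0$.

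For general $n$, I would use the short exact sequence of abelian sheaves on $\tilde X$, natural in $r$,
\[0\to W_{n-1}(\mc{I}^r) \xr{V} W_n(\mc{I}^r) \xr{R} \mc{I}^r \to 0,\]
with $V$ the Verschiebung and $R$ the projection to the zeroth Witt coordinate. A naive induction on $n$, chasing the diagram of the associated long exact sequences and applying the $n=1$ case to the outer terms, yields vanishing at each finite level but with constants $a_n$ growing linearly in $n$. The crux of the argument, and the main obstacle, is to obtain bounds \emph{uniform} in $n$. I would circumvent this by applying graded Artin--Rees directly to the graded module $\bigoplus_r R^if_*W_n(\mc{I}^r)$ over $\bigoplus_r f_*W_n(\mc{I}^r)$: by induction from the Verschiebung sequence, for $i>0$ the sheaf $R^if_*W_n(\sO_{\tilde X})$ is coherent on $W_n(\tilde Y)$ and supported on $Y_0$, and its annihilator contains a fixed power of the ideal of $Y_0$ independently of $n$ (the Frobenius-twisted sheaves $F^k_*R^if_*\sO_{\tilde X}$ appearing as graded pieces of the Verschiebung filtration are annihilated by the same power of the ideal of $Y_0$ as $R^if_*\sO_{\tilde X}$, so the annihilator does not degrade as $n$ grows). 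Combining this uniform annihilator with the graded Artin--Rees argument then yields $N$ and $a$ independent of $n$.
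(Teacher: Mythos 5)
The paper does not prove this statement; it simply cites \cite[Lemma~2.7(i)]{BBE}, so your task here is essentially to reconstruct an external argument. You correctly identify the real obstacle: the naive d\'evissage along the Verschiebung sequence yields constants $a_n$ that grow linearly in $n$, and the whole point of the lemma is that $a$ must be independent of $n$ (this uniformity is used in the proof of Proposition \ref{proposition-vanishing-WI}, where $F^a$ is to be killed on $R^if_*W_n(\mc{I})$ for all $n$).

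However, your proposed fix for the uniformity does not work as stated, and I see two concrete gaps. First, the claim that the annihilator of $R^if_*W_n(\sO_{\tilde X})$ contains a power of the ideal of $Y_0$ independent of $n$, ``since the graded pieces of the Verschiebung filtration are killed by the same power,'' is incorrect: a module carrying a filtration with $n$ graded pieces, each annihilated by $\mathfrak{J}^s$, is in general only annihilated by $\mathfrak{J}^{sn}$ (compare $\Z/p^n$, filtered with graded pieces $\Z/p$). So the annihilator does degrade, and your argument reproduces, in disguise, exactly the linear growth you were trying to eliminate. Second, even granting a uniform annihilator, the graded Artin--Rees step requires $\bigoplus_r f_*W_n(\mc{I}^r)$ to be a coherent graded $W_n(\OO_{\tilde Y})$-algebra and $\bigoplus_r R^if_*W_n(\mc{I}^r)$ to be finitely generated over it, with generators in degrees bounded \emph{uniformly in $n$}; none of these finiteness statements are addressed, and the Artin--Rees constant depends on the top generator degree, not merely on the annihilator. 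Your $n=1$ argument is essentially sound (the image of $R^if_*\mc{I}^{r+a}\to R^if_*\mc{I}^r$ for $r$ past the generator range is $(f_*\mc{I})^a\cdot R^if_*\mc{I}^r$, and $(f_*\mc{I})$ lies in the radical of $\mathfrak{J}\cdot f_*\sO_{\tilde X}$, so a fixed power of it lands in a uniform annihilator), but pushing this to $W_n$ needs a genuinely different mechanism than the one you sketch, and for that one must consult \cite{BBE} directly.
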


\begin{proof}[Proof of Proposition \ref{proposition-vanishing-WI}]
Choose $N$ as in Lemma \ref{lemma-I-vanishing}. By using Lemma \ref{lemma-I-to-Ia}
we may replace $\mc{I}$ by $\mc{I}^N$. Thus we may assume that $N=1$.
The image of the Frobenius acting on $W_n(\mc{I})$ is contained in 
$W_n(\mc{I}^{p})$:
$$
\xymatrix
{
W_n(\mc{I}) \ar[rr]^{F}\ar[dr]^{\exists !}
&
&
W_n(\mc{I})
\\
&
W_n(\mc{I}^{p})\arir[ur]
&
}
$$
and therefore $F^a$ (with $a$ as in Lemma \ref{lemma-I-vanishing}) is zero on 
$R^if_*W_n(\mc{I})$ for all $i>0$.

We continue as in the proof of \cite[Theorem~2.4]{BBE}. Since $F^a$ acts 
as zero on $\varprojlim_n R^if_*W_n(\mc{I})$ and 
$R^1\varprojlim_n R^if_*W_n(\mc{I})$ for all $i\geq 1$, we obtain via the 
exact sequence 
\[0\to R^1\varprojlim_n R^{i-1}f_*W_n(\mc{I})\to R^if_*(W(\mc{I}))\to \varprojlim_n R^if_*W_n(\mc{I})\to 0\]
%% spectral sequence
%% $$
%% E^{i,j}_2=R^i\varprojlim_n R^jf_*(W_n(\mc{I})) \Rightarrow R^{i+j}f_*W(\mc{I})
%% $$
that $F^{2a}$ acts as zero on $R^{i}f_*W(\mc{I})$ for all $i>0$ (we use that 
$R^1\varprojlim_n f_*W_n(\mc{I})=0$). Thus the 
relation $FV=p$ implies that $p^{2a}$ kills $R^{i}f_*W(\mc{I})$ for all $i>0$.
\end{proof}

\begin{corollary}\label{corollary-direct-image-isolated-singularity}
Let $S$ be an integral  
scheme with an isolated singularity at the closed point $s\in S$.
Let $f:X\xr{} S$ be projective and birational. Suppose that $f$
is an isomorphism over $S\backslash \{s\}$; we denote by $E$ the closed 
set $f^{-1}(s)$ equipped with some scheme structure.
Then 
$$
R^if_*W\OO_{X,\Q}=H^i(E,W\OO_{E,\Q}) \quad \text{for all $i>0$,}
$$  
where $H^i(E,W\OO_{E,\Q})$ is considered as skyscraper sheaf supported in $s$.
Moreover, the morphism is compatible with the Frobenius and Verschiebung action.
\begin{proof}
Let $\mc{I}$ be the sheaf of ideals for $E$. We obtain an exact sequence
$$
0\xr{} W\mc{I}_{\Q} \xr{} W\OO_{X,\Q} \xr{} W\OO_{E,\Q}\xr{} 0.
$$
Proposition \ref{proposition-vanishing-WI} implies that the higher direct 
images of $W\mc{I}_{\Q}$ vanish, which proves the statement.
\end{proof}
\end{corollary}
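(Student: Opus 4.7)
The plan is to reduce the statement to the vanishing already established in Proposition~\ref{proposition-vanishing-WI} by comparing $W\OO_X$ and $W\OO_E$ via the ideal sheaf of $E$.

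Concretely, let $\mathcal{I}\subset\OO_X$ be a sheaf of ideals defining the chosen scheme structure on $E$ (the answer will be independent of the choice after rationalization by Lemma~\ref{lemma-I-to-Ia}). For each $n$ the Witt vectors of $\mathcal{I}$ fit into an exact sequence of sheaves on $X$
\[
0\to W_n\mathcal{I}\to W_n\OO_X\to W_n\OO_E\to 0,
\]
compatible with $\pi,F,V$. Passing to the limit and then rationalizing gives a short exact sequence
\[
0\to W\mathcal{I}_{\Q}\to W\OO_{X,\Q}\to W\OO_{E,\Q}\to 0
\]
in $\widehat{\drw}_{X,\Q}$. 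Applying $Rf_*$ produces a long exact sequence, so it suffices to show $R^if_*W\mathcal{I}_{\Q}=0$ for all $i>0$; the corollary's isomorphism will then be the connecting/identification map of this sequence, which is automatically $F,V$-equivariant since everything in sight is.

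For the vanishing, I would apply Proposition~\ref{proposition-vanishing-WI} with $(\tilde X,\tilde Y,Y_0,\mathcal{I})=(X,S,\{s\},\mathcal{I})$. The only hypothesis to verify is that $R^if_*\OO_{X\setminus E}=0$ for $i>0$ on the complement, and this is immediate since $f$ restricts to an isomorphism $X\setminus E\xr{\simeq} S\setminus\{s\}$. Hence Proposition~\ref{proposition-vanishing-WI} gives the desired vanishing and we conclude $R^if_*W\OO_{X,\Q}\cong R^if_*W\OO_{E,\Q}$ for $i>0$.

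It remains to identify the right-hand side with $H^i(E,W\OO_{E,\Q})$ viewed as a skyscraper at $s$. Since $W\OO_{E,\Q}$ is supported on $E$, and $f(E)=\{s\}$ with $f|_E$ proper, the higher direct image $R^if_*W\OO_{E,\Q}$ is a skyscraper sheaf at $s$ whose stalk is computed by $R\Gamma(E,W\OO_{E,\Q})$, i.e.\ by $H^i(E,W\OO_{E,\Q})$; this is what is claimed. The main (and only) real content of the proof is the input from Proposition~\ref{proposition-vanishing-WI}, which itself rests on the Frobenius-killing argument of Berthelot--Bloch--Esnault via Lemmas~\ref{lemma-I-to-Ia} and \ref{lemma-I-vanishing}; the remainder is bookkeeping.
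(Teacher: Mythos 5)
Your proof is correct and follows essentially the same route as the paper: you use the ideal-sheaf short exact sequence $0\to W\mc{I}_{\Q}\to W\OO_{X,\Q}\to W\OO_{E,\Q}\to 0$ and cite Proposition~\ref{proposition-vanishing-WI} for the vanishing of $R^if_*W\mc{I}_{\Q}$. The extra detail you supply (checking the hypothesis of that proposition via $f\colon X\setminus E\xr{\simeq} S\setminus\{s\}$, and identifying $R^if_*W\OO_{E,\Q}$ with the skyscraper $H^i(E,W\OO_{E,\Q})$) is exactly what the paper leaves implicit.
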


\begin{lemma}\label{lemma-sectral-sequence-E_i}
Let $E$ be a scheme. Write $E=\cup_{i=1}^r E_i$ with $E_i$ irreducible for all $i$.
There is a spectral sequence 
$$
E_1^{s,t}=\bigoplus_{1\leq \imath_0 <\imath_1< \dots <\imath_{s}\leq r} H^t(\cap_{j=0}^sE_{\imath_{j}},W\OO_{\cap_{j=0}^sE_{\imath_{j}},\Q})\Rightarrow H^{s+t}(E,W\OO_{E,\Q}).
$$
The spectral sequence is compatible with the Frobenius and Verschiebung operation.
\begin{proof}
By \cite[Proposition~2.1(i)]{BBE} the sheaf $W\OO_{\cap_{j=0}^sE_{\imath_{j}},\Q}$ doesn't 
depend on the choice of the scheme structure for $\cap_{j=0}^sE_{\imath_{j}}$. 

From \cite[Corollary~2.3]{BBE} we get an exact sequence 
\begin{equation*}
W\OO_{E,\Q}\xr{} \bigoplus_{\imath_0}W\OO_{E_{\imath_0},\Q}\xr{} \bigoplus_{\imath_0<\imath_1}W\OO_{E_{\imath_0}\cap E_{\imath_1},\Q}\xr{} \dots,
\end{equation*}
where the maps are sums of restriction maps and thus compatible with $F$ and $V$.

%% By induction it is sufficient to prove the case $E=E_1\cup E_2$. There is 
%% an exact sequence 
%% $$
%% 0\xr{} \OO_E \xr{} \OO_{E_1}\times \OO_{E_2} \xr{} \OO_{E_1\times_{E}E_2}\xr{} 0,
%% $$
%% which yields an exact sequence 
%% $$
%% 0\xr{} W\OO_{E,\Q} \xr{} W\OO_{E_1,\Q}\oplus W\OO_{E_2,\Q} \xr{} W\OO_{E_1\times_{E}E_2,\Q}\xr{} 0.
%% $$
%% This proves the claim.

For every affine open set $U\subset E$ we have 
$$
H^{t}(U\cap_{j=0}^sE_{\imath_{j}},W\OO)=0, \quad \text{for all $t>0$,}
$$ 
because $U\cap_{j=0}^sE_{\imath_{j}}$ is affine. Therefore we obtain the 
spectral sequence in the statement. 
\end{proof}
\end{lemma}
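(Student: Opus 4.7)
The plan is to realize the displayed spectral sequence as the hypercohomology spectral sequence of a natural Čech-type resolution of $W\OO_{E,\Q}$ in terms of the restrictions to the multiple intersections of the components $E_i$.

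First I would recall the two inputs cited from \cite{BBE}. By \cite[Proposition~2.1(i)]{BBE}, the sheaf $W\OO_{Z,\Q}$ attached to a (closed) subscheme $Z$ depends only on the underlying reduced subscheme, so it makes sense to speak of $W\OO_{\cap_{j=0}^{s}E_{\imath_j},\Q}$ without fixing a scheme structure on the intersection. By \cite[Corollary~2.3]{BBE}, for the decomposition $E=\bigcup_{i=1}^{r} E_i$ the natural augmentation and restriction maps fit into an exact sequence of sheaves on $E$
\begin{equation*}
0 \to W\OO_{E,\Q} \to \bigoplus_{\imath_0} W\OO_{E_{\imath_0},\Q} \to \bigoplus_{\imath_0<\imath_1} W\OO_{E_{\imath_0}\cap E_{\imath_1},\Q} \to \cdots,
\end{equation*}
where each differential is the usual Čech-style alternating sum of restriction maps. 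In particular, if I call this complex $C^{\bullet}$, then the augmentation $W\OO_{E,\Q}\to C^{\bullet}$ is a quasi-isomorphism.

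Next I would feed this quasi-isomorphism into the standard hypercohomology machine. Applying $R\Gamma(E,-)$ to $C^{\bullet}$ gives a spectral sequence
\begin{equation*}
E_1^{s,t} = H^{t}\!\left(E,\ \bigoplus_{\imath_0<\cdots<\imath_s} W\OO_{E_{\imath_0}\cap\cdots\cap E_{\imath_s},\Q}\right) \Rightarrow H^{s+t}(E,W\OO_{E,\Q}).
\end{equation*}
To identify the $E_1$-term with the desired one, I would push cohomology onto the closed subschemes (restriction along the closed immersions $E_{\imath_0}\cap\cdots\cap E_{\imath_s}\hookrightarrow E$ is exact and preserves cohomology), and I would verify that cohomology commutes with the finite direct sum over tuples of indices, which is obvious. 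Compatibility with Frobenius and Verschiebung is automatic: the restriction maps in $C^{\bullet}$ are induced by pullback along closed immersions, which commutes with $F$ and $V$ on $W\OO_{?,\Q}$, so $C^{\bullet}$ is a complex in the category of sheaves with $F,V$-action, and the resulting spectral sequence inherits the action termwise.

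The only non-routine step is to be certain that the hypercohomology spectral sequence has exactly the form stated; this amounts to invoking a flasque (or acyclic) resolution of each $W\OO_{\cap_j E_{\imath_j},\Q}$ on the corresponding closed subscheme and then splicing, which is standard for bounded-below complexes of abelian sheaves on a noetherian space. The main (mild) obstacle is bookkeeping: making sure that the signs of the Čech differentials coming from \cite[Corollary~2.3]{BBE} match the conventions of the total complex used in the hypercohomology spectral sequence, but since we only need existence of a spectral sequence with the stated $E_1$-page and abutment, an overall sign change in each differential does not affect the conclusion.
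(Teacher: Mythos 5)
Your proof is correct and follows essentially the same route as the paper: use \cite[Proposition~2.1(i)]{BBE} to make sense of $W\OO_{\cap_j E_{\imath_j},\Q}$, use \cite[Corollary~2.3]{BBE} to obtain the \v{C}ech-style resolution of $W\OO_{E,\Q}$ by the pushforwards from the multiple intersections, and then read off the hypercohomology spectral sequence. The only cosmetic difference is that the paper makes the acyclicity explicit by observing that $W\OO$ has no higher cohomology on affine opens, whereas you invoke general flasque-resolution machinery; both give the same identification of the $E_1$-page, and your treatment of the $F,V$-compatibility matches the paper's.
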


\begin{proposition}\label{proposition-spectral-sequence-degenerates}
Let $E$ be a projective scheme over a finite field $k=\mathbb{F}_{p^a}$. Write $E=\cup_{i=1}^r E_i$ with $E_i$ irreducible for all $i$. Suppose that for all 
$s\geq 1$ and all $1\leq \imath_0< \dots <\imath_{s}\leq r$ the set theoretic
intersection $\cap_{j=0}^sE_{\imath_{j}}$, equipped with the reduced scheme 
structure, is smooth. 

For all $n\geq 2$ the differential of $E_n^{s,t}$, induced by the 
spectral sequence of Lemma \ref{lemma-sectral-sequence-E_i}, vanishes.
In other words, $E_2^{s,t}\Rightarrow H^{s+t}(E,W\OO_{E,\Q})$ is degenerated and 
$$
E_{\infty}^{s,t}=E_2^{s,t} \quad \text{for all $s,t$.}
$$
\begin{proof}
By definition $E_{n}^{s,t}$ is a subquotient of $E_{1}^{s,t}$. By assumption,
$\cap_{j=0}^sE_{\imath_{j}}$ is smooth and projective. We know that 
$$
H^t(\cap_{j=0}^sE_{\imath_{j}},W\OO_{\cap_{j=0}^sE_{\imath_{j}},\Q})\cong H^t_{{\rm crys}}(\cap_{j=0}^sE_{\imath_{j}}/K)_{[0,1[},
$$
where the right hand side is the slope $<1$-part of crystalline cohomology ($K=W(k)$). This isomorphism is compatible with the $F$-operation.
It follows from \cite{KaMe}
that the characteristic polynomial of $F^a$ acting on $H^t_{{\rm crys}}(\cap_{j=0}^sE_{\imath_{j}}/K)$ is the characteristic polynomial of the geometric
Frobenius acting on $H^t_{\text{\'et}}((\cap_{j=0}^sE_{\imath_{j}})\times_{k}\bar{k},\Q_{\ell})$ for any 
prime $\ell\neq p$.  Thus $F^a$ acting on 
$H^t_{{\rm crys}}(\cap_{j=0}^sE_{\imath_{j}}/K)_{[0,1[}$ 
has algebraic eigenvalues with absolute value $p^{\frac{ta}{2}}$ with respect to 
any embedding into $\C$. In particular, this holds for the 
eigenvalues of $E_{n}^{s,t}$ for $n\geq 1$ and all $s$. Thus the 
differential 
$$
E_{n}^{s-n,t+n-1}\xr{} E_{n}^{s,t}\xr{} E_{n}^{s+n,t-n+1}
$$
vanishes if $n\geq 2$. 
\end{proof}
\end{proposition}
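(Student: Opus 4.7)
The plan is to run a weight argument. For $n\ge 1$ the term $E_n^{s,t}$ is a subquotient of $E_1^{s,t}$ in a way compatible with the Frobenius action, and the differential $d_n\colon E_n^{s,t}\to E_n^{s+n,t-n+1}$ commutes with $F$ (by the $F$-equivariance asserted in Lemma~\ref{lemma-sectral-sequence-E_i}). Hence the whole matter reduces to comparing the $F^a$-eigenvalues on the sources and targets of the $d_n$'s and showing that for $n\ge 2$ these sets of eigenvalues are disjoint.

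First I would identify the Frobenius module structure of $E_1^{s,t}$. Since each intersection $Y_{\imath_0\ldots \imath_s}:=\bigcap_{j=0}^s E_{\imath_j}$ is by assumption smooth and projective over $k$, the slope spectral sequence for $Y_{\imath_0\ldots \imath_s}$ degenerates after inverting $p$, yielding a canonical $F$-equivariant isomorphism
\[
H^t(Y_{\imath_0\ldots \imath_s},W\mathcal{O}_{Y_{\imath_0\ldots \imath_s},\mathbb{Q}})\;\cong\;H^t_{\mathrm{crys}}(Y_{\imath_0\ldots \imath_s}/K)_{[0,1[},
\]
the slope $<1$ part of crystalline cohomology. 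Hence $E_1^{s,t}$, as an $F$-module, is a direct summand of $\bigoplus H^t_{\mathrm{crys}}(Y_{\imath_0\ldots \imath_s}/K)$.

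Next I invoke Katz--Messing \cite{KaMe}, which says that for a smooth projective variety $Y$ over $k=\mathbb{F}_{p^a}$ the characteristic polynomial of $F^a$ acting on $H^t_{\mathrm{crys}}(Y/K)$ coincides with the characteristic polynomial of the geometric Frobenius on $H^t_{\mathrm{\acute{e}t}}(Y_{\bar{k}},\mathbb{Q}_\ell)$ for $\ell\ne p$. By Deligne's Weil~II, the latter eigenvalues are $q$-Weil numbers of weight $t$, i.e.\ algebraic integers all of whose complex absolute values equal $p^{at/2}$. Consequently the eigenvalues of $F^a$ on $E_1^{s,t}$ — and hence, being a subquotient, also on $E_n^{s,t}$ — are pure of weight $t$.

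Finally, the differential $d_n\colon E_n^{s,t}\to E_n^{s+n,t-n+1}$ is $F^a$-equivariant, while its source has eigenvalues of absolute value $p^{at/2}$ and its target has eigenvalues of absolute value $p^{a(t-n+1)/2}$. For $n\ge 2$ these two values differ, so there can be no nonzero $F^a$-equivariant map between the two; hence $d_n=0$. The main subtlety is really just the combination of Katz--Messing with the slope decomposition, to certify that the slope $<1$ part is still a pure Frobenius module of the expected weight; once this is in place, purity forces the differential to vanish by a direct comparison of absolute values.
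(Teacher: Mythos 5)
Your argument is correct and follows essentially the same route as the paper: identify $E_1^{s,t}$ with a direct sum of slope $<1$ parts of crystalline cohomology via the degeneration of the slope spectral sequence, apply Katz--Messing plus Deligne's purity to conclude that $F^a$-eigenvalues on $E_n^{s,t}$ are Weil numbers of weight $t$, and then observe that the $F$-equivariant differential $d_n$ for $n\ge 2$ changes the weight and must therefore vanish. The only cosmetic difference is that you make the Frobenius-equivariance of $d_n$ and the invocation of Weil purity explicit, whereas the paper leaves them implicit.
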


\begin{thm}\label{thm-criterion-WO-rational}
Let $k$ be a finite field. 
Let $S$ be a normal integral scheme with an isolated singularity at the closed point $s\in S$.
Let $f:X\xr{} S$ be projective and birational. Suppose that $f$
is an isomorphism over $S\backslash \{s\}$; we denote by $E$ the closed 
set $f^{-1}(s)$.  Write $E=\cup_{i=1}^r E_i$ with $E_i$ irreducible for all $i$. Suppose that for all 
$s\geq 1$ and all $1\leq \imath_0< \dots <\imath_{s}\leq r$ the set theoretic
intersection $\cap_{j=0}^sE_{\imath_{j}}$, equipped with the reduced scheme 
structure, is smooth.

Then $S$ has $W\OO$-rational singularities (Definition \ref{definition-Witt-rational}) if and only if
the spectral sequence of Lemma \ref{lemma-sectral-sequence-E_i}
satisfies
$$
E_2^{s,t}=0 \quad \text{for  all $(s,t)\neq (0,0)$.}
$$ 
\begin{proof}
It follows from Corollary \ref{corollary-direct-image-isolated-singularity}
that $S$ has $W\OO$-rational singularities if and only if
$$
H^i(E,W\OO_{E,\Q})=0 \quad \text{for all $i>0$.}
$$
The assertion follows from Proposition \ref{proposition-spectral-sequence-degenerates}.
\end{proof}
\end{thm}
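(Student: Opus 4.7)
The proof is essentially a composition of two already-established results, so the work will mostly be in identifying how they fit together.

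First I would reduce the criterion to a statement purely about the cohomology of $E$. Since $S$ is normal, condition (1) in Definition \ref{definition-Witt-rational} is automatic by Remark \ref{Witt-rational-and-normal}. Since $X$ is smooth (so in particular a topological finite quotient of itself) and $f$ is projective, surjective, and birational, $f$ is a quasi-resolution in the sense of Definition \ref{quasi-res}. By the $W\sO$-analog of Proposition \ref{properties-Witt-rational-sing} (one quasi-resolution suffices), $S$ has $W\sO$-rational singularities if and only if $R^i f_* W\sO_{X,\Q}=0$ for all $i\geq 1$. Now Corollary \ref{corollary-direct-image-isolated-singularity} applies directly, since $f$ is an isomorphism outside $s$, and identifies these higher direct images with the skyscraper sheaves $H^i(E,W\sO_{E,\Q})$ supported at $s$. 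Hence $S$ has $W\sO$-rational singularities if and only if $H^i(E,W\sO_{E,\Q})=0$ for every $i\geq 1$.

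Next I would connect this to the $E_2$-page of the Mayer--Vietoris type spectral sequence. Lemma \ref{lemma-sectral-sequence-E_i} gives the convergent first-quadrant spectral sequence
\[
E_1^{s,t}=\bigoplus_{\imath_0<\cdots<\imath_s} H^t\bigl(\cap_{j=0}^s E_{\imath_j},\,W\sO_{\cap_{j=0}^sE_{\imath_j},\Q}\bigr)\Longrightarrow H^{s+t}(E,W\sO_{E,\Q}).
\]
The SNC hypothesis on $E$ together with the finite-field hypothesis are exactly the inputs of Proposition \ref{proposition-spectral-sequence-degenerates}, so the spectral sequence degenerates at $E_2$. Consequently the finite filtration on $H^n(E,W\sO_{E,\Q})$ has graded pieces $E_\infty^{s,t}=E_2^{s,t}$ with $s+t=n$.

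The equivalence then follows formally. If $E_2^{s,t}=0$ for every $(s,t)\neq(0,0)$, then all graded pieces of $H^n(E,W\sO_{E,\Q})$ vanish for $n\geq 1$, so $H^n=0$ for $n\geq 1$. Conversely, if $H^n(E,W\sO_{E,\Q})=0$ for every $n\geq 1$, then every graded piece $E_\infty^{s,t}=E_2^{s,t}$ with $s+t\geq 1$ vanishes; since the spectral sequence is concentrated in the first quadrant, $(s,t)=(0,0)$ is the only bidegree with $s+t=0$, so $E_2^{s,t}=0$ for all $(s,t)\neq(0,0)$. Combined with the first reduction, this proves the criterion. There is no real obstacle here since the technical heart of the argument has already been carried out in Corollary \ref{corollary-direct-image-isolated-singularity} and Proposition \ref{proposition-spectral-sequence-degenerates}; the only subtlety to get right is the bookkeeping that ensures the filtration argument really yields an ``if and only if'' and not just one implication, which works precisely because the spectral sequence lives in the first quadrant.
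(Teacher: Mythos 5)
Your proof is correct and follows the same route as the paper's (which is essentially a two-line compression of your argument): reduce via Corollary \ref{corollary-direct-image-isolated-singularity} to the vanishing $H^i(E,W\OO_{E,\Q})=0$ for $i>0$, then use the $E_2$-degeneration established in Proposition \ref{proposition-spectral-sequence-degenerates}. The extra justifications you supply --- normality handling condition (1), the one-quasi-resolution-suffices reduction via (the $W\OO$-analog of) Proposition \ref{properties-Witt-rational-sing}, and the first-quadrant bookkeeping --- are the details the paper leaves implicit, but there is no difference in approach.
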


The case $t=0$ is a combinatorial condition on the exceptional set $E$. 
For any smooth and proper scheme $X$ over $k$ we have
$$
H^0(X,W\OO_X)\otimes_{W(k)}W(\bar{k}) \cong H^0_{\text{\'et}}(X\times_k \bar{k},\Q_p)\otimes_{\Q_p} W(\bar{k}).
$$ 
Therefore the condition $E^{s,0}_2=0$, for all $s\geq 1$, is equivalent to 
the vanishing of the cohomology of the complex
\begin{multline*}
\bigoplus_{\imath_0}H^0(E_{\imath_0}\times_{k}\bar{k},\Q_p)\xr{} 
\bigoplus_{\imath_0<\imath_1}H^0((E_{\imath_0}\cap E_{\imath_1})\times_{k}\bar{k},\Q_p)\xr{}\\
\bigoplus_{\imath_0<\imath_1<\imath_2}H^0((E_{\imath_0}\cap E_{\imath_1}\cap E_{\imath_2})\times_{k}\bar{k},\Q_p) \xr{} \dots
\end{multline*}
in degree $\geq 1$. Of course, the cohomology in degree $=0$ equals 
$H^0(E\times_{k}\bar{k},\Q_p)=\Q_p$, because $S$ is normal. 

For a surface $S$ the conditions for $t\geq 1$ are equivalent to 
$E_{i,{\rm red}}\times_k \bar{k}\cong \coprod_{j} \P^1$
for all $i$. Indeed, for a smooth curve $E_i$ we have 
$$
\dim H^1(E_i,W\OO_{E_i,\Q})\geq \dim H^1(E_i,\OO_{E_i}),
$$
and $H^1(E_i,\OO_{E_i})=0$ if and only if $E_i\times_k \bar{k}$ is a disjoint 
union of $\P^1$s.

Therefore $S$ has $W\OO$-rational singularities if and 
only if the exceptional divisor of a minimal resolution over $\bar{k}$ is a tree of $\P^1$s.

\subsection{Cones and Witt-rational singularities}\label{section-on-cones}
In \cite[Ex. 2.3]{BE} Blickle and Esnault give an example of an $\mathbb{F}_p$-scheme which has BE-rational singularities but not rational singularities.
Their proof in fact shows, that their example also has $W\sO$-rational singularities. In this section we slightly generalize their example and show that it
also has Witt-rational singularities.

\subsubsection{}\label{Grauert-criterion}
Let $X$ be a proper $k$-scheme, $\sL$ an invertible sheaf on $X$ and $V(\sL)=\Spec (\oplus_{n\ge 0} \sL^{\otimes n})$.
We denote by $s_0: X\inj V(\sL)$ the zero-section.
By Grauert's criterion (see \cite[Cor. (8.9.2)]{EGAII}) $\sL$ is ample on $X$ if and only if there exists a $k$-scheme $C$ together
with a $k$-rational point $v\in C$ and a proper morphism $q: V(\sL)\to C$, such that $q$ induces an isomorphism $V(\sL)\setminus s_0(X)\xr{\simeq} C\setminus\{v\}$ and
$q^{-1}(v)_{\rm red}=s_0(X)_{\rm red}$. If $\sL$ is ample, we call any triple $(C,q,v)$ as above a {\em contractor of the zero-section of $V(\sL)$}.
One can choose $C$ for example to be the cone $\Spec S$, where $S$ is the graded ring $k\oplus_{n\ge 1} H^0(X, \sL^{\otimes n})$ and $v$ is the vertex,
i.e. the point corresponding to the ideal $S_+$.

The following proposition is well-known; we prove it for lack of reference.

\begin{proposition}\label{GR-Kodaira}
Let $X$ be a proper and smooth $k$-scheme and $\sL$ an ample sheaf on $X$. Then the following statements are equivalent:
\begin{enumerate}
 \item For any $n\ge 1$ and all contractors $(C,q, v)$ of the zero-section of $V(\sL^{\otimes n})$ we have $R^iq_*\omega_{V(\sL^{\otimes n})}=0$ for all $i\ge 1$.
\item $H^i(X, \omega_X\otimes\sL^{\otimes n})=0$ for all $i,n\ge 1$.
\end{enumerate}
\end{proposition}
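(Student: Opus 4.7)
The plan is to reduce both implications to a single computation of the completed stalk $(R^iq_*\omega_{V(\sL^{\otimes n})})_v^{\wedge}$ via the theorem on formal functions. Abbreviate $V:=V(\sL^{\otimes n})$ and write $\pi:V\to X$ for the projection. Since $V=\Spec_X\bigl(\bigoplus_{k\ge 0}\sL^{\otimes kn}\bigr)$, $\pi$ is smooth of relative dimension one with $\Omega^1_{V/X}\cong \pi^*\sL^{\otimes n}$, and $\omega_V=\pi^*\omega_X\otimes \Omega^1_{V/X}$ gives
\[\omega_V\cong \pi^*(\omega_X\otimes\sL^{\otimes n}).\]
Let $X\hookrightarrow V$ denote the zero section, $\sI\subset\sO_V$ its ideal sheaf (so $\pi_*\sI=\bigoplus_{k\ge 1}\sL^{\otimes kn}$), and $V_m$ its $m$-th infinitesimal neighborhood. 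Each $\pi_m:V_m\to X$ is a finite affine morphism with $\pi_{m*}\sO_{V_m}=\bigoplus_{k=0}^m \sL^{\otimes kn}$, so the projection formula yields
\[H^i(V_m,\omega_V|_{V_m})\;\cong\;\bigoplus_{k=0}^m H^i\bigl(X,\omega_X\otimes\sL^{\otimes(k+1)n}\bigr).\]

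For any contractor $(C,q,v)$ of $V$, the properness of $q$ together with the fact that $q$ is an isomorphism away from $v$ imply that $R^iq_*\omega_V$ is coherent on $C$ and, for $i\ge 1$, supported at $v$; in particular $(R^iq_*\omega_V)_v$ is a finitely generated $\sO_{C,v}$-module and so vanishes iff its completion does. Combining the theorem on formal functions with the calculation above yields
\[(R^iq_*\omega_V)_v^{\wedge}\;\cong\;\varprojlim_m H^i(V_m,\omega_V|_{V_m})\;\cong\;\prod_{k\ge 1}H^i\bigl(X,\omega_X\otimes \sL^{\otimes kn}\bigr),\]
with the Mittag-Leffler condition holding trivially since the transition maps are the obvious projections of direct sums.

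Both implications then fall out at once. Assuming (2), every factor on the right vanishes for $i\ge 1$, so $(R^iq_*\omega_V)_v=0$ for every contractor and every $n$, which is (1). Conversely, assuming (1), fix $n$ and choose any single contractor; then $(R^iq_*\omega_V)_v=0$ forces $H^i(X,\omega_X\otimes \sL^{\otimes kn})=0$ for all $k,i\ge 1$, and taking $k=1$ while letting $n$ range over all positive integers recovers (2). The argument is entirely classical; the only point demanding attention is the identification $\omega_V\cong\pi^*(\omega_X\otimes\sL^{\otimes n})$, which requires the convention $V(\sL^{\otimes n})=\Spec_X(\bigoplus \sL^{\otimes kn})$ used in the paper. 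No genuinely hard step is anticipated.
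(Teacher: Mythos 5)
Your proof is correct and uses the same central tool as the paper, namely the theorem on formal functions applied to the infinitesimal neighborhoods of the zero section. The genuinely useful refinement is your observation that $\omega_{V(\sL^{\otimes n})}\cong\pi^*(\omega_X\otimes\sL^{\otimes n})$ is pulled back from $X$, so that by the projection formula the cohomology of each infinitesimal neighborhood splits as a finite direct sum $\bigoplus_k H^i(X,\omega_X\otimes\sL^{\otimes kn})$ with the transition maps being the obvious projections; the completed stalk is then identified on the nose with $\prod_{k\ge 1}H^i(X,\omega_X\otimes\sL^{\otimes kn})$ and both implications become immediate. The paper does not make this splitting explicit: it instead writes down the short exact sequences $0\to\omega_X\otimes\sL^{\otimes(m+1)}\to\omega_V|_{Y_{m+1}}\to\omega_V|_{Y_m}\to 0$ and for the direction $(1)\Rightarrow(2)$ runs a downward induction on the cohomological degree $i$ starting from $i=\dim X$, using at each step that the vanishing of $H^{i+1}(X,\omega_X\otimes\sL^{\otimes m})$ forces surjectivity of the transition maps in degree $i$. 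Your version avoids this induction entirely and as a bonus shows that condition (1) for a single $n$ and a single contractor already implies (2). One small point you leave implicit: the formal fibers in the theorem on formal functions are cut out by $\fm_v^m\sO_V$, not by $\sI^m$, so one needs (as the paper notes) that $\sqrt{\fm_v\sO_V}=\sI$ to pass to the cofinal $\sI$-adic system; this is worth a line. Also, your $V_m$ has a harmless off-by-one shift relative to $\sO_V/\sI^m$, but it does not affect the inverse limit.
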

\begin{proof}
Let $(C,q,v)$ be a contractor of the zero-section of $V(\sL)$. Denote by $\sI$ the ideal sheaf of the zero section of $X$ in $V(\sL)$.
We have 
\[s_0^*\sI=\sL,\quad s_{0*}\sL^{\otimes n}= \sI^n/\sI^{n+1}.\]
We set $Y_n:= \Spec (\sO_{V(\sL)}/\sI^n)$; in particular $Y_1=s_0(X)$.
The sheaves $R^iq_*\omega_{V(\sL)}$ have support in $\{v\}$ and since $\sqrt{\fm_v\sO_{V(\sL)}}= \sI$, where $\fm_v\subset \sO_{X,v}$ is the ideal of $v$, the theorem on formal functions yields
\[R^iq_*\omega_{V(\sL)}=0 \Longleftrightarrow \varprojlim_n H^i(Y_n,\omega_{V(\sL)|Y_n})=0. \]
Further $\omega_{V(\sL)|Y_1}\cong \omega_X\otimes \sL$, hence tensoring the exact sequence $0\to \sI^n/\sI^{n+1}\to \sO_{Y_{n+1}}\to \sO_{Y_n}\to 0$
with $\omega_{V(\sL)|Y_{n+1}}$ yields the exact sequence
\eq{GR-Kodaira1}{0\to \omega_X\otimes \sL^{\otimes {n+1}}\to \omega_{V(\sL)|Y_{n+1}}\to \omega_{V(\sL)|Y_n}\to 0.}

(2)$\Rightarrow$ (1): We have $H^i(Y_1, \omega_{V(\sL)|Y_1})=H^i(X, \omega_{X}\otimes \sL)=0$ for all $i\ge 1$, by assumption.
                      Now the statement for $(C,q,v)$ follows from \eqref{GR-Kodaira1} and induction. Since $\sL$ is any ample sheaf, 
                       we can replace $\sL$ by $\sL^{\otimes n}$ in the above argument and obtain the statement also for 
                      a contractor of the zero-section of $V(\sL^{\otimes n})$.

(1)$\Rightarrow$ (2): Let $d$ be the dimension of $X$. Then the maps $H^d(Y_{n+1},\omega_{V(\sL)|Y_{n+1}})\to H^d(Y_n,\omega_{V(\sL)|Y_n})$ are surjective for all $n\ge 1$.
                      Since $\varprojlim_n H^d(Y_n,\omega_{V(\sL)|Y_n})=0$ by assumption, we get in particular
                        \[H^d(Y_1,\omega_{V(\sL)|Y_1})= H^d(X, \omega_X\otimes \sL)=0.\]
                       Replacing $\sL$ by $\sL^{\otimes n}$ in the above argument thus gives us
                        \[H^d(X, \omega\otimes \sL^{\otimes n})=0, \quad\text{for all } n\ge 1.\]
                       Assume we proved $H^{i+1}(X, \omega\otimes \sL^{\otimes n})=0$, for all $n\ge 1$. Then 
                        \eqref{GR-Kodaira1} yields that $H^i(Y_{n+1},\omega_{V(\sL)|Y_{n+1}})\to H^i(Y_n,\omega_{V(\sL)|Y_n})$
                        is surjective and as above we conclude that in particular 
                      $H^i(Y_1,\omega_{V(\sL)|Y_1})=H^i(X, \omega_X\otimes\sL)$ equals zero (if $i\ge 1$). Again we can do the argument with $\sL$
                      replaced by $\sL^{\otimes n}$ and obtain  $H^i(X, \omega_X\otimes\sL^{\otimes n})=0$ for all $n\ge 1$.
                      This finishes the proof.
\end{proof}

\begin{remark}
 Notice that in characteristic zero condition (1) is always satisfied because of the Grauert-Riemenschneider vanishing theorem; condition (2) is always satisfied because of
 the Kodaira vanishing theorem.
\end{remark}

\begin{thm}\label{example-Witt-rational}
 Let $X_0$ be a smooth, projective and geometrically connected $k$-scheme and $\sL$ an ample sheaf on $X_0$.
Assume $(X_0, \sL)$ satisfies the following condition:
\eq{Kodaira-van}{ H^i(X_0, \omega_{X_0}\otimes \sL^{\otimes n})=0,\quad \text{for all } n, i\ge 1.}
Let $X$ be the projective cone of $(X_0,\sL)$, i.e. $X={\rm Proj}(S[z])$, where $S$ denotes the graded $k$-algebra $k\oplus_{n\ge 1} H^0(X_0,\sL^{\otimes n})$.

Then $X$ is integral and  normal;  it has  Witt-rational singularities if and only if 
\eq{WO-van}{H^i(X_0, W\sO_{X_0})_\Q=0,\quad \text{for all } i\ge 1.}
Furthermore if $\sL$ is very ample, then $X$ is $CM$ if and only if the following condition is satisfied:
 \eq{CM-condition}{ H^i(X_0,\sL^{\otimes n})=0,\quad  \text{for all } n\in \Z \text{ and all } 1\le i\le \dim X_0-1.}
\end{thm}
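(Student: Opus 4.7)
The plan is to construct the resolution $f:\tilde X\to X$ as the blow-up of $X$ at the vertex $v$. Since $X_0$ is smooth, the projectivized tangent cone at $v$ is $X_0$, so $\tilde X$ is smooth with exceptional divisor $E\cong X_0$, and $f$ is a projective birational isomorphism away from $v$. For the integrality and normality of $X$: integrality is immediate, $X\setminus\{v\}$ is smooth, and for the $S_2$-condition at $v$ I would use Grauert's criterion (\ref{Grauert-criterion}) to identify $C\setminus\{v\}\cong V(\sL)\setminus s_0(X_0)$ and compute, via the Leray spectral sequence for the $\G_m$-bundle $V(\sL)\setminus s_0(X_0)\to X_0$, that $H^0(C\setminus\{v\},\sO)=\bigoplus_{n\geq 0}H^0(X_0,\sL^{\otimes n})=S$; this yields $H^0_v(\sO_X)=H^1_v(\sO_X)=0$, hence depth at least $2$. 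In particular $f$ is a quasi-resolution.

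For the Witt-rationality equivalence, Corollary~\ref{corollary-direct-image-isolated-singularity} applied to the isolated singularity at $v$ gives $R^if_*W\sO_{\tilde X,\Q}\cong H^i(X_0,W\sO_{X_0,\Q})$ (as skyscrapers at $v$) for $i\geq 1$. This yields both the ``only if'' direction (Witt-rational $\Rightarrow$ $W\sO$-rational $\Rightarrow$ \eqref{WO-van}) and, under \eqref{WO-van}, conditions (1)--(2) of Definition~\ref{definition-Witt-rational}. For condition (3), namely $R^if_*W\omega_{\tilde X,\Q}=0$ for $i\geq 1$, the Kodaira hypothesis \eqref{Kodaira-van} enters: by Proposition~\ref{GR-Kodaira}, applied locally near $v$ where $f$ is (up to the smooth compactification at infinity) the contractor of the zero section of $V(\sL^{\otimes m})$ for an appropriate $m$, one obtains $R^if_*\omega_{\tilde X}=0$ for $i\geq 1$ (the vanishing being automatic away from $v$). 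Induction on $n$ via the short exact sequence
\[
0\to W_{n-1}\omega_{\tilde X}\xrightarrow{\ul{p}} W_n\omega_{\tilde X}\xrightarrow{F^{n-1}}\omega_{\tilde X}\to 0
\]
of Proposition~\ref{properties-Witt-canonical}(7) (right-surjectivity uses that $\tilde X$ is smooth, hence CM) then upgrades this to $R^if_*W_n\omega_{\tilde X}=0$ for all $i,n\geq 1$. Passing to the inverse limit via Proposition~\ref{derived-functors-exist}(4) and Lemma~\ref{vanishing-of-Rlim}(1) (whose condition (a) is verified from coherence of $f_*W_n\omega_{\tilde X}$ on $W_nX$) forces $R^if_*W\omega_{\tilde X}=0$ for $i\geq 2$ and $R^1f_*W\omega_{\tilde X}=R^1\varprojlim f_*W_n\omega_{\tilde X}$. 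The latter I would show to be bounded $p$-torsion (hence zero in $\widehat{\drw}_{X,\Q}$) by adapting the Frobenius-contraction argument of Proposition~\ref{proposition-vanishing-WI} to the twisted sheaves $\omega_{\tilde X}\otimes\mc I_E^r$ (where $\mc I_E$ is the ideal sheaf of $E$): Frobenius sends $W(\omega_{\tilde X}\otimes\mc I_E^r)$ into $W(\omega_{\tilde X}\otimes\mc I_E^{rp})$, so a uniform power $F^{2a}$ acts as zero on the inverse limit and $VF=p$ yields a $p^{2a}$-annihilation. Lemma~\ref{pf-of-Womega} finally supplies the pushforward isomorphism $f_*W\omega_{\tilde X,\Q}\xrightarrow{\simeq}W\omega_{X,\Q}$, completing Witt-rationality.

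For the Cohen--Macaulay characterization under $\sL$ very ample, $X$ is smooth off $v$, so $X$ is CM iff $H^i_{\fm_v}(S)=0$ for $i<\dim X=\dim X_0+1$. Extending the Leray computation of paragraph~1 to all cohomological degrees gives $H^0_{\fm_v}(S)=H^1_{\fm_v}(S)=0$ automatically (using $\sL$ ample) and $H^i_{\fm_v}(S)\cong\bigoplus_{n\in\Z}H^{i-1}(X_0,\sL^{\otimes n})$ for $i\geq 2$, so CM is equivalent to $H^j(X_0,\sL^{\otimes n})=0$ for all $n\in\Z$ and $1\leq j\leq\dim X_0-1$, which is \eqref{CM-condition}. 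The principal obstacle is the last step of paragraph~2: the duality argument behind Lemma~\ref{pf-of-Womega} only controls the $f_*$-term, not higher $R^if_*$, and $X$ is not CM in general, so the Kodaira vanishing \eqref{Kodaira-van} must be lifted to Witt vectors via the Frobenius-contraction on the twisted ideal-sheaf complex $W(\omega_{\tilde X}\otimes\mc I_E^{\bullet})$---a genuine extension of the $W\sO$-case of Proposition~\ref{proposition-vanishing-WI}.
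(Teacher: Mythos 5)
Your overall architecture matches the paper's: reduce to an isolated singularity, handle $W\sO$ via Corollary~\ref{corollary-direct-image-isolated-singularity}, handle $\omega$ via Proposition~\ref{GR-Kodaira} and the sequence from Proposition~\ref{properties-Witt-canonical}(7), pass to $R\varprojlim$ and deal with the $R^1\varprojlim$ term, and finish the CM part with a local cohomology computation. Two points need attention.

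First, a minor but real issue: you take the resolution to be the blow-up of $X$ at the vertex, and assert that the exceptional divisor is $X_0$ and $\tilde X$ is smooth. This is correct only when $\sL$ is very ample (so that $S$ is generated in degree one); the theorem assumes only that $\sL$ is ample. The paper instead works with the affine cone $C={\rm Spec}\,S$ and uses the contractor $q:V(\sL)\to C$ of the zero section, which exists for any ample $\sL$ by Grauert's criterion and is always a resolution since $V(\sL)$ is an $\A^1$-bundle over the smooth $X_0$. It only identifies $q$ with a blow-up in the very ample case. You should frame the argument with $q:V(\sL)\to C$ rather than a blow-up, and reduce $X$ to $C$ as the paper does (the locus $X\setminus\{v\}$ is an $\A^1$-bundle over ${\rm Proj}\,S$, hence smooth).

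Second, and this is the real gap you have correctly flagged as the ``principal obstacle'': your proposed control of $R^1\varprojlim f_*W_n\omega_{\tilde X}$ by a Frobenius-contraction argument on a hypothetical filtration $W(\omega_{\tilde X}\otimes \mc I_E^r)$ is not established and probably does not work as stated. The de Rham–Witt Frobenius $F:W_{n+1}\Omega^d\to W_n\Omega^d$ does not contract an $\mc I$-adic filtration the way the Witt-vector Frobenius on $W\sO$ does (there Teichm\"uller lifts of $\mc I$ go to $\mc I^p$ on the nose, which is what drives Proposition~\ref{proposition-vanishing-WI}). The relations $F(dV\alpha)=d\alpha$ and the presence of exact forms obstruct such a contraction, and $W(\omega_{\tilde X}\otimes\mc I^r)$ is not a defined object in this framework. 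The paper avoids this entirely: it factors $q_*W_\bullet\omega_V\to W_\bullet\omega_C$ through its image $I_\bullet$ with kernel $K_\bullet$ and cokernel $C_\bullet$; since $K_n$ and $C_n$ are coherent with support at the closed point $v$ they have finite length, so Mittag–Leffler gives $R^1\varprojlim K_n=R^1\varprojlim C_n=0$ and $R^2\varprojlim K_n=0$, hence $R^1\varprojlim q_*W_n\omega_V\cong R^1\varprojlim I_n$. Combined with $R^1\varprojlim W_n\omega_C=0$ (the transition maps are surjective because $C$ is normal, Proposition~\ref{properties-Witt-canonical}(4)), one gets a surjection $\varprojlim C_n\surj R^1\varprojlim I_n$, and $(\varprojlim C_n)_\Q=0$ by Lemma~\ref{pf-of-Womega}. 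This finite-length and exact-sequence argument is more elementary than the Frobenius contraction you envisage, uses only what is already in the paper, and is what you should substitute in place of your speculative last step.
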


\begin{proof}
Let $v$ be the $k$-rational point of $X$, corresponding to the homogenous ideal in $S[z]$ generated by $S_+$.
Then the pointed projective cone $X\setminus\{v\}$ is an $\A^1$-bundle over $X_0={\rm Proj\,} S$. Thus $X$ has an isolated singularity at $v$ and 
it suffices to consider the affine cone $C=\Spec S$ of $(X_0,\sL)$.  Since $X_0$ is integral, projective and geometrically connected and $k$ is perfect we have $H^0(X_0,\sO_{X_0})=k$, by Zariski's connectedness theorem.
Thus $C$ is integral and normal by \cite[Prop. (8.8.6), (ii)]{EGAII}. Set $V:=V(\sL)=\Spec(\oplus_{n\ge 0}\sL^{\otimes n})$; it is an $\A^1$-bundle over $X_0$, hence is smooth.
By \cite[Cor. (8.8.4)]{EGAII} there exists a projective morphism $q: V\to C$, such that the triple $(C, q, v)$ becomes a contractor of the zero-section of $V$.
In particular, $q: V\to C$ is a projective resolution of singularities of $C$. (In case $\sL$ is very ample, $q:V\to C$ is the blow-up of $C$ in the closed point $v$, see \cite[Rem. (8.8.3)]{EGAII}.)
By Proposition \ref{properties-Witt-rational-sing} and Remark \ref{Witt-rational-and-normal} it suffices to prove
\[R^iq_*W\sO_{V,\Q}=0,\quad R^iq_*W\omega_{V,\Q}=0,\quad\text{for all } i\ge 1.\]
By Corollary \ref{corollary-direct-image-isolated-singularity} the vanishing $R^iq_*W\sO_{V,\Q}=0$ is equivalent to the vanishing $H^i(X_0, W\sO_{X_0})_{\Q}=0$.
Thus it suffices to prove the vanishing $R^iq_*W\omega_{V,\Q}=0$ under the assumptions \eqref{Kodaira-van} and $Rq_*W\sO_{V,\Q}\cong W\sO_{C,\Q}$.
Proposition \ref{GR-Kodaira} yields $R^iq_*\omega_V=0$ for all $i\ge 1$. Hence also $R^iq_*W_n\omega_V=0$ for all $i,n\ge 1$, by the exact sequence in Proposition \ref{properties-Witt-canonical}, (7)
and induction. Therefore the exact sequence
\[0\to R^1\varprojlim_n R^{i-1}q_* W_n\omega_V\to R^iq_*W\omega_V\to \varprojlim_n R^iq_*W_n\omega_V\to 0\]
 immediately yields 
\eq{example-Witt-rational1}{R^iq_*W\omega_{V,\Q}=0,\quad \text{for all } i\ge 2.}
To conclude also the vanishing of $R^1q_*W\omega_{V,\Q}$ we have to prove the vanishing $(R^1\varprojlim_n q_*W_n\omega_V)_{\Q}=0$.
To this end denote by $I_\bullet$, $K_\bullet$ and $C_\bullet$ the image, the kernel and the cokernel of $q_*W_\bullet\omega_V\to W_\bullet\omega_C$, respectively.
Notice that $K_n$ and $C_n$ are coherent $W_n\sO_C$-modules, whose support is concentrated in the closed point $v$; hence these modules have finite length. Therefore
$K_\bullet$ and $C_\bullet$ satisfy the Mittag-Leffler condition, in particular $R^1\varprojlim C_n=0=R^1\varprojlim K_n$.
Furthermore, $R^2\varprojlim K_n=0$ by Lemma \ref{vanishing-of-Rlim}, (1). Thus the exact sequence $0\to K_\bullet\to q_*W_\bullet\omega_V\to I_\bullet\to 0$ gives
\[R^1\varprojlim q_*W_n\omega_V\cong R^1\varprojlim I_n.\]
We also have the vanishing $R^1\varprojlim W_n\omega_C=0$, since the transition maps are surjective (by Proposition \ref{properties-Witt-canonical}, (4)) and
therefore the exact sequence $0\to I_\bullet\to W_\bullet\omega_C\to C_\bullet\to 0$ gives a surjection 
\[\varprojlim C_n\to R^1\varprojlim I_n\to 0.\]
By Lemma \ref{pf-of-Womega}  $(\varprojlim C_n)_\Q=0$, hence also $(R^1\varprojlim q_*W_n\omega_V)_\Q=0$.

For the last statement notice that $X$ is CM if and only if $H^i_v(C,\sO_C)=0$ for all $i\le d-1=\dim X_0$. Since $C$ is affine the long exact localization sequence gives us
an exact sequence
\[0\to H^0_v(C,\sO_C)\to H^0(C,\sO_C)\to H^0(C\setminus\{v,\},\sO_C)\to H^1_v(C,\sO_C)\to 0\]
and isomorphisms
\[H^{i-1}(C\setminus\{v\},\sO_C)\cong H^i_v(C,\sO_C),\quad \text{for all }i\ge 2.\]
Let $\pi: C\setminus \{c\}\to X_0$ be the pointed affine cone over $X_0={\rm Proj}\, S$.
Since $\sL$ is very ample the graded ring $S$ is generated by $S_1$  and we obtain 
\[\pi_*\sO_{C\setminus\{v\}}=\bigoplus_{n\in\Z} \sL^{\otimes n}.\]
Therefore $H^0(C\setminus\{v\}, \sO_C)=\bigoplus_{n\ge 0} H^0(X_0,\sL^{\otimes n})=S=H^0(C,\sO_C)$.
Thus $H^0_v(C,\sO_C)$ and $H^1_v(C,\sO_C)$ always vanish and $H^i_v(C, \sO_C)$ vanishes for $2\le i\le \dim X_0 $ if and only if 
$\bigoplus_{n\in \Z} H^{i-1}(X_0,\sL^{\otimes n})$ vanishes, which is exactly condition \eqref{CM-condition}. This proves the Theorem.
\end{proof}

\begin{remark}\label{remark-cone}
\begin{enumerate}
 \item Condition \eqref{Kodaira-van} in the above theorem holds in characteristic zero by Kodaira vanishing, which in general is wrong in positive characteristic; for a counter example see \cite{Raynaud}.
       By \cite[Cor. 2.8.]{Deligne-Illusie} this condition holds if $X_0$ lifts to a smooth $W_2(k)$-scheme and has dimension $\le p$.
        Notice also that condition \eqref{Kodaira-van} and \eqref{CM-condition} are always satisfied  if $X_0$ is a smooth hypersurface in some $\P^n_k$ and $\sL=\sO_{\P^n_k}(1)_{|X_0}$.
\item The vanishing \eqref{WO-van} is satisfied if the degree map induces an isomorphism $\CH_0(X_0\times_k \overline{k(X_0)})_\Q\cong \Q$, which is for example the case if $X_{0,\bar{k}}$ is rationally chain connected, 
      where $\bar{k}$ is an algebraic closure of $k$.  See \cite[Ex. 2.3]{BE}, alternatively one can also use Bloch's decomposition of the diagonal and the vanishing Lemmas \ref{lemma-van1} and \ref{lemma-van2}.
       Another case where \eqref{WO-van} holds is when $X_0$ has a smooth projective model $\mc{X}_0$ over a complete discrete valuation ring $R$  of mixed characteristic and with residue field $k$, such that
        the generic fiber $\mc{X}_{0,\eta}$ satisfies $H^i(\mc{X}_{0,\eta}, \sO_{X_{0,\eta}})=0$ for all $i\ge 1$. This follows from $p$-adic Hodge theory
        (`` the Newton polygon of $H^i_{\rm crys}(X_0/W)\otimes \text{Frac}(R)\cong H^i_{dR}(\mc{X}_{0,\eta})$ lies above the Hodge polygon''). 
\item The example in \cite{BE} alluded to at the beginning of this section is the following: By \cite[Prop. 3]{Shioda} the Fermat hypersurfaces $X_0\subset \P^{2r+1}_k$ given by
       $x_0^n+\ldots +x_{2r+1}^n$, where $n$ is such that $p^\nu\equiv -1$ mod $n$, for some $\nu\ge 1$,  are unirational over $\bar{k}$. Hence $X_0$ satisfies the conditions 
       \eqref{Kodaira-van}, \eqref{WO-van} and \eqref{CM-condition} of the theorem and we obtain that the projective cone $X$ of $X_0$ is normal, CM and has Witt-rational singularities. 
       (In \cite[Ex. 2.3.]{BE} it was shown that $X$ is $W\sO$-rational.)
        Choosing $X_0$ of degree larger than $2r+2$ we see, that $H^{\dim X_0}(X_0,\sO_{X_0})\neq 0$ and it follows that $X$ cannot have rational singularities.
\item Let $X$ be as in the theorem and let $\pi: Y\to X$ be any resolution of singularities.  Then $R^i\pi_*W\omega_{Y,\Q}=0$ for all $i\ge 1$.
      As it follows from the proof and \cite[Thm. 1]{CR}, we even have the stronger vanishing $R^i\pi_*\omega_Y=0$ for all $i\ge 1$, which is implied by condition \eqref{Kodaira-van} solely.
      It is an obvious question whether the vanishing $R^i\pi_*W\omega_{Y,\Q}=0$ also holds without condition \eqref{Kodaira-van}, i.e. does some version of Grauert-Riemenschneider vanishing hold.
      In view of Proposition \ref{GR-Kodaira}, this question should be linked to some kind of Kodaira vanishing, of which at the moment even the formulation is not clear.
      (But see \cite[Cor. 1.2]{BBE} for a first result in this direction.)
\end{enumerate}
\end{remark}

\subsection{Morphisms with rationally connected generic fibre}

The goal of this section is to prove the following theorem.

\begin{thm} \label{thm-rational-connected-fibres}
Let $X$ be an integral scheme with Witt-rational singularities.
Let $f:X\xr{} Y$ be a projective morphism to an integral and normal scheme $Y$.  
We denote by $\eta$ the generic point of $Y$, and $X_{\eta}$ denotes the generic
fibre of $f$. Suppose that $X_{\eta}$ is smooth and for every field extension 
$L\supset k(\eta)$ the degree map 
$$
\CH_0(X_{\eta}\times_{k(\eta)}L)\otimes_{\Z} \Q\xr{} \Q
$$
is an isomorphism.
Then 
$$
\bigoplus_{i>0} R^if_*W\OO_{X,\Q}\cong \bigoplus_{i>0}H^i(\mc{WS}_{0,Y}), \quad \bigoplus_{i>0} R^if_*W\omega_{X,\Q}\cong  \bigoplus_{i>0}H^i(\mc{WS}_{\dim Y,Y}), 
$$
where $\mc{WS}$ is defined in Definition \ref{canonical-complex-for-singularities}.
\end{thm}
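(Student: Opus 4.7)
The plan is to combine the Bloch--Srinivas decomposition of the diagonal at the generic fibre with the correspondence action on relative Hodge--Witt cohomology (Proposition~\ref{proposition-C_S-to-dRW_S}) together with the vanishing Lemmas~\ref{lemma-van1} and~\ref{lemma-van2}.

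First, I reduce to a smooth setting. Witt-rationality of $X$ (Proposition~\ref{properties-Witt-rational-sing}) lets me replace $X$ by a quasi-resolution $\pi\colon\tilde X\to X$; by Remark~\ref{de-Jong-did-it} I can arrange $\tilde X=W/G$ for some smooth quasi-projective $W$ and finite group $G$, giving a finite surjective $\tau\colon W\to\tilde X$. Analogously, fix a quasi-resolution $g'\colon Z\to Y$ with a smooth finite surjective cover $\sigma\colon V\to Z$, and write $h=f\pi\tau\colon W\to Y$ and $g=g'\sigma\colon V\to Y$. Proposition~\ref{top-fin-quot-vs-smooth} then identifies $R^if_*(W\sO_X\oplus W\omega_X)_\Q$ and $H^i(\mc{WS}_{0,Y})\oplus H^i(\mc{WS}_{\dim Y,Y})$ with explicit direct summands, cut out by correspondence-theoretic projectors, of $R^ih_*(W\sO_W\oplus W\omega_W)_\Q$ and $R^ig_*(W\sO_V\oplus W\omega_V)_\Q$ respectively. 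The universal $\CH_0$-triviality hypothesis passes from $X_\eta$ to $W_\eta$ modulo torsion, since generically finite covers preserve it.

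Second, I apply the Bloch--Srinivas decomposition to $W_\eta$ in both orientations: there exist an integer $N\ge 1$, a zero-cycle $\zeta\in\CH_0(W_\eta)_\Q$ of degree one, and cycles $\Gamma_1\subset D_1\times_\eta W_\eta$ and $\Gamma_2\subset W_\eta\times_\eta D_2$, with $D_1,D_2\subsetneq W_\eta$ of codimension $\ge 1$, such that
\[N\,[\Delta_{W_\eta}]=[W_\eta\times_\eta\zeta]+[\Gamma_1]=[\zeta\times_\eta W_\eta]+[\Gamma_2]\]
in $\CH^{\dim W_\eta}(W_\eta\times_\eta W_\eta)_\Q$. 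Spreading these to $W\times_Y W$ produces analogous identities up to cycles supported over some proper closed $Y'\subsetneq Y$. The spread $\widetilde\zeta\subset W$ is a multisection of $h$ of generic degree one, whose normalization is a quasi-resolution of $Y$; by Corollary~\ref{independence3} this normalization is canonically identified with $g'\colon Z\to Y$, and the correspondences $[W\times_Y\widetilde\zeta]$ and $[\widetilde\zeta\times_Y W]$ therefore induce explicit morphisms in both directions between the direct summands highlighted in step one, both for $W\sO$ and for $W\omega$.

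The main obstacle is controlling the two $\Gamma$-contributions and propagating the generic statement to a global one. Lemma~\ref{lemma-van2} with $r=1$ applied to $[\widetilde\Gamma_1]$, and symmetrically Lemma~\ref{lemma-van1} with $r=1$ applied to $[\widetilde\Gamma_2]$, combine to show that on the projected $W\sO$- and $W\omega$-summands the $\Gamma$-pieces vanish, so that modulo $N$-torsion the identity equals the composition of the two $\widetilde\zeta$-correspondences. To pass from this generic conclusion to the asserted global isomorphisms of sheaves on $Y$ I would use Noetherian induction on $\dim Y$: over a dense open $U\subset Y$ on which the spread BS identity holds exactly, the argument just sketched yields the iso, while over the closed complement $Y\setminus U$ the inductive hypothesis combined with the Mayer--Vietoris long exact sequences arising from $R\ul\Gamma_{Y\setminus U}$ acting on both $Rf_*W\sO_{X,\Q}$ and $\mc{WS}_{0,Y}$ (and their $W\omega$-analogues) let me match the two sides. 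Inverting $N$, which is legitimate over $\Q$, produces the claimed isomorphisms in $D^b(\cdrw_{Y,\Q})$ for all $i>0$.
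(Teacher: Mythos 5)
Your strategy tracks the paper's for the first two-thirds: reduce to a smooth cover via a $G$-invariant projector, apply a Bloch--Srinivas decomposition at the generic fibre in both orientations, spread to $X'\times_Y X'$, and use Lemmas~\ref{lemma-van1} and~\ref{lemma-van2} to kill the error pieces on the $W\sO$- and $W\omega$-components. Two steps, however, diverge from the paper and contain genuine gaps.

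First, your identification ``the spread $\widetilde\zeta\subset W$ is a multisection of $h$ of generic degree one, whose normalization is a quasi-resolution of $Y$; by Corollary~\ref{independence3} this normalization is canonically identified with $g'$'' is not correct. A degree-one zero cycle in $\CH_0(X_\eta)\otimes\Q$ need not be a single rational point; it is in general a $\Q$-combination of closed points. Even if you take a single closed point $\alpha$ (as the paper does), its closure $Y_\alpha\subset X'$ is merely generically \emph{finite} over $Y$, not generically purely inseparable, so the normalization of $Y_\alpha$ is usually not a quasi-resolution of $Y$ and Corollary~\ref{independence3} does not apply. The paper avoids this by routing through Proposition~\ref{proposition-characterization-RS}, which takes an alteration $\tilde Y\to Y_\alpha$, compares $[A_1]$ (resp.\ $[A_2]$) with the ``diagonal-type'' cycle $[Q]$ on $\tilde Y\times_Y\tilde Y$ via the vanishing lemmas, and identifies the resulting image with the invariant $\RS^0(Y)$ (resp.\ $\RS^{\dim Y}(Y)$) of Definition~\ref{RS}; Theorem~\ref{Witt-rational-via-alterations} then translates $\RS$ into $H^i(\mc{WS})$. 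Some form of this detour is unavoidable: the only direct bridge from an arbitrary alteration of $Y$ to $\mc{WS}_{0,Y}$ is exactly the projector formalism of $\RS$.

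Second, the closing Noetherian-induction-plus-Mayer--Vietoris step is both unnecessary and flawed. It is unnecessary because after spreading out, the error cycles live in $\CH(X'\times_Y\bar D_2)$, $\CH(\bar D_1\times_Y X')$, and $\CH(X'_S\times_S X'_S)$ for some Weil divisor $S\subset Y$; all of these project into codimension $\ge 1$ on the relevant $X'$-factor, so Lemma~\ref{lemma-van1} (resp.\ \ref{lemma-van2}) already shows they act as zero on $\bigoplus_{i>0}R^if'_*W\sO_{X',\Q}$ (resp.\ $W\omega$) \emph{globally}, with no local-to-global patching required. It is flawed because the inductive hypothesis would not apply: the restriction $X_{Y\setminus U}\to Y\setminus U$ generally fails the hypotheses (smooth, $\CH_0$-trivial generic fibre; $Y\setminus U$ need not be normal integral), and a Mayer--Vietoris argument at best compares cohomology with supports, not the two a priori unrelated sheaves $R^if_*W\sO_{X,\Q}$ and $H^i(\mc{WS}_{0,Y})$.

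Replacing these two steps with the $\RS$-machinery (Proposition~\ref{proposition-characterization-RS} and Theorem~\ref{Witt-rational-via-alterations}) and dropping the induction would bring your argument in line with the paper's proof.
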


\subsubsection{} \label{notation-rational-connected-fibres}
Let $f:X\xr{} Y$ be as in the assumptions of 
Theorem \ref{thm-rational-connected-fibres}. We can choose a factorization
(cf.~Remark \ref{de-Jong-did-it})
$$
X'\xr{\pi} X'/G \xr{h} X''\xr{u} X, 
$$
such that 
\begin{itemize}
\item $X'$ is smooth, integral and quasi-projective, $G$ is a finite group acting on $X'$,
\item $h$ is birational and projective,
\item $X''$ is normal and $u$ is a universal homeomorphism. 
\end{itemize}

We obtain a commutative diagram 
\begin{equation} \label{diagram-quasi-resolution}
\xymatrix
{
X' \ar[d]^{\pi}
&
\\
X'/G \ar[d]^{h}
&
\\
X'' \ar[d]^{u} \ar[r]
& 
Y'' \ar[d]^{u'}
\\
X\ar[r]^{f}
&
Y
}
\end{equation}
where $Y''$ is the Stein factorization of $X''\xr{} Y$. Therefore $u'$
is a universal homeomorphism. Note that $Y''$ is also the Stein factorization
of $X'/G\xr{} Y$.

\begin{lemma}\label{lemma-first-step-rational-connected-fibres}
With the assumptions of Theorem \ref{thm-rational-connected-fibres}.
Set $f'=f\circ u \circ h\circ \pi$ with the notation 
as in diagram \eqref{diagram-quasi-resolution}. We consider $X'$ via $f'$ 
as a scheme over $Y$.
Let $P=\sum_{g\in G}[\Gamma(g)] \in \CH(X'\times_{Y}X')$, with $\Gamma(g)$
the graph of $g$. 
Then there is a natural isomorphism 
%% \begin{multline*}
%% {\rm im}\left(\hH(P/Y):
%% \bigoplus_{i>0} R^if'_*(W\OO_{X',\Q}\oplus W\omega_{X',\Q})
%% \xr{} 
%% \bigoplus_{i>0} R^if'_*(W\OO_{X',\Q}\oplus W\omega_{X',\Q})
%% \right)\\ 
%% \cong 
%% \bigoplus_{i>0} R^if_*(W\OO_{X,\Q} \oplus W\omega_{X,\Q})
%% \end{multline*}
$$
\hH(P/Y)\left(\bigoplus_{i>0} R^if'_*(W\OO_{X',\Q}\oplus W\omega_{X',\Q})\right)
\xr{} 
\bigoplus_{i>0} R^if_*(W\OO_{X,\Q} \oplus W\omega_{X,\Q})
$$
\begin{proof}
The cycle $P=\sum_{g\in G}[\Gamma(g)]$ is already defined in 
$\CH(X'\times_{X'/G}X')$. Theorem \ref{Witt-rational-via-alterations} 
and Corollary \ref{finite-quotients-are Witt-rational} imply 
%% \begin{multline}\label{multline-P-image}
%% {\rm im}\left(\hH(P/(X'/G)): 
%% \pi_*(W\OO_{X',\Q} \oplus W\omega_{X',\Q})\xr{} \pi_*(W\OO_{X',\Q} \oplus W\omega_{X',\Q})\right)\\ 
%% \cong W\OO_{X'/G,\Q} \oplus W\omega_{X'/G,\Q}.
%% \end{multline}
\begin{equation}\label{multline-P-image}
\hH(P/(X'/G))\left(\pi_*W\OO_{X',\Q} \oplus \pi_*W\omega_{X',\Q}\right)
\cong W\OO_{X'/G,\Q} \oplus W\omega_{X'/G,\Q}.
\end{equation}
It follows   from Proposition \ref{correspondences-and-change-of-basis} that 
$$
\hH(P/Y)=\bigoplus_i R^i(f\circ u\circ h)_*\hH(P/(X'/G)).
$$
Since $P^2=\#G\cdot P$, we obtain from \eqref{multline-P-image} that  
\begin{multline*}
\hH(P/Y)\left(\bigoplus_{i>0} R^if'_*(W\OO_{X',\Q} \oplus W\omega_{X',\Q})\right)=\\
\bigoplus_{i>0} R^i(f\circ u\circ h)_*(W\OO_{X'/G,\Q} \oplus W\omega_{X'/G,\Q}).
\end{multline*}
Because $X$ has Witt-rational singularities and $X'/G\xr{u\circ h} X$ is a
quasi-resolution we get
$$
R^j(u\circ h)_*(W\OO_{X'/G,\Q} \oplus W\omega_{X'/G,\Q})=\begin{cases} 0 & \text{for all $j>0$} \\ W\OO_{X,\Q} \oplus W\omega_{X,\Q} & \text{for $j=0$,} 
\end{cases}
$$
which implies the statement.
\end{proof}
\end{lemma}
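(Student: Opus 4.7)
The plan is to compute $\hH(P/Y)$ in two stages: first factor the computation through the intermediate quotient $X'/G$ by a change-of-base formula, then exploit the Witt-rationality of both $X'/G$ (as a finite quotient) and of $X$ (by assumption) to simplify on either side. Set $\rho:=f\circ u\circ h:X'/G\to Y$.

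First, since $\pi\circ g=\pi$ for every $g\in G$, each graph $\Gamma(g)\subset X'\times X'$ is supported over the diagonal of $X'/G$, so $P$ lifts canonically to a cycle in $\CH(X'\times_{X'/G} X')$, and each $\Gamma(g)$ is the graph of an automorphism, hence is finite and surjective over the second projection $\pi:X'\to X'/G$. Since $\pi$ is finite and in particular affine, Proposition \ref{correspondences-and-change-of-basis} applied with base-change morphism $\rho$ yields
\[ \hH(P/Y)=\bigoplus_{i\ge 0} R^i\rho_{*}\,\hH(P/(X'/G)). \]

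Secondly, at the level of $X'/G$ the identity $\Gamma(g)\circ\Gamma(g')=\Gamma(gg')$ gives $P\circ P=\#G\cdot P$, so $\tfrac{1}{\#G}\hH(P/(X'/G))$ is an idempotent endomorphism of $\pi_{*}(W\OO_{X',\Q}\oplus W\omega_{X',\Q})$ (note $R^i\pi_{*}=0$ for $i>0$ since $\pi$ is finite), whose image is therefore a direct summand. Over any dense open $U\subset X'/G$ where the $G$-action is free, $P_{|U}$ coincides with the cycle $[X'\times_{X'/G}X']^{0}$ defining the projector $e_{\pi}$ of Proposition \ref{alterations-and-projectors}; since $X'/G$ is a finite quotient and hence has Witt-rational singularities by Corollary \ref{finite-quotients-are Witt-rational}, Theorem \ref{Witt-rational-via-alterations} together with Definition \ref{RS} naturally identifies the image of $\tfrac{1}{\#G}\hH(P/(X'/G))$ with $W\OO_{X'/G,\Q}\oplus W\omega_{X'/G,\Q}$. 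Combining the two steps, and using that the exact functor $R^i\rho_{*}$ preserves direct summands, the image of $\hH(P/Y)$ on $\bigoplus_{i>0}R^if'_{*}(W\OO_{X',\Q}\oplus W\omega_{X',\Q})$ becomes
\[ \bigoplus_{i>0} R^i\rho_{*}(W\OO_{X'/G,\Q}\oplus W\omega_{X'/G,\Q}). \]

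Finally, the map $u\circ h:X'/G\to X$ is a quasi-resolution of the Witt-rational scheme $X$, so by Proposition \ref{properties-Witt-rational-sing} we have $R^{j}(u\circ h)_{*}(W\OO_{X'/G,\Q}\oplus W\omega_{X'/G,\Q})=0$ for $j>0$ and $=W\OO_{X,\Q}\oplus W\omega_{X,\Q}$ for $j=0$. The Leray spectral sequence for $\rho=f\circ(u\circ h)$ collapses and identifies the display above with $\bigoplus_{i>0}R^if_{*}(W\OO_{X,\Q}\oplus W\omega_{X,\Q})$, yielding the claimed natural isomorphism. The main obstacle is the cycle-theoretic identification of $\tfrac{1}{\#G}P$ with $e_{\pi}$: at points of $X'/G$ with non-trivial stabilizers the fibre product $X'\times_{X'/G}X'$ may be non-reduced and need not decompose literally as $\bigsqcup_{g}\Gamma(g)$. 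It suffices however that the equality hold over a dense open of $X'/G$, because $[X'\times_{X'/G}X']^{0}$ is by construction the closure of the generic fibre of $\pi\times\pi$, and the locus of free action is dense; this is exactly the normalization that keeps Theorem \ref{Witt-rational-via-alterations} applicable.
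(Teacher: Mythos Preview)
Your proof is correct and follows essentially the same route as the paper's: factor $P$ through $\CH(X'\times_{X'/G}X')$, invoke Proposition~\ref{correspondences-and-change-of-basis} to compute $\hH(P/Y)$ as derived pushforward of $\hH(P/(X'/G))$, identify the image of the latter projector with $W\sO_{X'/G,\Q}\oplus W\omega_{X'/G,\Q}$ via the Witt-rationality of the finite quotient $X'/G$, and finish using that $u\circ h$ is a quasi-resolution of the Witt-rational $X$. Your extra care with the identity $P=[X'\times_{X'/G}X']^{0}$ (arguing via the free locus and closure) and your explicit mention of the collapsing Leray spectral sequence are welcome elaborations of steps the paper leaves implicit.
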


\begin{proposition}\label{proposition-characterization-RS}
Let $f':X'\xr{} Y$ be a projective and surjective morphism. Suppose that
$Y$ is normal, and $X'$ is smooth and connected. We denote by $\eta$ the 
generic point of $Y$. 
Let $Y'\subset X'$ be a closed irreducible subset such that $Y'\xr{} Y$
is generically finite. We denote by $A_1$ and $A_2$ the  closure of $Y'_{\eta}\times_{\eta}X'_{\eta}$ and $X'_{\eta}\times_{\eta} Y'_{\eta}$ in $X'\times_Y X'$, 
respectively. Then there are natural isomorphisms 
\begin{align}
\label{align-characterization-RS-1} 
\hH([A_1]/Y)\left(\bigoplus_{i>0} R^if'_*W\OO_{X',\Q}\right)&\cong \RS^0(Y)\\
\label{align-characterization-RS-2} 
\hH([A_2]/Y)\left(\bigoplus_{i>0} R^if'_*W\omega_{X',\Q}\right)&\cong \RS^{\dim Y}(Y).
\end{align}
(See Definition \ref{RS} for $\RS$.)
\begin{proof}
We will prove \eqref{align-characterization-RS-2}, the identity \eqref{align-characterization-RS-1} can be proved in the same way.

Let $\tilde{Y}\xr{} Y'$ be an alteration of $Y'$, such that $\tilde{Y}$ is smooth. 
We denote by $\imath:\tilde{Y}\xr{} X'$ the composition. The graph of $\imath$
defines a morphism 
$$
\hH(\Gamma(\imath)/Y): \hH(\tilde{Y}/Y)(-r)\xr{} \hH(X'/Y),
$$
with $r:=\dim X'-\dim Y$. The closure $Z$ of $X'_{\eta}\times_{\eta} \tilde{Y}_{\eta}$
in $X'\times_Y \tilde{Y}$ defines a morphism 
$$
\hH(Z/Y):\hH(X'/Y)\xr{} \hH(\tilde{Y}/Y)(-r).
$$
We have $({\rm id}_{X'}\times \imath)_*([Z])=d\cdot [A_2]$ for some $d\neq 0$. 
Finally, we define $Q$ to be the closure of $\tilde{Y}_{\eta}\times_{\eta}\tilde{Y}_{\eta}$ in 
$\tilde{Y}\times_Y \tilde{Y}$. We have the following relations 
\begin{align*}
[\Gamma(\imath)]\circ [Z]-d[A_2]&\in \ker(\CH(X'\times_Y X')\xr{} \CH(X'_{\eta}\times_\eta X'_{\eta})),\\
[Z]\circ [\Gamma(\imath)]-[Q]&\in \ker(\CH(\tilde{Y}\times_Y \tilde{Y})\xr{} \CH(\tilde{Y}_{\eta}\times_\eta \tilde{Y}_{\eta})),\\
[A_2]\circ [A_2]- e[A_2] &\in \ker(\CH(X'\times_Y X')\xr{} \CH(X'_{\eta}\times_\eta X'_{\eta})),\\
[Q]\circ [Q] -g[Q]&\in \ker(\CH(\tilde{Y}\times_Y \tilde{Y})\xr{} \CH(\tilde{Y}_{\eta}\times_\eta \tilde{Y}_{\eta})),
\end{align*} 
for some $e,g\neq 0$. Because of Lemma \ref{lemma-van2} we conclude that 
$$
\hH([A_2]/Y)\left(\bigoplus_{i>0} R^if'_*W\omega_{X',\Q}\right)\cong 
\hH([Q]/Y)\left(\bigoplus_{i>0} R^i(f'\circ \imath)_*W\omega_{\tilde{Y},\Q}\right).
$$ 
By Definition \ref{RS} the right hand side is $\RS^{\dim Y}(Y)$. 
\end{proof}
\end{proposition}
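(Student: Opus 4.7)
The plan is to follow the sketch given in the statement and reduce from the generic-fibre closures $A_1, A_2$ to correspondences arising from a smooth alteration of $Y'$. By Remark \ref{de-Jong-did-it} choose an alteration $\sigma:\tilde Y\to Y'$ with $\tilde Y$ smooth and integral, of generic degree $d$ over $Y'$, and let $\imath:\tilde Y\to X'$ be the composition $\tilde Y\to Y'\hookrightarrow X'$. Since $Y'\to Y$ is generically finite, $f'\imath:\tilde Y\to Y$ is a smooth alteration of $Y$, which by Definition \ref{RS} and Proposition \ref{alterations-and-projectors} may be used to compute $\RS^{\dim Y}(Y)$ and $\RS^0(Y)$. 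Define
\[
Z=\overline{X'_\eta\times_\eta\tilde Y_\eta}\subset X'\times_Y\tilde Y,\qquad Q=\overline{\tilde Y_\eta\times_\eta\tilde Y_\eta}\subset \tilde Y\times_Y\tilde Y,
\]
viewed as correspondences in $\mc{C}_Y$; these are the intended replacements for $[A_2]$ and the standard correspondence on $\tilde Y$ over $Y$.

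The four cycle-theoretic congruences listed in the statement are elementary identities verified by restricting to the generic fibre over $\eta\in Y$: for instance $\Gamma(\imath_\eta)\circ(X'_\eta\times_\eta\tilde Y_\eta)=d\cdot(X'_\eta\times_\eta Y'_\eta)$, because $\tilde Y_\eta\to Y'_\eta$ has degree $d$; the constants $e, g$ arise similarly as the degrees of the self-compositions of $A_2$ and $Q$ over $\eta$. Every cycle in the kernel of restriction to the generic fibre is supported over some proper closed $V\subsetneq Y$, hence in $(f')^{-1}(V)\times(f')^{-1}(V)$, which is contained in a codimension-$\geq 1$ subset in each $X'$-factor since $f'$ is dominant. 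Lemma \ref{lemma-van2} with $r=1$ then shows that such kernel cycles annihilate $\bigoplus_{p=\dim X'}R^qf'_*W\Omega^p_{X',\Q}$, in particular $\bigoplus_{i\ge 0}R^if'_*W\omega_{X',\Q}$; the analogous statement for $\bigoplus_{i\ge 0}R^if'_*W\sO_{X',\Q}$ (needed for \eqref{align-characterization-RS-1}) follows from Lemma \ref{lemma-van1} with $r=1$, and the same argument applies to kernel cycles in $\tilde Y\times_Y\tilde Y$.

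Combining the identities with these vanishings, and inverting the nonzero integers $d, e, g$ (allowed since we work $\Q$-linearly), the maps $\hH([\Gamma(\imath)]/Y)$ and $\frac{1}{d}\hH([Z]/Y)$ become mutually inverse up to nonzero rational scalars between the image of $\hH([A_2]/Y)$ on $\bigoplus_{i>0}R^if'_*W\omega_{X',\Q}$ and the image of $\hH([Q]/Y)$ on $\bigoplus_{i>0}R^i(f'\imath)_*W\omega_{\tilde Y,\Q}$. Since $[Q]=[\tilde Y\times_Y\tilde Y]^0$ by definition, the operator $\hH([Q]/Y)=\deg(f'\imath)\cdot e_{f'\imath}$ is a nonzero integer multiple of the projector from Proposition \ref{alterations-and-projectors}, whose image on the $(i>0)$-part of $\hH^{*,(0,\dim Y)}(\tilde Y/Y)_\Q$ is precisely $\RS^{\dim Y}(Y)$ by Definition \ref{RS}. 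This yields \eqref{align-characterization-RS-2}; the proof of \eqref{align-characterization-RS-1} is entirely parallel with $W\sO$ replacing $W\omega$, Lemma \ref{lemma-van1} replacing Lemma \ref{lemma-van2}, and $\RS^0(Y)$ replacing $\RS^{\dim Y}(Y)$. The main technical point is matching the codimension hypothesis of the vanishing lemmas with the kernel cycles supported over proper closed subsets of $Y$: this hinges on the dominance of $f'$ and forces the choice $r=1$, after which everything reduces to bookkeeping with the cycle identities and the definition of $\RS$.
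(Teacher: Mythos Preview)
Your proof is correct and follows essentially the same approach as the paper's: choose a smooth alteration $\tilde Y\to Y'$, set up the auxiliary cycles $Z$ and $Q$, verify the four cycle congruences over the generic fibre, and use the vanishing lemmas to pass from the image of $\hH([A_2]/Y)$ to the image of $\hH([Q]/Y)$, which is $\RS^{\dim Y}(Y)$ by definition. You spell out in more detail than the paper why the kernel cycles satisfy the codimension hypothesis of Lemmas~\ref{lemma-van1} and~\ref{lemma-van2} (namely, they are supported over $(f')^{-1}(V)$ in each factor for some proper closed $V\subsetneq Y$, forcing $r=1$), and you make explicit that Lemma~\ref{lemma-van1} handles the $W\sO$ case while Lemma~\ref{lemma-van2} handles the $W\omega$ case, but the argument is otherwise identical.
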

 
\begin{proof}[Proof of Theorem~\ref{thm-rational-connected-fibres}]
We use diagram \ref{diagram-quasi-resolution} and the constructions of 
\ref{notation-rational-connected-fibres}.
The first step is to prove 
\begin{equation}\label{equation-1-thm-rational-connected-fibres}
\CH_0(X'_{\eta}\times_{k(\eta)}L)^G\otimes \Q\cong \Q
\end{equation}
for every field extension $L\supset k(\eta)$. 
%We note that $f':X'\xr{} Y$
%factors through $Y''$. Denoting by $\eta''$ the generic point of $Y''$, it
%is clear that $X'_{\eta}=X'_{\eta''}$.
We have a push-forward map 
$$
\alpha:\CH_0(X'_{\eta}\times_{k(\eta)}L)\otimes \Q \xr{} \CH_0(X_{\eta}\times_{k(\eta)}L)\otimes \Q,
$$
and because $X_{\eta}$ is smooth over $k(\eta)$ we have a pull-back map 
$$
\beta:\CH_0(X_{\eta}\times_{k(\eta)}L)\otimes \Q \xr{} \CH_0(X'_{\eta}\times_{k(\eta)}L)\otimes \Q.
$$
We have the following formula for the composition 
$$
\beta\circ \alpha=\deg(u)\sum_{g\in G} g_*,
$$
provided that $G$ acts faithfully on $X'$ (we may assume this).
Thus \eqref{equation-1-thm-rational-connected-fibres} follows from 
the assumptions. 

Let 
$$P=\sum_{g\in G}[\Gamma(g)] \in \CH(X'\times_{Y}X'),$$ 
with $\Gamma(g)$
the graph of $g$. We denote by $P_{\eta}$ the image of $P$ in 
$\CH(X'_{\eta}\times_\eta X'_{\eta})$. The cycle $P_{\eta}$ is invariant 
under the action of $G$ on the left and the right factor of $X'_{\eta}\times_\eta X'_{\eta}$.

By the same arguments as in \cite[Proposition~1]{BS} we see that there
are non-zero integers $N_1,N_2,M_1,M_2$, effective divisors $D_1,D_2$ on
$X'_{\eta}$, and a closed point $\alpha$ of $X'_{\eta}$, such that 
\begin{align}\label{align-1-thm-rational-connected-fibres}
N_1P_{\eta}+N_2 [\alpha\times_{\eta} X'_{\eta}] &\in {\rm image}(\CH(X'_{\eta}\times_{\eta} D_2)),\\
 M_1P_{\eta}+M_2 [X'_{\eta} \times_{\eta} \alpha ] &\in {\rm image}(\CH(D_1\times_{\eta} X'_{\eta})).\label{align-2-thm-rational-connected-fibres}
\end{align}

Let $\bar{D}_1, \bar{D}_2,$ and $Y_{\alpha}$ be the closures of $D_1,D_2,$ and $\alpha$ in $X'$.  
The map $Y_{\alpha}\xr{} Y$ is generically finite and 
$Y_{\alpha,\eta}=\alpha$. Let $A_1$ and $A_2$ be the closures of $\alpha\times_{\eta} X'_{\eta}$ and $X'_{\eta}\times_{\eta} \alpha$ in $X'\times_{Y} X'$. 
Because of the formulas \eqref{align-1-thm-rational-connected-fibres}, \eqref{align-2-thm-rational-connected-fibres} over the generic point $\eta$, there is a Weil divisor $S\subset Y$
such that 
\begin{align*}
N_1P+N_2 [A_1] &\in {\rm image}(\CH(X'\times_Y \bar{D}_2))+{\rm image}(\CH(X'_S\times_S X'_S)),\\
 M_1P+M_2 [A_2] &\in {\rm image}(\CH(\bar{D}_1\times_Y X'))+{\rm image}(\CH(X'_S\times_S X'_S)).
\end{align*}
Lemma \ref{lemma-van1} implies that $\hH(P/Y)$ acts as $\hH([A_1]/Y)$ on 
$$
\bigoplus_{i>0} R^if'_*W\OO_{X',\Q}.
$$
From Lemma \ref{lemma-van2} we conclude that $\hH(P/Y)$ acts as $\hH([A_2]/Y)$ on 
$$
\bigoplus_{i>0} R^if'_*W\omega_{X',\Q}.
$$
In view of Proposition \ref{proposition-characterization-RS}, Theorem \ref{Witt-rational-via-alterations}, and   
Lemma \ref{lemma-first-step-rational-connected-fibres} this implies the statement
of the theorem.
\end{proof}

\section{Further applications}
\subsection{Results on torsion}

\subsubsection{The Cartier-Dieudonn\'e-Raynaud algebra }\label{Recall-CDR}
Recall from \cite[I, (1.1)]{IR} that the Cartier-Dieudonn\'e-Raynaud algebra is the graded (non-commutative) $W$-algebra, generated by formal symbols $F$ and $V$ in degree $0$ and by $d$ in degree $1$ which satisfy the following relations
\[F\cdot a= F(a)\cdot F,\quad a\cdot V= V\cdot F(a), \quad (a\in W), \quad F\cdot V=p= V\cdot F\]
\[a\cdot d=d\cdot a \quad(a\in W), F\cdot d \cdot V= d,\quad d\cdot d=0.\]
(Here $F$ is a formal symbol, whereas $F(a)$ denotes the Frobenius on the Witt vectors of $k$ applied to the Witt vector $a$.)
Thus $R$ is concentrated in degree 0 and 1. Notice that any de-Rham-Witt module and any Witt-module on a scheme $X$ naturally becomes an $R$-module (the latter with $d$ acting as 0).
Therefore we have an exact functor 
\[\widehat{\drw}_X\to {\rm Sh}(X,R):=(\text{sheaves-of $R$-modules on $X$})\]
which trivially derives to a functor
\eq{drw-to-R}{\phi: D^b(\widehat{\drw}_X)\to D^b({\rm Sh}(X, R)):=D^b(X,R).}
  We set
\[R_n:= R/(V^n \cdot R+d\cdot V^n\cdot R).\]
Notice that this is a left $R$-module. We obtain a functor 
\[R_n\otimes_R: {\rm Sh}(X,R)\to {\rm Sh}(X, W_n[d]):=(\text{sheaves of $W_n[d]$-modules on $X$}),\]
where $W_n[d]$ is the graded $W_n$-algebra $W_n\oplus W_n\cdot d$, with $d^2=0$. By \cite[Prop. (3.2)]{IR} the following sequence is  an exact sequence of right $R$-modules
\eq{R-exact-seq}{0\to R(-1)\xr{(F^n,-F^nd)} R(-1)\oplus R\xr{dV^n+V^n} R\to R_n\to 0.} 
This allows us to calculate the left-derived functor
\[R_n\otimes^L_R -: D^-({\rm Sh}(X,R))\to D^-({\rm Sh}(X,W_n[d])):=D^-(X, W_n[d]).\]
One obtains the following.
\begin{enumerate}
 \item Assume $X$ is smooth. Then $R_n\otimes^L_R W\Omega_X\cong W_n\Omega_X$.
 \item Let $f:X\to Y$ be a morphism of $k$-schemes, then
       \[R_n\otimes^L_R Rf_*(-)\cong Rf_*(R_n\otimes^L_R(-)): D^b(X,R)\to D^b(Y,W_n[d]).\]
 \item Let $X$ be a scheme and $Z\subset X$ be a closed subscheme, then 
       \[R_n\otimes^L_R R\ul{\Gamma}_Z(-)\cong R\ul{\Gamma}_Z(R_n\otimes^L_R(-)): D^b(X,R)\to D^b(X, W_n[d]).\]
\end{enumerate}
The first statement is \cite[II, Thm (1.2)]{IR}, the last two statements follow directly from \eqref{R-exact-seq}.

\begin{proposition}[{\cite[I, Prop. 1.1.]{EII}}]\label{Ekedahl-Nakayama}
Let $S$ be a $k$-scheme and $M\in D^b(S,R)$ a complex of $R$-modules, which is bounded in both directions, i.e. there exists a natural number $m$, 
such that $H^i(M)^j$ is non-zero only for $(i,j)\in [-m,m]\times [-m,m]$.
\begin{enumerate}
 \item Assume there exist integers $r,s\in \Z$, such that $H^i(R_1\otimes^L_R M)^j=0$ for all pairs $(i,j)$ satisfying one of the following conditions
        \eq{Ekedahl-Nakayama1}{(i+j=r,\, i\ge s)  \text{ or } (i+j=r-1,\, i\ge s+1) \text{ or } (i+j=r+1, \, i\ge s+1).}
       Then 
            \[H^i(R_n\otimes^L_R M)^j=0,\quad \text{for all }n \text{ and all }(i,j) \text{ with } i+j=r, \, i\ge s.\]\
\item Assume there exist integers $r,s\in \Z$, such that $H^i(R_1\otimes^L_R M)^j=0$ for all pairs $(i,j)$ satisfying one of the following conditions
       \eq{Ekedahl-Nakayama2}{(i+j=r,\, i\le s) \text{ or } (i+j=r+1,\, i\le s) \text{ or } (i+j=r-1,\, i\le s-1).}
        Then 
        \[H^i(R_n\otimes_R^L M)^j=0\quad \text{for all }n \text{ and all } (i,j) \text{ with } i+j=r,\, i\le s.\]
\end{enumerate}
\end{proposition}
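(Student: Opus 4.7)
The plan is to proceed by induction on $n$, with the base case $n = 1$ being exactly the hypothesis. For the inductive step, the starting point is a short exact sequence of right $R$-modules relating $R_n$, $R_{n-1}$ and $R_1$, obtained from the filtration of $R_n$ by the (derived) images of right multiplication by powers of $V$, together with the presentation \eqref{R-exact-seq} and its analogue for $R_{n-1}$. Tensoring this short exact sequence with $M$ yields an exact triangle in $D^b(S,R)$
$$R_1 \otimes^L_R M \longrightarrow R_n \otimes^L_R M \longrightarrow R_{n-1} \otimes^L_R M \stackrel{+1}{\longrightarrow}$$
where the connecting map may introduce a shift in internal grading (reflecting the fact that the Verschiebung twist that occurs in the exact sequence defining $R$ is in degree $0$, while the correcting $dV^n$ term lives in degree $1$).

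The next step is to apply the long exact sequence of cohomology in a fixed internal degree $j$. For a pair $(i,j)$ with $i+j = r$ and $i \ge s$, this sequence sandwiches $H^i(R_n \otimes^L_R M)^j$ between terms involving $H^?(R_1 \otimes^L_R M)^?$ and $H^?(R_{n-1} \otimes^L_R M)^?$. The three hypotheses in \eqref{Ekedahl-Nakayama1} are calibrated precisely so that every contribution from $R_1 \otimes^L_R M$ on the three relevant diagonals $i+j = r$ and $i+j = r\pm 1$ (with the tighter range $i \ge s+1$ accounting for the internal degree shifts in the connecting maps) vanishes. Combined with the induction hypothesis applied to $R_{n-1} \otimes^L_R M$ with the same $r,s$ — which itself only requires the vanishings on the same three lines and is thus reproduced by the same triangle — this forces $H^i(R_n \otimes^L_R M)^j = 0$, closing the induction.

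Part (2) is proved by the same mechanism: the conditions \eqref{Ekedahl-Nakayama2} are the mirror image of \eqref{Ekedahl-Nakayama1}, with $i \le s$ replacing $i \ge s$ and the asymmetric tightening on the adjacent diagonals reversed, so the identical long exact sequence argument carries through. Boundedness of $M$ in both directions ensures that the relevant terms on far-away diagonals vanish for trivial reasons, so only finitely many $(i,j)$ need attention at each step.

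The main obstacle is identifying the correct short exact sequence of right $R$-modules linking $R_1$, $R_{n-1}$, $R_n$ and tracking the internal degree shifts in the resulting triangle, so that the asymmetry in the three hypotheses (the range $i \ge s$ on the central diagonal versus $i \ge s+1$ on the two adjacent ones) matches exactly the degree shifts in the connecting homomorphisms. Once this bookkeeping is settled, which is a routine computation from the explicit presentation of $R$, the rest of the argument is a clean induction via long exact sequences.
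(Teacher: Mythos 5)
The bracketed header \cite[I, Prop.~1.1]{EII} is the \emph{source} of this statement: the paper quotes Ekedahl's Nakayama lemma and does not reprove it, so there is no argument in the paper for your attempt to be measured against.

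As for your sketch on its own terms, the central step has a real gap. You posit a distinguished triangle $R_1\otimes^L_R M\to R_n\otimes^L_R M\to R_{n-1}\otimes^L_R M\xrightarrow{+1}$, which would require a short exact sequence of right $R$-modules assembled out of $R_1$, $R_{n-1}$ and $R_n$ (up to internal twist). The natural candidates do not exist. Left multiplication by $V$ is a right $R$-linear map $R_{n-1}\to R_n$ (it sends $V^{n-1}R+dV^{n-1}R$ into $V^nR+dV^nR$, using $Vd=pdV$), but its cokernel is $R/(VR+dV^nR)$, and for $n\ge 2$ this is strictly larger than $R_1=R/(VR+dVR)$: in the standard $W$-basis of $R$ the basis element $dV$ lies in $dVR$, while only $p$-multiples of $dV$ lie in $VR$ or in $dV^nR$. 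Dually, left multiplication by $V^{n-1}$ gives a right $R$-linear map $R_1\to R_n$ whose image lies in $\ker(R_n\to R_{n-1})$, but it misses the class of $dV^{n-1}$: every degree-one element of $V^{n-1}R$ is divisible by $p^{n-1}$, and the $dV^{n-1}$-coefficient in $dV^nR$ is divisible by $p$, so $dV^{n-1}$ itself is not hit for $n\ge 2$. There is thus no clean short exact sequence of the kind your induction step requires, and as written the argument does not close.

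The asymmetric pattern in \eqref{Ekedahl-Nakayama1} — three diagonals $i+j=r,\ r\pm 1$, with $i\ge s$ only on the central one and $i\ge s+1$ on the two adjacent ones — points instead to the length-two free resolution \eqref{R-exact-seq} of $R_n$ over $R$ as the actual mechanism: a length-two resolution spreads $R_n\otimes^L_R M$ over three successive cohomological degrees, and the internal twist $R(-1)$ in the outer two terms of the resolution is precisely what shifts the internal degree and produces the adjacent diagonals $r\pm 1$. Your instinct that the internal-degree bookkeeping is the crux is right, but the relevant tool is the resolution itself, not a triangle among the $R_n$.
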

This proposition is called Ekedahl's Nakayama Lemma because applying it to the cone of a morphism $f: M\to N$ in $D^b(X, R)$ between two complexes, which are bounded in both directions,
we obtain that $R_n\otimes_R f$ is an isomorphism for all $n$, if $R_1\otimes_R^L f$ is. 

\begin{corollary}\label{cor-Ekedahl-Nakayama}
Let $S$ be a $k$-scheme and $f: X\to S$, $g: Y\to S$ two $S$-schemes which are smooth over $k$.
Let $\varphi: Rf_*W\Omega_X\to Rg_*W\Omega_Y$ be a morphism in $D^b(S,R)$.
Then $R_1\otimes_R \varphi$ is a morphism 
    \[\varphi_1:=R_1\otimes_R^L \varphi: Rf_*\Omega_X\to Rg_*\Omega_Y\quad \text{ in } D^b(S, k[d])\]
and furthermore:
\begin{enumerate}
 \item If there exists  an natural number $a\ge 0$ such that 
       $\varphi_1: \oplus_{j\le a} Rf_*\Omega_X^j\xr{\simeq} \oplus_{j\le a}Rg_*\Omega_Y^j$ is an isomorphism, then 
        \[\hat{\varphi}:=R\varprojlim_n(R_n\otimes_R \varphi): \bigoplus_{j\le a} Rf_* W\Omega^j_X\xr{\simeq} \bigoplus_{j\le a} Rg_*W\Omega^j_Y\]
        is an isomorphism.
\item If there exists a natural number $a\ge 0$ such that 
       $\varphi_1: \oplus_{j\ge a} Rf_*\Omega_X^j\xr{\simeq} \oplus_{j\ge a}Rg_*\Omega_Y^j$ is an isomorphism, then 
        \[\hat{\varphi}=R\varprojlim_n(R_n\otimes_R \varphi): \bigoplus_{j\ge a} Rf_* W\Omega^j_X\xr{\simeq} \bigoplus_{j\ge a} Rg_*W\Omega^j_Y\]
        is an isomorphism.
\end{enumerate}
\end{corollary}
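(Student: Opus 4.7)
The plan is to apply Ekedahl's Nakayama lemma (Proposition \ref{Ekedahl-Nakayama}) to the mapping cone of $\varphi$ and then pass to the inverse limit. Let $C$ denote the cone of $\varphi$ in $D^b(S,R)$. Since $f$ and $g$ are quasi-projective morphisms between finite-dimensional $k$-schemes and $W\Omega^j$ vanishes outside the range $[0,\max(\dim X,\dim Y)]$, the complex $C$ is bounded in both the cohomological direction $i$ and the form-degree direction $j$, so the proposition applies. Using the base-change identities of \ref{Recall-CDR}(1)-(2), one has $R_n\otimes^L_R Rf_*W\Omega_X\cong Rf_*W_n\Omega_X$ (and similarly for $g$), so $R_n\otimes^L_R C$ is canonically the cone of the map $Rf_*W_n\Omega_X\to Rg_*W_n\Omega_Y$ induced by $R_n\otimes_R\varphi$. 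In particular $R_1\otimes^L_R\varphi$ really is a morphism $Rf_*\Omega_X\to Rg_*\Omega_Y$ in $D^b(S,k[d])$, which justifies the first assertion of the corollary.

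The heart of the argument is to show, in case (1), that $H^{i_0}(R_n\otimes^L_R C)^{j_0}=0$ for every $n\ge 1$, every $i_0\in\Z$, and every $j_0\le a$; and in case (2), the analogous vanishing for every $j_0\ge a$. I will fix such a pair $(i_0,j_0)$ and set $s:=i_0$ and $r:=i_0+j_0$. The hypothesis on $\varphi_1$ gives $H^i(R_1\otimes^L_R C)^j=0$ for all $i$ and all $j$ in the specified range (using also that $W\Omega^j=0$ for $j<0$). A direct inspection of the three regions in \eqref{Ekedahl-Nakayama1} (resp.~\eqref{Ekedahl-Nakayama2}) with this choice of $(s,r)$ shows that every $(i,j)$ appearing there satisfies $j\le j_0\le a$ (resp.~$j\ge j_0\ge a$): for (1) the bounds are $j\le j_0$, $j\le j_0-2$, $j\le j_0$, and for (2) the bounds are $j\ge j_0$, $j\ge j_0+1$, $j\ge j_0$. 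Hence the hypothesis of Proposition \ref{Ekedahl-Nakayama}(1) (resp.~(2)) is satisfied, and its conclusion yields the desired vanishing at $(i_0,j_0)$ for every $n$. This index check is really the only nontrivial step.

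With the pointwise vanishing in place, I pass to the inverse limit. Since $H^q(R_n\otimes^L_R C)^{j_0}=0$ for all $n\ge 1$ and all $q$ in the relevant range of $j_0$, both $\varprojlim_n$ and $R^1\varprojlim_n$ of this system vanish, so the spectral sequence
\begin{equation*}
R^p\varprojlim_n H^q(R_n\otimes^L_R C)^{j_0}\ \Longrightarrow\ H^{p+q}\!\bigl(R\varprojlim_n(R_n\otimes^L_R C)\bigr)^{j_0}
\end{equation*}
forces $H^i\bigl(R\varprojlim_n(R_n\otimes^L_R C)\bigr)^{j_0}=0$ in the same range. Since the transition maps on each $W_n\Omega^j$ are surjective, the Mittag-Leffler condition holds, and commuting $R\varprojlim$ with $Rf_*$ as in Proposition \ref{derived-functors-exist}(4) one obtains $R\varprojlim_n Rf_*W_n\Omega^j_X=Rf_*W\Omega^j_X$, compatibly with $\hat\varphi$. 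Thus the cone of $\hat\varphi$ restricted to $\bigoplus_{j\le a}$ (resp.~$\bigoplus_{j\ge a}$) is acyclic, which is precisely the statement that $\hat\varphi$ is an isomorphism on that summand. The main obstacle is the combinatorial verification of the three region-conditions of Ekedahl's Nakayama lemma carried out above; the remainder is formal manipulation.
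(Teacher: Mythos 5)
Your proof is correct and follows essentially the same route as the paper's: pass to the cone of $\varphi$, apply Ekedahl's Nakayama lemma with $s:=i_0$, $r:=i_0+j_0$ for each fixed $(i_0,j_0)$ in the relevant half-plane, verify the three region-conditions give only indices $j$ on the correct side of $a$, and then pass to $R\varprojlim$. The paper compresses the index verification and the final $\varprojlim$ step, whereas you spell both out (the former with the explicit bounds $j\le j_0, j_0-2, j_0$ resp. $j\ge j_0, j_0+1, j_0$, and the latter via the $R^p\varprojlim$ spectral sequence on the cone); these are exactly the details the paper leaves to the reader, so your argument is a faithful completion rather than a new approach.
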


\begin{proof}
 First of all notice, that by \ref{Recall-CDR}, (1) and (2) $R_n\otimes^L_R\varphi$ indeed is a morphism $Rf_*W_n\Omega_X\to Rg_*W_n\Omega_Y$
and that $R\varprojlim(R_n\otimes^L_R\varphi)$ is a morphism $Rf_*W\Omega_X\to Rg_*W\Omega_Y$, 
since $R\varprojlim Rf_*W_n\Omega_X= Rf_*R\varprojlim_n W_n\Omega_X=Rf_*W\Omega_X$.

Denote by $C$ the cone of $\varphi$ in $D^b(S, R)$. It is clearly bounded in both directions (in the sense of Proposition \ref{Ekedahl-Nakayama}).
Assume we are in the situation of (1). Then $H^i(R_1\otimes_R^L C)^j= 0$ for all $i\in \Z$ and $j\le a$.
Choose $i_0\in \Z$ and $j_0\le a$ and set $s:=i_0$ and $r:=j_0+i_0$. Then $H^i(R_1\otimes_R^L C)^j= 0$ for all $(i,j)$ as in \eqref{Ekedahl-Nakayama1}.
Thus $H^{i_0}(R_n\otimes_R^L C)^{j_0}=0$ for all $n$.
Therefore $R_n\otimes^L_R\varphi$ is an isomorphism for all $n$ in degree $\le a$, which gives (1).
Now assume we are in the situation of (2). Then $H^i(R_1\otimes_R^L C)^j=0$ for all $i\in \Z$ and $j\ge a$. Choose $i_0\in \Z$ and $j_0\ge a$
and set $s:=i_0$ and $r:= i_0+j_0$. Then $H^i(R_1\otimes_R^L C)^j=0$ for $(i,j)$ as in \eqref{Ekedahl-Nakayama2}.
Thus $H^{i_0}(R_n\otimes^L_R C)^{j_0}=0$, which implies (2) as above.
\end{proof}

\begin{lemma}\label{Cousin-and-Rn}
 Let $X$ be a smooth scheme and $E(W_\bullet\Omega_X)$ the Cousin complex of $W_\bullet\Omega$ with respect to the codimension filtration on $X$ (see \ref{CousinWitt}).
Then there is a natural commutative diagram of isomorphisms in $D^b(X, W_n[d])$
\[\xymatrix{ R_n\otimes^L_R W\Omega_X \ar[r]^-\simeq\ar[d]_\simeq    &   R_n\otimes^L_R \varprojlim E(W_\bullet\Omega_X)\ar[d]^\simeq\\
               W_n\Omega_X\ar[r]^\simeq & E(W_n\Omega_X).  }\]
\end{lemma}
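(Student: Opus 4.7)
The plan is to exhibit the diagram as the result of sandwiching $R_n \otimes^L_R W\Omega_X$ and $W_n\Omega_X$ between their respective Cousin resolutions, using the isomorphism $R_n \otimes^L_R W\Omega_X \cong W_n\Omega_X$ of \ref{Recall-CDR}(1) as the left vertical arrow. For the bottom arrow, the local cohomology vanishing \eqref{loc-coh-van} together with \ref{CousinWitt}(3) shows that the natural augmentation $W_n\Omega_X \to E(W_n\Omega_X)$ is a quasi-isomorphism of complexes of $W_n[d]$-modules. For the top arrow, I would use that $E(W_\bullet\Omega_X)$ is by Lemma \ref{drw-is-CM} a flasque resolution of $W_\bullet\Omega_X$ in $\drw_X$, and that the proof of Lemma \ref{drw-is-CM} shows the transition maps of the Cousin pro-complex are level-wise surjective on sections over affines. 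By Lemma \ref{vanishing-of-Rlim}, $\varprojlim$ therefore agrees with $R\varprojlim$ on this system, yielding a quasi-isomorphism $W\Omega_X = \varprojlim W_\bullet\Omega_X \xrightarrow{\simeq} \varprojlim E(W_\bullet\Omega_X)$ of complexes of $R$-modules, to which I apply $R_n\otimes^L_R$.

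For the right vertical arrow I would use the projection from the limit to level $n$, i.e.\ the morphism of complexes of $R$-modules
\[
\varprojlim E(W_\bullet\Omega_X) \lra E(W_n\Omega_X),
\]
and note that $V^n$ and $d V^n$ act as zero on $E(W_n\Omega_X)$ (since $V^n=0$ on $W_n\Omega_X$, and the Cousin construction is functorial in de Rham-Witt systems), so this projection is $R_n$-linear and hence factors through $R_n\otimes_R \varprojlim E(W_\bullet\Omega_X)$. The composition
\[
R_n\otimes^L_R \varprojlim E(W_\bullet\Omega_X) \lra R_n\otimes_R \varprojlim E(W_\bullet\Omega_X) \lra E(W_n\Omega_X)
\]
defines the right vertical arrow in $D^b(X, W_n[d])$.

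Commutativity of the square is then tautological, as it amounts to commutativity of the square obtained by applying $R_n\otimes^L_R$ to the square of $R$-linear morphisms $W\Omega_X \to \varprojlim E(W_\bullet\Omega_X)$, $W\Omega_X \to W_n\Omega_X$ (and then to $E(W_n\Omega_X)$), which commutes by naturality of the augmentation $M \to E(M)$ applied to the pro-system $W_\bullet\Omega_X$ and to $W_n\Omega_X$. Since the left vertical, top, and bottom arrows have already been shown to be isomorphisms, the right vertical arrow is forced to be one as well. The main obstacle will be verifying that the projection $\varprojlim E(W_\bullet\Omega_X) \to E(W_n\Omega_X)$ is genuinely $R_n$-linear (so that the factorization through $R_n\otimes^L_R$ exists as a map in $D^b(X, W_n[d])$, not merely in $D^b(X, W[d])$); this reduces to a term-by-term check using the explicit description $E^i(W_\bullet\Omega_X) = \bigoplus_{x\in X^{(i)}} i_{x*} H^i_x(W_\bullet\Omega_X)$ from \ref{CousinWitt}(2) and the $\pi$-compatibility of $V$ on the pro-system.
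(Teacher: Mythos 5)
Your proof follows essentially the same route as the paper's. The paper likewise takes the left vertical arrow from \ref{Recall-CDR}(1), produces the right vertical arrow by observing that the natural projection $\varprojlim E(W_\bullet\Omega_X)\to E(W_n\Omega_X)$ kills $V^n$ and $dV^n$ and hence factors through $R_n\otimes_R \varprojlim E(W_\bullet\Omega_X)$, obtains the horizontal isomorphisms from Lemma \ref{drw-is-CM} and Lemma \ref{properties-of-flasque}(4) (i.e.\ $\varprojlim E(W_\bullet\Omega_X)$ is a resolution of $W\Omega_X$, respectively $E(W_n\Omega_X)$ of $W_n\Omega_X$), and then deduces that the right vertical arrow is also an isomorphism — exactly your sandwich argument.
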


\begin{proof}
 There is an obvious map 
\[R_n\otimes_R \varprojlim E(W_\bullet\Omega_X)= \varprojlim E(W_\bullet\Omega_X)/(V^n+dV(\varprojlim E(W_\bullet\Omega_X)))\to E(W_n\Omega),\]
yielding a morphism $R_n\otimes^L_R \varprojlim E(W_\bullet\Omega_X)\to E(W_n\Omega)$, which clearly fits into a diagram as in the statement.
By Lemma \ref{drw-is-CM}, $E(W_\bullet\Omega_X)$ is a flasque resolution of $W_\bullet\Omega_X$ and thus, by Lemma \ref{properties-of-flasque}, (4),
$\varprojlim E(W_\bullet\Omega_X)$ is a resolution of $W\Omega_X$. Hence the two horizontal arrows in the diagram are isomorphisms, and by \ref{Recall-CDR}(1) the vertical arrow on the left is an isomorphism. 
This proves the claim. 
\end{proof}

\subsubsection{}\label{pb-Rn} Let $f: X\to Y$ be a morphism between $k$-schemes. Then we have the pullback in $D^b(\drw_{Y})$ (see Definition \ref{2.1.2})
                 \[f^*: W_\bullet\Omega_Y\to Rf_* W_\bullet\Omega_X.\]
                 Using \eqref{R-exact-seq} and \ref{Recall-CDR}, (1), (2) one checks 
                      \[f^*_n= R_n\otimes_R^L(\phi(R\varprojlim f^*)) \quad \text{in } D^b(Y,W_n[d]),\]
                   where $\phi$ is the forgetful functor from \eqref{drw-to-R}, $f^*_n$ denotes the image of $f^*$ under the projection from $D^b(\drw_Y)$ to $ D^b(Y, W_n[d])$ 
and 
                     $R\varprojlim: D^b(\drw_Y)\to D^b(\cdrw_Y)$ (see Proposition \ref{derived-functors-exist}.)
\subsubsection{}\label{pf-Rn} Let $f: X\to Y$ be a morphism of pure relative dimension $r$ between smooth schemes and let $Z\subset X$ be a closed subscheme, such that $f_{|Z}$ is proper.
                 Then we have the pushforward in $D^b(\drw_Y)$ (see \eqref{derived-pf})
                  \[f_*: Rf_*R\ul{\Gamma}_Z W_\bullet\Omega_X\to W_\bullet\Omega_Y(-r)[-r].\]
                  Using \eqref{2.1.6.1} and Lemma \ref{Cousin-and-Rn} one checks
                      \[f_{*,n}=R_n\otimes_R^L(\phi(R\varprojlim f_*)) \quad \text{in } D^b(Y,W_n[d]).\]

\subsubsection{}\label{cup-Rn} Let $X$ be smooth equidimensional scheme and $Z\subset X$ an integral closed subscheme of codimension $c$. Then we have the cup-product with $cl([Z])$ (see \ref{2.6.2}) in $D^b(\drw_X)$ 
                 \[(-)\cup cl([Z]): W_\bullet\Omega_X\xr{\eqref{localcup}} \sH^c_{Z}(W_\bullet\Omega)(c)\xr{\eqref{supp-sheaf-to-derived-cat}} R\ul{\Gamma}_Z(W_\bullet\Omega)(c)[c].\]
                  Notice that in \ref{local-cup-product} we defined this maps only in the limit, but they can clearly be constructed on each level and the limit of the above gives \ref{local-cup-product}.
                  Also notice that the first map is given by $\alpha\mapsto (-1)^{c\cdot\deg \alpha} \alpha\cdot cl([Z])$, which is  a morphism of de Rham-Witt systems 
                  (since $F( cl([Z]))=\pi(cl([Z]))$ and $d(cl([Z]))=0$.)
                  Using \eqref{R-exact-seq} and \ref{Recall-CDR}, (1), (3) one checks
                   \[ (-\cup cl_n([Z]))= R_n\otimes^L_R \phi(R\varprojlim( (-)\cup cl([Z]))) \quad \text{in } D^b(X, W_n[d]).\]

\subsubsection{}\label{Corr-R} Let $S$ be a $k$-scheme, $f:X\to S$ and $g: Y\to S$ be two $S$-schemes which are smooth over $k$ and of the same pure dimension $N$.
                 Let $Z\subset X\times_S Y$ be a closed integral subscheme of dimension $N$, which is proper over $Y$. Then 
                   we define the morphism
                     \[\sR([Z]/S): Rf_*W\Omega_X\to Rg_*W\Omega_Y\quad\text{in } D^b(S,R)\]
                   as the composition 
                   \begin{align*}
                     Rf_*W\Omega_X &\xr{\pr_1^*}  Rf_*R\pr_{1*}W\Omega_{X\times Y}\\
                                   &\xr{\cup cl([Z])}  Rf_*R\pr_{1*}R\ul{\Gamma}_Z W\Omega_{X\times Y}(N)[N]\\
                                   &\xr{\simeq, \, Z\subset X\times_S Y}   Rg_*R\pr_{2*}R\ul{\Gamma}_Z W\Omega_{X\times Y}(N)[N]\\
                                   &\xr{pr_{2*}}  Rg_*W\Omega_{Y}.
                   \end{align*}
                   Here we simply write $\pr_{1}^*$ instead of $\phi(R\varprojlim(\pr_1^*))$ etc. Notice that the third arrow does not exist in $D^b(\cdrw_{S})$, since it is not compatible
                    with the $W\sO_S$-action. By \ref{Recall-CDR} and \ref{pb-Rn}, \ref{pf-Rn}, \ref{cup-Rn}, $R_n\otimes^L_R \sR([Z]/S)$ equals the following composition in $D^b(S, W_n[d])$, for all $n\ge 1$
                    \begin{align}
                     Rf_*W_n\Omega_X &\xr{\pr_{1,n}^*}  Rf_*R\pr_{1*}W_n\Omega_{X\times Y}\\
                                   &\xr{\cup cl_n([Z])}  Rf_*R\pr_{1*}R\ul{\Gamma}_Z W_n\Omega_{X\times Y}(N)[N]\nonumber\\
                                   &\xr{\simeq, \, Z\subset X\times_S Y}   Rg_*R\pr_{2*}R\ul{\Gamma}_Z W_n\Omega_{X\times Y}(N)[N]\nonumber\\
                                   &\xr{pr_{2,n*}}  Rg_*W_n\Omega_{Y}\nonumber.
                    \end{align}
                  This also shows that
          \eq{Corr-R1}{\sR([Z]/S)= R\varprojlim(R_n\otimes^L_R \sR([Z]/S)).}
\begin{lemma}\label{Corr-R-gives-Corr-H}
In the situation of \ref{Corr-R} we have
\[\oplus_i H^i(\sR([Z]/S))= \hat{\sH}([Z]/S): \hat{\sH}(X/S)\to \hat{\sH}(Y/S),\]
in particular $H^i(\sR([Z]/S))$ is $W\sO_S$-linear. (See Proposition \ref{proposition-C_S-to-dRW_S} for the notation.)
Further,
\[\oplus_i H^i(R_1\otimes^L_R \sR([Z]/S)): \oplus_i R^if_*\Omega_X\to \oplus_i R^i g_*\Omega_Y\]
equals the the morphism $\rho_H(Z/S)$ from \cite[Prop. 3.2.4]{CR}; in particular it is $\sO_S$-linear.
\end{lemma}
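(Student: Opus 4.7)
The strategy is to unwind both sides of each claimed equality into concrete four-step compositions and to read off agreement from work already done in earlier sections.

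First I would address the identification $\bigoplus_i H^i(\sR([Z]/S))=\hat{\sH}([Z]/S)$. By construction in \ref{Corr-R}, the morphism $\sR([Z]/S)$ in $D^b(S,R)$ is assembled from the four steps: pullback $\pr_1^*$, local cup product with $\hat{c}l([Z])$, the identification using $Z\subset X\times_S Y$, and the pushforward $\pr_{2*}$. This is exactly the composition \eqref{local-corr} that appeared in the proof of Proposition \ref{correspondences-and-change-of-basis} in the special case $h=\id$; there it was established, under the extra assumption that $f$ and $g$ are affine, that this composition computes $\hat{\sH}([Z]/S)$. To remove the affineness hypothesis I would restrict everything to an open affine $U\subset S$: the four operations entering $\sR([Z]/S)$ are manifestly compatible with such restriction, so taking $H^i$ over $U$ recovers the analogous composition for $f^{-1}(U)\to U$ and $g^{-1}(U)\to U$. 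By Lemma \ref{comparison-global-local-cup} this local composition equals the global cup-product correspondence $Cor_{\hat{c}l}([Z_U])$ defining $\hat{\sH}([Z]/S)$ on sections. Sheafifying the resulting equality over the basis of affine opens of $S$ yields the claim. The $W\OO_S$-linearity of each $H^i(\sR([Z]/S))$ is then automatic, since by Proposition \ref{proposition-C_S-to-dRW_S} the morphism $\hat{\sH}([Z]/S)$ is a morphism in $\widehat{\drw}_S$, hence in particular $W\OO_S$-linear.

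For the second claim, I would apply $R_1\otimes^L_R(-)$ to $\sR([Z]/S)$ step by step. The compatibilities recorded in \ref{pb-Rn}, \ref{pf-Rn} and \ref{cup-Rn} say precisely that $R_1\otimes^L_R$ sends each of the four ingredients of $\sR([Z]/S)$ to its level-$1$ counterpart: ordinary pullback $\pr_{1,1}^*$ on Hodge cohomology, local cup product with $cl_1([Z])\in H^c_Z(X,\Omega^c_X)$, the identification over $Z\subset X\times_S Y$ (which is a sheaf-level isomorphism unchanged by $R_1\otimes^L_R$), and the coherent-sheaf pushforward $\pr_{2,1*}$. Consequently $R_1\otimes^L_R\sR([Z]/S)$ is the same four-step composition but with $W\Omega$ replaced by $\Omega$. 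This is exactly the formula defining $\rho_H(Z/S)$ in \cite[Prop.~3.2.4]{CR}, so passing to $\oplus_i H^i$ gives the asserted equality, and $\OO_S$-linearity follows from the linearity already established there.

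The only real obstacle is bookkeeping: one must verify that the signs, shifts, and normalizations entering the three compositions — $\sR([Z]/S)$ in \ref{Corr-R}, the explicit description of $\hat{\sH}([Z]/S)$ via the local cup product in Lemma \ref{comparison-global-local-cup} and Proposition \ref{correspondences-and-change-of-basis}, and the definition of $\rho_H(Z/S)$ in \cite{CR} — match on the nose rather than merely up to a universal natural isomorphism. No genuinely new geometric input is required beyond what was used in the proof of Proposition \ref{correspondences-and-change-of-basis}; the work consists in tracking the compatibility of pullback, local cup product with $\hat{c}l([Z])$, and pushforward with the projection $R_n\otimes^L_R(-)$.
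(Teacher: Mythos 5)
Your argument is correct and follows essentially the same route as the paper: identify the cup product appearing in $\sR([Z]/S)$ with the global cup product entering $Cor_{\hat{c}l}$ via Lemma \ref{comparison-global-local-cup} (and its level-1 analogue), then compare the resulting four-step compositions by unwinding the definitions, referring back to the proof of Proposition \ref{correspondences-and-change-of-basis} and using \ref{pb-Rn}, \ref{pf-Rn}, \ref{cup-Rn} for the $R_1\otimes^L_R$ part. The paper's proof is more compressed but cites exactly the same ingredients; your explicit reduction to affine opens and sheafification just spells out what the paper leaves as "going through the definitions."
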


\begin{proof}
 By Lemma \ref{comparison-global-local-cup} (and its proof for the level $n=1$ case) the global cup product
$(-)\cup \hat{c}l([Z]): H^i(X\times Y, W\Omega_{X\times Y})\to H^{i+N}_Z(X\times Y, W\Omega_{X\times Y}(N))$ 
 (resp. $(-)\cup cl_1([Z]): H^i(X\times Y, \Omega_{X\times Y})\to H^{i+N}_Z(X\times Y, \Omega_{X\times Y}(N))$)
is given by applying $H^i(X\times Y,R\varprojlim (-))$ to the  cup product of \ref{cup-Rn} (resp. 
applying $H^i(X\times Y, R_1\otimes^L_R (-))$ to \ref{cup-Rn} ).
Now the lemma follows from going through the definitions (cf. also the proof of Proposition \ref{correspondences-and-change-of-basis} and of \cite[Lem. 4.1.3]{CR}.) 
\end{proof}

\begin{thm}\label{torison1}
 Let $S$ be a $k$-scheme and $f: X\to S$ and $g: Y\to S$ be two $S$-schemes which are integral and smooth over $k$ and have dimension $N$.
Assume $X$ and $Y$ are properly birational over $S$, i.e. there exists a closed integral subscheme $Z\subset X\times_S Y$, such that the projections
  $Z\to X$ and $Z\to Y$ are proper and birational.
Then $R_n\otimes^L_R \sR([Z]/S)$ (see \ref{Corr-R}) induces isomorphisms in $D^b(S, W_n)$, for all $n\ge 1$
\[Rf_*W_n\OO_X\cong Rg_*W_n\OO_Y,\quad Rf_*W_n\Omega_X^N\cong Rg_*W_n\Omega^N_Y.\]
Therefore $\sR([Z]/S)=R\varprojlim(R_n\otimes^L_R \sR([Z]/S))$ induces isomorphisms in $D^b(S,R)$
\[Rf_*W\OO_X\cong Rg_*W\OO_Y,\quad Rf_*W\Omega_X^N\cong Rg_*W\Omega^N_Y.\]
Taking cohomology we obtain isomorphisms of $W\sO_S$-modules which are compatible with Frobenius and Verschiebung 
\[R^if_*W\sO_X\cong R^ig_*W\sO_Y, \quad R^i f_*W\Omega^N_X\cong R^i g_* W\Omega^N_Y,\quad \text{for all }i\ge 0.\] 
\end{thm}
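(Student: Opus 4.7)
The plan is to reduce to the level-$1$ statement and then invoke Ekedahl's Nakayama Lemma in the form of Corollary~\ref{cor-Ekedahl-Nakayama}. By \eqref{Corr-R1} we have $\sR([Z]/S) \cong R\varprojlim_n (R_n\otimes_R^L \sR([Z]/S))$, so both the finite-level assertions and the limit assertion follow once the hypotheses of Corollary~\ref{cor-Ekedahl-Nakayama} are verified at level $n=1$. Concretely, I need to show that
\[
\varphi_1 := R_1\otimes_R^L \sR([Z]/S) : Rf_*\Omega_X \lra Rg_*\Omega_Y
\]
is an isomorphism in the $j=0$ summand (so that Corollary~\ref{cor-Ekedahl-Nakayama}(1) applied with $a=0$ yields the $W\OO$-statement) and in the $j=N$ summand (so that Corollary~\ref{cor-Ekedahl-Nakayama}(2) applied with $a=N$ yields the $W\Omega^N$-statement). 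The intermediate degrees $0<j<N$ never enter the argument, which is what makes the reduction feasible even though a correspondence between smooth properly birational schemes need not act as an isomorphism on the middle Hodge pieces.

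By Lemma~\ref{Corr-R-gives-Corr-H}, $\bigoplus_i H^i(\varphi_1)$ coincides with the Chow-correspondence action $\rho_H([Z]/S)$ studied in \cite{CR}. The level-$1$ input I will cite is the \emph{integral} statement that for a properly birational correspondence $Z\subset X\times_S Y$ between smooth integral $S$-schemes of the same dimension $N$, the morphisms
\[
\rho_H([Z]/S): Rf_*\OO_X \xr{\simeq} Rg_*\OO_Y, \qquad \rho_H([Z]/S): Rf_*\omega_X \xr{\simeq} Rg_*\omega_Y
\]
are isomorphisms in $D^b(S)$. This is one of the main results of \cite{CR}: the inverse is the transpose correspondence $[Z]^t$, and the key identities $[Z]^t\circ [Z] = [\Delta_X]$ and $[Z]\circ [Z]^t = [\Delta_Y]$ (in the relevant refined Chow groups), which hold precisely because both projections $Z\to X$ and $Z\to Y$ are proper and birational, translate under the functoriality of $\rho_H$ into the desired isomorphisms. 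I emphasize that I cannot substitute the vanishing Lemmas~\ref{lemma-van1} and~\ref{lemma-van2} of the present paper here, since they only eliminate the excess cycles $[Z]^t\circ[Z]-[\Delta_X]$ and $[Z]\circ[Z]^t-[\Delta_Y]$ after clearing a bounded integer denominator, which is fatal for the torsion-aware statement we are after.

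Once the level-$1$ step is granted, the upgrade is formal: Corollary~\ref{cor-Ekedahl-Nakayama} produces the claimed isomorphisms in $D^b(S,R)$, and applying $R_n\otimes_R^L(-)$ together with \ref{Recall-CDR}(1)-(2) delivers the corresponding isomorphisms at each finite level in $D^b(S,W_n)$. For the final sentence of the theorem, Lemma~\ref{Corr-R-gives-Corr-H} identifies $\bigoplus_i H^i(\sR([Z]/S))$ with the action $\hat\sH([Z]/S)$ from Proposition~\ref{proposition-C_S-to-dRW_S}, which is $W\sO_S$-linear; compatibility with $F$ and $V$ is automatic because $\sR([Z]/S)$ is constructed as a morphism in $D^b(S,R)$. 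The main obstacle is therefore the integral level-$1$ isomorphism, and my plan fundamentally depends on importing the torsion-sensitive result from \cite{CR}, which was proved there by a direct Grothendieck-duality argument on the two projections $Z\to X$ and $Z\to Y$ rather than by the rational vanishing techniques developed elsewhere in the present article.
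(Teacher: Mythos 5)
Your proposal is correct and follows essentially the same strategy as the paper: reduce to level one via Corollary~\ref{cor-Ekedahl-Nakayama} (applied with $a=0$ for the $W\sO$-part and $a=N$ for the $W\Omega^N$-part) together with Lemma~\ref{Corr-R-gives-Corr-H}, and then import the integral level-one isomorphisms from \cite[Thm.~3.2.6]{CR}. The only slip in the exposition is that you first assert the exact identity $[Z]^t\circ[Z]=[\Delta_X]$ and then, two sentences later, correctly speak of the excess cycle $[Z]^t\circ[Z]-[\Delta_X]$ --- the identity holds only modulo cycles supported over proper closed subsets of $X$, and it is exactly the integral vanishing mechanism in \cite{CR} (not an exact correspondence identity) that disposes of these, as you yourself note when explaining why Lemmas~\ref{lemma-van1} and~\ref{lemma-van2} cannot be used.
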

\begin{proof}
 By Ekedahl's Nakayama Lemma (Corollary \ref{cor-Ekedahl-Nakayama}) it suffices, to show that 
\[H^i(R_1\otimes_R^L \sR([Z]/S)): R^i f_*\Omega_X\to R^i g_*\Omega_Y\]
is an isomorphism for all $i\ge 0$. This follows from Lemma \ref{Corr-R-gives-Corr-H} and \cite[Thm. 3.2.6]{CR}. The second statement follows again from 
Lemma \ref{Corr-R-gives-Corr-H}.
\end{proof}

\begin{remark}
Notice that we do not need to assume here that any of the schemes $X$, $Y$ or $S$ are quasi-projective. We had to assume it in section 2 and 3
since, we can only prove the compatibility of $\sH(-/S)$ with composition of correspondences, in the quasi-projective case. But in the argument
above we only need that the maps exist in $D^b(S, R)$ and can then reduce to the result of \cite{CR}, where no quasi-projectiveness assumption is needed.
\end{remark}

Modulo torsion the $W\sO$-part of the following corollary was proved by Ekedahl in \cite{E83}.

\begin{corollary}\label{cor-torsion1}
Let $X$ and $Y$ be two smooth and proper $k$-schemes, which are birational and of pure dimension $N$. Then there are isomorphisms of $W[F, V]$-modules
\[H^i(X, W\sO_X)\cong H^i(Y, W\sO_Y), \quad H^i(X, W\Omega^N_X)\cong H^i(Y, W\Omega^N_Y), \quad \text{for all } i\ge 0\]
and also for all $n$, isomorphisms of $W_n$-modules
\[H^i(X, W_n\sO_X)\cong H^i(Y, W_n\sO_Y), \quad H^i(X, W_n\Omega^N_X)\cong H^i(Y, W_n\Omega^N_Y),\quad \text{for all } i\ge 0.\]
\end{corollary}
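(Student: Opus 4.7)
The plan is to deduce this corollary directly from Theorem \ref{torison1} by taking $S = \Spec k$ and exhibiting a suitable correspondence $Z \subset X \times Y$. Since $X$ and $Y$ are birational there is a birational map $\varphi: X \dashrightarrow Y$, defined on some non-empty open $U \subset X$. Let $Z \subset X \times Y$ be the closure of the graph $\Gamma_\varphi \subset U \times Y$, equipped with its reduced integral structure. Then $Z$ is a closed integral subscheme of $X \times_k Y = X \times_{\Spec k} Y$ of dimension $N$, and the two projections $Z \to X$ and $Z \to Y$ are birational; they are also proper, because $X$ and $Y$ are proper over $k$ and hence so is $Z$ (as a closed subscheme of the proper scheme $X \times Y$).

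Applying Theorem \ref{torison1} with $S = \Spec k$, $f:X \to \Spec k$, $g:Y \to \Spec k$, and this $Z$, the morphism $\sR([Z]/\Spec k)$ of \ref{Corr-R} induces, on each finite level $n$, an isomorphism
\[
Rf_* W_n\OO_X \xr{\simeq} Rg_* W_n\OO_Y, \quad Rf_* W_n\Omega^N_X \xr{\simeq} Rg_* W_n\Omega^N_Y
\]
in $D^b(\Spec k, W_n[d])$, and in the limit isomorphisms
\[
Rf_* W\OO_X \xr{\simeq} Rg_* W\OO_Y, \quad Rf_* W\Omega^N_X \xr{\simeq} Rg_* W\Omega^N_Y
\]
in $D^b(\Spec k, R)$. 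Taking $H^i$ (which, since $X$ and $Y$ are proper, computes $H^i(X,-)$ and $H^i(Y,-)$) yields the desired isomorphisms of abelian groups.

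It remains to observe that these cohomology isomorphisms are compatible with the $F$- and $V$-operators (and, on finite levels, are $W_n$-linear). For the finite-level statement this is built into the fact that $\sR([Z]/\Spec k)$ is constructed in $D^b(\Spec k, R)$ as recalled in \ref{Corr-R}; applying the forgetful functor $D^b(\Spec k, R) \to D^b(\Spec k, W_n[d])$ via $R_n \otimes_R^L(-)$ as in \ref{Recall-CDR} preserves the $F, V$ operators (in degree $0$), and the isomorphism being in this derived category guarantees that $H^i$ inherits a $W_n[F,V]$-linear isomorphism. The limit version similarly produces $W[F,V]$-linear isomorphisms on $H^i(X, W\OO_X)$ and $H^i(X, W\Omega^N_X)$ via \eqref{Corr-R1}.

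There is essentially no genuine obstacle here beyond writing down the correspondence $Z$: everything else is packaged into Theorem \ref{torison1} and the compatibility statements of \ref{Corr-R}. The only minor subtlety worth a sentence in the final write-up is to verify that the $W_n$-linear isomorphisms obtained at finite level are compatible with the restriction maps $W_n \to W_{n-1}$, so that passing to the limit recovers the $W$-linear isomorphism (this is immediate from the fact that $\sR([Z]/\Spec k)$ lies in $D^b(\Spec k, R)$ and the tensor products $R_n \otimes_R^L (-)$ are functorial in $n$).
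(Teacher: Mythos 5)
Your proposal is correct and matches the (implicit) argument the paper intends: apply Theorem~\ref{torison1} with $S=\Spec k$, take for $Z$ the closure of the graph of a birational map (integral, and proper over both factors precisely because $X\times_k Y$ is proper), and take cohomology. One small aside: when you write that $H^i$ computes $H^i(X,-)$ ``since $X$ and $Y$ are proper,'' properness is irrelevant there (for $f:X\to\Spec k$ one always has $R^if_*=H^i(X,-)$); properness is needed only to guarantee that the projections $Z\to X$ and $Z\to Y$ are proper so that Theorem~\ref{torison1} applies.
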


In the case where $X$ and $Y$ are tame finite quotients (see Definition \ref{finite-quotients}) we have no map like $\sR([Z]/S)$. This is why we have to assume
that there exists a morphism in this case:
 
\begin{thm}\label{torsion2}
 Let $f: X\to Y$ be a proper and birational $k$-morphism between two tame finite quotients. Then we have isomorphisms
\[f^*: W\sO_Y\xr{\simeq} Rf_*W\sO_X, \quad Rf_*W\omega_X\cong f_*W\omega_X[0]\xr{\simeq,\, f_*} W\omega_Y,\]
where $W\omega$ is defined in Definition \ref{definition-Witt-canonical} and $f_*$ is the pushforward from Proposition \ref{properties-Witt-canonical}, (6).
There are also corresponding isomorphisms on each finite level.
\end{thm}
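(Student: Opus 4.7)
My strategy is to reduce the claimed isomorphisms to their level-$1$ versions by means of Ekedahl's Nakayama lemma (Proposition~\ref{Ekedahl-Nakayama}), and then establish the level-$1$ statement by an integral correspondence argument on smooth covers, following the template of Theorem~\ref{vanishing-HDI} but avoiding the passage to $\Q$-coefficients by exploiting the tame hypothesis.

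\textbf{Setup.} Because $X$ and $Y$ are tame finite quotients, I pick smooth integral $k$-schemes $X'$, $Y'$ and finite surjective morphisms $a:X'\to X$, $b:Y'\to Y$ of degrees $d_a$, $d_b$ coprime to $p$; in particular $d_a,d_b\in W^\times$. Since $f$ is proper birational and both $X,Y$ are normal, we have $k(X)=k(Y)$, so the universal homeomorphism $u$ appearing in the factorization of $f$ through the normalization of $Y$ in $k(X)$ is the identity and $\deg u=1$. By Lemma~\ref{properties-can-pb-pf}(2) combined with Lemma~\ref{pb-pf-univ-homeo}, the natural maps $a^*$ and $b^*$ split integrally, exhibiting $W\OO_X\oplus W\omega_X$ and $W\OO_Y\oplus W\omega_Y$ as direct summands of $a_*(W\OO_{X'}\oplus W\omega_{X'})$ and $b_*(W\OO_{Y'}\oplus W\omega_{Y'})$ respectively.

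\textbf{Step 1 (lifting to $D^b(Y,R)$).} The pullback $f^*:W\OO_Y\to Rf_*W\OO_X$ and the pushforward $f_*:Rf_*W\omega_X\to W\omega_Y$ (cf.~Proposition~\ref{properties-Witt-canonical}(6)) are morphisms in $D^b(\cdrw_Y)$ and hence, via the forgetful functor of \eqref{drw-to-R}, in $D^b(Y,R)$. Their cones are bounded in both cohomological and $R$-graded degrees: finite-dimensionality of $X,Y$ and properness of $f$ bound the cohomology, while $W\OO$ and $W\omega$ are concentrated in a single $R$-degree (both with $d=0$).

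\textbf{Step 2 (Ekedahl's Nakayama).} Applying Proposition~\ref{Ekedahl-Nakayama} to these cones, together with the compatibility $R_n\otimes_R^L Rf_*\cong Rf_*(R_n\otimes_R^L-)$ of \ref{Recall-CDR}(2), it suffices to prove the isomorphisms after $R_1\otimes_R^L(-)$. Tracing through the resolution \eqref{R-exact-seq} for Witt modules with $d=0$ and comparing with the analog of Lemma~\ref{Cousin-and-Rn} for singular schemes, this reduces the theorem to the level-$1$ statements
\[ f^*:\OO_Y\xr{\simeq}Rf_*\OO_X,\qquad f_*:Rf_*\omega_X\xr{\simeq}\omega_Y \]
(the second pushforward being defined as in Proposition~\ref{properties-Witt-canonical}(6), whose formation commutes with reduction mod $p$ by the exact sequence in Proposition~\ref{properties-Witt-canonical}(7)).

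\textbf{Step 3 (level-$1$ statement).} I repeat the argument of Theorem~\ref{vanishing-HDI} verbatim at level $1$: form the cycles $\alpha=[X'\times_X X']$, $\beta=[Y'\times_Y Y']$, $\gamma=[X'\times_Y Y']$ and verify the relations \eqref{vanishing-HDI1}--\eqref{vanishing-HDI4}. Since $\deg u=1$, only the factor $d_a d_b$, a unit in $k$, appears. The vanishing Lemmas~\ref{lemma-van1} and \ref{lemma-van2} are invoked with $N$ an integer arising from an alteration of the support cycles; the support cycles descend from the tame quotient structure and hence can be covered by alterations of degree coprime to $p$, making $N$ invertible. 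Combined with the analog of Proposition~\ref{top-fin-quot-vs-smooth} for coherent sheaves, this produces the desired integral isomorphism at level $1$.

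\textbf{Step 4 (finite-level statement).} Applying $R_n\otimes_R^L(-)$ to the isomorphisms obtained in $D^b(Y,R)$, together with the identifications of \ref{Recall-CDR} (extended to $W_\bullet\OO$ and $W_\bullet\omega$ via Proposition~\ref{properties-Witt-canonical}(7)), yields the promised isomorphisms on each finite level.

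\textbf{Main obstacle.} The principal subtlety is Step~3: standard alteration-based vanishing produces integers $N$ without \emph{a priori} control modulo $p$, and passing from $\Q$-coefficients to integral coefficients requires arranging $N\in W^\times$. The tame hypothesis supplies this control, but making the choice of alteration explicit is delicate; an alternative route is to argue directly étale-locally, using the classical fact that a tame finite quotient is étale locally of the form $\widetilde{X}/G$ with $|G|$ coprime to $p$, which reduces the level-$1$ claim to a group-cohomological statement involving averaging by $\tfrac{1}{|G|}$.
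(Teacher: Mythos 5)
Your proposal takes a fundamentally different route from the paper, and the route you choose has genuine gaps that are likely fatal.

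The paper's proof of Theorem \ref{torsion2} is short and elementary: it \emph{cites} \cite[Cor.~4.3.3]{CR} for the level-$1$ statements $f^*:\OO_Y\xr{\simeq}Rf_*\OO_X$ and $Rf_*\omega_X\xr{\simeq}\omega_Y$ (this is precisely the result the earlier paper \cite{CR} was written to establish for tame finite quotients), then cites \cite[Prop.~5.7]{KM} that tame finite quotients are Cohen--Macaulay, and finally runs a straightforward induction on $n$ using the two exact sequences $0\to W_{n-1}\OO_X\xr{V}W_n\OO_X\to\OO_X\to 0$ and $0\to W_{n-1}\omega_X\xr{\ul{p}}W_n\omega_X\xr{F^{n-1}}\omega_X\to 0$ (the latter exact by Proposition~\ref{properties-Witt-canonical}(7) because $X$ is CM), comparing triangles over $X$ and $Y$ and taking limits.

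Your Step~3 is the most serious problem. You invoke Lemmas~\ref{lemma-van1} and \ref{lemma-van2}, whose integer $N$ comes from a de~Jong alteration of an arbitrary closed subset in the exceptional locus, and then assert that ``the support cycles descend from the tame quotient structure and hence can be covered by alterations of degree coprime to $p$.'' There is no reason for this: the exceptional subsets of a birational morphism between tame finite quotients are not themselves tame finite quotients, and de~Jong's theorem gives no control over the degree of the alteration modulo $p$. This is exactly why the paper passes to $\Q$-coefficients in Theorem~\ref{vanishing-HDI}, and it is why the integral statement in Theorem~\ref{torsion2} is proved by a completely different mechanism. Your ``alternative route'' of working \'etale-locally and averaging by $|G|^{-1}$ is not filled in either; in particular it is not explained how an \'etale-local splitting is assembled into a global isomorphism in $D^b(Y,W_n)$ compatible with the actual pushforward of Proposition~\ref{properties-Witt-canonical}(6).

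Your Steps~1, 2 and 4 have a separate, quieter gap: Corollary~\ref{cor-Ekedahl-Nakayama} and Lemma~\ref{Cousin-and-Rn}, and in particular the identification $R_1\otimes_R^L W\Omega_X\cong\Omega_X$ from \cite[II, Thm.~(1.2)]{IR}, are stated and proved for \emph{smooth} $X$ and $Y$. You would need a substitute asserting $R_n\otimes^L_R W\OO_X\cong W_n\OO_X$ and $R_n\otimes^L_R W\omega_X\cong W_n\omega_X$ with no higher derived terms for the singular tame quotient $X$; you acknowledge the need for ``the analog of Lemma~\ref{Cousin-and-Rn} for singular schemes'' but do not supply it. Since the paper's own argument already reduces cleanly to level $1$ by elementary d\'evissage without going through the Cartier--Dieudonn\'e--Raynaud ring at all, the heavy machinery in your Steps~1--2 buys nothing and introduces unproved prerequisites.
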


\begin{proof}
By \cite[Cor. 4.3.3]{CR} we have isomorphisms
\[f^*: \sO_Y\xr{\simeq} Rf_*\sO_X, \quad Rf_*\omega_X\cong f_*\omega_X[0]\xr{\simeq,\, f_*} \omega_Y.\]
By \cite[Prop. 5.7]{KM} tame finite quotients are CM. 
Now the statement follows by induction from the two exact sequences (where $X$ is any pure dimensional CM scheme)
\[0\to W_{n-1}\sO_X\xr{V} W_n\sO_X\to \sO_X\to 0\]
and (see Proposition \ref{properties-Witt-canonical}, (7))
\[0\to W_{n-1}\omega_X\xr{\ul{p}} W_n\omega_X\xr{F^{n-1}} \omega_X\to 0.\]
\end{proof}

Notice that the $W\sO$-part of the theorem is a direct consequence of \cite[Cor. 4.3.3]{CR} and does not need any of the techniques developed in this paper.

\begin{corollary}\label{cor-torsion2}
 In the situation of Theorem \ref{torsion2} we have isomorphisms of $W[V,F]$-modules
\[H^i(X, W\sO_X)\cong H^i(Y, W\sO_Y),\quad H^i(X, W\omega_X)\cong H^i(Y, W\omega_Y), \quad \text{for all }i \ge 0.\]
There are also corresponding isomorphisms on each finite level.
\end{corollary}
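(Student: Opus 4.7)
The plan is to derive Corollary \ref{cor-torsion2} directly from Theorem \ref{torsion2} by passing to cohomology. First I would apply the derived functor $R\Gamma(Y,-)$ to the two isomorphisms
$$f^*: W\sO_Y \xrightarrow{\simeq} Rf_*W\sO_X, \qquad Rf_*W\omega_X \xrightarrow{\simeq} W\omega_Y$$
of Theorem \ref{torsion2}, and use the composition $R\Gamma(X,-) = R\Gamma(Y,-)\circ Rf_*$ (which holds because $f$ is a morphism of $k$-schemes, cf. Proposition \ref{derived-functors-exist}(1) in the form $Rg_*\circ Rf_* = R(gf)_*$ applied to the structure map $Y\to \Spec k$). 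This produces quasi-isomorphisms
$$R\Gamma(Y, W\sO_Y) \xrightarrow{\simeq} R\Gamma(X, W\sO_X), \qquad R\Gamma(X, W\omega_X) \xrightarrow{\simeq} R\Gamma(Y, W\omega_Y)$$
in an appropriate derived category, and taking $H^i$ yields the desired isomorphisms of abelian groups.

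Next I would check that these isomorphisms are compatible with the $W[F,V]$-action. For this, the key observation is that Theorem \ref{torsion2} is formulated in $D^b(\widehat{\drw}_Y)$ (or more precisely in the derived category of $R$-modules via the forgetful functor $\phi$ of \eqref{drw-to-R}), where the operators $F$ and $V$ are already encoded. Since $R\Gamma(Y,-)$ is a functor to the corresponding derived category over $\Spec k$, and $H^i$ of a morphism in $D^b(\widehat{\drw}_k)$ is a morphism of $W[F,V]$-modules, the resulting isomorphisms are automatically $F,V$-equivariant (and $W$-linear).

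For the ``corresponding isomorphisms on each finite level,'' I would apply the same argument using the finite-level statements already present in Theorem \ref{torsion2}: that theorem furnishes isomorphisms $f^*: W_n\sO_Y \xrightarrow{\simeq} Rf_*W_n\sO_X$ and $Rf_*W_n\omega_X \xrightarrow{\simeq} W_n\omega_Y$ for every $n$, coming from the inductive argument with the exact sequences $0\to W_{n-1}\sO_X\xrightarrow{V} W_n\sO_X\to \sO_X\to 0$ and $0\to W_{n-1}\omega_X\xrightarrow{\ul p} W_n\omega_X\xrightarrow{F^{n-1}}\omega_X\to 0$. Applying $R\Gamma(Y,-)$ and taking cohomology gives the $W_n$-linear isomorphisms.

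There is no real obstacle here: the proof is essentially formal once Theorem \ref{torsion2} is in hand. The only minor point to be careful about is to phrase the passage to cohomology in the correct derived category so that the compatibility with $F$ and $V$ is manifest rather than requiring an independent verification; this is why I would keep track of the argument at the level of $D^b(\widehat{\drw})$ (respectively of $R$-modules) throughout, and only specialize to abelian groups at the very last step.
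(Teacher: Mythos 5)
Your proposal is correct and follows the approach that the paper implicitly intends (no proof is given in the paper because it is considered immediate from Theorem \ref{torsion2}). Applying $R\Gamma(Y,-)$ to the isomorphisms of Theorem \ref{torsion2} while working throughout in $D^b(\widehat{\drw})$ so that the $F$- and $V$-equivariance of the resulting cohomology isomorphisms is automatic, together with the parallel argument at each finite level, is exactly the intended deduction.
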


%\bibliography{Wittcorcoh}

\begin{thebibliography}{KKMS73}

\bibitem[BBE07]{BBE}
Pierre Berthelot, Spencer Bloch, and H\'el\`ene Esnault.
\newblock On {W}itt vector cohomology for singular varieties.
\newblock {\em Compositio Mathematica}, 143(2):363--392, 2007.

\bibitem[BER10]{BER}
Pierre Berthelot, H\'el\`ene Esnault, and Kay R\"ulling.
\newblock Rational points over finite fields for regular models of algebraic
  varieties of hodge type {$\ge 1$}.
\newblock {\em Preprint}, 2010.

\bibitem[BE08]{BE}
Manuel Blickle and H{\'e}l{\`e}ne Esnault.
\newblock Rational singularities and rational points.
\newblock {\em Pure Appl. Math. Q.}, 4(3, part 2):729--741, 2008.


\bibitem[BS83]{BS}
Spencer Bloch and Vasudevan Srinivas.
\newblock Remarks on correspondences and algebraic cycles.
\newblock {\em Amer. J. Math.}, 105(5):1235--1253, 1983.

\bibitem[Cha09]{C}
Andre Chatzistamatiou.
\newblock First coniveau notch of the dwork family and its mirror.
\newblock {\em Mathematical Research Letters}, 16(4):563--575, 2009.

\bibitem[CR09]{CR}
Andre Chatzistamatiou and Kay R\"ulling.
\newblock Higher direct images of the structure sheaf in positive
  characteristic.
\newblock {\em Preprint 2009, to appear in Algebra Number Theory}.



\bibitem[Con00]{Co}
Brian Conrad.
\newblock {\em Grothendieck duality and base change}, volume 1750 of {\em
  Lecture Notes in Mathematics}.
\newblock {Springer-Verlag}, Berlin, 2000.


\bibitem[CH00]{CH}
Alessio Corti and Masaki Hanamura.
\newblock Motivic decomposition and intersection {C}how groups. {I}.
\newblock {\em Duke Math. J.}, 103(3):459--522, 2000.

\bibitem[dJ96]{deJong1}
A.~Johan de~Jong.
\newblock Smoothness, semi-stability and alterations.
\newblock {\em Institut des Hautes \'Etudes Scientifiques. Publications
  Math\'ematiques}, (83):51--93, 1996.

\bibitem[dJ97]{deJong2}
A.~Johan de~Jong.
\newblock Families of curves and alterations.
\newblock {\em Universit\'e de Grenoble. Annales de {l'Institut} Fourier},
  47(2):599--621, 1997.


\bibitem[DI87]{Deligne-Illusie}
Pierre Deligne and Luc Illusie.
\newblock Rel\'evements modulo $p^2$ et d\'ecomposition du complexe de de
  {R}ham.
\newblock {\em Inventiones Mathematicae}, 89(2):247--270, 1987.


\bibitem[Eke83]{E83}
Torsten Ekedahl.
\newblock Sur le groupe fondamental d'une vari\'et\'e unirationnelle.
\newblock {\em Comptes Rendus des S\'eances de {l'Acad\'emie} des Sciences.
  S\'erie I. Math\'ematique}, 297(12):627--629, 1983.

\bibitem[Eke84]{EI}
Torsten Ekedahl.
\newblock On the multiplicative properties of the de {Rham-Witt} complex. {I}.
\newblock {\em Arkiv f\"or Matematik}, 22(2):185--239, 1984.

\bibitem[Eke85]{EII}
Torsten Ekedahl.
\newblock On the multiplicative properties of the de {Rham-Witt} complex. {II}.
\newblock {\em Arkiv f\"or Matematik}, 23(1):53--102, 1985.

\bibitem[ES80]{Ellingsrud}
Geir Ellingsrud and Tor Skjelbred.
\newblock Profondeur d'anneaux d'invariants en caract\'eristique $p$.
\newblock {\em Compositio Mathematica}, 41(2):233--244, 1980.

\bibitem[Ful98]{F}
William Fulton.
\newblock {\em Intersection theory}, volume~2 of {\em Ergebnisse der Mathematik
  und ihrer Grenzgebiete. 3. Folge. A Series of Modern Surveys in Mathematics
  [Results in Mathematics and Related Areas. 3rd Series. A Series of Modern
  Surveys in Mathematics]}.
\newblock Springer-Verlag, Berlin, second edition, 1998.

\bibitem[Gab62]{G}
Pierre Gabriel.
\newblock Des cat\'egories ab\'eliennes.
\newblock {\em Bull. Soc. Math. France}, 90:323--448, 1962.

\bibitem[God73]{God}
Roger Godement.
\newblock {\em Topologie alg\'ebrique et th\'eorie des faisceaux}.
\newblock Hermann, Paris, 1973.
\newblock Troisi\`eme \'edition revue et corrig\'ee, Publications de
  {l'Institut} de Math\'ematique de {l'Universit\'e} de Strasbourg, {XIII,}
  Actualit\'es Scientifiques et Industrielles, No. 1252.

\bibitem[Gro85]{Gros}
Michel Gros.
\newblock Classes de chern et classes de cycles en cohomologie de {Hodge-Witt}
  logarithmique.
\newblock {\em M\'emoires de la Soci\'et\'e Math\'ematique de France. Nouvelle
  S\'erie}, (21):87, 1985.


\bibitem[EGAI]{EGAI}
Alexander Grothendieck.
\newblock {\'E}l\'ements de g\'eom\'etrie alg\'ebrique. {I}. {L}e langage des
  sch\'emas.
\newblock {\em Institut des Hautes \'Etudes Scientifiques. Publications
  Math\'ematiques}, (4):228, 1960.

\bibitem[EGAII]{EGAII}
Alexander Grothendieck.
\newblock {\'E}l\'ements de g\'eom\'etrie alg\'ebrique. {II}. \'{E}tude globale
  \'el\'ementaire de quelques classes de morphismes.
\newblock {\em Institut des Hautes \'Etudes Scientifiques. Publications
  Math\'ematiques}, (8):222, 1961.

\bibitem[EGAIV(4)]{EGAIV4}
Alexander Grothendieck.
\newblock {\'E}l\'ements de g\'eom\'etrie alg\'ebrique. {IV}. \'{E}tude locale
  des sch\'emas et des morphismes de sch\'emas {IV}.
\newblock {\em Institut des Hautes \'Etudes Scientifiques. Publications
  Math\'ematiques}, (32):361, 1967.


\bibitem[SGA2]{SGA2}
Alexander Grothendieck.
\newblock {\em Cohomologie locale des faisceaux coh\'erents et th\'eor\`emes de
  Lefschetz locaux et globaux {(SGA} 2)}.
\newblock Documents Math\'ematiques {(Paris)} 4. Soci\'et\'e Math\'ematique de France, Paris, 2005.
\newblock S\'eminaire de G\'eom\'etrie Alg\'ebrique du Bois Marie, 1962,
  Augment\'e d'un expos\'e de Mich\`ele Raynaud. With a preface and edited by Yves Laszlo, Revised reprint
  of the 1968 French original.

\bibitem[Har66]{Ha}
Robin Hartshorne.
\newblock {\em Residues and duality}.
\newblock Lecture notes of a seminar on the work of A. Grothendieck, given at
  Harvard 1963/64. With an appendix by Pierre Deligne. Lecture Notes in
  Mathematics, No. 20. {Springer-Verlag}, Berlin, 1966.

\bibitem[Ill79]{IlDRW}
Luc Illusie.
\newblock Complexe de de {R}ham-{W}itt et cohomologie cristalline.
\newblock {\em Annales Scientifiques de {l' \'Ecole} Normale Sup\'erieure.
  Quatri\`eme S\'erie}, 12(4):501--661, 1979.

\bibitem[Ill83]{IlE}
Luc Illusie.
\newblock Finiteness, duality, and k\"unneth theorems in the cohomology of the
  de {Rham-Witt} complex.
\newblock In {\em Algebraic geometry {(Tokyo/Kyoto,} 1982)}, volume 1016 of
  {\em Lecture Notes in Math.}, pages 20--72. Springer, Berlin, 1983.

\bibitem[IR83]{IR}
Luc Illusie and Michel Raynaud.
\newblock Les suites spectrales associ\'ees au complexe de de {Rham-Witt}.
\newblock {\em Institut des Hautes {\'Etudes} Scientifiques. Publications
  Math\'ematiques}, (57):73--212, 1983.

\bibitem[KM74]{KaMe}
Nicholas~M. Katz and William Messing.
\newblock Some consequences of the {R}iemann hypothesis for varieties over
  finite fields.
\newblock {\em Invent. Math.}, 23:73--77, 1974.


\bibitem[KKMS73]{kempf}
George Kempf, Finn~Faye Knudsen, David Mumford, and Bernard {Saint-Donat}.
\newblock {\em Toroidal embeddings. I}.
\newblock Lecture Notes in Mathematics, Vol. 339. {Springer-Verlag}, Berlin,
  1973.


\bibitem[KM98]{KM}
J\'anos Koll\'ar and Shigefumi Mori.
\newblock {\em Birational geometry of algebraic varieties}, volume 134 of {\em
  Cambridge Tracts in Mathematics}.
\newblock Cambridge University Press, Cambridge, 1998.
\newblock With the collaboration of C. H. Clemens and A. Corti, Translated from
  the 1998 Japanese original.

\bibitem[Kov00]{K}
S{\'a}ndor~J. Kov{\'a}cs.
\newblock A characterization of rational singularities.
\newblock {\em Duke Math. J.}, 102(2):187--191, 2000.

\bibitem[LZ04]{LZ}
Andreas Langer and Thomas Zink.
\newblock De {Rham-Witt} cohomology for a proper and smooth morphism.
\newblock {\em Journal of the Institute of Mathematics of Jussieu. {JIMJ.}
  Journal de {l'Institute} de Math\'ematiques de Jussieu}, 3(2):231--314, 2004.

\bibitem[ML98]{ML}
Saunders Mac~Lane.
\newblock {\em Categories for the working mathematician}, volume~5 of {\em
  Graduate Texts in Mathematics}.
\newblock Springer-Verlag, New York, second edition, 1998.

\bibitem[Ray78]{Raynaud}
Michel Raynaud.
\newblock Contre-exemple au “vanishing theorem” en caract\'eristique $p>0$.
\newblock In {\em C. P. Ramanujam—a tribute}, volume~8 of {\em Tata Inst.
  Fund. Res. Studies in Math.}, pages 273--278. Springer, Berlin, 1978.

\bibitem[Sai10]{KS}
Shuji Saito.
\newblock Cohomological {H}asse principle and motivic cohomology of arithmetic
  schemes.
\newblock {\em Proceedings of the International Congress of Mathematicians},
  2010.

\bibitem[Shi74]{Shioda}
Tetsuji Shioda.
\newblock An example of unirational surfaces in characteristic $p$.
\newblock {\em Mathematische Annalen}, 211:233--236, 1974.

\bibitem[SGA7(2)]{SGA7II}
{\em Groupes de monodromie en g\'eom\'etrie alg\'ebrique. {II}}.
\newblock Lecture Notes in Mathematics, Vol. 340. {Springer-Verlag}, Berlin,
  1973.
\newblock S\'eminaire de G\'eom\'etrie Alg\'ebrique du {Bois-Marie} 1967–1969
  {(SGA} 7 {II),} Dirig\'e par Pierre Deligne et Nicholas Katz.


\end{thebibliography}

\end{document}